\newcommand{\bbm}{\begin{boldmath}}
\newcommand{\ebm}{\end{boldmath}}
\newcommand{\lt}{<}
\newcommand{\gt}{>}
\newtheorem{theo}{Theorem}[section]
\newtheorem{cor}{Corollary}[section]
\newtheorem{lem}{Lemma}[section]
\newtheorem{prop}{Proposition}[section]
\newtheorem{defy}{Definition}[section]
\newtheorem{mex}{Example}[section]
\newtheorem{mex2}{Example}[subsection]
\newtheorem{conjecture}{Conjecture}[section]
\newtheorem{remk}{{\bf Remark:}}[section]
   {\begin{remk}\begin{normalshape}\mbox{}}%
   {\hfill  \end{normalshape}\end{remk}}%
\tikzstyle{level 1}=[sibling distance=40mm]
\tikzstyle{level 2}=[sibling distance=30mm]
\tikzstyle{level 3}=[sibling distance=20mm]
\tikzstyle{level 4}=[sibling distance=10mm]
\newcommand{\hmytab}{\hspace*{1.9em}}
\begin{document}
\begin{center}
{\Large {\bf Convergence and divergence testing theory and applications by Integration at a point}} \\
\vspace{0.5cm}
{\bf Chelton D. Evans and William K. Pattinson}
\end{center}
\begin{abstract}
Integration at a point is a new kind of integration derived from
 integration over an interval in infinitesimal and infinity domains 
 which are
 spaces larger than the reals.
%
 Consider a continuous monotonic divergent function that is continually increasing.
 Apply the fundamental theorem of calculus. The integral is a difference
 of the function integrated at the end points. If one of these point integrals is much-greater-than
 the other in magnitude delete it by non-reversible arithmetic.
 We call this type of integration ``convergence sums" because our primary application
 is a theory for the determination of convergence and divergence of
 sums and integrals.
 The theory is far-reaching.
 It reforms known convergence tests and arrangement theorems,
 and it connects integration and series 
 switching between the different forms.
 By separating the finite and infinite domains, the mathematics is more naturally considered,
 and is a problem reduction.
 In this endeavour we rediscover and reform 
 the ``boundary test" which we believe to be the boundary between convergence
 and divergence: the boundary is represented as an infinite class of generalized p-series functions.
 All this is derived from extending du Bois-Reymond's theory with gossamer numbers
 and function comparison algebra.
\end{abstract}

\section*{Introduction}

The papers are an interdisciplinary collaboration. The coauthor is a retired high school
and tertiary Physics/Maths teacher with over 33 years teaching experience.
 I produced the
mathematics,
 considered by myself for over 20 years, and collaborated with William, over
the past four years.

\ref{S01}. \textbf{Convergence sums at infinity with new convergence criteria} 

\ref{S0101}. Introduction \\
\ref{S0102}. What does a sum at infinity mean? \\
\ref{S0103}. Euler's Convergence criteria \\
\ref{S0108}. A reconsidered convergence criteria \\
\ref{S0110}. Reflection \\
\ref{S0111}. Monotonic sums and integrals \\
\ref{S05}. Comparing sums at infinity \\
\ref{S12}. Integral and sum interchange \\
\ref{S09}. Convergence integral testing \\
\ref{S07}. Convergence tests summary \\
\hmytab \ref{S0701} p-series test \\
\hmytab \ref{S0702} Power series tests (Section \ref{S14}) \\
\hmytab \ref{S0703} Integral test \\
\hmytab \ref{S0704} Comparison test \\
\hmytab \ref{S0705} nth term divergence test \\
\hmytab \ref{S0706} Absolute convergence test \\
\hmytab \ref{S0707} Limit Comparison Theorem \\
\hmytab \ref{S0708} Abel's test \\
\hmytab \ref{S0709} L'Hopital's convergence test \\
\hmytab \ref{S0710} Alternating convergence test (Theorem \ref{P206}) \\
\hmytab \ref{S0711} Cauchy condensation test \\
\hmytab \ref{S0712} Ratio test (Section \ref{S17}) \\
\hmytab \ref{S0713} Cauchy's convergence test \\
\hmytab \ref{S0714} Dirichlet's test \\
\hmytab \ref{S0716} Bertrand's test (Section \ref{S17}) \\
\hmytab \ref{S0717} Raabe's tests (Section \ref{S17}) \\
\hmytab \ref{S0718} Generalized p-series test (Section \ref{S1802}) \\
\hmytab \ref{S0719} Generalized ratio test (Section \ref{S17}) \\
\hmytab \ref{S0720} Boundary test (Section \ref{S18}) \\
\hmytab \ref{S0721} nth root test (Theorem \ref{P234}) \\
\ref{S13}. Miscellaneous \\
\hmytab \ref{S1301} Transference between sums and convergence sums \\
\hmytab \ref{S1302} Convergence rates 

\ref{S14}. \textbf{Power series convergence sums} 

\hmytab \ref{S1401} Introduction \\
\hmytab \ref{S1402} Finding the radius of convergence \\
\hmytab \ref{S1403} Briefly differentiation and continuity 

\ref{S15}. \textbf{Convergence sums and the derivative of a sequence at infinity} 

\hmytab \ref{S1501} Introduction \\
\hmytab \ref{S1503} Derivative at infinity \\
\hmytab \ref{S1502} Convergence tests 

\ref{S16}. \textbf{Rearrangements of convergence sums} 

\hmytab \ref{S1601} Introduction \\
\hmytab \ref{S1602} Periodic sums \\
\hmytab \ref{S1603} Tests for convergence sums 

\ref{S17}. \textbf{Ratio test and a generalization with convergence sums} 

\hmytab \ref{S1701} Introduction \\
\hmytab \ref{S1702} The ratio test and variations \\
\hmytab \ref{S1703} A Generalized test 

\ref{S18}. \textbf{The Boundary test for positive series}

\hmytab \ref{S1801}. Introduction \\
\hmytab \ref{S1802}. Generalized p-series \\
\hmytab \ref{S1803}. The existence of the boundary and tests \\
\hmytab \ref{S1804}. The boundary test Examples \\
\hmytab \ref{S1805}. Convergence tests \\
\hmytab \ref{S1806}. Representing convergent/divergent series 
\section{Convergence sums at infinity with new convergence criteria} \label{S01}
 Development of sum and integral convergence criteria, leading to
 a representation of the sum or integral as a point at infinity.
 Application of du Bois-Reymond's comparison of functions theory, when
 it was thought that there were none.
 Known convergence tests are alternatively stated and some are reformed.
 Several new convergence tests are developed,
 including an adaption of L'Hopital's rule.
 The most general, the boundary test is stated.  
 Thereby we give an overview of a new field we call `Convergence sums'. 
 A convergence sum is essentially a strictly monotonic sum or integral where one of the
 end points after integrating is deleted resulting in a sum or integral at a point.

\subsection{Introduction}\label{S0101}

 Before attempting to evaluate a sum or integral, we need to
 know if we \textit{can} evaluate it. 
 The aim of this paper is to introduce a new field of mathematics
 concerning sum convergence, with an `aerial' view of the field, 
 for the purpose of convincing the reader of its existence and 
 extensive utility. 
 In general,
 we are concerned with positive series.

 We are building the theory of this new mathematics
 from the foundations of our previous papers,
 and by extending du Bois-Reymond's Infinitesimal and Infinitary Calculus 
 (\cite{cebp21}, \cite{cebp10}).

 The introduction of the `gossamer' number system \cite[Part 1]{cebp21}
 reasons in, and defines infinitesimals and infinities,
 and is comparable with the hyper reals, but more user friendly.

 Paper \cite[Part 3]{cebp21} introduces an algebra
 that results from the comparison of functions,
 which is instrumental in the following development of the
 convergence tests. 

 The goal is to reproduce known tests and new tests with this theory.
 Advantage is taken of known tests,
 where a parallel test is constructed
 from the known test.
 However in some cases the new test is quite different,
 with an extended problem range and usage.
 The p-series, by application of non-reversible arithmetic
 at infinity, is such a test.
 
The tests listed above marked with * are referenced but are considered in greater detail
 in subsequent papers. Other ideas such as rearrangements (Section \ref{S16}),
 derivatives (Section \ref{S15}), and applications with infinite products
 are also discussed in subsequent papers. 
 
 In 
 \cite[Part 3]{cebp21} a method of comparing functions, 
 generally at zero or infinity, is developed. This is the core idea
 for developing the convergence criterion, and subsequent tests.

 However, even in this paper we will have to request faith,
 as not until the consideration of convergence or divergence
 at the boundary (Section \ref{S18}) 
 can we understand why this maths works.
 For this we need du Bois-Reymond's infinitary calculus
 and relations. 

 To this point it has generally been believed,
 even by advocates such as G. H. Hardy, that du Bois Reymond's infinitary
 calculus has little application.  We believe,
 with the discovery of sums at infinity,
 which we call `convergence sums',
 this view may be overturned. 

With over twenty tests, and the application of an
 infinitesimal and infinity number system, the infinireals,
 we believe this recognizes the infinitary calculus,
 produced by 
 both du Bois-Reymond and
 Hardy.

Hardy himself did not believe du Bois-Reymond's
 theory to be 
 of major mathematical significance.
 However, if our work changes this belief,
 both du Bois-Reymond and Hardy's path and intuition
 can be justified. 
 
 In \cite[Part 1]{cebp21} we develop the infinireals and `gossamer' number
 system $*G$, which is constructed from infinite integers. 

  When reasoning with infinitesimals
 and infinities, often simpler or
 more direct constructions are
 possible. We are motivated
 to seek this both for theoretical
 and practical calculations.

 We have argued in our number system that infinitesimals
 and infinities are numbers.

 In the gossamer number system,
  $\Phi$ are infinitesimals,
 $\Phi^{-1}$ are infinities
 and 
 $\mathbb{R}_{\infty}$ are `Infinireals' which are either
 infinitesimals or infinities.

 Further, the numbers in $*G$
 have an explicit number type
 such as integers, rational numbers, infinite integers, infinite transcendental numbers,
 etc.

 Infinitary calculus is
 a non-standard analysis,
 which
 we see as complementing and
  replacing
 standard analysis and other non-standard
 analyses, where applicable.
\subsection{What does a sum at infinity mean?}\label{S0102}
Zeno's paradoxes provide excellent reasons for us to accept infinity,
 as we need to consider a sum of infinite terms to obtain
 a finite result. The following argument from our
 readings on Zeno, demonstrates that 
 a partial sum is subject to Zeno's paradoxes.

\begin{quote}
 Consider an arrow in flight. 
 After travelling half the distance,
 half the distance remains.
 Repeating this, after each
 repetition, since half the distance always
 remains, the arrow never reaches its target.
\end{quote}

 The remaining distance $\frac{1}{2^{n}}|_{n=\infty}$
 is of course a positive infinitesimal,
 smaller than any positive real number.
 With these kinds of problems and the discovery of infinitesimal
 calculus, mathematical knowledge exponentially increases.

 The problem was the infinite sum:
 $\frac{1}{2} + \frac{1}{4} + \frac{1}{8} \ldots =1$.
 By expressing $y = \frac{1}{2} + \frac{1}{4} + \ldots$
 as an infinite sum, 
 $2y = 1 + \frac{1}{2} + \frac{1}{4} + \ldots = 1 + y$,
 the infinitesimal is side-stepped (the infinite series are subtracted)
 and $y$ is solved for $y=1$,
 and the arrow hits the target.
 Alternatively, express as
 a partial sum,
 $y_{n} = \frac{1}{2} + \frac{1}{4} + \ldots + \frac{1}{2^{n}}$, then
 $y_{n} = 1 - \frac{1}{2^{n}}$,
 $\frac{1}{2^{n}}|_{n=\infty} \in \Phi$, 
 $y_{n}|_{n=\infty}=1$. 

In fact, Zeno's paradoxes prove the existence of the infinitesimal,
 for such a number can always be constructed.
 With the arrow striking the target,
 an application of the transfer principle where the infinitesimal
 is set to zero($(*G,\Phi) \mapsto (\mathbb{R},0)$). We see 
 from the two number systems $*G$ and $\mathbb{R}$,
 that in $*G$ the arrow has no collision with the target, but always approaches
 infinitesimally close to the target.
 The realization of the sum is by the transfer principle, transferring
 to a collision in $\mathbb{R}$.
\[ \sum_{k=0}^{\infty} a_{k} = a_{0} + a_{1} + a_{2} + \ldots \]
Typically such a sum is described as
 converging when the sum of the remaining terms tends to zero.
 $\sum_{k=0}^{\infty} a_{k} = \sum_{k=0}^{n-1} a_{k} + r_{n}$
 where $r_{n} = \sum_{k=n}^{\infty} a_{k}$ and for convergence
 $r_{n} \to 0$ as $n \to \infty$. 
 As this partial sum includes finite and infinite
 terms,
 we will need to construct
 a different partial sum, separating
 finite and infinite terms, 
 as convergence/divergence is considered
 for the infinite terms only.

 Since a sum is a sequence of partial sums, even at infinity,
 then the convergence of the sum is reduced
 to sequence theory and the convergence
 of the sequence.
 Let $s_{n} = \sum_{k=1}^{n} a_{n}$, then does $( s_{n})|_{n=\infty}$ converge?

We find that all sum convergence or divergence
 is determined at infinity \cite[Corollary 6.1]{cebp21}, except
 if the sum diverges by summing
 a prior singularity. This is independent
 of the sum convergence criteria.

 We apply this reasoning to sums.

 Consider a sum of positive terms $\sum_{k=1}^{\infty} a_{k}$.
 Let $a_{k}$ be finite so the series
 contains no singularities for an arbitrary number of finite terms 
(for example
 none of the terms divide by zero).
 Then such a sum,
  if it diverges, can only diverge
 at infinity, as a sum with a finite
 number of terms is always convergent. 

 Since a convergent series 
 is the negation of a divergent series,  
 such that all series are classified
 as either convergent or divergent,
 then this is determined at
 the singularity $n=\infty$.
\bigskip
\begin{defy}\label{DEF006}
We say $\sum_{k=n_{0}}^{n_{1}} a_{k}$ is a
 `convergence sum' at infinity. The domain
 of the sum is not finite; $\, n_{0}, n_{1} \in \mathbb{N}_{\infty}$; 
\end{defy}
We define iterating over a sum and integral with number types.
\bigskip
\begin{defy}
  \[ \sum_{\mathbb{N}_{\lt}} a_{k} = a_{1} + a_{2} + a_{k}\ldots \text{ where } k \text{ is finite.} \]
  \[ \sum_{\mathbb{N}_{\infty}} a_{k} = \ldots, a_{n-1}+ a_{n} + a_{n+1} + \ldots \text{ for } n \in \mathbb{N}_{\infty} \]
\end{defy}
\bigskip
\begin{defy}
$\int_{\mathrm{min}(+\Phi^{-1})}^{\mathrm{max}(+\Phi^{-1})} a(x)\,dx =  \int_{+\Phi^{-1}}a(x)\,dx = \int a(x)\,dx|_{+\Phi^{-1}}$
\end{defy}

With the existence of infinite integers $\mathbb{J}_{\infty}$,
 integer sequences can be partitioned into finite and infinite
 parts,
$(a_{1}, a_{2}, \ldots ) = (a_{1}, a_{2}, \ldots, a_{k})|_{k \lt \infty} + (\ldots, a_{n}, a_{n+1}, \ldots)|_{n=\infty}$ \cite[Definition 6.4]{cebp21}. 
 Consequently
 sequences converge
 at infinity,
 $a_{m} - a_{n} |_{\forall m,n = \infty} \simeq 0$ \cite[Definition 2.14]{cebp21} ,
 and since a sum is a sequence, the sum
 can now be deconstructed into
 finite and infinite parts.
\bigskip
\begin{theo}\label{P028}
 $\sum_{j=1}^{\infty} a_{j}$
 $= \sum_{j=1}^{j \lt \infty}a_{j} + \sum_{\mathbb{N}_{\infty}}a_{j}$
 $= \sum_{\mathbb{N}_{\lt}}a_{j} + \sum_{\mathbb{N}_{\infty}}a_{j}$
\end{theo}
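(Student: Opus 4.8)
The plan is to reduce this to the partition of integer sequences into finite and infinite parts, which is cited as \cite[Definition 6.4]{cebp21} just above the statement. The central observation is that a sum is, by definition, a sequence of partial sums, so any structural decomposition available for sequences transfers to sums provided the summation index set itself splits correctly. Here the index set is $\mathbb{N}$ (running from $1$ to $\infty$), and the cited partition gives $(a_1,a_2,\ldots) = (a_1,\ldots,a_k)|_{k<\infty} + (\ldots,a_n,a_{n+1},\ldots)|_{n=\infty}$, i.e. $\mathbb{N} = \mathbb{N}_{<} \cup \mathbb{N}_{\infty}$ as a disjoint partition into the finite integers and the infinite integers $\mathbb{J}_\infty$. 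So the first step is to write $\mathbb{N} = \mathbb{N}_{<} \sqcup \mathbb{N}_{\infty}$ and invoke finite additivity of the summation operator over a disjoint union of index sets.

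Next I would make the bookkeeping explicit. Pick an infinite integer $n \in \mathbb{J}_\infty$ acting as the ``cut point'': by the partition, every $j \in \mathbb{N}$ with $j \ge 1$ is either finite (so $j < n$, captured by $\sum_{j=1}^{j<\infty} a_j = \sum_{\mathbb{N}_<} a_j$) or infinite (captured by $\sum_{\mathbb{N}_\infty} a_j$), and these two ranges are complementary and non-overlapping. Then $\sum_{j=1}^{\infty} a_j = \sum_{\mathbb{N}_<} a_j + \sum_{\mathbb{N}_\infty} a_j$ is just additivity of the sum over the two pieces of the index partition. The second displayed equality, $\sum_{j=1}^{j<\infty} a_j = \sum_{\mathbb{N}_<} a_j$, is purely notational — it is the definition of the symbol $\sum_{\mathbb{N}_<}$ — so no work is needed there beyond pointing to the definition.

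The main obstacle is not any calculation but making precise what $\sum_{j=1}^{\infty} a_j$ \emph{means} as an object in $*G$, and showing the splitting is well defined, i.e. independent of which infinite integer $n$ is chosen as the cut. For this I would rely on the sequence-convergence fact quoted in the excerpt, $a_m - a_n \simeq 0$ for all $m,n = \infty$ \cite[Definition 2.14]{cebp21}, applied to the sequence of partial sums: shifting the cut point from one infinite integer to another changes each piece only by a sum of terms indexed between two infinite integers, and one must argue this leaves the total $\sum_{j=1}^\infty a_j$ unchanged (the finite-part sum $\sum_{\mathbb{N}_<}$ absorbs whichever finite terms precede the cut, and the infinite-part sum takes the rest). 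I would phrase this as: the identity holds for \emph{every} admissible partition point, and the value of the left-hand side does not depend on it, so the equation is an identity in $*G$. A secondary, lighter point is that the transfer principle $(*G,\Phi)\mapsto(\mathbb{R},0)$ recovers the usual decomposition $\sum = \sum_{k=1}^{n-1} + r_n$ discussed earlier, which serves as a sanity check rather than part of the proof. Given the machinery already in place, I expect the argument to be short, essentially a citation-driven unfolding of definitions plus the remark on well-definedness.
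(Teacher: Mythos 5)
Your proposal takes essentially the same route as the paper: the paper's proof simply partitions the index sequence $(1,2,3,\ldots)$ into $(\mathbb{N}_{\lt})+(\mathbb{N}_{\infty})$ and splits the sum over the two pieces, which is exactly your central step. Your additional remarks on well-definedness of the cut point go beyond what the paper records but do not change the argument.
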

\begin{proof}
By the existence of infinite integers,
 with infinireals, $\mathbb{N}_{\lt} \lt \mathbb{N}_{\infty}$, 
 we can extend an integer sequence to infinity.

 $(1, 2, 3, \ldots)$
 $=(\mathbb{N}_{\lt}) + (\mathbb{N}_{\infty})$,
 then
 $\sum_{j=1}^{\infty} a_{j}$
 $=\sum_{\mathbb{N}_{\lt}}a_{j} + \sum_{\mathbb{N}_{\infty}}a_{j}$.
\end{proof}
\bigskip
\begin{defy}\label{DEF007}
We say 
 $\int_{x_{0}}^{x_{1}} a(x)\,dx|_{x=\infty}$ is a
 `convergence integral' at infinity. The domain
 of the integral is not finite; $\,x_{0}, x_{1} \in +\Phi^{-1}$;
\end{defy}
\bigskip
\begin{theo}\label{P052}
 $\int_{\alpha}^{\infty} a(x)\,dx = \int_{\alpha}^{x \lt \infty} a(x)\,dx + \int_{+\Phi^{-1}} a(x)\,dx$
\end{theo}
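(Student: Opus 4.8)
The statement is the integral analogue of Theorem \ref{P028}: it splits an improper integral $\int_\alpha^\infty a(x)\,dx$ over a real interval into a finite-range piece $\int_\alpha^{x<\infty} a(x)\,dx$ plus the "convergence integral" piece $\int_{+\Phi^{-1}} a(x)\,dx$ sitting entirely in the infinite domain. So the proof should mirror the one already given for sums, but transferred from the discrete partition of $\mathbb{N}$ into $\mathbb{N}_{<}+\mathbb{N}_{\infty}$ to the continuous partition of the positive axis in $*G$ into its finite part and its infinite part $+\Phi^{-1}$.

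The plan is as follows. First I would invoke the partition of the extended positive real line available in the gossamer number system: any $x$ ranging up to $\infty$ lies either in the finite reals or in $+\Phi^{-1}$, and these are separated, with every finite $x$ strictly less than every element of $+\Phi^{-1}$ (the continuous counterpart of $\mathbb{N}_{<}<\mathbb{N}_{\infty}$ used in Theorem \ref{P028}). Fix any witnessing point $\beta$ with $\alpha \le \beta < \infty$ and $\beta$ finite (equivalently, a boundary point at the interface, so that $[\alpha,\beta)$ is the finite part and $[\beta,\infty)$ — up to an infinitesimal adjustment — realizes $+\Phi^{-1}$). Second, I would apply the additivity of the integral over adjacent intervals, which holds in the infinitary calculus of \cite{cebp21} exactly as over the reals: $\int_\alpha^\infty a = \int_\alpha^\beta a + \int_\beta^\infty a$. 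Third, I would identify the first term with $\int_\alpha^{x<\infty} a(x)\,dx$ (it is an integral with finite upper limit) and the second with $\int_{+\Phi^{-1}} a(x)\,dx$ by Definition \ref{DEF007} and the definition of $\int_{+\Phi^{-1}}$ as $\int_{\min(+\Phi^{-1})}^{\max(+\Phi^{-1})}$, noting that the choice of the cut point $\beta$ only shifts both pieces by a quantity that is absorbed when one passes to the equivalence relation $\simeq$ on $*G$ (sequences/integrals agreeing up to infinitesimals at infinity are identified, as recorded just before Theorem \ref{P028}).

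The step I expect to be the main obstacle is making the identification in the third step precise: unlike the sum case, where $\mathbb{N}_{<}$ and $\mathbb{N}_{\infty}$ give a clean index partition, here the "cut" between the finite domain and $+\Phi^{-1}$ is not at a single canonical point, so one must argue that $\int_\beta^\infty a(x)\,dx$ genuinely represents $\int_{+\Phi^{-1}} a(x)\,dx$ independently of which admissible $\beta$ is used — i.e. that changing $\beta$ within the finite reals changes the value only by a finite real amount that is reassigned to the finite-range term, leaving the convergence integral well defined. Invoking the earlier result that all convergence behaviour is determined at $x=\infty$ (\cite[Corollary 6.1]{cebp21}) together with interval additivity handles this, so the argument remains short; the rest is the routine transfer of real interval-additivity of the integral into $*G$.
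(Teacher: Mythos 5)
Your proposal matches the paper's own (very terse) proof: the paper simply notes that by the existence of infinite real numbers, $\mathbb{R}^{+} \lt +\Phi^{-1}$, one partitions the domain of integration into its finite and infinite parts, which is exactly your first two steps. Your additional discussion of the non-canonical cut point $\beta$ goes beyond anything the paper addresses, but the core argument is the same.
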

\begin{proof}
By the existence of infinite real numbers,
 $\mathbb{R}^{+}\! \lt \!+\Phi^{-1}$, partition the integral
 on the domain between the finite and infinite integrals.
\end{proof}
\bigskip
\begin{theo}\label{P001}
If given the sum $\sum_{k=0}^{\infty} a_{k}$, $a_{k}$ has no singularities
 for finite $k$($k \lt \infty$),
 then convergence or divergence of the sum is 
 determined at infinity, $\sum_{\mathbb{N}_{\infty}} a_{k}$.
\end{theo}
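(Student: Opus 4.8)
The plan is to reduce the statement to the partition of the partial-sum sequence into its finite-index and infinite-index parts, together with the fact (used informally in Section \ref{S0102}) that a sum of a finite number of finite terms is itself finite. Write $s_{n} = \sum_{k=0}^{n} a_{k}$, so that the series $\sum_{k=0}^{\infty} a_{k}$ is \emph{by definition} the sequence of partial sums $(s_{n})$, and its convergence is the convergence of that sequence at infinity, i.e. $s_{m} - s_{n} \simeq 0$ for all $m, n \in \mathbb{N}_{\infty}$ (\cite[Definition 2.14]{cebp21}).

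First I would invoke Theorem \ref{P028}, or equivalently the partition $(1,2,3,\ldots) = (\mathbb{N}_{\lt}) + (\mathbb{N}_{\infty})$, to split $(s_{n})$ into the initial segment $(s_{n})_{n \lt \infty}$ and the tail $(s_{n})_{n = \infty}$. The hypothesis that $a_{k}$ has no singularities for finite $k$ guarantees that each $s_{n}$ with $n \lt \infty$ is a finite real number --- a finite sum of finite terms --- so the initial segment is an ordinary real sequence and can contribute no divergence; this is exactly the clause ``a sum with a finite number of terms is always convergent''. Hence whether $(s_{n})$ converges is decided entirely by its behaviour on $\mathbb{N}_{\infty}$.

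Next I would localise the convergence condition to the infinite terms $a_{k}$ themselves. For infinite integers $m \le n$ we have $s_{n} - s_{m} = \sum_{k=m+1}^{n} a_{k}$, a sum taken purely over infinite indices, i.e. a sub-sum of $\sum_{\mathbb{N}_{\infty}} a_{k}$ in the sense of Definition \ref{DEF006}. Therefore the series converges iff $\sum_{k=m+1}^{n} a_{k} \simeq 0$ for all $m,n \in \mathbb{N}_{\infty}$, and this is a property of $\sum_{\mathbb{N}_{\infty}} a_{k}$ alone; divergence (including the case where $(s_{n})|_{n=\infty}$ is infinite) is its negation. Since every positive series is classified as either convergent or divergent, both verdicts are read off from $\sum_{\mathbb{N}_{\infty}} a_{k}$, which is the claim.

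The main obstacle --- and the only place where care is really needed --- is ruling out that divergence ``hides'' in the finite part. This is precisely the exception noted after \cite[Corollary 6.1]{cebp21}: if some $a_{k}$ with finite $k$ were a singularity (for instance $a_{k}=\infty$, or a division by zero), then $s_{n}$ would already be infinite or undefined for all $n \ge k$ and the sum would diverge without ever reaching $\mathbb{N}_{\infty}$. The hypothesis that $a_{k}$ has no singularities for finite $k$ is exactly what excludes this, so once it is in force the argument above goes through without further friction.
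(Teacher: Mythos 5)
Your proof is correct and follows essentially the same route as the paper's own argument, which simply observes that every finite sum (absent singularities) is convergent, so divergence --- and hence, by negation, the convergence/divergence verdict --- can only be decided at infinity. Your version is more explicit, grounding that verbal logic in the partial-sum decomposition of Theorem \ref{P028} and the Cauchy-style localisation $s_{n}-s_{m}=\sum_{k=m+1}^{n}a_{k}$ over infinite indices, but the underlying idea is the one the paper uses.
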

\begin{proof}
Since every finite sum is convergent, only
 an infinite sum is divergent. 
 As the negation of a divergent 
 series determines
 convergence, only at infinity can
 an infinite series be 
 determined to be convergent or divergent.
\end{proof}
\bigskip
\begin{theo}\label{P016}
If given the integral $\int_{x_{0}}^{\infty} a(x)\,dx$, $a(x)$ has
 no singularities for finite $x$($x \lt \infty$),
 then convergence or divergence of the integral is 
 determined at infinity, $\int a(x)\,dx|_{+\Phi^{-1}}$.
\end{theo}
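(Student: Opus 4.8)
The plan is to follow the same line of reasoning as in Theorem \ref{P001}, transporting it from the integer line to the real line by means of the domain partition of Theorem \ref{P052}. First I would apply Theorem \ref{P052} to write $\int_{x_{0}}^{\infty} a(x)\,dx = \int_{x_{0}}^{x \lt \infty} a(x)\,dx + \int a(x)\,dx|_{+\Phi^{-1}}$, separating the improper integral into a finite-domain part and an infinite-domain part. Everything then reduces to locating in which of these two summands the convergent/divergent dichotomy can actually be decided.

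Next I would show that the finite-domain part is necessarily convergent. By hypothesis $a(x)$ has no singularities at any finite $x$, so on any compact interval $[x_{0},\beta]$ with $\beta \lt \infty$ the integrand is bounded and integrable and $\int_{x_{0}}^{\beta} a(x)\,dx$ is finite; in particular the finite-domain contribution $\int_{x_{0}}^{x \lt \infty} a(x)\,dx$ cannot be the source of divergence. Since every improper integral of this form is classified as either convergent or divergent, with convergence being exactly the negation of divergence, it follows that the classification can only be decided by the remaining, infinite-domain, part $\int a(x)\,dx|_{+\Phi^{-1}}$, which is the assertion of the theorem.

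The step I expect to be the main obstacle is not a deep one but a matter of care: pinning down precisely what ``no singularities for finite $x$'' guarantees. It must be read as: $a$ is defined and finite at every finite point of $[x_{0},\infty)$, so that deleting a single finite point, or a bounded sub-ray, still leaves a proper (finite) integral. One should also note the contrast with the series case, where the analogous caveat in Theorem \ref{P001} was phrased as an exception for sums that ``diverge by summing a prior singularity'': an integral over a finite interval genuinely can diverge through an interior singularity, and it is exactly this possibility that the hypothesis here excludes. Unbounded growth of $a$ as $x$ ranges over the finite reals is not a singularity at any finite point; it is infinite behaviour that properly belongs to the $+\Phi^{-1}$ piece, so it poses no threat to the finiteness of $\int_{x_{0}}^{\beta}$ for finite $\beta$, and the argument goes through exactly as in the series case.
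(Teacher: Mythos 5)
Your proposal is correct and follows essentially the same reasoning as the paper's proof: the finite-domain part cannot diverge in the absence of finite singularities, so the convergent/divergent dichotomy can only be settled by the part at infinity. The only difference is that you make the decomposition explicit by citing Theorem \ref{P052}, which the paper leaves implicit in its one-line argument.
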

\begin{proof}
Since every definite integral without singularities is convergent,
 only at infinity can the integral diverge.
 Hence the determination of convergence or divergence occurs at infinity.
\end{proof}

Theorem \ref{P001} of course does not say what is happening at infinity,
 nor does it say how to use this fact; 
 for 
 this we need a convergence criterion. 
 The development of a criterion is  
 necessary for
 defining a sum,
 because the sums concerned have an infinite
 number of terms.

This more general view of sum convergence makes more sense
 when
 we view other sum criteria.
 An exotic example is in string theory \cite{stringtheory} where
 $\sum_{k=1}^{n} k|_{n=\infty} = -\frac{1}{12}$.
  Here a sum is defined at
 infinity by summing shifted sequences,
 the finite sum is meaningless,
 as you need to consider an infinity of terms.

All this follows from \cite[Part 6 Sequences]{cebp21}
 where an alternate
 definition for convergence of
 a sequence at infinity is defined,
 and since a sum is a type of sequence,
 the application to the definitions
 of sums at infinity follows.

The other aspect of sums are divergent sums, sums 
 which do not converge.
 So any function which does not
 converge by definition diverges,
 and we say of such a sum that it
 is equal to infinity,
  $\sum a_{n}|_{n=\infty}=\infty$. 
\bigskip
\begin{defy}\label{DEF008}
If $\sum a_{n}$ diverges at infinity we say
 $\sum a_{n}|_{n=\infty}=\infty$ 
\end{defy}

 Through logic and language,
 diverging sums are divided into two cases,
 (i) where the sum does not continually grow in
 magnitude, $1 - 1 + 1 - \ldots$ being
 an example, and (ii)
 where the sum continually grows in magnitude,
 as in  $1 + 4 + 8 + 16 + \ldots$
 diverging to
 infinity.
 We could classify divergence as 
 either the divergent sum is monotonically
 increasing, or it is not.

 The characterisation of the sum
 will also depend on the inner term $a_{n}$ being
 summed. If we say a sum is monotonic we will 
 mean 
 the sum's sequence is monotonic.
 Of course we can have a sum
 increasing for both monotonically
 increasing and monotonically decreasing
 terms.

The sums at infinity in this paper
 are concerned with monotonic
 functions that do not plateau,
 which for diverging
 sums would be the class that continually
 grows.

 This does not deny the many possible applications
 of non-monotonic series, but instead we
 are concerned with monotonic series as an input
 with our tests.

Monotonic functions,
 because of their
 guaranteed behaviour are really useful
 and the subject of much of infinitary
 calculus. The scales of infinities are 
 examples. 

 In our culture, let us ask the question,
 are there different infinities? The idea that
 like different numbers,
 we can have different infinities
 is most likely unrecognised.  
 So $x^{2}|_{x=\infty}$ and $x^{3}|_{x=\infty}$
 are being seen as $\infty$.  While this generalisation is useful,
 the use of infinitary calculus or little-o/big-O notation
 is not as readily recognised; yet 
 it becomes advantageous to
 treat different infinities as different numbers.

Interpreting what a sum is generally boils
 down to interpreting the little dots,
 that is, saying
 what happens at infinity.

 Euler, Hardy, 
 Ramanujuan and many
 others have pursued 
 and found applications 
 in defining
 a sum at infinity.
 
 In this paper another
  criterion is developed and compared with a 
 non-standard
 analysis convergence criterion.

The new criterion separates finite and infinite
 arithmetic as other criteria have done.
 After this separation,
  the infinite part of the sum 
 is considered as a point at infinity.
 At 
 this point we use infinitary
 calculus as the mechanism 
 to do calculations.

 To help explain why this is interesting,
 consider a divergent sum
 $1 + 1 + \ldots$, 
 if we express  
 the sum by $\sum 1 |_{n=\infty} = \infty$ 
 thus diverges at infinity.
 That is,
 we are in an infinite loop where one is being 
 continually added. 
 We could further describe
 the sum by the divergent function,
 $\sum 1 = n|_{n=\infty}$
\bigskip
\begin{defy}\label{DEF002}
Given a function $f(n)$,  
 let $\sum f(n)|_{n=\infty} = g(n)$ be interpreted
 as a function at infinity.
\end{defy}
\bigskip
If we construct $g(n)$ counting from a reference
 point then
 $\sum_{k=a}^{b} f(n)$
 $= \sum^{b} f(n) - \sum^{a} f(n)$.
\bigskip
\begin{defy}\label{DEF003}
$\sum a_{n}|_{n=\infty}$, 
$\,\int a(n)\,dn \in *G$;
\end{defy}

Infinitary calculus can then be applied to the sum
 at infinity. 
 Looking at convergent series
 we can ask what happens at infinity.
 In the same way a stone thrown into
 a still pond generates a ripple,
  and 
 over time the ripples subside again leaving
 the still pond,
 so the 
 idea of a steady state for
 a sum
 is to look at the sum at infinity
 and enquire about the sum's behaviour.

If the pond does not settle down,
 but continually vibrates, this too can be considered a steady state.
 A steady state looks at the behaviour of the system after an infinity
 of time.

 From the p-series it is known
 that it is not enough that what
 is being added,
 tends to zero. 
 For example $p=1$ gives 
 $\frac{1}{n}|_{n=\infty}=0$, but
 $\sum_{k=1}^{n} \frac{1}{k}|_{n=\infty} = \infty$ diverges.
 This shows that summing an infinity of infinitesimals
 is not necessarily finite.

 Interestingly, a sum that is convergent will have the terms
 being added,
 and these will no longer have an effect on
 the end state. We can explain this
 by considering the sum in a higher
 dimension, with infinitesimals,
 which when projected (by approximation) back to $\mathbb{R}$,
 can disappear. 

 By the Criterion E2 
 described in the next section,
 $\sum f(n)|_{n=\infty}=0$ is a 
 necessary and sufficient condition for sum
 convergence (provided the sum
 did not diverge before reaching infinity).

 In \cite[Part 5]{cebp21} classes
 of functions $\{ \frac{1}{x^{2}+\pi}, \frac{1}{x^{2}-3x}, \ldots \}|_{x=\infty}$
 could be simplified
 to $\frac{1}{x^{2}}|_{x=\infty}$ by arguments of
 magnitude.
 The same simplifications with care
 can be applied to sums, reducing
 classes of sums to particular
 cases. Indeed this is described
 later by the p-series test at infinity.
\bigskip
\begin{mex}\label{MEX001}
For $\sum_{n=1}^{\infty} \frac{1}{n^{2}-3n}$, 
 convergence or divergence
 can be determined by
 considering the sum at infinity,
 $\sum \frac{1}{n^{2}-3n}|_{n=\infty}$
$=\sum \frac{1}{n^{2}}|_{n=\infty}$ 
$=0$ is convergent by comparison
 with known p-series, with
 $p \gt 1$ known to converge. $n^{2}-3n = n^{2}|_{n=\infty}$ as $n^{2} \succ 3n|_{n=\infty}$.
\end{mex}

The following scale can be applied to solving
 equations of the form $a+b=a$ (non-reversible arithmetic \cite[Part 5]{cebp21}) when $a \neq 0$,
 and will be useful when solving sums.
\[ (c \prec \mathrm{ln}(x) \prec x^{p}|_{p \gt 0}  \prec a^{x}|_{a \gt 1} \prec x! \prec x^{x} )|_{x=\infty}\, \cite[\text{Part 2}]{cebp21} \]
\subsection{Euler's Convergence criteria}\label{S0103}
Euler on the nature of series convergence \cite{eulerho}:
\begin{quote}
Series with a finite sum when infinitely continued,
 do not increase this sum
 even if continued to the double of its terms.
 The quantity which is increased
 behind an infinity of terms
 actually remains infinitely small.
 If this were not the case, the sum of the series would not be determined and, 
 consequently, would not be finite.  
\end{quote}

Laugwitz reasons from Euler's criterion
 \cite[p.14]{laugwitz}:
\begin{quote}
A series (of real numbers) has a finite sum if the values of the sum
 between infinitely large numbers is an infinitesimal.
\end{quote}
Reference \cite[p.212]{gold} 
 defined
 a convergence Criterion E1, 
 as a reformation of Euler's criterion
 in
 A. Robinson's non-standard analysis.
 Consider the tail of a sum, and a countable infinity section.

\textbf{Criterion E1.}
\textit{
 The series with general term $a_{k}$,
 where $a_{k} \geq 0$, is convergent(has a finite sum)
 if $\sum_{k=\omega}^{2\omega} a_{k}$ is an infinitesimal
 for any infinitely large $\omega$.
}

We have formed other criteria through
 minor variations, and then considered infinity as a point.
 The Criteria can be implemented 
 by using an extension of du Bois-Reymond's
 infinitary calculus.
 Whereby we can define the sum at infinity
 as a function.
\bigskip
\begin{defy}\label{DEF004}
For Criteria E2, E3, with 
 monotonic sequence $(a_{n})|_{n=\infty}$, $n \in \mathbb{N}_{\infty}$,
\[ \sum a_{n}|_{n=\infty} \in \mathbb{R}_{\infty} \text{ is an infinireal.} \]
\end{defy}

\begin{prop}\label{P049}
 $\sum a_{n}|_{n=\infty} \mapsto \{ 0, \infty \}$
\[ \text{If } \sum a_{n}|_{n=\infty} = \mathbb{R}_{\infty} \text{ then either } \sum a_{n}|_{n=\infty}=0 \text{ converges or }
 \sum a_{n}|_{n=\infty}=\infty \text{ diverges.} \]
\end{prop}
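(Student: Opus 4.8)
The plan is to combine the dichotomy built into the infinireals with the classical convergence/divergence dichotomy for monotone positive series. First I would note that, by Definition \ref{DEF004}, the object $\sum a_n|_{n=\infty}$ is an infinireal, i.e. it lies in $\mathbb{R}_\infty$; since every infinireal is \emph{either} an infinitesimal (an element of $\Phi$) \emph{or} an infinity (an element of $\Phi^{-1}$), and no number is both, these two mutually exclusive alternatives are exactly the two branches $0$ and $\infty$ claimed by the map $\sum a_n|_{n=\infty} \mapsto \{0,\infty\}$.

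Next I would match each branch to a statement about the series. Because $(a_n)|_{n=\infty}$ is monotone (and, in the positive-series setting of this paper, through nonnegative values), the classical alternative applies: $\sum a_n$ either converges or diverges. If it diverges, Definition \ref{DEF008} records $\sum a_n|_{n=\infty}=\infty$; concretely, by positivity the partial sums $\sum_{k=1}^{n}a_k$ are infinite for $n \in \mathbb{N}_\infty$, so after removing the finite head via Theorem \ref{P028} what remains is an infinity, the $\Phi^{-1}$ case. If it converges, the remainder $r_n=\sum_{k=n}^{\infty}a_k$ satisfies $r_n\to 0$; transferring this to an infinite index shows the tail-at-infinity is a positive infinitesimal, hence $\simeq 0$, and under the transfer principle $(*G,\Phi)\mapsto(\mathbb{R},0)$ we write $\sum a_n|_{n=\infty}=0$, the $\Phi$ case. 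This is also the content of Criterion E2. The two cases are exhaustive and disjoint, which is the assertion.

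The step I expect to be the main obstacle is excluding an ``appreciable'' value, i.e. showing the convergence sum is never a finite non-infinitesimal real. This is precisely where monotonicity and positivity are used: for a convergent positive series the remainders $r_n$ form a null sequence, so their value at an infinite index is forced to be infinitesimal rather than appreciable; for a divergent positive series the partial sums are unbounded, so their value at an infinite index is genuinely infinite. Making the transfers ``$r_n\to 0$ at $n=\infty$'' and ``partial sums $\to\infty$ at $n=\infty$'' precise relies on the sequence-at-infinity machinery of \cite[Part 6]{cebp21}; granting that, the proposition follows at once from Definitions \ref{DEF004} and \ref{DEF008} together with the meaning of $\simeq$.
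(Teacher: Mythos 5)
Your proposal is correct and its core is the same as the paper's own proof: by Definition \ref{DEF004} the convergence sum is an infinireal, hence lies in $\Phi$ or $\Phi^{-1}$, and realizing these ($\Phi \mapsto 0$, $\Phi^{-1} \mapsto \infty$) yields exactly the two values $0$ and $\infty$. The additional material you supply on remainders and partial sums is consistent elaboration (essentially the content of Criterion E2) but is not needed for, or used in, the paper's one-line argument, which also notes that an undefined sum is assigned $\infty$ by convention.
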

\begin{proof}
 When the infinitesimals and 
 infinities $\mathbb{R}_{\infty}$ are realized,
 $\Phi \mapsto 0$ and $\Phi^{-1} \mapsto \infty$,
 then these are the only two possible values of the sum at infinity.
 If the sum is undefined, by definition
 the sum is said to diverge and assigned $\infty$.
\end{proof}

\textbf{Criterion E2.}
\textit{
 The series with general term $a_{k}$,
 where $a_{k} \geq 0$, is
 convergent (has a finite sum) if
 and only if $\sum a_{n}|_{n=\infty} \in \Phi$,
 or
 else the series is divergent
 and $\sum a_{n}|_{n=\infty}\in \Phi^{-1}$ or $\infty$.
}

\textit{
The integral version of Criterion E2, 
 either  
 $\int^{n} a(n)\,dn|_{n=\infty} \in \Phi$ 
 converges. Else  
 the integral 
 $\int^{n} a(n)\,dn|_{n=\infty} \in \Phi^{-1}$ 
 or $\infty$ and 
 diverges.
}

\textit{For convergence, $(a)|_{n=\infty}$ must be a monotonic sequence.}

The requirement that the series has a sequence
 of monotonic terms for convergence will later be 
 overcome by converting the  
 sequences to an auxiliary monotonic
 sequence for testing.
 
In developing a criterion, the 
 convergence Criterion E2 is also 
 justified from Euler's considerations.
 The
 same quoted
 criterion is realised
 with Criterion E2. 
 Hence we refer to Criteria E1, E2, and E3 in the following section
 as Euler's convergence criteria.
 
In comparing convergence Criteria E1 and E2,
 the non-standard analysis Criterion E1 compares in a sense
 with an interval between two infinities, 
 whereas the Criterion E2 compares at a point, infinity.

However, Criteria E1 and E2 can be compared.
 Replacing the infinitesimals in Criterion E1 with
 zero($\Phi \mapsto 0$) then
 Criterion E1 gives a similar convergence with  Criterion E2.
 For convergence, both sums are zero.
\[ \text{Criterion E1: } \sum_{k=\omega}^{2\omega} a_{k} =0 \;\;\;\;  
 \text{Criterion E2: } \sum a_{n}|_{n=\infty}=0 \]

\begin{mex}\label{MEX002}
Showing that the harmonic 
 series diverges by Euler convergence Criterion E1.
 $\sum_{k=\Omega+1}^{2 \Omega} \frac{1}{k}$
 $\approx \sum_{k=1}^{2 \Omega} \frac{1}{k} - \sum_{k=1}^{\Omega} \frac{1}{k}$
 $\approx \mathrm{ln}\,2\Omega-\mathrm{ln}\,\Omega$
 $\approx \mathrm{ln}\,2 \neq 0$ hence $\sum \frac{1}{k}$ diverges.
\end{mex}
\bigskip
\begin{mex}\label{MEX003}
Showing that the harmonic 
 series diverges by Euler convergence Criterion E2.
 $\sum \frac{1}{n}|_{n=\infty}$
 $=\int \frac{1}{n}\,dn|_{n=\infty}$
 $=\mathrm{ln}\,n|_{n=\infty} = \infty$ diverges.
\end{mex}
\subsection{A reconsidered convergence criteria}\label{S0108}
An integral or a sum as a point may seem shocking, however there
 turns out to be a simple explanation. By the fundamental 
 theorem of calculus, an integral can be expressed as a difference
 in two integrals at a point. If one of these integrals has
 a much greater than magnitude, we can apply non-reversible
 arithmetic $a_{n} + b_{n}|_{n=\infty}=a_{n}$ when $a_{n} \succ b_{n}|_{n=\infty}$
 and only one integral point need be tested.
\[ \int_{a}^{b} f(x)\,dx = \int_{a}f(x)\,dx \text{ or } \int^{b}f(x)\,dx \]

The significance of the reduction to
 evaluation of the integral at a point 
 is to 
 reduce an integral or sum convergence test
 to one, instead of two function evaluations.

Hence we really are evaluating a sum or integral
 at a point.

 As it will be advantageous to convert between
 sums and integrals, we can always thread a continuous
 monotonic function through a monotonic sequence,
 and conversely
 from a monotonic function generate a monotonic sequence.
 At integer values the function and sequence are equal.
 \[ a(n) = a_{n}|_{n \in \mathbb{J}_{\infty}} \]

 Sums and integrals can sandwich each other.
 Either consider the criterion with a sum or
 integral. With the criterion having the same conditions
 for both sums and integrals allows for the integral test
 in both directions.

\textbf{Criteria E3} sum and integral convergence

 The following criteria E3 and E3' are linked,
 hence we refer to both collectively as ``the E3 criteria".

\textbf{E3.0}
Consider an arbitrary infinite interval $[n_{0},n_{1}]$
 which can be grown to meet the conditions.
 $n_{1}-n_{0}=\infty$ is a minimum requirement; $\, n_{0}, n_{1} \in \Phi^{-1}$;

\textbf{E3.1}
 If $\int a(n)\,dn|_{n=\infty}$ cannot
 form a monotonic function, or
 the other E3 conditions fail,
 then the integral diverges, 
 $\int a(n)\,dn|_{n=\infty}=\infty$. \\
\textbf{E3.2}
$\int a(n)\,dn|_{n=\infty} \in \mathbb{R}_{\infty}$; 
 $n \in \Phi^{-1}$ \\
\textbf{E3.3} For divergence $\int a(n)\,dn|_{n=\infty} \in \Phi^{-1}$ \\
\textbf{E3.4} For convergence $\int a(n)\,dn|_{n=\infty} \in \Phi$ \\
\textbf{E3.5} For divergence, $\int a(n)\,dn|_{n=\infty}$
 can be made arbitrarily large. \\
\textbf{E3.6} For convergence, $\int a(n)\,dn|_{n=\infty}$ can be
 made arbitrarily small. 

\textbf{E3'.0}
Consider an arbitrary infinite interval $[n_{0},n_{1}]$
 which can be grown to meet the conditions.
 $n_{1}-n_{0}=\infty$ is a minimum requirement; $\, n_{0}, n_{1} \in \mathbb{J}_{\infty}$; \\
\textbf{E3'.1}
 If $\sum a_{n}|_{n=\infty}$ cannot
 form a monotonic function, or
 the other E3' conditions fail,
 then the sum diverges, 
 $\sum a_{n}|_{n=\infty}=\infty$. \\
\textbf{E3'.2}
$\sum a_{n}|_{n=\infty} \in \mathbb{R}_{\infty}$; 
 $n \in \Phi^{-1}$ \\
\textbf{E3'.3} For divergence $\sum a_{n}|_{n=\infty} \in \Phi^{-1}$ \\
\textbf{E3'.4} For convergence $\sum a_{n}|_{n=\infty} \in \Phi$. \\
\textbf{E3'.5} For divergence, $\sum a_{n}|_{n=\infty}$
 can be made arbitrarily large. \\
\textbf{E3'.6} For convergence, $\sum a_{n}|_{n=\infty}$ can be
 made arbitrarily small.

E3.0-6 satisfy the E2 criterion by adding additional properties
 which justify integration at a point. 
 E3.1 overrides other E3 conditions.

Concerning E3.1: by consideration of 
 arrangements, many classes of non-monotonic
 functions can be rearranged into
 monotonic series for testing.
 (Section \ref{S16})

We do not require a strict monotonic function 
 for the criterion E3
 as a consequence of E3.5 and E3.6 conditions.
 A monotonic series may be
 tested by
 generating a contiguous subsequence
 which is strictly monotonic,
 removing
 equality  (Section \ref{S16}).
 
Concerning E3.2: the NSA Example \ref{MEX002}  
 integrates leaving neither an infinity or
 infinitesimal, then clearly
 such integrals exist.
 Our approach, however excludes this
 case,
 by defining 
 any integral between two infinities
 as an infinireal $\mathbb{R}_{\infty}$.
 This was done 
 to reduce complexity and increase usability.
 If a computation occurs which gives
 the above case ($\sum a_{n}|_{n=\infty} \not\in \mathbb{R}_{\infty}$),
 then an assumption or condition
 has failed.
  However, the theory
 can be extended to include these cases,
 but may be more complicated.
 This results in Example \ref{MEX002}
 alternatively being evaluated
 in Example \ref{MEX003}
 by condition E3.5.
\bigskip
\begin{prop}
If $\sum a_{n} \not \in \mathbb{R}_{\infty}$
 or $\int a(n)\,dn \not \in \mathbb{R}_{\infty}$ 
 then the sum or integral is said to diverge.
\end{prop}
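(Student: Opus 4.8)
The plan is to argue from the dichotomy between convergence and divergence together with the necessary condition for convergence supplied by Criterion E2. First I would recall that, by Criterion E2 (equivalently conditions E3.4 and E3'.4), a convergent sum satisfies $\sum a_{n}|_{n=\infty} \in \Phi$ and a convergent integral satisfies $\int a(n)\,dn|_{n=\infty} \in \Phi$; since $\Phi \subseteq \mathbb{R}_{\infty}$, membership in $\mathbb{R}_{\infty}$ is a necessary condition for convergence in both cases.

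Next I would take the contrapositive: if $\sum a_{n} \not\in \mathbb{R}_{\infty}$ (respectively $\int a(n)\,dn \not\in \mathbb{R}_{\infty}$), then in particular the sum (integral) at infinity is not an infinitesimal, hence it cannot be convergent. I would then invoke the dichotomy established in Subsection \ref{S0102}: every sum, and every integral without a finite singularity, is classified as either convergent or divergent, these being logical negations of one another. Since convergence has just been ruled out, the sum or integral is divergent.

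Finally, by Definition \ref{DEF008} a divergent sum at infinity is assigned the value $\infty$, and the integral is assigned $\infty$ in the same way via condition E3.1; this is consistent with Proposition \ref{P049}, where the only realized values of the sum at infinity are $0$ and $\infty$, and with conditions E3.1 and E3'.1, which already declare divergence whenever the remaining E3 conditions fail. I would also point back to the discussion following E3.2, noting that this is precisely the situation flagged there: if a computation produces $\sum a_{n}|_{n=\infty} \not\in \mathbb{R}_{\infty}$, an assumption or condition has failed, and the convention adopted is to call the sum or integral divergent.

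The statement is essentially a bookkeeping convention that pins down the behaviour of the criterion at its boundary of applicability, rather than a theorem with computational content, so I do not expect a real obstacle. The one point requiring care is to make explicit that ``diverges'' is being used here in its \emph{defined} sense as the negation of ``converges,'' so that the excluded-middle step is legitimate and no third possibility is being silently discarded.
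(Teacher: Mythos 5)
Your proposal is correct and follows essentially the same route as the paper: the paper's own proof simply observes that this is divergence ``in the undefined sense'' because conditions E3.3/E3.4 (and their primed analogues) cannot be met when the value lies outside $\mathbb{R}_{\infty}$, so the E3.1 fallback declares divergence — exactly the convention-plus-dichotomy argument you spell out. Your version is more explicit about why convergence is excluded and why the excluded-middle step is legitimate, but the underlying content is the same.
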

\begin{proof}
 Divergence in the `undefined sense' and not
 a diverging magnitude.
 Conditions E3.3 or E3.4 or E3'.3 or E4'.3 not met.
\end{proof}

Conditions E3.5 and E3.6
 stop the integral from
 plateauing. In $*G$ we could easily
 have a sum of infinitesimals 
 monotonically increasing
 but bounded above.
 These same conditions allow that only
 one integral or sum needs to 
 be tested.

Integration at a point approach makes
 sense when one of the integral evaluations has
 a much greater than magnitude
 than the other. For example, consider
 a diverging integral,  
 $\int^{n} f(x) \,dx \succ \int^{a} f(x)\,dx|_{n=\infty}$
 then $\int^{n}_{a} f(x) \,dx$
 $= \int^{n} f(x)\,dx - \int^{a} f(x)\,dx$
 $= \int^{n} f(x)\,dx$
 $= \int f(n)\,dn|_{n=\infty}$.
 The same order of magnitude
 situation could occur for the infinitely small.

Integrating a single point 
 has in a sense decoupled 
 integration over
 an interval,
 however this is not unexpected.
 The fundamental theorem of calculus
 itself is an expression of the
 difference of two integrals
 at a point.
\[ \int_{a}^{b} f(x) \,dx = \int^{b} f(x)\,dx - \int^{a} f(x)\,dx \] 
\begin{theo}      
A sum representation of the fundamental theorem of calculus \cite[Theorem 5.1]{cebp10}.
\[ \sum_{a}^{b} f_{n} = \sum^{b} f_{n} - \sum^{a} f_{n} \]
\end{theo}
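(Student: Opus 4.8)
The plan is to mirror the derivation of the integral form of the fundamental theorem of calculus, replacing the integral $\int^x f$ by the partial-sum function $\sum^n f_n$, and to treat the displayed identity as a statement about difference operators on sequences rather than about the sums themselves. First I would introduce the antidifference operator: for a sequence $(f_n)$ define $F_n = \sum^n f_k$ to be any sequence whose forward difference satisfies $F_n - F_{n-1} = f_n$, i.e. $F$ is a discrete antiderivative of $f$. This is the summation analogue of $\int^x f(t)\,dt$ in the continuous statement $\int_a^b f = \int^b f - \int^a f$, and it is exactly the object denoted $\sum^b f_n$ and $\sum^a f_n$ in the theorem. The key observation is then that $\sum_{k=a}^{b} f_k$ telescopes: writing each term as $F_k - F_{k-1}$ and summing from $k=a$ to $k=b$ collapses the middle, leaving $F_b - F_{a-1}$, and after the usual index bookkeeping this is $\sum^b f_n - \sum^a f_n$.

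The steps, in order, would be: (1) fix the meaning of $\sum^n f_n$ as the antidifference $F_n$ with $\Delta F_{n-1} = F_n - F_{n-1} = f_n$, noting that any two such antidifferences differ by a constant, so the right-hand side $\sum^b f_n - \sum^a f_n = F_b - F_a$ is well defined independent of the choice; (2) expand the finite (or, per Definition~\ref{DEF006}, infinite-indexed) sum $\sum_{k=a}^{b} f_k = \sum_{k=a}^{b}(F_k - F_{k-1})$; (3) telescope to get $F_b - F_{a-1}$; (4) reconcile the off-by-one between $F_{a-1}$ and $F_a$ — depending on whether the convention for $\sum_{a}^{b}$ is inclusive at the lower endpoint, the clean statement $\sum_a^b f_n = \sum^b f_n - \sum^a f_n$ corresponds to the half-open convention $\sum_{k=a+1}^{b}$, matching the continuous $\int_a^b$ exactly; (5) invoke Theorem~\ref{P028} to note that the identity persists when $a, b \in \mathbb{N}_\infty$, since the telescoping algebra is purely formal and the splitting of a sequence into finite and infinite parts is already established. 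The cited reference \cite[Theorem 5.1]{cebp10} can be quoted directly for the base case over finite ranges, so the only new content here is the extension to the gossamer setting, which is immediate from the partition property.

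The main obstacle I expect is not the telescoping — that is routine — but pinning down the conventions so the statement is literally true rather than true up to an endpoint term. One has to decide once and for all whether $\sum^b f_n$ includes the $b$-th term, and whether $\sum_a^b$ is inclusive of $a$; the identity as displayed forces a particular pair of choices (the same pair that makes $\int_a^b = \int^b - \int^a$ work), and a careless reader could object with a one-term discrepancy. I would therefore state the convention explicitly up front and remark that it is the discrete shadow of the continuous one. A secondary, softer point is the well-definedness of the antidifference at infinity: since $F$ is only determined up to an additive constant, I should check that this constant cancels in $F_b - F_a$ and that no issue of convergence of $F$ itself intrudes — but it does not, because the theorem is an algebraic identity between the two displayed expressions, not a claim that either side is finite.
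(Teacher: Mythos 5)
The paper offers no proof of this statement at all --- it is quoted as an imported result, deferring entirely to \cite[Theorem 5.1]{cebp10} --- so there is nothing internal to compare your argument against. Your telescoping antidifference argument is the natural discrete mirror of the fundamental theorem of calculus and is correct as far as it goes; your attention to the endpoint convention (whether $\sum_a^b$ is inclusive at $a$, forcing $F_b-F_{a-1}$ versus $F_b-F_a$) and to the cancellation of the additive constant in $F_b-F_a$ addresses the only genuine subtleties, and the extension to infinite indices via Theorem \ref{P028} is the only content beyond the cited base case.
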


While an interval integration is
 a concrete and tangible calculation,
 the theory of
 calculus often uses integration at a point 
 implicitly. Other examples can be found,
 such as integrating using the power rule
 $\int x^{p}\,dx = \frac{1}{p+1} x^{p+1}$.

Integration at a point has theoretical advantages,
 the complexity of theory and calculation can
 be reduced. 
 This and subsequent papers in the series have found
 working at infinity, with du Bois-Reymond's
 infinitary calculus and functional space,
 provides ways to build function and sequence constructions.
\bigskip
\begin{prop}\label{P042}
Convergence: When $n_{1} \gt n_{0}$; $\, n_{0}$, $n_{1} \in \Phi^{-1}$;
 there exists $n_{1}:$
\[ \int_{n_{0}}^{n_{1}} a(n)\,dn = \int_{n_{0}} a(n)\,dn|_{n=\infty} \]
\end{prop}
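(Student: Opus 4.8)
The plan is to handle the convergent case the same way the paragraph preceding the proposition handles the divergent case: split the definite integral over the infinite interval by the fundamental theorem of calculus and then delete the dominated endpoint by non-reversible arithmetic. Fix an antiderivative $F(n)=\int^{n}a(n)\,dn$ of the positive integrand $a$, with the constant chosen so that $F(n)|_{n=\infty}\in\Phi$ (equivalently $F(n)=-\int_{n}^{\infty}a$); this normalisation is exactly what the convergent case of Criterion E2/E3 supplies, conditions E3.2 and E3.4 placing $F(n)|_{n=\infty}\in\mathbb{R}_\infty$ and then in $\Phi$. The fundamental theorem of calculus (the displayed identity $\int_{a}^{b}f=\int^{b}f-\int^{a}f$ above) gives
\[ \int_{n_0}^{n_1} a(n)\,dn = F(n_1) - F(n_0), \]
so it is enough to exhibit $n_1>n_0$ in $\Phi^{-1}$ with $|F(n_0)|\succ|F(n_1)|$: then $-F(n_0)+F(n_1)=-F(n_0)$ by the rule $u+v=u$ for $u\succ v$, and $-F(n_0)$ is, under the endpoint convention $\int_a^b=\int_a+\int^b$ introduced earlier, precisely the point-integral $\int_{n_0}a(n)\,dn|_{n=\infty}$.

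The crux is therefore the choice of the upper limit. Since $a\ge 0$ the antiderivative $F$ is monotone, and since the integral converges the magnitude $|F(n)|=\int_{n}^{\infty}a$ is decreasing and infinitesimal at infinity; by E3.0 the interval $[n_0,n_1]$ may be grown, and by E3.6 the value $|F(n_1)|$ may be made arbitrarily small — in particular of strictly smaller order than the now-fixed positive infinitesimal $|F(n_0)|$. Concretely I would push $n_1$ far enough up the scale $\mathrm{ln}(x)\prec x^{p}\prec a^{x}\prec x!\prec x^{x}$ that $\int_{n_0}^{n_1}a$ agrees with $\int_{n_0}^{\infty}a$ to within a relative infinitesimal, whence $|F(n_1)|=\int_{n_0}^{\infty}a-\int_{n_0}^{n_1}a\prec|F(n_0)|$; this secures the dominance $|F(n_0)|\succ|F(n_1)|$ needed above and meets the existential claim. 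Proposition \ref{P049} then confirms that the resulting value realises as an infinitesimal, consistent with convergence.

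The step I expect to be the real obstacle is precisely justifying that such an $n_1$ exists — that the monotone infinitesimal $|F(n_1)|$ can genuinely be driven below the order of the fixed infinitesimal $|F(n_0)|$ while remaining in $\Phi^{-1}$. This needs $F(n_0)\ne 0$, i.e. that $a$ does not ultimately vanish, which is part of the non-plateau content of the E3 conditions (E3.1 and E3.6); and it needs ``arbitrarily small'' in E3.6 to be read as ``of smaller order than any prescribed infinitesimal'', supported by the fact from \cite{cebp21} that antiderivatives and sequences converge at infinity, so that $\int_{n_0}^{n_1}a\to\int_{n_0}^{\infty}a$ through infinities. Everything else — the single FTC split and the one application of non-reversible arithmetic — is routine, though I would also take care to record that the lower-endpoint symbol $\int_{n_0}a\,dn$ denotes $-F(n_0)$, so that the asserted identity holds exactly and not merely up to a sign.
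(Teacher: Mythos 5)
Your proposal is correct and follows essentially the same route as the paper: split $\int_{n_0}^{n_1}a$ by the fundamental theorem of calculus, invoke E3.6 to push $n_1$ until $\int^{n_1}a\,dn\prec\int^{n_0}a\,dn$, and then delete the dominated endpoint by non-reversible arithmetic, leaving $-\int^{n_0}a\,dn=\int_{n_0}a\,dn|_{n=\infty}$. The paper's own proof is just a terser version of this; your added care about the normalisation of the antiderivative and the sign convention for the lower-endpoint symbol is consistent with, not different from, what the paper does.
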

\begin{proof}
By arbitrarily making
 the infinitesimal $\int^{n_{1}}a(n)\,dn$ smaller(E3.6),
 to when the condition $\int^{n_{1}}a(n)\,dn \prec \int^{n_{0}}a(n)\,dn$ is met,
 then $\int^{n_{1}}a(n)\,dn-\int^{n_{0}}a(n)\,dn = -\int^{n_{0}} a(n)\,dn$.
\end{proof}
\bigskip
\begin{prop}\label{P043}
Divergence: When 
$n_{1} \gt n_{0}$; $\, n_{0}, n_{1}\ \in \Phi^{-1}$;
 there exists $n_{0}:$
\[ \int_{n_{0}}^{n_{1}} a(n)\,dn = \int^{n_{1}} a(n)\,dn|_{n=\infty} \]
\end{prop}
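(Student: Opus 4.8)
The plan is to obtain Proposition \ref{P043} as the mirror image of Proposition \ref{P042}, replacing the convergence condition E3.6 by the divergence conditions E3.3 and E3.5. First I would rewrite the point form of the fundamental theorem of calculus displayed above as
\[ \int_{n_{0}}^{n_{1}} a(n)\,dn = \int^{n_{1}} a(n)\,dn - \int^{n_{0}} a(n)\,dn , \]
so that the claim becomes: there is an admissible interval $[n_{0},n_{1}]$, with $n_{0},n_{1}\in\Phi^{-1}$ and $n_{1}-n_{0}=\infty$, on which the lower endpoint term $\int^{n_{0}} a(n)\,dn$ is deleted by non-reversible arithmetic, i.e. $\int^{n_{0}} a(n)\,dn \prec \int^{n_{1}} a(n)\,dn|_{n=\infty}$.

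Next I would produce that comparison. Since the integral diverges, E3.3 gives $\int^{n} a(n)\,dn|_{n=\infty}\in\Phi^{-1}$ and E3.5 says this point integral can be made arbitrarily large; because $a$ threads a monotonic (hence eventually one-signed) sequence, $n\mapsto\int^{n} a(n)\,dn$ is monotonic and unbounded on $\Phi^{-1}$. Hence, holding $n_{0}$ fixed so that $\int^{n_{0}} a(n)\,dn$ is a fixed infinity, I can grow $n_{1}$ (permitted by E3.0, and automatically keeping $n_{1}-n_{0}=\infty$) until $\int^{n_{1}} a(n)\,dn \succ \int^{n_{0}} a(n)\,dn$; equivalently one may take $n_{0}$ to be a sufficiently small infinity, say an iterated logarithm of $n_{1}$. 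With the comparison in hand I apply $b+c|_{n=\infty}=b$ for $b\succ c$, with $b=\int^{n_{1}} a(n)\,dn$ and $c=-\int^{n_{0}} a(n)\,dn$, to get $\int^{n_{1}} a(n)\,dn-\int^{n_{0}} a(n)\,dn=\int^{n_{1}} a(n)\,dn$, which together with the displayed identity is exactly the assertion.

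The main obstacle is justifying the comparison step rather than the arithmetic: one must be sure that the unbounded growth of the divergent integral (E3.5) genuinely delivers $\int^{n_{0}} a(n)\,dn \prec \int^{n_{1}} a(n)\,dn$ for some admissible pair of endpoints, and not merely two infinities of the same order. This is where the function comparison algebra of \cite[Part 3]{cebp21} and the monotonicity of the integrand do the work — they guarantee that $\int^{n} a(n)\,dn$ increases without bound as $n$ runs through $\Phi^{-1}$, so pushing the endpoints far enough apart forces the ratio $\int^{n_{1}} a(n)\,dn\,/\,\int^{n_{0}} a(n)\,dn$ into $\Phi^{-1}$. A minor second point to record explicitly is that the chosen endpoints still satisfy the interval requirement $n_{1}-n_{0}=\infty$ of E3.0, which is immediate once $n_{1}$ has been grown or $n_{0}$ taken as small as a logarithm of $n_{1}$, and that the specific infinity $n_{1}$ selected represents the same value $\int^{n} a(n)\,dn|_{n=\infty}$ appearing on the right-hand side.
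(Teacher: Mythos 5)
Your proposal matches the paper's own argument: the paper likewise invokes the absence of a least diverging infinity to decrease $n_{0}$ until $\int^{n_{1}}a(n)\,dn \succ \int^{n_{0}}a(n)\,dn|_{n=\infty}$ and then deletes the smaller term by non-reversible arithmetic. The only point worth watching is the quantifier direction --- the statement fixes $n_{1}$ and asserts the existence of $n_{0}$, so your alternative phrasing (take $n_{0}$ as small as an iterated logarithm of $n_{1}$) is the one that matches the claim, rather than growing $n_{1}$ with $n_{0}$ held fixed.
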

\begin{proof}
Since there is no
 least diverging infinity,
 we can always decrease
 an infinity, and from E3.5 we can construct
 an arbitrary smaller infinity.

Then given $n_{1}$ we can decrease $n_{0}$
 till the condition $\int^{n_{1}}a(n)\,dn \succ \int^{n_{0}} a(n)\,dn|_{n=\infty}$ is met.
 Then $\int_{n_{0}}^{n_{1}} a(n)\,dn|_{n=\infty}$
 $=\int^{n_{1}}a(n)\,dn - \int^{n_{0}} a(n)\,dn|_{n=\infty}$
 $=\int^{n_{1}}a(n)\,dn|_{n=\infty}$.
\end{proof}
\bigskip
\begin{theo}\label{P044}
For integral convergence or divergence, we need
 only test one point at infinity.
 $\int a(n)\,dn|_{n=\infty} \in \Phi$ when convergent
 and $\int a(n)\,dn|_{n=\infty} \in \Phi^{-1}$ when
 divergent.
\end{theo}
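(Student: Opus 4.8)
The plan is to reduce the statement to Propositions \ref{P042} and \ref{P043} together with the partition of Theorem \ref{P052}, so that the two-point fundamental-theorem expression collapses to a single surviving point integral. First I would write $\int_{x_{0}}^{\infty} a(x)\,dx = \int_{x_{0}}^{x \lt \infty} a(x)\,dx + \int_{+\Phi^{-1}} a(x)\,dx$ by Theorem \ref{P052}; the first summand is an ordinary definite integral with no singularities for finite $x$, hence finite, so by Theorem \ref{P016} convergence or divergence is carried entirely by the second summand, an integral over an infinite interval $[n_{0},n_{1}]$ with $n_{0},n_{1} \in \Phi^{-1}$ and $n_{1}-n_{0}=\infty$ (condition E3.0).

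Next I would invoke the fundamental theorem of calculus to write this interval integral as the difference $\int^{n_{1}} a(n)\,dn - \int^{n_{0}} a(n)\,dn$ of two point integrals, and split along the E3 conditions. If $a$ cannot furnish a monotonic antiderivative (or another E3 condition fails), condition E3.1 overrides and declares divergence, assigning $\int a(n)\,dn|_{n=\infty}=\infty \in \Phi^{-1}$, which matches the claim vacuously. Otherwise: in the convergent case apply Proposition \ref{P042}, using E3.6 to grow $n_{1}$ until $\int^{n_{1}} a(n)\,dn \prec \int^{n_{0}} a(n)\,dn$, so by non-reversible arithmetic the interval integral collapses to $-\int^{n_{0}} a(n)\,dn$, i.e. to the single point integral $\int a(n)\,dn|_{n=\infty}$ up to sign; by E3.4 this lies in $\Phi$. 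In the divergent case apply Proposition \ref{P043}, using E3.5 to shrink $n_{0}$ until $\int^{n_{1}} a(n)\,dn \succ \int^{n_{0}} a(n)\,dn$, so the interval integral collapses to $\int^{n_{1}} a(n)\,dn = \int a(n)\,dn|_{n=\infty}$; by E3.3 this lies in $\Phi^{-1}$.

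To finish I would observe the dichotomy is exhaustive: by condition E3.2 the point integral lies in $\mathbb{R}_{\infty}$, and by the integral form of Proposition \ref{P049} its realization is either $0$ (convergence) or $\infty$ (divergence), so exactly one of the two cases above applies. Hence in every case only the one point integral $\int a(n)\,dn|_{n=\infty}$ need be evaluated, and its membership in $\Phi$ versus $\Phi^{-1}$ decides convergence versus divergence, which is the assertion.

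The main obstacle is the legitimacy of "growing" the interval $[n_{0},n_{1}]$: one must be sure that conditions E3.5/E3.6 genuinely reach the regime where one point integral much-greater-than dominates the other, independently of the initial infinite endpoints, and that the surviving point integral names the same germ at infinity regardless of which endpoint is discarded. This is the content of E3.0 combined with the non-reversible rule $a_{n}+b_{n}|_{n=\infty}=a_{n}$ when $a_{n} \succ b_{n}|_{n=\infty}$; the remainder is bookkeeping on Theorem \ref{P052} and the E3 criteria.
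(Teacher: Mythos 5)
Your proposal is correct and follows essentially the same route as the paper: the paper's own proof likewise invokes Criterion E3.1 to secure monotonicity, observes that the integral is then an infinireal so that exactly the convergent and divergent cases remain, and delegates those two cases to Propositions \ref{P042} and \ref{P043}. The additional scaffolding you supply (the partition via Theorem \ref{P052}, the appeal to Theorem \ref{P016} for the finite part, and the dichotomy via Proposition \ref{P049}) is consistent with the paper's framework but is left implicit in its shorter argument.
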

\begin{proof}
By Criterion E3.1 we 
 need only test a monotonic sequence.
 If E3.1 is satisfied, then either the
 integral is converging or diverging, as
 the sum is an infinireal.
 Both these cases are handled by Propositions
 \ref{P042} and \ref{P043}.
\end{proof}

Both Propositions \ref{P042}, \ref{P043} and Theorem \ref{P044}
 can be demonstrated.
\bigskip
\begin{mex}\label{MEX042}
$\int_{n}^{n^{2}} \frac{1}{x^{4}} \,dx|_{n=\infty}$
$= -\frac{1}{3} \frac{1}{{n^{2}}^{3}}$ 
$ +\frac{1}{3} \frac{1}{n^{3}}|_{n=\infty}$ 
$= -\frac{1}{3} \frac{1}{n^{6}}$ 
$ +\frac{1}{3} \frac{1}{n^{3}}|_{n=\infty}$ 
$= \frac{1}{3} \frac{1}{n^{3}}|_{n=\infty}$
 $=0$ converges.
 $\int^{n^{2}} \frac{1}{x^{4}} \,dx \prec \int^{n} \frac{1}{x^{4}} \,dx|_{n=\infty}$

 By Criterion E3, $\int^{n} \frac{1}{x^{4}} \,dx|_{n=\infty}$
$= \frac{1}{3} \frac{1}{n^{3}}|_{n=\infty}$
 $=0$ converges.
\end{mex}
\bigskip
\begin{mex}\label{MEX043}
$\int_{\mathrm{ln}\,n}^{n} x^{2}\,dx|_{n=\infty}$
 $= \frac{x^{3}}{3}|_{\mathrm{ln}\,n}^{n}|_{n=\infty}$
 $= \frac{n^{3}}{3} - \frac{(\mathrm{ln}\,n)^{3}}{3}|_{n=\infty}$
 $= \frac{n^{3}}{3}|_{n=\infty}$
 $= \infty$
 diverges.
 $\int^{n} x^{2}\,dx \succ \int^{\mathrm{ln}\,n} x^{2}\,dx|_{n=\infty}$

 By Criterion E3, 
 $\int^{n} x^{2}\,dx|_{n=\infty}$
 $= \frac{x^{3}}{3}|_{n=\infty}$
 $= \frac{n^{3}}{3}|_{n=\infty}$
 $= \infty$
 diverges.
\end{mex}
 Similar propositions are constructed for sums.
\bigskip
\begin{prop}\label{P055}
Convergence: When $n_{1} \gt n_{0}$; $ n_{0}$, $n_{1} \in \mathbb{J}_{\infty}$;
 there exists
 $n_{1}:$
\[ \sum_{n_{0}}^{n_{1}} a_{n} = \sum_{n_{0}} a_{n}|_{n=\infty}\]
\end{prop}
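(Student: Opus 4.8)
The plan is to mirror the proof of Proposition \ref{P042} on the sum side: use the sum form of the fundamental theorem of calculus in place of its integral form, and invoke condition E3'.6 to produce the required $n_{1}$.

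First I would split the convergence sum over the infinite interval $[n_{0},n_{1}]$ by the sum representation of the fundamental theorem of calculus,
\[ \sum_{n_{0}}^{n_{1}} a_{n} = \sum^{n_{1}} a_{n} - \sum^{n_{0}} a_{n}, \]
where both terms on the right are read as functions at infinity. Since we are in the convergence case, condition E3'.4 gives $\sum^{n_{0}} a_{n}|_{n=\infty} \in \Phi$ and $\sum^{n_{1}} a_{n}|_{n=\infty} \in \Phi$, so both are positive infinitesimals. The crucial point is that these are not rigid: by condition E3'.6 the upper-endpoint contribution $\sum^{n_{1}} a_{n}$ can be driven below any prescribed positive infinitesimal by advancing $n_{1}$ along $\mathbb{J}_{\infty}$, while the lower-endpoint contribution $\sum^{n_{0}} a_{n}$ is left fixed. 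I would therefore choose $n_{1}$ so large that $\sum^{n_{1}} a_{n} \prec \sum^{n_{0}} a_{n}|_{n=\infty}$; such an $n_{1}$ exists precisely because $\Phi$ has no least element.

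With this choice, non-reversible arithmetic $a_{n} + b_{n}|_{n=\infty} = a_{n}$ when $a_{n} \succ b_{n}|_{n=\infty}$ applies to the difference above, so that
\[ \sum^{n_{1}} a_{n} - \sum^{n_{0}} a_{n}|_{n=\infty} = -\sum^{n_{0}} a_{n}|_{n=\infty}, \]
and hence $\sum_{n_{0}}^{n_{1}} a_{n} = \sum_{n_{0}} a_{n}|_{n=\infty}$, which is the asserted identity and exhibits the $n_{1}$ whose existence is claimed.

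I expect the only real obstacle to be justifying $\sum^{n_{1}} a_{n} \prec \sum^{n_{0}} a_{n}$: one must check that E3'.6 genuinely lets us push the upper point's partial sum strictly below the \emph{fixed} infinitesimal $\sum^{n_{0}} a_{n}|_{n=\infty}$ without disturbing the lower point, so that the non-reversible cancellation is legitimate. This is the sum-side counterpart of the ``there is no least diverging infinity'' step used in Proposition \ref{P043}; here the relevant fact is that $\Phi$ has no least element, and condition E3'.6 is exactly the hypothesis that encodes it, so the argument goes through.
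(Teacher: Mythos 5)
Your proposal matches the paper's own proof essentially step for step: the paper likewise splits $\sum_{n_{0}}^{n_{1}} a_{n}$ into the two point-sums, invokes E3'.6 to shrink $\sum a_{n}|_{n=n_{1}}$ until $\sum a_{n}|_{n=n_{1}} \prec \sum a_{n}|_{n=n_{0}}$, and then deletes it by non-reversible arithmetic to leave $-\sum a_{n}|_{n=n_{0}}$. The residual sign discrepancy between $-\sum a_{n}|_{n=n_{0}}$ and the stated $\sum_{n_{0}} a_{n}|_{n=\infty}$ is present in the paper's proof as well (and in its integral counterpart, Proposition \ref{P042}), so you are not introducing any new gap.
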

\begin{proof}
By arbitrarily making
 the infinitesimal $\sum a_{n}|_{n=n_{1}}$ smaller(E3'.6),
 to when the condition $\sum a_{n}|_{n={n_{1}}} \prec \sum a_{n}|_{n={n_{0}}}$ is met,
 $\sum_{n_{0}}^{n_{1}} a_{n}$
 $=\sum a_{n}|_{n=n_{1}} -\sum a_{n}|_{n=n{o}} = -\sum a_{n}|_{n=n_{0}}$.
\end{proof}
\bigskip
\begin{prop}\label{P056}
Divergence: When 
$n_{1} \gt n_{0}$; $n_{0}, n_{1} \in \mathbb{J}_{\infty}$;
 there exists $n_{0}:$ 
\[ \sum_{n_{0}}^{n_{1}} a_{n} = \sum^{n_{1}} a_{n}|_{n=\infty} \]
\end{prop}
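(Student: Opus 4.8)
The plan is to mirror the proof of Proposition \ref{P043}, the divergence case for integrals, replacing the integral-at-a-point with the sum-at-a-point $\sum a_n|_{n=\infty}$. The statement asserts that for a divergent convergence sum with $n_0 < n_1$ in $\mathbb{J}_\infty$, we may shrink $n_0$ until the lower endpoint contribution is dominated, leaving only $\sum^{n_1} a_n|_{n=\infty}$. First I would invoke the sum form of the fundamental theorem of calculus (the theorem just before Proposition \ref{P042}), writing $\sum_{n_0}^{n_1} a_n = \sum^{n_1} a_n - \sum^{n_0} a_n$, so that the claim reduces to showing the second term can be made $\prec$ the first, whence non-reversible arithmetic $a + b|_{n=\infty} = a$ when $a \succ b|_{n=\infty}$ (cited from \cite[Part 5]{cebp21}) deletes it.

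Next I would supply the key structural fact that makes the shrinking possible: there is no least diverging infinity, so given the fixed upper infinite integer $n_1$ one can always pick a smaller infinite integer $n_0 \in \mathbb{J}_\infty$ with $n_0 < n_1$, and by condition E3'.5 the partial sum $\sum^{n_0} a_n|_{n=\infty}$ can be made an arbitrarily small infinity relative to $\sum^{n_1} a_n|_{n=\infty}$. Since divergence means (by E3'.3) that $\sum a_n|_{n=\infty} \in \Phi^{-1}$, both endpoint sums are infinities of the same orientation, and E3'.5 guarantees the lower one can be driven below the upper one in the $\succ$ ordering. Once $\sum^{n_1} a_n \succ \sum^{n_0} a_n|_{n=\infty}$ is arranged, the subtraction collapses: $\sum^{n_1} a_n - \sum^{n_0} a_n|_{n=\infty} = \sum^{n_1} a_n|_{n=\infty}$, which is exactly the asserted identity.

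The step I expect to be the main obstacle — really the only delicate point — is justifying \emph{existence} of the required $n_0$ cleanly, i.e. that E3'.5 genuinely delivers a point at which the strict domination $\sum^{n_1} a_n \succ \sum^{n_0} a_n|_{n=\infty}$ holds rather than merely $\succeq$ or an indeterminate comparison. This is the sum analogue of the remark "since there is no least diverging infinity, we can always decrease an infinity" used for integrals; I would lean on the correspondence $a(n) = a_n|_{n \in \mathbb{J}_\infty}$ threading a monotonic function through the sequence, so that the infinitary-calculus ordering of function values transfers to partial sums and the argument of Proposition \ref{P043} applies verbatim. With that transfer in hand the proof is a two-line repetition of the integral case, and I would keep it that short.

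\begin{proof}
By the sum form of the fundamental theorem of calculus,
 $\sum_{n_{0}}^{n_{1}} a_{n} = \sum^{n_{1}} a_{n} - \sum^{n_{0}} a_{n}$.
 Since the sum diverges, by E3'.3 $\sum a_{n}|_{n=\infty} \in \Phi^{-1}$, so both
 $\sum^{n_{1}} a_{n}|_{n=\infty}$ and $\sum^{n_{0}} a_{n}|_{n=\infty}$ are infinities.
 There is no least diverging infinity, so fixing $n_{1}$ we may decrease $n_{0} \in \mathbb{J}_{\infty}$,
 and by E3'.5 construct $n_{0}$ with $\sum^{n_{1}} a_{n} \succ \sum^{n_{0}} a_{n}|_{n=\infty}$.
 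Then by non-reversible arithmetic,
 $\sum_{n_{0}}^{n_{1}} a_{n}|_{n=\infty} = \sum^{n_{1}} a_{n} - \sum^{n_{0}} a_{n}|_{n=\infty} = \sum^{n_{1}} a_{n}|_{n=\infty}$.
\end{proof}
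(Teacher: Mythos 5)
Your proposal matches the paper's own proof essentially line for line: both decompose $\sum_{n_{0}}^{n_{1}} a_{n}$ as a difference of two sums at a point, appeal to the absence of a least diverging infinity together with E3$'$.5 to choose $n_{0}$ so that $\sum^{n_{1}} a_{n} \succ \sum^{n_{0}} a_{n}|_{n=\infty}$, and then delete the dominated term by non-reversible arithmetic. The extra scaffolding you add (citing the sum form of the fundamental theorem of calculus and E3$'$.3 explicitly) is consistent with the paper and does not change the argument.
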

\begin{proof}
Since there is no
 least diverging infinity,
 we can always decrease
 an infinity, and from E3'.5 we can construct
 an arbitrary smaller infinity.

Then given $n_{1}$ we can decrease $n_{0}$
 till the condition $\sum^{n_{1}} a_{n} \succ \sum^{n_{0}} a_{n}|_{n=\infty}$ is met.
 Then $\sum_{n_{0}}^{n_{1}} a_{n}|_{n=\infty}$
 $=\sum^{n_{1}}a_{n} - \sum^{n_{0}} a_{n}|_{n=\infty}$
 $=\sum^{n_{1}}a_{n}|_{n=\infty}$.
\end{proof}
\bigskip
\begin{theo}\label{P057}
For sum convergence or divergence, we need
 only test one point at infinity.
 $\sum a_{n}|_{n=\infty} \in \Phi$ when convergent
 and $\sum a_{n}|_{n=\infty} \in \Phi^{-1}$ when
 divergent.
\end{theo}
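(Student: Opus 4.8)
The plan is to mirror the proof of Theorem~\ref{P044}, the integral analogue, replacing the E3 conditions by their E3$'$ counterparts and the integral propositions by the corresponding sum propositions, namely Propositions~\ref{P055} and~\ref{P056}.

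First I would invoke Criterion E3$'$.1: if the partial-sum sequence $\sum a_{n}|_{n=\infty}$ cannot be made monotonic (after any rearrangement permitted in Section~\ref{S16}) or another E3$'$ condition fails, then by fiat the sum is declared divergent and assigned $\infty$, and there is nothing further to check. So assume the E3$'$ conditions hold. Then by E3$'$.2, equivalently by Proposition~\ref{P049}, the object $\sum a_{n}|_{n=\infty}$ is an infinireal in $\mathbb{R}_{\infty}$, so upon realization ($\Phi \mapsto 0$, $\Phi^{-1}\mapsto\infty$) it lands in exactly one of the two classes $\Phi$ or $\Phi^{-1}$, the undefined case being absorbed into divergence. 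This dichotomy already gives the ``either converges or diverges'' half of the statement.

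Next I would treat the two cases separately. If $\sum a_{n}|_{n=\infty}\in\Phi$, take an arbitrary infinite interval $[n_{0},n_{1}]$ with $n_{0},n_{1}\in\mathbb{J}_{\infty}$ and apply Proposition~\ref{P055}: by E3$'$.6 shrink the tail $\sum a_{n}|_{n=n_{1}}$ until $\sum a_{n}|_{n=n_{1}}\prec\sum a_{n}|_{n=n_{0}}$, so that non-reversible arithmetic collapses $\sum_{n_{0}}^{n_{1}}a_{n}$ to the single point $\sum_{n_{0}}a_{n}|_{n=\infty}$. If instead $\sum a_{n}|_{n=\infty}\in\Phi^{-1}$, apply Proposition~\ref{P056}: since there is no least diverging infinity, by E3$'$.5 we may decrease $n_{0}$ until $\sum^{n_{1}}a_{n}\succ\sum^{n_{0}}a_{n}|_{n=\infty}$, whence $\sum_{n_{0}}^{n_{1}}a_{n}$ collapses to $\sum^{n_{1}}a_{n}|_{n=\infty}$. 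In either case one point at infinity suffices, which is the conclusion, and membership in $\Phi$ versus $\Phi^{-1}$ tracks convergence versus divergence as asserted.

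The main obstacle, exactly as in the integral version, is not the arithmetic but the existence claim packaged inside the two propositions: that the endpoints can always be chosen inside $\mathbb{J}_{\infty}$ so that one point-sum strictly dominates the other in the $\succ$ sense. This rests on E3$'$.5 and E3$'$.6 — a divergent sum's value can be made arbitrarily large, a convergent tail arbitrarily small — together with $\mathbb{J}_{\infty}$ having no least element, so that Propositions~\ref{P055} and~\ref{P056} genuinely apply. Granting those, the theorem is immediate, and the remaining content is just bookkeeping of which endpoint survives the deletion.
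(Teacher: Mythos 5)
Your proposal matches the paper's own proof essentially verbatim: the paper likewise invokes Criterion E3$'$.1 to reduce to a monotonic sequence, observes the sum is an infinireal so it either converges or diverges, and disposes of the two cases via Propositions~\ref{P055} and~\ref{P056}. Your additional unpacking of what those propositions assert is consistent with the paper and adds no divergence in approach.
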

\begin{proof}
By Criterion E3'.1 we 
 need only test a monotonic sequence.
 If E3'.1 is satisfied, then either the
 integral is converging or diverging, as
 the sum is an infinireal.
 Both these cases are handled by Propositions
 \ref{P055} and \ref{P056}.
\end{proof}

Consider the tail or remainder
 of an integral. $n_{0}, n_{1} \in \Phi^{-1}$;  
 $\,r(n_{0},n_{1}) = \int_{n_{0}}^{n_{1}} a(n)\,dn$.
 Let $a(n)$ be a continuous 
 function passing through
 the sequence points $(a_{n})|_{n=\infty}$.
 The tail explains the convergence criteria for E1 and E3, 
 and the tail explains monotonic divergence, as the tail
 is at infinity.

Then Criterion E1 is a criterion for the tail $r(n,2n)|_{n=\infty}$.
 We find Criterion E3 also satisfies Criterion E1,
 but also true for any part or whole
 of the tail. 

Criterion E3 is a more encompassing criterion than Criterion E2, as the
 whole tail or sum at infinity is captured. 
 However, only a part of the tail needs to be tested, condition E0.
 With
 the reduction of rearrangement theorems (Section \ref{S16}),
 Criterion E3's limitation
 of requiring monotonic input is addressed.
\subsection{Reflection}\label{S0110}
 In comparing convergence criteria E1 and E3,
 both convergence criteria have a role.
 However, in our opinion we do not see Robinson's non-standard analysis
 being used everyday by engineers. 
 Criterion E3 is better
 suited for this
 purpose and leads to a non-standard model which better supports
 calculation and theory, by solving in a more accessible way.
 That is, solving more simply,
 primarily by removing the ``mathematical logic"- that is
 the field of logic, from the application.
 This is the specialization
 that is necessary to use Robinson's non-standard analysis.

The indirect logic reasoning
 used in the current convergence and
 divergence tests is similarly frustrating.
 While there are many uses for such
 logic, with direct reasoning at infinity, 
 classes of these can be reduced.
 A particular example is the Limit Comparison Theorem (LCT).
 The act of picking the 
 correct series to apply the test,
 in full knowledge that
 the test is going to work
 (as often the asymptotic series
 is chosen), is unnecessary (see Example \ref{MEX035}). 
 Not from a logical viewpoint, but from
 an application viewpoint, 
 you have
 already solved the problem!
\bigskip
\begin{mex}\label{MEX035}
Determine using LCT that $\sum_{k=1}^{n} \frac{1}{n^{2}+1}$ converges.
 By the p-series test, we know $\sum \frac{1}{n^{2}}|_{n=\infty}=0$
 converges. Compare using Test \ref{S0707}, $\,\sum \frac{1}{n^{2}+1}|_{n=\infty}$ 
 and $\sum \frac{1}{n^{2}}|_{n=\infty}$.
 $\frac{1}{n^{2}+1} \frac{n^{2}}{1}|_{n=\infty}$  
 $= 1 - \frac{1}{n^{2}+1}|_{n=\infty}=1$ hence both sums converge.
\end{mex}

The tests under the
 new convergence criteria have been
 developed to address these problems,
 with more direct reasoning.
 This will include in a later paper
 a concept of a derivative of a
 sequence (Section \ref{S15}). The flow and mixing of these
 tests, and the introduction of
 a universal convergence test (see Test \ref{S0720}),
 we believe, will change the nature
 of convergence testing.

 In introducing a new way of calculating, a focus
 was chosen toward the development of convergent
 and divergent series, theory and tests.
 Without further ado, the following discussion relates to the E3 convergence
 criterion.
\bigskip
\begin{defy}
`convergence sums' use Criterion E3,
 `convergence integrals' use Criterion E3.
\end{defy}
\subsection{Monotonic sums and integrals}\label{S0111}
 In determining convergence or
 divergence,
 if the sum or integral is well behaved,
 that is no singularities on the finite
 interval,
 then we only need to state this result
 at the end, to say whether the sum or integral
 has converged or diverged.

 That convergence or divergence,
 with no finite singularities,
 is determined at infinity,
 is self evident and
 purely logical.  If a sum or integral does not diverge for
 any finite values, then the only possible place
 where the sum or integral can diverge is at infinity.
 For all monotonic series and integrals,
 the convergence tests work at infinity.

Singularities are often caused by division
 by zero, so when these singularities
 are not present, what
 remains is a continuous function (see \ref{MEX033}),
 which could then be tested at the singularity
 infinity. 
 For a series, the continuous function is constructed
 by threading a continuous function through
 the sequence points.
\bigskip
\begin{theo}
If the integral $\int_{\alpha}^{\infty}a(n)\,dn$ has no 
 singularities before infinity and
\[
\int a(n)\,dn|_{n=\infty} = \left\{ 
  \begin{array}{rl}
    0 & \; \text{then } \int_{\alpha}^{\infty}a(n)\,dn \text{ is convergent,} \\
    \infty & \; \text{ then } \int_{\alpha}^{\infty}a(n)\,dn \text{ is divergent.}
  \end{array} \right. 
\]
\end{theo}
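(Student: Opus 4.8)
The plan is to reduce the improper integral to its behaviour at the single point $\infty$ by splitting off the harmless finite part. First I would apply Theorem~\ref{P052} to write $\int_{\alpha}^{\infty} a(n)\,dn = \int_{\alpha}^{n \lt \infty} a(n)\,dn + \int_{+\Phi^{-1}} a(n)\,dn$. Since $a$ has no singularities before infinity, the finite piece $\int_{\alpha}^{n \lt \infty} a(n)\,dn$ is a definite integral over a bounded interval and is therefore finite --- this is exactly the observation used in the proof of Theorem~\ref{P016}. Consequently the convergence or divergence of $\int_{\alpha}^{\infty} a(n)\,dn$ is governed entirely by the infinite piece $\int_{+\Phi^{-1}} a(n)\,dn$, which is the content of Theorem~\ref{P016}.

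Next I would identify the interval integral $\int_{+\Phi^{-1}} a(n)\,dn = \int_{\mathrm{min}(+\Phi^{-1})}^{\mathrm{max}(+\Phi^{-1})} a(n)\,dn$ with the point evaluation $\int a(n)\,dn|_{n=\infty}$. This is where monotonicity enters (implicit from the section heading \ref{S0111} and stated there for all monotonic integrals): because $a$ is monotonic the two endpoint evaluations $\int^{n_{1}} a(n)\,dn$ and $\int^{n_{0}} a(n)\,dn$ are comparable in the $\prec,\succ$ sense, so Proposition~\ref{P042} (convergent case) or Proposition~\ref{P043} (divergent case) collapses the two-endpoint integral to the single point integral $\int a(n)\,dn|_{n=\infty}$. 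Equivalently one may invoke Theorem~\ref{P044} directly: for integral convergence or divergence only one point at infinity need be tested, with $\int a(n)\,dn|_{n=\infty} \in \Phi$ in the convergent case and $\int a(n)\,dn|_{n=\infty} \in \Phi^{-1}$ in the divergent case.

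The case split then follows. If $\int a(n)\,dn|_{n=\infty} = 0$, then by condition E3.4 the infinite piece is infinitesimal; adding the finite piece and applying the transfer principle $\Phi \mapsto 0$ realizes $\int_{\alpha}^{\infty} a(n)\,dn$ as a finite real number, so the integral converges. If instead $\int a(n)\,dn|_{n=\infty} = \infty$, then by condition E3.3 the infinite piece lies in $\Phi^{-1}$ and by E3.5 can be made arbitrarily large; it dominates the finite piece under the non-reversible arithmetic $a_{n}+b_{n}|_{n=\infty}=a_{n}$ when $a_{n}\succ b_{n}|_{n=\infty}$, so $\int_{\alpha}^{\infty} a(n)\,dn \in \Phi^{-1}$ and the integral diverges. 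By Proposition~\ref{P049} these are the only two possibilities, so the dichotomy is exhaustive.

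The main obstacle is the middle step: rigorously justifying that the definite integral taken between two arbitrary infinities is captured by the single point evaluation $\int a(n)\,dn|_{n=\infty}$. This rests on the E3.5--E3.6 conditions (preventing a plateau) together with monotonicity; without monotonicity the reduction in Propositions~\ref{P042} and~\ref{P043} can fail, so the standing hypothesis that $a$ be monotonic is what makes the argument go through. Everything else is bookkeeping with the finite/infinite split of Theorem~\ref{P052} and the transfer principle.
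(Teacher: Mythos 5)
Your proposal is correct and follows essentially the same route as the paper: the paper's own (one-sentence) proof likewise dismisses the finite part as having no influence and then appeals to Criterion E3 via Theorem~\ref{P044}. Your version simply fills in the details the paper leaves implicit --- the split via Theorem~\ref{P052}, the reduction to a point via Propositions~\ref{P042} and~\ref{P043}, and the role of monotonicity and the E3.5--E3.6 conditions.
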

\begin{proof}
 Since the finite part of the integral has no influence on convergence
 or divergence, the evaluation is achieved using Criterion E3, 
 which was applied in Theorem \ref{P044}.
\end{proof}
\bigskip
\begin{theo}
If the sum $\sum_{k=k_{0}}^{\infty}a_{k}$ has no 
 singularities before infinity and
\[
\sum a_{n}|_{n=\infty} = \left\{ 
  \begin{array}{rl}
    0 & \; \text{then } \sum_{k=k_{0}}^{\infty} a_{k} \text{ is convergent,} \\
    \infty & \; \text{ then } \sum_{k=k_{0}}^{\infty} a_{k} \text{ is divergent.}
  \end{array} \right. 
\]
\end{theo}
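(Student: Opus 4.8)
The plan is to reduce the statement to its already-proved sister for integrals, exactly mirroring the way Theorem~\ref{P057} mirrors Theorem~\ref{P044}. Since the hypothesis stipulates that $\sum_{k=k_{0}}^{\infty} a_{k}$ has no singularities before infinity, Theorem~\ref{P001} applies and tells us that convergence or divergence is determined entirely by the infinite part $\sum_{\mathbb{N}_{\infty}} a_{k}$; equivalently, by the behaviour of the sum at infinity $\sum a_{n}|_{n=\infty}$. By Proposition~\ref{P049} this quantity, when it lies in $\mathbb{R}_{\infty}$, is forced to realize to exactly one of the two values $0$ or $\infty$, so the two branches of the displayed dichotomy are genuinely exhaustive and mutually exclusive.

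First I would invoke the E3$'$ criterion: the finite portion $\sum_{k=k_{0}}^{k<\infty} a_{k}$ contributes nothing to convergence (it is a finite sum, hence convergent by the argument in Theorem~\ref{P001}), so by Theorem~\ref{P028} the whole sum decomposes as $\sum_{\mathbb{N}_{\lt}}a_{k} + \sum_{\mathbb{N}_{\infty}}a_{k}$ with only the second summand relevant. Then, on the infinite block, conditions E3$'$.5 and E3$'$.6 permit us to grow the interval $[n_{0},n_{1}]$ so that one endpoint integral/sum dominates the other in magnitude; Propositions~\ref{P055} and~\ref{P056} then collapse $\sum_{n_{0}}^{n_{1}} a_{n}$ to a single point evaluation. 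Concretely: if $\sum a_{n}|_{n=\infty}\in\Phi$ (i.e.\ equals $0$ upon realization), Proposition~\ref{P055} gives $\sum_{n_{0}}^{n_{1}} a_{n} = \sum_{n_{0}} a_{n}|_{n=\infty}$, which is infinitesimal, so the tail tends to zero and the series converges; if instead $\sum a_{n}|_{n=\infty}\in\Phi^{-1}$ (realizing to $\infty$), Proposition~\ref{P056} gives $\sum_{n_{0}}^{n_{1}} a_{n} = \sum^{n_{1}} a_{n}|_{n=\infty}$, which can be made arbitrarily large by E3$'$.5, so the series diverges. This is precisely the content already packaged in Theorem~\ref{P057}, so the cleanest write-up simply cites Theorem~\ref{P057} together with Theorem~\ref{P001}.

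The main obstacle, such as it is, is not a computation but a bookkeeping point: one must be careful that the hypothesis ``no singularities before infinity'' is exactly what licenses discarding the finite block, and that the monotonicity requirement implicit in Criterion E3$'$ (needed for Proposition~\ref{P049} and for E3$'$.1) is either assumed or recovered. Here the series is being fed in as a candidate for the convergence-sum machinery, so $(a_{n})|_{n=\infty}$ monotonic is part of the standing setup; if one wanted to be scrupulous one would note that E3$'$.1 handles the non-monotonic case by declaring divergence, which is consistent with the ``$\infty$'' branch. Beyond that, the proof is a one-line appeal to the integral analogue already established, since the sum/integral interchange (the threading of a continuous $a(n)$ through the points $(a_{n})$, $a(n)=a_{n}|_{n\in\mathbb{J}_{\infty}}$) makes the two statements formally identical.
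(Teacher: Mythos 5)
Your proposal is correct and follows essentially the same route as the paper: the paper's own proof is a one-line appeal to the fact that the finite part has no influence together with Criterion E3$'$ as packaged in Theorem~\ref{P057}, which is exactly the reduction you make (your additional scaffolding via Theorems~\ref{P001} and~\ref{P028} and Propositions~\ref{P055} and~\ref{P056} just unwinds what Theorem~\ref{P057} already contains). Your remark on the monotonicity bookkeeping is a reasonable addition but does not change the argument.
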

\begin{proof}
 Since the finite part of the sum has no influence on convergence
 or divergence, the evaluation is achieved using Criterion E3', 
 which was applied in Theorem \ref{P057}.
\end{proof}

 The given theorems are for using the infinity convergence test easily.

 When the condition of no singularities
 for finite values occurs, 
 the sum at infinity's purpose will be to decide and hence complete 
 the convergence test.
 In such a sum it is enough to refer to the
 sum at infinity to determine the sum's convergence/divergence.

For example, in testing $\sum_{k=1}^{\infty} \frac{1}{n^{2}}$
 we find $\sum \frac{1}{n^{2}}|_{n=\infty}=0$ converges.
 This is the end of the test, the implication that
 the original series $\sum_{k=1}^{\infty} \frac{1}{n^{2}}$
 converges is unnecessary. 

Formally repeating the implication 
 of convergence or divergence of
 the sum from the infinite partition
 is superfluous.
 Convergence or divergence at infinity
 logically follows from the sum.
 
 Having said that, by convention  
 we state $\sum a_{n}|_{n=\infty}=0$ converges.
 Technically this symbolic statement alone says the series converges;
 however we have chosen 
 to state convergence or divergence 
 at the end to aid communication, 
 even though it is saying the same thing twice,
 once symbolically and another with a word.

 Through rearrangements, we believe the convergence
 criteria applies
 to all series, but the application of the criteria
 generally works on monotonic series.
 The reasons for this are that the major
 mathematical tools for comparing functions
 and the use of infinitary calculus, work
 best for monotonic sequences.
  Monotonic sequences and series are a more primitive structure
  than non-monotonic sequences and series, so we can
 develop the theory 
 and use them as building blocks.

Testing on monotonic series 
 makes sense if we imagine at infinity,
 in a similar
 way to the steady state, where the sum or system
 settles down. 
 It is not till we have a settled state
 that we can determine or apply the convergence criterion.
 Hence the need for monotonic sequence $(a_{n})|_{n=\infty}$.

 While this reduces the classes of series that
 can be considered, in actual effect it
 applies a structure to the test. 
 A method will be developed to transform or 
 rearrange sums that are not monotonic to sums
 that are monotonic, and then
 test the transformed sum for convergence or divergence. 
 The restriction of monotonic sequences
 in determining convergence or divergence will be addressed by
 extending the class of sequences 
 in another paper. For example,
 the Alternating Convergence Theorem (Theorem \ref{P007}) is excluded
 by such a restriction.
 So the restriction is not a restriction at all, but
 a problem reduction.

Consider the steady state of the sum at infinity.
 Let $(a_{n})|_{n=\infty}$ describe the
 sequence at infinity. 
 Then we can categorize the
 relationship cases between sum convergence
 and the sequence behaviour.

1. Sequence $(a_{n})$ is monotonic \\ 
2. Sequence $(a_{n})$ is not monotonic and the sum at infinity diverges. \\
3. Sequence $(a_{n})$ is not monotonic and the sum at infinity converges.

1.1 $\sum a_{n}|_{n=\infty}=0$ converges then
 $(a_{n})|_{n=\infty}$ is monotonically decreasing. \\
1.2 $\sum a_{n}|_{n=\infty}=\infty$ diverges
 and 
 $(a_{n})|_{n=\infty}$ is monotonically increasing. \\
1.3 $\sum a_{n}|_{n=\infty}=\infty$
 diverges and $(a_{n})|_{n=\infty}$ is monotonically decreasing.

 For example, case 3 is 
 used 
 in the comparison test variation
 Theorem \ref{P011},
 and 
 the Alternating Convergence Theorem \ref{P007}.

 Most of the tests assume a
 monotonic sequence, case 1.
 Further, just because a sequence is not case 1,
 does
 not mean that the problem cannot be transformed
 into a case 1 category.
\bigskip
\begin{mex}\label{MEX036}
 1.1: $\sum (\frac{1}{2})^{n}|_{n=\infty}$,
 1.2: $\sum 2^{n}|_{n=\infty}$,  
 1.3: $\sum \frac{1}{n}|_{n=\infty}=\infty$. 
\end{mex}
\subsection{Comparing sums at infinity}\label{S05}
The comparison at infinity is to remove the finite part of the sum,
 and we seek monotonic behaviour to test for convergence/divergence.
 So a sequence $(a_{n})$ becomes a sequence at infinity, $(a_{n})|_{n=\infty}$.
 In a sense the finite part of the sum is the transient.

By choosing positive monotonic sequences,
 primitive relations between sequences and sums, functions and integrals,
 can be preserved, practically allowing an interchange between the inequalities.
\bigskip
\begin{theo}\label{P038}
 $f_{n}, g_{n} \in \mathbb{R}_{\infty}$;
 $f_{n}\geq 0, g_{n}\geq 0$. If $z \in \{ \lt, \leq, \gt, \geq \}$
 and the sequences $f_{n}$ and $g_{n}$ are monotonic then 
 $\sum f_{n} \; z \; \sum g_{n} \Leftrightarrow f_{n} \; z \; g_{n}|_{n=\infty}$ in $*G$.
\end{theo}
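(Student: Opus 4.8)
The plan is to transfer the comparison one term at a time. First I would reduce the four relations $z\in\{\lt,\leq,\gt,\geq\}$ to the two cases $z\in\{\lt,\leq\}$, since interchanging the roles of $f_n$ and $g_n$ turns $\gt$ into $\lt$ and $\geq$ into $\leq$. Next, recall from Theorem~\ref{P057} and Definition~\ref{DEF004} that $\sum f_n|_{n=\infty}$ and $\sum g_n|_{n=\infty}$ are well-defined infinireals in $*G$ (a positive infinitesimal when the corresponding series converges, a positive infinity when it diverges), so the relation ``$\sum f_n \; z \; \sum g_n$ in $*G$'' makes sense and, $*G$ being totally ordered, exactly one of $\sum f_n \lt \sum g_n$, $\sum f_n = \sum g_n$, $\sum f_n \gt \sum g_n$ holds --- and likewise for $f_n$ versus $g_n$ at $n=\infty$. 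It therefore suffices to prove the three forward implications $f_n \lt g_n|_{n=\infty}\Rightarrow \sum f_n \lt \sum g_n$, $\ f_n = g_n|_{n=\infty}\Rightarrow \sum f_n = \sum g_n$, and $f_n \gt g_n|_{n=\infty}\Rightarrow \sum f_n \gt \sum g_n$; the ``only if'' halves of the stated equivalences then follow by contraposition against this trichotomy.

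For the equality and non-strict cases I would argue as follows. Because $f_n,g_n$ are monotonic and non-negative, the comparison algebra of \cite[Part 3]{cebp21} makes the comparison set $\{n: f_n\leq g_n\}$ (resp. $\{n:f_n=g_n\}$) a tail of $\mathbb{N}$ whenever the corresponding relation holds at infinity; hence by conditions E3.0/E3'.0 the convergence sum may be evaluated over an infinite interval $[n_0,n_1]$, $n_0,n_1\in\mathbb{J}_\infty$, positioned entirely inside that tail. On such an interval the inequality $f_n\leq g_n$ (or the equality) holds for every index, so term-by-term positivity gives $\sum_{n_0}^{n_1} f_n \leq \sum_{n_0}^{n_1} g_n$ (resp. $=$), i.e. $\sum f_n \leq \sum g_n$ (resp. $=$) in $*G$. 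Any discrepancy contributed by the deleted finite portion is annihilated by the non-reversible arithmetic built into the convergence sum (Propositions~\ref{P055} and~\ref{P056}), so positioning the interval far enough out is legitimate.

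The strict case is the crux. Suppose $f_n \lt g_n|_{n=\infty}$, and position $[n_0,n_1]$ as above so that $f_n \lt g_n$ for every $n\in[n_0,n_1]$. Then $\sum_{n_0}^{n_1} g_n-\sum_{n_0}^{n_1} f_n=\sum_{n_0}^{n_1}(g_n-f_n)$ is a sum of positive terms over an infinite interval, hence is itself a positive infinireal --- a positive infinitesimal or a positive infinity, but in either case strictly greater than $0$ in $*G$ --- so $\sum f_n \lt \sum g_n$. Combining the three forward implications with trichotomy then closes the equivalence: if $\sum f_n \lt \sum g_n$ while $f_n \lt g_n|_{n=\infty}$ failed, then $f_n \geq g_n|_{n=\infty}$, whence $\sum f_n \geq \sum g_n$ by the preceding cases, a contradiction; the $\leq$ equivalence (and, by symmetry, those for $\gt,\geq$) is handled the same way. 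An essentially equivalent alternative, which I would mention but not pursue, is to thread a continuous monotonic $a(x)$ through each sequence and use the sum--integral interchange to reduce everything to comparison of the antiderivative germs $\int f\,dx|_{x=\infty}$ versus $\int g\,dx|_{x=\infty}$.

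The step I expect to be the main obstacle is the selection of the infinite comparison interval on which the chosen termwise relation holds at every index: this rests on the comparison algebra of \cite[Part 3]{cebp21} guaranteeing that, for monotonic germs, the sets $\{n:f_n\lt g_n\}$, $\{n:f_n=g_n\}$, $\{n:f_n\gt g_n\}$ partition a tail of $\mathbb{N}$ (so that the interval can be slid past their boundary, as permitted by E3.0/E3'.0), and it is precisely here that the monotonicity hypothesis is indispensable --- without it the sign of $g_n-f_n$ need not stabilise, and neither the interval selection nor the trichotomy survives. The only other point needing care is that a sum of positive terms over an infinite interval is genuinely nonzero in $*G$, a positive infinitesimal counting as positive, so that strictness is transmitted rather than swallowed by the non-reversible arithmetic; this is immediate from the infinireal framework of the earlier sections, so the proof reduces to invoking these facts in the right order.
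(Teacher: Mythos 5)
Your proof is essentially sound by the standards of this paper, but it takes a genuinely different route from the one the paper uses. The paper's entire proof of Theorem~\ref{P038} is one sentence: thread a continuous monotonic function through each sequence (so that $a(n)=a_{n}$ at integer points, preserving the relation) and then invoke the integral version, Theorem~\ref{P039}, which is itself imported from \cite[Part 6]{cebp21} rather than proved here; all the content is thus delegated to the continuous case. You instead work directly in the discrete setting: you reduce to two relations by symmetry, establish the three forward implications by positioning an infinite comparison interval $[n_{0},n_{1}]$ inside the tail where the termwise relation holds and summing term by term (with the strict case handled by observing that a sum of positive terms over an infinite interval is a strictly positive infinireal), and then recover the converse directions by contraposition against trichotomy in $*G$. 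What the paper's route buys is brevity and reuse of the already-established sum--integral interchange; what yours buys is a self-contained argument at the level of sequences that makes explicit exactly where monotonicity enters, namely in forcing the sign of $g_{n}-f_{n}$ to stabilise on a tail --- a point the paper's proof leaves entirely implicit. The one caveat is the step you yourself flag as the main obstacle: two monotonic sequences can in fact cross infinitely often (e.g.\ $f_{n}=n$ and $g_{n}=n+\tfrac{1}{2}(-1)^{n}$ are both monotonic increasing yet $g_{n}-f_{n}$ alternates in sign), so the tail-trichotomy you attribute to the comparison algebra of \cite[Part 3]{cebp21} is not automatic and would need either an additional hypothesis or the convention that the relation at infinity is simply undefined in such cases; but this is a defect of the theorem's hypotheses rather than of your argument, and the paper's own one-line proof silently assumes at least as much.
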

\begin{proof}
 Thread a continuous monotonic function through the sequence, which
 satisfies E3 criteria. Then apply Theorem \ref{P039}.
\end{proof}
\bigskip
\begin{theo}\label{P039}
 Let $f(n) \geq 0, g(n) \geq 0$ be monotonic, continuous functions at infinity.
 If  $z \in \{ \lt, \leq, \gt, \geq \}$ then  
 $\int f(n)\,dn \; z \; \int g(n)\,dn \Leftrightarrow f(n) \; z \; g(n)|_{n=\infty}$ \cite[Part 6]{cebp21}
\end{theo}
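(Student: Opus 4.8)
The plan is to establish the biconditional $\int f(n)\,dn \; z \; \int g(n)\,dn \Leftrightarrow f(n) \; z \; g(n)|_{n=\infty}$ by first reducing the four relations in $\{<,\le,>,\ge\}$ to the two strict cases and the equality case, since $\le$ is the disjunction of $<$ and $=$, and similarly for $\ge$; by symmetry in $f$ and $g$ it then suffices to handle $<$ (the $>$ case follows by swapping roles) and $=$. For the equality case, if $f(n) = g(n)|_{n=\infty}$ then the functions agree on a whole infinite tail, so their point-integrals — which by Criterion E3 depend only on the behaviour on that tail — are equal in $*G$; conversely if $f(n) \ne g(n)$ on every infinite interval then, using monotonicity, one dominates the other on a cofinal set and the strict case (below) forces the integrals apart.

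The heart of the argument is the strict inequality. First I would prove the forward direction of the contrapositive: assume $f(n) \succeq g(n)|_{n=\infty}$ (i.e.\ $f \ge g$ on an infinite tail) and show $\int f(n)\,dn \ge \int g(n)\,dn|_{n=\infty}$. This is the monotonicity of integration applied at a point: on the infinite interval $[n_0,n_1]$ witnessing E3 we have $\int_{n_0}^{n_1} f \ge \int_{n_0}^{n_1} g$ pointwise in the ordinary Riemann sense, and passing to the point-integral via Propositions \ref{P042}/\ref{P043} (whichever of convergence/divergence applies — and E3.1 guarantees we are in one of these two regimes) preserves the inequality in $*G$. Combined with the equality case this gives $f \le g \Rightarrow \int f \le \int g$ and hence, contrapositively, $\int f < \int g \Rightarrow f < g$ at infinity. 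For the reverse direction, assume $f(n) < g(n)|_{n=\infty}$; by monotonicity of $f$ and $g$ and the function-comparison algebra of \cite[Part 3]{cebp21}, strict inequality of monotonic functions at infinity means $g - f$ is eventually positive and monotonic, so $\int (g-f)\,dn|_{n=\infty} \in \Phi^{-1}$ is a genuine infinity (it can be made arbitrarily large, condition E3.5) or the tail integrals of $f$ and $g$ already sit in different classes ($\Phi$ versus $\Phi^{-1}$); either way $\int f \ne \int g$, and since we have already shown $f < g \Rightarrow \int f \le \int g$, we conclude $\int f < \int g$.

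The main obstacle I anticipate is the reverse direction of the strict case: pointwise strict inequality $f(n) < g(n)$ does \emph{not} by itself force the integrals to be strictly ordered in $*G$ — one needs the gap $g - f$ to be ``large enough'' relative to the length of the infinite interval, which is exactly where the E3.5/E3.6 conditions and the monotonicity hypothesis do the real work. The delicate point is ruling out the degenerate situation where $\int(g-f)$ is a nonzero but ``absorbed'' quantity; this is precisely what E3.2 excludes by fiat (any integral between two infinities is declared an infinireal), so the argument must invoke E3.2 at this step rather than prove it. I would lean on \cite[Part 6]{cebp21} for the precise statement that, for monotonic continuous $f,g$, the tail integrals inherit the strict order, treating that as the black box this theorem is really citing; the contribution here is assembling it with the point-integral reduction of Theorem \ref{P044}.
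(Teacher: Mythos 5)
The first thing to note is that the paper does not actually prove Theorem \ref{P039}: it is imported wholesale from \cite[Part 6]{cebp21}, and the only in-paper justification is the informal passage a few lines below the statement, which argues one direction by multiplying the pointwise relation by the positive infinitesimal $\Delta n$ and summing ($f(n)\,z\,g(n) \Rightarrow f(n)\Delta n\,z\,g(n)\Delta n \Rightarrow \int f\,dn\,z\int g\,dn$), and the converse by differentiating (``dividing by positive infinitesimal quantities equally, the relation $z$ will not change''). Your route --- splitting $z$ into strict and equality cases, getting $f\le g \Rightarrow \int f\le\int g$ from monotonicity of integration on the E3 interval, and recovering the converse by contrapositive --- is a genuinely different and more structured decomposition, and it correctly isolates where the content lives (the strict case). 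But it has a concrete gap there.

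Your argument for $f<g \Rightarrow \int f<\int g$ rests on the dichotomy ``$\int(g-f)\,dn|_{n=\infty}\in\Phi^{-1}$, or the tail integrals of $f$ and $g$ sit in different classes ($\Phi$ versus $\Phi^{-1}$).'' Neither horn holds for $f=1/n^{3}$, $g=1/n^{2}$: here $f<g|_{n=\infty}$, both point-integrals are infinitesimals (same class $\Phi$), and $\int(g-f)\,dn|_{n=\infty}$ is itself an infinitesimal, not an infinity, so E3.5 is not available. The conclusion $\int f\ne\int g$ is still true in that example, but your case analysis does not reach it. The repair is simpler than what you wrote: strict positivity of $g-f$ on the whole infinite interval $[n_{0},n_{1}]$ gives $\int_{n_{0}}^{n_{1}}(g-f)\,dn>0$ directly in $*G$ (there is no measure-zero collapse to worry about), and linearity then yields $\int f<\int g$ without any appeal to which class the difference lands in; E3.2 is only needed to certify that the result is an infinireal, not to separate the integrals. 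A second, smaller issue: your contrapositive step uses that $\neg(f<g)$ at infinity implies $f\ge g$ on an infinite tail. For two monotonic functions this is exactly the comparability (trichotomy) assumption of the du Bois-Reymond function hierarchy --- monotonic functions can in principle cross cofinally often --- so it should be flagged explicitly as an appeal to the comparison algebra of \cite[Part 3]{cebp21} rather than asserted; ``one dominates the other on a cofinal set'' is not strong enough, since the relation at infinity needs eventual dominance.
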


Often we solve for the relation $z$ to include equality, so
 when applying the transfer principle
 the relation is unchanged.
 When  ; $f, g \in \mathbb{R}_{\infty}$; then 
 $(*G,\lt) \not\mapsto (\mathbb{R} \text{ or } \overline{\mathbb{R}}, \lt)$.
 However,
 $(*G,\leq) \mapsto (\mathbb{R} \text{ or } \overline{\mathbb{R}}, \leq)$.
\bigskip
\begin{mex}
 $x^{2} \lt x^{3}|_{x=\infty}$ in $*G$ $\not\mapsto \; \infty \lt \infty$ in $\overline{\mathbb{R}}$.  
 $x^{2} \leq x^{3}|_{x=\infty}$ in $*G$ $\mapsto \; \infty \leq \infty$ in $\overline{\mathbb{R}}$ 
\end{mex}
\bigskip
\begin{mex}
Compare $\sum \frac{1}{n^{3}} \; z \; \sum \frac{1}{n^{2}}|_{n=\infty}$.
 Solve for $z$.
\begin{align*}
 \sum \frac{1}{n^{3}} \; z \; \sum \frac{1}{n^{2}}|_{n=\infty} \tag{remove the sum} \\
 \frac{1}{n^{3}} \; z \; \frac{1}{n^{2}}|_{n=\infty} \tag{cross multiply} \\
 n^{2} \; z \; n^{3} \\
 n^{2} \lt n^{3} \tag{$(+\Phi^{-1}, \lt) \not\mapsto (\infty,\lt)$} \\
  & \tag{$*G \mapsto *G: \;\; \lt \; \Rightarrow \; \leq$} \\ 
 n^{2} \leq n^{3} \tag{relation can be realized, $\infty \leq \infty$} \\
 \sum \frac{1}{n^{3}} \leq \sum \frac{1}{n^{2}}|_{n=\infty} \tag{Solve for $z = \; \leq$ and substitute back} \\
\end{align*}
\end{mex}
\begin{mex}
If we know $\sum \frac{1}{n^{2}}|_{n=\infty}=0$ converges, 
 then by comparison we can determine the convergence of $\sum \frac{1}{n^{3}}|_{n=\infty}$.
\begin{align*}
 0 \leq \sum \frac{1}{n^{3}} \leq \sum \frac{1}{n^{2}}|_{n=\infty} \tag{$\Phi \mapsto 0$} \\
 0 \leq \sum \frac{1}{n^{3}}|_{n=\infty} \leq 0  \\
 \sum \frac{1}{n^{3}}|_{n=\infty}=0 \tag{sum converges}
\end{align*}
\end{mex}

The idea is, that if the sum's state has reached a steady
 state at infinity (this could of course be a function in $n$),
 then we may compare different sums. Now, our sums at 
 a point could simply have the summation sign sigma removed, and
 by infinitary calculus the inner functions compared.

If the sums $\sum f_{n}|_{n=\infty}$ and $\sum g_{n}|_{n=\infty}$
 are monotonic,
 then at infinity their state
 is settled, hence
 $\sum f_{n} \; z \; \sum g_{n} \Rightarrow f_{n} \; z \; g_{n}|_{n=\infty}$

 Showing 
 $f_{n} \; z \; g_{n} \Rightarrow \sum f_{n} \; z \; \sum g_{n}|_{n=\infty}$ and then reversing the argument shows
 the implication in the other direction.
$f_{n} \; z \; g_{n}|_{n=\infty}$,
$f(n) \; z \; g(n)|_{n=\infty}$,
$f(n) \Delta n \; z \; g(n) \Delta n|_{n=\infty}$,
$\int f(n) \,dn \; z \; \int g(n)\,dn|_{n=\infty}$,
$\sum f_{n} \; z \; \sum g_{n}|_{n=\infty}$.

Converting the sum to an integral and then differentiating,
 as this is dividing by positive infinitesimal
 quantities equally, the relation $z$ will not change.
 $\sum f_{n} \; z \; \sum g_{n}|_{n=\infty}$,
 $\int f(n)\,dn \; z \; \int g(n)\,dn|_{n=\infty}$,
 $\frac{d}{dn} \int f(n)\,dn \; (D z) \; \frac{d}{dn} \int g(n)\,dn|_{n=\infty}$,
 $f(n) \; (D z) \; g(n)|_{n=\infty}$,
 $f(n) \; z \; g(n)|_{n=\infty}$.

\[ \sum f_{n} \; z \;\! \sum g_{n} \; \Leftrightarrow  \;\; f_{n} \; z \; g_{n}|_{n=\infty} \]
\[ \int f(n) \,dn \; z \;\! \int g(n) \,dn \; \Leftrightarrow  \;\; f(n) \; z \; g(n)|_{n=\infty} \]

We have chosen continuous curves $f(n)$ and $g(n)$
such that $f_{n} \;\; z \;\; g_{n} \Leftrightarrow f(n) \;\; z \;\; g(n)|_{n=\infty}$, where for integer values $f(n) = f_{n}$ and $g(n)=g_{n}$.
 These continuous curves maintain the relation for their interval
 between the points.

To summarize, comparing two sums
 with each other at infinity,
 the sum over an interval's convergence
 is converted to a sum at a point at infinity
 for both sums,
 and 
 the sums at infinity are compared. 
 By removing the sigma, this comparison now compares the sum's inner components,
 solving for the relation and substituting
 back into the original sum's comparison.

 The conjecture is that
 all sums with integer indices can be compared at infinity.
 This belief will lead to considerations of arrangements,
 and ways the sums can be compared in subsequent papers.
 An example being 
 the Alternating Convergence Theorem (Theorem \ref{P007}),
 with its alternate representation given
 because of the test's importance.
\bigskip
\begin{mex}\label{MEX004}
Determine convergence of 
 $\sum \frac{1}{n \sqrt{n^{3}+1}}|_{n=\infty}$.
For interest, compare with a known convergent
 sum $\sum \frac{1}{n^{2}}|_{n=\infty}=0$.
 $\sum \frac{1}{n \sqrt{n^{3}+1}} \;\; z \;\; \sum \frac{1}{n^{2}}|_{n=\infty}$,
 $\sum \frac{1}{n (n^{3}+1)^{\frac{1}{2}} } \;\; z \;\; \sum \frac{1}{n^{2}}|_{n=\infty}$,
 $\frac{1}{n (n^{3}+1)^{\frac{1}{2}} } \;\; z \;\; \frac{1}{n^{2}}|_{n=\infty}$,
 $n^{2} \;\; z \;\; n (n^{3}+1)^{\frac{1}{2}}|_{n=\infty}$,
 $n^{2} \prec n (n^{3}+1)^{\frac{1}{2}}|_{n=\infty}$.
 For our purposes we will not need as strong  a relation.
 The lesser relation, less than or equal to, will suffice.
 $n^{2} \leq n (n^{3}+1)^{\frac{1}{2}}|_{n=\infty}$.
 Then $z = \; \leq$, substituting
 this back into the original sum comparison,  
 $0 \leq \sum \frac{1}{n \sqrt{n^{3}+1}} \leq \sum \frac{1}{n^{2}}|_{n=\infty}$,
 $0 \leq \sum \frac{1}{n \sqrt{n^{3}+1}}|_{n=\infty} \leq 0$,
 $\sum \frac{1}{n \sqrt{n^{3}+1}}|_{n=\infty}=0$ converges.
\end{mex}

 We did restrict $(a_{n})$ to being monotonic
 in the determination of convergence
 or divergence of $\sum a_{n}|_{n=\infty}$. 
 If we can bound a function between two monotonic
 functions that are both converging or both diverging,
 a determination of convergence or divergence 
 can be made by applying the sandwich principle.
\bigskip
\begin{mex}\label{MEX005}
$\sum_{k=1}^{n} \frac{3 + \mathrm{sin}\,k}{k^{2}}|_{n=\infty}$.
 Determine convergence or divergence of $\sum \frac{3 + \mathrm{sin}\,n}{n^{2}}|_{n=\infty}$.
 $2 \leq 3+\mathrm{sin}\,n \leq 4|_{n=\infty}$,
 $\frac{2}{n^{2}} \leq \frac{3+\mathrm{sin}\,n}{n^{2}} \leq \frac{4}{n^{2}}|_{n=\infty}$,
 $\sum \frac{2}{n^{2}} \leq \sum \frac{3+\mathrm{sin}\,n}{n^{2}} \leq \sum \frac{4}{n^{2}}|_{n=\infty}$,
 $0 \leq \sum \frac{3+\mathrm{sin}\,n}{n^{2}} \leq 0|_{n=\infty}$,
 $\sum \frac{3+\mathrm{sin}\,n}{n^{2}}|_{n=\infty}=0$ converges.
\end{mex}
 
One of the most important comparisons is to compare against the
 p-series, as it greatly simplifies calculation. 
\bigskip
\begin{mex}\label{MEX006}
$\sum \frac{1}{n \sqrt{n^{3}+1}}|_{n=\infty}$
 $= \sum \frac{1}{n(n^{3}+1)^{\frac{1}{2}}}|_{n=\infty}$
 $= \sum \frac{1}{n n^{\frac{3}{2}}}|_{n=\infty}$
 $= \sum \frac{1}{n^{\frac{5}{2}}}|_{n=\infty}$
 $=0$ by comparison with the known p-series, hence
 the sum converges.
 $\sum \frac{1}{n^{p}}|_{n=\infty} = 0$ when $p \gt 1$.
\end{mex}
\bigskip
The p-series comparison is more
 heavily used with sums at infinity,
 as with infinitary calculus, arguments
 of magnitude can reduce the sum to
 a p-series test. The p-series at infinity
 can often replace other tests. 
 In this sense it is different from
 the standard p-series test.
\bigskip
\begin{mex}\label{MEX007}
Rather than using the comparison
 test, 
 $\sum \frac{1}{n+n^{\frac{3}{2}}}|_{n=\infty}$ 
 $=\sum \frac{1}{n^{\frac{3}{2}}}|_{n=\infty}$ 
 $=0$ converges.
 As $n + n^{\frac{3}{2}} = n^{\frac{3}{2}}|_{n=\infty}$
 because $n^{\frac{3}{2}} \succ n|_{n=\infty}$.
\end{mex}

Another comparison is when a sum varies
 within an interval not containing zero,
 as given by example 
\ref{MEX005} with the sum having the terms interval $[2,4]$. 

 As a consequence of Criterion E3,
 when a sum at infinity is either 
 zero or infinity,
  then multiplying the sum by a constant
 or a bounded variable without zero
 leaves the sum unchanged.
 $0 \cdot \alpha_{n} = 0$ and $\alpha_{n} \cdot \infty = \infty$ 
\bigskip
\begin{theo}\label{P059}
 If $+\Phi \lt \alpha_{n} \lt +\Phi^{-1}$ then
 $\sum a_{n} \alpha_{n} = \sum a_{n}|_{n=\infty}$.
\end{theo}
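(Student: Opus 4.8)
The plan is to split on the dichotomy supplied by Theorem \ref{P057} (together with Proposition \ref{P049}): since $(a_n)|_{n=\infty}$ is a monotonic positive sequence, $\sum a_n|_{n=\infty}$ is an infinireal realized either as $0$ (the convergent case, $\sum a_n|_{n=\infty}\in\Phi$) or as $\infty$ (the divergent case, $\sum a_n|_{n=\infty}\in\Phi^{-1}$). It then suffices to show that $\sum a_n\alpha_n|_{n=\infty}$ falls in the same class, for then the two convergence sums have the same realized value and hence are equal in the sense intended by the statement.

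First I would unpack the hypothesis. The condition $+\Phi\lt\alpha_n\lt+\Phi^{-1}$ says $\alpha_n$ exceeds every positive infinitesimal and is dominated by every positive infinity, so $\alpha_n$ is a positive \emph{appreciable} quantity: there exist positive reals $m\le M$ with $m\le\alpha_n\le M|_{n=\infty}$. Multiplying through by $a_n\ge 0$ gives $m\,a_n\le a_n\alpha_n\le M\,a_n|_{n=\infty}$. The sequences $m\,a_n$ and $M\,a_n$ are monotonic, being positive real multiples of the monotonic $(a_n)$, so they satisfy the E3$'$ criterion, and by Theorem \ref{P038} together with the constant-multiple behaviour of a sum at a point we get $\sum m\,a_n = m\sum a_n$ and $\sum M\,a_n = M\sum a_n|_{n=\infty}$, with $\sum m\,a_n\le\sum a_n\alpha_n\le\sum M\,a_n|_{n=\infty}$.

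Now I would finish by cases. In the convergent case $\sum a_n|_{n=\infty}\in\Phi$, so $M\sum a_n|_{n=\infty}\in\Phi$ because a real multiple of a positive infinitesimal is again a positive infinitesimal in $*G$; then $0\le\sum a_n\alpha_n|_{n=\infty}\le\Phi$ forces $\sum a_n\alpha_n|_{n=\infty}\in\Phi$, so the sum converges to $0$, exactly as in Example \ref{MEX005}. In the divergent case $\sum a_n|_{n=\infty}\in\Phi^{-1}$, so $m\sum a_n|_{n=\infty}\in\Phi^{-1}$ since $m\gt 0$ real times a positive infinity is a positive infinity; then $\sum a_n\alpha_n|_{n=\infty}$ dominates a positive infinity, hence lies in $\Phi^{-1}$ and diverges to $\infty$. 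In either case $\sum a_n\alpha_n = \sum a_n|_{n=\infty}$, and also $0\cdot\alpha_n=0$, $\alpha_n\cdot\infty=\infty$ as advertised.

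The main obstacle is the E3$'$ monotonicity requirement: $\alpha_n$ need not be monotonic (take $\alpha_n=3+\sin n$), so $a_n\alpha_n$ need not be monotonic and $\sum a_n\alpha_n|_{n=\infty}$ is not \emph{a priori} a legitimate convergence sum. This is precisely the situation resolved by the sandwich/bounding step above: one never tests $a_n\alpha_n$ for monotonicity directly, but traps $\sum a_n\alpha_n$ between the two monotonic comparison sums $\sum m\,a_n$ and $\sum M\,a_n$, which share the convergence behaviour of $\sum a_n$, so only the boundedness of $\alpha_n$ (not its monotonicity) is used. The remaining ingredients — that $\mathbb{R}\cdot\Phi\subseteq\Phi$ and $\mathbb{R}^{+}\cdot\Phi^{-1}\subseteq\Phi^{-1}$, and that a real constant pulls out of a sum at a point — are elementary properties of $*G$ established in \cite{cebp21}.
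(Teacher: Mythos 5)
Your proposal is correct and follows essentially the same route as the paper's own proof: bound $\alpha_{n}$ between two positive reals, multiply by $a_{n}$, sum, pull the constants out, and sandwich using the fact that a positive real multiple of $c\in\{0,\infty\}$ is again $c$. Your explicit case split on $\Phi$ versus $\Phi^{-1}$ and your remark on why the non-monotonicity of $a_{n}\alpha_{n}$ is harmless are just slightly more verbose renderings of steps the paper compresses into the single line $\beta_{1}c=c$, $\beta_{2}c=c$.
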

\begin{proof}
Since $\alpha_{n} \neq +\mathbb{R}_{\infty}$,
 $\exists ; \beta_{1}, \beta_{2} \in +\mathbb{R}; : \beta_{1} \leq \alpha_{n} \leq \beta_{2}$.  
 $0 \leq \beta_{1} \leq \alpha_{n} \leq \beta_{2}|_{n=\infty}$,
 $0 \leq \beta_{1} a_{n} \leq \alpha_{n} a_{n} \leq \beta_{2} a_{n}|_{n=\infty}$,
 $0 \leq \sum \beta_{1} a_{n} \leq \sum \alpha_{n} a_{n} \leq \sum \beta_{2} a_{n}|_{n=\infty}$,
 $0 \leq \beta_{1} \sum a_{n} \leq \sum \alpha_{n} a_{n} \leq \beta_{2} \sum a_{n}|_{n=\infty}$.
 If $\sum a_{n}|_{n=\infty}=c$, $c \in \{0,\infty\}$,
 then $\beta_{1}c =c$, $\beta_{2} c=c$.
 Substituting $c$ into the above inequality,
 $0 \leq \beta_{1} c \leq \sum \alpha_{n} a_{n} \leq \beta_{2} c|_{n=\infty}$,
 $0 \leq  c \leq \sum \alpha_{n} a_{n} \leq c|_{n=\infty}$,
 $\sum \alpha_{n} a_{n}|_{n=\infty} = c = \sum a_{n}|_{n=\infty}$.
\end{proof}
\bigskip
\begin{theo}\label{P060}
If we can deconstruct a product inside a sum to a real part and
 an infinitesimal, the infinitesimal may be ignored.
 $\alpha \in \mathbb{R}^{+}$; $\,\delta \in \Phi$;
 \[ \sum a_{n}(\alpha + \delta)|_{n=\infty} = \sum a_{n}|_{n=\infty} \]
\end{theo}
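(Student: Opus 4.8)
The plan is to reduce this to Theorem \ref{P059} by verifying that the multiplier $\alpha+\delta$ is neither infinitesimal nor infinite, i.e. that it is an ``appreciable'' (bounded, nonzero) quantity. First I would observe that since $\alpha\in\mathbb{R}^{+}$ is a fixed positive real and $\delta\in\Phi$, so that $|\delta|$ is smaller than every positive real, we have $\alpha+\delta\simeq\alpha$; in particular $0<\frac{\alpha}{2}<\alpha+\delta<2\alpha$ holds at infinity. Thus $\alpha+\delta$ lies strictly between two positive reals, so it exceeds every positive infinitesimal and is exceeded by every positive infinity, that is $+\Phi<\alpha+\delta<+\Phi^{-1}$.

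Next I would set $\alpha_{n}:=\alpha+\delta$ (a constant multiplier, or the sequence one gets if $\delta$ is read as varying with $n$; either way the two-sided bound just established is uniform in $n$) and apply Theorem \ref{P059}. Its hypothesis $+\Phi<\alpha_{n}<+\Phi^{-1}$ is exactly what the previous paragraph supplies, and its conclusion is precisely $\sum a_{n}(\alpha+\delta)=\sum a_{n}|_{n=\infty}$. Since the E3 criterion is in force throughout, $\sum a_{n}|_{n=\infty}\in\{0,\infty\}$ (Proposition \ref{P049}), so the identity is the asserted one with no further qualification.

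The only genuine work — and the step to handle carefully — is the first one: confirming $\alpha+\delta>0$ and that it is not an infinireal. This rests on $\alpha\neq0$ (given by $\alpha\in\mathbb{R}^{+}$) together with the defining property of $\Phi$ that adding an infinitesimal to a nonzero real yields an appreciable number; without $\alpha\neq0$ the claim fails, since $\sum a_{n}\delta|_{n=\infty}$ need not equal $\sum a_{n}|_{n=\infty}$. As an alternative to citing Theorem \ref{P059}, one could argue directly by the sandwich used in its proof: from $0\leq\frac{\alpha}{2}a_{n}\leq a_{n}(\alpha+\delta)\leq 2\alpha a_{n}|_{n=\infty}$ (all terms nonnegative) one sums to obtain $0\leq\frac{\alpha}{2}\sum a_{n}\leq\sum a_{n}(\alpha+\delta)\leq 2\alpha\sum a_{n}|_{n=\infty}$, and then, writing $\sum a_{n}|_{n=\infty}=c$ with $c\in\{0,\infty\}$, the relations $\frac{\alpha}{2}c=c$ and $2\alpha c=c$ pinch $\sum a_{n}(\alpha+\delta)|_{n=\infty}$ to $c=\sum a_{n}|_{n=\infty}$. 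I would present the reduction to Theorem \ref{P059} as the main line and note this sandwich as the underlying mechanism.
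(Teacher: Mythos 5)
Your proof is correct, but it takes a different route from the paper's. The paper does not reduce to Theorem \ref{P059} at all: it expands the product additively, writing $\sum a_{n}(\alpha+\delta)|_{n=\infty} = \sum \alpha a_{n}|_{n=\infty} + \sum \delta a_{n}|_{n=\infty}$, and then argues by cases on the second term --- if $\sum \delta a_{n}|_{n=\infty}=\infty$ then, since $\alpha a_{n} \geq \delta a_{n}$ eventually, $\sum a_{n}|_{n=\infty}=\infty$ as well and both sides diverge; if $\sum \delta a_{n}|_{n=\infty}=0$ the infinitesimal contribution drops out and the constant $\alpha$ is absorbed. Your argument instead bounds the whole multiplier, $0 < \frac{\alpha}{2} < \alpha+\delta < 2\alpha$, so that $+\Phi < \alpha+\delta < +\Phi^{-1}$ and Theorem \ref{P059} applies directly; the sandwich you sketch as the ``underlying mechanism'' is exactly the proof of Theorem \ref{P059}, so the two routes converge at that point. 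Your version buys economy and avoids the case split (and the slightly delicate comparison $\sum \alpha a_{n} \geq \sum \delta a_{n}$, which needs $\alpha \geq \delta$ termwise); the paper's version makes explicit the fact being advertised in the theorem's informal statement, namely that the infinitesimal part of the product contributes nothing on its own. You are also right to flag that $\alpha \neq 0$ is the load-bearing hypothesis --- the paper makes the same point immediately after, in Example \ref{MEX010}.
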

\begin{proof} 
$\sum a_{n}(\alpha + \delta)|_{n=\infty}$
 $= \sum a_{n}|_{n=\infty} + \sum \delta a_{n}|_{n=\infty}$.
 If $\sum \delta a_{n}|_{n=\infty}=\infty$ then $\sum a_{n}|_{n=\infty}=\infty$
 as $\sum \alpha a_{n} \geq \sum \delta a_{n}$.
 If $\sum \delta a_{n}|_{n=\infty}=0$
 then $\sum (\alpha a_{n} + \delta a_{n})|_{n=\infty}$ 
 $= \sum \alpha a_{n} + \sum \delta a_{n}|_{n=\infty}$
 $= \alpha \sum a_{n} + 0|_{n=\infty}$
 $= \sum a_{n}|_{n=\infty}$
\end{proof}
\bigskip
\begin{theo}\label{P061}
If $\Phi^{+} \lt \alpha_{n} \lt +\Phi^{-1}$ or
 $-\Phi^{-1} \lt \alpha_{n} \lt -\Phi$ then
 $\sum \alpha_{n} a_{n} = \sum a_{n}|_{n=\infty}$.
\end{theo}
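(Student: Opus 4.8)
The plan is to reduce everything to Theorem~\ref{P059}, which already disposes of the positive case. When $\Phi^{+} \lt \alpha_{n} \lt +\Phi^{-1}$ the hypothesis of Theorem~\ref{P059} holds verbatim (the condition $\Phi^{+}\lt\alpha_{n}$ merely says $\alpha_{n}$ exceeds every positive infinitesimal, i.e.\ $\alpha_{n}$ is appreciable), so $\sum \alpha_{n} a_{n} = \sum a_{n}|_{n=\infty}$ with nothing further to do. The only genuinely new situation is the negative branch $-\Phi^{-1} \lt \alpha_{n} \lt -\Phi$, and the whole argument is aimed at that case.

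First I would put $\beta_{n} = -\alpha_{n}$. Negating the inequality $-\Phi^{-1} \lt \alpha_{n} \lt -\Phi$ yields $+\Phi \lt \beta_{n} \lt +\Phi^{-1}$, so $\beta_{n}$ is an appreciable positive quantity bounded away from both $0$ and $\Phi^{-1}$; moreover negation sends the continuous monotonic curve threaded through $(\beta_{n}a_{n})$ to the one threaded through $(\alpha_{n}a_{n})$ up to reflection, so the E3$'$ structure is preserved and $\sum\alpha_{n}a_{n}|_{n=\infty}$ is a legitimate convergence sum. Then $\sum \alpha_{n} a_{n} = -\sum \beta_{n} a_{n}$, and Theorem~\ref{P059} applied to the appreciable multiplier $\beta_{n}$ gives $\sum \beta_{n} a_{n} = \sum a_{n}|_{n=\infty}$.

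Next I would split on the value of $\sum a_{n}|_{n=\infty}$, which by Proposition~\ref{P049} (equivalently by Criterion E3) lies in $\{0,\infty\}$. If $\sum a_{n}|_{n=\infty}=0$, then $\sum \alpha_{n} a_{n}|_{n=\infty} = -\sum\beta_{n}a_{n}|_{n=\infty} = -0 = 0 = \sum a_{n}|_{n=\infty}$, so the sum converges and the asserted equality holds. If $\sum a_{n}|_{n=\infty}=\infty$, then $\sum\beta_{n}a_{n}$ diverges, hence so does $\sum\alpha_{n}a_{n}=-\sum\beta_{n}a_{n}$; by Definition~\ref{DEF008} a divergent sum is assigned the value $\infty$, so again $\sum \alpha_{n} a_{n}|_{n=\infty} = \infty = \sum a_{n}|_{n=\infty}$.

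The step needing the most care is the divergence case: a negative appreciable multiplier turns a positively divergent sum into one tending to $-\infty$, which is not numerically equal to $+\infty$ in $\overline{\mathbb{R}}$ but is identified with $\infty$ under the convergence-sum convention (Definition~\ref{DEF008}, Proposition~\ref{P049}); so the identity in the statement is to be read in this classificatory sense, exactly as in Theorems~\ref{P059} and~\ref{P060}. The only other thing to check is the monotonicity/E3$'$ admissibility of $\alpha_{n}a_{n}$, which I would note follows because negation just interchanges ``monotonically increasing'' and ``monotonically decreasing'' while $\beta_{n}a_{n}$ already satisfies the hypotheses invoked for Theorem~\ref{P059}.
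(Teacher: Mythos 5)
Your proof takes essentially the same route as the paper's: the positive case is handed to Theorem \ref{P059} verbatim, and the negative case is reduced to it by negating the inequality $-\Phi^{-1} \lt \alpha_{n} \lt -\Phi$ to obtain an appreciable positive multiplier. You are in fact more careful than the published proof, which stops at ``which is the positive case'' and silently passes over the fact that $\sum \alpha_{n} a_{n} = -\sum \beta_{n} a_{n}$, so that the divergent branch lands on $-\infty$ rather than $+\infty$; your explicit appeal to Definition \ref{DEF008} and Proposition \ref{P049} to read the equality in the classificatory sense supplies exactly the step the paper leaves implicit.
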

\begin{proof}
Positive case see Theorem \ref{P059}.
 Negative case,
 $-\Phi^{-1} \lt \alpha_{n} \lt - \Phi$,
 multiply the inequality by $-1$,
 $\Phi^{-1} \gt (-\alpha_{n}) \gt \Phi$,
 $\Phi \lt (-\alpha_{n}) \lt \Phi^{-1}$
 which is the positive case.
\end{proof}
\bigskip
\begin{mex}\label{MEX008}
 $\sum \frac{3 + \mathrm{sin}\,n}{n^{2}}|_{n=\infty}$
 $= \sum \frac{1}{n^{2}}|_{n=\infty}=0$
 as $2 \leq 3 + \mathrm{sin}\,n \leq 4|_{n=\infty}$
 is positive bounded.
\end{mex}
\bigskip
\begin{mex}\label{MEX009}
The bounded variable is used
 in the proof of Dirichlet's test,
 see convergence Test \ref{S0714}.
\end{mex}
When about zero, we need to be careful,
 because infinitesimals can appear in the calculation
 which contradict the use of Theorem \ref{P061}.
\bigskip
\begin{mex}\label{MEX010}
An example of when not to use
 the positive bound.
If we reason that because
 the $\mathrm{sin}$ function
 is bounded, that we can treat this
 as a constant, and since constants
 in sums are ignored the $\mathrm{sin}$ function
 is ignored.

 For demonstration purposes only,
 considering $\mathrm{sin}$ as finite and simplifying
 as a constant (rather than the infinitesimal it is)
 $\sum \frac{1}{n} |\mathrm{sin}\frac{1}{n}|$
 $=\sum \frac{1}{n}|_{n=\infty}=\infty$ diverges.
 This is incorrect.
 Actually the reverse case happens
 and the sum converges.

 The problem is 
 the requirement of
 Theorem \ref{P061} was not met.
 The infinitesimal
 cannot be treated as a constant, as it interacts in
 the test.

 Considering a $\mathrm{sin}$ expansion,
 $|\mathrm{sin}\frac{1}{n}| = \frac{1}{n}|_{n=\infty}$,
 $\sum \frac{1}{n} |\mathrm{sin}\frac{1}{n}| = \sum \frac{1}{n^{2}}|_{n=\infty}=0$ converges.
\end{mex}

 We now venture into the darker side of infinity,
 with non-uniqueness, 
 where out of necessity we need to explain
 a counter-example.
 Or rather, by non-uniqueness,
 the counter-example is nullified.

 Infinity as a space
 branches into other possibilities.
 Our convention to use left-to-right $=$ operator
 as a directed assignment \cite[Definition 2.9]{cebp21}
 allows for exploration at infinity as, rather than
 one line of logic, several may need to be followed.
 In fact the following problem requires non-uniqueness
 at infinity to be understood.

Returning to $\sum a_{n} = \int a(n)\,dn|_{n=\infty}$,
 we have a counter-example where,
 if we directly integrate a convergent integral at infinity,
 Criteria E3 fails to give $0$,
 and hence does not establish convergence.  
\bigskip
\begin{mex}\label{MEX011}
 $\sum \frac{1}{8{n}^{2}+12n+4}|_{n=\infty}=0$
 can be shown
 to converge by the comparison test at infinity,
 or reduce the sum to a known p-series. However
 converting the sum to an integral, without
 applying infinitary simplification,
 then integrating the sum at infinity fails
 to give $0$, contradicting Criteria E3. 

$\sum \frac{1}{8{n}^{2}+12n+4}|_{n=\infty}$
 $=\int \frac{1}{8\,{n}^{2}+12\,n+4} dn|_{n=\infty}$
 $=\frac{\mathrm{ln}\left( 2\,n+1\right) }{4}-\frac{\mathrm{ln}\left( n+1\right) }{4}|_{n=\infty}$
 $= \frac{\mathrm{ln}\,2}{4} \neq 0$ fails Criteria E3.
 $\frac{\mathrm{ln}\,2}{4} \not \in \mathbb{R}_{\infty}$

However, had infinitary arguments been applied,
 we would have got the correct result. 

$\int \frac{1}{8\,{n}^{2}+12\,n+4} dn|_{n=\infty}$
 $=\int \frac{1}{8\,{n}^{2}} dn|_{n=\infty}$
 $= -\frac{1}{8n}|_{n=\infty}$
 $=0$ satisfies Criterion E3'.

The above, by non-uniqueness at infinity, does not contradict.
 Firstly, E3 found the convergence by another path, rather
 than directly integrating. By not applying non-reversible
 arithmetic, this contradicted the E3 criteria, hence
 it is not a valid counter-example.  

 Other criterion did find a solution.
 Again, this is not a contradiction, as different theories
 have different rules. Even when the two calculations
 had the same starting point 
 $\int \frac{1}{8\,{n}^{2}+12\,n+4} dn|_{n=\infty}$ but arrived
 at different conclusions does not contradict because they
 are governed by different sets of rules.
\end{mex}
\bigskip
\begin{remk}
 Another criterion could extend E3 and not require
 the convergence sum to be an infinireal.
 Then, comparing against the boundary the above would have converged.
 However, we have chosen to purse the simpler Criteria E3.
\end{remk}
\[ \text{Conjecture: By application of infinitary 
 arguments before integrating,} \]
\[ \text{removing lower order terms in the
 integral at infinity,} \]
\[ \text{we realize Criterion E3 in testing for convergence.} \]

With the boundary test, see Test \ref{S0720}, which
 is the subject of the next paper,
 the conjecture is justified as lower order terms
 become additive identities and are simplified
 as part of the test.

What this says about infinity
 is interesting.
 Firstly, we are forced to consider infinitary
 arguments to explain what is going on.
 Secondly, the nature of expressions
 at infinity encodes information.
 In this case, persisting with the lower order
 magnitude terms contradicted infinitary 
 magnitude arguments. Having $\int \frac{1}{8n^{2}+12n+4}\,dn|_{n=\infty}$
 has perpetuated the lower order terms $12n+4|_{n=\infty}$ 
 at infinity, and subsequently affected the integration.

 We cannot assume a rule for finite arithmetic will
 carry over and be the same for infinite arithmetic.
 Infinity is realising itself to hold different number
 systems,
 and be more complicated, and yet explain so much more.

 $\int \frac{1}{8n^{2}+12n+4}\,dn \neq \int \frac{1}{8n^{2}}\,dn|_{n=\infty}$
 implicitly assumes $8n^{2} \not \succ 12n+4|_{n=\infty}$, where we associated 
 the much-greater-than relation with non-reversible arithmetic.

If we consider simplification in general, that
 is, where we apply arguments of magnitude,
 then the very process often effects other
 evaluations, particularly comparison.
 In the previous case, 
 $n^2+ \frac{1}{n} \gt n^{2}|_{n=\infty}$,
 but after simplification
 equality is realized($n^{2}=n^{2}$ as $\frac{1}{n}|_{n=\infty}=0$).
 Since the simplification is non-Archimedean arithmetic,
 which exists everywhere in analysis, when limits
 are being taken, the realization of 
 infinitesimals and infinities matters.

This is not a blanket statement of
 denying the realization operation, but 
 that a given simplification may have
 consequences, such as the previous example showed.
 We remind ourselves that truncation is 
 a subset of realization, so the simplification is common.
 Our goal is to 
 look at such arithmetic, and
 manage it.

In this spirit, papers follow with further 
 results and conjectures,
 at present from necessity and empirical observation.
 For example, the classes of sums are greatly
 increased by considering periodic sums at infinity;
 with Criterion E3 as a guide,
 we conjecture a rearrangement for integral sums at infinity
 (Section \ref{S16}).

 Criterion E3 is being used as a necessary test.
 We can relax Criterion E3, but 
 then other results may become less certain, or
 the theory becomes more complex, and may serve 
 other purposes. 
 (Other criteria are possible.)
 While we may conjecture, it is important
 to be able to test that conjecture.
\bigskip
\begin{prop}\label{P053}
 If $f=\infty$ and $\mathrm{ln}\,n \prec f|_{n=\infty}$ then $\sum \frac{1}{e^{f}}|_{n=\infty}=0$
 and $\int \frac{1}{e^{f}}\,dn|_{n=\infty}=0$ converges.
\end{prop}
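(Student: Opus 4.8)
The plan is to dominate $1/e^{f}$ by a convergent $p$-series and then invoke the sandwich principle through the comparison Theorems \ref{P038} and \ref{P039}. Fix any real $p \gt 1$, for concreteness $p = 2$. Since $\ln n \prec f|_{n=\infty}$, multiplying the comparison by the positive constant $p$ gives $p\ln n \prec f|_{n=\infty}$, i.e. $p\ln n / f|_{n=\infty} = 0$. Because $f = \infty$, it follows that $f - p\ln n = f\,(1 - p\ln n / f) = \infty$, hence $e^{f - p\ln n}|_{n=\infty} = \infty$, that is $e^{f}/n^{p}|_{n=\infty} = \infty$. Equivalently $n^{p}/e^{f}|_{n=\infty} \in \Phi$, so $\frac{1}{e^{f}} \prec \frac{1}{n^{p}}|_{n=\infty}$, and in particular the realizable inequality $\frac{1}{e^{f}} \le \frac{1}{n^{p}}|_{n=\infty}$ holds.

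Next I would note that both $\frac{1}{e^{f}}$ and $\frac{1}{n^{p}}$ are positive and, since $f$ (hence $e^{f}$) is monotonically increasing, monotonically decreasing at infinity; so Theorem \ref{P038} lifts the inequality between inner terms to the sums, giving $0 \le \sum \frac{1}{e^{f}} \le \sum \frac{1}{n^{p}}|_{n=\infty}$. By the $p$-series comparison (Example \ref{MEX006}) with $p \gt 1$ we have $\sum \frac{1}{n^{p}}|_{n=\infty} = 0$, so the sandwich forces $\sum \frac{1}{e^{f}}|_{n=\infty} = 0$, which is convergence by Criterion E3'.

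For the integral statement I would run the identical argument through Theorem \ref{P039}: from $\frac{1}{e^{f}} \le \frac{1}{n^{p}}|_{n=\infty}$ we obtain $0 \le \int \frac{1}{e^{f}}\,dn \le \int \frac{1}{n^{p}}\,dn|_{n=\infty}$, and $\int \frac{1}{n^{p}}\,dn|_{n=\infty} = \frac{1}{1-p}\,n^{1-p}|_{n=\infty} = 0$ since $p \gt 1$; hence $\int \frac{1}{e^{f}}\,dn|_{n=\infty} = 0$ converges by Criterion E3.

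The delicate point is the exponentiation step $p\ln n \prec f \Rightarrow n^{p} \prec e^{f}$: the relation $\prec$ controls the ratio $p\ln n / f$, not the difference $f - p\ln n$, so one must use $f = \infty$ explicitly to pass to $f - p\ln n = \infty$ before exponentiating. One should also make explicit that the monotonicity needed to apply Theorems \ref{P038}/\ref{P039} and Criterion E3 is precisely the (implicit) hypothesis that $f$ is monotonically increasing, so that $1/e^{f}$ is the monotonic sequence/function the criterion requires.
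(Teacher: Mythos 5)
Your proposal is correct and follows essentially the same route as the paper: compare $\sum \frac{1}{e^{f}}$ against a convergent p-series with $p>1$, pass to logarithms to reduce the comparison to $p\,\mathrm{ln}\,n \prec f|_{n=\infty}$, exponentiate back to get $\frac{1}{e^{f}} \leq \frac{1}{n^{p}}|_{n=\infty}$, and sandwich. The only difference is that you justify the exponentiation step explicitly (using $f=\infty$ to pass from the ratio condition to $f - p\,\mathrm{ln}\,n = \infty$), where the paper compresses this into its $z = e^{\prec}$ notation; that is a welcome clarification but not a different argument.
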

\begin{proof}
Let $p \gt 1$, comparing against the convergent p-series,
 $\sum \frac{1}{n^{p}}|_{n=\infty}=0$.
 Solving for relation $z$, 
 $\sum \frac{1}{e^{f}} \; z \; \sum \frac{1}{n^{p}}|_{n=\infty}$,
 $\frac{1}{e^{f}} \; z \; \frac{1}{n^{p}}|_{n=\infty}$,
 $n^{p} \; z \; e^{f}|_{n=\infty}$,
 $p \,\mathrm{ln}\,n \; (\mathrm{ln}\,z) \; f|_{n=\infty}$,
 $p \,\mathrm{ln}\,n \prec f|_{n=\infty}$,
 $\mathrm{ln}\,z = \; \prec$,
 $z = e^{\prec} = \; \prec \; = \; \leq$. 
 (Recall from \cite[Part 2]{cebp21} the left-to-right generalization
 of the equals operator, $\prec \; \Rightarrow \; \leq$ for positive
 values.)
 Substituting $z$ back in the sum comparison,
 $0 \leq \sum \frac{1}{e^{f}} \leq 0$,
 $\sum \frac{1}{e^{f}}|_{n=\infty}=0$ converges.
\end{proof}
\bigskip
\begin{mex}\label{MEX044}
 Determine convergence of $\sum \frac{1}{e^{x}}|_{x=\infty}$.
 By Proposition \ref{P053}, $f=x$, 
 $\mathrm{ln}\,x \; z \; x|_{x=\infty}$,
 $\mathrm{ln}\,x \prec x|_{x=\infty}$,
 then  $\sum \frac{1}{e^{x}}|_{x=\infty}=0$ converges.
 
 However, it is easier to compare with a known converging p-series directly.
 $x^{p} \prec e^{x}|_{x=\infty}$,
 $\frac{1}{e^{x}} \prec \frac{1}{x^{p}}|_{x=\infty}$, 
 $\sum \frac{1}{e^{x}} \lt \sum \frac{1}{x^{p}}|_{x=\infty}$, 
 and for $p \gt 1$ the convergence result follows.
\end{mex}
\subsection{Integral and sum interchange}\label{S12}
The integral test is often given in one direction:
 if a sum can be bounded below and above by 
 a monotonic integral and if the integral converges,
 the sum converges, and if the integral diverges
 the sum diverges.

With the convergence sum Criteria E3 and E3',
 positive monotonic sums can be made arbitrarily large or small.
 Then the sum can bound an integral
 and determine the integral's convergence or divergence,
 or vice versa.
 Hence the integral test can be in two directions,
 a consequence of the E3 and E3' criteria.

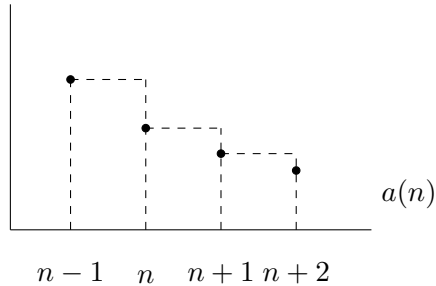
\begin{figure}[H]
\centering
\begin{tikzpicture}[domain=0.5:5.0]
  \tikzstyle{every node}=[font=\small]
\begin{scope}[xshift=00mm, yshift=00mm]
  \draw[color=orange] plot[id=exp] function{5.0/x**(0.20)-3.0}; 
  \draw[fill=black] (1.0,2.0) circle [radius=0.05cm] ;
  \draw[fill=black] (2.0,1.35275) circle [radius=0.05cm] ;
  \draw[fill=black] (3.0,1.0137) circle [radius=0.05cm] ;
  \draw[fill=black] (4.0,0.7892) circle [radius=0.05cm] ;
\end{scope}
\begin{scope}[xshift=00mm, yshift=00mm]
  \draw [black] (0.2,0.0) -- (5.0,0.0);
  \draw [black] (0.2,0.0) -- (0.2,3.0);
  \draw[dashed,black] (1.0,0.0) -- (1.0,2.0);
  \draw[dashed,black] (2.0,0.0) -- (2.0,2.0);
  \draw[dashed,black] (3.0,0.0) -- (3.0,1.35275);
  \draw[dashed,black] (4.0,0.0) -- (4.0,1.0137);
  \draw[dashed,black] (1.0,2.0) -- (2.0,2.0);
  \draw[dashed,black] (2.0,1.35275) -- (3.0,1.35275);
  \draw[dashed,black] (3.0,1.0137) -- (4.0,1.0137);
  \node [label={[shift={(1.0,-1.0)}]$n-1$}] {};
  \node [label={[shift={(2.0,-1.0)}]$n$}] {};
  \node [label={[shift={(3.0,-1.0)}]$n+1$}] {};
  \node [label={[shift={(4.0,-1.0)}]$n+2$}] {};
  \node [label={[shift={(5.5,0.0)}]$a(n)$}] {};
\end{scope}
\end{tikzpicture}
\caption{Strictly monotonic decreasing function} \label{fig:FIG01}
\end{figure}

\begin{figure}[H]
\centering
\begin{tikzpicture}[domain=0.0:5.0]
  \draw[color=orange] plot[id=exp] function{0.08*x*x}; 
  \draw[fill=black] (1.0,0.08) circle [radius=0.05cm] ;
  \draw[fill=black] (2.0,0.32) circle [radius=0.05cm] ;
  \draw[fill=black] (3.0,0.72) circle [radius=0.05cm] ;
  \draw[fill=black] (4.0,1.28) circle [radius=0.05cm] ;
  \node [label={[shift={(1.0cm,0.0cm)}]$a_{n-1}$}] {};
  \node [label={[shift={(2.0cm,0.2cm)}]$a_{n}$}] {};
  \node [label={[shift={(3.0cm,0.70cm)}]$a_{n+1}$}] {};
  \node [label={[shift={(4.0cm,1.40cm)}]$a_{n+2}$}] {};
  \node [label={[shift={(5.5cm,1.75cm)}]$a(n)$}] {};
\end{tikzpicture}
\caption{Strictly monotonic increasing function} \label{fig:FIG02}
\end{figure}
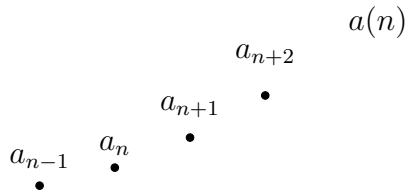
\bigskip
\begin{prop}\label{P062} 
 For a monotonic function $a(x)$ in $*G$.  
\[ \ldots \leq \int^{n-1} a(x)\,dx \leq \int^{n} a(x)\,dx \leq \int^{n+1} a(x)\,dx \leq \ldots|_{n=\infty} \]

For a strictly monotonic function, replace the inequality with a strict inequality.
\end{prop}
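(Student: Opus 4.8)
The plan is to reduce the whole doubly-infinite chain to a single generic link and then settle that link with the point-form fundamental theorem of calculus. First I would note that, by transitivity of $\leq$ in $*G$, it suffices to prove $\int^{m} a(x)\,dx \leq \int^{m+1} a(x)\,dx$ for an arbitrary infinite integer $m \in \mathbb{J}_\infty$ (and the corresponding strict inequality when $a$ is strictly monotonic); since the argument is uniform in $m$, both tails of dots in the statement are then produced at once.

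For that single link I would invoke the fundamental theorem of calculus in the point form already displayed in the paper, namely $\int_{m}^{m+1} a(x)\,dx = \int^{m+1} a(x)\,dx - \int^{m} a(x)\,dx$, taking a common antiderivative on the two point-integrals so that the constant of integration cancels and the right-hand side is unambiguous. The claim is then equivalent to $\int_{m}^{m+1} a(x)\,dx \geq 0$. Because the convergence-sum setting is that of positive functions — consistent with Figures \ref{fig:FIG01} and \ref{fig:FIG02} — we have $a(x) \geq 0$ on $[m,m+1]$; more sharply, monotonicity of $a$ pins $a(x)$ between its endpoint values, giving the two-sided bound $0 \leq \min(a(m),a(m+1)) \leq \int_{m}^{m+1} a(x)\,dx \leq \max(a(m),a(m+1))$, which is also exactly the estimate the adjacent integral-test results will want. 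Hence $\int^{m} a(x)\,dx \leq \int^{m+1} a(x)\,dx$, and chaining over $m$ yields the full statement. For a strictly monotonic $a$ the integrand is positive and not identically zero on the unit interval, so $\int_{m}^{m+1} a(x)\,dx > 0$ and every link is strict; here it is worth recording that the strict inequality is asserted inside $*G$ (the $|_{n=\infty}$), so the usual obstruction $(+\Phi^{-1},\lt)\not\mapsto(\infty,\lt)$ to realizing strict infinitary inequalities is not in play.

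The argument is essentially routine, so I do not expect a genuine obstacle, only two points demanding care. The first is bookkeeping in the point-integral notation: one must read $\int^{m+1} a(x)\,dx - \int^{m} a(x)\,dx$ with a common antiderivative so that it genuinely equals the definite integral $\int_{m}^{m+1} a(x)\,dx$, i.e. use the point-FTC exactly as the paper has set it up rather than a shifted-integrand comparison. The second is being explicit that it is positivity of $a$, the standing hypothesis of the convergence-sum framework, and not monotonicity \emph{per se}, that drives the non-strict chain; monotonicity enters only to obtain the clean endpoint bounds and to force the integrand not to vanish identically in the strict case. An alternative route through Theorem \ref{P039}, comparing $\int a(x)\,dx$ with the shifted integrand $a(x+1)$, is available but introduces a spurious additive constant from shifting the antiderivative, so I would avoid it.
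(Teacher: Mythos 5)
Your proof is correct and rests on the same tool as the paper's --- the point-form fundamental theorem of calculus --- but it is organized differently, and the difference is instructive. The paper's proof starts from the chain of nested definite integrals $\ldots \leq \int_{n_{0}}^{n-1} a(x)\,dx \leq \int_{n_{0}}^{n} a(x)\,dx \leq \ldots$ (and, in a separate ``convergence'' case, the reversed chain with a common upper limit $n_{1}$), expands each term by the FTC, and then discards the common point integral $\int^{n_{0}}a(x)\,dx$ as an additive identity; the opening chain itself is asserted without justification. You instead reduce the doubly-infinite chain to one generic adjacent link, write the difference of the two point integrals as the definite integral $\int_{m}^{m+1}a(x)\,dx$ via the same FTC identity, and observe that its nonnegativity is exactly positivity of $a$ on the unit interval --- which is also the unstated fact underlying the paper's first line. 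What your route buys is (i) a single uniform argument in place of the paper's divergence/convergence case split, (ii) an explicit identification that positivity of $a$, not monotonicity, drives the non-strict inequalities (monotonicity only supplies the endpoint bounds and the strictness in the strict case), and (iii) the endpoint estimate $\min(a(m),a(m+1)) \leq \int_{m}^{m+1}a(x)\,dx \leq \max(a(m),a(m+1))$, which is the form Proposition \ref{P063} actually needs. Your closing remark that the strict inequality lives in $*G$ and so is not blocked by $(*G,\lt)\not\mapsto(\overline{\mathbb{R}},\lt)$ is a point the paper does not make but should.
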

\begin{proof} 
 Apply the fundamental theorem of calculus in $*G$ \cite{cebp10}.

Divergence,
 $ \ldots \leq \int^{n-1}_{n_{0}} a(x)\,dx \leq \int^{n}_{n_{0}} a(x)\,dx \leq \int^{n+1}_{n_{0}} a(x)\,dx \leq \ldots|_{n=\infty}$,
 $ \ldots \leq \int^{n-1}a(x)\,dx - \int^{n_{0}} a(x)\,dx \leq \int^{n} a(x)\,dx - \int^{n_{0}} a(x)\,dx \leq \int^{n+1} a(x)\,dx - \int^{n_{0}} a(x)\,dx \leq \ldots|_{n=\infty}$,
 choose $n_{0}$ as an additive identity, 
 $ \ldots \leq \int^{n-1} a(x)\,dx \leq \int^{n} a(x)\,dx \leq \int^{n+1} a(x)\,dx \leq \ldots|_{n=\infty}$.

Convergence,  
 $\ldots \geq \int^{n_{1}}_{n-1} a(x)\,dx \geq \int^{n_{1}}_{n} a(x)\,dx \geq \int^{n_{1}}_{n+1} a(x)\,dx \geq \ldots|_{n=\infty}$, 
 $\ldots \geq \int^{n_{1}} a(x)\,dx - \int^{n-1} a(x)\,dx \geq \int^{n_{1}} a(x)\,dx - \int^{n} a(x)\,dx \geq \int^{n_{1}} a(x)\,dx - \int^{n+1} a(x)\,dx \geq \ldots|_{n=\infty}$, 
 choose $n_{1}$ for an additive identity,
 $\ldots \geq - \int^{n-1} a(x)\,dx \geq - \int^{n} a(x)\,dx \geq - \int^{n+1} a(x)\,dx \geq \ldots|_{n=\infty}$. 
\end{proof}
\bigskip
\begin{prop}\label{P066}
 For a monotonic sequence $a_{n}$ in $*G$.
\[ \ldots \leq \sum a_{n-1} \leq \sum a_{n} \leq \sum a_{n+1} \ldots|_{n=\infty} \]

For a strictly monotonic function, replace the inequality with a strict inequality.
\end{prop}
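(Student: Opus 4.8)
The plan is to mirror the proof of Proposition \ref{P062}, replacing the integral form of the fundamental theorem of calculus by its sum form $\sum_{a}^{b} f_{n} = \sum^{b} f_{n} - \sum^{a} f_{n}$. As in the integral case I would split into a divergence case and a convergence case; the only real difference between the two is which end point of the partial sum is kept as the live quantity and which is discarded as an additive identity.

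For divergence I would fix a reference index $n_{0} \in \mathbb{J}_{\infty}$ lying below every index appearing in the chain and consider the partial sums $\sum_{n_{0}}^{m} a_{k}$ for $m \in \{ \ldots, n-1, n, n+1, \ldots \}$. Since the terms are positive (the standing assumption for the series treated here, and exactly what makes the sequence of partial sums monotonic), appending a term is a nonnegative change, so $\ldots \leq \sum_{n_{0}}^{n-1} a_{k} \leq \sum_{n_{0}}^{n} a_{k} \leq \sum_{n_{0}}^{n+1} a_{k} \leq \ldots|_{n=\infty}$. Expanding by the sum form of the fundamental theorem, $\sum_{n_{0}}^{m} a_{k} = \sum^{m} a_{k} - \sum^{n_{0}} a_{k}$, and since the sum diverges and there is no least diverging infinity, Proposition \ref{P056} (condition E3'.5) lets me decrease $n_{0}$ until $\sum^{n_{0}} a_{k}$ is an additive identity relative to each $\sum^{m} a_{k}$ in the chain. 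Cancelling it gives $\ldots \leq \sum^{n-1} a_{k} \leq \sum^{n} a_{k} \leq \sum^{n+1} a_{k} \leq \ldots|_{n=\infty}$, which is the claim.

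For convergence I would instead fix an upper reference index $n_{1} \in \mathbb{J}_{\infty}$ above every index in the chain and work with the tails $\sum_{m}^{n_{1}} a_{k}$. Adding more leading terms shrinks a positive tail, so $\ldots \geq \sum_{n-1}^{n_{1}} a_{k} \geq \sum_{n}^{n_{1}} a_{k} \geq \sum_{n+1}^{n_{1}} a_{k} \geq \ldots|_{n=\infty}$; writing $\sum_{m}^{n_{1}} a_{k} = \sum^{n_{1}} a_{k} - \sum^{m} a_{k}$ and using Proposition \ref{P055} (condition E3'.6) to choose $n_{1}$ so that $\sum^{n_{1}} a_{k}$ is an additive identity, I obtain $\ldots \geq -\sum^{n-1} a_{k} \geq -\sum^{n} a_{k} \geq -\sum^{n+1} a_{k} \geq \ldots|_{n=\infty}$, i.e. the reversed ordering $\sum^{n-1} a_{k} \leq \sum^{n} a_{k} \leq \ldots$ again. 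The strictly monotonic statement is identical with every non-strict inequality replaced by the corresponding strict one, since then appending a term is a strictly positive change.

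The one point that needs genuine care — and the step I expect to be the main obstacle — is the legitimacy of discarding the reference end point: one must verify that a single choice of $n_{0}$ (respectively $n_{1}$) serves simultaneously for all the adjacent pairs shown in the chain, so that the non-reversible cancellation $a + b = a$ with $a \succ b$ is applied consistently along the whole chain rather than pairwise with shifting thresholds. This uniform choice is precisely what Propositions \ref{P055} and \ref{P056}, together with the ``arbitrarily large / arbitrarily small'' conditions E3'.5--E3'.6, are designed to supply, and it is the same subtlety already dealt with in Proposition \ref{P062}.
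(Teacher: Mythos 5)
Your proposal matches the paper's own proof of Proposition \ref{P066} essentially step for step: the same split into a divergence case (reference index $n_{0}$ below the chain, discarded as an additive identity) and a convergence case (reference index $n_{1}$ above the chain, discarded as an additive identity), both via the sum form of the fundamental theorem of calculus mirroring Proposition \ref{P062}. Your added remark about needing a single uniform choice of $n_{0}$ (resp.\ $n_{1}$) for the whole chain is a reasonable elaboration of what the paper leaves implicit, but it does not change the route.
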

\begin{proof}
Apply for sums, the mirror to the fundamental theorem of calculus in $*G$ \cite{cebp10}.

Divergence:
 $\ldots \leq \sum_{k=n_{0}}^{n-1} a_{k} \leq \sum_{k=n_{0}}^{n} a_{k} \leq \sum_{k=n_{0}}^{n+1} a_{k} \ldots|_{n=\infty}$, 
 $\ldots \leq \sum a_{n-1} - \sum a_{n_{0}} \leq \sum a_{n} - \sum a_{n_{0}} \leq \sum a_{n+1} - \sum a_{n_{0}} \ldots|_{n=\infty}$, 
 choose $n_{0}$ for an additive identity,
 $\ldots \leq \sum a_{n-1} \leq \sum a_{n} \leq \sum a_{n+1} \ldots|_{n=\infty}$, 

Convergence:
 $\ldots \sum_{k=n-1}^{n_{1}} a_{k} \geq \sum_{k=n}^{n_{1}} a_{k} \geq \sum_{k=n+1}^{n_{1}} a_{k} \geq \ldots$,
 $\ldots \geq \sum a_{n_{1}} - \sum a_{n-1} \geq \sum a_{n_{1}} - \sum a_{n} \geq \sum a_{n_{1}} - \sum a_{n+1} \ldots|_{n=\infty}$, choose $n_{1}$ as an additive identity,
 $\ldots \geq -\sum a_{n-1} \geq -\sum a_{n} \geq -\sum a_{n+1} \ldots|_{n=\infty}$ 
\end{proof}
\bigskip
\begin{prop}\label{P063}
 For monotonic function $a(n)$ and sequence $a_{n}$: $a(n)=a_{n}$ in $*G$.
\[ \sum a_{n} \leq \int^{n}a(x)\,dx \leq \sum a_{n+1} \leq \int^{n+1} a(x)\,dx|_{n=\infty} \]

For a strictly monotonic function,
 replace the inequality with a strict inequality.
\end{prop}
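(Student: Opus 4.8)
The plan is to carry out the classical integral-test sandwich, but at the single point $n=\infty$ in $*G$. The one extra ingredient beyond Propositions \ref{P062} and \ref{P066} is the \emph{single-interval} comparison for a monotonic function: on $[k,k+1]$ the value $a(x)$ always lies between the endpoint values $a_k$ and $a_{k+1}$, so $\min(a_k,a_{k+1}) \le \int_k^{k+1} a(x)\,dx \le \max(a_k,a_{k+1})$; equivalently, for the decreasing case (which matches the direction of the stated chain) $a_{k+1} \le \int_k^{k+1} a(x)\,dx \le a_k$ and also $a_k \le \int_{k-1}^{k} a(x)\,dx$, with all inequalities strict when $a$ is strictly monotonic since then $a$ is non-constant on a subinterval of positive length. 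The increasing case is dual.

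Next I would telescope. Using the fundamental theorem of calculus in $*G$, as in the proof of Proposition \ref{P062}, write $\int^{n} a(x)\,dx - \int^{n_0} a(x)\,dx = \sum_{k=n_0}^{n-1}\int_k^{k+1} a(x)\,dx$, and using its sum mirror \cite{cebp10}, as in Proposition \ref{P066}, write the partial sums of $a_k$ as differences of the same form. Summing the single-interval inequalities over $k=n_0,\dots,n-1$ then traps $\int^{n} a(x)\,dx - \int^{n_0} a(x)\,dx$ between two partial sums of the $a_k$ differing by an index shift, and similarly with the upper rather than lower reference for the convergent case, so that a tail and not a head is being bounded. Choosing $n_0$ (resp.\ the upper reference) to be an additive identity at infinity, exactly the device used in the proofs of Propositions \ref{P062}, \ref{P066}, \ref{P042}, \ref{P043}, collapses the reference terms and leaves a clean two-sided bound relating $\int^{n} a(x)\,dx$ to the consecutive values $\sum a_{n}$ and $\sum a_{n\pm 1}$.

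Finally I would interleave. Proposition \ref{P062} gives $\int^{n} a(x)\,dx \le \int^{n+1} a(x)\,dx$ and Proposition \ref{P066} gives $\sum a_{n} \le \sum a_{n+1}$; combining these with the two-sided bound just obtained, together with its image under $n\mapsto n+1$, assembles the full chain $\sum a_{n} \le \int^{n} a(x)\,dx \le \sum a_{n+1} \le \int^{n+1} a(x)\,dx$. Strictness for strictly monotonic $a$ follows because each link is fed by a single-interval inequality that is itself strict.

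I expect the genuine work to be bookkeeping rather than ideas: pinning down, from the paper's conventions, which of $\sum a_{n-1}$, $\sum a_{n}$, $\sum a_{n+1}$ matches which side of each integral bound (the raw estimate is tight only up to one index step, so recovering the stated form also uses monotonicity of the partial sums), and checking that the additive-identity truncation of the reference terms is simultaneously legitimate in the increasing/decreasing and divergent/convergent sub-cases --- that is, reproducing, in each sub-case, the case split already made in the proofs of Propositions \ref{P062} and \ref{P066}.
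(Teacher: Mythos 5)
Your proposal is correct and follows essentially the same route as the paper: the paper's proof likewise starts from the interval comparison $\sum_{k=n_{0}}^{n} a_{k} \leq \int_{n_{0}}^{n}a(x)\,dx \leq \sum_{k=n_{0}}^{n+1} a_{k} \leq \int_{n_{0}}^{n+1} a(x)\,dx$ (justified by the geometric construction in the figures, i.e.\ your telescoped single-interval bounds), rewrites each term via the fundamental theorem of calculus and its sum mirror as a difference of point evaluations, and collapses the reference term at $n_{0}$ (divergent case) or $n_{1}$ (convergent case) as an additive identity. Your write-up merely makes explicit the per-interval estimate that the paper delegates to the figures.
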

\begin{proof}
 From the geometric construction (see figures \ref{fig:FIG01} and \ref{fig:FIG02}),
 applying the fundamental theorem
 of calculus and sums,
 with a choice of the
 second integrand to be an additive identity. 

Divergence case:
 $\sum_{k=n_{0}}^{n} a_{n} \leq \int_{n_{0}}^{n}a(x)\,dx \leq \sum_{k=n_{0}}^{n+1} a_{k} \leq \int_{n_{0}}^{n+1} a(x)\,dx|_{n=\infty}$,
 $\sum a_{n} - \sum a_{n_{0}} \leq \int^{n}a(x)\,dx - \int^{n_{0}}a(x)\,dx \leq \sum a_{n+1} - \sum a_{n_{0}} \leq \int^{n+1} a(x)\,dx - \int^{n_{0}} a(x)\,dx|_{n=\infty}$, 
 $\sum a_{n} \leq \int^{n}a(x)\,dx \leq \sum a_{n+1} \leq \int^{n+1} a(x)\,dx|_{n=\infty}$, 

Convergence case:
 $\sum_{k=n}^{n_{1}} a_{k} \geq \int_{n}^{n_{1}} a(x)\,dx \geq \sum_{k=n+1}^{n_{1}} a_{k} \geq  \int_{n+1}^{n_{1}} a(x)\,dx|_{n=\infty}$,    
 $\sum a_{n_{1}} -\sum a_{n} \geq \int^{n_{1}} a(x)\,dx - \int^{n} a(x)\,dx \geq \sum a_{n_{1}} - \sum a_{n+1} \geq  \int^{n_{1}} a(x)\,dx - \int^{n+1} a(x)\,dx|_{n=\infty}$,    
 $-\sum a_{n} \geq -\int^{n} a(x)\,dx \geq -\sum a_{n+1} \geq -\int^{n+1} a(x)\,dx|_{n=\infty}$.    
\end{proof}

Further assumptions about the steady state
 are that successive terms reach the
 same state at infinity. 
 This may seem contradictory,
 but it is a property of non-uniqueness
 at infinity. We can have $n \lt n+1 \lt n+2 \lt \ldots|_{n=\infty}$
 and $n = n+1 = n+2 = n+3|_{n=\infty}$ after approximation where the
 overriding magnitude dominates.
 We would say that they are just two different views of the
 same event.

 Another example,
 $(n+1)^{2} \gt n^{2}|_{n=\infty}$,
 but we can find equality in the leading coefficient,
 $n^{2} + 2n + 1 == n^{2}|_{n=\infty}$ as $n^{2} \succ 2n+1$.

 Such a view of magnitude leads to the following steady state interpretation at infinity.
\bigskip
\begin{prop}\label{P064}
\[ \sum a_{n} = \sum a_{n+1}|_{n=\infty} \]
\[ \int^{n} a(x) \,dx = \int^{n+1} a(x)\,dx|_{n=\infty} \]
\end{prop}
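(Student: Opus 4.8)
The plan is to handle both identities through a single device: the fundamental theorem of calculus in $*G$ for the integral line, and its sum analogue for the series line (the same tools already invoked for Propositions \ref{P062} and \ref{P066}), which rewrite the one-step difference as a single slice, and then to show that slice is an additive identity for the accumulated quantity at infinity, so that non-reversible arithmetic $a+b=a|_{n=\infty}$ (valid when $a \succ b$) absorbs it.

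First, for the integral, write $\int^{n+1}a(x)\,dx = \int^{n}a(x)\,dx + \int_{n}^{n+1}a(x)\,dx$. Monotonicity of $a$ gives $0 \leq \int_{n}^{n+1}a(x)\,dx \leq \max\{a(n),a(n+1)\}$, so the increment is dominated by a single term. Then split by Criterion E3 via Proposition \ref{P049} and Theorem \ref{P044}: if the integral converges, $\int^{n}a(x)\,dx$ tends to a real constant while the increment is of strictly lower order (one term of a tail whose total is infinitesimal), hence an additive identity, giving $\int^{n+1}a(x)\,dx = \int^{n}a(x)\,dx|_{n=\infty}$; if it diverges, $\int^{n}a(x)\,dx \in \Phi^{-1}$ and $\int^{n+1}a(x)\,dx \in \Phi^{-1}$, and for integrands of sub-exponential growth the increment is again of strictly lower order and is absorbed, while for faster growth both sides are infinities that agree once $\Phi^{-1} \mapsto \infty$. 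The sum statement is identical with the single term $a_{n+1}$ in place of the slice and Theorem \ref{P057} in place of \ref{P044}, so that $\sum^{n+1}a_{k} = \sum^{n}a_{k} + a_{n+1}|_{n=\infty}$.

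A shorter, order-theoretic route worth recording: Propositions \ref{P062} and \ref{P066} already place the quantities in a monotone chain $\ldots \leq \int^{n}a(x)\,dx \leq \int^{n+1}a(x)\,dx \leq \ldots|_{n=\infty}$ (and likewise for the sums), while Criterion E3 forces the convergence integral / convergence sum to realize to a single element of $\{0,\infty\}$ (Proposition \ref{P049}); two neighbours of a monotone chain that both realize to the same value are equal at infinity. Proposition \ref{P063} then keeps the interleaved sum and integral statements in step with each other.

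The main obstacle is the borderline divergent case: integrands or terms growing at least geometrically — $e^{x}$ is the prototype, where $\int^{n+1}e^{x}\,dx = e\int^{n}e^{x}\,dx$ — for which the one-step increment has the same order of magnitude as the accumulated quantity, so the two sides are genuinely distinct as fine $*G$-values and coincide only after realization. Handling this cleanly requires exactly the non-uniqueness of arithmetic at infinity highlighted just before the statement (the chains $n < n+1 < \cdots$ and $n = n+1 = \cdots$ being ``two views of the same event''); the proof must therefore be explicit that the equality asserted is the one surviving the transfer $(*G,\Phi,\Phi^{-1}) \mapsto (\overline{\mathbb{R}},0,\infty)$, not an identity of sharp $*G$-values.
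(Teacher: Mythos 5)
Your second, order-theoretic route is essentially the paper's own proof: the paper observes that by Criterion E3 and Proposition \ref{P049} the convergence sum is an infinireal, and then simply notes that in either case ($\Phi \mapsto 0$ or $\Phi^{-1} \mapsto \infty$) consecutive sums ``obtain equality'' after realization --- nothing more. Your primary route, the decomposition $\int^{n+1}a(x)\,dx = \int^{n}a(x)\,dx + \int_{n}^{n+1}a(x)\,dx$ with the increment absorbed by non-reversible arithmetic, is a genuinely different and sharper attempt: where it works it yields an identity of $*G$-values up to a lower-order term, not merely equality after the transfer. You are right that it breaks down for at-least-geometric growth ($\int^{n+1}e^{x}\,dx = e\int^{n}e^{x}\,dx$), and your fallback there --- equality only after $(*G,\Phi,\Phi^{-1}) \mapsto (\overline{\mathbb{R}},0,\infty)$ --- is exactly what the paper's proof asserts globally; your write-up is the more honest of the two about what the proposition actually claims. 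One small correction to your convergent case: in this framework $\int^{n}a(x)\,dx|_{n=\infty}$ for a convergent integral is an infinitesimal in $\Phi$, not a quantity tending to a real constant (e.g.\ $\int^{n}x^{-2}\,dx = -\tfrac{1}{n}$), so the absorption argument there is really that both sides lie in $\Phi$ and realize to $0$, which collapses your first route back onto the second in that case as well.
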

\begin{proof}
 Since the sequence of terms is monotonic,
 any two consecutive terms are more equal to
 each other than terms further way
 as the sequence increases.
 In this context we define equality 
 even when the derivative is diverging.

 Case $\sum a_{n} \in \Phi$, $\Phi \mapsto 0$ and consecutive sums
 obtain equality.
 Case $\sum a_{n} \in \Phi^{-1}$, $\Phi^{-1} \mapsto \infty$ and consecutive sums
 obtain equality.
\end{proof}
Since consecutive sums sandwich between the integral,
 and consecutive integrals sandwich between
 the sum, 
 if one converges 
 or diverges so does the other.
\bigskip
\begin{theo}\label{P065} 
The integral test in both directions, interchanging sums and integrals
 at infinity. $*G \mapsto \overline{\mathbb{R}}_{\infty}$
\[ \sum a_{n} = \int^{n} a(x)\,dx|_{n=\infty} \]
\end{theo}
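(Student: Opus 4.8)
The plan is to derive the identity as a squeeze, combining the chain of inequalities of Proposition \ref{P063} with the steady-state collapse of consecutive sums and integrals from Proposition \ref{P064}. Proposition \ref{P063} already supplies, for monotonic $a(n)$ with $a(n)=a_n$ at integer points,
\[ \sum a_{n} \leq \int^{n}a(x)\,dx \leq \sum a_{n+1} \leq \int^{n+1} a(x)\,dx|_{n=\infty}, \]
so $\int^{n}a(x)\,dx$ is trapped between $\sum a_{n}$ and $\sum a_{n+1}$, and $\sum a_{n+1}$ is trapped between $\int^{n}a(x)\,dx$ and $\int^{n+1}a(x)\,dx$. Everything below is carried out after the realization $*G \mapsto \overline{\mathbb{R}}_{\infty}$ named in the statement.

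First I would apply Proposition \ref{P064}, which in $\overline{\mathbb{R}}_{\infty}$ gives $\sum a_{n} = \sum a_{n+1}|_{n=\infty}$ and $\int^{n}a(x)\,dx = \int^{n+1}a(x)\,dx|_{n=\infty}$. Substituting $\sum a_{n+1}=\sum a_{n}$ into the first two inequalities of the chain collapses them to $\sum a_{n}\leq\int^{n}a(x)\,dx\leq\sum a_{n}$, hence $\int^{n}a(x)\,dx=\sum a_{n}|_{n=\infty}$, the asserted identity. For the ``both directions'' content I would run the argument from the other end: substituting $\int^{n+1}a(x)\,dx=\int^{n}a(x)\,dx$ into the last two inequalities gives $\int^{n}a(x)\,dx\leq\sum a_{n+1}\leq\int^{n}a(x)\,dx$, so $\sum a_{n+1}=\int^{n}a(x)\,dx|_{n=\infty}$ as well; the interchange can thus be started from either the sum or the integral. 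Combining with Proposition \ref{P049}, both $\sum a_{n}$ and $\int^{n}a(x)\,dx$ are forced into $\{0,\infty\}$ and they agree, so $\sum a_{n}|_{n=\infty}\in\Phi$ (convergence by E3') precisely when $\int^{n}a(x)\,dx|_{n=\infty}\in\Phi$ (convergence by E3), and likewise with $\Phi^{-1}$ for divergence — that is the integral test in both directions.

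The main obstacle is the legitimacy of invoking Proposition \ref{P064} here. In $*G$ the chain inequalities are typically strict — $\sum a_{n}\prec\sum a_{n+1}$ in the divergent case and $\sum a_{n}\succ\sum a_{n+1}$ in the convergent case — so the collapse is unavailable in $*G$; it is only the realization $*G \mapsto \overline{\mathbb{R}}_{\infty}$ that converts these into equalities. Hence the squeeze has to be performed after transfer, and one must check that the non-strict relations of Proposition \ref{P063} survive it, which they do by the rule recorded after Theorem \ref{P039} that $(*G,\leq)\mapsto(\overline{\mathbb{R}},\leq)$ — precisely the reason Proposition \ref{P063} was phrased with $\leq$ rather than $\prec$. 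A secondary point to confirm is that threading a continuous monotonic $a(x)$ through the sequence points (as in Theorem \ref{P038}) is always possible for the monotonic inputs the E3 criteria assume, so that Proposition \ref{P063} genuinely applies.
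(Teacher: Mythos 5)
Your proposal is correct and follows essentially the same route as the paper's own proof: both rest on the sandwich chain of Proposition \ref{P063} together with the steady-state equalities $\sum a_{n}=\sum a_{n+1}|_{n=\infty}$ and $\int^{n}a(x)\,dx=\int^{n+1}a(x)\,dx|_{n=\infty}$ of Proposition \ref{P064}, squeezing first the integral between consecutive sums and then the sum between consecutive integrals to get the identity in both directions. Your added remarks on why the collapse must happen after the realization $*G \mapsto \overline{\mathbb{R}}_{\infty}$ and on the preservation of $\leq$ under transfer make explicit a point the paper leaves implicit, but they do not change the argument.
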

\begin{proof}
 Since both sequences $(\sum a_{n})|_{n=\infty}$ and $(\int^{n} a(n)\,dn)|_{n=\infty}$
 are monotonically increasing, Proposition \ref{P063},
 these inequalities show that both are bounded above or both are unbounded. 
 Therefore,
 both sequences converge or both diverge.

 $s \in \{ 0, \infty \}$;
 $s' \in \{ 0, \infty\}$;
 By Proposition \ref{P064}, let
 $s = \sum a_{n}|_{n=\infty} = \sum a_{n+1}|_{n=\infty}$.
 By Proposition \ref{P063},
 $s \leq \int^{n} a(x)\,dx|_{n=\infty} \leq s$
 then $\int^{n} a(x)\,dx|_{n=\infty} = s$.
 Similarly 
 By Proposition \ref{P064}, 
 let $s' = \int a(x)\,dx|_{x=\infty}$.
 By Proposition \ref{P063}, $s' \leq \sum a_{n}|_{n=\infty} \leq s'$,
 $\sum a_{n}|_{n=\infty}=s'$
\end{proof}
\subsection{Convergence integral testing}\label{S09}
Identify the singularity points in the domain. If any of these
 diverge, the integral diverges. If they all converge,
 the integral converges.
\bigskip
\begin{mex} 
 \cite[Problem 1554, p.145]{demidovich}
Test $\int_{-\infty}^{\infty} \frac{dx}{1+x^{2}}$ for convergence 
 or divergence.
 Consider the singularity points $x=\pm \infty$.
 Use non-reversible arithmetic,
 as $x^{2} \succ 1|_{x=\infty}$
 then $x^{2}+1 = x^{2}|_{x=\infty}$.

 $\int^{\infty} \frac{dx}{1+x^{2}}$
 $=\int^{x} \frac{dx}{x^{2}}|_{x=\infty}$
 $= -\frac{1}{x}|_{x=\infty}$
 $=0$ converges.  Similarly,
 $\int_{-\infty}\frac{dx}{1+x^{2}}$
 $=\int_{-\infty}\frac{dx}{x^{2}}=0$
 converges.
\end{mex}
\bigskip
\begin{mex}
 \cite[Example 3, p.144]{demidovich}
Test $\int_{0}^{\infty} e^{-x^{2}}dx$ for convergence.

 Test the point of discontinuity $x=\infty$,

 A solution by Proposition \ref{P053}.
 $\mathrm{ln}\,x \; z \; x^{2}|_{x=\infty}$,
 $\mathrm{ln}\,x \prec x^{2}|_{x=\infty}$
 then
 $\int^{x} e^{-x^{2}}\,dx|_{x=\infty}$
 $=0$ converges and the initial integral converges.

 By a comparison against a known convergent integral,
 $p \gt 1$,
 $\int e^{-x^{2}}\,dx \; z \; \int \frac{1}{x^{p}}dx|_{x=\infty}$,
 $e^{-x^{2}} \; z \; \frac{1}{x^{p}}|_{x=\infty}$,
 $-x^{2} \; (\mathrm{ln}\,z) \; -p \,\mathrm{ln}\,x|_{x=\infty}$,
 $-x^{2} \; (\mathrm{ln}\,z) \; 0|_{x=\infty}$,
 $-x^{2} \succ 0|_{x=\infty}$,
 $e^{-x^{2}} \prec 1|_{x=\infty}$,
 $z = \; \prec = \leq$, and the integral converges.
\end{mex}
\bigskip
\begin{mex}
 \cite[Example 5, p.145]{demidovich}
Test for convergence of the elliptic integral
 $\int_{0}^{1} \frac{dx}{\sqrt{1-x^{4}}}$.

 The point of discontinuity of the integrand is $x=1$.
 Expand and use non-reversible arithmetic,
 $(1-x)^{4}  = 1 -4x + 6x^{2}-4x^{3}+x^{4}|_{x=0}$
 $=1-4x|_{x=0}$,
 then
 $(1-(x-1)^{4})|_{x=0}$
 $= -4x|_{x=0}$,
 
 $\int^{1^{-}} \frac{dx}{(1-x^{4})^{\frac{1}{2}}}$ 
 $=\int^{0^{-}} \frac{dx}{(1-(1+x)^{4})^{\frac{1}{2}}}$
 $=\int^{0^{-}} \frac{dx}{(-4x)^{\frac{1}{2}}}$
 $=-\int^{x} \frac{1}{2} \frac{dx}{x^{\frac{1}{2}}}|_{x=0}$
 $= x^{\frac{1}{2}}|_{x=0}=0$ converges.
\end{mex}
\subsection{Convergence tests}\label{S07}
The following is an exploration of sums
 convergence tests
 where the tests are rewritten
 with respect to the 
 \textbf{E3} convergence criteria.
 These tests are derived.
 Known results are 
 derived and re-written
 in terms of the new theory.
 This also helps to demonstrate
 the theory from a theoretical point of view.

It is assumed unless otherwise stated
 that the series being tested is monotonic.
 That is, given a series at infinity $\sum a_{n}|_{n=\infty}$,
 we can construct an associated sequence of terms
 from the series,
 $(a_{n})|_{n=\infty}$. We require
 this sequence to be monotonic.

 A notable exception is the 
 Alternating convergence test,
 which has a requirement that
 the general term is $(-1)^{n}a_{n}$,
 then $(a_{n})|_{n=\infty}$ is monotonic. 
 
Mathematics often implicitly
 works with infinitesimals and infinities
 but does not declare this, for example
 in the calculation of limits.
 When a reference is made, it is 
 usually to the extended reals $\overline{\mathbb{R}}$,
 however these do not explicitly declare
 or state infinitesimals or infinities,
 but $\pm \infty$ the number.

We do not necessarily mind the implicit use,
 however to be
 more descriptive, we generally
 reason in $*G$,
 and project back and state the proposition or theorem
 in $\mathbb{R}/\overline{\mathbb{R}}$. 
 As a default, we have done this for 
 the convergence tests in this paper.

 However, 
 there is a difference
 between transferring a result for $*G$ to  
 $\mathbb{R}/\overline{\mathbb{R}}$, 
 the $\lt$ relation.

Consider $*G: \;n^{2} \lt n^{3}|_{n=\infty}$ projected to
 $\overline{\mathbb{R}}$ then $\infty \lt \infty$
 is a contradiction.
 A similar contradiction occurs when
 we project an infinitesimal
 relation $\frac{1}{n^{3}} \lt \frac{1}{n^{2}}|_{n=\infty}$
 then $0 \lt 0$ contradicts. 

 Pragmatically, if we use $(*G, \lt) \mapsto (\mathbb{R}/\overline{\mathbb{R}},\leq)$
 then such contradictions can be minimized.
 (See \cite[Theorem 4.4]{cebp21})
 
 We can geometrically understand
 this as $*G$ with infinitesimals and infinities,
 being a much more dense space.
 We may have
 an infinity of curves infinitely close
 to each other in $*G$ project back (by
 infinitesimal truncation)
 to a single curve in $\mathbb{R}$
 \cite[Example 3.20]{cebp21}.

 However, the projection of infinitesimals to $0$ and positive infinities to $\infty$,
 in a contradictory way allows us to sandwich diverging infinities,
 or infinitesimals which are growing apart infinitesimally sandwiched to $0$.
 And hence we say
 $\sum a_{n}|_{n=\infty}=0 \text{ or } \infty$.
\subsubsection{p-series test}\label{S0701}
\begin{theo}\label{P054}
\[ \sum \frac{1}{n^{p}}|_{n=\infty}=0
 \text{ converges when } p \gt 1 \text{ and} \]  
\[ \sum \frac{1}{n^{p}}|_{n=\infty}=\infty
 \text{ diverges when } p \leq 1 \]
\[ \int \frac{1}{x^{p}} \,dx|_{x=\infty}=0
 \text{ converges when } p \gt 1 \text{ and} \] 
\[ \int \frac{1}{x^{p}} \, dx|_{x=\infty}=\infty
 \text{ diverges when } p \leq 1 \]
\end{theo}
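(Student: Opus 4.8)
The plan is to prove the integral statement by direct antidifferentiation at a point and then harvest the series statement for free from the two-directional integral test, Theorem \ref{P065}, which gives $\sum a_n = \int^n a(x)\,dx|_{n=\infty}$. Before invoking Criterion E3 I would first check that $a(x)=1/x^p$ meets its hypotheses: for every real $p$ the function $1/x^p$ is monotonic at infinity (strictly decreasing for $p>0$, constant for $p=0$, strictly increasing for $p<0$), so E3.1 is satisfied, the object $\int a(x)\,dx|_{x=\infty}$ is an infinireal, and it only remains to locate it in $\Phi$ or in $\Phi^{-1}$ and then realize.

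Next I would split into the cases $p\neq 1$ and $p=1$. For $p\neq 1$ the power rule of integration at a point (used implicitly already in Section \ref{S0108}) gives $\int \frac{1}{x^p}\,dx|_{x=\infty} = \frac{1}{1-p}\,x^{1-p}|_{x=\infty}$. Appealing to the scale of infinities $c \prec \mathrm{ln}\,x \prec x^{q}|_{q>0} \prec \ldots$ quoted in Section \ref{S0102}: when $p>1$ the exponent $1-p$ is negative, so $x^{1-p}|_{x=\infty}\in\Phi$; multiplying by the finite nonzero constant $\frac{1}{1-p}$ leaves an infinitesimal, so the integral lies in $\Phi$ and converges by E3.4, realizing to $0$, and since $\int^{n}\frac{1}{x^p}\,dx = \frac{1}{(1-p)n^{p-1}}$ can be made arbitrarily small by enlarging $n$, condition E3.6 holds. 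When $p<1$ the exponent $1-p$ is positive, so $x^{1-p}|_{x=\infty}\in\Phi^{-1}$, the integral lies in $\Phi^{-1}$ and diverges by E3.3, realizing to $\infty$, and $\frac{n^{1-p}}{1-p}$ can be made arbitrarily large, so E3.5 holds.

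For the leftover case $p=1$, antidifferentiation gives $\int \frac{1}{x}\,dx|_{x=\infty} = \mathrm{ln}\,x|_{x=\infty}$, which is an infinity by the same scale ($c \prec \mathrm{ln}\,x|_{x=\infty}$), hence lies in $\Phi^{-1}$ and diverges by E3.3; this is precisely Example \ref{MEX003}. I would then transfer all conclusions from $*G$ to $\mathbb{R}/\overline{\mathbb{R}}$ via $\Phi\mapsto 0$ and $\Phi^{-1}\mapsto\infty$, and read off the two series statements from Theorem \ref{P065}.

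The step that needs genuine care rather than routine calculation is the range $p\leq 1$, and in particular $p\leq 0$, where $1/n^p$ does not tend to zero: one must be certain that the criterion is still being applied to a legitimately monotonic input (it is, since $n^{-p}$ is monotonic increasing) and that the divergence reported through E3.3/E3.5 is of the genuine diverging-magnitude kind and not the degenerate undefined kind flagged in Section \ref{S0108} (cf.\ Example \ref{MEX011}). The closed form $\frac{n^{1-p}}{1-p}$ makes this transparent, which is exactly why routing through the integral via Theorem \ref{P065}, rather than arguing on the partial sums directly, is the right move; the only external fact I rely on beyond the E3 machinery is that the power rule for $\int x^{-p}\,dx$ remains valid when evaluated at a point at infinity, which the paper has already taken as given.
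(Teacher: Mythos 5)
Your proposal is correct and follows essentially the same route as the paper: convert the sum to the integral at a point via the integral test, antidifferentiate with the power rule to get $\frac{1}{1-p}x^{1-p}|_{x=\infty}$, split off the case $p=1$ where $\mathrm{ln}\,x|_{x=\infty}=\infty$, and classify the result as infinitesimal or infinity. Your additional checks of the E3 conditions (monotonicity for all real $p$, and E3.5/E3.6 for the $p\leq 0$ range) are more careful than the paper's own proof but do not change the argument.
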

\begin{proof}
The p-series sum test follows from applying the
 integral test and considering the
 convergence of the analogous integral .

Since the sum has no singularity
 except at infinity, hence this is the only
 place that the sum can diverge.
 $\sum \frac{1}{n^{p}}|_{n=\infty} = \int \frac{1}{x^{p}} \,dx|_{x=\infty} = \frac{1}{-p+1} \frac{1}{x^{p-1}}|_{x=\infty}$
 when $p \neq 1$. 
$p \lt 1$ then 
 $\frac{1}{-p+1} \frac{1}{x^{p-1}}|_{x=\infty}= \infty$ diverges, 
 $p \gt 1$ then $\frac{1}{-p+1} \frac{1}{x^{p-1}}|_{x=\infty}= 0$ converges.
 When $p=1$, $\sum \frac{1}{n} = \int \frac{1}{x}\,dx|_{x=\infty}$
 $=\mathrm{ln}\,x|_{x=\infty} = \infty$ which diverges.
\end{proof}
\bigskip
\begin{mex2}\label{MEX026}  
The p-test often uses infinitary calculus.
 $\sum \frac{1}{n(n+1)}|_{n=\infty}$
 $= \sum \frac{1}{n^{2}+n}|_{n=\infty}$
 $=\sum \frac{1}{n^{2}}|_{n=\infty}=0$ converges. 
 Perhaps a trickier but equally valid way, $n = n+1|_{n=\infty}$ 
 then $n(n+1)=n^{2}|_{n=\infty}$ and the simplification follows.
\end{mex2}
\bigskip
\begin{mex2}\label{MEX027}  
 $\sum \frac{5n+2}{n^{3}+1}|_{n=\infty}$
 $= \sum \frac{5n}{n^{3}}|_{n=\infty}$
 $= \sum \frac{5}{n^{2}}|_{n=\infty}=0$ converges.
 Alternatively see Example \ref{MEX028}.
\end{mex2}

 We can convert the integral into a series, for example power series
 or a Riemann sum and test the integral as a series.
 However, we often convert from a series to an integral
 because it is easier work with continuous functions.
 For example, to an integral apply the chain rule.
\subsubsection{Power series tests}\label{S0702} 
While this is not generally called a test,
 possibly because $x=0$ is almost always a solution,
 hence there generally is a convergence at $x=0$.
 As the method uses other tests at the interval end points,
 and by its nature it has been included.
 Power series convergence sums (Section \ref{S14}) 
 is more detailed.
\bigskip
\begin{theo}\label{P029}
Transform
 the sum  
 $\sum a_{n}x^{n} = \sum (b_{n}x)^{n}|_{n=\infty}$.
 For convergence 
 $\sum (b_{n}x)^{n}|_{n=\infty}=0$, 
 solving for $|b_{n}x| \lt 1$, 
 the radius of convergence $r = \frac{1}{|b_{n}|}|_{n=\infty}$
 If $r$ exists $x=(-r,r)$ converges. For the interval of convergence
 the end points need to be tested. (Section \ref{S14}) 
\end{theo}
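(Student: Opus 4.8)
The plan is to reduce the power series to a geometric series at infinity via the substitution that defines $b_{n}$, namely $b_{n}^{n}=|a_{n}|$ (so $b_{n}=|a_{n}|^{1/n}\geq 0$), and then apply Criterion E3 together with the comparison-at-infinity machinery of Section \ref{S05}. First I would pass to absolute values using the absolute convergence test (Test \ref{S0706}): it suffices to show $\sum (b_{n}x)^{n}|_{n=\infty}=0$, since $|a_{n}x^{n}|=(b_{n}|x|)^{n}$ and, by condition E3.4, this forces convergence of $\sum a_{n}x^{n}$. The guiding observation is that $(b_{n}x)^{n}$ is a geometric term whose base $b_{n}x$ is being evaluated at infinity, so everything hinges on the value $c:=b_{n}x|_{n=\infty}$.

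Next I would make the geometric step precise under the hypothesis that $r$ exists, which I read as: $b_{n}|_{n=\infty}$ is a finite positive real $b$ (obtained, as usual, by threading a monotonic function through the sequence $(b_{n})$ so that E3 applies). Then $(b_{n}x)^{n}|_{n=\infty}=(bx)^{n}|_{n=\infty}=c^{n}|_{n=\infty}$ with $c=bx$, and its sum at infinity is evaluated by the integral $\int c^{x}\,dx|_{x=\infty}=\frac{c^{x}}{\ln c}|_{x=\infty}$, which lies in $\Phi$ (equals $0$) precisely when $|c|\lt 1$ and equals $\infty$ when $|c|\gt 1$. Hence $\sum (b_{n}x)^{n}|_{n=\infty}=0$ iff $|b_{n}x|\lt 1|_{n=\infty}$; solving this inequality for $x$ at infinity gives $|x|\lt \frac{1}{|b_{n}|}|_{n=\infty}$, which is exactly the definition $r=\frac{1}{|b_{n}|}|_{n=\infty}$. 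Therefore the series converges absolutely for every $x\in(-r,r)$, and for $|x|\gt r$ we have $|b_{n}x|\gt 1|_{n=\infty}$, so $(b_{n}x)^{n}|_{n=\infty}\notin\Phi$ and the nth-term requirement of E3 fails, giving divergence. At $x=\pm r$ one gets $|b_{n}x|=1|_{n=\infty}$, the boundary case $|c|=1$ for which the geometric comparison is indeterminate (e.g. $\sum x^{n}/n$ versus $\sum x^{n}/n^{2}$ both have $r=1$ but behave differently at $x=\pm 1$), so the endpoints must be tested separately, matching the statement.

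The main obstacle is the hypothesis ``if $r$ exists'': in general $b_{n}=|a_{n}|^{1/n}$ need not settle to a single value at infinity, and then $\frac{1}{|b_{n}|}|_{n=\infty}$ is not a real number, the dichotomy between $|b_{n}x|\lt 1$ and $|b_{n}x|\gt 1$ collapses, and one genuinely needs a largest-limit-point (Cauchy--Hadamard) argument. I would dispose of this by invoking the ``if $r$ exists'' proviso as above, and defer the oscillatory case to the dedicated treatment in Section \ref{S14}. A secondary, milder point is that the geometric step is morally the nth root test (Theorem \ref{P234}), proved only later; to keep the argument self-contained I would run the comparison with $\sum c^{n}$ directly through the integral $\int c^{x}\,dx$ rather than cite that test, and use Theorems \ref{P038}--\ref{P039} only to transfer the resulting relation back from the inner terms to the sums.
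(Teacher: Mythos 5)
Your proposal is correct and follows the same top-level decomposition as the paper --- rewrite $\sum a_{n}x^{n}$ as $\sum (b_{n}x)^{n}$ with $b_{n}=a_{n}^{\frac{1}{n}}$ and reduce everything to the geometric criterion $|b_{n}x| \lt 1$ --- but you establish the geometric step by a different key lemma. The paper's proof (given for the verbatim restatement, Theorem \ref{P200}) simply cites Theorem \ref{P027}, which shows $\sum x^{n}|_{n=\infty}=0$ for $|x|\lt 1$ by comparing against the convergent p-series $\sum \frac{1}{n^{\alpha}}$ and solving for the relation $z$ with the logarithmic machinery; you instead evaluate $\sum c^{n}|_{n=\infty}$ directly as $\int c^{x}\,dx|_{x=\infty} = \frac{c^{x}}{\mathrm{ln}\,c}|_{x=\infty}$ and read off whether the result is an infinitesimal or an infinity. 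Both routes are legitimate within the E3 framework (yours passes through the integral test, Theorem \ref{P065}, rather than the comparison-at-infinity machinery of Section \ref{S05}), and yours is somewhat more self-contained in that it avoids leaning on the nth root test. You also make explicit two things the paper's proof leaves implicit or places elsewhere: the reduction to absolute values (which the paper folds into Definition \ref{DEF009}) and the divergence half for $|x|\gt r$ via the nth-term test (which the paper isolates as Proposition \ref{P033}). Your reading of ``if $r$ exists'' as $b_{n}|_{n=\infty}$ settling to a finite positive value is consistent with how the paper uses that hypothesis, and your caveat that the oscillatory case would require a largest-limit-point (Cauchy--Hadamard) argument is a fair one that the paper's own proof does not address either.
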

\bigskip
\begin{mex2}\label{MEX032}
Determine the radius of convergence 
 and the interval of convergence of the following power
 series.
 $\sum \frac{n x^{n}}{2^{n+1}}$,
 the only place divergence is taking place is at the point at infinity.
 $\sum \frac{n x^n}{2^{n+1}} |_{n=\infty}$ 
 $= \sum \frac{1}{2} n (\frac{x}{2})^{n}|_{n=\infty}$ 
 $= \sum (n^{\frac{1}{n}} \frac{x}{2})^{n} |_{n=\infty}$ 
 $= \sum (\frac{x}{2})^{n}|_{n=\infty} =0$ when
 $| \frac{x}{2} | \lt 1$, 
 $| x | \lt 2$,  
 $r=2$.
 Investigating the end points,
 case $x=2$, $\sum \frac{n 2^{n}}{2^{n+1}}|_{n=\infty}$
 $=\sum n|_{n=\infty}=\infty$ diverges.
 Case $x=-2$, $\sum \frac{n (-2)^{n}}{2^{n+1}}|_{n=\infty}$
 $=\sum n (-1)^{n}|_{n=\infty}=\infty$ diverges.
 Interval of convergence $x=(-2,2)$.
\end{mex2}
\bigskip
\begin{mex}\label{MEX014}  
Find the radius of convergence and convergence interval for 
 $\sum_{n=1}^{\infty} \frac{ x^n}{ n^2 3^n }$.

Finding the radius of convergence at infinity, 
$\sum \frac{1}{n^2 3^n } x^n|_{n=\infty}$ 
 $= \sum \frac{1}{3^n } x^n|_{n=\infty}$ [as $3^{n} \succ\!\succ n^{2}|_{n=\infty}$] 
 $= \sum (\frac{x}{3})^n|_{n=\infty}$, 
 $| \frac{x}{3} | \lt 1$,  
 $|x| \lt 3$,  
 $r=3$. Alternatively see Example \ref{MEX031}.

Testing the end points of the interval,
 case $x=-3$,  
 $\sum \frac{ (-3)^{n}}{n^{2} 3^{n}}|_{n=\infty}$ 
 $=\sum \frac{ (-1)^{n}3^{n}}{n^{2} 3^{n}}|_{n=\infty}$ 
 $=\sum \frac{(-1)^n}{n^2}|_{n=\infty}=0$ converges by the ACT (Theorem \ref{P007}).  
 Case $x=3$, 
 $\sum \frac{ (3)^{n}}{n^{2} 3^{n}}|_{n=\infty}$ 
 $=\sum \frac{1}{n^2}|_{n=\infty}=0$ converges. 
 Interval of convergence $x=[-3,3]$
\end{mex}

\subsubsection{Integral test}\label{S0703}  
See Theorem \ref{P065}. $\sum a_{n} = \int^{n} a(x)\,dx|_{n=\infty}$
\bigskip
\begin{mex2}\label{MEX045}
$\sum \frac{\mathrm{ln}\,n}{n^{2}}|_{n=\infty}$
 $= \int \frac{\mathrm{ln}\,n}{n^{2}} \,dn|_{n=\infty}$.
 Using the symbolic maths package Maxima as a calculator to solve
 the integral, 
 $\mathrm{integrate(log(n)/n}$\verb|^|$\mathrm{2,n);}$
 $\int \frac{\mathrm{ln}\,n}{n^{2}} \,dn|_{n=\infty}$
$= - \frac{\mathrm{ln}\,n}{n} - \frac{1}{n}|_{n=\infty}=0$ converges.
 Hence 
 $\sum \frac{\mathrm{ln}\,n}{n^{2}}|_{n=\infty}=0$
 converges.
\end{mex2}

The integral test allows the application of the
 chain rule.  Consider when the subscript is
 itself a diverging function, and its derivative
 is a product.  The sum's constant multiplier
 is irrelevant.
 $f_{n} \to \infty$
 Determine convergence/divergence. 
 $\sum a_{f_{n}} \frac{d f_{n}}{dn} |_{n=\infty}$
 $=\int a(f(n)) \frac{d f(n)}{dn} dn |_{n=\infty}$
 $=\int a(f(n)) \,d f(n)|_{n=\infty}$
 $=\int a(n) \,dn|_{n=\infty}$
 $=\sum a_{n}|_{n=\infty}$

\bigskip
\begin{mex2}\label{MEX046}
 \cite[10.14.16, p.16]{apostol}
 $\sum n e^{-n^{2}}|_{n=\infty}$ 
 $=\int -2n e^{-n^{2}}\,dn|_{n=\infty}$ 
 $=\int \frac{d(-n^{2})}{dn} e^{-n^{2}}\,dn|_{n=\infty}$ 
 $=\int  e^{-n^{2}}d (-n^{2})|_{n=\infty}$ 
 $= e^{-n^{2}}|_{n=\infty}$
 $=0$ converges.
\end{mex2}
\bigskip
\begin{mex2}\label{MEX047}
 \cite[3.2.31.a, p.96]{kaczor}
 Show $\sum a_{n}|_{n=\infty}=0$ converges if the given series converges. 
$\sum 3^{n} a_{3^{n}}|_{n=\infty}=0$.
$\sum 3^{n} a_{3^{n}}|_{n=\infty}$
$= \int 3^{n} a(3^{n})dn|_{n=\infty}$
$= \int 3^{n} a(3^{n})\frac{dn}{d 3^{n}} d(3^{n})|_{n=\infty}$
$= \int 3^{n} a(3^{n})\frac{1}{\mathrm{ln}\,3 \cdot 3^{n}} d(3^{n})|_{n=\infty}$
$= \int a(3^{n}) d(3^{n})|_{n=\infty}$
$= \int a(n) dn|_{n=\infty}$
$= \sum a(n)|_{n=\infty}$
 $=0$ converges.
\end{mex2}

 The integral test can be combined with other tests, which makes it 
 really useful.
\bigskip
\begin{mex2}\label{MEX048}
 \cite[2.3.4]{kaczor}
Given $a_{n}|_{n=\infty}=\infty$,  
 show 
$\frac{1}{n} \sum_{k=1}^{n} a_{k}|_{n=\infty}=\infty$. 

 A ratio in integers
 is converted by the integral test to 
 a continuous variable,
 and L'Hopital's rule is applied.

 Since $a_{n}|_{n=\infty}=\infty$, $\int a_{n}\,dn|_{n=\infty} = \infty$,
 $\frac{1}{n} \sum a_{n}|_{n=\infty}$
 $=\frac{1}{n} \int a_{n}\,dn|_{n=\infty}$,
 since $\frac{\infty}{\infty}$ form then differentiate,
 $\frac{1}{n} \int a_{n}\,dn|_{n=\infty}$
 $=\frac{1}{\frac{d}{dn} n} \frac{d}{dn} \int a_{n}\,dn|_{n=\infty}$
 $=a_{n}|_{n=\infty}$
 $=\infty$.
\end{mex2}
\subsubsection{Comparison test}\label{S0704}  
\begin{theo}\label{P010}
$0 \leq a_{n} \leq b_{n}|_{n=\infty}$
\[ \text{If } \sum a_{n}|_{n=\infty}=\infty 
 \text{ diverges then } \sum b_{n}|_{n=\infty}=\infty \text{ diverges} \] 
\[ \text{If } \sum b_{n}|_{n=\infty}=0 \text{ converges then } \sum a_{n}|_{n=\infty}=0 \text{ converges}. \]
\end{theo}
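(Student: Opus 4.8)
The plan is to turn the termwise bound $0 \le a_n \le b_n|_{n=\infty}$ into a bound between the two sums at infinity, and then finish by the infinireal dichotomy underlying Criterion E3, under which each sum at infinity is forced to be either $0$ (convergent) or $\infty$ (divergent). So the proof splits into two stages: first a transfer of the inequality from terms to sums, then a one-line case analysis.

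By the standing convention of Section \ref{S07}, both $(a_n)|_{n=\infty}$ and $(b_n)|_{n=\infty}$ are nonnegative monotonic sequences, so Theorem \ref{P038} applies: threading continuous monotonic curves through the sequences as there, the relation $a_n \,z\, b_n|_{n=\infty}$ is equivalent to $\sum a_n \,z\, \sum b_n|_{n=\infty}$. Taking $z = \,\le\,$, which we are entitled to since $a_n \le b_n|_{n=\infty}$ is given, this yields $\sum a_n \le \sum b_n|_{n=\infty}$, and $a_n \ge 0$ likewise gives $0 \le \sum a_n|_{n=\infty}$. It is important to keep the non-strict relation here: as the paper emphasizes, $(*G,\lt) \not\mapsto (\overline{\mathbb{R}},\lt)$ whereas $(*G,\le) \mapsto (\overline{\mathbb{R}},\le)$, so the chain $0 \le \sum a_n \le \sum b_n|_{n=\infty}$ can be realized ($\Phi \mapsto 0$) without the contradictions a strict inequality between coincident limits would produce. (Alternatively one could bypass Theorem \ref{P038} and argue directly from $0 \le \sum_{k=n_0}^{n_1} a_k \le \sum_{k=n_0}^{n_1} b_k$ on an infinite interval, but the route above is the one already packaged in the theory.)

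Next I would invoke Theorem \ref{P057}, equivalently Proposition \ref{P049}: a monotonic sum at infinity is an infinireal, so $\sum a_n|_{n=\infty}, \sum b_n|_{n=\infty} \in \{0,\infty\}$, with $0$ signalling convergence and $\infty$ divergence. The two assertions then follow by a short case split. Divergence: if $\sum a_n|_{n=\infty} = \infty$, then $\sum b_n|_{n=\infty} \ge \sum a_n|_{n=\infty} = \infty$, and since $\sum b_n|_{n=\infty}$ is an infinireal it cannot equal $0$, hence $\sum b_n|_{n=\infty} = \infty$ and $\sum b_n$ diverges. Convergence: if $\sum b_n|_{n=\infty} = 0$, then $0 \le \sum a_n|_{n=\infty} \le \sum b_n|_{n=\infty} = 0$, and realizing the sandwich gives $\sum a_n|_{n=\infty} = 0$, so $\sum a_n$ converges.

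The only real obstacle is the first stage: being scrupulous that the hypotheses of Theorem \ref{P038} are in force (both sequences nonnegative and monotonic, with terms sitting in $\mathbb{R}_{\infty}$ so that the transfer of $\le$ is legitimate) and that the resulting squeeze may be realized precisely because it is written with $\le$ and not $\lt$. A minor caveat worth a remark: if only one of the two sequences were known to be monotonic, Theorem \ref{P038} would not apply verbatim, and one would instead bracket the non-monotonic sequence between monotonic envelopes and run the same squeeze on those, in the style of Examples \ref{MEX005} and \ref{MEX008}; under the standing assumption of this section that refinement is not needed.
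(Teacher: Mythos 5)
Your proposal is correct and follows essentially the same route as the paper's own proof: sum the termwise inequality to obtain $0 \leq \sum a_{n} \leq \sum b_{n}|_{n=\infty}$, then case-split on $\sum a_{n}|_{n=\infty}=\infty$ and $\sum b_{n}|_{n=\infty}=0$. You are merely more explicit than the paper in citing Theorem \ref{P038} for the term-to-sum transfer and Proposition \ref{P049} for the infinireal dichotomy, which the paper leaves implicit.
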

\begin{proof}
If $0 \leq a_{n} \leq b_{n}|_{n=\infty}$
then
  $0 \leq \sum a_{n} \leq \sum b_{n}|_{n=\infty}$ \\
Case $\sum a_{n}|_{n=\infty}=\infty$ diverges, $0 \leq \infty \leq \sum b_{n}|_{n=\infty}$ then $\sum b_{n}|_{n=\infty}=\infty$ \\ 
Case $\sum b_{n}|_{n=\infty}=0$ converges $0 \leq \sum a_{n} \leq 0$ then $\sum a_{n}|_{n=\infty}=0$
\end{proof}

The aim of the comparison test is to find a sum 
 where 
 convergence or divergence is known, and compare
 against that sum, component wise.
\bigskip
\begin{mex2}\label{MEX049}
$\sum \frac{\mathrm{ln}\,n}{n^{2}}|_{n=\infty}$
 An inequality approach, an indirect
 and not as accessible for non-maths people
 because of the factorization. 

$\frac{ \mathrm{ln}\,n }{n^{2}}$  
 $=\frac{\mathrm{ln}\,n}{n^{\frac{1}{2}}} \frac{1}{n^{\frac{3}{2}}}|_{n=\infty}$.
 As $\mathrm{ln}\,n \prec n^{\frac{1}{2}}|_{n=\infty}$,
 $\frac{\mathrm{ln}\,n}{n^{\frac{1}{2}}}|_{n=\infty} = 0$,
 $\frac{\mathrm{ln}\,n}{n^{\frac{1}{2}}} \frac{1}{n^{\frac{3}{2}}} \leq \frac{1}{n^{\frac{3}{2}}}|_{n=\infty}$, 
 $0 \leq \frac{\mathrm{ln}\,n}{n^{2}} \leq \frac{1}{n^{\frac{3}{2}}}|_{n=\infty}$, 
  Summing the inequality at infinity, 
 $0 \leq \sum \frac{\mathrm{ln}\,n}{n^{2}} \leq \sum \frac{1}{n^{\frac{3}{2}}}|_{n=\infty}$, 
 $0 \leq \sum \frac{\mathrm{ln}\,n}{n^{2}}|_{n=\infty} \leq 0$, 
  $\sum \frac{\mathrm{ln}\,n}{n^{2}}|_{n=\infty}=0$
 converges.
\end{mex2}
\bigskip
\begin{mex2}\label{MEX050}
The same problem above could be solved with a different comparison,
 where we test against the known convergent p-series sums.
 Let $p \gt 1$,
 $\sum \frac{ \mathrm{ln}\,n }{n^{2}} \;\; z \;\; \sum \frac{1}{n^{p}}|_{n=\infty}$, 
 $\frac{ \mathrm{ln}\,n }{n^{2}} \;\; z \;\; \frac{1}{n^{p}}|_{n=\infty}$, 
 $\mathrm{ln}\,n \;\; z \;\; n^{2-p}|_{n=\infty}$. 
 When $2-p \gt 0$ then 
 $\mathrm{ln}\,n \prec n^{2-p}|_{n=\infty}$. Let $p=\frac{3}{2}$
 satisfies both conditions and the sum converges. 
 Not necessary, but just to demonstrate the theory is working,
 $\prec \; = \; \leq$, substituting back into the sum,
 $\sum \frac{ \mathrm{ln}\,n }{n^{2}} \leq \sum \frac{1}{n^{\frac{3}{2}}}|_{n=\infty}$, 
 $\sum \frac{ \mathrm{ln}\,n }{n^{2}} \leq 0 |_{n=\infty}$. 
\end{mex2}

In a variation of the comparison test,
 we can sandwich a series which may not be monotonic
 between two monotonic series with the same convergence.
 In the 
 sandwiched comparison test where either
 side of the test are monotonic sequences $(b_{n})|_{n=\infty}$ and
 $(c_{n})|_{n=\infty}$. Thereby extending the
 test to $\sum a_{n}|_{n=\infty}$ which may not be monotonic.
\bigskip
\begin{theo}\label{P011}
 If we can sandwich a non-monotonic sequence between two monotonic sequences,
 which either both converge or both diverge,
 we can determine the non-monotonic sequence's convergence.
\[ \text{Given } 0 \leq a_{n} \leq b_{n} \leq c_{n}|_{n=\infty}
 \text{ where } (a_{n})|_{n=\infty}
 \text{ and } (c_{n})|_{n=\infty}
 \text{ are monotonic sequences. } \]
\[ \text{If } \sum a_{n}|_{n=\infty} = \sum c_{n}|_{n=\infty} \text{ then }
  \sum b_{n} = \sum c_{n}|_{n=\infty} \]
\end{theo}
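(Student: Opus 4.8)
The plan is to mirror the proof of the comparison test (Theorem \ref{P010}), but with a two-sided bound and a squeeze. First I would sum the termwise inequality $0 \leq a_{n} \leq b_{n} \leq c_{n}|_{n=\infty}$ over the infinite index range. Since adding up a chain of nonnegative term-by-term inequalities preserves it — exactly the step used in the proof of Theorem \ref{P010} — this yields $0 \leq \sum a_{n} \leq \sum b_{n} \leq \sum c_{n}|_{n=\infty}$ in $*G$. Note this summation step does not need $(b_{n})$ to be monotonic: it is a purely termwise manipulation.

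Next I would use the monotonicity hypotheses on $(a_{n})|_{n=\infty}$ and $(c_{n})|_{n=\infty}$. By Definition \ref{DEF004} the sums $\sum a_{n}|_{n=\infty}$ and $\sum c_{n}|_{n=\infty}$ are infinireals, so by Proposition \ref{P049} each realizes to either $0$ (convergence) or $\infty$ (divergence). The hypothesis $\sum a_{n}|_{n=\infty} = \sum c_{n}|_{n=\infty}$ then forces a common value $s \in \{0,\infty\}$. Substituting $s$ for both ends of the sandwich gives $s \leq \sum b_{n}|_{n=\infty} \leq s$, hence $\sum b_{n}|_{n=\infty} = s = \sum c_{n}|_{n=\infty}$. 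For clarity I would spell out the two regimes: if $s=0$ then $0 \leq \sum b_{n}|_{n=\infty} \leq 0$, so $\sum b_{n}|_{n=\infty}=0$ converges; if $s=\infty$ then the left bound $\sum a_{n} \leq \sum b_{n}$ already gives $\infty \leq \sum b_{n}|_{n=\infty}$, so $\sum b_{n}|_{n=\infty}=\infty$ diverges. The final transfer $\Phi \mapsto 0$, $\Phi^{-1} \mapsto \infty$ then states the conclusion in $\mathbb{R}/\overline{\mathbb{R}}$.

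The only point needing care — and the main obstacle — is that $(b_{n})|_{n=\infty}$ is not assumed monotonic, so $\sum b_{n}|_{n=\infty}$ is not a priori governed by Criterion E3'; the value must be produced from the squeeze alone, not from the criterion. This is legitimate because the partial sums of the nonnegative sequence $(b_{n})$ are themselves monotonically nondecreasing, so the expression $\sum b_{n}|_{n=\infty}$ is well posed and the sandwich pins its realized value. I would state this explicitly so that the conclusion $\sum b_{n} = \sum c_{n}|_{n=\infty}$ is read as the equality of realized values delivered by the sandwich principle (cf. Example \ref{MEX005}), and not as a claim that $(b_{n})$ satisfies the E3' hypotheses. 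Everything else is routine inequality manipulation in $*G$.
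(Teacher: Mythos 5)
Your proposal is correct and follows essentially the same route as the paper's own proof: sum the termwise inequality to obtain $0 \leq \sum a_{n} \leq \sum b_{n} \leq \sum c_{n}|_{n=\infty}$, then treat the convergent case ($0 \leq \sum b_{n} \leq 0$) and the divergent case ($\infty \leq \sum b_{n} \leq \infty$) by the squeeze. Your added remarks --- invoking Proposition \ref{P049} to justify that the outer sums realize to $\{0,\infty\}$, and noting explicitly that $\sum b_{n}|_{n=\infty}$ is pinned by the sandwich rather than by Criterion E3' --- are sound elaborations of steps the paper leaves implicit, not a different argument.
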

\begin{proof}
$0 \leq a_{n} \leq b_{n} \leq c_{n}|_{n=\infty}$
 then 
$0 \leq \sum a_{n} \leq \sum b_{n} \leq \sum c_{n}|_{n=\infty}$.
 Divergent case, $\infty \leq  \sum b_{n} \leq \infty$
 then $\sum b_{n}|_{n=\infty}=\infty$ diverges.
 Convergent case,
 $0 \leq  \sum b_{n} \leq 0$
 then $\sum b_{n}|_{n=\infty}=0$ converges.
\end{proof}
\subsubsection{nth term divergence test}\label{S0705}  
\begin{theo}\label{P013}
\[ \text{If } a_{n}|_{n=\infty} \neq 0 \Rightarrow \sum a_{n}|_{n=\infty}=\infty \text{ diverges.} \]
\end{theo}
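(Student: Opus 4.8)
The plan is to prove the contrapositive: if the (monotonic, positive) series has $a_{n}|_{n=\infty}\neq 0$, then $\sum a_{n}|_{n=\infty}=\infty$ diverges. The key observation is that $a_{n}|_{n=\infty}\neq 0$ means $a_{n}|_{n=\infty}\notin\Phi$, so since the terms are non-negative the limiting value $L=a_{n}|_{n=\infty}$ is either a positive real or a positive infinity. I would then reduce to the comparison test (Theorem \ref{P010}) against a constant series.

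First I would fix a uniform lower bound. Because $(a_{n})|_{n=\infty}$ is a monotonic gossamer sequence it settles at infinity, so every infinite-index term $a_{n_{0}}$, $a_{n_{1}}$ on the test interval $[n_{0},n_{1}]$ satisfies $a_{n_{0}}\simeq L$ and $a_{n_{1}}\simeq L$; since $L\notin\Phi$ and a quantity infinitely close to a non-infinitesimal is itself non-infinitesimal, both endpoints are non-infinitesimal. Applying the realization $\Phi\mapsto 0$ there is a fixed positive real $\beta$ (take $\beta=L/2$ when $L$ is real, $\beta=1$ when $L$ is infinite) below these endpoints. Monotonicity then transports the bound across the whole interval: for an increasing sequence $a_{n}\geq a_{n_{0}}\geq\beta$, and for a decreasing sequence $a_{n}\geq a_{n_{1}}\geq\beta$, so in either case $a_{n}\geq\beta$ for all $n\in[n_{0},n_{1}]$.

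Then I would conclude: from $0\leq\beta\leq a_{n}|_{n=\infty}$ one gets $0\leq\sum\beta\leq\sum a_{n}|_{n=\infty}$, and by the integral test (Theorem \ref{P065}) $\sum\beta|_{n=\infty}=\int\beta\,dx|_{x=\infty}=\beta x|_{x=\infty}=\infty$. Hence $\sum\beta|_{n=\infty}$ diverges, and by the comparison test (Theorem \ref{P010}) $\sum a_{n}|_{n=\infty}=\infty$ diverges, as claimed. An equivalent and shorter route avoids the bound-bookkeeping entirely: if the series converged then by Criterion E2 and Proposition \ref{P064} both $\sum^{n}a_{k}|_{n=\infty}$ and $\sum^{n-1}a_{k}|_{n=\infty}$ lie in $\Phi$, so $a_{n}=\sum^{n}a_{k}-\sum^{n-1}a_{k}$ lies in $\Phi$ by closure of $\Phi$ under subtraction, contradicting $a_{n}|_{n=\infty}\neq 0$; then divergence means $\sum a_{n}|_{n=\infty}=\infty$ by Proposition \ref{P049}.

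The main obstacle is the first step of the comparison route — extracting a \emph{uniform} positive real lower bound for $a_{n}$ over the entire infinite interval, because non-uniqueness at infinity makes it tempting to conflate the pointwise relation $a_{n}\simeq L$ with a global bound. Once the closure properties of $\Phi$ and the realization map are invoked carefully, together with monotonicity, the remainder is just an application of the comparison and integral tests already established. (The alternative route sidesteps this difficulty, at the cost of leaning on Proposition \ref{P064}.)
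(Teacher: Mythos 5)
Your argument is correct within the paper's framework, but it takes a different (and more explicit) route than the paper does. The paper's entire proof is one line: Theorem \ref{P013} is the negation of Theorem \ref{P014} (``if $\sum a_{n}|_{n=\infty}=0$ converges then $a_{n}|_{n=\infty}=0$''), and Theorem \ref{P014} is itself proved by a short positivity argument --- a sum of non-negative terms can only be zero if no term is non-infinitesimally positive. Your primary route instead proves the statement directly: extract a uniform positive real lower bound $\beta$ from $a_{n}|_{n=\infty}\neq 0$ using monotonicity and the closure properties of $\Phi$, then compare against $\sum\beta|_{n=\infty}=\infty$ via Theorems \ref{P065} and \ref{P010}. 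This is essentially a quantitative re-proof of the contrapositive of Theorem \ref{P014}, and it buys you an explicit divergence rate ($\sum a_{n}\geq\beta n$) at the cost of the bound-bookkeeping you describe. Your ``shorter route'' --- convergence would force both $\sum^{n}a_{k}$ and $\sum^{n-1}a_{k}$ into $\Phi$, hence their difference $a_{n}$ into $\Phi$ --- shares the paper's contrapositive skeleton but justifies it by the classical partial-sum-difference mechanism rather than the paper's positivity argument; it leans on Proposition \ref{P064} and closure of $\Phi$ under subtraction, which is arguably cleaner than the paper's own proof of Theorem \ref{P014}. One small terminological slip: at the outset you call the statement you are proving ``the contrapositive,'' but it is the statement itself; the contrapositive is Theorem \ref{P014}. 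That does not affect the validity of either of your arguments.
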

\begin{proof}
By the negation of Theorem \ref{P014}, as when not equal to $0$, the negation of convergence is divergence,
 the sum diverges.
\end{proof}
\begin{theo}\label{P014}
If $\sum a_{n}|_{n=\infty}=0$ converges then $a_{n}|_{n=\infty}=0$
\end{theo}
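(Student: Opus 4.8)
The plan is to exploit that a convergence sum is, by condition E3'.4, an infinitesimal at infinity, and that a single term of the series is the difference of two consecutive values of the partial-sum function. Write $g(n) = \sum^{n} a_{k}$ for the sum regarded as a function at infinity in the sense of Definition \ref{DEF002}. Convergence means $g(n)|_{n=\infty} = \sum a_{n}|_{n=\infty} = 0$, i.e. $g(n) \in \Phi$; since $n-1$ (and $n+1$) is again an infinite integer whenever $n$ is, the same holds for $g(n-1)$.

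First I would recover the general term from $g$ using the sum analogue of the fundamental theorem of calculus quoted earlier: successive values of the partial-sum function differ by exactly one term, $a_{n} = g(n) - g(n-1)$ at infinity. Then, since $\Phi$ is closed under subtraction, $a_{n} = g(n) - g(n-1) \in \Phi$, which is precisely the assertion $a_{n}|_{n=\infty} = 0$. An equivalent and perhaps more transparent route uses positivity: for the positive (monotonic) series to which Criterion E3' applies, $g$ is monotonically increasing, so $0 \leq g(n-1) \leq g(n)|_{n=\infty}$ and hence $0 \leq a_{n} \leq g(n)|_{n=\infty}$; realizing the infinitesimals ($\Phi \mapsto 0$) collapses both outer bounds to $0$, and the sandwich principle forces $a_{n}|_{n=\infty} = 0$. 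Proposition \ref{P064}, $g(n) = g(n+1)|_{n=\infty}$, packages the same idea a third way.

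I expect the genuine content to lie not in the computation but in the legitimacy of operating at the point $n=\infty$. Two checks are needed: that the decomposition $a_{n} = g(n) - g(n-1)$ is an admissible manipulation here — it is, being an instance of the sum fundamental theorem of calculus with \emph{both} endpoints retained rather than one deleted, so no non-reversible arithmetic is invoked; and that the realization step in the sandwich is valid despite non-uniqueness at infinity — it is, precisely because both bounding quantities, $0$ and $g(n)$, map to the single value $0$, leaving no room for the sandwiched quantity to escape. A minor structural point worth stating explicitly in the proof is that the theorem is meaningful for the positive/monotonic series governed by E3', for which the monotonicity of $g$ used above is automatic; the contrapositive then yields the $n$th term divergence test (Theorem \ref{P013}).
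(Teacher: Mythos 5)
Your proposal is correct, and your positivity/sandwich route ($0 \leq a_{n} \leq \sum a_{n}|_{n=\infty}$, then realize $\Phi \mapsto 0$) is essentially the paper's own proof, which argues that a sum of nonnegative terms would be positive if any term were greater than zero. Your first route via $a_{n} = g(n) - g(n-1)$ and closure of $\Phi$ under subtraction is a harmless variant of the same idea and adds nothing that needs separate justification.
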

\begin{proof}
 A sum of terms greater than or equal to
 zero is positive if their exists a term greater
 than zero. Since the sum is zero, their exists
 no such term, consequently $a_{n}|_{n=\infty}=0$.
\end{proof}
\bigskip
\begin{mex2}\label{MEX051}
$1-1+1-1+ \ldots$, $a_{n}|_{n=\infty}=(-1)^{n} \neq 0$ hence
 the series diverges.
\end{mex2}
\bigskip
\begin{mex2}\label{MEX033}
 An example of the second case, 
 determine convergence/divergence
 of $\sum_{n=1}^{\infty} \frac{n^3+n}{5 n^{3} + n^{2} +27}$.
 Since no division by zero, consider
 $\sum \frac{n^3+n}{5 n^{3} + n^{2} +27}|_{n=\infty}$.
 $a_{n}|_{n=\infty} = \frac{n^3+n}{5 n^{3} + n^{2} +27} |_{n=\infty}$
$= \frac{n^{3}}{5 n^{3}}|_{n=\infty}$
$= \frac{1}{5}$
$\neq 0$ 
 therefore divergent by nth term test.
 With the sum at infinity, simplifying 
 makes it clear
 why the sum diverges, without need to even refer
 to the nth term divergence test.
 $\sum \frac{n^3+n}{5 n^{3} + n^{2} +27}|_{n=\infty}$
 $= \sum \frac{n^{3}}{5 n^{3}} |_{n=\infty}$
 $= \sum \frac{1}{5}|_{n=\infty} = \infty$
\end{mex2}
\subsubsection{Absolute convergence test}\label{S0706}  
\begin{theo}\label{P015}
If $\sum | a_{n} | |_{n=\infty}=0$ converges then
  $\sum a_{n}|_{n=\infty}=0$ is convergent.
\end{theo}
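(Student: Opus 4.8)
The plan is to reduce the statement to the comparison test already established in Theorem \ref{P010}, by decomposing $a_n$ into a non-negative piece together with the absolutely summable piece we already control. First I would record the elementary pointwise bound $-|a_n| \le a_n \le |a_n|$, valid for every $n$, and rewrite it as $0 \le |a_n| + a_n \le 2|a_n|$. Thus $b_n := |a_n| + a_n$ is a non-negative sequence dominated termwise by $2|a_n|$. By hypothesis $\sum |a_n| \,|_{n=\infty} = 0$, so by Theorem \ref{P059} (a bounded positive multiplier leaves a convergence sum unchanged, here the constant $2$) we get $\sum 2|a_n|\,|_{n=\infty} = 0$, a convergent sum. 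Applying the comparison test, Theorem \ref{P010}, to $0 \le b_n \le 2|a_n|\,|_{n=\infty}$ yields $\sum(|a_n| + a_n)\,|_{n=\infty} = 0$, i.e. this sum converges.

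Next I would write $a_n = (|a_n| + a_n) - |a_n|$ and use additivity of the sum at infinity. Since both $\sum(|a_n|+a_n)\,|_{n=\infty}$ and $\sum|a_n|\,|_{n=\infty}$ are infinitesimal, their difference is infinitesimal, so
\[ \sum a_n\,|_{n=\infty} = \sum(|a_n| + a_n)\,|_{n=\infty} - \sum |a_n|\,|_{n=\infty} = 0 - 0 = 0 . \]
In the E3 framework the symbolic statement $\sum a_n\,|_{n=\infty} = 0$ is exactly the assertion that $\sum a_n$ converges, which is what we want.

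The step I expect to be the main obstacle is making sure that $\sum a_n\,|_{n=\infty}$ is a legitimate object of the theory in the first place: $a_n$ need not be monotonic — it may change sign — so condition E3.1 is not automatic and the difference-of-sums manipulation needs justification. I would address this by noting that absolute convergence forces the partial sums $s_m = \sum_{k=1}^{m} a_k$ to satisfy $|s_m - s_n| \le \sum_{k=n+1}^{m}|a_k| \in \Phi$ for all infinite $m > n$, hence $s_m - s_n \simeq 0$ for all $m,n = \infty$; by the convergence of sequences at infinity in $*G$ this makes $(s_n)|_{n=\infty}$ a well-defined point, so the non-monotonicity of $(a_n)$ causes no difficulty and the computation above is valid. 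As a remark I would mention the slicker alternative of summing $-|a_n| \le a_n \le |a_n|$ directly to obtain $-\sum|a_n| \le \sum a_n \le \sum|a_n|\,|_{n=\infty}$, whence $\sum a_n\,|_{n=\infty} = 0$ after realization — but since this rests on the same well-definedness observation, I would keep the decomposition argument as the primary proof.
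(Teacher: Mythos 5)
Your argument is correct, and it takes a genuinely different route from the paper. The paper works directly with the two-sided bound $-|a_{k}| \leq a_{k} \leq |a_{k}|$: it sums this inequality between two infinities $n_{0}, n_{1} \in \mathbb{J}_{\infty}$, invokes the hypothesis to collapse the outer sums to $0$, and squeezes to get $\sum a_{n}|_{n=\infty}=0$ --- that is, the paper's primary proof is exactly the ``slicker alternative'' you relegate to a closing remark. Your primary proof instead decomposes via the non-negative piece $b_{n} = |a_{n}| + a_{n}$ with $0 \leq b_{n} \leq 2|a_{n}|$, runs the comparison test (Theorem \ref{P010}) together with Theorem \ref{P059} on that positive sum, and only at the end forms $\sum a_{n} = \sum b_{n} - \sum |a_{n}|$ as a difference of two quantities already known to be infinitesimal. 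What your route buys is that every sum to which the positive-series machinery is applied really is a sum of non-negative terms, and the only sign-changing object ever manipulated is a difference of two controlled infinitesimals; what the paper's route buys is brevity, at the cost of applying the sandwich directly to the possibly non-monotonic $\sum a_{k}$. Your closing observation --- that absolute convergence gives $|s_{m}-s_{n}| \leq \sum_{k=n+1}^{m}|a_{k}| \in \Phi$ for infinite $m > n$, so $(s_{n})|_{n=\infty}$ is a well-defined point despite $(a_{n})$ not being monotonic --- addresses a gap that the paper's own proof leaves implicit, and is worth keeping in either version.
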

\begin{proof}
Let $a_{k} \neq 0$. 
 $-|a_{k}| \leq a_{k} \leq |a_{k}|$,
 using $a_{k} = \mathrm{sgn}(a_{k})|a_{k}|$,
 where $\mathrm{sgn}(x) = \pm 1$ when $x \neq 0$.
 $-|a_{k}| \leq |a_{k}|\mathrm{sgn}(a_{k}) \leq |a_{k}|$,
 dividing by $|a_{k}|$,
 $-1 \leq \mathrm{sgn}(a_{k}) \leq 1$ which is always
 true,
 hence
 $-|a_{k}| \leq a_{k} \leq |a_{k}|$ is true.
 $n_{0}$, $n_{1} \in \mathbb{J}_{\infty}$;
 Summing the inequalities between two infinities $n_{0}$ and $n_{1}$,
 $-\sum_{k=n_{0}}^{n_{1}}|a_{k}| \leq \sum_{k=n_{0}}^{n_{1}} a_{k} \leq \sum_{k=n_{0}}^{n_{1}} |a_{k}|$. 
 
 The condition $\sum |a_{n}||_{n=\infty}$ is
 itself governed by Criterion E3.
 With a suitable choice of $n_{1}$(see Proposition \ref{P042}),
 $\sum_{k=n_{0}}^{n_{1}} |a_{n}|$
 $=\sum_{k=n_{0}} |a_{n}|$ as the sum converges, replace
 $n_{0}$ by $n$,
 $\sum_{n} a_{n}|_{n=\infty}=0$,
 $-\sum a_{n}|_{n=\infty}=0$,
 $\sum a_{n}|_{n=\infty}=0$.

 Consider the inequality,
 $-\sum_{k=n_{0}}^{n_{1}}|a_{k}| \leq \sum_{k=n_{0}}^{n_{1}} a_{k} \leq \sum_{k=n_{0}}^{n_{1}} |a_{k}|$, 
 $-\sum_{k=n_{0}}|a_{k}| \leq \sum_{k=n_{0}} a_{k} \leq \sum_{k=n_{0}} |a_{k}|$, 
 $-\sum_{n}|a_{k}| \leq \sum_{n} a_{k} \leq \sum_{n} |a_{k}||_{n=\infty}$, 
 $\sum|a_{n}| \leq -\sum a_{n} \leq -\sum |a_{n}||_{n=\infty}$, 
 $0 \leq -\sum a_{n}|_{n=\infty} \leq 0$, 
 $\sum a_{n}|_{n=\infty}=0$ converges.
\end{proof}

Theorem \ref{P015} is a sum rearrangement theorem
 at infinity, summing at infinity only.
 However it was used in (Section \ref{S16}) 
 to prove the more general sum
 theorem below.

If $\sum a_{\nu}$ is
 absolutely convergent and $\sum a_{\nu}'$ is
 an arbitrary rearrangement then $\sum a_{\nu} = \sum a_{\nu}'$ \cite[Theorem 4, p.79]{knopp}.
\subsubsection{Limit Comparison Theorem (LCT)}\label{S0707}  
\begin{theo}\label{P003}
  If  $\frac{ a_{n}}{b_{n}}|_{n=\infty}=c$
 and $c \neq 0$ and $c$ is a constant then
  $\sum a_{n} = \sum b_{n}|_{n=\infty}$. 
 Either both series converge or both diverge.
\end{theo}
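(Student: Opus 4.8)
The plan is to reduce the claim to the constant-multiple results already in hand, principally Theorem~\ref{P060} (with Theorems~\ref{P059} and~\ref{P061} as alternatives). By the standing assumption the sequences $(a_n)|_{n=\infty}$ and $(b_n)|_{n=\infty}$ are positive and monotonic, so Criterion~E3 applies to each, and since $a_n,b_n\ge 0$ the ratio $a_n/b_n$ is $\ge 0$; combined with $c\ne 0$ this forces $c\in\mathbb{R}^+$ (a genuine positive real, as $c$ is assumed constant).

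First I would unpack the hypothesis $\frac{a_n}{b_n}\big|_{n=\infty}=c$ in $*G$. Equality at infinity means $\frac{a_n}{b_n}-c$ is infinitesimal (possibly $0$), so there is $\delta_n\in\Phi$ with $\frac{a_n}{b_n}=c+\delta_n$, i.e.
\[ a_n = b_n(c+\delta_n)\big|_{n=\infty}. \]
This is exactly the deconstruction of the summand into a positive real part plus an infinitesimal that Theorem~\ref{P060} requires. Summing at infinity and applying that theorem with $\alpha=c\in\mathbb{R}^+$ and $\delta=\delta_n\in\Phi$ yields
\[ \sum a_n\big|_{n=\infty} = \sum b_n(c+\delta_n)\big|_{n=\infty} = \sum b_n\big|_{n=\infty}. \]

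From here the stated dichotomy is immediate. By Proposition~\ref{P049} each of $\sum a_n|_{n=\infty}$ and $\sum b_n|_{n=\infty}$ lies in $\{0,\infty\}$; since they are equal, either both equal $0$, in which case both series converge by Criterion~E3, or both equal $\infty$, in which case both diverge.

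The only genuinely delicate point I anticipate is the bookkeeping around $\delta_n$: whether one reads it as a single infinitesimal or as an infinitesimal-valued expression in the infinite integer $n$, and whether the proof of Theorem~\ref{P060} (which splits $\sum b_n(c+\delta_n)=c\sum b_n+\sum\delta_n b_n$ and argues the second term is dominated) covers a varying $\delta_n$. If that causes concern, I would bypass Theorem~\ref{P060} entirely: choose real constants $0<\beta_1<c<\beta_2$, observe $\beta_1 b_n\le a_n\le\beta_2 b_n|_{n=\infty}$, sum to get $\beta_1\sum b_n\le\sum a_n\le\beta_2\sum b_n|_{n=\infty}$, and then use Theorem~\ref{P059} (positive real constants are absorbed since $\beta_i\cdot 0=0$ and $\beta_i\cdot\infty=\infty$) together with the comparison test Theorem~\ref{P010} to sandwich $\sum a_n|_{n=\infty}$ onto the common value of $\sum b_n|_{n=\infty}$. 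Either route is short; all the real content sits in the already-established constant-multiple lemmas.
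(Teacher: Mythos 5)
Your proof is correct, and it follows the same basic skeleton as the paper's --- multiply the hypothesis through by $b_n$, sum at infinity, and absorb the constant $c$ using $c\cdot 0=0$, $c\cdot\infty=\infty$ --- but you are noticeably more careful at the one point where the paper is loose. The paper's proof passes directly from $\frac{a_n}{b_n}|_{n=\infty}=c$ to $a_n=c\,b_n|_{n=\infty}$ and then to $\sum a_n=c\sum b_n|_{n=\infty}$, silently discarding the fact that the ratio is only infinitesimally close to $c$; you make that discrepancy explicit by writing $a_n=b_n(c+\delta_n)$ with $\delta_n\in\Phi$ and invoking Theorem~\ref{P060}, which is exactly the lemma built for absorbing such a decomposition. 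Your worry about whether Theorem~\ref{P060} tolerates a $\delta_n$ varying with $n$ is legitimate (it is stated for a single $\delta\in\Phi$), and your fallback --- sandwiching with real constants $\beta_1<c<\beta_2$ and appealing to Theorems~\ref{P059} and~\ref{P010} --- is sound and is essentially a re-run of the proof of Theorem~\ref{P059} itself. In short: same route as the paper, but with the infinitesimal bookkeeping done properly; either of your two finishes closes the gap the paper leaves open.
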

\begin{proof}
$\frac{ a_{n}}{b_{n}}|_{n=\infty} = c$,  
$a_{n} = c b_{n}|_{n=\infty}$.   
 Apply summation to both sides,  
$\sum a_{n} = c \sum b_{n} |_{n=\infty}$.  
 Ignoring the constant as the sum either converges or diverges ($c \cdot \infty=\infty$ or $c \cdot 0=0$), 
$\sum a_{n} = \sum b_{n} |_{n=\infty}$.  Two possibilities.  
 Case 
 $\sum a_{n} = 0 \Leftrightarrow \sum b_{n} = 0 |_{n=\infty}$.  
 Case 
 $\sum a_{n} = \infty \Leftrightarrow \sum b_{n} = \infty|_{n=\infty}$ 
\end{proof}
\bigskip
\begin{mex2}\label{MEX028}  
$\sum_{k=0}^{n} \frac{5k+2}{k^{3}+1}|_{n=\infty}$,
 the limit comparison test assumes 
 the answer. If you have already worked out
 that the above sum tends to $\sum \frac{1}{n^{2}}$,
 then forming a limit is redundant.
 However you can verify the result by calculating
 the limit.

 Let $a_{n} = \frac{5n+2}{n^{3}+1}$,
 $b_{n} = \frac{1}{n^{2}}$.
$\sum \frac{1}{n^{2}}|_{n=\infty}=0$ converges,
 as this 
 is a p-series with $p=2 \gt 1$.
 
 $\frac{a_{n}}{b_{n}}|_{n=\infty}$
 $= \frac{5n+2}{n^{3}+1} n^{2}|_{n=\infty}$
 $=\frac{5n^{3}}{n^{3}}|_{n=\infty} = 5$.
 Since $\sum \frac{1}{n^{2}}|_{n=\infty}=0$ converges
 then $\sum a_{n}|_{n=\infty}=0$ converges.
 Alternatively see Example \ref{MEX027}. 
\end{mex2}

See Example \ref{MEX035}.
\subsubsection{Abel's test}\label{S0708}  

\begin{theo}\label{P002}  
Suppose $\sum b_{n}|_{n=\infty}=0$ converges
 and $( a_{n} )$ is a monotonic
 convergent sequence
 then $\sum a_{n} b_{n}|_{n=\infty}=0$ converges.
\end{theo}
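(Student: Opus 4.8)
The plan is to reduce the general (possibly sign-changing) case to a decay-plus-summation-by-parts argument, since the paper's multiplier results (Theorems \ref{P059}, \ref{P060}, \ref{P061}) are built for nonnegative summands and so cannot be applied directly to the signed series $\sum b_n$. First I would use that a monotonic convergent sequence is bounded with a finite real limit $L$, and write $a_n = L + c_n$, where $c_n = a_n - L$ is monotonic and infinitesimal at infinity, $c_n|_{n=\infty} \in \Phi$. By linearity of the interval sums, $\sum a_n b_n = L \sum b_n + \sum c_n b_n$ holds on every finite block, hence the same identity holds for the tails evaluated at infinity. The first term is immediate: $L$ is a real constant and $\sum b_n|_{n=\infty} = 0$ by hypothesis, so $L \sum b_n|_{n=\infty} = 0$. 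Since I will show each piece on the right is individually infinitesimal, splitting the tail this way is legitimate.

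The heart of the proof is the remaining term $\sum c_n b_n|_{n=\infty}$, where $c_n$ is monotonic and infinitesimal but $\sum b_n$ is merely convergent. Here I would apply the sum form of the fundamental theorem of calculus (Abel summation by parts) rather than any positivity argument. Writing $B_k = \sum_{j=k}^{\infty} b_j$ for the tail of $\sum b_n$, which is infinitesimal because $\sum b_n$ converges, and using $b_k = B_k - B_{k+1}$, I would obtain for any infinite interval $[N,M]$ the identity
\[ \sum_{k=N}^{M} c_k b_k = c_N B_N - c_M B_{M+1} + \sum_{k=N+1}^{M} (c_k - c_{k-1}) B_k. \]
The two boundary terms are infinitesimal, since $c$ is infinitesimal and each $B$ is infinitesimal. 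For the remaining sum, monotonicity of $c$ enters essentially: the absolute differences telescope, $\sum_{k=N+1}^{M} |c_k - c_{k-1}| = |c_M - c_N|$, which is infinitesimal, while $\sup_{k \geq N} |B_k|$ is infinitesimal as well, so that sum is bounded in magnitude by a product of two infinitesimals. Hence $\sum_{k=N}^{M} c_k b_k$ is infinitesimal for all infinite $N,M$, that is $\sum c_n b_n|_{n=\infty} \in \Phi$, which is convergence by Criterion E3'. Combining the two pieces gives $\sum a_n b_n|_{n=\infty} = 0 + 0 = 0$, converging by Criterion E3' (Theorem \ref{P057}).

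The main obstacle I anticipate is exactly the signed term $\sum c_n b_n$: because $\sum b_n$ need not converge absolutely, none of the magnitude-based multiplier theorems apply, and a naive attempt to simply realize $a_n$ to its constant limit $L$ would silently discard the monotonicity hypothesis and yield a false statement (for merely bounded $a_n$ the conclusion fails, as with $b_n = (-1)^n/n$ and $a_n = (-1)^n$, where $\sum a_n b_n = \sum 1/n$ diverges). Monotonicity must enter through the telescoping estimate in the summation-by-parts bound, and the delicate point is justifying within $*G$ that the boundary terms vanish and that $\sup_{k \geq N} |B_k|$ may be taken infinitesimal on the infinite tail; both follow from the convergence of $\sum b_n$, so that its tail is uniformly infinitesimal beyond any infinite index.
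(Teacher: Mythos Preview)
Your proof is correct and considerably more careful than the paper's, but it takes a genuinely different route. The paper gives two short arguments: the first simply realizes $a_n$ to its limit $a$ at infinity, writing $a_n b_n = a\,b_n|_{n=\infty}$ and hence $\sum a_n b_n = a\sum b_n|_{n=\infty} = 0$; the second bounds $0 \le a_n b_n \le M b_n$ and sandwiches. In other words, the paper does precisely the ``naive'' replacement you explicitly flag as dangerous. This is acceptable in the paper's setting only because its convergence-sum framework (Criterion E3) implicitly restricts $\sum b_n|_{n=\infty}$ to monotonic, effectively nonnegative input, so the multiplier theorems and sandwich inequalities apply directly and the monotonicity hypothesis on $(a_n)$ is not really needed beyond boundedness.

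Your Abel-summation-by-parts argument, by contrast, is the classical proof of Abel's test in full generality: it handles signed $b_n$, uses the monotonicity of $(a_n)$ essentially (via the telescoping $\sum |c_k - c_{k-1}| = |c_M - c_N|$), and your counterexample $b_n = (-1)^n/n$, $a_n = (-1)^n$ correctly shows why boundedness alone fails without monotonicity. What you gain is a proof that stands on its own outside the positive-series framework; what the paper gains is a two-line argument by exploiting the restricted setting it has already built.
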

\begin{proof}
Since $(a_{n})$ is a monotonic convergent sequence,
 then let $a_{n}|_{n=\infty}=a$, $a \prec \infty$.
 $a_{n}b_{n} = a b_{n}|_{n=\infty}$,
 then $\sum a_{n} b_{n}|_{n=\infty}$
 $= \sum a b_{n}|_{n=\infty}$
 $= a \sum b_{n}|_{n=\infty}=0$ converges.
\end{proof}
\begin{proof}
Since $(a_{n})|_{n=\infty}$ is convergent, the sequence
 is bounded above, say by $M$.
 $0 \leq a_{n}b_{n} \leq M b_{n}|_{n=\infty}$,
 $0 \leq \sum a_{n}b_{n} \leq \sum M b_{n}|_{n=\infty}$,
 $0 \leq \sum a_{n}b_{n} \leq M \sum b_{n}|_{n=\infty}$,
 $0 \leq \sum a_{n}b_{n} \leq 0|_{n=\infty}$,
 $\sum a_{n} b_{n}|_{n=\infty}=0$ converges.
\end{proof}
\subsubsection{L'Hopital's convergence test}\label{S0709}  
 See L'Hopital's convergence test (Section \ref{S180501}).
 Similarly for integrals.
\bigskip
\begin{conjecture}\label{P244}
 $f = \infty$,
 $g=\infty$
 and $\frac{f}{g}$ in indeterminate form.\\
 When $\frac{f}{g} \neq \frac{\mathrm{ln}_{w+1}}{ \prod_{k=0}^{w}\mathrm{ln_{k}}}|_{n=\infty}$ then
 \[ \sum \frac{f}{g} = \sum \frac{f'}{g'}|_{n=\infty} \] 
\end{conjecture}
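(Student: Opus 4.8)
The plan is to reduce the claim to the Limit Comparison Theorem (Theorem~\ref{P003}) via a logarithmic-derivative identity, with L'Hopital's rule at infinity as the engine. Since $f$ and $g$ are differentiable and $f/g$ has no finite singularity, I would first pass to the continuous setting: the integral test (Theorem~\ref{P065}) gives $\sum \frac{f}{g}|_{n=\infty} = \int \frac{f}{g}\,dn|_{n=\infty}$, and likewise for $f'/g'$, so it suffices to compare the summands $\frac{f}{g}$ and $\frac{f'}{g'}$ as functions at infinity. Applying L'Hopital to the indeterminate form $\frac{f}{g}$ (as in Example~\ref{MEX048}) gives $\frac{f}{g}|_{n=\infty}=\frac{f'}{g'}|_{n=\infty}=L$ for a common value $L\in[0,\infty]$.

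If $L\neq 0$ the conclusion is immediate: by the $n$th term divergence test (Theorem~\ref{P013}) both $\sum\frac{f}{g}|_{n=\infty}$ and $\sum\frac{f'}{g'}|_{n=\infty}$ equal $\infty$. The substance is the case $L=0$, where $g\succ f$ and both summands are positive infinitesimals. Here I would use the identity
\[ \frac{f'/g'}{f/g}=\frac{f'/f}{g'/g}=\frac{(\mathrm{ln}\,f)'}{(\mathrm{ln}\,g)'}\Big|_{n=\infty}, \]
and apply L'Hopital again, now to $\frac{\mathrm{ln}\,f}{\mathrm{ln}\,g}$ (both $\mathrm{ln}\,f,\mathrm{ln}\,g\to\infty$): this yields $\frac{(\mathrm{ln}\,f)'}{(\mathrm{ln}\,g)'}|_{n=\infty}=\frac{\mathrm{ln}\,f}{\mathrm{ln}\,g}|_{n=\infty}=:\mu$. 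When $\mu\in(0,1]$, i.e.\ $f$ and $g$ have the same logarithmic order, one gets $\frac{f'}{g'}=\mu\cdot\frac{f}{g}|_{n=\infty}$ with $\mu$ a nonzero constant, and the Limit Comparison Theorem gives $\sum\frac{f'}{g'}=\sum\frac{f}{g}|_{n=\infty}$, as wanted.

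What remains is the subcase $\mu=0$ (equivalently $\mathrm{ln}\,f\prec\mathrm{ln}\,g$), and this is exactly where the hypothesis excluding $\frac{f}{g}=\frac{\mathrm{ln}_{w+1}}{\prod_{k=0}^{w}\mathrm{ln}_k}|_{n=\infty}$ is used. Iterating the identity peels off successive logarithms, $\frac{(\mathrm{ln}\,f)'}{(\mathrm{ln}\,g)'}=\frac{(\mathrm{ln}_2\,f)'}{(\mathrm{ln}_2\,g)'}\cdot\frac{\mathrm{ln}\,f}{\mathrm{ln}\,g}$, and so on; the iterated-logarithm functions $\frac{1}{\prod_{k=0}^{w}\mathrm{ln}_k\,n}$ are the fixed points of this descent and mark the boundary between convergence and divergence. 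For $\frac{f}{g}$ bounded away from that boundary class the descent should terminate after finitely many steps at a nonzero ratio, and propagating the Limit Comparison Theorem back up the chain gives $\sum\frac{f'}{g'}=\sum\frac{f}{g}|_{n=\infty}$; classifying $\frac{f}{g}$ and $\frac{f'}{g'}$ on the same side of the boundary is where the boundary test (Test~\ref{S0720}, Section~\ref{S18}) is needed. I expect this last step --- showing the logarithmic descent terminates with matching convergence precisely when $\frac{f}{g}$ avoids the boundary class, and pinning down the exact exceptional set --- to be the main obstacle, which is why the statement is recorded as a conjecture pending that machinery.
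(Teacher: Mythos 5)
The first thing to note is that the paper offers no proof of this statement: it is recorded as Conjecture \ref{P244} (and repeated as Conjecture \ref{P245}), so there is no argument of the authors' to compare yours against. The nearest the paper comes is Proposition \ref{P235}, which assumes outright that $\frac{f}{g}=\frac{f'}{g'}|_{n=\infty}$ as functions and then concludes in one line from the boundary test, together with the counter-example family $f=\mathrm{ln}_{w+1}$, $g=\prod_{k=0}^{w}\mathrm{ln}_{k}$ that motivates the excluded class. Your sketch is a reasonable attack on the general case, and you are right to flag the termination of the logarithmic descent as the obstacle; the $L\neq 0$ branch via the $n$th term divergence test and the $\mu\in(0,1]$ branch via the Limit Comparison Theorem are fine within the paper's framework (modulo the fact that you invoke L'Hopital in the converse direction, inferring $\frac{(\mathrm{ln}\,f)'}{(\mathrm{ln}\,g)'}|_{n=\infty}=\mu$ from $\frac{\mathrm{ln}\,f}{\mathrm{ln}\,g}|_{n=\infty}=\mu$, which is not what the rule licenses).

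The genuine problem is that your final step does not merely resist proof --- it fails, because the conjecture's exceptional set, read literally, is too small. Take $f=(\mathrm{ln}_{2}\,n)^{2}$ and $g=n\,\mathrm{ln}\,n$. Then $\frac{f}{g}=\frac{(\mathrm{ln}_{2}\,n)^{2}}{n\,\mathrm{ln}\,n}$ is not equal at infinity to any $\frac{\mathrm{ln}_{w+1}}{\prod_{k=0}^{w}\mathrm{ln}_{k}}$ (its ratio to the $w=1$ boundary term is $\mathrm{ln}_{2}\,n\to\infty$), yet $\sum\frac{f}{g}|_{n=\infty}=\infty$ diverges (substituting $u=\mathrm{ln}\,n$ gives $\int\frac{(\mathrm{ln}\,u)^{2}}{u}\,du$), while $\frac{f'}{g'}=\frac{2\,\mathrm{ln}_{2}\,n}{n\,\mathrm{ln}\,n\,(\mathrm{ln}\,n+1)}$ has $\sum\frac{f'}{g'}|_{n=\infty}=0$ convergent (it reduces to $\int\frac{\mathrm{ln}\,u}{u^{2}}\,du$). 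On this pair your descent never terminates at a nonzero ratio: $\frac{\mathrm{ln}\,f}{\mathrm{ln}\,g}=\frac{2\,\mathrm{ln}_{3}\,n}{\mathrm{ln}\,n+\mathrm{ln}_{2}\,n}\to 0$, then $\frac{\mathrm{ln}_{2}\,f}{\mathrm{ln}_{2}\,g}\to 0$, and so on indefinitely, even though $\frac{f}{g}$ lies outside the literal excluded class. So any correct version of the statement must exclude a whole asymptotic neighbourhood of the boundary (everything whose logarithmic descent fails to terminate), not just the exact boundary ratios, and the plan of propagating the Limit Comparison Theorem back up the chain cannot be completed for the conjecture as written.
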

\bigskip
\begin{mex2}\label{MEX052}
$\sum \frac{\mathrm{ln}\,n}{n^{2}}|_{n=\infty}$
 Since
 $\frac{\mathrm{ln}\,n}{n^{2}}|_{n=\infty}$ is in 
 $\infty/\infty$ form, differentiate by L'Hopital's rule, 
 $\sum \frac{\mathrm{ln}\,n}{n^{2}} |_{n=\infty}$
 $=\sum \frac{\frac{d}{dn} \mathrm{ln}\,n}{ \frac{d}{dn} n^{2}} |_{n=\infty}$
 $= \sum \frac{1}{n} \frac{1}{2n}|_{n=\infty}$
 $= \sum \frac{1}{n^{2}}|_{n=\infty}=0$ converges.
\end{mex2}
\bigskip
\begin{mex2}\label{MEX053}
Test $\sum_{n=1}^{\infty} \frac{n^2}{2^{n}}$ for convergence or divergence.
$\sum \frac{n^{2}}{2^{n}}|_{n=\infty}$
 $= \sum \frac{n^{2}}{e^{n\,\mathrm{ln}\,2}}|_{n=\infty}$
 $= \sum \frac{2n}{\mathrm{ln}\,2 e^{n\,\mathrm{ln}\,2}}|_{n=\infty}$
 $= \sum \frac{2}{(\mathrm{ln}\,2)^{2} e^{n\,\mathrm{ln}\,2}}|_{n=\infty}$
 $= \sum \frac{1}{2^{n}}|_{n=\infty}$
 $=0$ converges.
\end{mex2}
\bigskip
\begin{mex2}\label{MEX054}
 \cite[3.2.17.a, p.74]{kaczor}
 Chain rule with L'Hopital's rule.
$\sum \frac{1}{2^{n^{\frac{1}{2}}}}|_{n=\infty}$
$= \int \frac{dx}{2^{x^{\frac{1}{2}}}}|_{x=\infty}$,
 let $x = u^{2}$, $\frac{dx}{du} = 2u$, 
$\int \frac{dx}{2^{x^{\frac{1}{2}}}}|_{x=\infty}$
$= \int \frac{dx}{du} \frac{du}{2^{u}}|_{u=\infty}$
$= \int 2u \frac{du}{2^{u}}|_{u=\infty}$.
  [ $\frac{d}{du} 2^{u}$
 $= \frac{d}{du}e^{ u \,\mathrm{ln}\,2}$
 $= e^{ u \,\mathrm{ln}\,2} \cdot \mathrm{ln}\,2$
 $= 2^{u} \cdot \mathrm{ln}\,2$]
 As $\frac{u}{2^{u}}|_{u=\infty}=\frac{\infty}{\infty}$, apply L'Hopital's rule,
 $\int 2u \frac{du}{2^{u}}|_{u=\infty}$
$= \int 2 \frac{du}{2^{u} \cdot \mathrm{ln}\,2}|_{u=\infty}$
$= \int \frac{du}{2^{u}}|_{u=\infty}$
$=0$ converges.
\end{mex2}
\subsubsection{Alternating Convergence Test}\label{S0710}  
 Also called the Alternating Convergence Theorem (ACT),
 expressing the test at infinity. (See Theorem \ref{P206}) 
\bigskip
\begin{theo}\label{P007}
If $(a_{n})|_{n=\infty}$ is a monotonic
 decreasing sequence and 
 $a_{n}|_{n=\infty}=0$ then $\sum (-1)^{n} a_{n}|_{n=\infty}=0$ is convergent.
\end{theo}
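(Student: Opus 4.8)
The plan is to realise $\sum (-1)^{n} a_{n}|_{n=\infty}$ as a convergence sum over an infinite interval $[n_{0},n_{1}]$ with $n_{0},n_{1}\in\mathbb{J}_{\infty}$ (condition E3'.0) and to sandwich its magnitude by the first term of the tail, which the hypothesis $a_{n}|_{n=\infty}=0$ forces to be infinitesimal. First I would dispose of the parity of $n_{0}$: since the infinite integer $n_{0}$ has a definite parity, the factor $(-1)^{n_{0}}=\pm 1$ is a bounded nonzero constant, so by Theorem \ref{P061} it may be divided out without changing convergence. Thus we may assume $n_{0}$ is even and the tail reads $a_{n_{0}}-a_{n_{0}+1}+a_{n_{0}+2}-\cdots$. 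Since $(a_{n})|_{n=\infty}$ is monotonic decreasing with $a_{n}|_{n=\infty}=0$, we also have $a_{n}\ge 0$ at infinity, which will be used repeatedly.

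Next I would pass to the auxiliary sum obtained by grouping consecutive pairs, $b_{j}=a_{2j}-a_{2j+1}$. Monotone decrease gives $b_{j}\ge 0$, so $\sum b_{j}$ has a monotonic increasing sequence of partial sums and therefore satisfies the E3' hypotheses (it is an infinireal by E3'.2); this is the step that lets Criterion E3' apply despite the original oscillating sum not being monotonic. The standard regrouping $a_{2m}-a_{2m+1}+a_{2m+2}-\cdots-a_{2M+1}=a_{2m}-(a_{2m+1}-a_{2m+2})-\cdots-(a_{2M-1}-a_{2M})-a_{2M+1}$ exhibits the partial sum as $a_{2m}$ minus a sum of nonnegative quantities, hence $0\le \sum_{j=m}^{M}b_{j}\le a_{2m}$. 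Taking $m,M$ infinite and, if $n_{1}-n_{0}$ is odd, absorbing the single unpaired term $\pm a_{n_{1}}$ (itself infinitesimal), this yields $0\le \sum(-1)^{n}a_{n}|_{n=\infty}\le a_{n_{0}}|_{n=\infty}$.

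Finally, because $a_{n}|_{n=\infty}=0$, the quantity $a_{n_{0}}$ is a positive infinitesimal, $a_{n_{0}}\in\Phi$. The sandwich then gives $\sum(-1)^{n}a_{n}|_{n=\infty}\in\Phi$, and by condition E3'.4 (equivalently Theorem \ref{P057}) this is precisely the assertion that the sum converges, with realisation $\Phi\mapsto 0$ producing $\sum(-1)^{n}a_{n}|_{n=\infty}=0$.

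The main obstacle I anticipate is not the arithmetic but the bookkeeping needed to stay inside the $*G$ framework: one must justify that for testing purposes the non-monotonic oscillating sum may be replaced by the monotone grouped sum $\sum b_{j}$, handle the parity of the infinite endpoints $n_{0},n_{1}$, and make the bound $\bigl|\sum_{k=n_{0}}^{n_{1}}(-1)^{k}a_{k}\bigr|\le a_{n_{0}}$ hold uniformly over all admissible infinite intervals rather than for one fixed choice (condition E3'.0 then lets the interval be grown as needed). Once that reduction is secured, the telescoping-type bound and the concluding sandwich to an infinitesimal are routine.
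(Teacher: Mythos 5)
Your proof is correct, but it follows a genuinely different route from the paper's. The paper (in the proof of Theorem \ref{P206}) also begins with the period-two contiguous rearrangement $\sum(a_{2n}-a_{2n-1})$, but from there it interprets the difference as a sequence derivative $\frac{da_{n}}{dn}$, passes to the continuous domain, and compares against the boundary $\sum \frac{1}{\prod_{k=0}^{w}\mathrm{ln}_{k}}$ by separation of variables: integrating gives $a(n)\;z\;\mathrm{ln}_{w+1}|_{n=\infty}$, and the hypothesis $a(n)|_{n=\infty}=0$ yields $0\;z\;\infty$, i.e.\ $z=\;\lt$, so the sum lies below the boundary and converges. You instead use the classical Leibniz telescoping estimate $0\le\sum_{j=m}^{M}(a_{2j}-a_{2j+1})\le a_{2m}$ to sandwich the entire tail between $0$ and the single term $a_{n_{0}}\in\Phi$, and then invoke E3$'$.4 directly. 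What your route buys is independence from the boundary test: the paper's proof leans on Theorem \ref{P232}, whose own justification in the paper is circuitous (via equivalence with the generalized ratio test), whereas your bound needs only the telescoping identity and the infinitesimality of $a_{n_{0}}$; it also produces an explicit error bound ($|\,$tail$\,|\le a_{n_{0}}$), which the boundary comparison does not. What the paper's route buys is uniformity — it exercises the same machinery (sequence derivative, separation of variables, boundary comparison) used for all the other tests. One small point to tighten: you justify applying Criterion E3$'$ by noting that $\sum b_{j}$ has monotonically increasing \emph{partial sums}, but E3$'$.1 asks for monotonicity of the \emph{terms} $(b_{j})|_{n=\infty}$, which a general monotone decreasing $(a_{n})$ does not guarantee for the differences $b_{j}=a_{2j}-a_{2j+1}$. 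Your sandwich $0\le\sum b_{j}\le a_{n_{0}}$ together with E3$'$.6 (the sum can be made arbitrarily small by growing $n_{0}$) rescues the conclusion without needing $(b_{j})$ monotonic, so the gap is cosmetic, but you should route the final step through that sandwich (or through Theorem \ref{P011}) rather than through a claim that the grouped sum "satisfies the E3$'$ hypotheses" as stated.
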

\bigskip
\begin{mex2}\label{MEX029}
 \cite[3.4.25, p.96]{kaczor}
Determine convergence of $\sum (-1)^{n} \frac{ n! e^{n} }{ n^{n+p} }|_{n=\infty}$

 Recognizing the $n!$, rearrange Stirling's formula,
 $n! = (2 \pi n)^{\frac{1}{2}} (\frac{n}{e})^{n}|_{n=\infty}$,
 $\frac{n!}{n^{n}} =(2 \pi n)^{\frac{1}{2}} \frac{1}{e^{n}}|_{n=\infty}$.

 Substitute the rearranged expression into the sum.
 $\sum (-1)^{n} \frac{ n! e^{n}}{ n^{n+p} }|_{n=\infty}$
 $=\sum (-1)^{n} \frac{ n!}{n^{n}}  \frac{e^{n}}{ n^{p} }|_{n=\infty}$
 $= \sum (-1)^{n} (2 \pi)^{\frac{1}{2}} n^{\frac{1}{2}} \frac{1}{e^{n}} \frac{e^{n}}{n^{p}} |_{n=\infty}$
 $= \sum (-1)^{n} (2 \pi)^{\frac{1}{2}} \frac{1}{n^{p-1/2}} |_{n=\infty}$
 $= \sum (-1)^{n} \frac{1}{n^{p-1/2}} |_{n=\infty}$
 
 When $p=\frac{1}{2}$, 
 $\sum (-1)^{n}|_{n=\infty}=\infty$
 diverges.
 By ACT(Theorem \ref{P007}) when $p \gt \frac{1}{2}$ then $\frac{1}{n^{p-1/2}}|_{n=\infty}=0$ and
 the sum converges.
 When $p \lt \frac{1}{2}$, by the nth term
 test the sum diverges.
\end{mex2}
\subsubsection{Cauchy condensation test}\label{S0711}  
\begin{theo}\label{P006}
$\sum 2^{n}a_{2^{n}}|_{n=\infty}$ converges or diverges with $\sum a_{n}|_{n=\infty}$. \\
If $\sum 2^{n} a_{2^{n}}|_{n=\infty}=0$ then 
 $\sum a_{n}|_{n=\infty}=0$.
If $\sum 2^{n} a_{2^{n}}|_{n=\infty}=\infty$ then
 $\sum a_{n}|_{n=\infty}=\infty$.
\end{theo}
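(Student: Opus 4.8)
The plan is to reduce the condensation test to the integral test (Theorem \ref{P065}) together with the change-of-variable technique already used in Examples \ref{MEX046} and \ref{MEX047}, so that the condensed sum and the original sum become, up to a harmless constant multiplier, the \emph{same} integral at infinity. First I would invoke the standing convention that the tested sequences are monotonic and nonnegative, thread a continuous monotonic function $a(x)$ through the sequence points with $a(n)=a_n$ at integers, and apply Theorem \ref{P065} twice:
\[ \sum a_{n}|_{n=\infty} = \int^{n} a(x)\,dx|_{n=\infty}, \qquad \sum 2^{n} a_{2^{n}}|_{n=\infty} = \int^{n} 2^{x} a(2^{x})\,dx|_{n=\infty}. \]

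Next I would attack the second integral with the substitution $u=2^{x}$: since $\frac{d}{dx}2^{x} = 2^{x}\ln 2 = u\ln 2$, we have $dx = \frac{du}{u\ln 2}$, the upper endpoint $n$ becomes $2^{n}$, and
\[ \int^{n} 2^{x} a(2^{x})\,dx|_{n=\infty} = \int^{2^{n}} u\,a(u)\,\frac{1}{u\ln 2}\,du|_{n=\infty} = \frac{1}{\ln 2}\int^{2^{n}} a(u)\,du|_{n=\infty}. \]
Then I would argue that $2^{n}|_{n=\infty}$ is an infinity, so by the steady-state behaviour of a monotonic integral at infinity (Proposition \ref{P064} and the ``same state at infinity'' discussion preceding it) the point $\int^{2^{n}} a(u)\,du|_{n=\infty}$ coincides with $\int^{u} a(u)\,du|_{u=\infty}$, which by Theorem \ref{P065} again is $\sum a_{n}|_{n=\infty}$. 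Stripping the constant $\frac{1}{\ln 2}$ by Theorem \ref{P059} (valid because $\frac{1}{\ln 2}\cdot 0 = 0$ and $\frac{1}{\ln 2}\cdot\infty = \infty$), we obtain $\sum 2^{n} a_{2^{n}}|_{n=\infty} = \sum a_{n}|_{n=\infty}$ as a value in $\{0,\infty\}$; the two displayed implications are simply the two cases of this equality read off.

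The step I expect to be the genuine obstacle is the endpoint identification $\int^{2^{n}} a(u)\,du|_{n=\infty} = \int^{u} a(u)\,du|_{u=\infty}$: this is exactly the non-uniqueness at infinity the paper flags ($2^{n}\succ n|_{n=\infty}$ yet both realize to $\infty$), so the argument must lean on conditions E3.5 and E3.6, which say a monotonic integral at infinity can be made arbitrarily large or arbitrarily small, to conclude that the convergence/divergence verdict depends only on the point at infinity and not on how the variable reached it. If that identification is felt to be too quick, the fallback is the classical block estimate performed directly at infinity: monotonicity of $(a_k)$ gives $\frac{1}{2}\sum_{m=1}^{n} 2^{m} a_{2^{m}} \le \sum_{k=2}^{2^{n}} a_k \le \sum_{m=0}^{n-1} 2^{m} a_{2^{m}}$, and since $\sum_{k=2}^{2^{n}} a_k|_{n=\infty} = \sum a_n|_{n=\infty}$ (the finite part and the constant $\frac{1}{2}$ are irrelevant, $2^{n}=\infty$), summing these inequalities at infinity and applying the comparison test (Theorem \ref{P010}) yields the same conclusion.
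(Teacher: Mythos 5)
Your proposal is correct and follows essentially the same route as the paper's own proof: convert the condensed sum to an integral, perform the change of variable $u=2^{x}$ so the $2^{x}$ factor cancels, discard the constant $\tfrac{1}{\ln 2}$ because the verdict is only $0$ or $\infty$, and re-identify the resulting integral at infinity with $\sum a_{n}|_{n=\infty}$. The only difference is that you make explicit (via E3.5/E3.6 and the steady-state discussion) the endpoint identification $\int^{2^{n}} a(u)\,du|_{n=\infty} = \int^{n} a(u)\,du|_{n=\infty}$, which the paper's proof performs silently in the step $\int a(2^{n})\,d(2^{n})|_{n=\infty} = \int a(n)\,dn|_{n=\infty}$.
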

\begin{proof}
Convert to the continuous domain, apply
 the chain rule, and convert back
 to the discrete domain.

 $\sum 2^{n} a_{2^{n}} dn|_{n=\infty}$
 $= \int 2^{n} a(2^{n}) dn|_{n=\infty}$
 $= \int 2^{n} a(2^{n}) \frac{dn}{d2^{n}} d(2^{n}) |_{n=\infty}$
 $= \int 2^{n} a(2^{n}) \frac{dn}{d \, e^{n \mathrm{ln} \, 2}} d(2^{n}) |_{n=\infty}$
 $= \int 2^{n} a(2^{n}) \frac{1}{2^{n} \mathrm{ln} \, 2 } d(2^{n}) |_{n=\infty}$
 $= \int a(2^{n}) \frac{1}{ \mathrm{ln} \, 2 } d(2^{n}) |_{n=\infty}$
 $= \int a(2^{n}) d(2^{n}) |_{n=\infty}$
 $= \int a(n) dn |_{n=\infty}$
 $= \sum a_{n}|_{n=\infty}$.
 Constants ignored as either converge $0$ or diverge $\infty$.
\end{proof}
\bigskip
\begin{mex2}\label{MEX040}
Determine convergence/divergence of $\sum \frac{1}{n \,\mathrm{ln}\,n}|_{n=\infty}$.

Let $a_{n}=\frac{1}{n \,\mathrm{ln}\,n}$. 
 $a_{2^{n}} = \frac{1}{2^{n} \, \mathrm{ln}2^{n}}$
 $=\frac{1}{2^{n} \cdot n \cdot \ln\,2}$.
 Then 
 $\sum 2^{n} a_{2^{n}} dn|_{n=\infty}$
 $= \sum 2^{n} \frac{1}{2^{n} \cdot n \cdot \ln\,2}|_{n=\infty}$
 $=  \frac{1}{\mathrm{ln}\,2} \sum \frac{1}{n}|_{n=\infty}$
 $= \infty$ diverges.
\end{mex2}

\subsubsection{Ratio test}\label{S0712}  
\begin{theo}\label{P008}
 Theorem \ref{P211} 
 $a_{n} \in *G$;
\[ \text{If } (\overline{\mathbb{R}},\lt): \;\; \frac{a_{n+1}}{a_{n}} \lt 1 \text{ then } \sum a_{n}|_{n=\infty}=0 \text{ converges.} \]
\[ \text{If } (\overline{\mathbb{R}},\gt): \;\; \frac{a_{n+1}}{a_{n}} \gt 1 \text{ then } \sum a_{n}|_{n=\infty}=\infty \text{ diverges.} \]
\end{theo}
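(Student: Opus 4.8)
The plan is to split into the two cases and reduce each to tools already available: the convergence case to a comparison with a convergent geometric series, the divergence case to the $n$th term divergence test. Throughout I take $a_n>0$, the positive-series setting of the paper (the general case reduces to this by the absolute convergence test, Theorem \ref{P015}).

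For the \textbf{convergence case}, suppose the ratio realizes to a value $L<1$ in $\overline{\mathbb{R}}$. First I would come down to a finite index: since $L<1$, fixing a real $c$ with $L<c<1$, there is a finite $N$ with $\frac{a_{n+1}}{a_n}\le c$ for all finite $n\ge N$. Telescoping the product $\frac{a_n}{a_N}=\prod_{k=N}^{n-1}\frac{a_{k+1}}{a_k}\le c^{n-N}$ (the product form of the fundamental theorem of calculus for sums, \cite{cebp10}) gives $a_n\le (a_N c^{-N})\,c^{n}=Mc^{n}$ for all $n\ge N$ and hence at infinity, where $M=a_N c^{-N}$ is now a genuine positive real constant. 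Since $0<c<1$, the geometric series converges at infinity: by the integral test (Theorem \ref{P065}), $\sum c^{n}|_{n=\infty}=\int c^{x}\,dx|_{x=\infty}=\frac{c^{x}}{\ln c}|_{x=\infty}=0$, using $c^{x}\in\Phi$ and $\ln c$ a nonzero real. By Theorem \ref{P059} the real multiplier is irrelevant, so $\sum Mc^{n}|_{n=\infty}=0$, and from $0\le a_n\le Mc^{n}|_{n=\infty}$ the comparison test (Theorem \ref{P010}) gives $\sum a_n|_{n=\infty}=0$, i.e. the sum converges.

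For the \textbf{divergence case}, suppose $\frac{a_{n+1}}{a_n}>1$ in $\overline{\mathbb{R}}$. Then at infinity $a_{n+1}>a_n$, so $(a_n)|_{n=\infty}$ is monotonic increasing; in particular $a_n|_{n=\infty}\ge a_N\succ 0$ for the finite $N$ past which the ratio exceeds $1$, so $a_n|_{n=\infty}\ne 0$, and the $n$th term divergence test (Theorem \ref{P013}) gives $\sum a_n|_{n=\infty}=\infty$, i.e. the sum diverges. (One could instead bound $a_n\ge M'd^{n}$ for a real $d>1$ and compare with a divergent geometric series, but this is the shorter route.)

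The step I expect to be the \textbf{main obstacle} is the transfer in the convergence case: extracting from the hypothesis stated in $\overline{\mathbb{R}}$ an honest real constant $c$ with $\frac{a_{n+1}}{a_n}\le c<1$ from a finite index onward, and then justifying the infinite telescoping product in $*G$ via the product-FTC of \cite{cebp10}. One also has to note that "$<1$ in $\overline{\mathbb{R}}$" genuinely rules out the inconclusive borderline — ratios like $1-\frac1n$ realize to $1$ and so are excluded. Once $a_n\le Mc^{n}|_{n=\infty}$ is in hand the rest is routine, being just the comparison and constant-multiplier results already proved.
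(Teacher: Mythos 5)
Your proof is correct, but it takes a genuinely different route from the paper's. The paper proves this statement simply by pointing at Theorem \ref{P211}, whose proof clears the denominator to get $a_{n+1} \; z \; a_{n}|_{n=\infty}$ (Theorem \ref{P219}), reads the difference $a_{n+1}-a_{n}$ as a sequence derivative, passes to the continuous domain (Theorem \ref{P220}), and then either specializes the generalized ratio test (Theorems \ref{P224}, \ref{P225}, themselves proved via equivalence with the boundary test) or runs the ``decelerating particle'' argument, integrating the relation twice. You instead give the classical argument: for convergence, extract a real $c$ with $L \lt c \lt 1$ dominating the realized ratio, telescope to $a_{n}\le Mc^{n}$, evaluate $\sum c^{n}|_{n=\infty}=0$, and finish with Theorems \ref{P059} and \ref{P010}; for divergence, note $a_{n}|_{n=\infty}\neq 0$ and apply Theorem \ref{P013}. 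Your route is more elementary and self-contained within the Section \ref{S01} toolkit, needing neither the derivative-of-a-sequence machinery nor the boundary test, and it isolates exactly where the hypothesis is used (extracting the real $c\lt 1$, which is what excludes borderline ratios such as $1-\frac{1}{n}$ --- consistent with the paper's Example \ref{MEX037}); your divergence case is also shorter than the paper's. What the paper's route buys is uniformity: the ratio test appears as one member of the generalized family alongside Raabe's and Bertrand's tests, at the price of resting on the boundary test. On the step you flag as the main obstacle: rather than literally telescoping an infinite product from a finite $N$ up to an infinite index, it is cleaner within this framework to work entirely at infinity and integrate the relation $\frac{a_{n+1}}{a_{n}}\le c$ over an infinite interval (the same move made in the second proof of Theorem \ref{P220}); the bound $a_{n}\le Mc^{n}|_{n=\infty}$ at infinity is all that Theorem \ref{P010} requires, so no finite-index transfer is actually needed.
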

The ratio test can also be expressed as an
 inequality at infinity.
\bigskip
\begin{theo}\label{P009}
  Theorem \ref{P219}
 $a_{n} \in *G$;
\[ \text{If } (\overline{\mathbb{R}},\lt): \;\; a_{n+1} \lt a_{n}  \text{ then } \sum a_{n}|_{n=\infty}=0 \text{ converges.} \]
\[ \text{If } (\overline{\mathbb{R}},\gt): \;\; a_{n+1} \gt a_{n}  \text{ then } \sum a_{n}|_{n=\infty}=\infty \text{ diverges.} \]
\end{theo}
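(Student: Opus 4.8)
The plan is to derive Theorem~\ref{P009} directly from the ratio test, Theorem~\ref{P008}, since the two are the same statement written multiplicatively versus additively. Because we are testing a positive series, at infinity $a_{n}$ is a nonzero positive gossamer number, so in $*G$ we may divide the hypothesised inequality by $a_{n}$ without disturbing the relation: $a_{n+1} \lt a_{n}|_{n=\infty}$ is equivalent to $\frac{a_{n+1}}{a_{n}} \lt 1|_{n=\infty}$, and likewise $a_{n+1} \gt a_{n}|_{n=\infty}$ is equivalent to $\frac{a_{n+1}}{a_{n}} \gt 1|_{n=\infty}$. This is the same preservation-of-relation-under-multiplication-by-a-positive-quantity principle used in Section~\ref{S05} (Theorems~\ref{P038} and~\ref{P039}) and in the proofs of Theorems~\ref{P059}--\ref{P061}.

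First I would record that $a_{n} \gt 0|_{n=\infty}$ and that $a_{n}$ is not an additive identity of the relevant magnitude, so that $\frac{a_{n+1}}{a_{n}}$ is well defined in $*G$. Then I would carry out the two-line algebra above in each of the two cases. Finally I would invoke Theorem~\ref{P008}: in the first case $\frac{a_{n+1}}{a_{n}} \lt 1$ gives $\sum a_{n}|_{n=\infty}=0$ converges, and in the second case $\frac{a_{n+1}}{a_{n}} \gt 1$ gives $\sum a_{n}|_{n=\infty}=\infty$ diverges. Nothing else is needed once the equivalence with the ratio form is in hand.

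The delicate point --- and the one I would spend the most care on --- is the qualifier $(\overline{\mathbb{R}},\lt)$. The equivalence $a_{n+1} \lt a_{n} \Leftrightarrow \frac{a_{n+1}}{a_{n}} \lt 1$ is an identity in $*G$, but the hypothesis and conclusion of Theorem~\ref{P008} are stated after realization to $\overline{\mathbb{R}}$, where $a_{n}$ itself may collapse to $0$ (or to $\infty$) and one can no longer divide. So the inequality $a_{n+1} \lt a_{n}|_{n=\infty}$ in the statement must be read, exactly as in Theorem~\ref{P008}, as a relation evaluated in $*G$ that is required to remain a \emph{strict} inequality in $\overline{\mathbb{R}}$ at the level of the \emph{ratio} $\frac{a_{n+1}}{a_{n}}$; the naive reading ``$a_{n}$ is eventually decreasing'' is strictly weaker (harmonic and convergent $p$-series alike would spuriously qualify) and is not what is meant. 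I would make this explicit, note that with the intended reading the two hypotheses are literally the same relation written two ways, and then the reduction to Theorem~\ref{P008} is immediate. Equivalently, one could bypass Theorem~\ref{P008} and argue in $*G$ directly --- $\frac{a_{n+1}}{a_{n}}$ realizing to $r \lt 1$ lets one dominate $a_{n}$ by the geometric sequence $r^{n}$ and apply the comparison test (Theorem~\ref{P010}) against the convergent geometric series, with the divergent case symmetric --- but this merely re-proves Theorem~\ref{P008} and is longer.
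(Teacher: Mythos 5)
Your reduction is sound as an \emph{equivalence} --- for a positive series, $a_{n} \gt 0$ in $*G$, so dividing $a_{n+1} \; z \; a_{n}$ through by $a_{n}$ does yield $\frac{a_{n+1}}{a_{n}} \; z \; 1$ --- but as a proof it runs the paper's logic backwards and is circular in context. The paper derives Theorem~\ref{P008} (i.e.\ Theorem~\ref{P211}) \emph{from} the present theorem: the entire proof of Theorem~\ref{P211} is the step $\frac{a_{n+1}}{a_{n}} \; z \; 1$, hence $a_{n+1} \; z \; a_{n}$, ``apply Theorem~\ref{P219}''. So invoking Theorem~\ref{P008} here establishes nothing that was not already assumed. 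The paper's own proof goes a different way: it rewrites $a_{n+1} \; z \; a_{n}$ as $a_{n+1}-a_{n} \; z \; 0$, reads the difference as the sequence derivative $\frac{d a_{n}}{dn}$ (Section~\ref{S15}), threads a continuous function through the sequence to obtain $\frac{d a(n)}{dn} \; z \; 0$, and then applies the first-derivative test Theorem~\ref{P220}, which in turn rests on the generalized ratio test (Theorem~\ref{P225}) and ultimately the boundary test. Your fallback --- realize $\frac{a_{n+1}}{a_{n}}$ to some $r \lt 1$, dominate $a_{n}$ by the geometric sequence $r^{n}$, and apply the comparison test Theorem~\ref{P010} --- is an independent and legitimate route (essentially the classical proof of the ratio test), and had you led with it the argument would stand on its own; as written, though, you present it only as an afterthought. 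Your discussion of the $(\overline{\mathbb{R}},\lt)$ qualifier is correct and matches the paper's own caution in Examples~\ref{MEX037} and~\ref{MEX221}: the strict inequality must survive realization, so $\frac{1}{n+1} \lt \frac{1}{n}$ collapsing to $0 \lt 0$ is indeterminate rather than a certificate of convergence.
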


As the application of the ratio test
 at infinity is 
 in one-to-one correspondence with
 the same limit calculation,
 let us consider
 the 
 modified ratio test
 Theorem \ref{P009}
 examples.
\bigskip
\begin{mex2}\label{MEX030}
 Determine convergence of $\sum \frac{n!}{(2n)!}|_{n=\infty}$.
 Let $a_{n} = \frac{n!}{(2n)!}$. 
\begin{align*}
 a_{n+1} \;\; z \;\; a_{n}|_{n=\infty} \\
 \frac{(n+1)!}{(2(n+1))!} \;\; z \;\; \frac{n!}{(2n)!}|_{n=\infty} \\
 (n+1)! (2n)! \;\; z \;\; (2n+2)! n!|_{n=\infty} \\  
 (n+1) (2n)! \;\; z \;\; (2n+2)! |_{n=\infty} \\  
 (n+1) \;\; z \;\; (2n+1)(2n+2) |_{n=\infty} \\  
 n \lt 4n^{2}|_{n=\infty} \\
 1 \lt 4n|_{n=\infty}\tag{By Theorem \ref{P009} convergent} 
\end{align*}
\end{mex2}
\bigskip 
\begin{mex2}\label{MEX037}
A strict inequality in $*G$ is not a strict inequality in 
 $\mathbb{R}$ when infinitely close.
 Consider the known
 divergent sum $\sum \frac{1}{n}|_{n=\infty}$,
 with a strict inequality interpretation
 the test fails,
 $a_{n+1} \; z \; a_{n}|_{n=\infty}$,
 $\frac{1}{n+1} \; z \; \frac{1}{n}|_{n=\infty}$,
 $\frac{1}{n+1} \lt \frac{1}{n}|_{n=\infty}$, and the sum converges, which is incorrect.
 
However realizing the comparison, 
 $\frac{1}{n+1} \; z \; \frac{1}{n}|_{n=\infty}$,
$0 \; z \; 0$, $z = \; ==$ equality
 and the test is indeterminate. 
\end{mex2}
\bigskip
\begin{defy}\label{DEF005}
Define the radius of convergence $r$, 
 $\frac{1}{r} = |\frac{a_{n}}{a_{n-1}}||_{n=\infty}$
\end{defy}
\bigskip
\begin{mex2}\label{MEX038}
 By the ratio test with a point
 at infinity notation,
 find the radius of convergence and convergence interval for 
 $\sum_{n=1}^{\infty} \frac{ x^n}{ n^2 3^n }$.
 $a_{n} = \frac{1}{n^{2} 3^{n}}$,    
 $|\frac{a_{n}}{a_{n-1}}||_{n=\infty}$
$=| \frac{ (n-1)^2 3^{n-1}}{n^2 3^n} | |_{n=\infty}$
 $= \frac{1}{3} | \frac{ (n-1)^{2} }{n^{2}} \cdot \frac{ 3^{n-1}}{3^{n-1}} ||_{n=\infty}$
 $= \frac{1}{3}$  
 $= \frac{1}{r}$,  
$r = 3$. Converges when $x=(-3,3)$.

 Test the interval's end points.
 When $x=3$, $\sum \frac{ 3^{n}}{ n^{2} 3^{n}}|_{n=\infty}$
 $= \sum \frac{1}{n^{2}}=0$ converges.
 When $x=-3$,
 $\sum \frac{ (-1)^{n} 3^{n}}{ n^{2} 3^{n}}|_{n=\infty}$
 $= \sum \frac{(-1)^{n}}{n^{2}}=0$ converges.
 Interval of convergence: $x = [-3,3]$.

 Alternatively see 
 Example \ref{MEX014}.
\end{mex2}
\subsubsection{Cauchy's convergence test}\label{S0713}  
 The standard test.
 Applying Cauchy's
 convergent sequence test,
 with the partial sum as a general sequence term.
 If $\exists N: \forall n, m \gt N, |s_{m}-s_{n}| \lt \epsilon$ then $(s_{n})$ is a Cauchy sequence.

 This test is reformed at infinity:
 $s_{m}-s_{n}\in \Phi$ and ; $m,n \in \Phi^{-1}$; with the 
 condition $m-n \in \Phi^{-1}$  \cite[Part 6]{cebp21}.

 By considering the convergence sums as a sequence of
 points, if the sequence converges then the sum converges.
 As a partial sum which starts counting at infinity,
 $( \ldots, s_{n}, s_{n+1}, s_{n+2}, \ldots)|_{n=\infty}$.

 For the convergence test at infinity,
 we consider an infinite interval at infinity,
 and if the sum is an infinitesimal, the
 sum converges. 
\bigskip
\begin{theo}
 Consider the convergence sum, 
 let $s_{n}= \sum_{n} a_{k}|_{n=\infty}$;
 $m, n \in \Phi^{-1}$; $m-n=\infty$.
 If $s_{n}-s_{m} \in \Phi$
 then the sum $s_{n}$ converges, else the sum diverges.
\end{theo}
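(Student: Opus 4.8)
The plan is to read $(s_{n})$ as the sequence of partial sums of the convergence sum, indexed by infinite integers, $(\ldots, s_{n}, s_{n+1}, s_{n+2}, \ldots)|_{n=\infty}$, and to recognise the hypothesis as precisely the Cauchy criterion for sequences at infinity established in \cite[Part 6]{cebp21}. First I would observe that, for $m \gt n$ at infinity, $s_{m} - s_{n} = \sum_{k=n+1}^{m} a_{k}$ is a sum over an infinite interval $[n+1,m]$ with $m-n=\infty$, so the hypothesis $s_{n}-s_{m}\in\Phi$ says every such tail block is infinitesimal. Since $a_{k}\geq 0$, the sequence $(s_{n})|_{n=\infty}$ is monotonically increasing, so the monotonicity requirement of Criterion E3$'$ is met and E3$'$ may be applied directly to the convergence sum $\sum_{\mathbb{N}_{\infty}} a_{k}$.

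For the convergence direction, assuming $s_{n}-s_{m}\in\Phi$ for all $m,n\in\Phi^{-1}$ with $m-n=\infty$, I would invoke the Cauchy sequence test at infinity: a monotonic sequence all of whose differences between infinite indices of infinite separation are infinitesimal is a convergent sequence, so $s_{n}|_{n=\infty}$ realises to a finite value; equivalently the tail $\sum_{\mathbb{N}_{\infty}} a_{k}$ is an infinitesimal, so by condition E3$'$.4 the sum converges. Alternatively one can route through Proposition~\ref{P049} and Theorem~\ref{P057}: the convergence sum is an infinireal, hence lies in $\Phi$ or in $\Phi^{-1}$, and the Cauchy hypothesis excludes the $\Phi^{-1}$ case, since an infinity cannot be sandwiched by infinitesimal tail blocks (Propositions~\ref{P055}, \ref{P056}).

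For the ``else diverges'' clause, if the Cauchy condition fails, so that some admissible pair $m,n$ gives $s_{n}-s_{m}\not\in\Phi$, then $(s_{n})|_{n=\infty}$ is not a convergent sequence and the series is not convergent; since the series has no finite singularities it is classified as convergent or divergent (Theorem~\ref{P001}), hence it diverges and $\sum a_{n}|_{n=\infty}=\infty$.

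The main obstacle I anticipate is the bookkeeping needed to reconcile the two viewpoints cleanly: the hypothesis is phrased ``between two infinities'' (an E1/E2-style interval statement) whereas the conclusion ``converges/diverges'' is an E3$'$ statement about a point at infinity. I would spend the bulk of the argument justifying that positivity forces monotonicity of $(s_{n})$, that the separation requirement $m-n=\infty$ is exactly what is needed so that $[n,m]$ captures enough of the tail (cf.\ condition E3.0 and the remark that only part of the tail need be tested), and that under these conditions ``$(s_{n})$ is Cauchy at infinity'' and ``$\sum_{\mathbb{N}_{\infty}} a_{k}\in\Phi$'' are interchangeable. Everything else is a direct appeal to the sequence theory of \cite[Part 6]{cebp21} together with Theorem~\ref{P057}.
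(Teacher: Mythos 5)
Your argument is correct within the paper's framework, but it takes a genuinely different route from the paper's own proof --- and, notably, it proves the implication that the theorem actually states. The paper's proof is a one-line verification of the \emph{converse}: it assumes the sum converges, notes that then $s_{n}$ and $s_{m}$ are both infinitesimals by Criterion E3, and concludes that their difference lies in $\Phi$; it never argues that the Cauchy condition is \emph{sufficient} for convergence, nor does it address the ``else diverges'' clause. You instead establish the forward direction by reading $s_{m}-s_{n}=\sum_{k=n+1}^{m}a_{k}$ as a tail block of infinite width, invoking the Cauchy criterion at infinity from \cite[Part 6]{cebp21} together with the dichotomy of Proposition \ref{P049} and Theorem \ref{P057}, and excluding the $\Phi^{-1}$ case via the observation that a divergent tail admits an infinite block (essentially E3$'$.5 / Proposition \ref{P056}), which would contradict the hypothesis. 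You then dispatch the divergence clause by negation. What each approach buys: the paper's version is the cheaper consistency check (convergence $\Rightarrow$ Cauchy), which is what is needed to connect Criterion E3 back to Criterion E1; yours is the stronger statement that the Cauchy condition is a usable \emph{test}, which is what the theorem's wording promises. For positive monotonic series under Criterion E3 the two directions are equivalent, so together they give the full biconditional. One small point to make explicit: you rely on $a_{k}\geq 0$ to get monotonicity of $(s_{n})|_{n=\infty}$; the theorem statement does not say this, though the surrounding section (convergence sums of positive series) clearly intends it, so you should state the positivity hypothesis rather than leave it implicit.
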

\begin{proof}
If the sum converges, then both sums satisfy the E3 criteria,
 then $s_{n}-s_{m} \in \Phi$ is a difference in infinitesimals,
 which is also an infinitesimal. $\Phi \mapsto 0$ and 
 the Cauchy sequence at infinity is satisfied.
\end{proof}

 The test forms the basis of Criterion E1 convergence.
 The example in \cite[pp.212--213]{gold} solves the same problem
 with a linear scale of infinities $(\omega, 2\omega, 3 \omega, \ldots)|_{\omega = \infty}$.
 If $s_{2 \omega} -s_{\omega} \in \Phi$ then $(s_{\omega})|_{\omega=\infty}$ converges else the sequence diverges.

 In constructing a Cauchy test at infinity,
 we can use infinities $2^{n}$ and $2^{n-1}$.
 Example \ref{MEX034} uses a power of $2$ scale of infinities $(2^{n}, 2^{n+1}, 2^{n+2}, \ldots)|_{n=\infty}$.
\bigskip
\begin{prop}\label{P047}
If  
 $s_{2^{n}} - s_{2^{n-1}}|_{n=\infty}=0$
 then $(s_{n})|_{n=\infty}$ converges,
 else the sum diverges.
\end{prop}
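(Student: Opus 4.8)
The plan is to deduce the proposition from the general Cauchy convergence test at infinity stated just above, read on the dyadic scale $(2^{n},2^{n+1},2^{n+2},\ldots)|_{n=\infty}$, and to supply the missing link with the Cauchy condensation test (Theorem \ref{P006}) together with the monotonicity of the term sequence $(a_{k})|_{k=\infty}$ which is part of the standing E3 hypothesis.

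First I would dispatch the ``else'' clause, the easy half. Since $\sum a_{k}$ is a positive monotonic series, the partial sums $s_{n}$ form a monotone increasing sequence, so $(s_{n})|_{n=\infty}$ is an infinireal by Proposition \ref{P049}. If it converges, the preceding Cauchy test gives $s_{m}-s_{n}\in\Phi$ for all $m,n\in\Phi^{-1}$ with $m-n=\infty$; applying this to the pair $m=2^{n}$, $2^{n-1}$ and noting $2^{n}-2^{n-1}=2^{n-1}=\infty$ yields $s_{2^{n}}-s_{2^{n-1}}\in\Phi$, i.e. $s_{2^{n}}-s_{2^{n-1}}|_{n=\infty}=0$ after realisation. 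The contrapositive is exactly ``if $s_{2^{n}}-s_{2^{n-1}}|_{n=\infty}\neq 0$ then the sum diverges''.

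For the forward direction I would first reduce to the case where $(a_{k})|_{k=\infty}$ is monotone decreasing, since otherwise $a_{k}|_{k=\infty}\neq 0$ and the sum diverges by the nth term test (Theorem \ref{P013}). Writing the dyadic block $B_{n}:=s_{2^{n}}-s_{2^{n-1}}=\sum_{k=2^{n-1}+1}^{2^{n}}a_{k}$ and the condensed term $c_{n}:=2^{n}a_{2^{n}}$, the block $B_{n}$ has $2^{n-1}$ summands, each between $a_{2^{n}}$ and $a_{2^{n-1}}$, giving the sandwich $\frac{1}{2}c_{n}\le B_{n}\le c_{n-1}$. Hence $B_{n}|_{n=\infty}=0$ forces $c_{n}|_{n=\infty}=0$, so the general term of the condensed series $\sum 2^{n}a_{2^{n}}$ is infinitesimal; one would then invoke Theorem \ref{P006} to push convergence of $\sum 2^{n}a_{2^{n}}$ back to $\sum a_{k}$, and finally Theorem \ref{P057} to conclude that $(s_{n})|_{n=\infty}$ converges.

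The main obstacle lies in this last inference. Infinitesimality of $c_{n}=2^{n}a_{2^{n}}$ — equivalently, of the single consecutive blocks $B_{n}$ — is a priori only the necessary, not the sufficient, condition for convergence of $\sum 2^{n}a_{2^{n}}$; it is the same gap that separates ``$a_{n}|_{n=\infty}=0$'' from ``$\sum a_{n}|_{n=\infty}=0$''. Closing it amounts to showing that in $*G$ monotonicity of $(a_{k})$ upgrades the consecutive-block condition $B_{n}\in\Phi$ to the full Cauchy condition $s_{m}-s_{n}\in\Phi$ for \emph{every} infinite interval $[n,m]$ with $m-n=\infty$, and the delicate case is a ``long'' interval straddling an infinite number of dyadic blocks, whose aggregate $\sum_{k}B_{k}$ need not remain infinitesimal. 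I would expect the clean resolution to come either from imposing the hypothesis on all dyadic pairs $2^{i},2^{j}$ rather than only on consecutive ones, or from the boundary test of Section \ref{S18}; in any case the reduction above cleanly delivers the ``else'' half and the necessary condition $2^{n}a_{2^{n}}|_{n=\infty}\in\Phi$.
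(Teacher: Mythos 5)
Your ``else'' half is the paper's argument: the paper's proof consists of checking E3.0 ($2^{n}-2^{n-1}=\infty$, so the dyadic interval has infinite width) and then observing that if the sum converges both point sums are infinitesimals, hence so is their difference, which realizes to $0$ --- exactly your contrapositive via the Cauchy test at infinity. Where you part company with the paper is the forward direction. The paper does not derive sufficiency at all: having verified E3.0, it reads the proposition as an instance of Criterion E1/E3 on the dyadic scale, i.e.\ it takes the infinitesimality of the sum over an infinite-width section of the tail to \emph{be} the convergence criterion, so there is nothing left to prove. You instead try to earn sufficiency through the condensation sandwich $\tfrac{1}{2}c_{n}\le B_{n}\le c_{n-1}$, and you correctly run into the obstruction that stops this route.

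The gap you flag is genuine and is not an artifact of your method. For $a_{k}=\frac{1}{k\,\mathrm{ln}\,k}$ one has $s_{2^{n}}-s_{2^{n-1}} = \mathrm{ln}_{2}(2^{n})-\mathrm{ln}_{2}(2^{n-1}) = \mathrm{ln}\frac{n}{n-1}|_{n=\infty}\in\Phi$, which realizes to $0$, yet the series diverges; so infinitesimality of \emph{consecutive} dyadic blocks is strictly weaker than convergence, and no amount of monotonicity of $(a_{k})$ closes the distance from $c_{n}\in\Phi$ to $\sum c_{n}\in\Phi$. Your proposed repair --- demand $s_{2^{i}}-s_{2^{j}}\in\Phi$ for all infinite $i,j$ with $2^{i}-2^{j}=\infty$, not just consecutive ones --- is precisely the ``for any infinitely large $\omega$'' quantifier of Criterion E1, and under that reading your argument and the paper's coincide. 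As written, though, your proof establishes only the necessity half plus $2^{n}a_{2^{n}}|_{n=\infty}\in\Phi$; the sufficiency half is supplied in the paper by definitional appeal to the E3 criteria, not by a derivation, and your counterexample-shaped worry is the reason it cannot be supplied any other way.
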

\begin{proof}
 Consider the E3 criterion.
 E.0: $2^{n}-2^{n-1}=\infty$ satisfied. If the sum is convergent,
 both the sums at a point are infinitesimals, their difference
 an infinitesimal $\Phi \mapsto 0$.  
\end{proof} 
\bigskip
\begin{mex2}\label{MEX034}
 Determine convergence/divergence of
 $s_{n} = \sum_{k=1}^{n} \frac{1}{k}$.
$s_{2^{n}} - s_{2^{n-1}}|_{n=\infty}$
 $=\int_{1}^{2^{n}} \frac{1}{x} dx - \int_{1}^{2^{n-1}} \frac{1}{x} dx$
 $= \mathrm{ln}\,x|_{1}^{2^{n}} - \mathrm{ln}\,x|_{1}^{2^{n-1}}|_{n=\infty}$
 $=n \, \mathrm{ln}\,2 - (n-1) \mathrm{ln}\,2|_{n=\infty}$
 $=\mathrm{ln}\,2 \neq 0$ diverges.
\end{mex2}

 However, Criteria E3 also do this 
 by integration at a point
 (see Theorems \ref{P044} and \ref{P057}).
 This could be considered as
 taking the Cauchy sequence a step further with magnitude
 arguments. 
\bigskip
\begin{theo}
 By convergence of a sequence at infinity
 \cite[Part 6]{cebp21},
 a convergence sum or integral need only have a point tested.
\end{theo}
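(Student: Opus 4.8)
The plan is to assemble the statement from pieces that are already in place, since the theorem is really a summary of the reduction machinery built in Sections~\ref{S0108}--\ref{S12}. First I would note that a convergence sum $\sum a_{n}|_{n=\infty}$ is literally a sequence of partial sums $s_{n}=\sum_{k} a_{k}$ indexed over $\mathbb{J}_{\infty}$, and a convergence integral $\int a(x)\,dx|_{n=\infty}$ is an antiderivative evaluated along an infinite net; by the sequence-convergence result \cite[Part 6]{cebp21} the convergence behaviour of each is governed by the behaviour of that sequence at the singularity $n=\infty$, and by Proposition~\ref{P049} the value is realized either to $0$ or to $\infty$. So the whole question reduces to: which single evaluation at infinity do we inspect?

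Next I would invoke the interval-collapse results. Applying Theorem~\ref{P028} (for sums) or Theorem~\ref{P052} (for integrals) discards the finite part, which by Theorem~\ref{P001} / Theorem~\ref{P016} carries no convergence information provided there are no finite singularities. What remains is an object on an infinite interval $[n_{0},n_{1}]$; Propositions~\ref{P042} and~\ref{P043} (integrals), together with Propositions~\ref{P055} and~\ref{P056} (sums), show that under conditions E3.6 / E3'.6 in the convergent case and E3.5 / E3'.5 in the divergent case this interval collapses to a single endpoint, and Theorems~\ref{P044} and~\ref{P057} package this as ``only one point at infinity need be tested,'' with the verdict being $\sum a_{n}|_{n=\infty}\in\Phi$ (resp.\ $\int a(n)\,dn|_{n=\infty}\in\Phi$) for convergence and $\in\Phi^{-1}$ for divergence. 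The proof is then just the chain: strip the finite part, then apply Theorem~\ref{P057} or Theorem~\ref{P044} to the infinite part.

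The ingredient that needs the most care — and the step I expect to be the main obstacle — is justifying that the non-reversible-arithmetic deletion of the subordinate endpoint is always legitimate. This rests on there being no least diverging infinity (so $n_{0}$ can always be pushed down until $\int^{n_{1}}\succ\int^{n_{0}}|_{n=\infty}$) and on infinitesimals being arbitrarily shrinkable (so in the convergent case $\int^{n_{1}}\prec\int^{n_{0}}|_{n=\infty}$); this is precisely what conditions E3.5 and E3.6 encode, and it is also why the monotonicity requirement E3.1 cannot be dropped here — non-monotonic inputs must first be rearranged (Section~\ref{S16}) before the two endpoints are magnitude-comparable. I would close by tying this to the steady-state picture: once the partial-sum sequence has settled at infinity, Proposition~\ref{P064} gives $\sum a_{n}=\sum a_{n+1}|_{n=\infty}$, so every representative point at infinity returns the same convergence verdict, which is exactly what the theorem asserts.
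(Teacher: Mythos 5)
Your proposal is correct within the paper's framework, but it takes a different route from the one the paper actually uses. The paper's proof is deliberately Cauchy-flavoured (the theorem sits in the Cauchy convergence test subsection): it forms the difference of two partial sums at infinity, $s_{n}|_{n=\infty}-s_{m}|_{m=\infty}$ with $|n-m|=\infty$, rewrites each as a point integral, and then deletes the subordinate term by non-reversible arithmetic, leaving $\int a(n)\,dn|_{n=\infty}$ in the divergent case or $-\int a(m)\,dm|_{m=\infty}$ in the convergent case; it then remarks that Theorem~\ref{P044} ``essentially does this in a simpler way.'' You instead assemble the claim from the finite/infinite decomposition (Theorems~\ref{P028}, \ref{P052}, \ref{P001}, \ref{P016}) and then invoke the interval-collapse machinery (Propositions~\ref{P042}, \ref{P043}, \ref{P055}, \ref{P056} packaged as Theorems~\ref{P044} and~\ref{P057}), closing with the steady-state observation of Proposition~\ref{P064}. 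Your route is the one the paper itself flags as the ``simpler way,'' and your discussion of why E3.5/E3.6 and the monotonicity requirement E3.1 are the load-bearing hypotheses is a genuine improvement in explicitness over the paper's terse argument. What it loses is the point of placing this theorem where it is: the paper wants to exhibit the single-point test as the endpoint of the Cauchy-sequence criterion at infinity (the difference $s_{n}-s_{m}$ over an infinite-width window), and your proof, by leaning on Theorems~\ref{P044} and~\ref{P057} whose statements already assert ``only one point need be tested,'' comes close to restating the conclusion rather than deriving it from the Cauchy side. If you keep your structure, you should at least open with the Cauchy difference so the connection the theorem is meant to document is visible.
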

\begin{proof}
 A partial sum $s_{n}$, where $|n-m|=\infty$,
 $s_{n}|_{n=\infty}-s_{m}|_{m=\infty}$
 $= \sum a_{n}|_{n=\infty} - \sum a_{m}|_{m=\infty}$
 $= \int a(n)\,dn|_{n=\infty}-\int a(m)\,dm|_{m=\infty}$
 $= \int a(n)\,dn|_{n=\infty}$ (divergent case)
 or $-\int a(m)\,dm|_{m=\infty}$ (convergent case),
 through the choice of the second variable and the E3 criteria,
 or diverges.

 Theorem \ref{P044} essentially does this in
 a simpler way, taking a slice of the tail with an infinite width
 in the domain and integrating. 
\end{proof}
\subsubsection{Dirichlet's test}\label{S0714}  
\begin{theo}\label{P004}
If $\sum b_{n}|_{n=\infty}=0$ converges
 and $(a_{n})|_{n=\infty}$ is positive
 and monotonically decreasing then
 $\sum a_{n} b_{n}|_{n=\infty}=0$ converges.
\end{theo}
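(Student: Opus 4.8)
The plan is to reduce this to Abel's test (Theorem \ref{P002}) together with a one-line sandwich, which is exactly what the ``bounded variable'' remark of Example \ref{MEX009} anticipates. First I would record the two trivial preliminaries: since $\sum b_{n}|_{n=\infty}=0$ converges, the $n$th term test (Theorem \ref{P014}) gives $b_{n}|_{n=\infty}=0$, and, following the standing convention and the proof pattern of Theorem \ref{P002}, we take $b_{n}\geq 0$. Next comes the only real observation: a positive, monotonically decreasing sequence at infinity is automatically a \emph{bounded} sequence. Indeed, being decreasing from some finite reference index $n_{0}$, every term — including the infinite ones — satisfies $0<a_{n}\leq a_{n_{0}}|_{n=\infty}$, and $M:=a_{n_{0}}$ is an ordinary positive real. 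In particular $(a_{n})|_{n=\infty}$ is a bounded monotone sequence, hence convergent, so Theorem \ref{P002} applies verbatim and delivers $\sum a_{n}b_{n}|_{n=\infty}=0$.

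If a self-contained argument in the style of Theorems \ref{P059}--\ref{P061} is preferred, I would instead argue directly by the sandwich principle used throughout Section \ref{S05}. From $0<a_{n}\leq M|_{n=\infty}$ and $b_{n}\geq 0$ we obtain the pointwise estimate $0\leq a_{n}b_{n}\leq Mb_{n}|_{n=\infty}$; summing this inequality at infinity (Theorem \ref{P010}) gives
\[ 0\leq \sum a_{n}b_{n}\leq M\sum b_{n}|_{n=\infty}=M\cdot 0=0, \]
whence $\sum a_{n}b_{n}|_{n=\infty}=0$ and the series converges.

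I expect no genuine obstacle, but one point does deserve care. One should \emph{not} attempt to ``absorb'' $a_{n}$ as a bounded positive variable via Theorem \ref{P059} or \ref{P061}, because those theorems require $a_{n}\succ\Phi$ at infinity, and a positive decreasing sequence may well tend to an infinitesimal (for instance $a_{n}=\tfrac1n$). The sandwich above sidesteps this entirely by bounding $a_{n}$ by the \emph{real} constant $M$ rather than by $a_{n}$ itself, so $M\sum b_{n}=M\cdot 0=0$ is a legitimate (constant-times-zero) simplification regardless of the limiting value of $(a_{n})$. That is the reason I would present the proof through the real upper bound $M$, and it is the only subtlety in an otherwise routine comparison argument.
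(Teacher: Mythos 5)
Your second, ``self-contained'' argument is exactly the paper's proof: the paper bounds the positive decreasing sequence $a_{n}$ above by a real constant $\beta$ and sandwiches $0\leq \sum a_{n}b_{n}\leq \beta\sum b_{n}=0$ at infinity, just as you do with $M$. Your first route via Abel's test (Theorem \ref{P002}) and your caution about not invoking Theorems \ref{P059}--\ref{P061} when $a_{n}$ may be infinitesimal are both sound, but they are embellishments on what is essentially the same comparison argument.
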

\begin{proof}
Since $a_{n}$
 is positive and decreasing, $a_{n}$ is
 bounded above by a positive constant $\beta$.
 Let $a_{n} \leq \beta$,
 $0 \leq a_{n} \leq \beta$,
 $0 \leq a_{n} b_{n} \leq a_{n} \beta |_{n=\infty}$,
 $0 \leq \sum a_{n} b_{n} \leq \sum a_{n} \beta |_{n=\infty}$,
 but $\sum a_{n} \beta = \sum a_{n}|_{n=\infty} =0$ converges.
 By the sandwich principle,
 $0 \leq \sum a_{n} b_{n} \leq 0|_{n=\infty}$
 and $\sum a_{n} b_{n}|_{n=\infty}=0$ converges.
\end{proof}
\subsubsection{Bertrand's test*}\label{S0716}  
Bertrand's test \cite{bertrand} is included in the generalized ratio test.
\[
\frac{a_{n}}{a_{n+1}} = 1 + \frac{1}{n} + \frac{\rho_{n}}{n \, \mathrm{ln}\,n}, \;\; 
\rho_{n}|_{n=\infty} = \left\{ 
  \begin{array}{rl}
    \gt 1 & \; \text{then } \sum a_{n} \text{ is convergent,} \\
    \lt 1 & \; \text{ then } \sum a_{n} \text{ is divergent.}
  \end{array} \right. 
\]
\subsubsection{Raabe's tests*}\label{S0717}  
When $\frac{ a_{n+1}}{a_{n}}|_{n=\infty}=1$ the
 ratio test fails, then try Raabe's test. 
\bigskip
\begin{theo}\label{P023}
If $n (\frac{ a_{n}}{a_{n+1}}-1)|_{n=\infty} \gt 1$ 
 then $\sum a_{n}|_{n=\infty}=0$ converges.  
 If $n (\frac{ a_{n+1}}{a_{n}}-1)|_{n=\infty} \lt 1$
 then $\sum a_{n}|_{n=\infty}=\infty$ diverges.
\end{theo}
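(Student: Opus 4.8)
The plan is to reduce each half of Raabe's test to the p-series test (Theorem \ref{P054}) via the comparison test (Theorem \ref{P010}), following the classical route but phrasing the ``for all sufficiently large $n$'' steps as statements evaluated at the point $n=\infty$.

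\textbf{Convergence half.} Write $L = n(\frac{a_n}{a_{n+1}}-1)\,|_{n=\infty}$, so $L>1$; since $\frac{a_n}{a_{n+1}}>1\,|_{n=\infty}$ the sequence $(a_n)|_{n=\infty}$ is monotonically decreasing, so $\sum a_n$ is a legitimate convergence sum. Fix two real numbers with $1<p<q<L$ (possible because $L>1$ strictly; if $L$ is infinite any such $p<q$ serves) and put $c_n=\frac{1}{n^p}$. I would first record the Raabe expression of the comparison series: by the binomial expansion at infinity (\cite[Part 2]{cebp21}),
\[ \frac{c_n}{c_{n+1}} = \Big(1+\frac1n\Big)^{p} = 1 + \frac{p}{n} + \frac{p(p-1)}{2n^{2}} + \ldots \,|_{n=\infty}. \]
From $L>q$ we get $\frac{a_n}{a_{n+1}} > 1+\frac{q}{n}\,|_{n=\infty}$, and since $\frac{q}{n}-\frac{p}{n} = \frac{q-p}{n} \succ \frac{1}{n^{2}}\,|_{n=\infty}$ the strict margin $q-p$ absorbs all the lower-order terms above, giving $1+\frac{q}{n} > \frac{c_n}{c_{n+1}}\,|_{n=\infty}$ and hence $\frac{a_n}{a_{n+1}} > \frac{c_n}{c_{n+1}}\,|_{n=\infty}$, i.e. $\frac{a_n}{c_n} > \frac{a_{n+1}}{c_{n+1}}\,|_{n=\infty}$. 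Because this ratio inequality already holds from some finite index $N$ on, $(a_n/c_n)$ is bounded above by the genuine real constant $M := a_N/c_N$, so $0 \le a_n \le M c_n = \frac{M}{n^{p}}\,|_{n=\infty}$. As $p>1$, Theorem \ref{P054} gives $\sum \frac{M}{n^{p}}|_{n=\infty}=0$, and the comparison test forces $\sum a_n|_{n=\infty}=0$: convergence.

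\textbf{Divergence half.} This is the mirror argument with the harmonic series $d_n=\frac1n$, for which $\frac{d_n}{d_{n+1}}=1+\frac1n$ exactly, so $n(\frac{d_n}{d_{n+1}}-1)|_{n=\infty}=1$ with no lower-order terms — no margin is needed here. Rewriting the hypothesis in the $\frac{a_n}{a_{n+1}}$ form (a routine reciprocal manipulation at infinity, using $\frac{a_n}{a_{n+1}}\to1$) it reads $n(\frac{a_n}{a_{n+1}}-1)|_{n=\infty}<1$, so $\frac{a_n}{a_{n+1}} < \frac{d_n}{d_{n+1}}\,|_{n=\infty}$, i.e. $\frac{a_n}{d_n} < \frac{a_{n+1}}{d_{n+1}}\,|_{n=\infty}$; this ratio is increasing from some finite index $N$, hence bounded below by the positive real $M := a_N/d_N$, so $a_n \ge \frac{M}{n}\,|_{n=\infty}$. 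By Theorem \ref{P054} with $p=1$, $\sum \frac{M}{n}|_{n=\infty}=\infty$, and the comparison test gives $\sum a_n|_{n=\infty}=\infty$: divergence.

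The main obstacle — the place where this is genuinely more than a formal rewriting — is the convergence half's comparison step: one cannot simply drop the $O(1/n^{2})$ tail of $(1+1/n)^{p}$ against the infinitesimal $\frac{p}{n}$, so the two-parameter choice $1<p<q<L$ is essential, the strict gap $q-p$ being exactly what makes $1+\frac{q}{n}\succ(1+\frac1n)^{p}$ at infinity. A companion subtlety is the ``bounded by a genuine real constant'' step: the monotone ratio $(a_n/c_n)$ must be picked up at a \emph{finite} index $N$ so that $M=a_N/c_N\in\mathbb{R}^{+}$, otherwise $M\cdot c_n$ need not be an additive identity and the comparison against an honest p-series would fail. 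This is the same care exercised with the real bounds $\beta_1,\beta_2$ in the proof of Theorem \ref{P059}, and it is precisely the lower-order-term pitfall warned against in Example \ref{MEX011}.
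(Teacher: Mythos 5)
Your convergence half is sound, and it takes a genuinely different route from the paper: you compare the ratio $\frac{a_{n}}{a_{n+1}}$ directly against that of the p-series $c_{n}=\frac{1}{n^{p}}$ with the two-parameter cushion $1<p<q<L$, deduce that $a_{n}/c_{n}$ is eventually decreasing and hence bounded by a real constant, and finish with Theorems \ref{P054} and \ref{P010}. The paper instead never touches the p-series at this point: it rearranges $n(\frac{a_{n}}{a_{n+1}}-1)\;z\;1$ into the one-line form $na_{n}-(n+1)a_{n+1}\;z\;0$ (Theorems \ref{P221} and \ref{P222}), identifies this with the $m=0$ case of the generalized ratio test (Theorem \ref{P225}), and proves that by a separation-of-variables reduction to the boundary test (Theorem \ref{P232}). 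Your argument is more elementary and self-contained; the paper's buys the whole Raabe/Bertrand hierarchy at once from a single reduction.

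The divergence half, however, contains a genuine error, and it sits exactly at the step you describe as ``a routine reciprocal manipulation.'' Writing $r=\frac{a_{n+1}}{a_{n}}$ and $s=\frac{a_{n}}{a_{n+1}}=\frac{1}{r}$ with $r\to 1$, one has $n(s-1)=-\frac{n(r-1)}{r}=-n(r-1)|_{n=\infty}$: the reciprocal \emph{negates} the Raabe expression, it does not preserve it. So the printed hypothesis $n(\frac{a_{n+1}}{a_{n}}-1)|_{n=\infty}<1$ converts to $n(\frac{a_{n}}{a_{n+1}}-1)|_{n=\infty}>-1$, not to $<1$ as you claim, and $>-1$ is far too weak to force divergence. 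Concretely, $a_{n}=\frac{1}{n^{2}}$ gives $n(\frac{a_{n+1}}{a_{n}}-1)|_{n=\infty}=-2<1$, so it satisfies the stated divergence hypothesis yet $\sum\frac{1}{n^{2}}|_{n=\infty}=0$ converges; the statement as printed is false (it is also inconsistent with the paper's own ``Raabe's test 2,'' whose thresholds are $\pm 1$ with the opposite sign, and the intended reading is surely $n(\frac{a_{n}}{a_{n+1}}-1)|_{n=\infty}<1$ as in Theorem \ref{P221}). Under that corrected hypothesis your harmonic-series comparison $\frac{a_{n}}{a_{n+1}}<1+\frac{1}{n}=\frac{d_{n}}{d_{n+1}}$ does go through, but you cannot obtain that form from the printed one by the manipulation you invoke; you must either emend the hypothesis explicitly or accept the sign flip and prove a different (and then unprovable) claim.
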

\bigskip
\begin{theo}\label{P024}
If $n a_{n} - (n+1) a_{n+1}|_{n=\infty} \gt 0$ then $\sum a_{n}=0$ converges.  
If $n a_{n} - (n+1) a_{n+1}|_{n=\infty} \lt 0$ then $\sum a_{n} = \infty$ diverges.
\end{theo}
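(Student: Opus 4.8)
The plan is to prove the two halves separately: reduce the convergence half to the Raabe form already recorded in Theorem~\ref{P023}, and settle the divergence half directly by comparison with the harmonic series inside Criterion~E3.

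\emph{Convergence.} First I would factor $na_n-(n+1)a_{n+1}=a_{n+1}\bigl(n\frac{a_n}{a_{n+1}}-(n+1)\bigr)$. Since the series is positive, $a_{n+1}\gt0$ at infinity, so dividing the hypothesis $na_n-(n+1)a_{n+1}|_{n=\infty}\gt0$ by $a_{n+1}$ — which preserves the relation, cf.\ Theorem~\ref{P039} — gives $n\frac{a_n}{a_{n+1}}-(n+1)\gt0$, i.e.\ $n\bigl(\frac{a_n}{a_{n+1}}-1\bigr)|_{n=\infty}\gt1$. This is exactly the convergence hypothesis of Theorem~\ref{P023}, so $\sum a_n|_{n=\infty}=0$ converges. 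A more self-contained variant telescopes: for infinite $n_0\lt n_1$, $\sum_{k=n_0}^{n_1}(ka_k-(k+1)a_{k+1})=n_0a_{n_0}-(n_1+1)a_{n_1+1}\leq n_0a_{n_0}$, and since the bracketed factor above, after infinitary simplification of $a_n$, is bounded below by a positive real $\varepsilon$, we get $\varepsilon\sum_{k=n_0}^{n_1}a_{k+1}\leq n_0a_{n_0}$; hence $\sum a_n|_{n=\infty}$ cannot be made large, so by condition~E3.6 it is an infinitesimal and the sum converges.

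\emph{Divergence.} Here I argue directly. The hypothesis $na_n-(n+1)a_{n+1}|_{n=\infty}\lt0$ says $na_n\lt(n+1)a_{n+1}$ at infinity, so the sequence $b_k:=ka_k$ is monotonically increasing at infinity. Fix an infinite reference $n_0$ and set $\mu:=n_0a_{n_0}\gt0$; monotonicity gives $ka_k\geq\mu$, hence $a_k\geq\frac{\mu}{k}$, for all infinite $k\geq n_0$. Summing and comparing with the integral (Proposition~\ref{P063}), $\sum_{k=n_0}^{n_1}a_k\geq\mu\sum_{k=n_0}^{n_1}\frac1k\geq\mu(\ln n_1-\ln n_0)$. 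Keeping $n_0$, hence $\mu$, fixed and letting $n_1$ grow, $\ln n_1-\ln n_0$ eventually exceeds any prescribed bound (in particular $1/\mu$), so $\mu(\ln n_1-\ln n_0)$ can be made arbitrarily large; by condition~E3.5 this forces $\sum a_n|_{n=\infty}=\infty$. Equivalently, $a_n\geq\frac{\mu}{n}$ together with the divergent harmonic p-series (Theorem~\ref{P054}), the comparison test (Theorem~\ref{P010}), and the constant-multiplier result (Theorem~\ref{P059}) give the same conclusion.

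\emph{Main obstacle.} I expect the hard part to be the realization subtlety the paper flags around Example~\ref{MEX037} and Proposition~\ref{P064}: the hypothesis must be evaluated on the infinitary-simplified form of $a_n$, so that $(n+1)a_{n+1}$ is itself reduced using $n+1=n|_{n=\infty}$-type identities before the sign of the difference is declared; otherwise, for instance, $a_n=\frac1{n\ln n}$ would spuriously meet the convergence hypothesis, whereas $na_n-(n+1)a_{n+1}$ correctly realizes to $0$, the inconclusive case. Concretely, in the convergence half one must verify that the bracketed factor is robustly positive — not an infinitesimal that would be swallowed relative to $a_{n+1}$ — so that the reduction to Theorem~\ref{P023} is legitimate; in the divergence half one must make sure $\mu$, though possibly infinitesimal, is a genuinely fixed positive quantity, so that $\mu(\ln n_1-\ln n_0)$ is not an indeterminate product. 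The remaining steps are routine telescoping and comparison.
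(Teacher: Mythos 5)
Your divergence half is correct and takes a genuinely different route from the paper's. You turn the hypothesis $na_{n} \lt (n+1)a_{n+1}$ into monotonicity of $(na_{n})$, deduce $a_{k} \geq \mu/k$ with $\mu = n_{0}a_{n_{0}}$, and compare with the divergent harmonic series; this is elementary and self-contained. The paper instead obtains the whole theorem (restated as Theorem~\ref{P222}) by setting $m=0$ in the generalized ratio test, Theorem~\ref{P225}, which is in turn proved by transforming it into the boundary test, Theorem~\ref{P232}; your comparison argument avoids that machinery entirely for the divergent case and is arguably the cleaner proof of that half.

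The convergence half is where the problems are. First, your primary reduction to Theorem~\ref{P023} is circular within this paper's architecture: Theorem~\ref{P023} reappears as Theorem~\ref{P221}, whose proof consists precisely of rearranging it into Theorem~\ref{P222}, i.e.\ into the statement you are trying to prove. Second, your ``self-contained'' telescoping variant contains the genuine gap. Telescoping does give $\sum_{k=n_{0}}^{n_{1}}(ka_{k}-(k+1)a_{k+1}) \leq n_{0}a_{n_{0}}$, but to convert this into a bound on $\sum a_{k+1}$ you need $ka_{k}-(k+1)a_{k+1} \geq \varepsilon a_{k+1}$ for a fixed positive real $\varepsilon$, i.e.\ $k(\frac{a_{k}}{a_{k+1}}-1) \geq 1+\varepsilon$. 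The stated hypothesis only gives positivity of the difference in $*G$, which is strictly weaker: for $a_{n}=\frac{1}{n\,\mathrm{ln}\,n}$ one has $na_{n}-(n+1)a_{n+1} = \frac{1}{\mathrm{ln}\,n}-\frac{1}{\mathrm{ln}(n+1)} \gt 0$, yet the series diverges. You correctly flag this in your ``main obstacle'' paragraph, but the assertion that the bracketed factor ``is bounded below by a positive real $\varepsilon$ after infinitary simplification'' is exactly the step that needs proof and is exactly where the counterexample lives; nothing in the hypothesis supplies that $\varepsilon$. The paper sidesteps this by deriving the statement from Theorem~\ref{P225}, whose own proof assumes the boundary test, so neither argument settles the point from first principles --- but yours presents it as established when it is not.
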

\bigskip
\begin{mex2}\label{MEX031} 
 \cite[10.16.4]{apostol}
Determine convergence of $\sum \frac{3^{n}n!}{n^{n}}|_{n=\infty}$.

 Let $a_{n} = \frac{3^{n}n!}{n^{n}}$.
 The application of the Ratio test fails.
 $\frac{a_{n+1}}{a_{n}}|_{n=\infty}$
 $= \frac{3^{n+1}(n+1)!}{(n+1)^{n+1}} \frac{n^{n}}{3^{n}n!}|_{n=\infty}$
 $= 3(n+1) \frac{n^{n}}{(n+1)^{n}}|_{n=\infty}$
 $=1$
\begin{align*}
n a_{n} - (n+1)a_{n+1}|_{n=\infty} \; z \; 0 & \tag{Try Raabe's Theorem \ref{P024} } \\
n \frac{ 3^{n} n! }{n^{n}} - \frac{ (n+1) 3^{n+1} (n+1)! }{ (n+1)^{n+1} }|_{n=\infty} \; z \; 0 & \\
\frac{ n! }{n^{n-1}} - \frac{ 3 n! }{ (n+1)^{n-1} }|_{n=\infty} \; z \; 0 & \\
(\frac{n+1}{n})^{n-1}|_{n=\infty} -  3 \; z \; 0 & \\
 -2 \lt 0 & \tag{Converges by Theorem \ref{P024}}
\end{align*}
\end{mex2}
\begin{mex2}\label{MEX039}
 \cite[3.2.16]{kaczor}
\begin{theo}\label{P050}
If 
 $\lim\limits_{n \to \infty} n \; \mathrm{ln} \frac{a_{n}}{a_{n+1}} = g$,  
 show 
 $g \gt 1 \Rightarrow$ convergence and
 $g \lt 1 \Rightarrow$ divergence.
\end{theo}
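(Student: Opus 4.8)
The plan is to exponentiate the hypothesis and reduce the statement to Raabe's test (Theorem \ref{P023}, equivalently Theorem \ref{P024}), which is already available, rather than re-deriving a comparison with the $p$-series from scratch.

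First I would transfer the hypothesis to a statement at infinity in $*G$: since $\lim_{n\to\infty} n\,\mathrm{ln}\frac{a_n}{a_{n+1}} = g$ we have $n\,\mathrm{ln}\frac{a_n}{a_{n+1}}\big|_{n=\infty} = g$, hence $\mathrm{ln}\frac{a_n}{a_{n+1}}\big|_{n=\infty} = \frac{g}{n}$, which is a signed infinitesimal because $g$ is a finite constant and $n \in \mathbb{J}_{\infty}$. Exponentiating and expanding $e^{u}$ at the infinitesimal $u = g/n$,
\[ \frac{a_n}{a_{n+1}}\Big|_{n=\infty} = e^{g/n} = 1 + \frac{g}{n} + \frac{g^2}{2 n^2} + \cdots\Big|_{n=\infty}. \]
Then I would subtract $1$ and multiply by $n$ to recover the Raabe quantity,
\[ n\Big(\frac{a_n}{a_{n+1}} - 1\Big)\Big|_{n=\infty} = g + \frac{g^2}{2 n} + \cdots\Big|_{n=\infty} = g, \]
the tail being an infinitesimal deleted by non-reversible arithmetic ($g \succ \frac{g^2}{2n}|_{n=\infty}$). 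Note that the same value $g$ results whether the higher powers of $g/n$ are discarded before or after multiplying by $n$ — each such term, e.g. $n\cdot\frac{g^2}{2n^2} = \frac{g^2}{2n}$, stays infinitesimal — so the ordering pitfall of Example \ref{MEX011} does not intrude here.

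Finally I would invoke Raabe's test. If $g \gt 1$ then $n(\frac{a_n}{a_{n+1}} - 1)|_{n=\infty} = g \gt 1$, so $\sum a_n|_{n=\infty} = 0$ converges by Theorem \ref{P023}. If $g \lt 1$ I would run the mirror computation from $\mathrm{ln}\frac{a_{n+1}}{a_n}|_{n=\infty} = -\frac{g}{n}$, obtaining $\frac{a_{n+1}}{a_n}|_{n=\infty} = 1 - \frac{g}{n} + \cdots$ and hence $n\big(1 - \frac{a_{n+1}}{a_n}\big)\big|_{n=\infty} = g \lt 1$, so $\sum a_n|_{n=\infty} = \infty$ diverges by the divergence half of the same test. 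As a consistency check, for $a_n = \frac{1}{n^{p}}$ one has $n\,\mathrm{ln}\frac{a_n}{a_{n+1}}|_{n=\infty} = p\,n\,\mathrm{ln}(1 + \frac{1}{n})|_{n=\infty} = p$, so the threshold $g = 1$ is exactly the $p = 1$ boundary, and the borderline case $g = 1$ is correspondingly inconclusive (compare Bertrand's test, \ref{S0716}).

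The hard part will be justifying, inside the function-comparison algebra of \cite[Part 3]{cebp21}, that $n\,\mathrm{ln}\frac{a_n}{a_{n+1}}|_{n=\infty} = g$ genuinely forces $n(\frac{a_n}{a_{n+1}}-1)|_{n=\infty} = g$; that is, that every discarded summand of $e^{g/n} - 1 - \frac{g}{n}$ is of strictly smaller magnitude at infinity than $\frac{g}{n}$, so that the truncation is a legitimate non-reversible simplification rather than an unwarranted one. Once that exponential expansion step is secured, the conclusion is an immediate application of the already-proved Raabe's test.
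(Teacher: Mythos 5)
Your proposal is correct and follows essentially the same route as the paper: exponentiate the hypothesis $n\,\mathrm{ln}\frac{a_{n}}{a_{n+1}}|_{n=\infty}=g$ and reduce to Raabe's test. The only cosmetic difference is that you truncate the series $e^{g/n}=1+\frac{g}{n}+\cdots$ by non-reversible arithmetic to reach the form of Theorem \ref{P023}, whereas the paper substitutes $e=(\frac{n+1}{n})^{n}|_{n=\infty}$ to get $e^{\frac{1}{n}}=\frac{n+1}{n}$ and lands on the equivalent form $n a_{n}-(n+1)a_{n+1}\;z\;0$ of Theorem \ref{P024}.
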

\begin{proof}
Consider the case 
 $n \; \mathrm{ln} \frac{a_{n}}{a_{n+1}} \gt 1|_{n=\infty}$, 
 $\mathrm{ln} \frac{a_{n}}{a_{n+1}} \gt \frac{1}{n}|_{n=\infty}$, 
 $\frac{a_{n}}{a_{n+1}} \gt e^{\frac{1}{n}} |_{n=\infty}$,  
 $a_{n} \gt a_{n+1} e^{\frac{1}{n}} |_{n=\infty}$  
 Substitute 
$e = (\frac{n+1}{n})^{n}|_{n=\infty}$
 into the inequality,  
 $a_{n} \gt a_{n+1} ((\frac{n+1}{n})^{n})^{\frac{1}{n}}|_{n=\infty}$, 
 $a_{n} \gt a_{n+1} \frac{n+1}{n}|_{n=\infty}$, 
 $n a_{n} - (n+1) a_{n+1} \gt 0 |_{n=\infty}$. 
This is Raabe's convergence test Theorem \ref{P024},
 and hence
$n \; \mathrm{ln} \frac{a_{n}}{a_{n+1}} \gt 1|_{n=\infty} \Rightarrow \sum a_{n}$ is convergent.

For the divergent case, after a similar substitution, 
 $a_{n} \lt a_{n+1} ((\frac{n+1}{n})^{n})^{\frac{1}{n}}|_{n=\infty}$,
 $a_{n} \lt a_{n+1} \frac{n+1}{n}|_{n=\infty}$, 
 $n a_{n} - (n+1) a_{n+1} \lt 0$, is Theorem \ref{P024}, divergent
 case.
\end{proof}
\end{mex2}
\subsubsection{Generalized p-series test}\label{S0718}  
 See Theorem \ref{P230},
 Known results.
\bigskip
\begin{defy}\label{DEF001}
Let 
 $\sum \frac{1}{\prod_{k=0}^{w-1} \mathrm{ln}_{k} \cdot \mathrm{ln}_{w}^{p} }$
 and the corresponding integral 
 be called the generalized p-series. 
\end{defy}
\bigskip
\begin{theo}\label{P036}
$\sum \frac{1}{\prod_{k=0}^{w-1} \mathrm{ln}_{k} \cdot \mathrm{ln}_{w}^{p} }|_{n=\infty}$ 
 and 
 $\int \frac{1}{\prod_{k=0}^{w-1} \mathrm{ln}_{k} \cdot \mathrm{ln}_{w}^{p} }\,dn|_{n=\infty}$
diverge
 when $p \leq 1$
 and converges 
 when $p \gt 1$.
\end{theo}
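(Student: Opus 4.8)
The plan is to reduce the generalized $p$-series to an ordinary $p$-series by the chain-rule substitution the paper uses repeatedly (compare Examples \ref{MEX046} and \ref{MEX047}), and then to quote the $p$-series test (Theorem \ref{P054}) together with the integral--sum interchange at infinity (Theorem \ref{P065}), so that the sum and the integral versions are settled at once.

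First I would fix the notation $\ln_0 n = n$, $\ln_1 n = \ln n$, $\ln_{k+1} n = \ln(\ln_k n)$, so that $\prod_{k=0}^{w-1}\ln_k = n \cdot \ln n \cdot \ln\ln n \cdots \ln_{w-1} n$ and, for $w=0$, the empty product gives back $\sum \frac{1}{n^p}$. The one computational ingredient is the derivative identity
\[ \frac{d}{dn}\,\ln_w n = \frac{1}{\prod_{k=0}^{w-1}\ln_k n}\Big|_{n=\infty}, \]
which I would prove by a short induction on $w$: the base case $w=1$ is $\frac{d}{dn}\ln n = \frac{1}{n}$, and the inductive step applies the chain rule to $\ln_w n = \ln(\ln_{w-1} n)$, contributing the extra factor $\frac{1}{\ln_{w-1} n}$. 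All the iterated logarithms are positive and strictly increasing once $n$ is large enough that $\ln_w n > 0$, which the E3 condition of growing the interval (E3'.0) supplies; hence the integrand $\frac{1}{\prod_{k=0}^{w-1}\ln_k n \cdot (\ln_w n)^p}$ is a strictly monotonic decreasing function at infinity, so Criterion E3 applies and Theorem \ref{P065} is available.

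Next I would substitute $u = \ln_w n$. By the derivative identity, $du = \frac{dn}{\prod_{k=0}^{w-1}\ln_k n}$, so
\[ \int \frac{dn}{\prod_{k=0}^{w-1}\ln_k n \cdot (\ln_w n)^p}\Big|_{n=\infty} = \int \frac{du}{u^p}\Big|_{u=\infty}, \]
where $u = \ln_w n \to \infty$ as $n \to \infty$, since the logarithm of a positive divergent quantity is again divergent. By the $p$-series test (Theorem \ref{P054}) the right-hand integral is $0$ and converges when $p \gt 1$, and is $\infty$ and diverges when $p \leq 1$ (for $p=1$ it is $\ln u|_{u=\infty} = \ln_{w+1} n|_{n=\infty} = \infty$). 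Finally, by Theorem \ref{P065}, $\sum \frac{1}{\prod_{k=0}^{w-1}\ln_k n \cdot (\ln_w n)^p}|_{n=\infty}$ equals this integral, which finishes the proof for both the sum and the integral simultaneously.

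The main obstacle is essentially bookkeeping: making the derivative identity precise (the induction, and checking that the telescoping product produced by the chain rule is exactly $\prod_{k=0}^{w-1}\ln_k n$), and confirming that $u = \ln_w n$ is a legitimate monotonic divergent variable at infinity so that the substitution and Theorem \ref{P065} genuinely apply. Once the substitution is in place the convergence dichotomy is inherited verbatim from the ordinary $p$-series, and no analytic input beyond Theorems \ref{P054} and \ref{P065} is needed.
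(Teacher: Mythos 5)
Your proposal is correct and matches the paper's own argument: the paper proves this (as Theorem \ref{P230}) by reducing the integral to $\int \frac{1}{t^{p}}\,dt$ with $t=\mathrm{ln}_{w}\,n$ via the substitution of Theorem \ref{P228}, then invoking the ordinary $p$-series result and the integral--sum interchange. Your single substitution $u=\mathrm{ln}_{w}\,n$ justified by the derivative identity (the paper's Lemma \ref{P202}) is just a one-step packaging of the paper's $w$-fold iteration of $n=e^{v}$, so the two arguments are essentially identical.
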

\subsubsection{Generalized ratio test}\label{S0719}  
\begin{theo}\label{P037}
Includes the ratio test, Raabe's test, Bertrand's test.
 See Section \ref{S17} and Theorem \ref{P225}.
\[
\frac{a_{n}}{a_{n+1}} - (1 + \frac{1}{n} + \frac{1}{n\,\mathrm{ln}\,n} + \ldots + \frac{1}{n \, \mathrm{ln}\,n \, \ldots \mathrm{ln}_{k}\,n })|_{n=\infty} = \left\{ 
  \begin{array}{rl}
    \gt 0 & \; \text{then } \sum a_{n}|_{n=\infty} \text{ is convergent,} \\
    \leq 0 & \; \text{ then } \sum a_{n}|_{n=\infty} \text{ is divergent.}
  \end{array} \right. 
\]
\end{theo}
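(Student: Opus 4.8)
The plan is to reduce the statement to the generalized p-series test (Theorem \ref{P036}) by a ratio comparison, Raabe's test (Theorem \ref{P024}) being the $w=1$ prototype. Write $\mathrm{ln}_{0} n = n$ and, for the generalized p-series of Definition \ref{DEF001}, set
\[ B^{(w,p)}_{n} = \frac{1}{\prod_{j=0}^{w-1}\mathrm{ln}_{j} n \cdot \mathrm{ln}_{w}^{p} n}, \]
so that by Theorem \ref{P036} $\sum B^{(w,p)}_{n}|_{n=\infty}=0$ converges for $p \gt 1$ and $\sum B^{(w,p)}_{n}|_{n=\infty}=\infty$ diverges for $p \leq 1$. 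The engine is the telescoping bound: if $a_{n},d_{n}\gt 0$ are monotonic and $\frac{a_{n}}{a_{n+1}} \geq \frac{d_{n}}{d_{n+1}}|_{n=\infty}$, then $a_{n} = a_{n_{0}}\prod_{j=n_{0}}^{n-1}\frac{a_{j+1}}{a_{j}} \leq \frac{a_{n_{0}}}{d_{n_{0}}}\, d_{n}|_{n=\infty}$, so the comparison test (Theorem \ref{P010}) carries the verdict of $\sum d_{n}$ over to $\sum a_{n}$; the reversed inequality carries it the other way.

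First I would compute the ratio expansion of $B^{(w,p)}_{n}$ at infinity. By the chain rule (the fundamental theorem of calculus in $*G$), $\mathrm{ln}_{j}(n+1)-\mathrm{ln}_{j} n = \frac{1}{n\,\mathrm{ln}\,n\cdots\mathrm{ln}_{j-1}n}|_{n=\infty}$, hence
\[ \frac{\mathrm{ln}_{j}(n+1)}{\mathrm{ln}_{j} n} = 1 + \frac{1}{n\,\mathrm{ln}\,n\cdots\mathrm{ln}_{j-1}n\cdot\mathrm{ln}_{j}n}|_{n=\infty}. \]
Multiplying these factors over $j=0,\ldots,w-1$, appending $\big(\frac{\mathrm{ln}_{w}(n+1)}{\mathrm{ln}_{w}n}\big)^{p} = 1 + \frac{p}{n\,\mathrm{ln}\,n\cdots\mathrm{ln}_{w}n}|_{n=\infty}$ (binomial expansion), and discarding cross terms as lower order by non-reversible arithmetic, I obtain
\[ \frac{B^{(w,p)}_{n}}{B^{(w,p)}_{n+1}} = 1 + \frac{1}{n} + \frac{1}{n\,\mathrm{ln}\,n} + \cdots + \frac{1}{n\,\mathrm{ln}\,n\cdots\mathrm{ln}_{w-1}n} + \frac{p}{n\,\mathrm{ln}\,n\cdots\mathrm{ln}_{w}n}|_{n=\infty}. \]
Setting $w=k$, $p=1$ shows that the bracketed quantity subtracted in the theorem, call it $c_{n}$, is exactly $\frac{B^{(k,1)}_{n}}{B^{(k,1)}_{n+1}}|_{n=\infty}$ --- the ratio of the \emph{divergent} boundary series $B^{(k,1)}_{n} = (n\,\mathrm{ln}\,n\cdots\mathrm{ln}_{k}n)^{-1}$ --- and that $\frac{B^{(k,p)}_{n}}{B^{(k,p)}_{n+1}} = c_{n} + \frac{p-1}{n\,\mathrm{ln}\,n\cdots\mathrm{ln}_{k}n}|_{n=\infty}$ up to higher-order terms.

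The two cases now follow. If $\frac{a_{n}}{a_{n+1}} - c_{n}|_{n=\infty}\leq 0$, then $\frac{a_{n}}{a_{n+1}} \leq \frac{B^{(k,1)}_{n}}{B^{(k,1)}_{n+1}}|_{n=\infty}$, the telescoping bound gives $a_{n} \geq \frac{a_{n_{0}}}{B^{(k,1)}_{n_{0}}} B^{(k,1)}_{n}|_{n=\infty}$, and Theorem \ref{P036} with the comparison test gives $\sum a_{n}|_{n=\infty}=\infty$. If $\frac{a_{n}}{a_{n+1}} - c_{n}|_{n=\infty}\gt 0$, I pick $p \gt 1$ close enough to $1$ that the positive difference dominates the correction $\frac{p-1}{n\,\mathrm{ln}\,n\cdots\mathrm{ln}_{k}n}$, so $\frac{a_{n}}{a_{n+1}} \geq \frac{B^{(k,p)}_{n}}{B^{(k,p)}_{n+1}}|_{n=\infty}$, the telescoping bound gives $a_{n} \leq C\, B^{(k,p)}_{n}|_{n=\infty}$ for a positive constant $C$, and Theorem \ref{P036} with the comparison test gives $\sum a_{n}|_{n=\infty}=0$.

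The hard part is legitimising the choice of $p$ in the convergent case: I must show that ``$\frac{a_{n}}{a_{n+1}} - c_{n}|_{n=\infty}\gt 0$'', read in $*G$, is strong enough to admit such a $p$. This is immediate when the difference is a positive real or a positive infinity; the delicate sub-case is a positive infinitesimal finer than $\frac{1}{n\,\mathrm{ln}\,n\cdots\mathrm{ln}_{k}n}$, where no $p \gt 1$ at level $k$ suffices and the level-$k$ test is genuinely indeterminate --- one then re-expands at level $k+1$, which is precisely why the generalized test is a hierarchy (the complete argument being Section \ref{S17} and Theorem \ref{P225}). A secondary technical point is the telescoping step itself within the convergence-sum framework: multiplying the inequalities $\frac{a_{j+1}}{a_{j}} \leq \frac{B^{(k,p)}_{j+1}}{B^{(k,p)}_{j}}$ over an infinite range of indices and realizing the product must be shown to preserve the relation, for which the monotonicity of $(a_{n})$ and of $(B^{(k,p)}_{n})$ together with Theorem \ref{P038} are the tools.
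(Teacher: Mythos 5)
Your proposal is correct in outline but takes a genuinely different route from the paper's. The paper proves this statement (as Theorem \ref{P225}) by \emph{assuming the boundary test} and transforming the ratio hypothesis into it algebraically: the difference $a_{n}-a_{n+1}$ is read as a sequence derivative, the resulting differential inequality $-\frac{da}{a} \; z \; \sum \frac{1}{\prod \mathrm{ln}_{k}}\,dn$ is integrated by separation of variables, and exponentiating lands on the comparison $a_{n} \; (-z) \; \frac{1}{\prod_{k=0}^{w}\mathrm{ln}_{k}}$, with Corollary \ref{P243} recording the two tests as equivalent. You instead telescope the ratio hypothesis into a direct domination $a_{n} \leq C\,B^{(k,p)}_{n}$ (or $a_{n} \geq c\,B^{(k,1)}_{n}$) and invoke the generalized p-series test (Theorem \ref{P036}) together with the comparison test (Theorem \ref{P010}) --- the classical Kummer/De Morgan--Bertrand argument. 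What your approach buys is independence from the boundary test: the paper's first proof of the boundary test (Theorem \ref{P233}) appeals to its equivalence with the generalized ratio test, so the paper's proof of this theorem is circular unless the boundary test is grounded elsewhere, whereas yours rests only on Theorem \ref{P036}, which the paper proves directly (Theorem \ref{P230}). What it costs is the explicit expansion of $\frac{B^{(w,p)}_{n}}{B^{(w,p)}_{n+1}}$, which you carry out correctly from $\mathrm{ln}_{j}(n+1)-\mathrm{ln}_{j}\,n = \frac{1}{L_{j-1}}|_{n=\infty}$ (Lemma \ref{P202}).

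The caveat you flag in the convergent case is a genuine gap and should not be waved away: if $\frac{a_{n}}{a_{n+1}}-c_{n}$ is positive but of strictly smaller order than $\frac{1}{n\,\mathrm{ln}\,n\ldots\mathrm{ln}_{k}\,n}$ --- take $a_{n} = \frac{1}{n\,\mathrm{ln}\,n\ldots\mathrm{ln}_{k+1}\,n}$, for which the difference is $\frac{1}{L_{k+1}} \gt 0$ yet the sum diverges --- then no $p \gt 1$ at level $k$ exists and the literal implication ``difference $\gt 0$ implies convergence'' fails. Your proof makes this visible where the paper's hides it inside the instruction to solve for the correct $w$ in the boundary comparison; the hypothesis your telescoping argument actually needs is that the difference be bounded below by $\frac{p-1}{n\,\mathrm{ln}\,n\ldots\mathrm{ln}_{k}\,n}$ for some $p \gt 1$, which is precisely how Knopp's form (Theorem \ref{P224}) phrases the test after multiplying through by $n\,\mathrm{ln}\,n\ldots\mathrm{ln}_{m}\,n$. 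Stating that strengthened hypothesis, or deferring the borderline case to the level-$(k+1)$ test as you suggest, closes the argument.
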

\subsubsection{Boundary test}\label{S0720}   
\begin{theo}\label{P041}
See Theorem \ref{P232}. Let $w$ be a fixed integer. Solve for relation $z$. 
\[ 
\sum a_{n} \; z \; \sum \frac{1}{\prod_{k=0}^{w} \mathrm{ln}\,n}|_{n=\infty},\;\; z = \left\{
  \begin{array}{rl}
    \lt & \; \text{then } \sum a_{n}|_{n=\infty} \text{ is convergent,} \\ 
    \geq & \; \text{then } \sum a_{n}|_{n=\infty} \text{ is divergent.} 
  \end{array} \right.
\]
\end{theo}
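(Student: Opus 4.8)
The plan is to reduce the boundary test to two ingredients already in hand: the comparison test (Theorem \ref{P010}) and the generalized $p$-series theorem (Theorem \ref{P036}), glued together by the sum/term comparison equivalence for monotonic sequences (Theorem \ref{P038}). Throughout write $\mathrm{ln}_0\,n = n$, $\mathrm{ln}_k\,n = \mathrm{ln}(\mathrm{ln}_{k-1}\,n)$, and $b_n = \frac{1}{\prod_{k=0}^{w}\mathrm{ln}_k\,n}$ for the level-$w$ boundary term. First I would record that $b_n$ is exactly the $p=1$ member of the generalized $p$-series of Definition \ref{DEF001} at level $w$, so Theorem \ref{P036} gives $\sum b_n|_{n=\infty}=\infty$, while for every $\varepsilon>0$ the enlarged-denominator term $c_n=\frac{1}{\prod_{k=0}^{w-1}\mathrm{ln}_k\,n\,\cdot\,(\mathrm{ln}_w\,n)^{1+\varepsilon}}$ satisfies $\sum c_n|_{n=\infty}=0$.

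Divergence direction. Suppose that solving $\sum a_n\;z\;\sum b_n|_{n=\infty}$ by the usual ``remove the sigma, compare inner terms, solve for $z$, substitute back'' pattern (as in Example \ref{MEX004} and Proposition \ref{P053}) returns $z=\;\geq$. By Theorem \ref{P038} this is equivalent to $a_n\geq b_n|_{n=\infty}$, hence $0\leq b_n\leq a_n|_{n=\infty}$; since $\sum b_n|_{n=\infty}=\infty$, Theorem \ref{P010} forces $\sum a_n|_{n=\infty}=\infty$. This half is unconditional.

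Convergence direction. Suppose the solved relation is $z=\;<$, i.e.\ $a_n\prec b_n|_{n=\infty}$. The claim is that, at a level $w$ large enough to cover the iterated-logarithm depth of $a_n$, strictness against the boundary upgrades to $a_n\leq c_n|_{n=\infty}$ for some $\varepsilon>0$ — equivalently, $a_n$ is squeezed between two consecutive convergent members of the generalized $p$-series scale — whence $0\leq a_n\leq c_n|_{n=\infty}$ with $\sum c_n|_{n=\infty}=0$, and Theorem \ref{P010} gives $\sum a_n|_{n=\infty}=0$. To make this robust I would frame the test not around one $w$ but around the whole chain $\{b_n^{(w)}\}_{w\geq 0}$, applying Theorem \ref{P036} at each level and transferring the verdict by comparison: for $\sum a_n$ inside the logarithmico-exponential comparability scale there is either a largest $w$ with $a_n\succeq b_n^{(w)}$ (divergence), or the strict relation at the resolving level carries a genuine exponent $p>1$ (convergence).

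The main obstacle is precisely this last upgrade, because the divergent boundary $\sum b_n$ and its convergent neighbours $\sum c_n$ are ``infinitely close'': their termwise ratio $(\mathrm{ln}_w\,n)^{-\varepsilon}$ tends to $0$ only at iterated-logarithmic speed, so a bare $a_n\prec b_n$ at a fixed $w$ (for instance $a_n=\frac{1}{\prod_{k=0}^{w+1}\mathrm{ln}_k\,n}$, which still diverges) need not lie below any $c_n$ at that $w$; one genuinely must be free to raise $w$ past the depth of $a_n$. This is the point the paper flags as understood only via the structure of the generalized $p$-series scale in Section \ref{S18}, so in a self-contained writeup I would either assume $a_n$ lies in that scale or cite the level-by-level comparison there, and then the dichotomy ``$z=\;<$ vs.\ $z=\;\geq$'' records exactly which of the two cases above occurs.
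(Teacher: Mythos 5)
Your route is essentially a rigorous expansion of the paper's \emph{second} proof of Theorem \ref{P233} (``the boundary is equivalent to the generalized p-series test''), whereas the paper's primary argument runs through Corollary \ref{P243}: it converts the term comparison $a(n)\;z\;1/\prod_{k=0}^{w}\mathrm{ln}_{k}$ into the generalized ratio test by separation of variables and takes that test as given. What your approach buys is directness: the divergence half, $z=\;\geq\;\Rightarrow a_{n}\geq b_{n}\Rightarrow\sum a_{n}|_{n=\infty}=\infty$ by Theorems \ref{P038}, \ref{P036} and \ref{P010}, is complete and unconditional, and is cleaner than anything the paper actually writes down. What it costs is that you must confront head-on the gap you correctly identify in the convergence half: $a_{n}\prec b_{n}^{(w)}$ at one fixed $w$ does not imply convergence (your example $a_{n}=1/\prod_{k=0}^{w+1}\mathrm{ln}_{k}\,n$ kills the literal ``fixed $w$'' reading of the statement), so the verdict $z=\;<$ is only meaningful at the resolving depth, where strictness genuinely yields an exponent $p>1$ and Theorem \ref{P036} applies. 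The paper buries this exact issue under the phrase ``a unique $w$ is found'' and the appeal to Hardy's logarithmico-exponential scale; your explicit restriction to that comparability scale is the honest hypothesis under which the dichotomy holds, and naming it is a genuine improvement over the paper's two-sentence sketch. One small correction: since $b_{n}^{(w+1)}\leq b_{n}^{(w)}|_{n=\infty}$, the set of $w$ with $a_{n}\succeq b_{n}^{(w)}$ is upward closed, so you want the \emph{smallest} such $w$ (or simply ``some $w$''); ``largest'' is backwards, though nothing in your argument depends on it.
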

\subsubsection{nth root test*}\label{S0721}   

\begin{theo}\label{P026}
If $|a_{n}|^{\frac{1}{n}}|_{n=\infty} \lt 1$ then
 $\sum a_{n}|_{n=\infty}=0$ converges. (See Theorem \ref{P234})
\end{theo}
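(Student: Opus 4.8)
The plan is to reduce the claim to the geometric series together with the absolute convergence test (Theorem \ref{P015}), so that it suffices to show $\sum |a_{n}||_{n=\infty}=0$.

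First I would read the hypothesis $|a_{n}|^{\frac{1}{n}}|_{n=\infty} \lt 1$ as a relation realized in $\overline{\mathbb{R}}$, in the same spirit as Theorems \ref{P008} and \ref{P009}. Then the realized value is bounded strictly below $1$ by a genuine real amount, so there exists a real $r$ with $|a_{n}|^{\frac{1}{n}} \leq r \lt 1|_{n=\infty}$. Raising to the $n$-th power, a monotone operation on nonnegative quantities, gives $0 \leq |a_{n}| \leq r^{n}|_{n=\infty}$.

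Next I would establish that the geometric series converges, $\sum r^{n}|_{n=\infty}=0$. Writing $r^{n} = \frac{1}{e^{f}}$ with $f = n\,\mathrm{ln}\frac{1}{r}$ and $\mathrm{ln}\frac{1}{r} \gt 0$, we have $f=\infty$ and $\mathrm{ln}\,n \prec n \prec f|_{n=\infty}$, so Proposition \ref{P053} gives $\sum r^{n}|_{n=\infty}=0$. (Alternatively, $e^{cn} \succ n^{p}|_{n=\infty}$ for every $p$, hence $r^{n} \prec \frac{1}{n^{p}}|_{n=\infty}$, and comparison with the convergent p-series, Theorem \ref{P054} with $p \gt 1$, finishes it.) Then I would sandwich: since $r^{n}$ is monotonic decreasing and the zero sequence is monotonic, from $0 \leq 0 \leq |a_{n}| \leq r^{n}|_{n=\infty}$ with both outer sums equal to $0$, the sandwiched comparison test (Theorem \ref{P011}) yields $\sum |a_{n}||_{n=\infty}=0$; if $|a_{n}|$ is itself monotonic the ordinary comparison test, Theorem \ref{P010}, suffices. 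Finally Theorem \ref{P015} gives $\sum a_{n}|_{n=\infty}=0$, as claimed.

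The main obstacle is the first step: ensuring the hypothesis, read correctly, furnishes a \emph{real} $r$ strictly between $|a_{n}|^{\frac{1}{n}}|_{n=\infty}$ and $1$. Read in $*G$ the inequality would be too weak, since an infinitesimal gap $1-\epsilon_{n}$ need not make $(1-\epsilon_{n})^{n}$ infinitesimal; it is precisely the $\overline{\mathbb{R}}$-reading (the realized limiting value is less than $1$) that legitimizes the real interpolation, after which everything is a routine comparison inside the E3 framework.
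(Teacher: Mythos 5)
Your proof is correct within the paper's framework, but it takes a genuinely different route from the paper's own argument. The paper (in the proof of Theorem \ref{P234}) derives the test from the boundary test: it starts from the convergence condition $\sum |a_{n}| \lt \sum \frac{1}{\prod_{k=0}^{w}\mathrm{ln}_{k}}|_{n=\infty}$, removes the sum, takes $n$-th roots, and uses $(\mathrm{ln}_{k})^{\frac{1}{n}}|_{n=\infty}=1$ to collapse the right-hand side to $1$, so the root condition appears as the realized form of the boundary comparison. You instead dominate by a geometric series: extract a real $r$ with $|a_{n}|^{\frac{1}{n}} \leq r \lt 1|_{n=\infty}$, establish $\sum r^{n}|_{n=\infty}=0$ via Proposition \ref{P053} (or the p-series), sandwich with Theorem \ref{P011}/\ref{P010}, and finish with the absolute convergence test (Theorem \ref{P015}). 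The paper's route buys thematic unity --- the root test becomes one more corollary of the universal boundary comparison --- but its algebra runs from the boundary condition to the root condition, and since $|a_{n}| \lt \frac{1}{\prod \mathrm{ln}_{k}}$ is strictly weaker than $|a_{n}| \leq r^{n}$, the direction actually needed leans entirely on the realization step. Your route is more elementary and self-contained, and it explicitly isolates the one genuinely delicate point --- that the hypothesis must be read in $\overline{\mathbb{R}}$ so that a real gap below $1$ exists, since an infinitesimal gap such as $|a_{n}|^{\frac{1}{n}} = 1-\frac{1}{n}$ gives $|a_{n}|_{n=\infty} = e^{-1} \neq 0$ and the sum diverges --- a point the paper's proof passes over silently. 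That observation is the real content of the theorem, and your handling of it is sound.
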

\subsection{Miscellaneous}\label{S13}
\subsubsection{Transference between sums and convergence sums}\label{S1301}
 When determining convergence or divergence
 with convergences sums
 we actually do a transfer \cite[Part 4]{cebp21} from an interval to
 a point in their construction.
\[ \sum_{n=n_{0}}^{\infty} a_{n} \mapsto \sum a_{n}|_{n=\infty} \]

 After determining convergence or
 divergence at infinity, we may need to
 translate the ``convergence sum" back into a sum.

 This is of course just reversing
 the direction which we previously used
 to solve for the sum's convergence or
 divergence.
\bigskip
\begin{mex}
 $\sum \frac{1}{n^{2}}|_{n=\infty}=0 \mapsto \sum_{k=1}^{\infty} \frac{1}{n^{2}}$ converges. 
\end{mex}

 The E3 criteria uses an infinite
 section of the tail (integration between two infinities) at infinity to
 determine convergence or divergence.
 However, this is enough to determine the
 whole infinite tail's convergence or divergence.
 
 In determining convergence or divergence,
 we take a sum and consider the sum or integral
 at infinity.
 The reverse is possible, where we take
 a sum or integral at infinity and
 construct a sum from a sequence.

 Provided the sequence is monotonic
 and does not contain singularities in $*G$, 
 the same convergence or divergence properties 
 are retained.
 
 This is an example of a transfer from a point
 to an interval,
 as we extend from one space into another.
\bigskip
\begin{theo}\label{P090} Transference from ``convergence sums" to sums.
\[ \sum a_{n}|_{n=\infty} \mapsto \sum_{k=k_{0}}^{\infty} a_{k} \]
\[ \int a(n)\,dn|_{n=\infty} \mapsto \int_{x_{0}}^{\infty} a(n)\,dn \]
\end{theo}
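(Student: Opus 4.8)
The plan is to run the transference of Section \ref{S1301} in reverse, combining the finite-plus-infinite decomposition of Theorem \ref{P028} with the fact (Theorems \ref{P001}, \ref{P016}) that, in the absence of finite singularities, convergence or divergence is decided entirely at infinity. First I would record the standing hypotheses that make the reverse construction legitimate, exactly as stated just before the theorem: the sequence $(a_{n})|_{n=\infty}$ (respectively the function $a(n)$) is monotonic and admits a contiguous monotonic extension to indices $k \geq k_{0}$ (respectively a continuous monotonic $a(x)$ on $[x_{0},\infty)$) with no singularities for finite indices in $*G$. This hypothesis is what prevents a finite singularity from spoiling the constructed series, and it is the only nontrivial ingredient.

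Next I would invoke Proposition \ref{P049}: since $\sum a_{n}|_{n=\infty} \in \mathbb{R}_{\infty}$, its realization is either $0$ (convergence) or $\infty$ (divergence). Applying Theorem \ref{P028} I would write $\sum_{k=k_{0}}^{\infty} a_{k} = \sum_{\mathbb{N}_{\lt}} a_{k} + \sum_{\mathbb{N}_{\infty}} a_{k}$, note that the finite block $\sum_{\mathbb{N}_{\lt}} a_{k}$ is a finite sum, hence convergent and singularity-free by hypothesis, so it contributes nothing to convergence or divergence, and conclude that the behaviour of $\sum_{k=k_{0}}^{\infty} a_{k}$ is governed solely by $\sum_{\mathbb{N}_{\infty}} a_{k} = \sum a_{n}|_{n=\infty}$ (Theorem \ref{P001}, with the single-point test of Theorem \ref{P057}). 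The integral statement is handled identically, replacing Theorem \ref{P028} by Theorem \ref{P052}, Theorem \ref{P001} by Theorem \ref{P016}, and Theorem \ref{P057} by Theorem \ref{P044}, after threading a continuous monotonic curve through the sequence points as in Theorems \ref{P038} and \ref{P039}.

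Finally I would apply the transfer principle $(*G,\Phi) \mapsto (\mathbb{R},0)$ to move the symbolic conclusion $\sum a_{n}|_{n=\infty} \in \Phi$ (or $\Phi^{-1}$) to the statement that $\sum_{k=k_{0}}^{\infty} a_{k}$ converges (or diverges) in $\mathbb{R}/\overline{\mathbb{R}}$, which is precisely what the arrow $\mapsto$ abbreviates here; this is the ``transfer from a point to an interval'' mentioned in Section \ref{S1301}, the exact converse of the forward transference $\sum_{n=n_{0}}^{\infty} a_{n} \mapsto \sum a_{n}|_{n=\infty}$.

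I expect the main obstacle to be one of hypotheses rather than of argument: one must make precise, and justify, that the reverse extension to the finite range $k \geq k_{0}$ can always be performed while preserving monotonicity and avoiding singularities in $*G$, since without this the constructed series need not inherit the convergence behaviour of its tail. Once that extension is granted, the rest is just the decomposition of Theorem \ref{P028} (or \ref{P052}) together with the transfer principle, both already established in the excerpt.
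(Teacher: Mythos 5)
Your proposal is correct and follows essentially the same route as the paper: the paper's proof of Theorem \ref{P090} simply delegates to Theorems \ref{P091}, \ref{P092}, and \ref{P093}, whose proofs are exactly the decomposition you describe (Theorem \ref{P028} or \ref{P052}, the finite part being harmless under the no-singularity hypothesis, and the tail deciding convergence or divergence). You have effectively inlined those three lemmas, and you correctly isolate the same existence/no-singularity hypothesis that the paper makes explicit in their statements.
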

\begin{proof}
 By Theorems \ref{P091}, \ref{P092}, \ref{P093}.
\end{proof}
\bigskip
\begin{theo}\label{P091}
 If $\sum a_{n}|_{n=\infty}=0$ converges and 
 $(a_{k})|_{k=k_{0}}^{\infty}$ exists and is not
 an infinity
 then $\sum_{k=k_{0}}^{\infty} a_{k}$ converges.
\end{theo}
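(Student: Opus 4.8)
The plan is to split the ordinary series into its finite and infinite parts by Theorem~\ref{P028}, and to show that the finite part is a finite quantity while the infinite part is the given infinitesimal. Writing $\sum_{k=k_{0}}^{\infty} a_{k} = \sum_{\mathbb{N}_{\lt}} a_{k} + \sum_{\mathbb{N}_{\infty}} a_{k}$, with the first sum over finite indices $k \geq k_{0}$ and the second being the convergence sum $\sum a_{n}|_{n=\infty}$, the target reduces to controlling each summand separately.

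First I would handle the finite part: because $(a_{k})|_{k=k_{0}}^{\infty}$ is assumed to exist and not to be an infinity, every $a_{k}$ with finite $k \geq k_{0}$ is a finite element of $*G$, so $\sum_{\mathbb{N}_{\lt}} a_{k}$ is a sum of finitely many finite terms and is therefore finite --- this is precisely the content of Theorem~\ref{P001}, that a sum with no finite singularity can only misbehave at infinity. Call this finite value $S_{\lt}$; in particular $S_{\lt} \not\in \Phi^{-1}$. Then I would use the hypothesis on the infinite part: $\sum_{\mathbb{N}_{\infty}} a_{k} = \sum a_{n}|_{n=\infty} = 0$, i.e.\ this tail lies in $\Phi$ (condition E3'.4, convergence). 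Consequently $\sum_{k=k_{0}}^{\infty} a_{k} = S_{\lt} + \delta$ with $\delta \in \Phi$, a finite real plus an infinitesimal; applying the transfer principle $(*G,\Phi)\mapsto(\mathbb{R},0)$ realizes the sum as the real number $S_{\lt}$, so the series converges.

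The only substantive step is the justification that ``$(a_{k})$ exists and is not an infinity'' is exactly the hypothesis needed to exclude a singularity at finite $k$, which is what licenses the finite/infinite split of Theorem~\ref{P028} and the dichotomy of Theorem~\ref{P001}; once that is granted, the monotone infinitesimal tail supplied by Criterion~E3' does the rest. I would also note that the starting index $k_{0}$ is harmless: shifting it only alters the finite part $S_{\lt}$, never the infinite part, so the partition point in Theorem~\ref{P028} can always be taken at or beyond $k_{0}$.
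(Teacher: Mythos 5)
Your proposal is correct and follows essentially the same route as the paper's own proof: both decompose the series via Theorem~\ref{P028} into a finite part (which cannot diverge because the terms $(a_{k})$ are assumed not to be infinities) and the infinite tail (which is the infinitesimal convergence sum by hypothesis), then combine. You merely make explicit the transfer step $(*G,\Phi)\mapsto(\mathbb{R},0)$ and the harmlessness of the starting index $k_{0}$, which the paper leaves implicit.
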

\begin{proof}
 Since $(a_{k})$ does not diverge, a finite sum
 of its terms do not diverge.
 Since the sum of the tail is an infinitesimal,
 then we can construct the stated sum,
 by Theorem \ref{P028}. 
\end{proof}
\bigskip
\begin{theo}\label{P092}
 If $\int a(x)\,dx|_{n=\infty}=0$ converges
 and function $a(x)$ exists and is not
 an infinity
 then
 $\int_{x_{0}}^{\infty} a(x)\,dn$ converges.
\end{theo}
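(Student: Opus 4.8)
The plan is to mirror the proof of Theorem \ref{P091}, replacing the sum partition of Theorem \ref{P028} by its integral analogue, Theorem \ref{P052}. First I would write
\[ \int_{x_{0}}^{\infty} a(x)\,dx = \int_{x_{0}}^{x \lt \infty} a(x)\,dx + \int_{+\Phi^{-1}} a(x)\,dx \]
by Theorem \ref{P052}, so that the question of convergence is reduced to controlling each of the two pieces separately: a genuinely finite integral over $[x_{0}, x\lt\infty]$, and the tail at infinity $\int a(x)\,dx|_{x=\infty}$.

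Second, I would dispatch the finite part. The hypothesis that $a(x)$ exists and is not an infinity on the finite part of the domain says precisely that there are no singularities before infinity; by Theorem \ref{P016} the only place an integral of such a function can diverge is at infinity, so the definite integral $\int_{x_{0}}^{x \lt \infty} a(x)\,dx$ over any finite interval is convergent, i.e. a finite real number. This is the analogue of the remark in the proof of Theorem \ref{P091} that ``a finite sum of its terms do not diverge.''

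Third, I would invoke the hypothesis $\int a(x)\,dx|_{n=\infty}=0$ directly: by Criterion E3.4 (equivalently Theorem \ref{P044}) this says the tail integral lies in $\Phi$, an infinitesimal. Adding the finite real value of the finite part to this infinitesimal gives a finite number in $*G$, whose realization $\Phi \mapsto 0$ leaves exactly the finite part; hence $\int_{x_{0}}^{\infty} a(x)\,dx$ converges. The conclusion then transfers back to $\mathbb{R}/\overline{\mathbb{R}}$ as in Section \ref{S07}.

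The main obstacle I expect is the second step rather than the third: one must be sure that ``$a(x)$ exists and is not an infinity'' genuinely forbids the finite integral from blowing up at some interior point of $[x_{0}, x\lt\infty]$, and not merely at the endpoints. The cleanest way around this is to lean on Theorem \ref{P016} together with the self-evident observation of Section \ref{S0111} that a well-behaved integral with no finite singularities can only diverge at infinity; everything else is routine partitioning and the non-reversible-arithmetic fact that a finite quantity plus an infinitesimal is finite.
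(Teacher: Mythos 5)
Your proposal follows the same route as the paper's own proof: partition via Theorem \ref{P052} into a finite part and a tail at infinity, note that the finite part cannot diverge since $a(x)$ is not an infinity, and use the hypothesis that the tail is an infinitesimal to conclude convergence. The paper states this in three brief sentences; your version merely fills in the same steps with more explicit justification, so the approaches are essentially identical.
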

\begin{proof}
 Since the continuous function
 $a(x)$ does not diverge, its finite integral 
 does not diverge.
 Since the integral of the tail is an infinitesimal,
 then we can construct the stated integral,
 by Theorem \ref{P052}. 
\end{proof}
\bigskip
\begin{theo}\label{P093}
If $\sum a_{n}|_{n=\infty}=\infty$
 or $\int a(n)\,dn|_{n=\infty}=\infty$
 diverge we can construct a respective 
 sum $\sum_{k=k_{0}}^{\infty} a_{k}$
 or integral $\int_{x_{0}}^{\infty} a(x)\,dx$ that diverges.
\end{theo}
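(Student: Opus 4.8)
The plan is to mirror Theorems \ref{P091} and \ref{P092}, running the implication in the divergent direction and invoking Theorems \ref{P001} and \ref{P016} (that divergence is decided at infinity) in place of the convergence bookkeeping. First I would fix a reference index $k_{0}$ (respectively a point $x_{0}$) large enough that the sequence $(a_{k})_{k\geq k_{0}}$ (respectively the function $a(x)$ on the finite part of $[x_{0},\infty)$) contains no singularities; since by hypothesis $a$ admits a well-defined convergence sum/integral at infinity, such a choice exists by discarding a finite list of bad points. This fixing of $k_{0}$ or $x_{0}$ is the content of the word ``construct'' in the statement.

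Next, for the sum case I would apply the partition Theorem \ref{P028} to write $\sum_{k=k_{0}}^{\infty} a_{k} = \sum_{\mathbb{N}_{\lt}} a_{k} + \sum_{\mathbb{N}_{\infty}} a_{k}$, where the second summand is exactly $\sum a_{n}|_{n=\infty}$. The finite part $\sum_{\mathbb{N}_{\lt}} a_{k}$ is a finite sum of finite terms, hence convergent; by hypothesis the infinite part is $\infty$; adding a finite quantity to $\infty$ leaves $\infty$, so $\sum_{k=k_{0}}^{\infty} a_{k}$ diverges. Equivalently, by Theorem \ref{P001} divergence of $\sum_{k=k_{0}}^{\infty} a_{k}$ is determined at infinity, and there it is already $\infty$ by assumption. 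I would treat the two flavours of divergence uniformly: if $\sum a_{n}|_{n=\infty}\in\Phi^{-1}$ the tail is a genuine infinity; if instead it diverges in the undefined sense of E3'.1 (the tail cannot be threaded by a monotonic function), then the partial sums starting at infinity fail to settle, so by Definition \ref{DEF008} we again record $\sum a_{n}|_{n=\infty}=\infty$ and the conclusion is the same. The integral case is the exact analogue: apply Theorem \ref{P052} to split $\int_{x_{0}}^{\infty} a(x)\,dx = \int_{x_{0}}^{x\lt\infty} a(x)\,dx + \int_{+\Phi^{-1}} a(x)\,dx$, note the finite integral is finite because $a$ has no finite singularities on $[x_{0},\infty)$, and combine the hypothesis $\int a(n)\,dn|_{n=\infty}=\infty$ with Theorem \ref{P016} to conclude $\int_{x_{0}}^{\infty} a(x)\,dx$ diverges.

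The only real subtlety I expect is the ``undefined'' flavour of divergence flagged by E3.1/E3'.1: when the tail cannot be made monotonic one cannot exhibit an element of $\Phi^{-1}$, so the argument must fall back on the purely logical definition of divergence (negation of convergence) rather than on arithmetic with infinities, and one should check that the finite head does not accidentally repair this behaviour, which it cannot since a finite correction does not alter whether the infinite tail settles. Everything else is routine: a finite head plus a divergent tail diverges, and the partition Theorems \ref{P028} and \ref{P052} do the structural work.
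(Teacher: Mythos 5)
Your proposal is correct and follows essentially the same route as the paper's own proof: decompose the sum or integral via Theorems \ref{P028} and \ref{P052} into a finite head plus the infinite tail, observe the head is finite and the tail is $\infty$ by hypothesis, and conclude that a finite quantity added to an infinity is still an infinity. Your extra care about the ``undefined'' flavour of divergence and the choice of $k_{0}$ past any singularities merely makes explicit what the paper leaves implicit.
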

\begin{proof}
 A point that does not exist leads to a diverging
 sum or integral.
 If the sequence or interval exist,
 then a sum or integral with a finite part and an infinite
 part can be constructed. Since the tail is an infinity,
 and the finite part of the sum added to the tail is
 still an infinity, hence as expected, the sum or integral will
 diverge.
\end{proof}
\subsubsection{Convergence rates} \label{S1302}
 We can show the theory of convergence sums includes error analysis.
\bigskip
\begin{defy}
 Rate of convergence of positive series is the ratio of the
 partial sums.
\end{defy}
\bigskip
\begin{theo}
 Rate of convergence of the positive convergent
 sum $\sum a_{n}|_{n=\infty}$ is $\frac{a_{n+1}}{a_{n}}|_{n=\infty}$.
\end{theo}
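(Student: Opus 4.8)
The plan is to read ``rate of convergence'' through the preceding definition as the ratio $\frac{s_{n+1}}{s_n}|_{n=\infty}$, where $s_n$ denotes the convergence sum $\sum a_k|_{k=\infty}$ regarded as a function of its reference point $n$; equivalently, by Theorem \ref{P065} and the fundamental theorem of calculus in $*G$ \cite{cebp10}, $s_n = \int^{n} a(x)\,dx|_{n=\infty}$. The first point I would stress is the interpretation: this $s_n$ is the tail attached at $n$, not the classical partial sum $\sum_{k=1}^{n} a_k$; the latter realizes to the finite value of the series and its ratio collapses to $1$, which is not what the theorem asserts. By condition E3.4 (Theorem \ref{P057}), for a positive convergent sum $s_n|_{n=\infty} \in \Phi$, so $s_n$ and $s_{n+1}$ are positive infinitesimals; in particular $s_n \neq 0$, the ratio is well defined, and $\frac{s_{n+1}}{s_n}|_{n=\infty}$ is a genuine $0/0$ indeterminate form at infinity.

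The second step is to evaluate that indeterminate form by L'Hopital's rule, which is exactly the setting of the L'Hopital convergence machinery of Section \ref{S0709} applied here to the ratio of two tails. Differentiating numerator and denominator with respect to $n$, and using the chain rule on $s_{n+1}$ with $\frac{d}{dn}(n+1)=1$,
\[ \frac{s_{n+1}}{s_n}\Big|_{n=\infty} = \frac{\frac{d}{dn} s_{n+1}}{\frac{d}{dn} s_n}\Big|_{n=\infty} . \]
Now $\frac{d}{dn} s_n = \frac{d}{dn}\int^{n} a(x)\,dx = a(n) = a_n$ by the fundamental theorem of calculus in $*G$, and likewise $\frac{d}{dn} s_{n+1} = a(n+1) = a_{n+1}$ since $a(k)=a_k$ at integer arguments. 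Substituting gives $\frac{s_{n+1}}{s_n}|_{n=\infty} = \frac{a_{n+1}}{a_n}|_{n=\infty}$, which is the claim; any common sign coming from the orientation of the antiderivative cancels in the ratio.

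The delicate point, and the one I expect to be the main obstacle, is not the calculus but the bookkeeping of realization. Proposition \ref{P064} gives $\sum a_n = \sum a_{n+1}|_{n=\infty}$ only \emph{after} the infinitesimals are realized, so one must resist collapsing $\frac{s_{n+1}}{s_n}$ to $1$ prematurely: the ratio is to be extracted while the infinitesimal structure of $*G$ is still intact and only then realized in $\overline{\mathbb{R}}$, the value being $\frac{a_{n+1}}{a_n}|_{n=\infty}$ (for a geometric series this is the common ratio; for a $p$-series it realizes to $1$, consistent with both tails vanishing at the same order). A secondary obstacle is checking that L'Hopital is legitimately applicable: this needs $s_n \in \Phi$ with $s_n$ monotone, which holds because under the standing monotonicity hypothesis $(a_n)|_{n=\infty}$ is a positive decreasing null sequence (Theorem \ref{P014}) and the tail of such a series is a monotone positive infinitesimal, so the hypotheses of the $0/0$ form are met and the differentiation step is justified.
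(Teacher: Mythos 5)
Your proposal follows essentially the same route as the paper's own proof: write the ratio of consecutive convergence sums as a ratio of integrals, observe it is a $0/0$ form because the sum converges, and apply L'Hopital's rule together with the fundamental theorem of calculus in $*G$ to reduce it to $\frac{a_{n+1}}{a_{n}}|_{n=\infty}$. Your additional remarks on interpreting $s_{n}$ as the tail rather than the classical partial sum, and on deferring realization until after the ratio is extracted, are sensible elaborations but do not change the argument.
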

\begin{proof}
 Let $s_{n} = \sum a_{n}|_{n=\infty}$ be a convergent sum.
 $\frac{s_{n+1}}{s_{n}}$
 $= \frac{ \sum a_{n+1}}{ \sum a_{n}}|_{n=\infty}$
 $= \frac{ \int a(n+1)\,dn}{ \int a(n)\,dn}|_{n=\infty}$
 is of the form $\frac{0}{0}$ as 
 $\sum a_{n}|_{n=\infty}=0$ is convergent.
 Use L'Hopital's \cite[Part 5]{cebp21} rule to differentiate.
 $\frac{ \int a(n+1)\,dn}{ \int a(n)\,dn}|_{n=\infty}$
 $= \frac{a(n+1)}{a(n)}|_{n=\infty}$
 $= \frac{ a_{n+1}}{a_{n}}|_{n=\infty}$.
\end{proof}
\bigskip
\begin{mex}
 Determine the rate of convergence of 
 $\sum \frac{ (4n)!(1103+26390n)}{ (n!)^{4} 396^{4n} }|_{n=\infty}$
 \cite{ramanujanpi}.

Let $a_{n} = \frac{ (4n)!(1103+26390n)}{ (n!)^{4} 396^{4n} }$,
 $\frac{ a_{n+1}}{a_{n}}|_{n=\infty}$
 $=\frac{ (4(n+1))!(1103+26390(n+1))}{ ((n+1)!)^{4} 396^{4(n+1)} } \frac{ (n!)^{4} 396^{4n} }{ (4n)! (1103 + 26390n) }|_{n=\infty}$
 $=\frac{ (4(n+1))!}{ ((n+1)!)^{4} 396^{4} } \frac{ (n!)^{4} }{ (4n)! }|_{n=\infty}$
 $= \frac{ (4n+4)(4n+3)(4n+2)(4n+1) (4n)! }{ (n+1)^{4} (n!)^{4} 396^{4} } \frac{ (n!)^{4} }{ (4n)! }|_{n=\infty}$
 $= \frac{4^{4}}{396^{4}}$
 $= \frac{1}{99^{4}}$
 $= 1.041020\times{10}^{-8}$ Hence eight decimal digits per
 iteration. 
\end{mex}
\section{Power series convergence sums} \label{S14}
 Calculating the radius and interval of
 convergence with power series at infinity.
 By using non-reversible arithmetic,
 either by factoring, comparison or
 application of the logarithmic magnitude relation, convergence or divergence may be determined.
 We interpret uniform convergence with a convergence sum.
\subsection{Introduction}\label{S1401}
 While convergence testing for power series is 
 straight forward,
 we mirror the tests with convergence sums.
 The theory is general as it calculates in a different
 way the radius of convergence,
 intervals and theorems. 

 In power series convergence sums
 we find application of non-reversible multiplication,
 Theorem \ref{P032}.

 `Convergence sums' theory extensively uses
 power series at infinity.
 By threading a continuous
 curve through a monotonic sequence and interchanging between
 the continuous form and the sequence.
 This one idea leads to the integral test in both
 directions (Theorem \ref{P065}).  
 We also use power 
 series at infinity to describe a derivative of a sequence (Section \ref{S15}). 

 The power series representation at
 infinity is interesting because historically
 the power series has played a role in applications,
 continuity,
 uniform convergence, limit interchanges, partial differentiation,
 solution validity,
 and many other matters. 

 For example, we can represent a trigonometric function
 at infinity. As power series are analytic, the property
 is applicable over the infinite domain too. 

 With power series, a generalization of the geometric series
 is extensively used for function representation
 and approximation. Fourier series, partial differential
 equations and other applied topics also 
 appear in number theory of partitions with generating functions.

 It happens that simplifying a sum at infinity, by reasoning of 
 magnitude of $\sum a_{n} x^{n}|_{n=\infty}$,
 is a different experience,
 and is another way of determining convergence.
 The reasoning is often algebraic,
 arguing with magnitudes and factoring. 
\subsection{Finding the radius of convergence} \label{S1402}
 A power series is a geometric series;
 we know that $1+x +x^{2} + \ldots$ is convergent
 when $|x| \lt 1$. The convergence can also be derived at infinity
 by comparing against a convergent p-series.
 Intuitively a fraction less than one multiplied 
 by another fraction less than one infinitely many times,
 is infinitely small.
\bigskip
\begin{theo}\label{P027}
If $|x| \lt 1$ then
 $\sum x^{n}|_{n=\infty}=0$ converges, and has radius of convergence $r=1$.  
\end{theo}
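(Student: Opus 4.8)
The plan is to reduce everything to the p-series test by way of the comparison test at infinity, handling the non-negative case first and then bootstrapping to all $|x| \lt 1$ via absolute convergence.

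First I would dispose of $x=0$ (where $\sum x^{n}|_{n=\infty}=\sum 0|_{n=\infty}=0$ trivially) and treat $0 \lt x \lt 1$. Put $c = 1/x \gt 1$, so $x^{n} = 1/c^{n}$, and note that $(x^{n})|_{n=\infty}$ is monotonic decreasing since $x^{n+1}=x\cdot x^{n}\le x^{n}$, so condition E3.1 holds. By the logarithmic magnitude scale $x^{p}|_{p\gt 0}\prec a^{x}|_{a\gt 1}$ recalled in Section \ref{S0102}, we have $n^{p}\prec c^{n}|_{n=\infty}$ for every fixed $p\gt 0$, hence $\frac{1}{c^{n}}\prec\frac{1}{n^{p}}|_{n=\infty}$, i.e. $0\le x^{n}\le\frac{1}{n^{p}}|_{n=\infty}$. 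Fixing $p\gt 1$, the p-series test (Theorem \ref{P054}) gives $\sum\frac{1}{n^{p}}|_{n=\infty}=0$, so the comparison test at infinity (Theorem \ref{P010}) yields $\sum x^{n}|_{n=\infty}=0$, which is convergence.

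For general $|x|\lt 1$, apply the previous step to $|x|\in[0,1)$ to get $\sum|x^{n}|\,|_{n=\infty}=\sum|x|^{n}|_{n=\infty}=0$, and then the absolute convergence test (Theorem \ref{P015}) gives $\sum x^{n}|_{n=\infty}=0$ directly. For the radius of convergence, the series $\sum x^{n}$ has all coefficients equal to $1$, so Definition \ref{DEF005} gives $\frac{1}{r}=|\frac{a_{n}}{a_{n-1}}|\,|_{n=\infty}=1$, whence $r=1$. To confirm this value is sharp, observe that for $|x|\ge 1$ we have $|x^{n}|\,|_{n=\infty}=|x|^{n}\ge 1\ne 0$, so the nth term divergence test (Theorem \ref{P013}) forces $\sum x^{n}|_{n=\infty}=\infty$; thus convergence holds exactly on $(-1,1)$.

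The only genuinely delicate point is the comparison step: one must invoke the infinitary relation $c^{n}\succ n^{p}$ as a magnitude statement in $*G$ rather than smuggle in a limit argument, since the whole apparatus is built on non-reversible arithmetic at the point $n=\infty$. Everything else is a routine chaining of already-established convergence-sum machinery.
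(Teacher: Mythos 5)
Your proof is correct and takes essentially the same route as the paper: both reduce $\sum x^{n}|_{n=\infty}$ to a comparison with the convergent p-series via the magnitude relation $n^{p} \prec c^{n}|_{n=\infty}$, the paper by solving for the relation $z$ after taking logarithms and you by fixing $p \gt 1$ and chaining Theorems \ref{P054}, \ref{P010} and \ref{P015}. Your explicit detour through absolute convergence for negative $x$ is in fact slightly more careful than the paper's own proof, which applies $\mathrm{ln}\,x$ without first restricting to $x \gt 0$.
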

\begin{proof}
 Comparing against the convergent p-series.
$\sum x^{n} \; z \; \sum \frac{1}{n^{\alpha}}|_{n=\infty}$
 converges when $\alpha \gt 1$.
 $x=0$ is a solution.
 Solving for $x$,
 $x^{n} \; z \; \frac{1}{n^{\alpha}}|_{n=\infty}$,
 $x^{n} n^{\alpha} \; z \; 1|_{n=\infty}$, 
 $\mathrm{ln}(x^{n} n^{\alpha}) \; (\mathrm{ln}\,z) \; \mathrm{ln}\,1|_{n=\infty}$, 
 $n \, \mathrm{ln}\,x + \alpha  \, \mathrm{ln}\,n \; (\mathrm{ln}\,z) \; 0|_{n=\infty}$, 
 $n \, \mathrm{ln}\,x \; (\mathrm{ln}\,z) \; 0|_{n=\infty}$, 
 $\mathrm{ln}\,x^{n} \; (\mathrm{ln}\,z) \; 0|_{n=\infty}$, 
 $x^{n} \; z \; e^{0}|_{n=\infty}$, 
 $x^{n} \; z \; 1|_{n=\infty}$, 
 $z = \;\lt$ 
 then $|x| \lt 1$ .
 (Solving for $z = \; \leq$ leads
 to $x=1$ which in the sum diverges hence
 this case is excluded).
\end{proof}
\bigskip
\begin{prop}\label{P033}
$\sum x^{n}|_{n=\infty}$ diverges when $|x| \gt 1$.
\end{prop}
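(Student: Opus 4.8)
The plan is to mirror the proof of Theorem \ref{P027}, but comparing against a \emph{divergent} p-series instead of a convergent one, and to dispatch the sign issue for $x<-1$ with the nth term divergence test. First I would split on the sign of $x$, treating $x>1$ by comparison and $x<-1$ separately.

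For $x>1$: compare $\sum x^{n} \; z \; \sum \frac{1}{n^{\alpha}}|_{n=\infty}$ where $\alpha \leq 1$, so that the p-series diverges by the p-series test (Theorem \ref{P054}). Solving for $z$ exactly as in Theorem \ref{P027}: $x^{n} \; z \; \frac{1}{n^{\alpha}}|_{n=\infty}$, hence $x^{n} n^{\alpha} \; z \; 1|_{n=\infty}$, and taking logarithms $n\,\mathrm{ln}\,x + \alpha\,\mathrm{ln}\,n \; (\mathrm{ln}\,z) \; 0|_{n=\infty}$. Since $n \succ \mathrm{ln}\,n|_{n=\infty}$ the lower order term is an additive identity and drops, leaving $n\,\mathrm{ln}\,x \; (\mathrm{ln}\,z) \; 0|_{n=\infty}$; because $x>1$ gives $\mathrm{ln}\,x>0$, the left side is a positive infinity, so $\mathrm{ln}\,z = \; \succ$ and hence $z = e^{\succ} = \; \succ \; = \; \geq$. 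Substituting back, $\sum x^{n} \geq \sum \frac{1}{n^{\alpha}}|_{n=\infty} = \infty$, so $\sum x^{n}|_{n=\infty}=\infty$ diverges.

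For $x<-1$ the sequence $(x^{n})|_{n=\infty}$ alternates in sign and is not monotonic, so the comparison machinery does not apply directly; instead I would invoke the nth term divergence test (Theorem \ref{P013}). Since $|x|>1$, from the magnitude scale $a^{x}|_{a>1}$ growing without bound, $|x^{n}||_{n=\infty} = |x|^{n}|_{n=\infty} = \infty$, so $x^{n}|_{n=\infty} \neq 0$; therefore $\sum x^{n}|_{n=\infty}=\infty$ diverges. In fact this last argument already covers $x>1$ as well, so the whole proposition follows in one line from Theorem \ref{P013}; the comparison argument is worth including only to parallel Theorem \ref{P027}.

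The main obstacle is really just bookkeeping: ensuring the $x<-1$ case is handled even though $(x^{n})|_{n=\infty}$ fails the monotonicity hypothesis underlying the comparison results of Section \ref{S05}. The nth term test is precisely the tool that sidesteps this, since it only needs $x^{n}|_{n=\infty}\neq 0$, which is immediate from $|x|>1$ via the magnitude scale.
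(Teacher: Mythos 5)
Your proposal is correct and, in its distilled form, is exactly the paper's proof: the paper disposes of the whole proposition in one line via the nth term divergence test, observing that $|x| \gt 1$ forces $x^{n}|_{n=\infty} \neq 0$. The auxiliary comparison against a divergent p-series for $x \gt 1$ is sound but, as you yourself note, redundant.
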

\begin{proof}
 For convergence, $\sum a_{n}|_{n=\infty}$ requires
 $a_{n}|_{n=\infty}=0$. When $|x| \gt 1$ then $x^{n}|_{n=\infty} \neq 0$.
\end{proof}
\bigskip
\begin{defy}\label{DEF009}
 The \textit{radius of convergence} is absolute convergence of
 $\sum a_{n} x^{n}|_{n=\infty}$,
 solving Theorem \ref{P029}, $|a_{n}^{\frac{1}{n}}r||_{n=\infty} \lt 1$
 about the origin. $x$ is absolutely convergent about the
 origin within $(-r,r)$.
\end{defy}
\bigskip
\begin{defy}
 The \textit{interval of convergence} includes the radius
 of convergence, and the end points which need to be 
 tested separately.  
\end{defy}
 
A power series convergence test,
 Theorem \ref{P029} transforms
 the series at infinity to evaluate
 the radius of convergence, a distance
 about which the sum converges.
 Unimportant terms in the sums product,
 which are not required
 to determine convergence or divergence,
 become transients.
 Applying 
 non-reversible arithmetic, these variables and
 constants can be removed.

 Since a power series about a point can be translated
 to the origin, the calculation
 of the radius of convergence and
 the interval of convergence may be applied
 to infinite series of the form $\sum_{k=1}^{\infty} a_{k}(x-c)^{k}$.

 While solving absolute convergence finds the
  general interval,
 the end points
 of the interval need to be tested separately
 for the 
 interval of convergence \cite[Properties of functions represented by power series, p.431]{apostol}.
\bigskip
\begin{mex}
Determine the radius of convergence of
 $\sum n (\frac{x}{2})^{n}|_{n=\infty}=0$.
 
 Transform the power series by
 bringing the $n$ term into the product and simplifying.
\begin{align*}
 \sum n (\frac{x}{2})^{n}|_{n=\infty} \\
 = \sum (n^{\frac{1}{n}} \frac{x}{2})^{n}|_{n=\infty} & \tag{$n^{\frac{1}{n}}|_{n=\infty}=1$} \\
 = \sum (\frac{x}{2})^{n}|_{n=\infty}=0 \tag{for convergence} \\
 |\frac{x}{2}| \lt 1, \;\; |x| \lt 2 & \tag{Theorem \ref{P033}} \\
 \text{radius of convergence } r=2
\end{align*}
\end{mex}
\bigskip
\begin{theo}\label{P200}
Transform
 the sum  
 $\sum a_{n}x^{n} = \sum (b_{n}x)^{n}|_{n=\infty}$.
 For convergence 
 $\sum (b_{n}x)^{n}|_{n=\infty}=0$, 
 solving for $|b_{n}x| \lt 1$, 
 the radius of convergence $r = \frac{1}{|b_{n}|}|_{n=\infty}$
 If $r$ exists $x=(-r,r)$ converges. For the interval of convergence
 the end points need to be tested.
\end{theo}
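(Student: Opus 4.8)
The plan is to reduce the claim to the geometric-series case already settled in Theorem \ref{P027} and Proposition \ref{P033}. First I would absorb the coefficient into the base of the power: setting $b_{n} = |a_{n}|^{1/n}$ (a legitimate object of $*G$, since $a_{n}\in *G$, and the same quantity appearing in Definition \ref{DEF009}), one has $|a_{n}x^{n}| = |b_{n}x|^{n}$, so the absolute power series $\sum |a_{n}|x^{n}|_{n=\infty}$ agrees termwise with $\sum |b_{n}x|^{n}|_{n=\infty}$. This is precisely the transform $\sum a_{n}x^{n} = \sum (b_{n}x)^{n}|_{n=\infty}$ asserted in the statement, now read at the level of magnitudes.

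Next I would apply Theorem \ref{P027} with $y = b_{n}x$: $\sum y^{n}|_{n=\infty}=0$ converges whenever $|y|<1$. Hence $\sum (b_{n}x)^{n}|_{n=\infty}$ converges as soon as $|b_{n}x|<1|_{n=\infty}$, and by the absolute convergence test (Theorem \ref{P015}) so does $\sum a_{n}x^{n}|_{n=\infty}$. Solving $|b_{n}x|<1$ for $x$ at infinity gives $|x| < \frac{1}{|b_{n}|}|_{n=\infty}$; when this infinitary expression realises to a single nonnegative value $r$ — the ``if $r$ exists'' hypothesis — every $x$ with $|x|<r$ lies in the convergence region, so $x\in(-r,r)$ converges. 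For the complementary direction I would invoke Proposition \ref{P033}: when $|b_{n}x|>1|_{n=\infty}$ the terms $(b_{n}x)^{n}|_{n=\infty}\neq 0$, so by the nth term test (Theorem \ref{P013}) the series diverges; equivalently $|x|>r$ forces divergence, which pins $r$ down as the radius. Finally, at $x=\pm r$ one has $|b_{n}x|=1|_{n=\infty}$, exactly the boundary case deliberately excluded in the proof of Theorem \ref{P027} (there, solving for $z=\;\leq$ leads back to $\sum 1|_{n=\infty}=\infty$); hence no uniform conclusion is available and the two end points must be tested separately, which is the content of the last sentence.

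The main obstacle, and the place where care is needed, is treating $b_{n}=|a_{n}|^{1/n}$ as a well-behaved infinitary object and, in particular, the possibility that $\frac{1}{|b_{n}|}|_{n=\infty}$ fails to realise to one value (oscillating coefficients). The statement sidesteps this with the hypothesis ``if $r$ exists''; in the write-up I would note that when $b_{n}x$ does not settle one can still sandwich $|b_{n}x|$ between monotonic bounds (using the sandwiched comparison variation, Theorem \ref{P011}) and read off convergence or divergence from whichever side of $1$ those bounds fall, but the clean single-radius statement genuinely needs the existence hypothesis. A secondary subtlety is the relation bookkeeping when solving the inequality through the logarithm, using the $\prec \;\Rightarrow\; \leq$ generalisation of the equals operator as in Theorem \ref{P027}; this is exactly why the correct conclusion is the strict $|x|<r$ rather than $|x|\leq r$, the endpoint case $x=1$ being discarded because it diverges in the sum.
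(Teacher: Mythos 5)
Your proof takes essentially the same route as the paper's: set $b_{n}=a_{n}^{1/n}$, absorb the coefficient into the base, and reduce to the geometric-series result of Theorem \ref{P027}, with the end points deferred to separate testing. The extra care you add (working with $|a_{n}|^{1/n}$ and invoking Theorem \ref{P015}, plus the divergence half via Proposition \ref{P033} and the nth term test) only supplements the paper's two-line argument rather than replacing it.
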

\begin{proof}
 Let $b_{n} = a_{n}^{\frac{1}{n}}$,
$\sum a_{n} x^{n}|_{n=\infty}$
$= \sum (a_{n}^{\frac{1}{n}} x)^{n}|_{n=\infty} = 0$
 by Theorem \ref{P027} when $|a_{n}^{\frac{1}{n}}x||_{n=\infty} \lt 1$

The end points $b_{n}x|_{n=\infty}=1$ 
 evaluated separately using the Criteria E3 (Section \ref{S0108}). 
\end{proof}

 A primary technique in simplifying products is
 to apply the inverse log and exponential functions,
 then non-reversible arithmetic.
\[ ab = e^{\mathrm{ln}(ab)} = e^{\mathrm{ln}\,a+\mathrm{ln}\,b} = e^{\mathrm{ln}\,a} \text{ when } \mathrm{ln}\,a \succ \mathrm{ln}\,b \]
\begin{mex}
 $\sum n (\frac{x}{2})^{n}|_{n=\infty}$
 $= \sum e^{ \mathrm{ln}\,n + n \,\mathrm{ln}\,\frac{x}{2} }|_{n=\infty}$
 $= \sum e^{n \,\mathrm{ln}\,\frac{x}{2} }|_{n=\infty}$
 $= \sum (\frac{x}{2})^{n}|_{n=\infty}$, $|\frac{x}{2}|\lt 1$, $|x| \lt 2$,
 radius of convergence $r=2$.
\end{mex}

By application of `logarithmic magnitude' 
 we can directly simplify the product.
 Given positive functions and relation $f \; z \; g$,
 when $\mathrm{ln}\,f \prec \mathrm{ln}\,g$ then
 by definition we say $f \prec\!\prec g$. We then apply a non-reversible product
 theorem, if $a \prec\!\prec b$ then $ab = b$ \cite[Part 5]{cebp21}.
 For power series, this allows
 the simplification of product terms. 
\bigskip
\begin{theo}\label{P032}
For positive $a$ and $b$, if $a \succ\!\succ b|_{n=\infty}$ then
 $\sum ab = \sum a|_{n=\infty}$\
\end{theo}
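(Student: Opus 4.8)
The plan is to reduce the statement directly to the non-reversible product theorem of \cite[Part 5]{cebp21} quoted in the paragraph immediately preceding it. By the definition of logarithmic magnitude given just above, the hypothesis $a \succ\!\succ b|_{n=\infty}$ means precisely $\mathrm{ln}\,a \succ \mathrm{ln}\,b|_{n=\infty}$. First I would apply the non-reversible scale to the logarithms: $\mathrm{ln}(ab) = \mathrm{ln}\,a + \mathrm{ln}\,b = \mathrm{ln}\,a|_{n=\infty}$, since $\mathrm{ln}\,a \succ \mathrm{ln}\,b$ makes $\mathrm{ln}\,b$ an additive identity. Exponentiating gives $ab = e^{\mathrm{ln}(ab)} = e^{\mathrm{ln}\,a} = a|_{n=\infty}$, which is exactly the recorded fact ``if $b \prec\!\prec a$ then $ab = a$''.

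Next I would pass from this pointwise identity $a_{n}b_{n} = a_{n}|_{n=\infty}$ to the identity of convergence sums $\sum a_{n}b_{n} = \sum a_{n}|_{n=\infty}$. Since $a$ must be a monotonic sequence to serve as the input of a convergence sum under Criterion E3, and $(a_{n}b_{n})|_{n=\infty}$ coincides with $(a_{n})|_{n=\infty}$, the sequence $(a_{n}b_{n})|_{n=\infty}$ is itself monotonic, so both sums are governed by E3$'$ and lie in $\{0,\infty\}$. Splitting into the two E3$'$ cases then finishes it: if $\sum a_{n}|_{n=\infty}=\infty$ diverges then $\sum a_{n}b_{n}|_{n=\infty}=\infty$ because the summands agree at infinity, and likewise $\sum a_{n}|_{n=\infty}=0 \Rightarrow \sum a_{n}b_{n}|_{n=\infty}=0$.

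The one place requiring care — and the main obstacle — is the legitimacy of substituting one expression for an equal one inside a sum: Example \ref{MEX011} shows that carrying an unsimplified summand through an integration in $*G$ can yield a spurious value outside $\mathbb{R}_{\infty}$. The resolution, which I would state explicitly in the write-up, is that here the substitution is performed on the summand \emph{before} summing, replacing $ab$ by the genuinely equal infinireal $a$ rather than re-expanding an already simplified form; no lower-order magnitude terms are perpetuated past the summation, so Criterion E3 is respected throughout and the conclusion $\sum ab = \sum a|_{n=\infty}$ holds.
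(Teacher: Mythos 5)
Your proof is correct and follows essentially the same route as the paper: the core step is the identical chain $ab = e^{\mathrm{ln}\,a+\mathrm{ln}\,b} = e^{\mathrm{ln}\,a} = a|_{n=\infty}$ via non-reversible arithmetic on the logarithms, which the paper simply performs inside the summation sign. Your additional care in justifying the passage from the pointwise identity to the identity of convergence sums (and in distinguishing this from the pitfall of Example \ref{MEX011}) goes beyond what the paper records but does not change the argument.
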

\begin{proof}
$(\sum ab$
 $ = \sum e^{\mathrm{ln}(ab)}$
 $= \sum e^{\mathrm{ln}\,a+\mathrm{ln}\,b}$
 $= \sum e^{\mathrm{ln}\,a}$
 $= \sum a)|_{n=\infty}$
  since $a \succ\!\succ b$ means $\mathrm{ln}\,a \succ \mathrm{ln}\,b$.
\end{proof}
\bigskip
\begin{mex}\label{MEX041}
 As $\left( \frac{x}{2} \right)^{n} \succ\!\succ n|_{n=\infty}$,
 $\sum n (\frac{x}{2})^{n}|_{n=\infty}$
 $= \sum (\frac{x}{2})^{n}|_{n=\infty}$.

To establish the logarithmic magnitude relationship,
 solve the comparison.   
 $n \; z \; (\frac{x}{2})^{n}|_{n=\infty}$,
 $\mathrm{ln}\,n \; (\mathrm{ln}\,z) \; n\, \mathrm{ln}\frac{x}{2}|_{n=\infty}$,
 $(\mathrm{ln}\,z) = \; \prec$
 then by definition $z = \; \prec\!\prec$.
\end{mex}

Smaller infinities in the product/division may be simplified, 
 thereby making the sum easier to solve for convergence.
 It is not always easy to identify
 the dominant term.
 From \cite[Part 5]{cebp21}, products can
 be converted to sums by taking 
 the logarithm, and solving the
 relation.
\bigskip
\begin{mex}\label{MEX013} 
Show $\sum c n^{p} x^{n} = \sum x^{n}|_{n=\infty}$.
  $\sum c n^{p} x^{n}|_{n=\infty}$
 $= \sum e^{ \mathrm{ln}(c n^{p} x^{n})}|_{n=\infty}$
 $= \sum e^{ \mathrm{ln}\,c + p\, \mathrm{ln}\,n + n\, \mathrm{ln}\,x }|_{n=\infty}$
 $= \sum e^{ n \,\mathrm{ln}\,x }|_{n=\infty}$
 $= \sum x^{n}|_{n=\infty}$,
 because $n\, \mathrm{ln}\,x \succ p\, \mathrm{ln}\,n +\mathrm{ln}\,c|_{n=\infty}$. 
\end{mex}
\bigskip
\begin{prop}\label{P031}
When $p$ and $c$
  are constant then $\sum c a_{n} n^{p} x^{n}$
 $= \sum a_{n} x^{n}|_{n=\infty}$ 
\end{prop}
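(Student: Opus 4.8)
The plan is to generalise the argument of Example \ref{MEX013} and Theorem \ref{P032} from the special coefficient $1$ to a general coefficient $a_n$, the only new input being control of the growth of $\mathrm{ln}|a_n|$ through the root $a_n^{1/n}|_{n=\infty}$. I would first reduce to positive terms (replace $a_n$ by $|a_n|$, $x$ by $|x|$, $c$ by $|c|$: the sign of $c$ is irrelevant to convergence, and the bounded sign factor $\mathrm{sgn}(c\,a_n x^n)$ may be discarded by Theorem \ref{P061}, consistently with the convention of Definition \ref{DEF009} that the radius concerns absolute convergence), and then write the general term as
\[ c\,a_n n^p x^n = e^{\,\mathrm{ln}\,c + \mathrm{ln}|a_n| + p\,\mathrm{ln}\,n + n\,\mathrm{ln}|x|}|_{n=\infty}. \]
The aim is to show that $\mathrm{ln}\,c + p\,\mathrm{ln}\,n$ is an additive identity at infinity relative to $\mathrm{ln}|a_n x^n| = \mathrm{ln}|a_n| + n\,\mathrm{ln}|x|$, so that the exponent simplifies by non-reversible arithmetic (Section \ref{S0102}) to $\mathrm{ln}|a_n x^n|$, whence $c\,a_n n^p x^n = a_n x^n|_{n=\infty}$; applying $\sum$ to both sides then gives the claim.

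The key step is the magnitude estimate $\mathrm{ln}|a_n x^n| \succ \mathrm{ln}\,n$. Writing $\mathrm{ln}|a_n| = n\,\mathrm{ln}(|a_n|^{1/n})$ and letting $|a_n|^{1/n}|_{n=\infty}$ realise to $1/r$ (Theorem \ref{P200}, the radius of convergence), one gets $\mathrm{ln}|a_n x^n| = n\bigl(\mathrm{ln}|x| - \mathrm{ln}\,r\bigr)|_{n=\infty}$, whose magnitude is of order $n$ and hence $\succ \mathrm{ln}\,n \succ \mathrm{ln}\,c$ as soon as $|x| \neq r$; the edge cases $r = \infty$ and $r = 0$ are similar (there $\mathrm{ln}|a_n|$ itself dominates $n\,\mathrm{ln}|x|$), and $x = 0$ is trivial. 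Equivalently, this says $a_n x^n \succ\!\succ c\,n^p|_{n=\infty}$, so Theorem \ref{P032} applies directly to the product $(a_n x^n)(c\,n^p)$ and yields $\sum c\,a_n n^p x^n = \sum a_n x^n|_{n=\infty}$; as a cross-check, the same computation shows via Theorem \ref{P200} that the two series share the radius $r$, since $c^{1/n}|_{n=\infty} = n^{p/n}|_{n=\infty} = 1$.

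The main obstacle is the boundary $|x| = r$, where the proposition fails literally: for instance with $a_n = 1/n^2$ and $x = 1$ one has $\sum a_n x^n|_{n=\infty} = 0$ but $\sum c\,a_n n^p x^n = \sum c/n^{2-p}|_{n=\infty} = \infty$ for $p \geq 1$. So I would state the result for $|x| \neq r$ only, i.e. as an equality of radii and of convergence behaviour on the open interval $(-r,r)$, matching the standing convention in Theorems \ref{P029} and \ref{P200} that the endpoints are tested separately. A secondary nuisance is making the sign reduction rigorous for genuinely sign-changing power series; I would lean on the absolute-convergence definition of the radius (Definition \ref{DEF009}) rather than attempt a direct term-by-term treatment of $\sum (-1)^n\,a_n n^p x^n$.
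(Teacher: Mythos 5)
Your proposal follows the same basic route as the paper's proof: write the general term as $e^{\mathrm{ln}(c a_{n} n^{p} x^{n})}$, expand the logarithm, discard the subdominant terms $\mathrm{ln}\,c + p\,\mathrm{ln}\,n$ by non-reversible arithmetic, and exponentiate back. Where you genuinely differ is in the dominance estimate. The paper discards $p\,\mathrm{ln}\,n$ on the strength of $n\,\mathrm{ln}\,x \succ p\,\mathrm{ln}\,n|_{n=\infty}$ alone, which silently requires $\mathrm{ln}\,x \neq 0$ and never examines the contribution of $\mathrm{ln}\,a_{n}$; you instead compare $p\,\mathrm{ln}\,n$ against the whole remaining exponent $\mathrm{ln}|a_{n}x^{n}| = n(\mathrm{ln}|x| - \mathrm{ln}\,r)|_{n=\infty}$, which is the quantity that actually matters, and this buys you the correct hypothesis $|x| \neq r$ under which the discarding is legitimate. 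Your counterexample ($a_{n}=1/n^{2}$, $x=1=r$, $p \geq 1$) is valid and shows the proposition fails literally on the boundary of the interval of convergence, a restriction the paper's statement and proof omit. The price is a slightly heavier setup (the reduction to absolute values and the appeal to Theorem \ref{P200} for the realization $|a_{n}|^{\frac{1}{n}}|_{n=\infty}=\frac{1}{r}$), but the payoff is a correct statement of scope; within the paper's intended use (computing the radius, with endpoints always tested separately) the two arguments coincide.
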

\begin{proof}
 By similar argument 
 to Example \ref{MEX013}.
 $\sum c a_{n} n^{p} x^{n}|_{n=\infty}$
 $=\sum e^{\mathrm{ln}(c a_{n} n^{p} x^{n})}|_{n=\infty}$.
 $\mathrm{ln}(c a_{n} n^{p} x^{n})|_{n=\infty}$
 $= \mathrm{ln}\,c + \mathrm{ln}\,a_{n} +p\,\mathrm{ln}\,n + n \,\mathrm{ln}\,x|_{n=\infty}$
 $= \mathrm{ln}\,a_{n} + n \,\mathrm{ln}\,x|_{n=\infty}$
 $= \mathrm{ln}a_{n}x^{n}|_{n=\infty}$,
 as $n \,\mathrm{ln}\,x \succ \mathrm{ln}\,c|_{n=\infty}$ and
 $n \,\mathrm{ln}\,x \succ p\,\mathrm{ln}\,n|_{n=\infty}$.
 Reversing the exponential and logarithmic operations,
 $\sum e^{\mathrm{ln}(c a_{n} n^{p} x^{n})}|_{n=\infty}$
 $= \sum e^{ \mathrm{ln}a_{n}x^{n}}|_{n=\infty}$
 $= \sum a_{n}x^{n}|_{n=\infty}$
\end{proof}
\bigskip
\begin{mex}\label{MEX200}  
Find the radius and interval of convergence for 
 $\sum_{n=1}^{\infty} \frac{ (x-5)^{n}}{ (n+2) 3^{n}}$.
\begin{align*}
 \sum \frac{ (x-5)^{n}}{ (n+2) 3^{n}}|_{n=\infty} \tag{Need only consider the point at infinity} \\
 = \sum \frac{ (x-5)^{n}}{ 3^{n}}|_{n=\infty} \tag{as $3^{n} \succ\!\succ n+2|_{n=\infty}$} \\
 = \sum (\frac{x-5}{ 3})^{n}|_{n=\infty}=0 \tag{for convergence} \\
 | \frac{x-5}{3}| \lt 1 \\
 |x-5| \lt 3 \tag{radius $r=3$} \\ 
 2 \lt x \lt 8
\end{align*}
Test the intervals end points. 
 Case
 $x=2$,
 $\sum (\frac{ 2-5}{3})^{n}|_{n=\infty}$
 $= \sum (-1)^{n}|_{n=\infty} = \infty$ diverges.
 Case 
 $x=8$,
 $\sum (\frac{ 8-5}{3})^{n}|_{n=\infty}$
 $= \sum 1|_{n=\infty}=\infty$ diverges.
 Radius of convergence is $3$, interval of convergence
 is $x=(2,8)$.
\end{mex}

Basic arithmetic is used in solving these problems,
 $a^{bc} = (a^{b})^{c} = (a^c)^{b}$.
 Where the raised powers are 
 interchanged.
 The best way is to evaluate
 the triple from the base upwards.
 E.g. 
 $2^{15}=2^{3 \cdot 5} = (2^3)^5 = (2^5)^3 = 32768$.
 If we write without correct bracketing,
 the order can be ambiguous,
 evaluating from the top down, 
 $2^{3^{5}} = 2^{(3^5)} = 2^{243}$,
 $2^{5^{3}} = 2^{(5^{3})} = 2^{125}$.
\bigskip
\begin{mex}\label{MEX015}  
 \cite[11.7.10]{apostol}. Determine the radius of convergence for   
 $\sum_{n=1}^{\infty} 3^{n^{\frac{1}{2}}} \frac{z^{n}}{n}$. \; 
 $\sum 3^{n^{\frac{1}{2}}} \frac{z^{n}}{n}|_{n=\infty}$ 
 $=\sum (3^{\frac{1}{2}})^{n} \frac{z^{n}}{n}|_{n=\infty}$ 
 $=\sum (3^{\frac{1}{2}}z)^{n} \frac{1}{n}|_{n=\infty}$ 
 $=\sum (3^{\frac{1}{2}}z)^{n}|_{n=\infty}$ 
 when $|3^{\frac{1}{2}}z| \lt 1|_{n=\infty}$,
 $|z| \lt \frac{1}{3^{\frac{1}{2}}}$,
 $r= \frac{1}{3^{\frac{1}{2}}}$
\end{mex}
\bigskip
\begin{mex}\label{MEX016}  
\cite[11.7.12 ]{apostol}. 
Determine the radius of convergence for   
 $\sum_{n=1}^{\infty} (1+\frac{1}{n})^{n^{2}} z^{n}$.
 $\sum (1+\frac{1}{n})^{n^{2}} z^{n}|_{n=\infty}$
 $=\sum ((\frac{n+1}{n})^{n} z)^{n}|_{n=\infty}$
 $=\sum (e z)^{n}|_{n=\infty}=0$
 when $|ez| \lt 1$, $r=\frac{1}{e}$
\end{mex}

By Stirling's formula we know $(n!)^{\frac{1}{n}}|_{n=\infty}=e^{-1}n$.
 This can be used in determining radius of convergence
 with factorial expressions.
\bigskip
\begin{mex}\label{MEX017}  
\cite[Example 5, p.795]{kreyszig}.
Determine the radius of convergence.    
 $\sum \frac{ (2n)! }{(n!)^{2}} y^{n}|_{n=\infty}$
 $=\sum (\frac{ ((2n)!)^{\frac{1}{n}} }{(n!)^{\frac{2}{n}}} y )^{n}|_{n=\infty}$
 $=\sum ( \frac{ ( ((2n)!)^{\frac{1}{2n}} )^{2} }{((n!)^{\frac{1}{n}})^{2}} y )^{n}|_{n=\infty}$
 $=\sum ( \frac{ (2n)^{2} }{ n^{2} } y )^{n}|_{n=\infty}$
 $=\sum ( 4 y )^{n}|_{n=\infty} =0$
 when $|4y| \lt 1$, $|y| \lt \frac{1}{4}$, $r=\frac{1}{4}$
\end{mex}

\bigskip
\begin{mex}\label{MEX020} 
 \cite[3.3.7.c, p.98]{kaczor2}, given $\sum a_{n} x^{n}|_{n=\infty}$
 has radius of convergence $R$, $R \prec \infty$.
 Solve the radius of convergence $r$ for
 $\sum \frac{n^{n}}{n!} a_{n} x^{n}|_{n=\infty}$.

 Solving for $R$. 
 $\sum a_{n} x^{n}|_{n=\infty}$
 $= \sum (a_{n}^{\frac{1}{n}} x)^{n}|_{n=\infty}=0$
 when
 $|a_{n}^{\frac{1}{n}}x| \lt 1|_{n=\infty}$, 
 $|x| \lt \frac{1}{ |a_{n}^{\frac{1}{n}}| }|_{n=\infty}$,
 $R = \frac{1}{ |a_{n}^{\frac{1}{n}}| }|_{n=\infty}$.
 $\sum \frac{n^{n}}{n!} a_{n} x^{n}|_{n=\infty}$
 $= \sum (\frac{n}{(n!)^{\frac{1}{n}}} a_{n}^{\frac{1}{n}} x)^{n}|_{n=\infty}$
 $= \sum (e a_{n}^{\frac{1}{n}} x)^{n}|_{n=\infty}=0$ 
 when $|e a_{n}^{\frac{1}{n}} x| \lt 1|_{n=\infty}$,  
 $e |x| \lt R$,  
 $|x| \lt \frac{R}{e}$ radius of convergence $r=\frac{R}{e}$ 
\end{mex}
\bigskip
\begin{mex}\label{MEX021} \cite[3.3.7.d, p.98]{kaczor2} 
 given $\sum a_{n} x^{n}|_{n=\infty}$
 has radius of convergence $R$, 
 as above $R = \frac{1}{ |a_{n}^{\frac{1}{n}}| }|_{n=\infty}$, 
$R \prec \infty$.
 Solve the radius of convergence $r$
 for $\sum a_{n}^{2} x^{n}|_{n=\infty}$.

 $\sum a_{n}^{2} x^{n}|_{n=\infty}$
 $=\sum (a_{n}^{\frac{2}{n}} x)^{n}|_{n=\infty}$,  
 $|a_{n}^{\frac{2}{n}} x| \lt 1|_{n=\infty}$,  
 $|x| \lt |a_{n}^{-\frac{2}{n}} | |_{n=\infty}$,  
 $|x| \lt R^{2}$
 then radius of convergence $r=R^{2}$.
\end{mex}

 Considering power series with the Alternating Convergence Theorem (ACT),
 we can determine convergence with functions that can be represented
 with these power series, for example log and trigonometric functions.
\bigskip
\begin{mex}\label{MEX022}  
Show $\mathrm{ln}(1+x) = \sum_{k=0}^{n} (-1)^{k} \frac{ x^{k+1}}{k+1}|_{n=\infty}$ 
 converges when radius of convergence $r=1$.
 $\sum (-1)^{n} \frac{ x^{n+1}}{n+1}|_{n=\infty}$ 
 converges by the ACT (See Theorem \ref{P206}) if $\frac{ x^{n+1}}{n+1}|_{n=\infty}=0$.
 Solve  
 $\frac{ x^{n+1}}{n+1}|_{n=\infty}=0$.
 When $|x| \lt 1$,
 $\frac{ x^{n+1}}{n+1}|_{n=\infty}$
 $=x^{n+1}|_{n=\infty}$
 $=0$ as
 $x^{n+1} \succ\!\succ n+1|_{n=\infty}$,
 radius of convergence $r=1$.
 More simply without $\succ\!\succ$, 
 $\frac{ x^{n+1}}{n+1}|_{n=\infty} = x^{n+1} \cdot \frac{1}{n+1} = 0 \cdot 0 =0$.
\end{mex}
\bigskip
\begin{mex}\label{MEX023}  
 Determining the radius of convergence of 
 $\mathrm{atan}\,x$ follows the same 
 reasoning as Example \ref{MEX022},
 in determining convergence 
 consider $\mathrm{atan}\,x$ at infinity,
 $\mathrm{atan}\,x = \sum (-1)^{n} \frac{x^{2n+1}}{2n+1}|_{n=\infty}$.

 For negative $x$, factoring out the negative sign leaves the
 positive case,
 hence need only consider 
 $x=(0,\infty)$.

 When $x \neq 1$ we observe that
 $x^{2n+1}$ `log dominates' $2n+1$.
  $x^{2n+1} \; z \; 2n+1|_{n=\infty}$,
 $(2n+1) \, \mathrm{ln}\,x \; (\mathrm{ln}\,z) \; \mathrm{ln}(2n+1)|_{n=\infty}$,
 $(2n+1) \, \mathrm{ln}\,x \succ \mathrm{ln}(2n+1)|_{n=\infty}$,
 then $x^{2n+1} \succ\!\succ (2n+1)$.
 [ $\mathrm{ln}\,z = \; \succ$, $z = e^{\succ} = \;\succ\!\succ$ ]
 Then 
 $\sum (-1)^{n} \frac{x^{2n+1}}{2n+1}|_{n=\infty}$ 
 $= \sum (-1)^{n} x^{2n+1}|_{n=\infty}$.

 Case $x \gt 1$, 
 $\sum (-1)^{n} x^{2n+1}|_{n=\infty}=\infty$ diverges.
 Case $x = (0,1)$,
 $\sum (-1)^{n} x^{2n+1}|_{n=\infty}=0$ converges.
 Hence the radius of convergence $r=1$.

 For the interval of convergence, test the end points.
 Case $x=1$ converges by the ACT.
 $\sum \frac{(-1)^{n} 1^{2n+1}}{2n+1}|_{n=\infty}$
 $=\sum \frac{(-1)^{n}}{2n+1}|_{n=\infty}=0$
 converges by ACT
 as $\frac{1}{2n+1}|_{n=\infty}=0$.
 Case $x=-1$, 
 Case $x=1$ converges by the ACT. $\sum \frac{(-1)^{n+1}}{2n+1}|_{n=\infty}=0$
 Interval of convergence is $[-1,1]$.
\end{mex}
\bigskip
\begin{mex}\label{MEX024}  
 Determine radius of convergence for 
 $\mathrm{sin}\,x = \sum_{k=0}^{n} (-1)^{k} \frac{x^{2k+1}}{(2k+1)!}|_{n=\infty}$.
 Determine 
  $\sum (-1)^{n} \frac{x^{2n+1}}{(2n+1)!}|_{n=\infty}$.
 Assume $x$ is positive as sign can be factored out.
  Solve $\frac{x^{2n+1}}{(2n+1)!}|_{n=\infty}=0$,
  $x^{2n+1} \;\; z \;\; (2n+1)!|_{n=\infty}$,
 $(2n+1) \, \mathrm{ln}\,x \;\; (\mathrm{ln}\,z) \;\; \sum_{k=1}^{2n+1} \mathrm{ln}\,k|_{n=\infty}$,
 $(2n+1) \, \mathrm{ln}\,x \;\; (\mathrm{ln}\,z) \;\; \int^{2n+1} \mathrm{ln}\,n\,dn|_{n=\infty}$,
 since $\int \mathrm{ln}\,n\,dn = n \,\mathrm{ln}\,n|_{n=\infty}$, 
 $(2n+1) \, \mathrm{ln}\,x \;\; (\mathrm{ln}\,z) \;\; (2n+1)\, \mathrm{ln}(2n+1)|_{n=\infty}$.
 $(2n+1) \, \mathrm{ln}\,x \prec (2n+1)\, \mathrm{ln}(2n+1)|_{n=\infty}$,
 $\mathrm{ln}\,z = \; \prec$, $z = e^{\prec} = \; \prec$,
 $\frac{x^{2n+1}}{(2n+1)!}|_{n=\infty}=0$,
 by ACT the series is convergent for all $x$.
 Similarly the same result for $\mathrm{cos}\,x$.
\end{mex}
\subsection{Briefly differentiation and continuity} \label{S1403}
In considering properties of power series,
 we again find parallel theorems with the standard theorems.  
\bigskip
\begin{theo}\label{P040}
\cite[Theorem 11.9, pp.432--433]{apostol} $f(x) = \sum_{n=0}^{\infty} a_{n}(x-a)^{n}$,
 the differentiated
 series $\sum_{n=1}^{\infty} n a_{n}(x-a)^{n-1}$ also has radius of
 convergence $r$. 
\end{theo}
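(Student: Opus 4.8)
The plan is to reduce everything to the computation of the radius of convergence at infinity, as set up in Definition \ref{DEF009} and Theorem \ref{P200}, where the radius is $r = \frac{1}{|a_{n}^{1/n}|}|_{n=\infty}$. First I would translate the centre to the origin via $x \mapsto x-a$, so that it suffices to treat $f(x) = \sum_{n=0}^{\infty} a_{n} x^{n}$ with radius $r$ and show the differentiated series $\sum_{n=1}^{\infty} n a_{n} x^{n-1}$ has the same radius; the stated case then follows because a power series about $a$ shifts to the origin, as noted in Section \ref{S1402}.

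Next, for $x \neq 0$ I would factor $\sum_{n=1}^{\infty} n a_{n} x^{n-1} = x^{-1} \sum_{n=1}^{\infty} n a_{n} x^{n}$. The multiplier $x^{-1}$ is a fixed nonzero real, hence a bounded variable bounded away from zero and infinity, so by Theorem \ref{P059} (or Theorem \ref{P061}) it does not change whether $\sum_{n=1}^{\infty} n a_{n} x^{n}|_{n=\infty}$ is $0$ or $\infty$; and $x=0$ is trivially interior to the convergence set of the differentiated series too. Hence the differentiated series and $\sum n a_{n} x^{n}$ share a radius of convergence.

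Then I would apply Proposition \ref{P031} with $c=1$ and $p=1$, which gives $\sum n a_{n} x^{n} = \sum a_{n} x^{n}|_{n=\infty}$, the simplification being justified there by $n\,\mathrm{ln}\,x \succ \mathrm{ln}\,n|_{n=\infty}$; equivalently, in the language of Definition \ref{DEF009}, $(n a_{n})^{1/n} = n^{1/n} a_{n}^{1/n} = a_{n}^{1/n}|_{n=\infty}$ since $n^{1/n}|_{n=\infty}=1$. Either way the radius of $\sum n a_{n} x^{n}$ equals $\frac{1}{|a_{n}^{1/n}|}|_{n=\infty} = r$, and with the previous step the differentiated series has radius $r$. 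Undoing the translation finishes the proof.

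The main obstacle I expect is the bookkeeping around the factor $x^{-1}$ and the point $x=0$: one must be careful that ``radius of convergence'' refers only to the open interval, and that dividing by the nonzero real $x$ is legitimate precisely because of the $\{0,\infty\}$ dichotomy of Criterion E3 (Proposition \ref{P049}), not for an arbitrary real-valued sum. Once that is phrased correctly through Theorem \ref{P059}, the remainder is a direct application of Proposition \ref{P031} (or the identity $n^{1/n}|_{n=\infty}=1$), requiring no further analytic input.
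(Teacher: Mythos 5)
Your proposal is correct and follows essentially the paper's own route: the paper gives no proof of Theorem \ref{P040} itself (it is cited from Apostol), but the companion Theorem \ref{P035} is proved by exactly your steps --- replacing $x^{n-1}$ by $x^{n}$ (your $x^{-1}$ factor, handled there implicitly rather than via Theorem \ref{P059}) and then discarding the $\mathrm{ln}\,n$ term in $e^{\mathrm{ln}\,a_{n}+\mathrm{ln}\,n+n\,\mathrm{ln}\,x}$, which is precisely Proposition \ref{P031} with $c=1$, $p=1$. Your explicit care over the $x^{-1}$ factor and the $x=0$ point, and the alternative $n^{1/n}|_{n=\infty}=1$ justification, are minor refinements of the same argument.
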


The termwise differentiation and integration
 theorems given in \cite[Theorems 3 and 4, pp.643--644]{kreyszig},
 that the power series differentiated and integrated
 have the same radius of convergence, 
 follows from a finite number power of $n$ being
 simplified at infinity, demonstrated
 by $\sum c n^{p} x^{n} = \sum x^{n}|_{n=\infty}$.

 Our assumption is that if a convergence sum
 is an infinireal, it can be integrated
 and differentiated, by treating each term separately.
\bigskip
\begin{theo}\label{P035}
 Termwise differentiation and integration of the power series 
 have the same radius of convergence.
\[ \sum a_{n} x^{n}|_{n=\infty} = \frac{\partial}{\partial x} \sum a_{n} x^{n}|_{n=\infty} = \int \sum a_{n} x^{n}\, \partial x |_{n=\infty} \] 
\end{theo}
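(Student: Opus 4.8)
The plan is to show that the radius of convergence of a power series $\sum a_n x^n$, the termwise derivative $\sum n a_n x^{n-1}$, and the termwise integral $\sum \frac{a_n}{n+1} x^{n+1}$ all coincide, by reducing each to the same convergence-sum computation at infinity. The central observation is Theorem \ref{P200}: the radius of convergence is governed by $|a_n^{1/n} x||_{n=\infty} \lt 1$, so it suffices to show that the ``coefficient magnitude'' $|a_n^{1/n}|_{n=\infty}$ is unchanged when we pass to the differentiated or integrated coefficients. This is exactly the kind of simplification already carried out in Example \ref{MEX013} and Proposition \ref{P031}, where a finite power of $n$ multiplying the coefficient is absorbed at infinity.

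First I would write the differentiated series in the standard form $\sum n a_n x^{n-1} = \frac{1}{x}\sum n a_n x^n$; since the factor $\frac{1}{x}$ is a nonzero bounded quantity on any fixed $x \neq 0$ in the interior, by Theorem \ref{P061} (or Theorem \ref{P059}) it does not affect convergence or divergence of the sum at infinity, so the radius of convergence of $\sum n a_n x^n$ is what matters. Then by Proposition \ref{P031} with $c=1$, $p=1$, we have $\sum n a_n x^n = \sum a_n x^n|_{n=\infty}$, hence the same convergence behaviour and the same radius $r$. For the integrated series $\sum \frac{a_n}{n+1} x^{n+1} = x \sum \frac{a_n}{n+1} x^n$, the same reasoning applies: the factor $x$ is bounded and nonzero, and $\frac{1}{n+1}|_{n=\infty}$ is a positive bounded-below-by-infinitesimal... here one must be slightly careful, since $\frac{1}{n+1}$ is itself an infinitesimal, so Theorem \ref{P061} does not directly apply. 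Instead I would invoke Proposition \ref{P031} in the form $\sum n^{-1} a_n x^n = \sum a_n x^n|_{n=\infty}$ (taking $p = -1$), which is the genuine content needed and is already established by the logarithmic-magnitude argument $n\ln x \succ p\ln n + \ln c|_{n=\infty}$ in the interior of the disk.

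The main obstacle I anticipate is the interpretation of the displayed equation itself, which is an \emph{equality of convergence sums} $\sum a_n x^n = \frac{\partial}{\partial x}\sum a_n x^n = \int \sum a_n x^n\,\partial x$ at infinity, not merely an equality of radii. To justify this I would appeal to the stated assumption immediately preceding the theorem --- that an infinireal convergence sum may be differentiated and integrated termwise --- so that $\frac{\partial}{\partial x}\sum a_n x^n|_{n=\infty} = \sum \frac{\partial}{\partial x}(a_n x^n)|_{n=\infty} = \sum n a_n x^{n-1}|_{n=\infty}$, and then apply the magnitude simplification above to identify this with $\sum a_n x^n|_{n=\infty}$ as an element of $\{0,\infty\}$ on each fixed $x$ in the common disk; similarly for the integral. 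The edge cases $x = 0$ (trivial) and the endpoints $|x| = r$ (explicitly excluded, since only the radius, not the interval, is claimed) should be remarked on but require no work. I would close by noting that this recovers the classical Theorem \ref{P040} and the Kreyszig termwise theorems cited, as the finite power of $n$ that appears is precisely what $\sum c n^p x^n = \sum x^n|_{n=\infty}$ disposes of.
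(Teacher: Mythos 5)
Your proposal is correct and follows essentially the same route as the paper: the paper's proof performs inline the very computation $\ln(a_n n^{p} x^n) = \ln a_n + p\ln n + n\ln x = \ln a_n + n\ln x|_{n=\infty}$ that you delegate to Proposition \ref{P031}, absorbing the finite power of $n$ by logarithmic magnitude. Your treatment is in fact slightly more careful than the paper's at two points --- you justify the silent replacement $x^{n-1} \mapsto x^{n}$ by factoring out the bounded nonzero $\frac{1}{x}$, and you correctly flag that $\frac{1}{n+1}$ is an infinitesimal so Theorem \ref{P061} cannot be used there --- but the substance of the argument is the same.
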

\begin{proof}
 $\frac{\partial}{\partial x} \sum a_{n} x^{n}|_{n=\infty}$
 $= \sum a_{n} n x^{n-1}|_{n=\infty}$
 $= \sum a_{n} n x^{n}|_{n=\infty}$
 $= \sum e^{ \mathrm{ln}(a_{n} n x^{n})}|_{n=\infty}$
 $= \sum e^{ \mathrm{ln}\,a_{n} + \mathrm{ln}\, n + n \, \mathrm{ln}\, x}|_{n=\infty}$
 $= \sum e^{ \mathrm{ln}\,a_{n} + n \, \mathrm{ln}\, x}|_{n=\infty}$
 $= \sum a_{n} x^{n}|_{n=\infty}$.
 Similarly
 $\int \sum a_{n} x^{n}\,\partial x |_{n=\infty}$
 $= \sum a_{n} \frac{1}{n+1} x^{n+1}|_{n=\infty}$
 $= \sum a_{n} \frac{1}{n+1} x^{n}|_{n=\infty}$
 $= \sum e^{\mathrm{ln}(a_{n} \frac{1}{n+1} x^{n})}|_{n=\infty}$
 $= \sum e^{\mathrm{ln}\,a_{n} - \mathrm{ln}(n+1) + n \, \mathrm{ln}\,x}|_{n=\infty}$
 $= \sum e^{\mathrm{ln}\,a_{n} + n \, \mathrm{ln}\,x}|_{n=\infty}$
 $= \sum a_{n} x^{n}|_{n=\infty}$
\end{proof}
\bigskip
\begin{mex}\label{MEX025}  
 Find the radius of convergence of the sum, 
 $\sum { n \choose 2 } x^{n}|_{n=\infty}$
 $= \sum  \frac{n!}{(n-2)!2!} x^{n}|_{n=\infty}$
 $= \sum  n(n-1) x^{n}|_{n=\infty}$
 \cite[Example 1, p.799]{kreyszig}.

 $\sum  n(n-1) x^{n}|_{n=\infty}$.
 $= \sum  x^{n}|_{n=\infty}$, as $x^{n} \succ\!\succ n(n-1)$,
 or by bringing the $n$ terms into the power,
 $\sum  n(n-1) x^{n}|_{n=\infty}$
 $=\sum (n^{\frac{1}{n}}(n-1)^{\frac{1}{n}}x)^{n}|_{n=\infty}$
 $=\sum x^{n}|_{n=\infty}$, radius of convergence
 $r=1$.

 By application of Theorem \ref{P035}, partially
 integrating,
 $\sum  n(n-1) x^{n}|_{n=\infty}$
 $= \int \sum n(n-1) x^{n} \partial x|_{n=\infty}$
 $= \sum n(n-1)\frac{1}{n+1} x^{n+1}|_{n=\infty}$
 $= \int \sum (n-1) x^{n+1} \partial x|_{n=\infty}$
 $= \sum (n-1) x^{n+2} \frac{1}{n+2} |_{n=\infty}$
 $= \sum x^{n+2}|_{n=\infty}$,
 radius of convergence $r=1$.
\end{mex}

 For general testing,
 the ratio test is simpler to implement.
\bigskip
\begin{mex}
 Determine the radius of convergence of
 $\sum \frac{ 1 \cdot 3 \cdot 5 \ldots \cdot (2n-1) }{2 \cdot 4 \cdot 6 \ldots \cdot (2n)} \frac{ x^n}{n}|_{n=\infty}$.

$\sum \prod_{k=1}^{n} \frac{ 2k-1}{2k} \cdot \frac{x^{n}}{n}|_{n=\infty}$
$=\sum (\prod \frac{ 2n-1}{2n}|_{n=\infty}) \cdot \frac{x^{n}}{n}|_{n=\infty}$
$=\sum (\prod \frac{ 2n}{2n}|_{n=\infty}) \cdot \frac{x^{n}}{n}|_{n=\infty}$
$=\sum \frac{x^{n}}{n}|_{n=\infty}$
$=\sum x^{n}|_{n=\infty}=0$
 when 
$|x| \lt 1$ then $r=1$

 With the ratio test:
 Let $a_{n} = \prod_{k=1}^{n} \frac{ 2k-1}{2k} \cdot \frac{x^{n}}{n}$,  
 $|\frac{a_{n+1}}{a_{n}}| \lt 1|_{n=\infty}$,  
 $|\prod_{k=1}^{n+1}\frac{2k-1}{2k} \cdot \frac{x^{n+1}}{n+1} \prod_{k=1}^{n}\frac{2k}{2k-1} \cdot \frac{n}{x^{n}} | \lt 1|_{n=\infty}$,  
 $| \frac{2n+1}{2n+2} x| \lt 1|_{n=\infty}$, 
 $|x| \lt 1$, $r=1$
\end{mex}

 We consider continuity
 at infinity.
 By considering the convergence sums,
 if they differ near a point and at a point,
 then the sum is discontinuous at a point.
\bigskip
\begin{mex}\label{MEX018}  
\cite[Example 2, p.815]{kreyszig}.
 Show $\sum \frac{x^{2}}{(1+x^{2})^{n}}|_{n=\infty}$ is a discontinuous sum.

 Case $x=0$,
 $\sum \frac{0^{2}}{(1+0^{2})^{n}}|_{n=\infty}$ 
 $= \sum \frac{0}{1^{n}}|_{n=\infty}$ 
 $= \sum 0|_{n=\infty}$.

 Case $x \neq 0$,
 $\sum \frac{x^{2}}{(1+x^{2})^{n}}|_{n=\infty}$ 
 $=\sum (\frac{x^{\frac{2}{n}}}{(1+x^{2})} )^{n} |_{n=\infty}$ 
 $=\sum (\frac{1}{(1+x^{2})} )^{n} |_{n=\infty}$ 
 $= \sum \alpha^{n}|_{n=\infty}$, $\alpha = \frac{1}{1+x^{2}} \neq 0$.

 Comparing the convergence sums,
 $\sum 0 \; z \; \sum \alpha^{n}|_{n=\infty}$, 
 $0 \; z \; \alpha^{n}|_{n=\infty}$, 
 $0 \neq \alpha^{n}|_{n=\infty}$ as $0$ is not an
 infinitesimal and $\alpha^{n} \in \Phi$ is.
 Alternatively,
 $\sum 0 \; z \; \sum \alpha^{n}|_{n=\infty}$, 
 $0 \; z \; \int \alpha^{n}\,dn|_{n=\infty}$, 
 $0 \neq \alpha^{n} \mathrm{ln}\,\alpha|_{n=\infty}$. 
 
 Both sums converge, as when realized
 their convergence sum is zero.
 Since the convergence sum is not continuous
 about $x=0$, the convergence sum is not uniform continuous
 about $x=0$.
 Hence, while the sum is convergent, the sum is not uniformly convergent.
\end{mex}
\section{Convergence sums and the derivative of a sequence at infinity} \label{S15}
For convergence sums, by threading a continuous curve through
 a monotonic sequence, a series difference can be made a derivative.
 Series problems with differences can be transformed and solved in the continuous domain.
 At infinity, a bridge between the discrete and continuous domains is
 made. Stolz theorem at infinity is proved.
 Alternating convergence theorem for convergence sums is proved.

\subsection{Introduction}\label{S1501}
 There have always been relationships between series with discrete
 change and integrals with continuous change.
 In solving both problems and proofs we observe similarities and differences. 

 Series have no chain rule. However, for monotonic sequences satisfying
 the convergence sums criteria we can construct a continuous function
 at infinity where the chain rule can be applied.
 This can be combined with convergence sums integral test.

 In topology, a coffee cup can be transformed by stretching into a donut. 
 Similarly, we can consider a monotonic sequence which by stretching
 deforms into a strictly monotonic sequence.

 Consider a positive monotonic continuous function and its integral at
 infinity.
 Provided that the function's plateaus do not sum to infinity,
 the integral has the same convergence or divergence as
 the strictly deformed function's integral.

 Since convergence sums are monotonic,
 and can be deformed to be strictly monotonic,
 the correlation between the series and integrals 
 can be coupled in a way that results
 in a non-zero derivative.
 The derivative of a sequence follows.

 We believe the derivative of a sequence
 significantly
 changes convergence testing
 by allowing an interchange
 between sums and integrals
 with the integral theorem
 via sequences and functions
 in a fluid way.

 At infinity with infinireals
 we provide a classical explanation of a geometric
 construction of a curve threaded through a 
 sequence of points (see Figure \ref{FIG07}).
 
This simplicity explains what 
 can be 
 highly technical arguments
 on integer sums and theorems, which are not transferable between
 sums and integrals. 
 The mirrored discrete formula may use integer arguments in the
 proof specific to number theory whereas the continuous
 formula may be proved again by altogether different means.
 Never shall they meet.
  
 We again find that the
 acceptance of infinity, be it initially
 disturbing compared with classical arguments,
 ends up augmenting, upgrading 
 or replacing them. 

 The derivative of a sequence
 is a bridge between
 the continuous and discrete convergence
 sums at infinity.
\subsection{Derivative at infinity}\label{S1503}

When solving problems with sequences,
 there
 is no chain rule for sequences,
 as there is for the continuous variable.
 However, forward and backward
 differences are used in numerical analysis
 to calculate derivatives
 in the continuous domain.

In the discrete domain of integers,
 sequences, by contrast may use an
 equivalent theorem such as
 Stolz theorem or Cauchy's condensation
 test, as an effective chain rule.

If we consider a calculus of sequences,
 the change is an integer change,
 hence the goal is to construct a
 derivative that has meaning there.

Consider the following example 
 which motivates  the possibility
 of having a derivative at infinity,
 by constructing a derivative with powers
 at infinity. 

Since a function
 can be represented by a power series,
 we now can convert between
 a difference and a derivative at infinity.
 This uses non-reversible arithmetic. 
\bigskip
\begin{mex}\label{MEX201}
Let $f(x) = x^{2}$.
 $f(x+1)-f(x)|_{x=\infty}$
 $=(x+1)^{2}-x^{2}|_{x=\infty}$
 $= x^{2} + 2x + 1 - x^{2}|_{x=\infty}$
 $=2x+1|_{x=\infty}$
 $=2x|_{x=\infty} = f'(x)$, as $2x \succ 1|_{x=\infty}$.
\end{mex}
\bigskip 
\begin{lem}\label{P005}
Generalizing the derivative 
 of a power at infinity. If $f(x) = x^{p}|_{x=\infty}$
 then   
 $\frac{df}{dx} = f(x+1)-f(x)|_{x=\infty}$
\end{lem}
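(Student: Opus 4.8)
The plan is to expand the forward difference $f(x+1)-f(x)=(x+1)^{p}-x^{p}$ at infinity and show that its leading term is precisely the power-rule derivative $p\,x^{p-1}$, with all lower-order contributions absorbed by non-reversible arithmetic. The case $p=2$ worked out in Example \ref{MEX201} is the prototype.

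First I would factor out $x^{p}$, writing $(x+1)^{p}-x^{p}=x^{p}\big((1+\tfrac1x)^{p}-1\big)$. Since $\tfrac1x|_{x=\infty}\in\Phi$ is a positive infinitesimal, the generalized binomial series applies and converges at infinity: $(1+\tfrac1x)^{p}=\sum_{k=0}^{\infty}\binom{p}{k}x^{-k}=1+\tfrac{p}{x}+\binom{p}{2}x^{-2}+\cdots$, so that $(1+\tfrac1x)^{p}-1=\tfrac{p}{x}+\binom{p}{2}x^{-2}+\cdots|_{x=\infty}$.

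Next I would invoke the magnitude scale $x^{-1}\succ x^{-2}\succ x^{-3}\succ\cdots|_{x=\infty}$ together with non-reversible arithmetic ($a+b=a$ when $a\succ b$): the tail $\sum_{k\ge 2}\binom{p}{k}x^{-k}$ is dominated by a constant multiple of $x^{-2}$ and hence $\prec x^{-1}$, so $(1+\tfrac1x)^{p}-1=\tfrac{p}{x}|_{x=\infty}$. Multiplying back by $x^{p}$ gives $f(x+1)-f(x)=x^{p}\cdot\tfrac{p}{x}|_{x=\infty}=p\,x^{p-1}|_{x=\infty}$. Finally, the power rule (as used for the p-series in Theorem \ref{P054}) gives $\tfrac{df}{dx}=\tfrac{d}{dx}x^{p}=p\,x^{p-1}$, so both sides of the claimed identity equal $p\,x^{p-1}|_{x=\infty}$.

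The main obstacle is the rigorous justification that the binomial tail is genuinely of lower order than $x^{-1}$ at infinity, i.e.\ that summing infinitely many terms each individually $\prec x^{-1}$ still produces something $\prec x^{-1}$. This needs the convergence of $\sum_{k\ge 0}\binom{p}{k}u^{k}$ for $|u|<1$ so the remainder is bounded by $C\,x^{-2}$ for some constant $C$; equivalently one can use Taylor's theorem with remainder for $(1+u)^{p}$ about $u=0$. The remaining cases are then routine: for integer $p\ge 1$ the expansion terminates, for $p=1$ the identity is immediate ($f(x+1)-f(x)=1=f'(x)$), and for $p\le 0$ the same factorization and magnitude argument go through, with $p\,x^{p-1}$ now an infinitesimal that still equals the forward difference at infinity.
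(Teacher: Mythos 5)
Your proposal is correct and follows essentially the same route as the paper: expand $(x+1)^{p}$ by the (generalized) binomial theorem, cancel $x^{p}$, and discard the lower-order terms by non-reversible arithmetic since $x^{p-1}\succ x^{p-2}\succ\cdots|_{x=\infty}$, leaving $p\,x^{p-1}$. Your factoring out of $x^{p}$ and the explicit bounding of the binomial tail by $C\,x^{-2}$ is only a cosmetic rearrangement of the paper's argument, though it does supply a more careful justification of the step the paper dispatches with the single remark ``as $x^{k+1}\succ x^{k}|_{x=\infty}$.''
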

\begin{proof} 
$f(x+1)-f(x)|_{x=\infty}$
 $=(x+1)^{p} - x^{p}|_{x=\infty}$ 
 $=(x^{p} + { p \choose 1 } x^{p-1} + { p \choose 2 } x^{p-2} + \ldots ) - x^{p}|_{x=\infty}$
 $= p x^{p-1}|_{x=\infty}$, as $x^{k+1} \succ x^{k}|_{x=\infty}$.
\end{proof}
\bigskip
\begin{mex}\label{MEX202}
 Find the derivative of $\mathrm{sin} \, x$.
 Since $\mathrm{sin}\,x$ behaves the same
 as it does for finite values as it does at
 infinity, take the difference at infinity.
 Let $f(x) = \mathrm{sin}\,x$.
 $f(x+1)-f(x)|_{x=\infty}$
 $= \mathrm{sin}(x+1) - \mathrm{sin} \,x|_{x=\infty}$
 $= ((x+1) - \frac{1}{3!}(x+1)^{3} + \frac{1}{5!}(x+1)^{5} - \ldots)$
 $- (x - \frac{1}{3!}x^{3} + \frac{1}{5!}x^{5} - \ldots)|_{x=\infty}$
 $= 1 + (- \frac{1}{3!}(x+1)^{3} + \frac{1}{5!}(x+1)^{5} - \ldots)$
 $+( \frac{1}{3!}x^{3} - \frac{1}{5!}x^{5} + \ldots)|_{x=\infty}$
 $=1 + \sum_{k=1}^{\infty} (-1)^{k}( \frac{1}{(2k+1)!}(x+1)^{2k+1} - \frac{1}{(2k+1)!}x^{2k+1}) |_{x=\infty}$

Consider
 $\frac{1}{(2k+1)!}(x+1)^{2k+1}|_{x=\infty}$, taking the two
 most significant terms,
 $\frac{1}{(2k+1)!}(x+1)^{2k+1}|_{x=\infty}$
 $=\frac{1}{(2k+1)!}x^{2k+1} + \frac{1}{(2k+1)!}{2k+1 \choose 1} x^{2k}|_{x=\infty}$
 $=\frac{1}{(2k+1)!}x^{2k+1} + \frac{1}{(2k)!} x^{2k}|_{x=\infty}$

Substituting the expression into the previous sum, 
$f(x+1)-f(x)|_{x=\infty}$
$= 1 + \sum_{k=1}^{\infty} (-1)^{k}( \frac{1}{(2k+1)!}x^{2k+1} + \frac{1}{(2k)!} x^{2k} - \frac{1}{(2k+1)!}x^{2k+1} )|_{x=\infty}$
$= 1 + \sum_{k=1}^{\infty} (-1)^{k}\frac{1}{(2k)!} x^{2k}|_{x=\infty}$
$= \mathrm{cos}\,x|_{x=\infty}$,
 since a power series $f'(x)=\mathrm{cos}\,x$.
\end{mex}

Given a function $f(x)$, we can determine
 its derivative at infinity
 by converting $f(x)$ to a power series,
 taking the difference, and converting
 from the power series back into
 a function.
\bigskip
\begin{theo}\label{P201}
When $f(x) = \sum_{k=0}^{\infty} c_{i}x^{i}$,
\[ \frac{d f(x)}{dx} = f(x+1)-f(x)|_{x=\infty} \]
\end{theo}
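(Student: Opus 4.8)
The plan is to reduce Theorem \ref{P201} to the power-rule case already established in Lemma \ref{P005}, using linearity of both the difference operator and of differentiation, together with the standing assumption (Theorem \ref{P035} and the surrounding discussion) that a convergence sum which is an infinireal may be differentiated term by term. First I would write $f(x) = \sum_{k=0}^{\infty} c_k x^k$ and form the difference
\[ f(x+1) - f(x)|_{x=\infty} = \sum_{k=0}^{\infty} c_k \big( (x+1)^k - x^k \big)|_{x=\infty}. \]
Interchanging the (infinite) sum with the difference is the first thing to justify; I would appeal to the termwise-operation assumption stated just before Theorem \ref{P035}, namely that each term of a convergent power series at infinity can be handled separately.

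Next I would apply Lemma \ref{P005} to each summand: for each fixed $k$, $(x+1)^k - x^k|_{x=\infty} = k x^{k-1}|_{x=\infty}$, since the lower-order binomial terms are swallowed by non-reversible arithmetic ($x^{j+1} \succ x^{j}|_{x=\infty}$). Substituting back gives
\[ f(x+1) - f(x)|_{x=\infty} = \sum_{k=0}^{\infty} c_k k x^{k-1}|_{x=\infty} = \sum_{k=1}^{\infty} k c_k x^{k-1}|_{x=\infty}, \]
which is exactly the termwise derivative $\frac{d}{dx}\sum_{k=0}^{\infty} c_k x^k = f'(x)$ by the standard power-series differentiation theorem (Theorem \ref{P040}), whose radius of convergence is unchanged. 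So $f(x+1)-f(x)|_{x=\infty} = f'(x)|_{x=\infty}$, as claimed. I would remark that Example \ref{MEX201} ($f(x)=x^2$) and Example \ref{MEX202} ($f(x)=\sin x$) are the special cases $k=2$ and the trigonometric series respectively, so the theorem is just their common generalization.

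The main obstacle, and the place I would spend the most care, is the legitimacy of moving the difference operator inside the infinite sum and then re-identifying the result as the termwise derivative. For a single fixed term Lemma \ref{P005} is clean, but for the infinite sum one must know that discarding the lower-order binomial contributions $\binom{k}{2}x^{k-2} + \cdots$ within each term does not, after summation over all $k$, reconstitute something of competitive magnitude at infinity. In a fully rigorous treatment this needs the non-reversible-arithmetic bookkeeping to commute with the infinite summation; here I would lean on the paper's stated working assumption (the sentence ``Our assumption is that if a convergence sum is an infinireal, it can be integrated and differentiated, by treating each term separately'' preceding Theorem \ref{P035}) rather than prove it from scratch, and note that convergence of the differentiated series on the same disc is guaranteed by Theorem \ref{P040}. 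A secondary, more minor point is that $f$ must actually be representable by a power series with positive radius of convergence containing the relevant region — this is exactly the hypothesis $f(x)=\sum_{k=0}^{\infty} c_i x^i$, so it is assumed rather than established.
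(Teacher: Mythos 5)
Your proposal is correct and follows essentially the same route as the paper's own proof: expand $f$ as a power series, push the difference operator through the sum term by term, apply Lemma \ref{P005} to each term $(x+1)^{k}-x^{k}|_{x=\infty}=kx^{k-1}$, and recognize the result as the termwise derivative. The paper performs the interchange of the difference with the infinite sum silently, whereas you flag it and lean on the termwise-operation assumption preceding Theorem \ref{P035}; that extra care is the only substantive difference.
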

\begin{proof}
Given $f(x) = \sum_{k=0}^{\infty} a_{k}x^{k}$,
 $\frac{df(x)}{dx}$
 $= \sum_{k=0}^{\infty} \frac{d}{dx} a_{k}x^{k}$
 $= \sum_{k=0}^{\infty} k a_{k} x^{k-1}$.
 Consider the difference, 
 $f(x+1)-f(x) = \sum_{k=0}^{\infty} (a_{k}(x+1)^{k} - a_{k}x^{k})$
 $= \sum_{k=0}^{\infty} k a_{k}x^{k-1}$ 
 $= \frac{d f(x)}{dx}$,
 by 
 Lemma \ref{P005}.
\end{proof}

An application of the derivative
 at infinity is, with the comparison
 logic, where 
 rather than
 either
 assume 
 that infinitesimally close
 expressions are equal
 or 
 using orders of higher magnitude
 to simplify under addition
 by forming a difference
 we can 
 obtain the derivative.
 Since the derivative is a function, we have
 an asymptotic result.
\bigskip
\begin{mex}\label{MEX203}
 While solving for relation $z$: 
 $f + \mathrm{ln}(n+1) \;\; z \;\; g + \mathrm{ln}\,n|_{n=\infty}$,
 $f + \mathrm{ln}(n+1) -  \mathrm{ln}\,n \;\; z \;\; g|_{n=\infty}$,
 $f + \frac{d}{dn}\mathrm{ln}\,n \;\; z \;\; g|_{n=\infty}$,
 $f + \frac{1}{n} \;\; z \;\; g|_{n=\infty}$
\end{mex}

 Without the derivative at infinity,
 with an assumed $f \simeq g|_{n=\infty}$
 logical errors in
 the calculation are more easily made.
 This can be addressed by 
 solving using magnitude arguments
 and non-reversible arithmetic;
 however, this does not yield
 an asymptotic error estimate.

 With the use of the sequence derivative,
 an asymptotic expression of the
 difference is formed. 
 
 The sequence derivative can be an alternative
 to the use of the binomial theorem. 
 (However, if there is any doubt other well known
 methods such as the binomial theorem are available.)
\bigskip
\begin{mex}\label{MEX012}
Using the binomial theorem,
 $(2n+1)^{\frac{1}{2}}-(2n)^{\frac{1}{2}}|_{n=\infty}$
 $=(2n)^{\frac{1}{2}}(1+\frac{1}{2n})^{\frac{1}{2}} - (2n)^{\frac{1}{2}}|_{n=\infty}$
 $= (2n)^{\frac{1}{2}}(1+\frac{1}{2}\frac{1}{2n}+\ldots-1)|_{n=\infty}$
 $=(2n)^{\frac{1}{2}}\frac{1}{4n}$
 $= \frac{1}{2n^{\frac{1}{2}}}|_{n=\infty}=0$

The same calculation with the derivative at infinity and
 non-reversible arithmetic.

 $(2n+1)^{\frac{1}{2}}-(2n)^{\frac{1}{2}}|_{n=\infty}$
 $=(2n+2)^{\frac{1}{2}}-(2n)^{\frac{1}{2}}|_{n=\infty}$
 $=(2(n+1))^{\frac{1}{2}}-(2n)^{\frac{1}{2}}|_{n=\infty}$
 $= \frac{d}{dn} (2n)^{\frac{1}{2}}|_{n=\infty}$
 $= \frac{1}{2}(2n)^{-\frac{1}{2}}2|_{n=\infty}$
 $= \frac{1}{2n^{\frac{1}{2}}}|_{n=\infty}=0$. $2n+1 = 2n+2|_{n=\infty}$
\end{mex}

 The following definitions and results are given, as logarithms
 are extensively used with sequences and convergence tests.
\bigskip
\begin{defy}\label{DEF201}
Let $\mathrm{ln}_{k}$ be $k$ nested log functions, by default
 having variable $n$. $\mathrm{ln}_{k} = \mathrm{ln}(ln_{k-1})$, $\mathrm{ln}_{0} = n$.
\end{defy}
\bigskip
\begin{defy}\label{DEF200}
Let $L_{w} = \prod_{k=0}^{w} \mathrm{ln}_{k}$. 
\end{defy}
\bigskip
\begin{lem}\label{P202} 
 $\frac{d}{dn} \mathrm{ln}_{w} = \frac{1}{L_{w-1}}|_{n=\infty}$
\end{lem}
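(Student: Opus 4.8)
The plan is to prove this by a short induction on $w$, using nothing more than the chain rule. Everything is evaluated at the point $n=\infty$ in $*G$, but since each $\mathrm{ln}_k$ is obtained by composing the ordinary logarithm with itself finitely many times, and each such composite is a monotonic function that is smooth on its domain, the usual differentiation rules transfer verbatim to the infinite domain; I would simply invoke the treatment of derivatives in $*G$ from the earlier parts (cf.\ \cite[Part 5]{cebp21}) rather than reprove it.

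For the base case $w=1$: by Definition \ref{DEF201} we have $\mathrm{ln}_1 = \mathrm{ln}\,n$ and $\mathrm{ln}_0 = n$, so $\frac{d}{dn}\mathrm{ln}_1 = \frac{1}{n}$. By Definition \ref{DEF200}, $L_0 = \prod_{k=0}^{0}\mathrm{ln}_k = \mathrm{ln}_0 = n$, hence $\frac{d}{dn}\mathrm{ln}_1 = \frac{1}{L_0}\big|_{n=\infty}$, which is the claim for $w=1$.

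For the inductive step, suppose $\frac{d}{dn}\mathrm{ln}_{w-1} = \frac{1}{L_{w-2}}\big|_{n=\infty}$. Using $\mathrm{ln}_w = \mathrm{ln}(\mathrm{ln}_{w-1})$ and the chain rule, $\frac{d}{dn}\mathrm{ln}_w = \frac{1}{\mathrm{ln}_{w-1}}\cdot \frac{d}{dn}\mathrm{ln}_{w-1} = \frac{1}{\mathrm{ln}_{w-1}}\cdot\frac{1}{L_{w-2}}\big|_{n=\infty}$. Since $L_{w-1} = \prod_{k=0}^{w-1}\mathrm{ln}_k = \mathrm{ln}_{w-1}\cdot\prod_{k=0}^{w-2}\mathrm{ln}_k = \mathrm{ln}_{w-1}\cdot L_{w-2}$, this equals $\frac{1}{L_{w-1}}\big|_{n=\infty}$, completing the induction.

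There is essentially no serious obstacle here. The one point that deserves a word is the legitimacy of the chain rule at $n=\infty$, i.e.\ that differentiating a composition of functions that are each monotone and smooth near infinity behaves exactly as on $\mathbb{R}$; this is part of the infinitary calculus already established in the earlier papers and only needs to be cited. The remaining care is the minor bookkeeping identity $L_{w-1} = \mathrm{ln}_{w-1}\,L_{w-2}$ (the index shift in $L$), which is what makes the product collapse correctly in the inductive step.
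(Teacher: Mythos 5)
Your induction is correct: the base case $w=1$ gives $\frac{d}{dn}\mathrm{ln}_1 = \frac{1}{n} = \frac{1}{L_0}$, and the inductive step via the chain rule together with the identity $L_{w-1} = \mathrm{ln}_{w-1}\, L_{w-2}$ collapses the product exactly as you say. The paper in fact states Lemma \ref{P202} with no proof at all, so there is no authorial argument to compare against; your write-up supplies the natural (and essentially only) verification, and your flagged caveat about transferring the chain rule to the domain at infinity is the right thing to cite to \cite[Part 5]{cebp21} rather than reprove.
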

\bigskip
\begin{mex}\label{MEX204}
 In the following
 comparison, $(\mathrm{ln}_{3}(x+1) - \mathrm{ln}_{3}\,x)|_{x=\infty}$
 $= \frac{d}{dx}\mathrm{ln}_{3}\,x|_{x=\infty}$
 $= \frac{1}{L_{2}(x)}|_{x=\infty}$
 $=0$. In a sense this is the error term. \cite[Example 3.19]{cebp21}. 
\begin{align*}
x^{\frac{p}{p+1}} \; z \; x^{\mathrm{ln}_{2}(x) / \mathrm{ln}_{2}(x+1)}|_{x=\infty} \tag{Solve for relation $z$} \\
\mathrm{ln}(x^{\frac{p}{p+1}}) \; (\mathrm{ln}\,z) \; \mathrm{ln}( x^{\mathrm{ln}_{2}(x) / \mathrm{ln}_{2}(x+1)})|_{x=\infty} \\
\frac{p}{p+1} \, \mathrm{ln}\,x \; (\mathrm{ln}\,z) \; \frac{\mathrm{ln}_{2}(x)}{ \mathrm{ln}_{2}(x+1)}\,\mathrm{ln}\,x|_{x=\infty} \\
p \, \mathrm{ln}_{2}(x+1) \; (\mathrm{ln}\,z) \; (p+1) \mathrm{ln}_{2}(x)|_{x=\infty} \\
\mathrm{ln}\,p + \mathrm{ln}_{3}(x+1) \; (\mathrm{ln}_{2}\,z) \; \mathrm{ln}(p+1) + \mathrm{ln}_{3}(x)|_{x=\infty} \\
\mathrm{ln}\,p + (\mathrm{ln}_{3}(x+1) - \mathrm{ln}_{3}\,x) \; (\mathrm{ln}_{2}\,z) \; \mathrm{ln}(p+1)|_{x=\infty} \tag{Apply derivative} \\
\mathrm{ln}\,p \; (\mathrm{ln}_{2}\,z) \; \mathrm{ln}(p+1)|_{x=\infty} \\
\mathrm{ln}\,p \lt \mathrm{ln}(p+1)|_{x=\infty} \\
\mathrm{ln}_{2}\,z = \;\lt, \; z = e^{e^{\lt}} = \;\lt
\end{align*}
\end{mex}

 In working with integers, it is sometimes convenient to solve the
 problem for real numbers, then translate back into the
 integer domain.

 The development of a way to convert between the integer domain or
 the domain of sequences, and
 the continuous domain, is similarly beneficial.
 For example, converting between sums and integrals.

By threading a continuous function through a monotonic sequence,
 we can construct a continuous function with the monotonic properties.
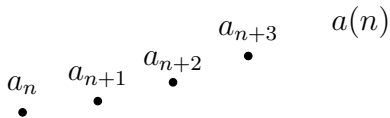
\begin{figure}[H]
\centering
\begin{tikzpicture}[domain=0.0:5.0]
  \draw[color=orange] plot[id=exp] function{0.05*x*x}; 
  \draw[fill=black] (1.0,0.05) circle [radius=0.05cm] ;
  \draw[fill=black] (2.0,0.2) circle [radius=0.05cm] ;
  \draw[fill=black] (3.0,0.45) circle [radius=0.05cm] ;
  \draw[fill=black] (4.0,0.8) circle [radius=0.05cm] ;
  \node [label={[shift={(1.0cm,0.0cm)}]$a_{n}$}] {};
  \node [label={[shift={(2.0cm,0.1cm)}]$a_{n+1}$}] {};
  \node [label={[shift={(3.0cm,0.30cm)}]$a_{n+2}$}] {};
  \node [label={[shift={(4.0cm,0.70cm)}]$a_{n+3}$}] {};
  \node [label={[shift={(5.5cm,0.75cm)}]$a(n)$}] {};
\end{tikzpicture}
\caption{Monotonic function and sequence through points} \label{FIG07}
\end{figure}
Further, since a monotonic series or integral can
 be deformed to a strictly monotonic series or integral
 (Theorem \ref{P212}),
 we need only consider the strictly monotonic case.
 By definition of the convergence
 Criterion E3 (Section \ref{S01}),
 cases where this cannot be done are said
 to be undefined. 

 Consider a positive sequence $(a_{n})$ in $*G$. 
 Without loss of generality,
 let $(a_{n})_{n=\infty}$ be strictly monotonic,
 either increasing or decreasing.

 Fit a curve, with conditions: $a(n) = a_{n}$.
 Let $a(x) = \sum_{k=0}^{n} c_{k}x^{k}$, pass through
 $n+1$ points. Solve for $(c_{k})$.

 Fitting a power series curve through
 a strictly monotonic sequence; 
 the curve fitted is also strictly monotonic
 (within the index interval).

 Since we determine convergence at infinity,
 we fit the curve for the sequence at infinity.
 Then $a(x)$ is strictly monotonic,
 and an analytic function.
 By converting a sequence difference,
 for example $a_{n+1}-a_{n}$
 to the continuous power series representation,
 Theorem \ref{P201} can be applied
 and a derivative formed.
 
In solving for one domain and transferring to the other,
 we can bridge between sequences and continuous functions.
\bigskip
\begin{defy}\label{DEF202}
 Let $(a_{n})|_{n=\infty}$ be a sequence at infinity
 and $a(n)$ a continuous function through the sequence.
\[ a_{n} = a(n)|_{n=\infty}; \; n \in \mathbb{J}_{\infty} \]
\end{defy}
\begin{defy}\label{DEF203}
Let the derivative of a sequence at infinity 
 be the difference of consecutive terms.
\[ a_{n+1} - a_{n} = \frac{d a_{n}}{dn}|_{n=\infty} \text{ or } a_{2n+1}-a_{2n} = \frac{ d a_{n}}{dn}  \text{ where } a_{n}|_{n=\infty} \neq \alpha \text{ a constant.} \]
\end{defy} 
 How the derivative of a sequence is defined is problem dependent.
 It is up to the user. 
 In a similar way we may start counting from $0$ or $1$.
 By the contiguous rearrangement theorem Theorem \ref{P210}, 
 we need only determine one contiguous rearrangement to determine
 convergence or divergence.

 Consider the technique of adequality \cite[p.5]{victors} more generally 
 to that of a principle of variation.
\[ d(f(A)) = f(A+E)-f(A) \]
 As a change in consecutive integers is $1$,
 \[ dn = (n+1)-n \]
 we can see a correspondence between a sequence derivative,
 and the continuous derivative.
\[ d(a_{n}) = a_{n+1}-a_{n} = \frac{ a_{n+1}-a_{n}}{1} =\frac{ d a_{n}}{dn} \]

To aid calculation, a convention of left to right equals symbol ordering is
  used to indicate which direction a conversion is taking place.
 Further, by redefining a variable from an integer to the continuous variable,
 will enable the transformation to be more natural and effortless. 
\bigskip
\begin{theo}\label{P203} 
 By threading a continuous function $a(n)$ through
 sequence $a_{n}$ and preserving
  monoticity.
 \[ \frac{ d a_{n} }{dn} = \frac{ d a(n)}{dn}|_{n=\infty} \]
\end{theo}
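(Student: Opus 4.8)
The plan is to identify the sequence derivative $\frac{da_n}{dn}$, defined in Definition \ref{DEF203} as the finite difference $a_{n+1}-a_n$, with the continuous derivative $\frac{da(n)}{dn}$ of the threaded function at infinity, by routing everything through the power series representation established earlier. First I would invoke the curve-fitting construction discussed just before this theorem: since $(a_n)|_{n=\infty}$ may be taken strictly monotonic (monotonic sequences deform to strictly monotonic ones, and the undefined cases are excluded by Criterion E3), we fit an analytic curve $a(x) = \sum_{k=0}^{\infty} c_k x^k$ through the sequence points with $a(n) = a_n$ for $n \in \mathbb{J}_{\infty}$, as in Definition \ref{DEF202}. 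This gives us a genuine power series to which the machinery of Section \ref{S14} and Theorem \ref{P201} applies.

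Next I would apply Theorem \ref{P201}: for a function given by a power series $f(x) = \sum_{k=0}^{\infty} c_k x^k$, one has $\frac{df(x)}{dx} = f(x+1) - f(x)|_{x=\infty}$. Applying this with $f = a$, we get $\frac{da(n)}{dn} = a(n+1) - a(n)|_{n=\infty}$. Now using $a(n) = a_n$ and $a(n+1) = a_{n+1}$ (the latter because the threaded curve passes through \emph{all} the sequence points, including the one indexed $n+1$), the right-hand side equals $a_{n+1} - a_n|_{n=\infty}$, which by Definition \ref{DEF203} is precisely $\frac{da_n}{dn}$. Chaining these equalities yields $\frac{da_n}{dn} = a_{n+1} - a_n = a(n+1) - a(n) = \frac{da(n)}{dn}|_{n=\infty}$, which is the claim.

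The main obstacle I anticipate is justifying that Theorem \ref{P201} legitimately applies to the \emph{fitted} curve. Theorem \ref{P201} (via Lemma \ref{P005}) relies on the binomial expansion of each $(x+1)^k$ together with non-reversible arithmetic $x^{k+1} \succ x^k|_{x=\infty}$ to collapse the difference termwise to $\sum_k k c_k x^{k-1}$; this requires that the termwise differentiation and the interchange of summation with the difference operation are valid at infinity for this particular series. The excerpt's standing assumption — stated just before Theorem \ref{P035} — is that a convergence sum which is an infinireal can be differentiated termwise, so I would appeal to that, noting that $a(x)$ is an infinireal-valued analytic function by construction. A secondary subtlety is the alternative form in Definition \ref{DEF203} using $a_{2n+1} - a_{2n}$; I would remark that by the contiguous rearrangement theorem (Theorem \ref{P210}) only one contiguous representative need be checked, so it suffices to treat the $a_{n+1}-a_n$ form, and the identification transfers.

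Finally, I would close by observing that the equality is to be read in $*G$ (and projects to $\overline{\mathbb{R}}_\infty$ in the usual way), and that the non-uniqueness of arithmetic at infinity — flagged repeatedly in the surrounding text — is not an obstruction here, since both sides are computed along the same path, namely the power series expansion of the single fitted curve $a(x)$.
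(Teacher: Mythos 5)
Your proposal follows exactly the paper's own route: represent the threaded curve as a power series, write $\frac{da_n}{dn} = a_{n+1}-a_n = a(n+1)-a(n)$ using $a(n)=a_n$, and conclude via Theorem \ref{P201} that this equals $\frac{da(n)}{dn}|_{n=\infty}$. The additional remarks on the applicability of Theorem \ref{P201} to the fitted curve and on the alternative difference form are reasonable elaborations but do not change the argument, which matches the paper's proof.
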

\begin{proof}
 Let $a(n)$ be represented by a power series.
 $\frac{ d a_{n}}{dn} = a_{n+1}-a_{n}$
 $= a(n+1) - a(n)$
 $= \frac{ d a(n)}{dn}|_{n=\infty}$ by Theorem \ref{P201}
\end{proof}
\bigskip
\begin{remk}
 The usefulness of
 the change of integers can be seen
 when considering 
 the equality of the Riemann sum to the integral
 \cite[Remark 2.1]{cebp10}, hence discrete change 
 has generality.
\end{remk}

 On the assumption that $\frac{ d^{k} a_{n}}{d n^{k}}$ 
 can be similarly defined.
\bigskip
\begin{defy}\label{DEF204}
 Converting between the discrete sequence and continuous curve through the sequence, with left to right direction. 
\[ f_{n}(a_{n}, \frac{ d a_{n} }{dn}, \ldots ) = f( a(n), \frac{d a(n)}{dn}, \ldots)|_{n=\infty} \text{   sequence to function} \] 
\[ f( a(n), \frac{d a(n)}{dn}, \ldots) = f_{n}(a_{n}, \frac{ d a_{n} }{dn}, \ldots )|_{n=\infty} \text{   function to sequence} \] 
\end{defy}
\bigskip
\begin{theo}\label{P204}
For a strictly monotonic sequence, we can construct an associated strictly monotonic function
 that is continually differentiable.
\end{theo}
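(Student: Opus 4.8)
The plan is to realize the function concretely by curve-fitting at infinity, exactly as in the construction set up just before Definition \ref{DEF202}. First I would invoke Theorem \ref{P212} to reduce to the strictly monotonic case, so that without loss of generality $(a_n)|_{n=\infty}$ is strictly increasing (the decreasing case follows by negation). Fix an infinite index interval $[n_0,n_1]$ with $n_1-n_0=\infty$ (condition E3.0), and on the $n_1-n_0+1$ consecutive integer nodes fit a power series $a(x)=\sum_{k=0}^{n_1-n_0} c_k x^k$ by imposing $a(n)=a_n$ at each integer $n$ in the interval. The coefficients $(c_k)$ are the unique solution of the associated Vandermonde system, which is nonsingular since the nodes are distinct; this produces an analytic curve through the sequence points, i.e. $a_n=a(n)|_{n=\infty}$ in the sense of Definition \ref{DEF202}.

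Next I would verify the two required properties. Analyticity is immediate: $a(x)$ is a terminating power series, hence $C^\infty$, and in particular continually differentiable; this also licenses Theorem \ref{P201} and Lemma \ref{P005}, so that $\frac{da(n)}{dn}=a(n+1)-a(n)|_{n=\infty}$ coincides with the sequence derivative of Definition \ref{DEF203} via Theorem \ref{P203}. For strict monotonicity I would argue at infinity: since $a_{n+1}-a_n\succ 0|_{n=\infty}$, the leading-order term of the difference, equivalently of $a'(x)$, has a fixed sign, so $a'(x)$ does not vanish on the infinite interval and $a(x)$ is strictly monotonic there; growing the interval gives this on the whole tail.

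The main obstacle is precisely this strict-monotonicity step, because a polynomial interpolant through monotone data need not be monotone in general (Runge-type oscillation between nodes), so the bare Vandermonde fit is not automatically sufficient. I would close the gap in one of two ways. Either (i) restrict to a contiguous piece — by the contiguous rearrangement theorem (Theorem \ref{P210}) only one contiguous rearrangement at infinity need be treated, and on a sufficiently short infinite sub-interval the secant slopes are all infinitely close, so a monotone Hermite-type fit that prescribes the sequence derivative at the nodes yields a strictly monotonic $C^1$ curve; or (ii) if genuine oscillation is forced for a particular input, then by the convention attached to Criterion E3 (Section \ref{S01}) such a sequence is declared undefined, so the hypothesis ``strictly monotonic sequence'' already excludes it. Either route leaves $a(x)$ analytic (hence continually differentiable) and strictly monotonic, which is the assertion of the theorem.
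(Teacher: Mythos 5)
Your proposal takes essentially the same route as the paper: the paper's own proof is a single sentence asserting that, since the sequence derivative is never $0$, fitting a power series through $N$ infinite points by solving $N$ equations yields a continually differentiable curve. Where you go further is in explicitly flagging that a polynomial interpolant through strictly monotone data need not itself be monotone between the nodes (Runge-type oscillation), a gap the paper's proof does not acknowledge at all; your two proposed repairs --- a monotone Hermite-type fit on a contiguous infinite sub-interval via Theorem \ref{P210}, or falling back on the E3 convention that declares the bad cases undefined --- are consistent with the paper's framework and make the argument more honest than the original, so this is a strengthening rather than a divergence.
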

\begin{proof}
For a strictly monotonic sequence, the sequence derivative is never $0$,
 a power series at infinity, say for $N$ infinite number of points,
 solving $N$ equations,
 the resulting 
 curve is continually differentiable.
\end{proof}

 With the interchangeability of the derivative
 between sequences and continuous functions,
 equations involving sequences can be solved
 as differential equations,
 and the result transformed back into
 the domain with sequences.  
 Bridging the continuous and discrete
 domains
 at infinity.
\bigskip
\begin{prop}
 If 
 $a_{n+1}-a_{n}|_{n=\infty}=\alpha$,
 then 
 $\frac{a_{n}}{n}|_{n=\infty}=\alpha$. \cite[2.3.14]{kaczor}
\end{prop}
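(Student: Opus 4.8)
The plan is to recognise the statement as a discrete L'Hôpital (Stolz--Cesàro) result and to prove it by transferring to the continuous domain, where the convergence-sum derivative machinery of this section applies. First I would invoke Definition \ref{DEF203} to read the hypothesis as $\frac{d a_{n}}{dn}|_{n=\infty} = \alpha$, and Theorem \ref{P203} (together with Theorem \ref{P204}) to thread a continuously differentiable function $a(x)$ through the sequence points, so that $\frac{d a(x)}{dx}|_{x=\infty} = \alpha$ and $\frac{a_{n}}{n}|_{n=\infty} = \frac{a(x)}{x}|_{x=\infty}$. The substance of the proof is then an application of L'Hôpital's rule at infinity to $\frac{a(x)}{x}$.

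Next I would split on whether $a(x)$ is an infinity at infinity. If $\alpha \neq 0$, then since $a(x+1)-a(x)|_{x=\infty}=\alpha \neq 0$ the function is strictly monotonic at infinity and grows without bound, so $a(x)|_{x=\infty}=\infty$ and $\frac{a(x)}{x}$ is an $\frac{\infty}{\infty}$ indeterminate form; applying L'Hôpital's rule at infinity (the convergence-sum form used throughout, e.g. Example \ref{MEX048}) gives $\frac{a(x)}{x}|_{x=\infty} = \frac{\frac{d}{dx}a(x)}{\frac{d}{dx}x}|_{x=\infty} = \frac{\alpha}{1} = \alpha$. If instead $a(x)|_{x=\infty}$ is finite (which forces $\alpha = 0$, since consecutive terms reach a common steady state by Proposition \ref{P064}), then $\frac{a(x)}{x}|_{x=\infty} = \frac{\text{finite}}{\infty} = 0 = \alpha$. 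In either case, transferring back along Definition \ref{DEF203}, $\frac{a_{n}}{n}|_{n=\infty} = \alpha$.

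The main obstacle is supplying the hypotheses needed to thread the curve and to use the sequence derivative: Theorem \ref{P203} is stated for a sequence through which a monotonic continuous function can be threaded, so I would first argue that the hypothesis makes $(a_{n})|_{n=\infty}$ eventually monotonic when $\alpha \neq 0$ (since $a_{n+1}-a_{n}$ then has the sign of $\alpha$ at infinity), and handle the $\alpha = 0$ case either by the same deformation argument or, more directly, by a sandwich bypassing the derivative entirely: given any positive real $\varepsilon$, eventually $|a_{n+1}-a_{n}| \leq \varepsilon$, so $|a_{n}| \leq |a_{n_{0}}| + \varepsilon(n-n_{0})$ at infinity, whence $\left|\frac{a_{n}}{n}\right| \leq \varepsilon$ at infinity and $\frac{a_{n}}{n}|_{n=\infty}=0$. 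A secondary point to watch is that L'Hôpital at infinity is being used in the direction ``ratio of the functions equals ratio of derivatives''; this is exactly the form invoked elsewhere in the paper, so no new justification is required beyond citing it.
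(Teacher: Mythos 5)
Your proof is correct, and it sits inside the same framework the paper uses (read $a_{n+1}-a_{n}$ as the sequence derivative via Definition \ref{DEF203}, thread a continuous $a(n)$ through the sequence via Theorem \ref{P203}), but the concluding device is different. The paper treats $\frac{d a(n)}{dn}=\alpha$ as a separable differential equation, integrates to get $a(n)=\alpha n|_{n=\infty}$, and then divides by $n$ --- exactly the same maneuver it uses to prove Stolz's theorem (Theorem \ref{P205}) immediately afterwards. You instead keep the quotient $\frac{a(x)}{x}$ and apply L'H\^opital at infinity in the $\frac{\infty}{\infty}$ case, which is the more classical Stolz--Ces\`aro route and is equally legitimate here since the paper uses that form of L'H\^opital freely (e.g.\ Example \ref{MEX048}). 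What your version buys is the edge-case hygiene the paper skips: when $\alpha=0$ the paper's line $a(n)=\alpha n|_{n=\infty}$ silently discards the constant of integration and literally reads $a(n)=0$, whereas your sandwich $|a_{n}|\leq|a_{n_{0}}|+\varepsilon(n-n_{0})$ handles $\alpha=0$ cleanly whether or not $a_{n}$ diverges; you also note that $\alpha\neq 0$ forces eventual monotonicity, which is the hypothesis needed to thread the curve in the first place. What the paper's version buys is brevity and uniformity with its proof of Theorem \ref{P205}. Either argument would be accepted; yours is the more careful of the two.
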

\begin{proof}
 As an alternative to the use of
 Stolz theorem,
 $a_{n+1}-a_{n}|_{n=\infty}$
 $= \frac{a_{n+1}-a_{n}}{dn}|_{n=\infty}$
 $=\alpha$, 
 $\frac{d a(n)}{dn} = \alpha$, 
 separate the variables,
 $d (a(n)) = \int \alpha dn$,
 $a(n) = \alpha n|_{n=\infty}$,
 $a_{n} = \alpha n|_{n=\infty}$,
 $\frac{ a_{n}}{n}|_{n=\infty}=\alpha$.
\end{proof}
\bigskip
\begin{theo}\label{P205}
Stolz theorem. Given sequence $(y_{n})|_{n=\infty}$
 is monotonically
  increasing and diverges,
 $y_{n}|_{n=\infty}=\infty$, 
 and $\frac{ x_{n}-x_{n-1} }{ y_{n} - y_{n-1}}|_{n=\infty} = g$,  
then 
$\frac{ x_{n}}{y_{n}}|_{n=\infty} = g$
\end{theo}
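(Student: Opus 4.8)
The plan is to carry the difference quotient through the sequence-derivative bridge of Section~\ref{S1503} and then close with L'Hopital's rule at infinity. First I would recast the hypothesis in terms of sequence derivatives. By Definition~\ref{DEF203} the sequence derivative is the user's choice of consecutive difference, so one may take $\frac{d x_{n}}{dn} = x_{n}-x_{n-1}$ and $\frac{d y_{n}}{dn} = y_{n}-y_{n-1}$ directly (equivalently, shift the index by one unit, which is immaterial at infinity by the ``non-uniqueness'' reasoning behind Proposition~\ref{P064}). The hypothesis then reads
\[ \frac{d x_{n}/dn}{d y_{n}/dn}\Big|_{n=\infty} = g. \]

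Next I would thread continuous functions $x(n)$ and $y(n)$ through the two sequences; after the deformation of Theorem~\ref{P212} these may be taken strictly monotonic and continually differentiable (Theorem~\ref{P204}), and by Theorem~\ref{P203} the sequence derivatives agree with the continuous ones, $\frac{d x_{n}}{dn} = \frac{d x(n)}{dn}|_{n=\infty}$ and similarly for $y$. Hence $\frac{x'(n)}{y'(n)}|_{n=\infty} = g$, i.e.\ $\frac{dx}{dy}|_{n=\infty}=g$ by the chain rule.

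Now transfer the target ratio to the continuous domain, $\frac{x_{n}}{y_{n}}|_{n=\infty} = \frac{x(n)}{y(n)}|_{n=\infty}$, using Definition~\ref{DEF202}. Since $(y_{n})|_{n=\infty}$ is monotonically increasing with $y_{n}|_{n=\infty}=\infty$, the denominator is an infinity; when the quotient is in indeterminate form I would apply L'Hopital's rule (\cite[Part 5]{cebp21}), giving $\frac{x(n)}{y(n)}|_{n=\infty} = \frac{x'(n)}{y'(n)}|_{n=\infty} = g$, and then transfer back to conclude $\frac{x_{n}}{y_{n}}|_{n=\infty}=g$.

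The main obstacle is licensing that L'Hopital step. When $g \neq 0$ one also has $x_{n}|_{n=\infty}=\infty$, so the ratio is a genuine $\frac{\infty}{\infty}$ form and the step is immediate. The delicate case is $g=0$, equivalently $(x_{n})|_{n=\infty}$ bounded or convergent: here I would argue directly that $x(n)$ is of smaller order than $y(n)$ --- from $\frac{dx}{dy}|_{n=\infty}=0$ with $y(n)|_{n=\infty}=\infty$, separating variables and integrating forces $x(n) \prec y(n)|_{n=\infty}$, hence $\frac{x(n)}{y(n)}|_{n=\infty}=0=g$ --- or appeal to boundedness of $(x_{n})$ against $y_{n}|_{n=\infty}=\infty$. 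Two secondary points: $(x_{n})$ is not assumed monotonic, so $x(n)$ is obtained by analytic interpolation rather than monotone threading, and the difference-to-derivative identity (Theorem~\ref{P201}) still applies since only the power-series representation is used; and the deformation to a strictly monotonic $y(n)$ leaves $y_{n}|_{n=\infty}=\infty$ and the value of $g$ unchanged, which is where the \textbf{E3} proviso that plateaus do not sum to infinity is invoked.
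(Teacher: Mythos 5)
Your proposal follows the paper's own bridge for the first half --- rewriting the difference quotient as $\frac{d x_{n}/dn}{d y_{n}/dn}$, threading continuous functions through the sequences via Theorem~\ref{P203}, and arriving at $\frac{dx}{dy}|_{n=\infty}=g$ --- but it closes differently. The paper treats $\frac{dx}{dy}=g$ as a separation-of-variables problem, integrates to $x(n)=g\,y(n)|_{n=\infty}$ (silently discarding the constant of integration by non-reversible arithmetic, which relies on $y_{n}|_{n=\infty}=\infty$), and then divides. You instead run L'Hopital's rule on the $\infty/\infty$ form $\frac{x(n)}{y(n)}$, which is the classically natural move since Stolz is the discrete analogue of L'Hopital. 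Each route buys something: the paper's integration is a one-line computation but hides both the integration constant and the $g=0$ case (where $x=g\,y$ degenerates to $x=0$ rather than $x=C$, and only $y_{n}|_{n=\infty}=\infty$ rescues the conclusion); your version surfaces exactly those issues, since you must separately license the L'Hopital step when the numerator is not an infinity, and you do handle $g=0$ by an order-of-magnitude argument. You are also more careful than the paper about the fact that $(x_{n})$ is not assumed monotonic, so the threading of $x(n)$ cannot invoke the monotone construction and must rest on the power-series interpolation of Theorem~\ref{P201} alone; the paper's proof passes over this point. Both arguments are valid within the paper's framework.
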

\begin{proof}
$\frac{ x_{n}-x_{n-1} }{ y_{n} - y_{n-1}}|_{n=\infty}$
$=\frac{ x_{n}-x_{n-1} }{dn}\frac{dn}{ y_{n} - y_{n-1}}|_{n=\infty}$
$=\frac{ d x_{n}}{dn}\frac{dn}{d y_{n}}|_{n=\infty}$
$=\frac{ d x(n)}{dn}\frac{dn}{d y(n)}|_{n=\infty}$
$=\frac{ d x(n)}{d y(n)}|_{n=\infty}=g$,
 recognizing a separation of variables problem, 
 separate and integrate the variables. 
 $\int dx = g \int dy|_{n=\infty}$,
 $x(n) = g y(n)|_{n=\infty}$,
 $x_{n} = g y_{n}|_{n=\infty}$,
 $\frac{x_{n}}{y_{n}}|_{n=\infty}=g$.
\end{proof}

In applications with series expansions that include
 differences, when it is possible to arbitrarily truncate
 the series, apply the transforms for
 the new system.
\bigskip
\begin{mex}\label{MEX205}
Using the sequence derivative 
 with a $\mathrm{sin}$ expansion.
 $0 \lt a_{1} \lt 1$,  
 $a_{n+1} = \mathrm{sin} \, a_{n}$,  
 Show $n^{\frac{1}{2}} a_{n} = 3^{\frac{1}{2}}|_{n=\infty}$.

Within the interval, $0 \leq \mathrm{sin}\,x \lt x$,
 then $a_{n+1} \leq a_{n}$. Applying this to
 infinity, 
 $a_{n}|_{n=\infty}=0$
 Using a Taylor series expansion,
 a one term expansion fails, giving a derivative of $0$.
 However a two term expansion succeeds.

$\mathrm{sin} \, x = x - \frac{x^{3}}{3!} + \ldots$,  
$a_{n+1}$
$= \mathrm{sin}\,a_{n}|_{n=\infty}$
 $= a_{n} - \frac{a_{n}^{3}}{3!}|_{n=\infty}$,  
$a_{n+1}-a_{n} = - \frac{a_{n}^{3}}{6}|_{n=\infty}$,  
$\frac{d a(n)}{dn} = -\frac{a^{3}}{6}|_{n=\infty}$,  
$\frac{da}{a^{3}} = -\frac{dn}{6}|_{n=\infty}$,  
$-\frac{1}{ 2a^{2}} = -\frac{n}{6}|_{n=\infty}$, 
$\frac{1}{ a^{2}} = \frac{n}{3}|_{n=\infty}$, 
$3 = n a^{2}$,  
$3^{\frac{1}{2}} = n^{\frac{1}{2}}a_{n}|_{n=\infty}$.
\end{mex}

While it is standard practice of including
 the integral symbol when integrating,
 the integral itself may be 
 subject to algebraic simplification,
 on occasions,
 it can be better to leave off the integral symbol.
\bigskip
\begin{defy}
For a continuous variable, 
 integration can be expressed without the
 integral symbol. $(a \, dn)$ means $\int a\,dn$.
\end{defy}
\bigskip
When considering a change of variable, as in the chain rule,
 a variable is used to express the change.
 However this is not necessarily required,
 By the $d( )$ operator, integration and differentiation 
 are possible. This can be more direct.
\bigskip
\begin{mex}\label{MEX206}
$\int \frac{2u}{u^{2}+1}du$. Let $v = u^{2}+1$, $\frac{dv}{du} = 2u$.
 $\int \frac{2u}{u^{2}+1}du$
 $= \int \frac{dv}{du} \frac{1}{v}du$
 $= \int dv \frac{1}{v}$
 $= \mathrm{ln}\,v$ 

Alternatively without the variable,
$\int \frac{2u}{u^{2}+1}du$
 $= \int \frac{d(u^{2}+1)}{du} \frac{1}{u^{2}+1}du$
 $= \int d(u^{2}+1) \frac{1}{u^{2}+1}$
 $= \mathrm{ln}(u^{2}+1)$
\end{mex}

Formally the integral symbol $\int$ and the change
 in variable $dx$ integrate the expression between them
 $\int y(x) dx$. However, when working
 with the algebra and  cancelling, integration
 and differentiation become factors. The integral symbol is
 not always necessary, and the order of cancellation 
 does not necessarily put the variable at the right end.

 From the point of view of solving, 
 the integral symbol $\int$ may be omitted,
 where trying different
 combinations of change may be beneficial. 

 Providing the context is clear,
 you can remove the integral symbols, but
 include the symbols at the end when communicating.  

 The generalised p-series test (See Section \ref{S1802}).
\[
\sum \frac{1}{\prod_{k=0}^{w-1} \mathrm{ln}_{k} \cdot \mathrm{ln}_{w}^{p} }|_{n=\infty}
 = \left\{
  \begin{array}{rl}
    0  & \; \text{converges when } p \gt 1 \\
    \infty  & \; \text{diverges when } p \leq 1 
  \end{array} \right.
\]
\bigskip
\begin{mex}\label{MEX207}
\cite[p.89 3.3.6]{kaczor}.
 Given 
 $s_{n} = \sum_{k=1}^{n} a_{k}$, 
 $s_{n}|_{n=\infty}=\infty$. \\
 3.3.6.a Show
 $\sum \frac{a_{n+1}}{s_{n} \, \mathrm{ln} \, s_{n}}|_{n=\infty}=\infty$
 diverges.

 Transform the problem into the continuous domain.
 $\sum \frac{a_{n+1}}{s_{n} \, \mathrm{ln} \, s_{n}}|_{n=\infty}$
 $= \sum \frac{a_{n+1}}{s_{n} \, \mathrm{ln} \, s_{n}} dn|_{n=\infty}$
 $= \int \frac{a}{s \, \mathrm{ln} \, s} dn|_{n=\infty}$ where 
 $n$ has been redefined. 
 Let $a(n)$ and $s(n)$ be continuous functions to replace $a_{n}$
 and $s_{n}$ respectively.  
 $s = a \, dn$, $s|_{n=\infty}=\infty$.

Observing $\frac{ds}{dn} = \frac{ d(a\,dn)}{dn} = a$ then
 $\int \frac{a}{s \, \mathrm{ln} \, s} dn|_{n=\infty}$ 
 $= \int \frac{1}{s \, \mathrm{ln} \, s} \frac{ds}{dn} dn|_{n=\infty}$ 
 $= \int \frac{1}{s \, \mathrm{ln} \, s} \,ds|_{n=\infty}$ 
 $= \infty$ diverges.

Alternatively applying the chain rule.  
 $\int \frac{a}{s \, \mathrm{ln} \, s} dn|_{n=\infty}$ 
 $= \int \frac{a}{(a \, dn) \, \mathrm{ln} \, (a \, dn)} dn|_{n=\infty}$ 
 $= \int \frac{a}{(a \, dn) \, \mathrm{ln} \, (a \, dn)} \frac{dn}{d (a \, dn)} d(a \, dn)|_{n=\infty}$ 
 $= \int \frac{a}{(a \, dn) \, \mathrm{ln} \, (a \, dn)} \frac{1}{a} d(a \, dn)|_{n=\infty}$ 
 $= \int \frac{1}{(a \, dn) \, \mathrm{ln} \, (a \, dn)} d(a \, dn)|_{n=\infty}$ 
 $= \int \frac{1}{s \, \mathrm{ln} \, s} ds|_{s=\infty}$ 
 $=\infty$
 as on the boundary.
\end{mex}
 The derivative of a sequence(Definition \ref{DEF203})
 leads to a chain rule with sequences.
\bigskip
\begin{mex}\label{MEX208}
 Example \ref{MEX207}, solved with the derivative,
 noticing that $a_{n+1} = s_{n+1}-s_{n}$ and constructing
 a derivative $\frac{d s_{n}}{dn}$. 

 $\sum \frac{a_{n+1}}{s_{n} \, \mathrm{ln} \, s_{n}} dn |_{n=\infty}$
 $= \sum \frac{s_{n+1}-s_{n}}{s_{n} \, \mathrm{ln} \, s_{n}} dn |_{n=\infty}$
 $= \sum \frac{d s_{n}}{dn} \frac{1}{s_{n} \, \mathrm{ln} \, s_{n}} dn |_{n=\infty}$
 $= \sum \frac{1}{s_{n} \, \mathrm{ln} \, s_{n}} d s_{n} |_{S_{n}=\infty}$
 $=\infty$ diverges. 
\end{mex}
\bigskip
\begin{mex}\label{MEX209}
\cite[p.89 3.3.6.b]{kaczor}.
 Continued from Example \ref{MEX207}.
 Show $\sum \frac{a_{n}}{s_{n} (\mathrm{ln} \, s_{n})^{2}}|_{n=\infty}=0$ converges. 

$\sum \frac{a_{n}}{s_{n} (\mathrm{ln} \, s_{n})^{2}}|_{n=\infty}$
$=\sum \frac{s_{n} - s_{n-1}}{s_{n} (\mathrm{ln} \, s_{n})^{2}}dn|_{n=\infty}$
$=\sum \frac{d s_{n}}{dn} \frac{1}{s_{n} (\mathrm{ln} \, s_{n})^{2}}dn|_{n=\infty}$
$=\sum \frac{1}{s_{n} (\mathrm{ln} \, s_{n})^{2}}d s_{n}|_{s_{n}=\infty}$
$=\int \frac{1}{s (\mathrm{ln} \, s)^{2}}d s|_{s=\infty}$
$=0$ converges (Generalised p-series, $p=2 \gt 1$).
\end{mex}

The exception to the derivative forming
 a difference is when $a_{n}|_{n=\infty}=\alpha$ is
 a constant, see Definition \ref{DEF203}.
 The sum
 of the power series, instead of being an 
 infinite sum, reduces to a single term,
 or an infinity of terms with a non-monotonic
 function.
 At infinity, the power series could not be monotonic, or
 have a strict relation.
\bigskip
\begin{mex}\label{MEX210}
To demonstrate the case, applying the derivative
 to the following problem.
 
Let $(a_{n})$ be a sequence with
 $a_{n}|_{n=\infty}=\alpha \neq 0$,
 $a_{n} \gt 0$. Prove that the
 series $\sum_{k=1}^{\infty} (a_{n+1}-a_{n})$
 and $\sum_{k=1}^{\infty} (\frac{1}{a_{n+1}}-\frac{1}{a_{n}})$
 both absolutely converge or both absolutely
 diverge.  \cite[3.4.17]{kaczor}

 Reorganising the problem,
 show 
 $\sum(a_{n+1}-a_{n})|_{n=\infty}$
 and $\sum (\frac{1}{a_{n+1}}-\frac{1}{a_{n}})|_{n=\infty}$
 both absolutely converge or both absolutely diverge.

 Following the approach given in this paper.
$\sum (a_{n+1}-a_{n})|_{n=\infty}$
$=\sum \frac{d a_{n}}{dn} dn|_{n=\infty}$
$=\int \frac{d a}{dn} dn|_{n=\infty}$
$=\int da|_{n=\infty}$
$=a|_{n=\infty} = \alpha$ \\
$\sum \frac{a_{n}-a_{n+1}}{a_{n}a_{n+1}}dn|_{n=\infty}$
$=\int -\frac{da}{dn}\frac{1}{a^{2}}dn|_{n=\infty}$
$=\int -\frac{1}{a^{2}}da|_{n=\infty}$
$=\frac{1}{a}|_{n=\infty}$
 $=\frac{1}{\alpha}$ 

 Both the sums fail
 the convergence criterion E3 where
 we expect the sums
 at infinity to be either $0$ or $\infty$.

This is suggesting that for a constant we need to
 treat the theory separately. Here the problem
 is reconsidered
 with the reasoning that $a_{n}$ is a constant,
 and $a_{n+1}-a_{n}$ is an infinitesimal,
\begin{proof}
$\sum(\frac{1}{a_{n+1}}-\frac{1}{a_{n}})|_{n=\infty}$
 $= \sum \frac{a_{n}-a_{n+1}}{a_{n+1}a_{n}}|_{n=\infty}$
 $= \sum -\frac{1}{a_{n+1} a_{n}}(a_{n+1}-a_{n})|_{n=\infty}$
 $= \sum -\frac{1}{\alpha^{2}}(a_{n+1}-a_{n})|_{n=\infty}$
 $= \sum (a_{n+1}-a_{n})|_{n=\infty}$. Since at infinity
 the sums are equal, so is their absolute value sum.
\end{proof}
\end{mex}

When approximating numerically, solving
 for a variable by variation,
 it is common
 to incrementally approach the solution with
 numerical schemes. 
\[ \text{If } \delta_{n} \to 0 \text{ then } x_{n+1}-x_{n} = \delta_{n}, \;\;  x_{n+1}-x_{n} = \frac{d x_{n}}{dn} = \frac{d x(n)}{dn}|_{n=\infty} = 0 \]
 The iterative scheme has a solution when its derivative is zero,
 corresponding to the solution of the problem.
\bigskip
\begin{mex}\label{MEX211}
\cite[Example 5.4]{cebp21}
 We can show the derivative of $x_{n}$, successive
 approximations, as decreasing in the
 following algorithm.
 $x \in *G$; $\delta \in \Phi$;
 $(x+\delta)^{2}=2$. Develop an iterative scheme,
 $x^{2} + 2x \delta+\delta^{2}=2$;
 $x^{2} + 2x \delta=2$ as $2x \delta \succ \delta^{2}$,
 $x_{n}^{2} + 2x_{n} \delta_{n}|_{n=\infty} = 2$,
 $\delta_{n} = \frac{1}{x_{n}} - \frac{x_{n}}{2}|_{n=\infty}$.
 Couple by solving for $x_{n+1} = x_{n}+\delta_{n}$.

In the ideal case, $(x_{n}+\delta_{n})^{2}|_{n=\infty} \simeq 2$
 Provided $\delta_{n} \to 0$, $(x_{n})|_{n=\infty}$ is a
 series of progressions towards the solution.
 This can be expressed as a derivative.
 $x_{n+1} = x_{n} + \delta_{n}$,
 $x_{n+1}-x_{n} = \delta_{n}$,
 $\frac{d x_{n}}{d n} = \delta_{n}$.

 Transferring the algorithm $*G \to \mathbb{R}$,
 provided we observe the same decrease in $\delta_{n}$,
 the algorithm finds the solution.

 Let 
 $x_{1}=1.5$,
 $\delta_{n}: (
 -8.3 \times 10^{-2},
 -2.45 \times 10^{-3},
 -2.12 \times 10^{-6},
 -1.59 \times 10^{-12}, \ldots)$
 As the gradient is negative
 and decreasing,
 $n$ vs $x_{n}$ is monotonically
 decreasing and asymptotic to the solution
 $x_{n}|_{n=\infty}=\sqrt{2}$.
\end{mex}
\subsection{Convergence tests}\label{S1502}
\begin{theo}\label{P206}
 The Alternating convergence theorem (ACT).
If $(a_{n})|_{n=\infty}$ is a monotonic
 decreasing sequence and 
 $a_{n}|_{n=\infty}=0$ then $\sum (-1)^{n} a_{n}|_{n=\infty}=0$ is convergent.
\end{theo}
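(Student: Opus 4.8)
This is the convergence-sum form of the classical alternating series test and coincides verbatim with Theorem \ref{P007}; what is distinctive here is that the summand $(-1)^{n}a_{n}$ is \emph{not} monotonic, so Criterion E3' cannot be applied to it directly. The plan is to reduce to a monotonic auxiliary sum by grouping terms in pairs, in the spirit of the sandwiched comparison Theorem \ref{P011}. First I would deform $(a_{n})|_{n=\infty}$ to a strictly decreasing sequence (permitted by the E3 setup via Theorem \ref{P212}) and thread a continuous strictly decreasing function $a(n)$ through it as in Definition \ref{DEF202}, so that $a_{k} \ge a_{k+1}|_{n=\infty}$ holds for every infinite index $k$.

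Next, fix infinite integers $n_{0} < n_{1}$ with $n_{1}-n_{0}=\infty$ and consider the tail block $S = \sum_{k=n_{0}}^{n_{1}} (-1)^{k} a_{k}$, taking $n_{0}$ even (the odd case only flips an overall sign and is treated identically). Grouping consecutive terms, $S = \sum_{j \ge 1} (a_{n_{0}+2j-2} - a_{n_{0}+2j-1})$, and each bracket is $\ge 0$ because $(a_{n})$ is decreasing at infinity; hence the partial sums of this grouped series form a monotonic increasing sequence of non-negative infinireals, satisfying condition E3'.1. Regrouping from the second term instead gives the standard telescoping bound $0 \le S \le a_{n_{0}}|_{n=\infty}$. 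This is the one genuinely new point to verify: that the finite telescoping estimate transfers verbatim to $*G$ for arbitrary infinite endpoints $n_{0}, n_{1}$ — which follows by applying the inequality at finite level and transferring, noting the discarded end contribution is controlled by $a_{n_{1}} \le a_{n_{0}}$.

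Then I would invoke the hypothesis $a_{n}|_{n=\infty}=0$, i.e.\ $a_{n_{0}} \in \Phi$ for infinite $n_{0}$. Combined with the squeeze $0 \le S \le a_{n_{0}}|_{n=\infty}$ and the realization $\Phi \mapsto 0$ used throughout Section \ref{S05}, this forces $S|_{n=\infty}=0$; thus the grouped auxiliary sum is an infinitesimal infinireal, Criterion E3' is met, and $\sum (-1)^{n} a_{n}|_{n=\infty} \in \Phi$, which is convergence. Two equivalent routes are worth noting: one may argue via Proposition \ref{P064}, since the even and odd partial sums at infinity differ by the single term $a_{n}|_{n=\infty}=0$ and hence reach the same steady state, a value then squeezed to $0$ by the telescoping bounds; or, consistent with the theme of this section, one may observe that the paired difference $a_{2m}-a_{2m+1}$ is a sequence derivative in the sense of Definition \ref{DEF203}, evaluate the grouped sum through the integral test Theorem \ref{P065}, and close with $a_{m}|_{m=\infty}=0$.

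The main obstacle is the bookkeeping forced by the non-monotone summand: making the pairing rigorous so that the steady state at infinity is well defined independently of the parity of the (infinite) starting index, and confirming that the telescoping inequality $0 \le \sum_{k=n_{0}}^{n_{1}}(-1)^{k}a_{k} \le a_{n_{0}}$ is legitimate in $*G$ and compatible with condition E3'.1. Once these are settled the remaining sandwiching is routine. A closing remark would apply Theorem \ref{P091} to transfer the result back to the ordinary series $\sum_{k=k_{0}}^{\infty}(-1)^{k}a_{k}$, recovering the classical statement.
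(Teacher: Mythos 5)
Your proposal is correct in substance but follows a genuinely different main route from the paper. Both proofs begin the same way, with the contiguous pairing rearrangement of Theorem \ref{P210} that converts the non-monotonic summand $(-1)^{n}a_{n}$ into the paired differences $a_{2n}-a_{2n-1}$. From there you run the classical Leibniz argument inside $*G$: the telescoping inequalities $0 \leq \sum_{k=n_{0}}^{n_{1}}(-1)^{k}a_{k} \leq a_{n_{0}}$ together with $a_{n_{0}} \in \Phi$ squeeze the tail block to an infinitesimal, and E3$'$ closes the proof. The paper instead compares the paired sum against the boundary $\sum \frac{1}{\prod_{k=0}^{w}\mathrm{ln}_{k}}$ of Section \ref{S18}: it reads the paired difference as a sequence derivative (Definition \ref{DEF203}), passes to the continuous domain, separates variables and integrates to reduce the comparison to $a(n) \; z \; \mathrm{ln}_{w+1}|_{n=\infty}$, and then concludes $z = \lt$ from $0 \lt \infty$ — essentially the second of the two "alternative routes" you sketch at the end, pushed through the boundary test rather than the integral test. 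What your version buys is self-containedness and elementarity: it needs only the pairing, a finite telescoping estimate transferred to infinite endpoints, and the squeeze, with no appeal to the boundary machinery or to separation of variables. What the paper's version buys is consistency with its central theme — it exhibits the ACT as one more instance of the boundary test and of the sequence-derivative calculus, which is the point of Section \ref{S1502}. One small caution on your write-up: condition E3$'$.1 asks for monotonicity of the sum's sequence of partial sums, which your non-negative grouped terms do supply, but you should say explicitly that it is the partial sums, not the bracketed terms $a_{n_{0}+2j-2}-a_{n_{0}+2j-1}$ themselves, that are being claimed monotonic, since the latter need not decrease.
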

\begin{proof}
Compare against
 the boundary (Section \ref{S18}) between convergence 
 and divergence.
\begin{align*}
\sum (-1)^{n} a_{n} \; z \; \sum \frac{1}{\prod_{k=0}^{w} \mathrm{ln}_{k}}|_{n=\infty} \tag{Rearrangent, see (Section \ref{S16})} \\
\sum a_{2n}-a_{2n-1} \; z \; \sum \frac{1}{\prod_{k=0}^{w} \mathrm{ln}_{k}}|_{n=\infty} \tag{A sequence derivative} \\
\sum \frac{d a_{n}}{dn} \; z \; \sum \frac{1}{\prod_{k=0}^{w} \mathrm{ln}_{k}}|_{n=\infty} \tag{Discrete to continuous $n$} \\
\frac{d a(n)}{dn} \; z \; \frac{1}{\prod_{k=0}^{w} \mathrm{ln}_{k}}|_{n=\infty} \tag{Separation of variables} \\
d a(n) \; z \; \int \frac{1}{\prod_{k=0}^{w} \mathrm{ln}_{k}} dn |_{n=\infty} \\
a(n) \; z \; \mathrm{ln}_{w+1} |_{n=\infty} \tag{ substituting conditions, $a(n)|_{n=\infty}=0$ } \\
 0 \; z \; \infty, \;\; z =\; \lt 
\end{align*}
\end{proof}
\section{Rearrangements of convergence sums at infinity} \label{S16}
Convergence sums theory is concerned with
 monotonic series testing. On face value, this
 may seem a limitation but, by 
 applying
 rearrangement theorems at infinity,
 non-monotonic sequences can be rearranged
 into monotonic sequences. 
 The resultant monotonic series are convergence sums.
 The classes of convergence sums
 are greatly increased
 by the additional versatility applied to the theory.
\subsection{Introduction}\label{S1601}
 The premise of the paper is that
 convergence sums (Section \ref{S01}) order of terms affects convergence.
 Surprisingly, the most simple rearrangement of bracketing
 terms of the sum differently (addition being associative)
 is profoundly useful for sums at infinity,
 as these sums have an infinity of terms, and an order.

 We believe there is still much that is unknown regarding convergence.
 In fact, historically the discussions and difference of opinions were and perhaps are far apart. 
 Pringsheim says:
\begin{quote}
Since in a series of positive terms the order in which the terms come has nothing
 to do with convergence or divergence of the series$\ldots$ \cite[p.9]{cajori}
\end{quote}
 F. Cajori addresses this; however the above is a real problem. 
 That such a basic fact was not accepted may explain
 why the sums order has not previously been incorporated into convergence theory. 
 From our perspective,
 the existence of infinite integers has opened the possibilities, yet in
 general, the infinitesimal or infinity 
 still has not been accepted as a number.

 Knopp on ``Infinite sequences and series" \cite{knopp} does not refer to infinitesimals,
  or infinity as a partition.  Theorems are generally stated from
 a finite number to infinity;
 however he does state
 theorems from a certain point onwards: something is true,
 an infinity in disguise. 
 This is of course classical mathematics.
 The concept of infinity creeps in through subtle arguments,
 the use of null sequences, which effectively are
 at infinity. 
 For an example, see Theorems \ref{P217} and \ref{P218}
 which are the same theorem, said in a different way.

 Having said this, we find Knopp's exposition and communication exceptional. So we do 
 not necessarily agree with the content,
 but for the rearrangement theorems in this paper
 we look to Knopp for both mathematical depth
 and the presentation. If we do not succeed,
 this is our and not Knopp's fault. 

 However, we do find classical mathematics applied at infinity
 to be extremely useful, if anything, often
 extending the original concept.

 In an interesting way, similarly to topology
 that stretches or deforms shapes, converting a coffee cup
  into a doughnut, we can stretch a monotonic sequence to a
 strictly monotonic sequence for convergence testing,
 where criteria E3 excludes 
 plateaus. 

A rearrangement is a reordering.
 $(1, 2, 3)$ can be rearranged to $(2, 3, 1)$.
 For an infinite sequence, 
 we can partition the sequence into
 other infinite sequences.
\bigskip
\begin{mex}
Partition the natural numbers into odd and even sequences.
 $(1, 2, 3, 4, 5, \ldots)$
 $=(1) + (2) + (3) + (4) + (5) + \ldots$, select every second
 element to generate two sequences,
 $(1, 3, 5, \ldots)$ and $(2, 4, 6, \ldots)$.
\end{mex}

 By partitioning an infinite sequence into two or 
 more other infinite sequences, we can construct rearrangements
 by taking (or by copying the whole and deleting) from the partition sequences.
\bigskip
\begin{defy}\label{DEF205}
A `subsequence' is a sequence formed from 
 a given sequence by deleting
 elements without changing the relative
 position of the elements.
\end{defy}

 Just as we have uses for empty sets, we
 define an empty sequence.
\bigskip
\begin{defy}
Let $()$ define an empty sequence. 
\end{defy}

 We find it useful to consider partition sequences
 which are subsequences. $(a_{n})$ partitioned into
 subsequences $(b_{k})$ and $(c_{j})$.
 While these are only a subset of arrangements,
 they can be used in theory and calculation.
\bigskip
\begin{prop}
If $a = (a_{1}, a_{2}, a_{3}, \dots)$ is partitioned into
 $b = (b_{1}, b_{2}, b_{3}, \ldots)$   and $c = (c_{1}, c_{2}, c_{3}, \ldots)$,
 where $b$ and $c$ are subsequences of $a$.
 Let an element $a_{k}$ in $a$ be in
 either $b$ or $c$.
 We can form a rearrangement of $a$, by having positional counters
 in $b$ and $c$, and sampling to a new sequence. 
\end{prop}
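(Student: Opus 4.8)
The plan is to turn the informal phrase ``positional counters in $b$ and $c$, and sampling to a new sequence'' into an explicit pair of monotone index maps, and then to check that the interleaved sequence it produces is a bijective relabelling of the terms of $a$ --- which is exactly what ``rearrangement'' should mean here. First I would record the data of the partition: since $b$ and $c$ are subsequences of $a$ arising from a partition, there are strictly increasing $\beta,\gamma:\mathbb{N}\to\mathbb{N}$ with $b_{i}=a_{\beta(i)}$ and $c_{j}=a_{\gamma(j)}$, and the hypothesis that each $a_{k}$ lies in exactly one of $b,c$ says precisely $\operatorname{im}\beta\cup\operatorname{im}\gamma=\mathbb{N}$ and $\operatorname{im}\beta\cap\operatorname{im}\gamma=\varnothing$.

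Next I would formalise the sampling. A choice of when to draw from $b$ versus from $c$ is a control sequence $s\in\{B,C\}^{\mathbb{N}}$; writing $p(n)$ for the number of $B$'s among $s_{1},\dots,s_{n}$ and $q(n)$ for the number of $C$'s, I define the new sequence by $a'_{n}=b_{p(n)}$ if $s_{n}=B$ and $a'_{n}=c_{q(n)}$ if $s_{n}=C$. The ``counters'' are just $p$ and $q$, each incremented whenever its symbol is emitted. Equivalently $a'_{n}=a_{\sigma(n)}$, where $\sigma(n)=\beta(p(n))$ or $\gamma(q(n))$ according to $s_{n}$.

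The crux is to show $\sigma:\mathbb{N}\to\mathbb{N}$ is a bijection, i.e.\ that $a'$ is a genuine rearrangement of $a$. Injectivity splits into two cases: two positions carrying the same symbol have different counter values, so $\sigma$ sends them to distinct points of $\operatorname{im}\beta$ (resp.\ $\operatorname{im}\gamma$) because $\beta$ (resp.\ $\gamma$) is injective; two positions carrying different symbols are sent into $\operatorname{im}\beta$ and $\operatorname{im}\gamma$, which are disjoint. Surjectivity: a given $k\in\mathbb{N}$ equals $\beta(i)$ or $\gamma(j)$ for a unique index, and provided the corresponding counter tends to infinity that index is eventually reached, so $k\in\operatorname{im}\sigma$. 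Hence $\sigma$ is a bijection and $a'$ is a rearrangement of $a$.

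The only place where care is needed --- and the step I expect to be the real obstacle --- is this surjectivity half, which forces the sampling rule to draw from each of $b$ and $c$ infinitely often (equivalently $p(n),q(n)\to\infty$); otherwise a whole tail of one subsequence is never emitted and $a'$ omits infinitely many terms of $a$, so it is not a rearrangement. I would therefore state this as an explicit condition on the control sequence, note that it holds automatically for natural rules such as strict alternation, and observe that in the convergence-sum setting $b$ and $c$ are themselves infinite, being a partition of the infinite sequence $a$, so the condition is both meaningful and mild. No infinitary calculus enters; the statement is purely about indices, and this construction is precisely what later lets non-monotonic sums at infinity be reordered into monotonic convergence sums (Section \ref{S16}).
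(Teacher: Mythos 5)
Your proposal is correct and follows the same constructive route as the paper --- positional counters into the two subsequences, advanced as terms are emitted --- but you carry it further than the paper does. The paper's proof stops at describing the algorithm (``Arbitrarily sample from $a$ and $b$ depending on the rearrangement choice'') and never verifies that the output is in fact a rearrangement; you supply that verification by exhibiting the index map $\sigma$ and checking it is a bijection, using disjointness of $\operatorname{im}\beta$ and $\operatorname{im}\gamma$ for injectivity across cases. More importantly, your surjectivity argument isolates a condition the paper omits entirely: the control sequence must draw from each of $b$ and $c$ infinitely often (equivalently $p(n),q(n)\to\infty$), since otherwise a tail of one subsequence is never emitted and $\sigma$ fails to be onto, so the result is not a rearrangement of $a$. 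Under the paper's literal wording, ``arbitrarily sample'' admits such degenerate choices, so your explicit hypothesis is not pedantry but a needed correction; it is also consistent with how the construction is actually used later (the $1:2$ and $1:2^{n}$ ratio examples all sample both partitions infinitely often). In short: same approach, but yours closes a real gap in the paper's own argument.
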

\begin{proof}
 Let $d$ be an empty sequence.
 Start the counters at the first element, increment by one after each
 sample to sequence $d$ by appending to $d$. 
 Arbitrarily sample from $a$ and $b$ depending on the rearrangement
 choice.
\end{proof}

 What is interesting about infinity, is that
 you may iterate over the different
 partitions unevenly. For example, 
 in an unequal ratio. In this case, we say
 that the partitions are being sampled
 at different rates.
\bigskip
\begin{mex}
Given infinite sequences $(1, 3, 5, 7, \ldots)$
 and $(2, 4, 6, 8, \ldots)$,
 create a rearrangement that for every
 odd number, sample two even numbers.
 A ratio of $1:2$.

 $(1)+(2,4)+(3)+(6,8)+(5)+(10,12)+\ldots$
 $= (1, 2, 4, 3, 6, 8, 5, 10, 12, \ldots)$
 This is a rearrangement of $(1, 2, 3, 4, \ldots)$. 
\end{mex}

 Another example, although the partition of the original
 sequence is partitioned in two, an infinitely
 small number of terms are sampled from one partition
 compared with the other partition.
\bigskip
\begin{mex}
 $(2, 3, 4, 5, \ldots)$ rearranged in a $1: 2^{n}$ ratio between
 the odd and even numbers,
 $( (2), (3), (4, 6), (5), (8, 10, 12, 14), (7), (16, 18, 20, 22, 24, 26, 28, 30), (9),  \ldots)$ \\
 $= (2, 3, 4, 6, 5, 8, 10, 12, 14, 7, 16, 18, 20, 22, 24, 26, 28, 30, 9, \ldots)$.
\end{mex}

 For finite sums, a sum rearrangement does not change
 the sum. However, for an infinite sum the situation
 can become very different.
 The given order of terms affects convergence.

 Different orderings/rearrangements on the same partition 
 of infinite terms can radically change the sum's value
 and convergence or divergence result.

We develop rearrangement theorems at infinity.
 We also construct theorems at infinity then transfer
 these back to known theorems via the transfer principle \cite[Part 4]{cebp21}.
 Hence, we establish the usefulness of infinity at a point.

With the rearrangement of sums
 at infinity,
 it was found that 
 a conditionally convergent sum 
($\sum_{\mathbb{N}_{\infty}} a_{j}=0$ converges
 but $\sum_{\mathbb{N}_{\infty}} |a_{j}|=\infty$ diverges),
 can be rearranged into
 a divergent sum \cite[Theorem 5 p.80]{knopp}.
 We will encounter examples of this with
 convergence sums (Section \ref{S01}),
 when considering sum
 rearrangements 
 independently.

That is, consider a partition of a sum at infinity.
 A rearrangement of the sum at infinity could
 unevenly sample one partition compared with the other.
 Since the index is still iterating over infinity,
 all elements are still summed.

Partition $(a_{n})|_{n=\infty}$
 into $(b_{n})|_{n=\infty}$ and $(c_{n})|_{n=\infty}$.
$\sum a_{n}|_{n=\infty} = \sum b_{n}|_{n=\infty} + \sum c_{n}|_{n=\infty}$
\bigskip
\begin{mex}
 The elementary proof of
 the harmonic
 series divergence.
 Choose a rearrangement with a `variable period' 
 of powers of two. 
 $\sum_{k=1}^{\infty} \frac{1}{k}$,
 group in powers of two, 
 $\sum_{k=2}^{\infty} \frac{1}{k}$
 $= \frac{1}{2} + (\frac{1}{3}+\frac{1}{4}) + (\frac{1}{5}+\frac{1}{6}+\frac{1}{7}+\frac{1}{8}) + \ldots$
 $\geq \frac{1}{2} + \frac{1}{2} + \frac{1}{2} + \ldots$
 diverges.
 As a sequence rearrangement, 
  $(\frac{1}{2}, \frac{1}{4}+\frac{1}{6}, \frac{1}{8} + \frac{1}{10} + \frac{1}{12} + \frac{1}{14}, \ldots) \geq (\frac{1}{2}, \frac{1}{2}, \frac{1}{2}, \ldots)$

 $b_{n} = \sum_{k=2^{n}+1}^{2^{n+1}} \frac{1}{k}$,
 $b_{k} \geq \frac{1}{2}$,
 $0 \leq \sum \frac{1}{2}|_{n=\infty} \leq \sum b_{n}|_{n=\infty}$,
 $\sum b_{n}|_{n=\infty}=\infty$ diverges,
 $\sum \frac{1}{n}|_{n=\infty}=\infty$.
\end{mex}
\bigskip
\begin{mex}\label{MEX212}
$\frac{1}{2} - \frac{1}{3} + \frac{1}{4} - \frac{1}{5}+\ldots$ is conditionally
 convergent.
 $\sum (-1)^{n} \frac{1}{n}|_{n=\infty}=0$.
 At infinity, partition $((-1)^{n}\frac{1}{n})|_{n=\infty}$ into $(\frac{1}{2n})|_{n=\infty}$
 and $(-\frac{1}{2n+1})|_{n=\infty}$.
 As this is a conditionally convergent series,
 we can find a rearrangement of the series which diverges.

 By considering the even numbers of the sum, we can construct
 a divergent harmonic series.
 $\frac{1}{4} + \frac{1}{6} + \frac{1}{8} + \frac{1}{10} + \ldots$
 $= \frac{1}{2}( \frac{1}{2} + (\frac{1}{3}+\frac{1}{4}) + (\frac{1}{5}+\frac{1}{6}+\frac{1}{7}+\frac{1}{8}) + \ldots)$
 $= \frac{1}{2}( b_{0} + b_{1} + b_{2} + \ldots)$
 As in the previous example, $b_{k} \geq \frac{1}{2}$.

 Choose an arrangement: for every odd term summed,
 sum $2^{n}$ even $a_{n}$ terms; 
 for a ratio of $1:2^{n}$.

 $\sum a_{n}|_{n=\infty}$
 $= \frac{1}{2} + \sum_{k=0}^{n} (\frac{b_{k}}{2} - \frac{1}{2k+3})|_{n=\infty}$
 For convergence or divergence, consider the sum at infinity then
 $\sum (\frac{b_{n}}{2} - \frac{1}{2n+3})|_{n=\infty}$
 $=\sum \frac{b_{n}}{2} |_{n=\infty}$
 $\geq \sum \frac{1}{4}|_{n=\infty}=\infty$,
 $\sum a_{n}|_{n=\infty} = \infty$ diverges.
\end{mex}

 The problem of concern above,
 with a conditionally convergent sum,
 is that summing
 in different unequal rates of the
 partitions affects the sum's result. 
 
 Consideration of
 different arrangements leads to
 Riemann's rearrangement theorem (see Theorem \ref{P216}),
 where the same sum converges
 to a chosen value,
 or,
 for another rearrangement, 
 diverges (see Theorem \ref{P218}).

 To get around this,
 simply do not consider 
 sum rearrangements independently,
 but as a contiguous sum,
 hence when summing consider the order of the sum's terms.

 Since a sum's convergence or divergence is determined at infinity,
 we need only consider a contiguous sum at infinity.

 Since a sequence is a more generic structure than a sum, and a
 sum can be constructed from a sequence by applying a plus operation to adjacent sequence 
 terms, we describe the partitioning of a sequence, and an application
 to sums will follow.
\subsection{Periodic sums} \label{S1602}
A sequence is a more primitive structure than a set and retains
 its order. We first need to develop some notation to partition
 sequences, and sequences at infinity.
 This involves the generalization of the period
 on a contiguous sequence.

 Once this is done, we can in a sweeping move present
 the most general rearrangement theorem for convergence
 sums at infinity, which we call the first rearrangement theorem.
\bigskip
\begin{defy}\label{DEF206}
A `contiguous subsequence' is a subsequence with
 no
 deleted elements between its start
 and end elements.
\end{defy}
\bigskip
\begin{defy}\label{DEF207}
A partition of a sequence is contiguous if
 partitioned into continuous subsequences
 which when joined form the original sequence.
\end{defy}
\bigskip
\begin{mex}\label{MEX213}
 $(1, 2, 3, 4, 5, \ldots) \mapsto ((1,2), (3,4), (5,6) \ldots)$ 
 is a contiguous partition.
\end{mex}

A contiguous partition has the property
 of reversibility.
 If the subsequences are joined together,
 the formed sequence is the original 
 sequence.
\bigskip
\begin{defy}\label{DEF208}
  $(a_{n})= (b_{n})$
  when $(b_{n})$ is a contiguous partition of $(a_{n})$
\end{defy}
\bigskip
\begin{defy}\label{DEF209}
A periodic sequence has fixed length subsequences.
\end{defy}
\bigskip
\begin{defy}\label{DEF210}
A contiguous periodic sequence is a periodic sequence 
 of a contiguous sequence.
\end{defy}
\bigskip
\begin{mex}\label{MEX214}
 $(1, 2, 3, 4, 5, \ldots) = ((1,2), (3,4), (5,6) \ldots)$
 is a contiguous periodic sequence. 
\end{mex}

 We can consider the sequence itself as a contiguous periodic
 sequence with a period of $1$.
 This can then be partitioned into other contiguous periodic sequences.

 Given $(a_{n})|_{n=\infty}$
 then $(a_{2n},a_{2n+1})|_{n=\infty}$.
 Since at infinity, we start counting down by finite integers, both sequences can be put
 into one-one correspondence. 
\bigskip
\begin{prop}\label{P207}
$(a_{n})|_{n=\infty} = (\ldots, a_{n-2}, a_{n-1}, a_{n}, a_{n+1}, a_{n+2}, \ldots)|_{n=\infty}$
\end{prop}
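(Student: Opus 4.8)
The plan is to read the statement as purely structural: both sides are sequences indexed by the infinite integers $\mathbb{N}_{\infty}$, and the claim is that the two-sided display centred on a generic infinite index $n$ is nothing but a relabelling of the sequence at infinity $(a_n)|_{n=\infty}$. First I would fix $n\in\mathbb{N}_{\infty}$ and invoke the structure of $\mathbb{J}_{\infty}$ from \cite[Part 1]{cebp21}: for every finite integer $k$ one has $n\pm k\in\mathbb{J}_{\infty}$ with $n-k<n<n+k$, so each term $a_{n-k}$ and $a_{n+k}$ appearing in the display is well defined and occurs in its natural index order. Hence the right hand side is a genuine sequence, not merely a formal expression.

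Next I would recall (Theorem \ref{P028}, \cite[Definition 6.4]{cebp21}) that $(a_m)|_{m=\infty}$ is exactly the infinite part of $(a_1,a_2,\ldots)$ under the contiguous partition $(\mathbb{N}_{\lt})+(\mathbb{N}_{\infty})$, i.e.\ the family $\{a_m : m\in\mathbb{N}_{\infty}\}$ listed in increasing index. Since there is no least diverging infinity (the fact used in Proposition \ref{P043}), $\mathbb{N}_{\infty}$ has no least element, so this sequence has no first term, which is precisely what the left ``$\ldots$'' in the display records. The right hand side is then obtained from $(a_m)|_{m=\infty}$ merely by marking the position occupied by the particular infinite index $n$; equivalently it is the trivial contiguous partition of $(a_m)|_{m=\infty}$ into singletons. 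No term is inserted, deleted, or moved, so by Definition \ref{DEF208} together with the one-one correspondence ``counting down by finite integers'' noted immediately before the proposition, the two sequences are equal in $*G$.

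The main obstacle is making the lateral ``$\ldots$'' precise. Stepping down by one from $n$ only reaches indices within finite distance of $n$, so the left ``$\ldots$'' must in fact range over all infinite integers below $n$, and the right ``$\ldots$'' over all infinite integers at least $n$; I would handle this by stressing that the restriction $|_{n=\infty}$ strips the finite part identically from both sides, and that centring the display at an arbitrary infinite $n$ is legitimate exactly because $\mathbb{N}_{\infty}$ is an end-extensible linear order with no least element. Establishing that these two descriptions of the index set coincide is essentially the entire content of the proposition; the remaining steps are routine bookkeeping with Theorem \ref{P028} and Definition \ref{DEF208}.
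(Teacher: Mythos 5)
Your proposal is correct and follows essentially the same route as the paper's own (very brief) proof: the paper simply observes that at infinity one can iterate both forwards and backwards because $\mathbb{N}_{\infty}$ has no greatest or least element, and then writes $(a_{n})|_{n=\infty}=(a_{n},a_{n+1},\ldots)|_{n=\infty}=(\ldots,a_{n-1},a_{n},a_{n+1},\ldots)|_{n=\infty}$. Your additional bookkeeping (closure of $\mathbb{J}_{\infty}$ under finite shifts, the relabelling via Theorem \ref{P028} and Definition \ref{DEF208}, and the honest remark that making the lateral ``$\ldots$'' precise is really the whole content) is a more careful rendering of the same idea rather than a different argument.
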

\begin{proof}
At infinity can iterate both forwards and backwards
 as no greatest or least element.
 $(a_{n})|_{n=\infty}$
 $=(a_{n}, a_{n+1}, a_{n+2}, \ldots)|_{n=\infty}$
 $=(\ldots, a_{n-2}, a_{n-2}, a_{n}, a_{n+1}, a_{n+2}, \ldots)|_{n=\infty}$
\end{proof}
\bigskip
\begin{prop}\label{P208}
 $(a_{n})|_{n=\infty}$ can be 
 partitioned with a fixed period $\tau$,
 and a contiguous partition can be formed.
\[ (a_{n})|_{n=\infty} =(a_{\tau n}, a_{\tau n +1}, a_{\tau n + 2}, \ldots, a_{\tau n + \tau-1})|_{n=\infty} \] 
\end{prop}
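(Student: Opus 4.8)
The plan is to construct the period-$\tau$ partition directly from Proposition \ref{P207} together with the definitions of a contiguous periodic sequence (Definitions \ref{DEF207}, \ref{DEF209}, \ref{DEF210}) and then invoke Definition \ref{DEF208} to license the stated equality. First I would apply Proposition \ref{P207} to rewrite $(a_{n})|_{n=\infty}$ as the two-sided sequence $(\ldots, a_{n-1}, a_{n}, a_{n+1}, \ldots)|_{n=\infty}$; this removes any worry about a first element and lets the index run through $\mathbb{N}_{\infty}$ in blocks. Since infinite integers exist \cite[Part 1]{cebp21} and $\mathbb{N}_{\infty}$ has no least element, I can re-anchor the running index at an infinite integer that is a multiple of $\tau$, relabelling so that the base index is $\tau n$ with $n \in \mathbb{N}_{\infty}$; the discussion following Theorem \ref{P028} (at infinity one counts down by finite integers) guarantees this finite shift does not alter the set of terms being enumerated.

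Next I would form the blocks $B_{n} = (a_{\tau n}, a_{\tau n+1}, \ldots, a_{\tau n + \tau - 1})$. Each $B_{n}$ is a contiguous subsequence of length exactly $\tau$, so $(B_{n})|_{n=\infty}$ is a periodic sequence in the sense of Definition \ref{DEF209}. Because consecutive blocks abut --- the last index of $B_{n}$ is $\tau n + \tau - 1 = \tau(n+1) - 1$, exactly one less than the first index $\tau(n+1)$ of $B_{n+1}$, with no index skipped --- joining the blocks recovers $(a_{n})|_{n=\infty}$ verbatim. Hence $(B_{n})|_{n=\infty}$ is a contiguous partition (Definition \ref{DEF207}), and indeed a contiguous periodic sequence (Definition \ref{DEF210}), so by Definition \ref{DEF208} we may write $(a_{n})|_{n=\infty} = (a_{\tau n}, a_{\tau n+1}, \ldots, a_{\tau n + \tau - 1})|_{n=\infty}$.

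The step that needs care --- the \emph{main obstacle} --- is verifying that passing to blocks of size $\tau$ yields a genuine one--one correspondence of terms, so that the partition is contiguous rather than merely a subsequence or a re-sampling at uneven rates of the kind warned about in Section \ref{S1601}. I would make this precise by observing that $\mathbb{N}_{\infty}$ is closed under the finite translations $k \mapsto k + j$ for $0 \le j \le \tau - 1$, so $\{\,\tau n + j : n \in \mathbb{N}_{\infty},\ 0 \le j < \tau\,\}$ covers $\mathbb{N}_{\infty}$ with no overlap; this is exactly what forces the blockwise enumeration to list every term of $(a_{n})$ exactly once, which is the content of ``contiguous'' here. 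Everything else is bookkeeping on the indices.
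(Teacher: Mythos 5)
Your proposal is correct and follows essentially the same route as the paper: both arguments rest on the observation that consecutive blocks abut (the last index $\tau n+\tau-1$ of one block is immediately followed by the first index $\tau(n+1)$ of the next), so the blocks join to recover the full sequence, together with an appeal to Proposition \ref{P207} to handle the two-sided indexing at infinity. The only difference is ordering --- the paper expands the blocked sequence first and invokes Proposition \ref{P207} at the end, whereas you apply it at the outset --- and your explicit remark that $\{\tau n+j : 0\le j<\tau\}$ covers the index set without overlap is a slightly more careful statement of the same bookkeeping.
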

\begin{proof}
Expand
 $(a_{\tau n}, a_{\tau n +1}, a_{\tau n + 2} + \ldots + a_{\tau n + \tau-1})|_{n=\infty}$ 
 $= \ldots + (a_{\tau n}, a_{\tau n +1}, a_{\tau n + 2} + \ldots + a_{\tau n + \tau-1}) + 
 (a_{\tau (n+1)}, a_{\tau (n+1) +1}, a_{\tau (n+1) + 2} + \ldots + a_{\tau (n+1) + \tau-1}) + \ldots 
|_{n=\infty}$ 
 $= \ldots + (a_{\tau n}, a_{\tau n +1}, a_{\tau n + 2} + \ldots + a_{\tau n + \tau-1}) + 
 (a_{\tau n+\tau}, a_{\tau n+\tau +1}, a_{\tau n+ \tau + 2} + \ldots + a_{\tau n+ 2\tau -1}) + \ldots 
|_{n=\infty}$  \\
  $= (\ldots, a_{\tau n -2}, a_{\tau n-1}, a_{\tau n}, a_{\tau n +1}, a_{\tau n + 2}, \ldots )|_{n=\infty}$.
 Apply 
 Proposition \ref{P207}.
\end{proof}

The concept of the period is extended to include arbitrary contiguous sequences.
 The period is described by a function $\tau(n)$ on the sequence index. 
\bigskip
\begin{defy}\label{DEF211}
 Let $\tau(n)$ describe a `variable periodic sequence'. \\
 $\tau(n) \geq 1$ \\
 $\tau(n+1) - \tau(n) \geq 1$ \\
 $\tau(n)$ contiguously partitions the sequence $(a_{n})$,
 $(a_{\tau(n)}, a_{\tau(n)+1}, \ldots, a_{\tau(n+1)-1})$
\end{defy}
\bigskip
\begin{prop}\label{P209}
 $(a_{n})|_{n=\infty} = (a_{\tau(n)}, a_{\tau(n)+1}, \ldots, a_{\tau(n+1)-1})|_{n=\infty}$
\end{prop}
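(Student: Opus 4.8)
The plan is to mirror the proof of Proposition \ref{P208}, replacing the constant period $\tau$ by the variable period function $\tau(n)$ and then invoking Proposition \ref{P207} to close the argument at infinity. First I would unpack Definition \ref{DEF211}: the conditions $\tau(n) \geq 1$ and $\tau(n+1) - \tau(n) \geq 1$ say precisely that $\tau$ is (weakly) increasing with integer jumps of size at least one, so that $\tau(n+1) - 1 \geq \tau(n)$ and hence each block $(a_{\tau(n)}, a_{\tau(n)+1}, \ldots, a_{\tau(n+1)-1})$ is a well-defined, non-empty contiguous subsequence of $(a_n)$.

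Next I would expand the right-hand side over consecutive values of the index, exactly as in Proposition \ref{P208}: writing out block $n$ followed by block $n+1$ gives $(\ldots, a_{\tau(n)}, \ldots, a_{\tau(n+1)-1}, a_{\tau(n+1)}, \ldots, a_{\tau(n+2)-1}, \ldots)$, and the crucial observation is that the last index of block $n$, namely $\tau(n+1)-1$, is immediately succeeded by the first index of block $n+1$, namely $\tau(n+1)$. So the blocks abut with no gap and (because $\tau$ is strictly increasing) no overlap; their concatenation is a contiguous run of consecutive integer indices. Since the index $n$ is iterated over $\mathbb{N}_{\infty}$, and at infinity there is no greatest or least element, this contiguous run is unbounded in both directions, so it is $(\ldots, a_{m-1}, a_m, a_{m+1}, \ldots)|_{n=\infty}$. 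Applying Proposition \ref{P207} then identifies this with $(a_n)|_{n=\infty}$, which is the claimed equality (in the sense of contiguous partition, Definition \ref{DEF208}).

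The main obstacle I anticipate is the bookkeeping at the block boundaries: one must verify that $\tau(n)$ genuinely tiles the full range of indices at infinity, i.e. that the union $\bigcup_n \{\tau(n), \tau(n)+1, \ldots, \tau(n+1)-1\}$ leaves no index uncovered and covers an unbounded set on both sides. Gaps are ruled out by the endpoint-succession identity above, overlaps by strict monotonicity of $\tau$, and unboundedness by the fact that iterating $n$ over $\mathbb{N}_{\infty}$ forces $\tau(n)$ (which increases by at least $1$ each step) to range over arbitrarily large infinite integers, with Proposition \ref{P207} supplying the backward direction. Once these three points are checked, the rest is the same routine expansion used for Proposition \ref{P208}, and I would present it in that compressed style rather than re-deriving the fixed-period case from scratch.
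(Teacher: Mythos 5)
Your proposal is correct and follows essentially the same route as the paper's own proof: expand the blocks over consecutive values of $n$, observe that the endpoint $\tau(n+1)-1$ of one block is immediately succeeded by the start $\tau(n+1)$ of the next so the concatenation is a contiguous run, and then invoke Proposition \ref{P207} to extend the run backwards at infinity. Your extra attention to non-overlap and to the tiling of indices only makes explicit what the paper's compressed expansion leaves implicit.
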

\begin{proof}
 $(a_{n})|_{n=\infty} = ((a_{\tau(n)}, a_{\tau(n)+1}, \ldots, a_{\tau(n+1)-1}),
 (a_{\tau(n+1)}, a_{\tau(n+1)+1}, \ldots, a_{\tau(n+2)-1}), \ldots )|_{n=\infty}$ 
 $=(a_{\tau(n)}, a_{\tau(n)+1}, \ldots )|_{n=\infty}$ 
 $=(\ldots, a_{\tau(n)-2}, a_{\tau(n)-1}, a_{\tau(n)}, a_{\tau(n)+1}, \ldots )|_{n=\infty}$ 
\end{proof}
 
We now have two ways to classify the partitioning, periodic with fixed $\tau$ or
 a variable period with $\tau(n)$, both of which describe a contiguous partition. 

Construct sums with the same definitions as their associated sequences.
 A series is by definition sequential, applying
 a sum operator to a sequence.
\bigskip
\begin{defy}\label{DEF212}
A `contiguous series' from a series is defined by applying addition to a contiguous subsequence.
\end{defy}
\bigskip
\begin{defy}\label{DEF213}
A `periodic sum' is obtained by applying 
 addition to a periodic sequence.
\end{defy} 
\bigskip
\begin{defy}
A contiguous periodic sum is obtained by
 applying addition to a contiguous periodic sequence.
\end{defy}
\bigskip
\begin{defy}
A variable periodic sum
 is obtained by applying addition to
 a variable periodic sequence.
\end{defy}
\bigskip
\begin{defy}
A contiguous variable periodic sum
 is obtained by applying addition to
 a contiguous variable periodic sequence.
\end{defy}

A periodic sum and variable periodic sum
 are rearrangements of sums.
 The prepending of `contiguous'
 can be omitted,
 for convergence sums will require
 monotonicity.

Consider the series $1 -\frac{1}{2} + \frac{1}{3} - \frac{1}{4} + \frac{1}{5} - \ldots$.
 We notice that the series oscillates,
 continually rising and falling with the positive and
 negative terms added respectively.

 For convergence sums, the criteria requires a monotonic series,
 which clearly the above is not.
 However by considering the order of terms,
 taking two terms at a time, the above series
 is monotonic.  Put another way,
 by considering the order in the rearrangement 
 of the series, if we can transform the series
 to a monotonic series, this can be used to determine
 convergence/divergence.
\bigskip
\begin{mex}\label{MEX215}
Does 
 $1 -\frac{1}{2} + \frac{1}{3} - \frac{1}{4} + \frac{1}{5} - \ldots$ converge or diverge?
 Consider a sum rearrangement
 $(1-\frac{1}{2}) + (\frac{1}{3}-\frac{1}{4}) + \ldots$
 and the successive terms,
 then 
 $(\frac{1}{1}-\frac{1}{2}, \frac{1}{3}-\frac{1}{4}, \frac{1}{5}-\frac{1}{6}, \ldots)$,
 $(\frac{1}{2}, \frac{1}{3\cdot 4}, \frac{1}{5 \cdot 6}, \ldots)$,
 with a fixed period $\tau=2$ we see the sequence
 is strictly monotonically decreasing, and can be tested by
 our convergence criteria.

 Test the sum at infinity, 
 $\sum (-1)^{n+1} \frac{1}{n}|_{n=\infty}$
 $= \sum (\frac{1}{2n} - \frac{1}{2n+1})|_{n=\infty}$
 $= \sum \frac{1}{2n(2n+1)}|_{n=\infty}$
 $=\frac{1}{4}\sum \frac{1}{n^{2}}|_{n=\infty}$
 $=0$ converges, and by
 a contiguous rearrangement Theorem \ref{P210}, 
 $\sum (-1)^{n} \frac{1}{n}|_{n=\infty}=0$
 converges too.
\end{mex}
\bigskip
\begin{mex}
 A period of three convergence example.
 Does $\frac{1}{1}-\frac{1}{3}-\frac{1}{2}+\frac{1}{5}+\frac{1}{7}-\frac{1}{4}+\ldots$ converge or diverge?

 Consider the sum at infinity, the denominators have 
 a sequence $(4k+1, 4k+3, 2k+2)$.
 $\sum (\frac{1}{4k+1}+\frac{1}{4k+3}-\frac{1}{2k+2})|_{k=\infty}$
 $= \sum \frac{8k+5}{32k^{3} + 64k^{2}+38k+6}|_{k=\infty}$
 $= \sum \frac{1}{k^{2}}|_{k=\infty}=0$ converges. 
\end{mex}

If we consider any index sum, brackets
 can be placed about any contiguous series of terms.
 Let $s = a_{1} + a_{2} + a_{3} + \ldots$,
 $s = (a_{1} + a_{2}) + (a_{3} + a_{4} + a_{5}) +
 (a_{6}) + (a_{7}+ \ldots + a_{11}) + \ldots$.
\bigskip
\begin{defy}
 A contiguous sum rearrangement 
 constructs another sum by 
 bracketing sequential terms.
\end{defy}
\bigskip
\begin{defy}
 A contiguous series is a contiguous sum rearrangement.
\end{defy}
\bigskip
\begin{prop}
 If $s'$ is a contiguous sum rearrangement of $s$ then
 $s=s'$.
\end{prop}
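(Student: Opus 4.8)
The plan is to reduce the claim to the contiguous-partition machinery already developed for sequences (Propositions \ref{P208}, \ref{P209} and Definition \ref{DEF208}) together with the fact that convergence or divergence is decided at infinity (Theorem \ref{P001}). First I would observe that a contiguous sum rearrangement is nothing but a (possibly variable) periodic sum obtained from the contiguous periodic sequence of brackets: writing $s = \sum a_n$, the rearrangement $s'$ fixes a bracketing, which amounts to a function $\tau(n)$ with $\tau(n)\ge 1$ and $\tau(n+1)-\tau(n)\ge 1$ so that the $n$-th bracket is $a_{\tau(n)}+a_{\tau(n)+1}+\cdots+a_{\tau(n+1)-1}$. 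These are exactly the conditions of Definition \ref{DEF211}, so $\tau(n)$ describes a variable periodic sequence, and Proposition \ref{P209} gives $(a_{n})|_{n=\infty} = (a_{\tau(n)}, a_{\tau(n)+1}, \ldots, a_{\tau(n+1)-1})|_{n=\infty}$; that is, the bracketed sequence is a contiguous partition of the original in the sense of Definition \ref{DEF208}.

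Next I would split each sum into its finite and infinite parts by Theorem \ref{P028}, $s = \sum_{\mathbb{N}_{\lt}} a_n + \sum_{\mathbb{N}_{\infty}} a_n$. On the finite side only finitely many terms enter any partial sum, and since ordinary addition is associative, re-bracketing leaves every finite partial sum unchanged; hence the finite part of $s'$ equals the finite part of $s$ (a single bracket straddling the boundary still contributes a fixed finite value that can be absorbed either way). On the infinite side, applying the summation operator to the contiguous-partition identity above, and using that a sum is a sequence of partial sums, the sequence of partial sums of $s'$ at infinity is a subsequence of that of $s$ at infinity; by convergence of sequences at infinity \cite[Part 6]{cebp21} any two such cofinal subsequences agree at infinity, so $\sum_{\mathbb{N}_{\infty}} a_n$ is unchanged. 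Adding the two parts yields $s = s'$, and in particular convergence or divergence is preserved (Theorem \ref{P001}).

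The main obstacle I expect is handling the variable-period case cleanly: the brackets of a contiguous rearrangement need not have bounded length, so I cannot lean on the fixed-period Proposition \ref{P208} and must check that an arbitrary bracketing genuinely satisfies the hypotheses of Definition \ref{DEF211} and that Proposition \ref{P209} then applies at infinity, together with the bookkeeping that the partial sums of the bracketed series form a cofinal subsequence of those of $s$ (so no tail of terms is lost or summed out of order). Once that correspondence is pinned down, equality of the sums follows immediately from associativity on each finite block and from the determination of convergence at infinity.
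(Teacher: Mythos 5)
Your conclusion is the one the paper wants, but your route is far heavier than the paper's own proof, which is a single formal observation: writing $s=a_{1}+a_{2}+\ldots$ and $s'=b_{1}+b_{2}+\ldots$, replacing each bracket $b_{k}$ by the contiguous block of $a$-terms it abbreviates literally restores the expression $s$, and that substitution is the whole argument. Nothing about $\tau(n)$, Proposition \ref{P209}, or the finite/infinite decomposition of Theorem \ref{P028} is invoked. Your identification of a bracketing with a variable periodic sequence in the sense of Definition \ref{DEF211} is a faithful reformulation and does no harm, and your finite-part associativity remark matches the paper's intent.

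The step you should worry about is the infinite part. You argue that the partial sums of $s'$ form a cofinal subsequence of those of $s$ and that ``any two such cofinal subsequences agree at infinity.'' That is only true when the original sequence of partial sums is itself determined at infinity; for an oscillating series such as $1-1+1-1+\ldots$ the bracketed partial sums $(0,0,0,\ldots)$ are a cofinal subsequence of $(1,0,1,0,\ldots)$ yet the two do not agree. The proposition as stated carries no monotonicity hypothesis, so your analytic argument silently imports the hypothesis that appears only in the subsequent Theorem \ref{P210} (that $\sum a_{n}|_{n=\infty}$ is a monotonic convergence sum). In effect you have proved Theorem \ref{P210} rather than this proposition; the proposition itself is meant as the purely formal statement that unbracketing recovers $s$, which is why the paper's proof is one line. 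If you intend your version, state the monotonicity (or determinacy at infinity) assumption explicitly before using the cofinal-subsequence step.
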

\begin{proof}
 $s = a_{1} + a_{2} + \ldots$, $s' = b_{1} + b_{2} + \ldots$.
 Considering $s'$, if we replace $b_{k}$ with
 the original contiguous $a_{k}$ series for each $b_{k}$,
 the original sum $s$ is restored. 
\end{proof}

This idea extends to sums at infinity, where-by
 preserving the order and bracketing the
 terms, we can determine a sum's convergence
 from the rearranged sum. 
 This is necessary,
 as convergence sums require
 monotonic series as input.
\bigskip
\begin{theo}\label{P210}
A contiguous rearrangement theorem. \\ 
If $(b_{n})|_{n=\infty}$
 is a contiguous arrangement of $(a_{n})|_{n=\infty}$
 and $\sum a_{n}|_{n=\infty}$ is a monotonic convergence sum
 then $\sum a_{n} = \sum b_{n}|_{n=\infty}$.
\end{theo}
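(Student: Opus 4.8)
The plan is to exploit the defining feature of a contiguous partition, namely its reversibility: since $(b_{n})|_{n=\infty}$ is obtained by bracketing consecutive blocks of $(a_{n})|_{n=\infty}$, the partial sums of $\sum b_{n}$ are exactly a subsequence of the partial sums of $\sum a_{n}$. Concretely, I would let $\sigma(n)$ be the index in the $a$-sequence at which the $n$-th block closes, so that $\sum^{n} b_{k} = \sum^{\sigma(n)} a_{k}$ with $\sigma$ strictly increasing and $\sigma(n) \geq n$ for all $n \in \mathbb{J}_{\infty}$; if some blocks are empty one first discards them without affecting the sum, so $\sigma$ may be taken strictly increasing. Thus proving $\sum a_{n} = \sum b_{n}|_{n=\infty}$ reduces to showing that the monotone sequence of partial sums of $\sum a_{n}$ takes the same realized value along the subsequence indexed by $\sigma$ as along the full index.

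First I would record that, since $\sum a_{n}$ is a monotonic convergence sum in the positive-series setting, Proposition~\ref{P066} gives that the partial sums $(\sum^{m} a_{k})|_{m=\infty}$ form a monotone increasing sequence, and by Criterion E3$'$ together with Theorem~\ref{P057} this sequence is an infinireal, hence realizes either to $0$ (convergent case, all partial sums in $\Phi$) or to $\infty$ (divergent case). Then I would split into these two cases. In the convergent case, monotonicity and $n \leq \sigma(n)$ give $0 \leq \sum^{n} a_{k} \leq \sum^{\sigma(n)} a_{k}|_{n=\infty}$, while every partial sum lies below the infinitesimal $\sum a_{n}|_{n=\infty}$; sandwiching and applying $\Phi \mapsto 0$ yields $\sum^{\sigma(n)} a_{k}|_{n=\infty} = 0 = \sum a_{n}|_{n=\infty}$, i.e. $\sum b_{n}|_{n=\infty} = \sum a_{n}|_{n=\infty}$. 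In the divergent case, $\sum^{\sigma(n)} a_{k} \geq \sum^{n} a_{k}|_{n=\infty}$ and the right side is $\infty$, so $\sum b_{n}|_{n=\infty} = \infty = \sum a_{n}|_{n=\infty}$. Proposition~\ref{P064} (consecutive partial sums are equal at infinity) is the technical fact legitimising the ``same realized value along a subsequence'' step, and it also ensures that $\sum b_{n}$, whose partial sums remain monotone even though its term sequence need not be, is itself a valid convergence sum to which Criterion E3$'$ applies.

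The main obstacle I anticipate is the absence of any a priori upper bound on the block-closing index: $\sigma(n)$ may grow arbitrarily fast in $n$, so one cannot simply squeeze $\sum^{\sigma(n)} a_{k}$ between $\sum^{n} a_{k}$ and $\sum^{n+C} a_{k}$ for a fixed shift $C$ and invoke a finite-shift version of Proposition~\ref{P064}. What rescues the argument is precisely the infinireal dichotomy of Criterion E3$'$: once the partial sums are known to be monotone and to realize to one of $0$ or $\infty$, any strictly increasing unbounded reindexing is trapped by the same bounds, so the fast growth of $\sigma$ is harmless. A secondary point to treat carefully is the bookkeeping that converts Definition~\ref{DEF207} (contiguous partition) into the clean identity $\sum^{n} b_{k} = \sum^{\sigma(n)} a_{k}$. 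With these in place the result follows, and reversing the roles of $(a_{n})$ and $(b_{n})$ in the trapping argument delivers the stated equality in both directions.
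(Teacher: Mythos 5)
Your proposal is correct, but it takes a genuinely different (and more substantive) route than the paper's own proof. The paper disposes of Theorem \ref{P210} in one line: ``Replace $b_{n}$ by the contiguous $a_{n}$ terms restores $\sum a_{n}|_{n=\infty}$'' --- a purely formal unbracketing argument that never visibly invokes the monotonicity hypothesis. You instead observe that the partial sums of $\sum b_{n}$ are exactly the subsequence $\sum^{\sigma(n)} a_{k}$ of the partial sums of $\sum a_{n}$, and then use Proposition \ref{P066} together with the E3$'$ infinireal dichotomy (via Theorem \ref{P057}) to trap that subsequence at the same realized value, $0$ or $\infty$, in each case. This is where the real mathematical content lies: classically, regrouping can alter the status of a non-monotone series (e.g.\ $1-1+1-\cdots$ bracketed in pairs), so the monotonicity assumption must do work somewhere, and your proof is the one that actually makes it do that work. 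The paper's version buys brevity at the cost of hiding this; yours buys a proof that would survive scrutiny outside the paper's framework, at the cost of needing the bookkeeping around $\sigma$ and the (reasonable) convention for discarding empty blocks. Your appeal to Proposition \ref{P064} to legitimise the ``same value along a subsequence'' step is slightly stronger than needed --- the sandwich $0 \leq \sum^{n} a_{k} \leq \sum^{\sigma(n)} a_{k}$ plus the dichotomy already suffices --- but it is consistent with how the paper uses that proposition elsewhere (e.g.\ in Theorem \ref{P065}).
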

\begin{proof}
Replace $b_{n}$ by the contiguous $a_{n}$ terms
 restores $\sum a_{n}|_{n=\infty}$. 
\end{proof}

The advantage of the theorem
 is that we need not consider 
 all the different
 rearrangements of the series at infinity,
 one will suffice.
 Once a contiguous rearrangement is
 found which
 can be tested for convergence
 or divergence, all the other
 contiguous rearrangements
 have the same value. For a sum at infinity, by convergence criterion E3,
 this value is an infinireal, and the
 sum either converges or diverges.

A rearrangement of a non-monotonic series to a 
 monotonic series greatly 
 increases the class of 
 functions that can be considered.
\bigskip
\begin{mex}\label{MEX216}
In (Section \ref{S01}) 
 we showed, by comparison of sequential terms, 
 $(\frac{(-1)^{n}}{n^{\frac{1}{2}}-(-1)^{n}})|_{n=\infty}$ to be a non-monotonic sequence,
 and hence the Alternating Convergence Theorem (ACT)
 cannot be used to determine
 if
 $\sum \frac{(-1)^{n}}{n^{\frac{1}{2}}-(-1)^{n}}|_{n=\infty}$ converges or diverges.
 However,
 the fixed
 period contiguous rearrangement theorem
 can be used for just such an event.

$\sum \frac{(-1)^{n}}{n^{\frac{1}{2}}-(-1)^{n}}|_{n=\infty}$
 $= \sum \left( \frac{(-1)^{2n}}{(2n)^{\frac{1}{2}}-(-1)^{2n}} + \frac{(-1)^{2n+1}}{(2n+1)^{\frac{1}{2}}-(-1)^{2n+1}} \right)|_{n=\infty}$
 $= \sum \left( \frac{1}{(2n)^{\frac{1}{2}}-1} - \frac{1}{(2n+1)^{\frac{1}{2}}+1} \right)|_{n=\infty}$
$= \sum \left( \frac{ (2n+1)^{\frac{1}{2}}+1 - ((2n)^{\frac{1}{2}}-1) }{ ((2n)^{\frac{1}{2}}-1)((2n+1)^{\frac{1}{2}}+1) } \right)|_{n=\infty}$
$= \sum \left( \frac{ ( (2n+1)^{\frac{1}{2}} - (2n)^{\frac{1}{2}})+2 }{ ((2n)^{\frac{1}{2}}-1)((2n+1)^{\frac{1}{2}}+1) } \right)|_{n=\infty}$
$= \sum \left( \frac{ 2 }{ ((2n)^{\frac{1}{2}}-1)((2n+1)^{\frac{1}{2}}+1) } \right)|_{n=\infty}$
 $=\sum \frac{2}{2n}|_{n=\infty} = \infty$ diverges.

Using the binomial theorem
 to determine
 the asymptotic 
 result,
 $(2n+1)^{\frac{1}{2}}-(2n)^{\frac{1}{2}}|_{n=\infty}$
 $=(2n)^{\frac{1}{2}}(1+\frac{1}{2n})^{\frac{1}{2}} - (2n)^{\frac{1}{2}}|_{n=\infty}$
 $= (2n)^{\frac{1}{2}}(1+\frac{1}{2}\frac{1}{2n}+\ldots-1)|_{n=\infty}$
 $=(2n)^{\frac{1}{2}}\frac{1}{4n}|_{n=\infty}$
 $= \frac{1}{2^{\frac{3}{2}}n^{\frac{1}{2}}}|_{n=\infty}=0$.
\end{mex}

The following example is not essential,
 but an example of what the theory can do.
\bigskip
\begin{mex}\label{MEX217}
 Partition 
 $(2, 3, 4, \ldots)$
 $= ((2),(3, 4, \ldots, 7), (8, 9, \ldots 20), \ldots)$
 $= ( b_{n} )|_{n=1, 2, \ldots}$ where 
 $b_{n} = (\lfloor e^{n-1}\rfloor+1, \ldots, \lfloor e^{n} \rfloor)$.
 We observe if $y \in b_{n}$, $\lfloor \mathrm{ln}\,y\rfloor = n-1$.
 To prove this consider the following.
 
 $k \in \mathbb{J}$;
 since $e$ is transcendental, $e^{k} \not\in \mathbb{J}$.
 Consider intervals $e^{1}, e^{2}, \ldots, e^{n}$,
 then the least integer before $e^{n}$ is $\lfloor e^{n} \rfloor$.
 The next integer after $e^{n}$ is $\lfloor e^{n} \rfloor+1$.
 Then we can form the sequence
 $( ( \lfloor e \rfloor ), e, ( \lfloor e \rfloor+1, \ldots \lfloor e^{2}\rfloor), e^{2}, (\lfloor e^{2} \rfloor+1, \ldots \lfloor e^{3} \rfloor), e^{3}, ( \lfloor e^{3} \rfloor+1, \ldots \lfloor e^{4} \rfloor), e^{4}, \ldots )$
 By considering $e^{n-1} \lt \lfloor e^{n-1}\rfloor+1, \ldots, \lfloor e^{n} \rfloor \lt e^{n}$,
 apply $\lfloor \mathrm{ln}(x) \rfloor$ to the previous sequence,
 $n-1 \leq \lfloor \mathrm{ln}(\lfloor e^{n-1}\rfloor+1) \rfloor, \ldots, \lfloor \mathrm{ln}(\lfloor e^{n} \rfloor)  \rfloor \lt n$.

\cite[3.4.9]{kaczor}
 By considering a variable period between powers of $e$,
 the following inner sum is simplified.
$\sum \frac{ (-1)^{ \lfloor \mathrm{ln}\,n \rfloor } }{ n}|_{n=\infty}$
$= \sum  (-1)^{n} \sum_{k= \lfloor e^{n-1}+1 \rfloor}^{e^{n}} \frac{1}{ k}|_{n=\infty}$,
 focussing on the inner sum, 
 $\sum_{k= \lfloor e^{n-1}+1 \rfloor}^{e^{n}} \frac{1}{ k}|_{n=\infty}$
 $=\int_{ e^{n-1}+1 }^{e^{n}} \frac{1}{ k} \, dk|_{n=\infty}$
 $= \mathrm{ln} \, k |_{ e^{n-1}+1 }^{e^{n}} |_{n=\infty}$ 
 $ = n - (n-1)|_{n=\infty} =1$.
 Then 
 $\sum \frac{ (-1)^{ \lfloor \mathrm{ln}\,n \rfloor } }{ n}|_{n=\infty}$
 $= \sum  (-1)^{n}1 = \infty$ diverges.
\end{mex}
\subsection{Tests for convergence sums} \label{S1603}
 The following discussion concerns our
 second rearrangement theorem, converting
 monotonic functions to strictly monotonic functions.
\bigskip
\begin{defy}
A `subfunction' is a function formed from
 a given function by deleting intervals or points
 without changing the relative position of the
 intervals or points.
\end{defy}
\bigskip
\begin{defy}
Let 
 $\mathrm{unique}(a)$
 define a function where
 input $a$ is monotonic, returns a function
 with only unique values.
 If $b = \mathrm{unique}(a)$
 then the
 returning function $b$ is strictly monotonic.
 $a$ can be a sequence or function.
\end{defy}
\bigskip
\begin{mex}
$a = (1, 3, 3, 5, 7, 7, 9)$, $b = \mathrm{unique}(a)$
 then $b=(1, 3, 5, 7, 9)$.
 $a$ is monotonically increasing, $b$ is
 strictly monotonic increasing.
\end{mex}

If $b(n) = \mathrm{unique}(a(n))$
 then plateaus in $a(n)$ would be
 removed in $b(n)$.
For continuous $a(n)$ a function has the 
 interval with equality collapsed to a single point.
 For sequence $a_{n}$, multiple values
 of $a_{n}$ would be collapsed to a single unique $a_{n}$.

The following rearrangement theorem of a sum
 at infinity is given. It is important
 because it allows the reduction of
 a monotonic sequence to a strictly monotonic
 sequence for convergence testing.

 This also makes theory easier, for example we can
 construct a strictly monotonic power series
 through a strictly monotonic sequence, but not
 through a monotonic sequence. 
\bigskip
\begin{theo}\label{P212}
The second rearrangement theorem:  

 If $a$ is a monotonic function or sequence,
 $b = \mathrm{unique}(a)$, 
 then for series $\sum a_{n} = \sum b_{n}|_{n=\infty}$
 or for integrals $\int a(n)\,dn = \int b(n)\,dn|_{n=\infty}$
 when $\mathbb{R}_{\infty} \mapsto \{0,\infty\}$.
\end{theo}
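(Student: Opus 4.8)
The plan is to reduce everything to the point at infinity and then exploit that $b=\mathrm{unique}(a)$ is a subsequence (resp.\ subfunction) of $a$, so the two convergence sums are sandwiched. First I would note that, by Theorems \ref{P001} and \ref{P057} (resp.\ Theorems \ref{P016} and \ref{P044}), convergence or divergence of $\sum a_n$ and of $\sum b_n$ is decided entirely at infinity, so it suffices to compare $\sum a_n|_{n=\infty}$ with $\sum b_n|_{n=\infty}$ (and $\int a(n)\,dn|_{n=\infty}$ with $\int b(n)\,dn|_{n=\infty}$). This is \emph{not} a contiguous rearrangement in the sense of Theorem \ref{P210}, since $\mathrm{unique}$ deletes the repeated values (resp.\ collapses the constant intervals) forming the plateaus of $a$; but it is still a deletion of nonnegative terms from a positive monotonic object. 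The key structural fact to record is that every contiguous block of $b$ comes from the corresponding, at least as long, block of $a$ by removing nonnegative terms, so, summing between two infinities and using positivity, $0 \le \sum b_n \le \sum a_n|_{n=\infty}$, and likewise $0 \le \int b(n)\,dn \le \int a(n)\,dn|_{n=\infty}$ (the collapsed constant intervals only removing nonnegative area).

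For the convergence case, suppose $\sum a_n|_{n=\infty}\in\Phi$. Then the sandwich $0\le\sum b_n\le\sum a_n|_{n=\infty}$ together with the comparison test (Theorem \ref{P010}) gives $\sum b_n|_{n=\infty}\in\Phi$, so under realization $\mathbb{R}_{\infty}\mapsto\{0,\infty\}$ both sums equal $0$; the integral statement is identical, or may instead be routed through the integral test in both directions (Theorem \ref{P065}) after first handling the sums, using Proposition \ref{P062} in place of the partial-sum blocks.

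The divergence case is the main obstacle, because the sandwich inequality now points the wrong way: deleting terms could in principle turn a divergent $\sum a_n$ into a convergent $\sum b_n$ if the plateaus carried unbounded multiplicity. This is exactly where the hypothesis ``$\mathbb{R}_{\infty}\mapsto\{0,\infty\}$'' — read concretely, that the plateaus of $a$ do not themselves sum to infinity, cf.\ the discussion in Section \ref{S1501} — is needed: under it, $b=\mathrm{unique}(a)$ is still a genuine strictly monotonic divergent sequence (resp.\ function) at infinity, and any input for which this fails is, by Criterion E3.1, declared undefined and therefore excluded from the theorem. Given this, I would finish with condition E3.5: for divergence $\sum b_n|_{n=\infty}$ can be made arbitrarily large by growing the infinite interval of summation, since the strictly increasing distinct values of $b$ accumulate without bound; hence $\sum b_n|_{n=\infty}\in\Phi^{-1}$, and under realization $\sum a_n=\sum b_n|_{n=\infty}$ as elements of $\{0,\infty\}$. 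The same E3.5/E3.6 argument, applied to $\int b(n)\,dn$ via Proposition \ref{P062}, closes the integral case, completing the proof.
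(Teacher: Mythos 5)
Your convergence case is fine and is essentially the paper's argument: $b$ is obtained from $a$ by deleting nonnegative terms, so $0 \leq \sum b_{n} \leq \sum a_{n}|_{n=\infty}$ and the comparison test closes it. You are also right, and more explicit than the paper, that all the content is in the divergence direction, and that some hypothesis on the plateaus is genuinely needed: if the value $2^{-k}$ is repeated $4^{k}$ times then $\sum a_{n}|_{n=\infty}=\infty$ while $\sum b_{n} = \sum 2^{-n}|_{n=\infty}=0$, so deletion really can destroy divergence. Your reading of the condition ``$\mathbb{R}_{\infty} \mapsto \{0,\infty\}$'' through the Section \ref{S1501} remark that the plateaus must not sum to infinity is a defensible way to make the hypothesis concrete.

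The gap is in how you then finish the divergence case. Your closing step extracts E3.5 from the claim that ``the strictly increasing distinct values of $b$ accumulate without bound.'' That only describes monotonically increasing $a$. The divergent series this theory most cares about are monotonically decreasing with terms tending to $0$ (case 1.3 of Section \ref{S0111}: the harmonic series, the boundary series $\sum \frac{1}{L_{w}}$), and there the distinct values of $b$ decrease to $0$, so your stated reason gives nothing. Having invoked the plateau hypothesis, you never actually use it. The direct finish is the decomposition the paper's proof is built on: write $\sum a_{n} = \sum b_{n} + \sum c_{n}|_{n=\infty}$ with $c_{n}$ the deleted terms; the hypothesis says $\sum c_{n}|_{n=\infty}$ is not an infinity, so $\sum b_{n} = \sum a_{n} - \sum c_{n} = \infty$. (The paper instead asserts $\sum b_{n} \geq \sum c_{n}|_{n=\infty}$ by pairing the two series term by term, which is itself shaky precisely when a plateau has more repetitions than there are distinct values available to pair against it --- the situation your hypothesis is meant to exclude. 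So your instinct to lean on the hypothesis is the right one; you just need to cash it out through the $b$/$c$ decomposition rather than through an increasing-case-only observation.)
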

\begin{proof}
By E3.5 and E3.6 (Section \ref{S0108}),
  if a series or integral plateaus,
 this is a localized event. 

\textbf{E3.5} For divergence,
 $\int a(n)\,dn|_{n=\infty}$ 
 or $\sum a_{n}|_{n=\infty}$
 can be made arbitrarily large. E3.5 (Section \ref{S0108})

\textbf{E3.6} For convergence, $\int a(n)\,dn|_{n=\infty}$
 or $\sum a_{n}|_{n=\infty}$
 can be
 made arbitrarily small. E3.6] (Section \ref{S0108})

Let $b = \mathrm{unique}(a)$.
 For sums, $\sum a_{n} = \sum b_{n} + \sum c_{n}|_{n=\infty}$ where $c_{n}$ are the deleted terms from $a_{n}$.
 $\sum c_{n}|_{n=\infty} \geq 0$.

$\sum b_{n} \geq \sum c_{n}|_{n=\infty}$
 as placing the series on one-one correspondence,
 $b_{n} \geq c_{n}|_{n=\infty}$.
 Consequently if $\sum a_{n}|_{n=\infty}=\infty$ diverges
 then $\sum b_{n}|_{n=\infty}=\infty$ diverges.
 If $\sum a_{n}|_{n=\infty}=0$ converges
 then $\sum b_{n}|_{n=\infty}$ converges.

The same argument is made for continuous $a(n)$
 and integrals. If $b(n) = \mathrm{unique}(a(n))$
 then $b(n)$ is a subfunction of $a(n)$. 
 $\int a(n)\,dn|_{n=\infty} = \int b(n)\,dn + \int c(n)\,dn|_{n=\infty}$ where $c(n)$ is a subfunction of $a(n)$
 and $\int b(n)\,dn \geq \int c(n)\,dn|_{n=\infty}$,
 again a one-one correspondence argument. $\int c(n)\,dn \geq 0$.
 If $\int a(n)\,dn|_{n=\infty}=\infty$ diverges
 then $\int b(n)\,dn|_{n=\infty}=\infty$ diverges.  
 If $\int a(n)\,dn|_{n=\infty}=0$ converges 
 then $\int b(n)\,dn|_{n=\infty}=0$ converges.  
\end{proof}
With convergence sums, we have, in the same way
 as (Section \ref{S01}), looked at existing tests and
 theorems, and asked how can we do this with
 partitioning at infinity, and using
 a our non-standard analysis.

For while \cite[Knopp]{knopp} is one of the
 best expositors, he does not use the infinite, 
 but $\epsilon$.

We raise this more as a point of difference than fact.
 A personal or subjective taste, than of necessity.
 However, partitioning at infinity,
 and a determination to reason there, have
 allowed us personally to find an alternative and
 easier way to reason. It may take another
 equally good expositor like Knopp to communicate
 this.

 We argue that a better way of reasoning at infinity is possible,
 but that it may take time and some other technical
 developments.  
 We have taken some of the known sum theorems
 described by Knopp, and applied our
 ideas of the infinite for alternative
 theorems at infinity. See Theorems \ref{P214} and \ref{P217}.
\bigskip
\begin{theo}\label{P213}
The Absolute convergence test (Theorem \ref{P015}).
 If $\sum |a_{n}||_{n=\infty}=0$
 then $\sum a_{n}|_{n=\infty}=0$
 was found to be a sum rearrangement theorem.
\end{theo}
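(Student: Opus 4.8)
The plan is to show that the proof of Theorem~\ref{P015} already is a rearrangement argument, because at no point does it use the order in which the terms $a_n$ are listed. First I would recall the structure of that proof: one starts from the pointwise inequality $-|a_k| \le a_k \le |a_k|$, valid for every $k$ since $\mathrm{sgn}(a_k)\in[-1,1]$, and sums it over an \emph{arbitrary} infinite contiguous block of indices, $-\sum_{k=n_0}^{n_1}|a_k| \le \sum_{k=n_0}^{n_1} a_k \le \sum_{k=n_0}^{n_1}|a_k|$ with $n_0,n_1\in\mathbb{J}_{\infty}$. The hypothesis $\sum|a_n||_{n=\infty}=0$ says the tail $\sum|a_k|$ is an infinitesimal, so by Criterion E3 and Proposition~\ref{P042} one may choose $n_1$ making $\sum^{n_1}|a_k|$ an additive identity against $\sum^{n_0}|a_k|$; both outer bounds collapse to $0$ and $\sum a_n|_{n=\infty}=0$ follows. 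The only data consumed are positivity of $|a_k|$ and the infinitesimality of the tail of $\sum|a_k|$ --- neither mentions the listing order.

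Next I would make the order-independence explicit. Let $(a'_n)$ be any rearrangement of $(a_n)$ in the sense of Section~\ref{S16}. Since $\sum|a_n|$ is a positive convergent series, every finite partial sum of $|a'_n|$ is bounded by the common total, so $\sum|a'_n|$ is again positive and convergent; by Theorem~\ref{P028} together with the no-singularity reasoning of Theorems~\ref{P001} and \ref{P016} its tail at infinity is again an infinitesimal, i.e.\ $\sum|a'_n||_{n=\infty}=0$. Feeding $(a'_n)$ into the same inequality-summing argument gives $\sum a'_n|_{n=\infty}=0$, hence $\sum a'_n = \sum a_n|_{n=\infty}$, and transferring $*G \mapsto \overline{\mathbb{R}}$ shows the rearranged series converges. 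To upgrade ``both converge'' to equality of values one matches the finite parts: the difference of corresponding finite partial sums is, at infinity, a sum over a vanishing finite index set, bounded in absolute value by a tail of $\sum|a_n|$, hence an infinitesimal that realizes to $0$; combined with Theorem~\ref{P028} this yields $\sum a_\nu = \sum a_\nu'$, which is \cite[Theorem 4, p.79]{knopp}.

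The hard part will be the claim in the second paragraph that a rearrangement of the positive series $\sum|a_n|$ still has infinitesimal tail ``at infinity'': this is exactly the Pringsheim/Cajori point that for positive terms order is irrelevant to convergence, and it must be argued inside the convergence-sum framework rather than imported. The cleanest route I see is to observe that any infinite contiguous block of $(|a'_n|)$ injects into the index set of $(|a_n|)$, so its block-sum is dominated by a (generally non-contiguous but still bounded) subsum of the absolutely convergent $\sum|a_n|$; invoking condition E3.6 (the convergent tail can be made arbitrarily small) then pins the tail of $\sum|a'_n|$ to $\Phi$. Once that lemma is secured, the rest is the verbatim proof of Theorem~\ref{P015} applied to $(a'_n)$, so the final statement --- that Theorem~\ref{P015}, read with its order-independence exposed, is the absolutely-convergent rearrangement theorem --- is immediate.
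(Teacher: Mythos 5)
The paper supplies no standalone proof of Theorem \ref{P213}: it is stated as a label, pointing back to Theorem \ref{P015} and the remark following it in Section \ref{S0706} that the absolute convergence test ``is a sum rearrangement theorem at infinity, summing at infinity only.'' Your first paragraph correctly reconstructs the proof of Theorem \ref{P015} (the sandwich $-|a_k|\le a_k\le|a_k|$ summed over an infinite block, collapsed via Proposition \ref{P042} and Criterion E3), and your observation that this argument consumes only positivity of $|a_k|$ and infinitesimality of the tail --- never the listing order --- is exactly what the paper means by ``was found to be a sum rearrangement theorem.'' Up to that point you and the paper coincide; the paper merely asserts what you make explicit.

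Your second and third paragraphs go well beyond Theorem \ref{P213} and in effect prove Theorem \ref{P214}, which the paper derives separately \emph{from} \ref{P213} by splitting each series into finite and infinite parts. Two cautions about that extension. First, your claim that an arbitrary rearrangement $(a'_n)$ satisfies $\sum|a'_n||_{n=\infty}=0$ requires that the rearrangement not drag finite-index terms out to infinity: if a fixed $a_j$ with $j$ finite lands at an infinite position, the tail block of $(|a'_n|)$ contains a non-infinitesimal term and the E3.6 argument fails. Your ``injects into the index set'' step must be sharpened to ``injects into the \emph{infinite} index set,'' which does hold for bijective rearrangements (every finite index has finite preimage) but needs to be said, since Section \ref{S16} otherwise restricts attention to contiguous rearrangements precisely to avoid such pathologies. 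Second, your matching of the finite parts is more careful than the paper's own step in Theorem \ref{P214}, which asserts that ``a rearrangement of a finite series has the same value'' even though the finite parts of the two series need not consist of the same terms; bounding the discrepancy by a tail of $\sum|a_n|$, as you do, is the correct repair. None of this extra machinery is needed for \ref{P213} itself, for which your first paragraph already suffices.
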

\bigskip
\begin{theo}\label{P214}
``If $\sum a_{\nu}$ is an absolutely convergent series,
 and if $\sum a_{\nu}'$ is an arbitrary rearrangement of it,
 then this series is also convergent, 
 and both series have the same value." \cite[p.79 Theorem 4]{knopp}
\end{theo}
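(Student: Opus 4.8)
The plan is to obtain Theorem \ref{P214} as a transfer of the corresponding statement at infinity, leaning on the absolute convergence test Theorem \ref{P015} (recast in Theorem \ref{P213} as a rearrangement theorem) together with the partitioning machinery of Section \ref{S16}. First I would peel off the finite parts: by Theorem \ref{P028}, $\sum_{\nu} a_\nu = \sum_{\mathbb{N}_\lt} a_\nu + \sum a_\nu|_{n=\infty}$ and likewise $\sum_\nu a_\nu' = \sum_{\mathbb{N}_\lt} a_\nu' + \sum a_\nu'|_{n=\infty}$, the two finite blocks being genuine finite sums that carry no convergence content (Theorem \ref{P001}). Since $\sum a_\nu$ has no singularities, both convergence and value reduce to comparing the two tails $\sum a_\nu|_{n=\infty}$ and $\sum a_\nu'|_{n=\infty}$, after accounting for the finitely many terms the rearrangement pushes across the finite/infinite cut (a bijection of $\mathbb{N}$ moves only finitely many indices out of, and finitely many into, any cofinite tail, so $\sum a_\nu'|_{n=\infty}$ and $\sum a_\nu|_{n=\infty}$ differ only by a finite block of terms).

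Next I would use that the proof of Theorem \ref{P015} is order-free: it uses only the pointwise bound $-|a_k| \le a_k \le |a_k|$, summation of this inequality between two arbitrary infinities $n_0,n_1 \in \mathbb{J}_\infty$, and Criterion E3 — none of which references the listing order of the terms. Hence, once we know $\sum |a_\nu'||_{n=\infty}=0$, the same argument yields $\sum a_\nu'|_{n=\infty}=0$. That the hypothesis transfers to the rearranged series is the positive case: $|a_\nu'|$ is a rearrangement of the positive series $|a_\nu|$, and for a positive series every partial sum of one ordering is dominated by some partial sum of the other, so by the comparison test at infinity (Theorem \ref{P010}) convergence is preserved and the two increasing partial-sum sequences have equal suprema. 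Applying the same cofinality argument to the positive and negative parts $a_\nu^{+},a_\nu^{-}$ separately, and writing $\sum a_\nu = \sum a_\nu^{+} - \sum a_\nu^{-}$ (legitimate since both pieces are convergent positive series under absolute convergence), the value of $\sum a_\nu'$ matches that of $\sum a_\nu$. Finally I would transfer the whole statement $*G \to \mathbb{R}$ via the transfer principle \cite[Part 4]{cebp21} and restore the finite parts, obtaining the stated result on $\mathbb{N}$.

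The main obstacle I anticipate is not the convergence half — which is almost formal once Theorem \ref{P015} is seen to be order-free — but the claim that the numerical value is preserved. Criterion E3 records a convergent sum only as the infinireal $0$ at infinity, so "same value" must be recovered through the positive/negative part decomposition and a genuine proof that a convergent positive series is rearrangement-invariant in value; the delicate point there is that an arbitrary rearrangement need not be contiguous, so Theorem \ref{P210} does not apply directly, and I would instead bound each partial sum of $\sum a_\nu'$ between two partial sums of $\sum a_\nu$ and conversely (each rearranged partial sum uses only finitely many original terms, and vice versa), forcing the two supremum limits to agree. Care is also required that the finitely many terms exchanged across the finite/infinite boundary contribute equally to both totals, which is where the contiguous-rearrangement viewpoint of Section \ref{S16} keeps the bookkeeping honest.
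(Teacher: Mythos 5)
Your proposal follows essentially the same route as the paper: split each series into its finite part and its tail at infinity, annihilate both tails with the absolute-convergence rearrangement theorem (Theorem \ref{P213}), and identify the finite parts. The paper's own proof simply asserts that the finite parts agree because ``a rearrangement of a finite series has the same value''; your extra bookkeeping about terms crossing the finite/infinite cut and the positive/negative-part argument for preservation of the value supply rigor at exactly the points the paper glosses over.
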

\begin{proof}
 $n_{0} = \mathrm{min}(+\Phi^{-1})$;
 $n_{1} = \mathrm{max}(+\Phi^{-1})$;
 $\sum a_{\nu} = \sum a_{\nu}|_{\nu \lt \infty} + \sum_{n_{0}}^{n_{1}} a_{\nu}$.
 $\sum a_{\nu}' = \sum a_{\nu}'|_{\nu \lt \infty} + \sum_{n_{0}}^{n_{1}} a_{\nu}'$.
 By Theorem \ref{P213}, $\sum_{n_{0}}^{n_{1}} a_{\nu}=0$
 then
 $\sum_{n_{0}}^{n_{1}} a_{\nu}'=0$.
 Since a rearrangement of a finite series has the same value,
 $\sum a_{\nu}|_{\nu \lt \infty} = \sum a_{\nu}'|_{\nu \lt \infty}$.
 Let $\sum a_{\nu}|_{\nu \lt \infty} = s$.
 Then $\sum a_{\nu} = s + 0=s$.
 $\sum a_{\nu}' = s + 0=s$.
\end{proof}

We emphasize rearrangements of infinite series
 relate to sampling a series at different rates,
 and their interest is also when the sampling rate
 does not matter. Hence the importance of the
 absolute convergence theorem.
\bigskip
\begin{lem}\label{P215}
 If we partition a conditionally
 convergent sum, $\sum a_{n}|_{n=\infty}=0$,
 $\sum |a_{n}||_{n=\infty}=\infty$, 
 into positive and
 negative sums, then one of these sums will diverge.
\end{lem}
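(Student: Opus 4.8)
The plan is to split $(a_n)|_{n=\infty}$ at infinity into the subsequence $(p_n)|_{n=\infty}$ of its non-negative terms and the subsequence $(q_n)|_{n=\infty}$ of its negative terms, so that the ``positive sum'' is $P := \sum p_n|_{n=\infty}$ and the ``negative sum'' is $\sum q_n|_{n=\infty} = -N$, where $N := \sum |q_n||_{n=\infty}$, and then to argue by contradiction with Criterion E3. First I would record the two bookkeeping identities coming from the partition at infinity (as in Section \ref{S16}): since $|a_n| = p_n$ on the positive subsequence and $|a_n| = |q_n|$ on the negative subsequence,
\[ \sum |a_n||_{n=\infty} = P + N, \qquad \sum a_n|_{n=\infty} = P - N . \]
Both $P$ and $N$ are sums of non-negative terms, hence monotonic, so by Criterion E3 and Proposition \ref{P049} each is an infinireal realizing to $0$ or $\infty$.

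Next I would run the contradiction. If neither the positive nor the negative sum diverges, then $P \in \Phi$ and $N \in \Phi$, and since a sum of two infinitesimals is an infinitesimal the first identity gives $\sum |a_n||_{n=\infty} \in \Phi$, i.e. $\sum |a_n||_{n=\infty} = 0$ converges --- contradicting the conditional-convergence hypothesis $\sum |a_n||_{n=\infty} = \infty$. Hence at least one of $P$, $N$ equals $\infty$, which is the assertion of the lemma. I would then append the sharper statement that \emph{both} diverge: if, say, $P = 0$ but $N = \infty$, the second identity gives $\sum a_n|_{n=\infty} = P - N \notin \Phi$ (a finite quantity minus an infinity), contradicting the convergence $\sum a_n|_{n=\infty} = 0$; the case $P = \infty$, $N = 0$ is symmetric. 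So in fact $P = N = \infty$ and both partitioned sums diverge.

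The main obstacle is justifying the two displayed identities, since the partition into positive and negative parts is \emph{not} a contiguous rearrangement, so Theorem \ref{P210} does not apply directly. For the first identity this is harmless: $\sum |a_n|$ is a series of non-negative terms, for which regrouping into the two partition subsequences affects neither convergence nor divergence, so $\sum |a_n||_{n=\infty} = P + N$ holds in the sense of the E3 criterion (this is exactly the positive-series partition identity $\sum a_n = \sum b_n + \sum c_n$ used throughout Section \ref{S16}, applied to $|a_n|$). The second identity is needed only for the sharper statement, and there I would sidestep any unrestricted rearrangement of the conditionally convergent $\sum a_n$ by passing to the monotone partial-sum functions $P(n) = \sum_{k \le n} \max(a_k,0)$ and $N(n) = \sum_{k \le n} \max(-a_k,0)$, for which $s_n = P(n) - N(n)$ and $\sum_{k \le n} |a_k| = P(n) + N(n)$ hold termwise; evaluating at $n = \infty$, finiteness of $s_n|_{n=\infty}$ together with $(P(n) + N(n))|_{n=\infty} = \infty$ forces $P(n)|_{n=\infty} = N(n)|_{n=\infty} = \infty$ by the same arithmetic of infinities --- keeping every step inside the positive-series regime where the E3 criteria are unambiguous.
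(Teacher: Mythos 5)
Your argument is correct and rests on the same key fact as the paper's own proof: the positive-series partition identity $\sum |a_{n}||_{n=\infty} = P + N$ together with the arithmetic of infinireals forces at least one of $P$, $N$ to be an infinity --- the paper phrases this directly (taking the partition sum of larger magnitude $\sum d_{n}$ and showing $2\sum|d_{n}| \geq \sum|a_{n}| = \infty$), while you phrase it by contraposition, which is the same content. Your appended sharper claim that both sums diverge is also right but goes beyond the lemma as stated; it is precisely Proposition \ref{P012}, which the paper derives separately from Lemma \ref{P215} by the same $P-N$ cancellation you use.
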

\begin{proof}
 Let the two sums, of positive
 and negative terms, have sequences $(c_{n})|_{n=\infty}$
 and $(d_{n})|_{n=\infty}$:
 $|\sum c_{n}| \leq |\sum d_{n}||_{n=\infty}$.

  $\sum |c_{n}| \leq \sum |d_{n}||_{n=\infty}$,
  $\sum |c_{n}| + |\sum d_{n}| \leq \sum |d_{n}| + | \sum d_{n}| |_{n=\infty}$,
  $\sum |c_{n}| + \sum |d_{n}| \leq 2 \sum |d_{n}||_{n=\infty}$,
  $\sum |a_{n}| \leq 2 \sum |d_{n}||_{n=\infty}$,
  $\infty \leq 2 \sum |d_{n}||_{n=\infty}$,
 $| \sum d_{n}||_{n=\infty} = \infty$ diverges.
\end{proof}
\bigskip
\begin{prop}\label{P012}
 If we partition a conditionally
 convergent sum, $\sum a_{n}|_{n=\infty}=0$,
 $\sum |a_{n}||_{n=\infty}=\infty$, 
 into positive and
 negative sums, then both of these sums will diverge.
\end{prop}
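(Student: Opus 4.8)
The plan is to leverage Lemma \ref{P215}, which already yields divergence of one of the two partition sums, and then bootstrap to the other using the hypothesis that the original sum converges. First I would fix notation: write $(c_n)|_{n=\infty}$ for the subsequence of positive terms of $(a_n)$ and $(d_n)|_{n=\infty}$ for the sequence of absolute values of the negative terms, so that by the partition bookkeeping of Section \ref{S16} (Theorem \ref{P210}, Theorem \ref{P212}) we have $\sum a_n|_{n=\infty} = \sum c_n|_{n=\infty} - \sum d_n|_{n=\infty}$ and $\sum |a_n||_{n=\infty} = \sum c_n|_{n=\infty} + \sum d_n|_{n=\infty}$. A preliminary remark is that both $(c_n)$ and $(d_n)$ are genuinely infinite sequences at infinity: if one of them had only finitely many terms, then $\sum a_n$ and $\sum |a_n|$ would differ at infinity by only a finite amount, forcing them to have the same convergence behaviour and contradicting conditional convergence.

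By Lemma \ref{P215}, at least one of $\sum c_n|_{n=\infty}$, $\sum d_n|_{n=\infty}$ equals $\infty$; by the symmetry of the roles of $c_n$ and $d_n$ we may assume $\sum d_n|_{n=\infty} = \infty$ diverges. It remains to show $\sum c_n|_{n=\infty} = \infty$ as well. By Proposition \ref{P049} the convergence sum $\sum c_n|_{n=\infty}$ is an infinireal, hence either $0$ or $\infty$; suppose for contradiction that $\sum c_n|_{n=\infty} = 0$ converges. Then $\sum d_n \succ \sum c_n|_{n=\infty}$, and applying non-reversible arithmetic to $\sum a_n|_{n=\infty} = \sum c_n|_{n=\infty} - \sum d_n|_{n=\infty}$ gives $\sum a_n|_{n=\infty} = -\sum d_n|_{n=\infty}$, so $\sum a_n|_{n=\infty}$ has infinite magnitude and $\sum a_n|_{n=\infty} \notin \Phi$. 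This contradicts the hypothesis $\sum a_n|_{n=\infty} = 0$. Therefore $\sum c_n|_{n=\infty} = \infty$ diverges, and hence both partition sums diverge.

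I expect the main obstacle to be foundational rather than combinatorial: namely, being careful that the partition identities $\sum a_n = \sum c_n - \sum d_n$ and $\sum|a_n| = \sum c_n + \sum d_n$ are legitimate at infinity (each of the subsequences $(c_n)$, $(d_n)$ may need to be contiguously rearranged into a monotonic convergence sum via Theorem \ref{P210} and Theorem \ref{P212}), and that the cancellation step $\sum c_n - \sum d_n = -\sum d_n$ is exactly the non-reversible subtraction $a - b = -b$ when $b \succ a$ from \cite[Part 5]{cebp21}. Once those points are granted, the argument reduces to the short contradiction above, and combined with Lemma \ref{P215} it establishes the proposition.
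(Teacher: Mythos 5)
Your proposal is correct and follows essentially the same route as the paper: invoke Lemma \ref{P215} to get one divergent partial sum, then use $\sum a_{n}|_{n=\infty}=0$ to force the other to diverge. The only differences are cosmetic — you keep the negative part as absolute values and argue by contradiction via non-reversible arithmetic, whereas the paper keeps signed terms and reads off $\sum c_{n}|_{n=\infty}=-\infty$ directly from $+\infty+\sum c_{n}|_{n=\infty}=0$.
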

\begin{proof}
By Lemma \ref{P215},
 let $\sum d_{n}|_{n=\infty}=\pm \infty$ be the divergent sum,
 when
 $\sum a_{n}|_{n=\infty} = \sum c_{n}|_{n=\infty} + \sum d_{n}|_{n=\infty}=0$ converges.
 Case $\sum d_{n}|_{n=\infty} = +\infty$, $+\infty + \sum c_{n}|_{n=\infty}=0$.
 $\sum c_{n}|_{n=\infty} = -\infty$ then $\sum c_{n}|_{n=\infty}$ diverges. 
 Similarly if $\sum d_{n}|_{n=\infty} = -\infty$, $\sum c_{n}|_{n=\infty}=+\infty$ diverges. 
\end{proof}
\bigskip
\begin{theo}\label{P216}
Riemann’s Rearrangement Theorem 
 If $\sum_{k=1}^{\infty} a_{k}$ is conditionally convergent, and $\alpha$ a given real number.
 Then there exists a rearrangement of the terms in $\sum_{k=1}^{n} a_{n}$ whose terms sum to $\alpha$.
\end{theo}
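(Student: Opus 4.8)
The plan is to transport the classical zig-zag (greedy) construction into the convergence-sum setting. First I would invoke Proposition \ref{P012}: writing the conditionally convergent series $\sum a_k$ as the interleaving of its subsequence $(c_n)$ of positive terms and its subsequence $(d_n)$ of negative terms (in their original relative orders), both one-sided sums diverge, $\sum c_n|_{n=\infty}=+\infty$ and $\sum d_n|_{n=\infty}=-\infty$. Since $\sum_{k=1}^{\infty} a_k$ converges, its general term is infinitesimal at infinity, so $c_n|_{n=\infty}=0$ and $d_n|_{n=\infty}=0$ as well.

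Next I would build the rearrangement by a variable period $\tau(n)$ in the sense of Definition \ref{DEF211} realising the greedy rule: append the terms $c_1,c_2,\ldots$ in order until the running partial sum first strictly exceeds $\alpha$; then append $d_1,d_2,\ldots$ in order until the running partial sum first drops below $\alpha$; then resume with the unused positive terms, and so on. Because each one-sided sum diverges to $\pm\infty$, every phase terminates after finitely many new terms, and every term of $(c_n)$ and of $(d_n)$ is eventually consumed exactly once; hence the resulting series $\sum a_n'$ is a genuine (contiguous at the block level) rearrangement of $\sum a_n$ in the sense of Section \ref{S16}. Let $(s_n)$ denote its sequence of partial sums.

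Then I would control the oscillation. If $t_m$ denotes the final term appended during phase $m$, then at the end of phase $m$ one has $|s - \alpha| \le |t_m|$ for the corresponding partial sum $s$, and between the ends of phases $m$ and $m+1$ every partial sum lies in the interval determined by these two crossing values, so it stays within $\max(|t_m|,|t_{m+1}|)$ of $\alpha$. Since $t_m$ is drawn from $(c_n)$ or $(d_n)$ with strictly increasing index, $t_m|_{m=\infty}=0$; hence $0 \le |s_n - \alpha| \le \sup_{k \ge n}|t_{m(k)}|$, where $m(k)$ is the phase containing the $k$th appended term, and the right side is infinitesimal at infinity. By the sandwich principle the sequence $(s_n)|_{n=\infty}$ converges with $s_n|_{n=\infty}=\alpha$; transferring $(\ast G,\Phi)\mapsto(\mathbb{R},0)$ gives $\sum a_n' = \alpha$, which is the claim.

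The main obstacle I expect is making the greedy construction rigorous inside the infinitary framework: the stopping indices of the successive phases form a doubly-infinite family that must be organised into one legitimate variable period $\tau(n)$ over $\mathbb{J}_{\infty}$, and one must verify that the procedure is an exhaustive rearrangement (a bijection of indices) rather than an omission — that neither $(c_n)$ nor $(d_n)$ is used up prematurely, which is exactly where the \emph{two-sided} divergence of Proposition \ref{P012} is indispensable. A secondary subtlety is that the rearranged series is manifestly non-monotonic, so Criterion E3 does not apply to it directly; the whole convergence argument must be carried through the partial-sum sequence $(s_n)|_{n=\infty}$ and its limit $\alpha$, not through a convergence sum evaluated at a single point at infinity.
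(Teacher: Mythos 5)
Your proposal is correct and follows essentially the same route as the paper: the classical greedy zig-zag construction carried out in $*G$ and transferred back to $\mathbb{R}$, with the positive/negative partition and the divergence of both one-sided sums (Proposition \ref{P012}) guaranteeing exhaustion. You supply more detail than the paper's proof does --- in particular the explicit oscillation bound $|s_n-\alpha|\le\sup|t_{m(k)}|$ and the observation that the argument must run through the partial-sum sequence rather than a single convergence sum at infinity --- but these are completions of the same argument, not a different one.
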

\begin{proof}
 The proof is almost the same as \cite{galanor},
 but with the $*G$ number system.
 Construct an algorithm that leads to
 a sum infinitesimally close to $\alpha$, which,  
 when transferred back to $\mathbb{R}$ is equal
 to $\alpha$. 
 Partition $a_{n}$ respectively into positive and negative sequences, monotonically
 decreasing in magnitude, $(c_{n})$ and $(d_{n})$.  
 Have integer variables $i$ for the current index into $(c_{n})$ and $j$ for 
 the current index into $(d_{n})$.
 $s_{0}=0$;
 If $s_{n} \lt \alpha$ then $s_{n+1} = c_{i} + s_{n}$
 and $i=i+1$ increments $i$;
 else $s_{n+1} = d_{j} + s_{n}$ and $j=j+1$ increments $j$.
 $s_{n}|_{n=\infty} \simeq \alpha$ generates a sum infinitesimally close to $\alpha$.
\end{proof}
\bigskip
\begin{theo}\label{P217}
``If $\sum a_{\nu}$ is a convergent, but not
 an absolutely convergent, series,
 then there are arrangements,
 $\sum a_{\nu}'$, of it that
 diverge." \cite[p.80 Theorem 5]{knopp}
\end{theo}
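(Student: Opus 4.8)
The plan is to leverage Proposition \ref{P012} together with the rearrangement machinery of Section \ref{S16}, constructing an explicit divergent rearrangement by the same algorithmic idea used for Riemann's theorem (Theorem \ref{P216}) and in Example \ref{MEX212}, but with a target that marches off to infinity rather than sitting at a fixed real $\alpha$.

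First I would partition the sequence $(a_{\nu})|_{\nu=\infty}$ into its positive subsequence $(c_{n})|_{n=\infty}$ and its negative subsequence $(d_{n})|_{n=\infty}$, each kept in the order in which it occurs in $(a_{\nu})$. Since $\sum a_{\nu}$ is conditionally convergent, Lemma \ref{P215} and Proposition \ref{P012} give $\sum c_{n}|_{n=\infty}=+\infty$ and $\sum d_{n}|_{n=\infty}=-\infty$; equivalently, the finite partial sums $C_{m}=\sum_{n=1}^{m}c_{n}$ and $\sum_{n=1}^{m}|d_{n}|$ are both unbounded. Also, $a_{\nu}|_{\nu=\infty}=0$ as a necessary condition for convergence, so $c_{n}\to 0$ and $d_{n}\to 0$.

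Next I would describe the rearrangement as a variable periodic sum built from a variable periodic sequence (Definition \ref{DEF211}): carry index counters $i$ into $(c_{n})$ and $j$ into $(d_{n})$, and for each stage $k=1,2,\ldots$ append just enough fresh positive terms $c_{i},c_{i+1},\dots$ for the running partial sum to first exceed $k$ — possible because $C_{m}$ is unbounded and only finitely many terms are needed at each stage — and then append exactly one negative term $d_{j}$. Because the thresholds $k$ grow without bound every positive term is eventually used, and since one negative term is consumed per stage every negative term is eventually used, so the resulting series $\sum a_{\nu}'$ is a genuine rearrangement of $\sum a_{\nu}$.

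Finally I would show $\sum a_{\nu}'$ diverges in the sense of Criterion E3'. Grouping $\sum a_{\nu}'$ into the contiguous blocks $b_{k}$ given by the $k$-th stage, the partial sums taken at the block boundaries exceed $k-\varepsilon_{k}$ with $\varepsilon_{k}=|d_{j(k)}|\to 0$, so they form an unbounded, eventually increasing subsequence; moreover within a block the running sum never drops more than $\varepsilon_{k}$ below the value attained at the end of that block's positive run. Hence the full partial-sum sequence of $\sum a_{\nu}'$ has both $\liminf$ and $\limsup$ equal to $+\infty$, giving $\sum a_{\nu}'|_{n=\infty}=\infty\notin\Phi$, which fails condition E3'.4, so the rearranged series diverges. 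The hard part will be this last step: passing rigorously from the finite-stage algorithmic construction to the ``sum at infinity'' / infireal language — in particular arguing that the partial-sum sequence viewed at $n=\infty$ genuinely equals $\infty$ and does not merely possess an unbounded subsequence, which is exactly where the $c_{n},d_{n}\to 0$ control on the per-stage overshoot and undershoot is needed; the partition step and the algorithm itself are routine given Proposition \ref{P012} and the template of Theorem \ref{P216}.
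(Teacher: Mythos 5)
Your proof is correct, but note first that the paper supplies no proof of Theorem \ref{P217} at all: it is quoted verbatim from Knopp, and the only argument in the paper is for the at-infinity analogue, Theorem \ref{P218}. Measured against that proof, your route is genuinely different in its construction even though both start from the same decomposition (Lemma \ref{P215} and Proposition \ref{P012} giving divergence of both the positive part $\sum c_{n}$ and the negative part $\sum d_{n}$). The paper builds contiguous blocks $b_{n}=c_{n}+\sum_{k=k_{0}}^{k_{1}}d_{k}$, each of a single sign and bounded away from zero by a fixed real $\alpha\neq 0$, and then concludes from $\sum b_{n}\geq\sum\alpha=\infty$; this is economical but it only controls the partial sums at block boundaries, so it implicitly leans on the contiguous rearrangement theorem (Theorem \ref{P210}) to transfer divergence of the block series back to the term-by-term rearrangement. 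You instead run the Riemann-style escalating-threshold algorithm of Theorem \ref{P216} with targets $k\to\infty$, which buys a stronger and more self-contained conclusion: you verify that \emph{every} partial sum in stage $k$ stays above roughly $k-1-\varepsilon_{k-1}$, so the full partial-sum sequence tends to $+\infty$ rather than merely possessing an unbounded subsequence of block sums. (As a minor simplification, you do not actually need $d_{n}\to 0$ for this last step; boundedness of the terms, which follows from $a_{\nu}\to 0$, already gives $k-1-M\to\infty$.) Your version also proves the classical statement \ref{P217} directly, whereas the paper proves only \ref{P218} and leaves the transfer to the finite-indexed series implicit.
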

\bigskip
\begin{theo}\label{P218}
  If $\sum a_{n}|_{n=\infty}$
 is conditionally convergent,
 $\sum a_{n}|_{n=\infty}=0$,
 $\sum |a_{n}||_{n=\infty}=\infty$,
 then there exists rearrangements
 of it such that 
 $\sum a_{n}'|_{n=\infty}=\infty$ diverges.
\end{theo}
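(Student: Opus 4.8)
The plan is to adapt the standard Knopp-style construction (the companion of Theorem \ref{P216}) so that the running sum is driven to $+\infty$, and then to read off divergence from the E3' machinery. First I would split $(a_{n})|_{n=\infty}$ into its positive contiguous subsequence $(c_{n})|_{n=\infty}$ and its negative contiguous subsequence $(d_{n})|_{n=\infty}$, keeping the original relative order of the terms. Conditional convergence together with Proposition \ref{P012} gives $\sum c_{n}|_{n=\infty}=+\infty$ and $\sum d_{n}|_{n=\infty}=-\infty$; moreover $\sum a_{n}|_{n=\infty}=0$ forces $a_{n}|_{n=\infty}=0$ by Theorem \ref{P014}, so in particular $c_{n}|_{n=\infty}=0$ and $d_{n}|_{n=\infty}=0$.

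Next I would build the rearrangement $\sum a_{n}'$ block by block. Maintain indices $i$ into $(c_{n})$ and $j$ into $(d_{n})$, both starting at $1$. At stage $k$: append the consecutive terms $c_{i},c_{i+1},\ldots,c_{i'}$ where $i'$ is the least index with $c_{i}+\cdots+c_{i'}\geq 1$ — such an $i'$ exists because the tail $\sum_{m\geq i}c_{m}$ still diverges and hence, by condition E3'.5, can be made arbitrarily large by taking enough of its terms — then advance $i$ to $i'+1$, append the single term $d_{j}$, and advance $j$ to $j+1$. Because exactly one negative term is consumed per stage and the stage index ranges over all of $\mathbb{N}_{\infty}$, every $c_{m}$ and every $d_{m}$ is eventually used, so $\sum a_{n}'$ is a genuine rearrangement of $\sum a_{n}$: it samples the positive partition at a higher rate than the negative one, in the spirit of Section \ref{S16}, while still exhausting both partitions.

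Finally I would group $\sum a_{n}'$ contiguously into the blocks $e_{k}=(c_{i}+\cdots+c_{i'})+d_{j}$. Then $e_{k}\geq 1+d_{j}$ and, since $d_{j}|_{k=\infty}=0$, we get $e_{k}\geq 1|_{k=\infty}$; so by the comparison test (Theorem \ref{P010}), from $\sum 1|_{k=\infty}=\infty$ we conclude $\sum e_{k}|_{k=\infty}=\infty$, i.e.\ the block partial sums are unbounded. Those block partial sums $e_{1}+\cdots+e_{k}$ form a subsequence of the partial sums of $\sum a_{n}'$, so the latter sequence cannot converge; hence $\sum a_{n}'|_{n=\infty}=\infty$ diverges by Definition \ref{DEF008}. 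Projecting $*G\mapsto\overline{\mathbb{R}}$ yields the claimed divergent rearrangement of $\sum_{k=1}^{\infty}a_{k}$.

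The main obstacle I expect is not the divergence estimate, which is immediate once the blocks are in place, but making precise — at infinity — that the interleaving really is a rearrangement: one must check that no negative term is deferred indefinitely (guaranteed since each stage inserts exactly one $d_{j}$) and that each positive block is a finite contiguous piece (guaranteed by E3'.5 applied to the still-divergent tail $\sum_{m\geq i}c_{m}$), so that the grouped series $\sum e_{k}$ is a legitimate contiguous rearrangement whose terms are infinireals to which Theorem \ref{P010} applies.
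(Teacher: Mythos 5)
Your proposal is correct and follows essentially the same route as the paper: partition into the positive and negative subsequences (both divergent by Proposition \ref{P012}/Lemma \ref{P215}), form contiguous blocks each consisting of a chunk of one partition plus a single term of the other so that every block is bounded below by a fixed positive real, and conclude divergence by comparison with $\sum \alpha$. The only cosmetic difference is that the paper attaches one term of the non-divergent-role partition to a chunk of the divergent one while you do the mirror image, and you are somewhat more careful than the paper in verifying that the interleaving exhausts both partitions.
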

\begin{proof}
 By construction. 
 Using Lemma \ref{P215} we can partition $(a_{n})$
 into $(c_{n})$ and $(d_{n})$ where $\sum d_{n}|_{n=\infty}=\pm\infty$
 diverges and the sign is dependent on whether the
 negative or positive group diverges.

 Choose $b_{n} = c_{n}+ \sum_{k=k_{0}}^{k_{1}} d_{k}$,
 where $\sum_{k=k_{0}}^{k_{1}} d_{k}$
 is a contiguous sum of $d_{n}$ terms and 
 $|b_{n}|$ has a lower real bound $\alpha \neq 0$.
 Since $\sum d_{n}=\pm\infty$ this is always possible.
 As we can arbitrarily increase $k_{1}$, many of these
 $b_{n}$ rearrangements are possible.
 Have $b_{n}$ either all positive or all negative.

 Case $b_{n} \gt 0$.
 $b_{n} \geq \alpha$, $\sum b_{n} \geq \sum \alpha|_{n=\infty}$,
 $\sum b_{n}|_{n=\infty} \geq \infty$,
 $\sum b_{n}|_{n=\infty}=\infty$ diverges.
 Case $b_{n} \lt 0$.
 $b_{n} \leq \alpha$, $\sum b_{n} \leq \sum \alpha|_{n=\infty}$,
 $\sum b_{n}|_{n=\infty} \leq -\infty$,
 $\sum b_{n}|_{n=\infty}=-\infty$ diverges.
 Many rearrangements which diverge were found.
\end{proof}
 
We now explore the chain rule,
 which varies the rate of counting,
 dependent on the integration variable.
\bigskip
\begin{mex}\label{MEX218}
$\int \frac{1}{1+2n}dn|_{n=\infty}$,
 let $u = 1+2n$, $\frac{du}{dn}=2$
 then $\int \frac{1}{u}dn$
 $= \int \frac{1}{u}\frac{dn}{du}du$
 $=\frac{1}{2}\int\frac{1}{u}du$
 $=\frac{1}{2}\mathrm{ln}\,u|_{n=\infty}$
 
Consider the same integral, but integrate
 with another variable.
 We find the result
 of a change in unequal variables
 changed the integral result.

 $\int \frac{1}{1+2n}dn$
 $= 2 \int \frac{1}{2+4n}dn$,
 let $v = 2+4n$, $\frac{dv}{dn} = 4$,
 $2 \int \frac{1}{v}dn$
 $=2 \int \frac{1}{v}\frac{dn}{dv}dv$
 $= \frac{1}{2}\mathrm{ln}\,v|_{n=\infty}$

However
 $\frac{1}{2}\mathrm{ln}\,u \neq \frac{1}{2}\mathrm{ln}\,v|_{n=\infty}$
 as $1+2n \neq 2 + 4n|_{n=\infty}$.
\end{mex}
\bigskip
\begin{conjecture}
 A periodic sum $\tau$ can have
 its sums interchanged, if the change of
 variable stays the same.
\[ \sum a_{n} = \sum \sum_{k=1}^{\tau} b_{k,n}|_{n=\infty} = \sum_{k=1}^{\tau}\sum b_{k,n} = \sum_{k=1}^{\tau} \int b_{k}(n)\,dn|_{n=\infty} \]
\end{conjecture}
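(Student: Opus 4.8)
The plan is to treat the three asserted equalities separately and to make explicit the hypothesis under which the middle one — the actual content — holds. First I would dispose of the leftmost equality $\sum a_{n} = \sum\sum_{k=1}^{\tau} b_{k,n}|_{n=\infty}$: it is nothing more than the definition of a contiguous periodic sum. Writing $b_{k,n} = a_{\tau n + k - 1}$, Proposition \ref{P208} gives $(a_{n})|_{n=\infty} = (a_{\tau n}, a_{\tau n+1}, \ldots, a_{\tau n + \tau - 1})|_{n=\infty}$, and applying the sum operator to this contiguous periodic partition yields $\sum a_{n} = \sum\big(\sum_{k=1}^{\tau} b_{k,n}\big)|_{n=\infty}$. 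No analysis is needed at this stage.

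Next comes the interchange $\sum\sum_{k=1}^{\tau} b_{k,n} = \sum_{k=1}^{\tau}\sum b_{k,n}|_{n=\infty}$. For this I would work in the paper's standing setting of positive series, so that each $(b_{k,n})|_{n=\infty}$ is nonnegative and monotonic; by Criterion E3.2 each $\sum b_{k,n}|_{n=\infty}$ is then an infinireal, realizing to $0$ or $\infty$ by Proposition \ref{P049}. Over any finite-or-infinite index interval $[n_{0},n_{1}]$ the identity $\sum_{n_{0}}^{n_{1}}\sum_{k=1}^{\tau} b_{k,n} = \sum_{k=1}^{\tau}\sum_{n_{0}}^{n_{1}} b_{k,n}$ is merely reindexing a finite-by-(possibly infinite) double sum of nonnegative terms; and because $\tau$ is finite, the realization $\mathbb{R}_{\infty} \mapsto \{0,\infty\}$ commutes with it: a sum of $\tau$ zeros is zero, and if any summand realizes to $\infty$ so does the total. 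This yields the interchange. Finally I would convert each inner sum to an integral: thread, by Definition \ref{DEF202}, a continuous monotonic $b_{k}(n)$ through $(b_{k,n})|_{n=\infty}$ with $b_{k}(n)=b_{k,n}$ on $\mathbb{J}_{\infty}$, \emph{using the common variable $n$ for every $k$}; Theorem \ref{P065} then gives $\sum b_{k,n} = \int b_{k}(n)\,dn|_{n=\infty}$, and summing over $k=1,\ldots,\tau$ produces the rightmost equality. The insistence on the same change of variable enters exactly here: Example \ref{MEX218} shows $\int \frac{1}{1+2n}\,dn \neq 2\int \frac{1}{2+4n}\,dn|_{n=\infty}$ because $1+2n \neq 2+4n|_{n=\infty}$, so reparametrizing the individual $b_{k}$ by mutually incompatible rescalings would break the step.

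The hard part will be Step~2 outside the positive monotonic regime. If the $b_{k,n}$ are allowed to change sign, or if two partition sums diverge with opposite infinite sign while $\sum a_{n}$ still converges — precisely the conditionally convergent situation of Lemma \ref{P215}, Proposition \ref{P012}, and Example \ref{MEX212} — then $\sum_{k=1}^{\tau}\sum b_{k,n}|_{n=\infty}$ is an indeterminate $\infty-\infty$ and the interchange genuinely fails; this is the very reason the statement is posed as a conjecture. So the residual work is to isolate the minimal hypothesis (beyond ``$\sum a_{n}$ is a convergence sum'') forcing each $\sum b_{k,n}|_{n=\infty}$ to be a well-defined infinireal with compatible signs, and to show that keeping the change of variable the same is a natural sufficient condition — which I expect to require that the threaded $b_{k}(n)$ inherit their monotonicity from that of $a(n)$, so that the partition sums cannot split into opposing infinities.
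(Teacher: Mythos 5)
The paper offers no proof of this statement: it is posed as a conjecture, and the material immediately following it (Example \ref{MEX218} and Example \ref{MEX219}) exists precisely to illustrate why it is only a conjecture. So the question is whether your argument proves the conjecture as stated, and it does not; the gap is the one you flag in your final paragraph, but it is worth being blunt about how large it is. In the positive monotonic regime you work in, each column sum $\sum b_{k,n}|_{n=\infty}$ is an infinireal of the same sign, and under the realization $\mathbb{R}_{\infty}\mapsto\{0,\infty\}$ a finite reparametrization of a single column (say replacing $\frac{1}{1+2n}$ by $\frac{2}{2+4n}$) perturbs its integral only by a bounded amount, which cannot turn a $0$ into an $\infty$ or vice versa. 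So in the only case you actually prove, the conjecture's distinctive hypothesis --- ``if the change of variable stays the same'' --- is inert: the chain of equalities holds with or without it. The case the hypothesis is about is Example \ref{MEX219}, where the columns $\frac{1}{1+2n}$, $-\frac{1}{2+4n}$, $-\frac{1}{4+4n}$ individually yield divergent integrals whose logarithmic leading terms must cancel, and where an inconsistent choice of variable leaves a residual $\frac{1}{4}\ln\frac{1}{4}\notin\mathbb{R}_{\infty}$ violating Criterion E3. That is exactly the case you defer.

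Moreover, your Step 2 in the mixed-sign case is not merely ``residual work''; it is where the statement may fail outright as an identity between realized values, since $\sum_{k=1}^{\tau}\sum b_{k,n}|_{n=\infty}$ can be a formal $\infty-\infty$ even when $\sum a_{n}|_{n=\infty}=0$ (Lemma \ref{P215}, Proposition \ref{P012}). An honest proof must either (i) read the middle and rightmost expressions in $*G$ before realization, carrying out the cancellation of divergent parts there under the common-variable constraint, or (ii) add the hypothesis that each column sum is separately an infinireal. You gesture at both options but commit to neither, so what you have is a proof of a weaker, essentially trivial special case together with a correct diagnosis of why the general statement remains a conjecture. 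The steps you do carry out (contiguity via Proposition \ref{P208}, finite interchange for nonnegative terms, Theorem \ref{P065} applied column by column) are sound within the paper's framework.
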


The sums instead of being summed contiguously,
 are rearranged into period columns.
 If these columns can be treated independently,
 this is advantageous as we can use 
 integrals to evaluate them.

However, because of our usage of the chain rule,
 we find here an example where we cannot
 apply the chain rule to one part of the
 partition differently to the others.
 Where infinity is concerned, we
 need to be more cautious. 
\bigskip
\begin{mex}\label{MEX219}
Does $1-\frac{1}{2} - \frac{1}{4} + \frac{1}{3} - \frac{1}{6} - \frac{1}{8} + \frac{1}{5} - \ldots$ converge or diverge?
 \cite[p.105 3.7.2]{kaczor} solves the sum;
 however we use this example to demonstrate theory.

$\sum ( \frac{1}{1+2n} - \frac{1}{2+4n} - \frac{1}{4+4n})|_{n=\infty}$
 $=\int \frac{1}{1+2n}dn - \int \frac{1}{2+4n}dn - \int \frac{1}{4+4n} dn|_{n=\infty}$
 $= \frac{1}{2} \mathrm{ln}(1+2n) - \frac{1}{4}\mathrm{ln}(2+4n) - \frac{1}{4} \mathrm{ln}(4+4n)|_{n=\infty}$
$= \frac{1}{4} ( 2 \, \mathrm{ln}(1+2n) - \mathrm{ln}(2+4n) - \mathrm{ln}(4+4n))|_{n=\infty}$
 $= \frac{1}{4} \mathrm{ln} \frac{ (1+2n)^{2}}{8(1+2n)(n+1)}|_{n=\infty}$
 $= \frac{1}{4} \mathrm{ln} \frac{ 1+2n}{8(n+1)}|_{n=\infty}$
 $=\frac{1}{4} \mathrm{ln}\frac{1}{4}$ 
 $=-0.34657359027997\ldots$
 $\neq 0$ or $\infty$ which 
 contradicts Criterion E3.

 We noted in Example \ref{MEX218} a different
 change in variable produced a
 different result.
 Keeping all the sums with variable $4n$,
 perform the same integration as before.

Integrating with the same rate for all sums.
$\sum ( \frac{1}{1+2n} - \frac{1}{2+4n} - \frac{1}{4+4n})|_{n=\infty}$
$=\sum ( \frac{2}{2+4n} - \frac{1}{2+4n} - \frac{1}{4+4n})|_{n=\infty}$
$=\sum ( \frac{1}{2+4n} - \frac{1}{4+4n})|_{n=\infty}$
$=\int \frac{1}{2+4n}dn - \int \frac{1}{4+4n}dn|_{n=\infty}$
 $= \frac{1}{4}(\mathrm{ln}(2+4n) - \mathrm{ln}(4+4n))|_{n=\infty}$
 $= \frac{1}{4} \mathrm{ln} \frac{2+4n}{4+4n}|_{n=\infty}$
 $=0$ converges.
 The sum agrees with the theory, and the
 correct result is found.
\end{mex}
\section{Ratio test and a generalization with convergence sums} \label{S17}
For positive series
 convergence sums we generalise the ratio test
 in $*G$ the gossamer numbers.
 Via a transfer principle, within the tests we construct variations.
 However, most significantly we connect and show
 the generalization to be equivalent to the boundary test. 
 Hence, the boundary test includes the generalized tests: the ratio test, Raabe's test,
 Bertrand's test and others.
\subsection{Introduction}\label{S1701}
Of the convergence sums (Section \ref{S01}) $\sum a_{n}|_{n=\infty}$, there exists
  two generalizations of convergence and divergence
 tests
 involving 
 the ratio of successive terms.
 Both are equivalent, and different expansions of
 $\frac{a_{n}}{a_{n+1}}$ and $\frac{a_{n+1}}{a_{n}}$.

 For example consider Raabe's test(Theorem \ref{P221}) for convergence.
 If $n (\frac{ a_{n}}{a_{n+1}}-1)|_{n=\infty} \gt 1$ is associated
 with the $\frac{a_{n}}{a_{n+1}}$ generalization,
 and 
 $n (\frac{ a_{n+1}}{a_{n}}-1)|_{n=\infty} \lt -1$ is
 associated with the $\frac{a_{n+1}}{a_{n}}$ generalization.
 Both tests can be rearranged to show the other.

 We find by considering the sum 
 in the more detailed number system with
 infinitesimals and infinities in $*G$ \cite[Part 1]{cebp21},
 that we can multiply and divide terms in the ratio and generalized tests,
 thus modifying the tests. For example, express
 the ratio test not as a ratio, but as a comparison without fractions,
 with no denominators.

 In the final comparison of the test,
 the numbers are projected to the extended real numbers,
 $*G \mapsto \overline{\mathbb{R}}$. The extension is used to include cases, where for example
 in the ratio test, the ratio is $0$ where the denominator is an infinity. Multiplying
 the denominator out results in a comparison of two numbers which differ by an infinity,
 hence cannot be compared in the reals.  

 While the ratio test is phrased as being in $\mathbb{R}$,
 in actuality the ratio test is a ratio between infinitesimals
 and infinities,
 none of which exist in $\mathbb{R}$ and the ratio is projected back to $\mathbb{R}$.

 The standard ratio test does this via the limit, where the infinitesimals and infinities
 are realised in the ratio via a transfer \cite[Part 4]{cebp21}. 
 For example, $\frac{1}{n+1}$ and $\frac{1}{n}$ are infinitesimals,
 their limit is $1$ and corresponds to 
 the indeterminate case for the ratio test. 
 Their ratio, $\frac{n}{n+1} = \frac{n+1}{n+1}-\frac{1}{n+1}$
 $= 1 - \frac{1}{n+1}_{n=\infty} = 1$.
 The infinitesimal is realized in this process.
 
 Working in $*G$ gives the flexibility to consider
 the ratio test as not set in stone,
 but where terms can be multiplied and divided.
 This is something which the real number system alone, is not suited
 to, because it does not have infinitesimals or a transfer principle.
 Though, as we have seen, the transfer principle is applied
 via limits.

 Finally we make a direct connection with this generalized ratio
 test and the boundary test (See Section \ref{S18}),
 and show their
 equivalence.
 
 The boundary test can be proved from the generalized ratio test,
 or vice versa.
\subsection{The ratio test and variations} \label{S1702}
 We now consider the tests
 from the number system's
 perspective.
 Consider the ratio test and generalizations
 in $*G$. The test can be algebraically 
 reformed, and a transfer principle used to
 apply back to $\mathbb{R}$.

A sum $\sum a_{n}|_{n=\infty}$,
 by having a negative gradient $\frac{d a_{n}}{dn} \lt 0$
 and being positive,
 does not have to converge.
 However, the ratio test is in part a gradient test;
 we can transform
 the test to the continuous variable as a first derivative
 test. The ratio test is, therefore both a gradient and a magnitude test,
 the gradient being a necessary but not a sufficient condition.

 We say $(\overline{\mathbb{R}}, \lt )$ to 
 mean $(*G, \lt) \mapsto (\overline{\mathbb{R}}, \lt)$,
 as the algebra is in $*G$ and transferred to
 $\overline{\mathbb{R}}$,
 with $*G \mapsto \overline{\mathbb{R}}$ as the last step.
 By symmetry of the relation $\lt$,
 $(\overline{\mathbb{R}},\lt)$ implies
 $(\overline{\mathbb{R}},\gt)$.

 These examples demonstrate the equivalence of the ratio test
 and modified ratio test.
\bigskip
\begin{mex}
Consider $\sum \frac{1}{n}$ by the ratio test, Theorem \ref{P211}.
 Let $a_{n} = \frac{1}{n}$,
 $\frac{a_{n+1}}{a_{n}}|_{n=\infty}$
 $= \frac{n}{n+1}|_{n=\infty}$
 $=1$ the indeterminate case.

Using the modified ratio test, Theorem \ref{P219}, 
 $a_{n+1} \; z \; a_{n}$,
 $\frac{1}{n+1} \lt \frac{1}{n}$, however, transferring
 this to $\mathbb{R}$,
 $0 \lt 0$ is a contradiction, hence this is also
 an indeterminate case.
 $(*G, \lt) \not \mapsto (\mathbb{R},\lt)$.
\end{mex}
\bigskip
\begin{mex}
 By the ratio test,
 $a_{n} = \frac{1}{e^{n}}$,
 $\frac{a_{n+1}}{a_{n}}|_{n=\infty}$
 $=\frac{e^{n}}{e^{n+1}}|_{n=\infty}$
 $=\frac{1}{e} \lt 1$ converges.

By the modified ratio test, 
 $a_{n+1} \; z \; a_{n}$,
 $\frac{1}{e^{n+1}} \lt \frac{1}{e^{n}}|_{n=\infty}$,
 $\frac{1}{e} \lt 1$, converges.
 $(*G, \lt) \mapsto (\mathbb{R},\lt)$.
\end{mex}

By considering the ratio test in a higher dimension,
 $*G$ with infinitesimals and infinities,
 the test does not have
 to be as a ratio. We can multiply
 and divide the terms,
 then by a transfer principle realize the
 test in $\overline{\mathbb{R}}$.
 Since the variations 
 may be used as convergence tests, all have been
 stated as theorems.
\bigskip
\begin{theo}\label{P211}
 Let $a_{n} \in *G$, $(\overline{\mathbb{R}},\lt)$.
\[ \text{If } \frac{a_{n+1}}{a_{n}} \lt 1 \text{ then } \sum a_{n}|_{n=\infty}=0 \text{ converges.} \]
\[ \text{If } \frac{a_{n+1}}{a_{n}} \gt 1 \text{ then } \sum a_{n}|_{n=\infty}=\infty \text{ diverges.} \]
\end{theo}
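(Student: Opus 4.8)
The plan is to reduce the ratio test to a comparison with the geometric series, exploiting that a bound on $a_{n+1}/a_{n}$ at infinity telescopes into a geometric bound on $a_{n}$ itself. Throughout, the series is positive, $a_{n} \gt 0$, and the hypothesis ``$\frac{a_{n+1}}{a_{n}} \lt 1$ in $(\overline{\mathbb{R}},\lt)$'' is read, after realization $*G \mapsto \overline{\mathbb{R}}$, as the existence of a real $\rho \lt 1$ with $\frac{a_{n+1}}{a_{n}}|_{n=\infty} = \rho$ (the limit of the successive ratios); since this inequality is strict, I may fix a real $r$ with $\rho \lt r \lt 1$, so that for every infinite $n$ one has $a_{n+1} \leq r\,a_{n}$. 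The borderline value $\rho = 1$ is exactly the indeterminate case and is excluded from the statement.

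First I would pass to logarithms and the continuous representation. Setting $b_{n} = \mathrm{ln}\,a_{n}$, the inequality $a_{n+1} \leq r\,a_{n}$ becomes $b_{n+1} - b_{n} \leq \mathrm{ln}\,r \lt 0$; note this also shows $(a_{n})|_{n=\infty}$ is monotonic, so we may thread a continuous monotonic $a(x)$ through $(a_{n})$ (Proposition \ref{P063}, Definition \ref{DEF202}). By the sequence--derivative identification (Definition \ref{DEF203}, Theorem \ref{P203}) this reads $\frac{d}{dx}\mathrm{ln}\,a(x) \leq \mathrm{ln}\,r \lt 0$ at infinity. Integrating from a fixed infinite $x_{0}$ gives $\mathrm{ln}\,a(x) \leq \mathrm{ln}\,a(x_{0}) + (x - x_{0})\,\mathrm{ln}\,r$, hence $0 \lt a(x) \leq C\,r^{x}|_{x=\infty}$, where $C = a(x_{0})\,r^{-x_{0}}$ is a positive constant whose precise value is irrelevant by Theorems \ref{P059} and \ref{P060}. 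Restricting to integers, $0 \leq a_{n} \leq C\,r^{n}|_{n=\infty}$.

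The convergence case now follows by the comparison test. Since $|r| \lt 1$, Theorem \ref{P027} gives $\sum r^{n}|_{n=\infty} = 0$, and the constant $C$ is absorbed by Theorem \ref{P059}, so $\sum C\,r^{n}|_{n=\infty} = 0$; then $0 \leq \sum a_{n} \leq \sum C\,r^{n}|_{n=\infty} = 0$ forces $\sum a_{n}|_{n=\infty} = 0$ by Theorem \ref{P010}. For the divergence case the argument mirrors: if the realized ratio exceeds $1$, pick a real $r \gt 1$ with $a_{n+1} \geq r\,a_{n}$ for every infinite $n$, telescope as above to $a_{n} \geq C\,r^{n}|_{n=\infty}$ with $C \gt 0$; since $r^{n}|_{n=\infty} = \infty \neq 0$ we get $a_{n}|_{n=\infty} \neq 0$, and the nth term divergence test Theorem \ref{P013} yields $\sum a_{n}|_{n=\infty} = \infty$ (equivalently, compare against $\sum C\,r^{n}|_{n=\infty} = \infty$). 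Finally the conclusions are transferred $*G \mapsto \overline{\mathbb{R}}$; since they only assert $\sum a_{n}|_{n=\infty} \in \{0,\infty\}$, this transfer is harmless (Proposition \ref{P049}).

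The main obstacle I anticipate is the first step: justifying cleanly the move from the single realized inequality $\frac{a_{n+1}}{a_{n}}|_{n=\infty} \lt 1$ in $\overline{\mathbb{R}}$ to the \emph{uniform} pointwise bound $a_{n+1} \leq r\,a_{n}$ holding at \emph{every} infinite $n$ --- this is where the transfer principle and the meaning of $\lt$ under $*G \mapsto \overline{\mathbb{R}}$ must be invoked carefully, and why the strict inequality (rather than $\leq$, which would reintroduce the indeterminate case) is essential. The subsequent telescoping/integration and the harmlessness of the multiplicative constant are routine given Theorems \ref{P059}, \ref{P060}, \ref{P203} and the geometric--series result Theorem \ref{P027}.
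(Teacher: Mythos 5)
Your proof is correct in substance but follows a genuinely different route from the paper's. The paper's own proof is a one-line reduction: it clears the denominator in $*G$ to get $a_{n+1} \; z \; a_{n}|_{n=\infty}$ and invokes Theorem \ref{P219}, which in turn converts the difference $a_{n+1}-a_{n}$ into the derivative $\frac{da(n)}{dn}$ and lands on Theorem \ref{P220}; that theorem is finally justified either by substitution into the generalized ratio test (Theorems \ref{P224}/\ref{P225}, whose proof assumes the boundary test) or by the ``decelerating particle'' integration argument. You instead give the classical self-contained argument: realize the ratio as $\rho\lt 1$, interpolate $\rho\lt r\lt 1$, telescope to the geometric envelope $a_{n}\leq C r^{n}|_{n=\infty}$, and finish by comparison with $\sum r^{n}|_{n=\infty}=0$ (Theorems \ref{P027}, \ref{P059}, \ref{P010}), with the divergent case dispatched by the $n$th term test (Theorem \ref{P013}). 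What your route buys is independence from the boundary-test machinery --- it rests only on results proved earlier and without circularity --- plus an explicit quantitative bound $a_{n}\leq Cr^{n}$ that the paper's chain never produces; what it costs is the extra step you correctly flag, namely upgrading the single realized inequality $\frac{a_{n+1}}{a_{n}}|_{n=\infty}\lt 1$ to the uniform bound $a_{n+1}\leq r\,a_{n}$ at every infinite $n$, which in this framework is exactly the statement that the ratio at every infinite index is infinitesimally close to $\rho$ and hence below $r$; the paper sidesteps this by never quantifying over infinite indices and instead pushing everything into the derivative formalism. Both proofs correctly exclude the realized-equality case as indeterminate.
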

\begin{proof}
Given $z \in \{ \lt, \gt \}$,
 in $*G$,
 $\frac{a_{n+1}}{a_{n}}|_{n=\infty} \; z \; 1$,
 $a_{n+1} \; z \; a_{n}|_{n=\infty}$,
 apply Theorem \ref{P219}.
\end{proof}
\bigskip
\begin{theo}\label{P219}
 $a_{n} \in *G$;
 $\,(\overline{\mathbb{R}},\lt)$.
\[ \text{If } a_{n+1} \lt a_{n}  \text{ then } \sum a_{n}|_{n=\infty}=0 \text{ converges.} \]
\[ \text{If } a_{n+1} \gt a_{n}  \text{ then } \sum a_{n}|_{n=\infty}=\infty \text{ diverges.} \]
\end{theo}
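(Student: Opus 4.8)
The plan is to reduce Theorem~\ref{P219} to the ratio form Theorem~\ref{P211} and then to settle the two cases by a geometric comparison (for convergence) and by the $n$th term test (for divergence). Since the series is positive, $a_{n}\succ 0$, so dividing the hypothesis $a_{n+1}\;z\;a_{n}|_{n=\infty}$, with $z\in\{<,>\}$, by $a_{n}$ is a legitimate manipulation in $*G$ and yields $\frac{a_{n+1}}{a_{n}}\;z\;1|_{n=\infty}$, while multiplying back by $a_{n}$ recovers the difference form; hence Theorems~\ref{P219} and~\ref{P211} are interderivable and it suffices to argue the ratio statement. As in Example~\ref{MEX037}, the relevant reading of the hypothesis is that, after realizing the comparison in $\overline{\mathbb{R}}$, one obtains a strict relation, i.e.\ $\frac{a_{n+1}}{a_{n}}|_{n=\infty}$ realizes to some $r\in\overline{\mathbb{R}}$ with $r\neq 1$.

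For the convergent case $r<1$: the equality $\frac{a_{n+1}}{a_{n}}|_{n=\infty}=r$ is a first-order recurrence at infinity, $a_{n+1}=r\,a_{n}|_{n=\infty}$, which I would solve by the separation/iteration technique of Section~\ref{S1503} to get the geometric germ $a_{n}=c\,r^{n}|_{n=\infty}$ for a bounded positive constant $c$. Because $r<1$ in $\overline{\mathbb{R}}$ we have $\sum r^{n}|_{n=\infty}=0$ by Theorems~\ref{P027} and~\ref{P033}, and the constant factor is irrelevant by Theorem~\ref{P059}, so $0\le\sum a_{n}\le\sum c\,r^{n}|_{n=\infty}=0$ and the comparison test Theorem~\ref{P010} gives $\sum a_{n}|_{n=\infty}=0$.

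For the divergent case $r>1$: then $a_{n+1}\succ a_{n}|_{n=\infty}$, so the positive sequence $(a_{n})|_{n=\infty}$ is strictly increasing at infinity. Since realization respects the shift $n\mapsto n+1$ (cf.\ Proposition~\ref{P064}), $a_{n}|_{n=\infty}$ cannot realize to $0$, for otherwise $a_{n+1}|_{n=\infty}$ would realize to $0$ too, contradicting the strict increase; hence $a_{n}|_{n=\infty}\neq 0$ and the $n$th term divergence test Theorem~\ref{P013} gives $\sum a_{n}|_{n=\infty}=\infty$. (Alternatively, $a_{n}=c\,r^{n}|_{n=\infty}$ with $r>1$ makes $\sum a_{n}|_{n=\infty}=\infty$ directly.)

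The step I expect to be the obstacle is the geometric bound $a_{n}=c\,r^{n}|_{n=\infty}$ in the convergent case: extracting it from a hypothesis phrased only in terms of the realized ratio is the $*G$ counterpart of the classical choice of $\rho$ with $r<\rho<1$, and it has to be carried out so that the bound itself survives realization. The failure of this step when $r=1$ is exactly the indeterminate case of the test --- the recurrence no longer forces $a_{n}$ onto a geometric germ, as witnessed by $a_{n}=\frac{1}{n}$ versus $a_{n}=\frac{1}{n^{2}}$ --- so the dichotomy in the theorem is genuinely tied to $r\neq 1$. Everything after the geometric bound is a routine application of the comparison test, the geometric and $p$-series results, and the $n$th term test.
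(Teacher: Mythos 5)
Your route is genuinely different from the paper's. The paper proves Theorem \ref{P219} by rewriting the hypothesis as a difference, $a_{n+1}-a_{n}\;z\;0|_{n=\infty}$, reading this as the sequence derivative $\frac{da_{n}}{dn}\;z\;0$, threading a continuous function through the sequence, and invoking the derivative test Theorem \ref{P220} (itself obtained either from the generalized ratio test \ref{P224}/\ref{P225}, hence ultimately the boundary test, or from the ``decelerating particle'' integration argument). You instead go the classical way: pass to the ratio form, compare with a geometric series in the convergent case, and use the $n$th term test \ref{P013} in the divergent case. Your divergence argument is clean and self-contained under the $(\overline{\mathbb{R}},\gt)$ reading of the hypothesis, and is more elementary than the paper's. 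Note also that the paper's logical order is the reverse of yours: Theorem \ref{P211} is derived \emph{from} \ref{P219}, so you must (as you do) supply an independent argument for the ratio statement rather than cite \ref{P211}, else the reduction would be circular.

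The obstacle you flag in the convergent case is a genuine gap, and the step as written is false rather than merely unproved: from $\frac{a_{n+1}}{a_{n}}|_{n=\infty}=r$ you cannot conclude $a_{n}=c\,r^{n}|_{n=\infty}$ for a bounded constant $c$ --- take $a_{n}=\frac{r^{n}}{n}$, whose ratio realizes to $r$ but for which the would-be ``constant'' is the infinitesimal $\frac{1}{n}$, outside the scope of Theorem \ref{P059}. What the argument needs, and what does follow, is only a one-sided bound: choose a real $\rho$ with $r\lt\rho\lt 1$; since the ratio realizes to $r$, one has $\frac{a_{n+1}}{a_{n}}\leq\rho|_{n=\infty}$, whence by iteration $a_{n}\leq c\,\rho^{n}|_{n=\infty}$, and the comparison test \ref{P010} against $\sum\rho^{n}|_{n=\infty}=0$ (Theorem \ref{P027}) finishes. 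Until that inequality replaces the asserted asymptotic equality, the convergent half of your proof is incomplete; with it, the proof goes through and has the merit of bypassing the boundary-test machinery on which the paper's own chain \ref{P219}$\to$\ref{P220}$\to$\ref{P225} depends.
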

\begin{proof}
Given $z \in \{ \lt, \gt \}$,
 in $*G$,
 $a_{n+1} \; z \; a_{n}|_{n=\infty}$,
 $a_{n+1}-a_{n} \; z \; 0|_{n=\infty}$, 
 $\frac{ d a_{n} }{dn}|_{n=\infty} \; z \; 0$,
 convert to the continuous domain,
 $\frac{ d a(n) }{dn}|_{n=\infty} \; z \; 0$,
 apply Theorem \ref{P220}.
\end{proof}
\bigskip
\begin{theo}\label{P220}
 $a(n) \in *G$; $\,(\overline{\mathbb{R}},\lt)$. 
\[ \text{If } \frac{d a(n)}{dn}|_{n=\infty} \lt 0 \text{ then } \int a(n)\,dn|_{n=\infty}=0 \text{ converges.} \]
\[ \text{If } \frac{d a(n)}{dn}|_{n=\infty} \gt 0 \text{ then } \int a(n)\,dn|_{n=\infty}=\infty \text{ diverges.} \]
\end{theo}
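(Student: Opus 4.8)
The plan is to push the statement back onto machinery already in place. By Theorem~\ref{P016} the convergence or divergence of $\int a(n)\,dn$ is settled at infinity, and by Theorem~\ref{P044} it is enough to evaluate the single point integral $\int a(n)\,dn|_{n=\infty}$. Criterion E3.2 makes this point integral an infinireal, so by E3.3/E3.4 it lies either in $\Phi$ (the integral converges) or in $\Phi^{-1}$ (it diverges), and these two alternatives are exclusive and exhaustive. Hence all that remains is to decide which alternative the sign of $\tfrac{da}{dn}|_{n=\infty}$ forces.

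I would do the divergent branch first, as it is the transparent one. If $\tfrac{da(n)}{dn}|_{n=\infty}\gt 0$ in $\overline{\mathbb R}$ then $a$ is increasing at infinity, so fixing an infinite base point $n_{0}$ we have $a(x)\ge a(n_{0})$ for $x\ge n_{0}$ at infinity, and once the strict relation is realized $a(n_{0})=:c$ is a genuine positive real. Comparing integrals via Theorem~\ref{P039}, $\int^{n} a(x)\,dx \ge \int^{n} c\,dx = c\,n$ at infinity; choosing $n_{0}$ so that $\int^{n_{0}}a$ is the negligible contribution (Proposition~\ref{P043}) gives $\int a(n)\,dn|_{n=\infty}=\int^{n}a(x)\,dx|_{n=\infty}\ge c\cdot\infty=\infty$, which can moreover be made arbitrarily large (E3.5), so the integral diverges.

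For the convergent branch, $\tfrac{da(n)}{dn}|_{n=\infty}\lt 0$ in $\overline{\mathbb R}$ says $a$ is decreasing at infinity. Here I would integrate the derivative inequality over an infinite interval $[n_{0},n_{1}]$ and then, following Proposition~\ref{P042} together with condition E3.6, choose the upper endpoint so that its contribution dominates and the remaining point integral $\int a(n)\,dn|_{n=\infty}$ can be made arbitrarily small; being an infinireal that is made arbitrarily small, it must lie in $\Phi$, so the integral converges. Theorems~\ref{P219} and~\ref{P211} then reduce, in turn, to this derivative test, so the modified ratio test and the ratio test drop out as corollaries.

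The genuine obstacle is this convergent branch: the naive reading ``$a$ decreasing $\Rightarrow \int a$ converges'' is false over $\mathbb R$ — $a(x)=1/x$ is decreasing yet $\int^{x}\tfrac1t\,dt=\ln x\to\infty$ — so the argument must exploit that the hypothesis is asserted in the strong sense $(*G,\lt)\mapsto(\overline{\mathbb R},\lt)$, i.e.\ that $\tfrac{da}{dn}|_{n=\infty}\lt 0$ survives realization as a \emph{strict} inequality rather than collapsing to $0=0$ (the phenomenon discussed around Example~\ref{MEX037}); cases where $\tfrac{da}{dn}$ realizes to $0$, such as $1/x$, simply fall outside the hypothesis, so the test is inherently one-directional. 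Making that exclusion precise, and showing that once it holds the ``arbitrarily small'' conclusion of E3.6 is legitimately available, is the delicate point; the remainder is the routine bookkeeping of choosing additive-identity endpoints as in Propositions~\ref{P042} and~\ref{P043}.
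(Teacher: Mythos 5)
Your divergent branch is sound and is essentially the paper's own argument: realizing $\frac{da}{dn}\gt 0$ strictly in $\overline{\mathbb{R}}$ gives a positive real lower bound $c$ for $a$ at infinity, whence $\int^{n}a(x)\,dx \geq cn|_{n=\infty}=\infty$. The convergent branch, however, contains a genuine gap that you have in effect named yourself without closing. You correctly observe that ``$a$ decreasing $\Rightarrow \int a$ converges'' fails for $a=1/x$, so the hypothesis must be read as a strict inequality surviving realization; but you then assert that once this exclusion is made, the ``arbitrarily small'' conclusion of E3.6 becomes legitimately available and the rest is bookkeeping. That availability is exactly the content of the theorem: E3.6 is a property characterizing convergent integrals, so invoking it to conclude $\int a(n)\,dn|_{n=\infty}\in\Phi$ is circular unless you first derive it from $\frac{da}{dn}|_{n=\infty}\lt 0$. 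Integrating the derivative inequality over an infinite interval and appealing to Proposition \ref{P042} only rearranges which endpoint survives (and note that in Proposition \ref{P042} it is the \emph{lower} endpoint that dominates for convergence, not the upper one as you state); it does not show the surviving point integral is an infinitesimal.

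The paper supplies the missing step in two independent ways, neither of which appears in your sketch. Its first proof reduces Theorem \ref{P220} to the already-established generalized ratio test (Theorems \ref{P224} and \ref{P225}), which rests on the boundary test. Its second proof is the argument your convergent branch needs: set $s(n)=\int a(n)\,dn$, so the hypothesis reads $\frac{d^{2}s}{dn^{2}}\lt 0$ strictly in $\overline{\mathbb{R}}$ (constant deceleration), while positivity of the integrand gives the one-sided constraint $\frac{ds}{dn}=a(n)\geq 0$ (the particle cannot move backwards). Integrating the strict second-derivative bound twice against that constraint yields $0\leq \frac{ds}{dn}\lt c$ and then $0\leq s(n)\lt cn+c_{2}$ with the strict relation preserved under transfer, i.e.\ $s(n)|_{n=\infty}\lt\infty$: the particle stops and the swept area is finite. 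It is precisely the interplay between the strictly realized deceleration and $a\geq 0$ that excludes $1/x$ (whose derivative realizes to $0$) and simultaneously delivers convergence; that interplay is the proof, not a delicate point to be deferred. As written, your convergent half establishes nothing beyond the dichotomy you set up at the outset.
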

\begin{proof}
 Substitute $m=1$ into Theorem \ref{P224} which is equivalent
 to Theorem \ref{P225} with the inequalities inverted.
 The equality case is discarded.
\end{proof}
\begin{proof}
 Although more complex, we find another proof
 combining integrating over relations and
 the transfer condition.

 Consider a particle
 undergoing constant deceleration, where the particle cannot
 move backwards it will stop (In the infinitesimal domain,
 the particle can still be moving).
 The area swept by the particle has similarly stopped.

 Expressing the conditions.
 Let $s(n) = \int a(n)\,dn$.
 Deceleration 
  in $\mathbb{R}:\, \frac{d^{2} s(n)}{d n^{2}} \lt 0$.
 The particle can only move forward.
 In $*G: \frac{d s(n)}{dn} \geq 0$.
\begin{align*}
 \frac{d^{2} s(n)}{d n^{2}}|_{n=\infty} \lt 0 \tag{Integrating} \\
 0 \leq \frac{d s(n)}{dn}|_{n=\infty} \lt c \tag{$c$ is positive, integrating} \\
 0 \leq s(n) \lt cn + c_{2}|_{n=\infty} \tag{$cn|_{n=\infty} \succ c_{2}$} \\
 0 \leq s(n) \lt cn|_{n=\infty} \tag{transfer preserving inequality as
 $(\overline{\mathbb{R}},\lt)$} \\
 0 \leq s(n)|_{n=\infty} \lt \infty \tag{$s(n)$ converges} \\
\end{align*}
Consider when the particle is under constant acceleration.
\begin{align*}
 \frac{d^{2} s(n)}{d n^{2}}|_{n=\infty} \gt 0 \tag{Integrating} \\
 \frac{d s(n)}{dn}|_{n=\infty} \geq c \tag{$c$ is positive, integrate} \\
 s(n) \geq cn + c_{2}|_{n=\infty} \tag{$cn+c_{2} = cn|_{n=\infty}$} \\
 s(n) \geq \infty \tag{$s(n)$ diverges}
\end{align*}
\end{proof}
 By threading a continuous function through
 the monotonic sequence $a_{n}$ we can show the above
 to be the ratio test.
 The gradient of $a(n)$ is the curvature of $s(n)$.

Consider a sum of positive terms.
 Then the sum, by always having terms added, is increasing.
 Threading a continuous function through the series,
 the function $a(n)$ is always positive.
 $s(n) = \int a(n)\,dn$, $\frac{ d s(n)}{dn} = \frac{d}{dn} \int a(n)\,dn$
 $= a(n) \gt 0$ is true in $*G$.

 This is the continuous version of 
 a sum with a negative sequence derivative,
 $\frac{a_{n+1}}{a_{n}} \lt 0$, 
 $a_{n+1} \lt a_{n}$,
 $a_{n+1} -a_{n} \lt 0$,
 $\frac{ d a_{n}}{dn} \lt 0$,
 $\frac{ d a(n)}{dn} \lt 0$.
 The area or distance traveled by the particle is finite,
 and in the same way the sum is finite and converges.
\bigskip
\begin{mex}\label{MEX221}
$\sum \frac{1}{n}|_{n=\infty}=\infty$ is known to
 diverge.
 The ratio test fails to determine
 convergence.
 Let $a_{n} = \frac{1}{n}$,
 $\frac{a_{n+1}}{a_{n}}|_{n=\infty}$
 $=\frac{n}{n+1}|_{n=\infty}=1$
 is indeterminate.

In working with the higher dimension $*G$ which includes
 the infinireals, 
 when we realize and apply the tests,
 a less than relationship with infinitesimals
 is not a less than relationship in $\mathbb{R}$. 

\begin{align*}
 a_{n+1} \; & \; z \; a_{n}|_{n=\infty} \tag{Theorem \ref{P219}} \\
 \frac{1}{n+1} & \; z \; \frac{1}{n}|_{n=\infty} \\
 \frac{1}{n+1} & \lt \frac{1}{n}|_{n=\infty} \tag{Realizing the infinitesimals} \\
 0 & \lt 0 \text{ contradicts} \tag{Indeterminate result} \\
 \tag{Alternatively multiply the denominators out.} \\
 n & \lt n+1|_{n=\infty} \tag{Realizing the infinities} \\
 \infty & \not\lt \infty \tag{Indeterminate result}
\end{align*}
\end{mex} 
 The tests are the same and in their variation
 almost trivially similar
 to the classic ratio test. However it is nice to
 do things in different ways.

 The limit ratio test, in its application can be varied as
 a ratio expression, multiplying and dividing
 the numerator and denominator. Rather than
 seeing the test set in stone, you can manipulate it.
 At times this is trivial, in other instances this becomes
 a way to transform tests.
\bigskip
\begin{mex}\label{MEX220}
Determine the convergence or divergence of 
 $\sum \frac{1\cdot 3 \cdot \ldots (2n-1)}{3 \cdot 6 \cdot \ldots (3n)}|_{n=\infty}$.

Let $a_{n} =  \frac{1\cdot 3 \cdot \ldots (2n-1)}{3 \cdot 6 \cdot \ldots (3n)}$, 
 $a_{n+1} \; z \; a_{n}|_{n=\infty}$,
 $\frac{1 \cdot 3 \cdot 5 \cdot \ldots ( 2(n+1)-1) }{3 \cdot 6 \cdot \ldots (3(n+1))} \; z \; \frac{1\cdot 3 \cdot \ldots (2n-1)}{3 \cdot 6 \cdot \ldots (3n)}|_{n=\infty}$,
 $\frac{2n+1}{3n+3} \; z \; 1|_{n=\infty}$,
 $2n+1 \; z \; 3n+3|_{n=\infty}$
 $1 \lt n+3|_{n=\infty}$
 and
 by Theorem \ref{P219} the series converges.
\end{mex}

When $\frac{ a_{n+1}}{a_{n}}|_{n=\infty}=1$, expressed as $a_{n+1} \simeq a_{n}$, use Raabe's test.
\bigskip
\begin{theo}\label{P221} 
 Raabe's test 1. \\
\[
n (\frac{ a_{n}}{a_{n+1}}-1)|_{n=\infty}
 = \left\{ 
  \begin{array}{rl}
    \gt 1 & \; \text{then } \sum a_{n}|_{n=\infty}=0 \text{ is convergent,} \\
    \lt 1 & \; \text{then } \sum a_{n}|_{n=\infty}=\infty \text{ is divergent.}
  \end{array} \right. 
\]
\end{theo}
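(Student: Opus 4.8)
The plan is to reduce Raabe's test~1 to Raabe's test~2 (Theorem~\ref{P024}), which is already available, by a short chain of equivalent inequality manipulations carried out at infinity. Since the terms are positive, $a_{n+1}\gt 0|_{n=\infty}$, so multiplying and dividing by $n$ and by $a_{n+1}$ preserves the relation $z\in\{\lt,\gt\}$. Writing the hypothesis as $n(\tfrac{a_n}{a_{n+1}}-1)\;z\;1|_{n=\infty}$, I would divide by $n$, add $1$, and then multiply through by $n\,a_{n+1}$:
\begin{align*}
 n\bigl(\tfrac{a_n}{a_{n+1}}-1\bigr)\;z\;1|_{n=\infty} \\
 \tfrac{a_n}{a_{n+1}}-1\;z\;\tfrac1n|_{n=\infty} \\
 \tfrac{a_n}{a_{n+1}}\;z\;\tfrac{n+1}{n}|_{n=\infty} \\
 n\,a_n\;z\;(n+1)\,a_{n+1}|_{n=\infty} \\
 n\,a_n-(n+1)\,a_{n+1}\;z\;0|_{n=\infty}
\end{align*}
Applying Theorem~\ref{P024} then finishes both cases at once: $z=\,\gt$ gives $\sum a_n|_{n=\infty}=0$ convergent, $z=\,\lt$ gives $\sum a_n|_{n=\infty}=\infty$ divergent.

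The underlying content (and an alternative self-contained route, should one prefer not to lean on Theorem~\ref{P024}) is the classical comparison against the generalized $p$-series. One compares the rearranged relation with $\tfrac{b_n}{b_{n+1}}=(1+\tfrac1n)^p=1+\tfrac pn+\ldots$ for $b_n=\tfrac1{n^p}$, where by the binomial expansion (Lemma~\ref{P005}) the terms beyond $\tfrac pn$ are of lower order, so $n\bigl(\tfrac{b_n}{b_{n+1}}-1\bigr)|_{n=\infty}=p$. In the convergent case, choosing $p$ strictly between $1$ and the value of $n\bigl(\tfrac{a_n}{a_{n+1}}-1\bigr)|_{n=\infty}$ forces $\tfrac{a_{n+1}}{b_{n+1}}\leq\tfrac{a_n}{b_n}|_{n=\infty}$, so $a_n/b_n$ is bounded above and $a_n\leq C/n^p|_{n=\infty}$; then $\sum a_n|_{n=\infty}=0$ by the $p$-series test (Theorem~\ref{P054}) and the comparison test (Theorem~\ref{P010}). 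The divergent case instead compares with the harmonic series $b_n=\tfrac1n$, for which $n\bigl(\tfrac{b_n}{b_{n+1}}-1\bigr)|_{n=\infty}=1$. Equivalently, the proof can be routed through the generalized ratio test by taking $w=0$, since $\tfrac{a_n}{a_{n+1}}-(1+\tfrac1n)\;z\;0|_{n=\infty}$ is, after multiplying by $n$, precisely the hypothesis here.

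The main obstacle is not the algebra but the interpretation of the strict thresholds ``$\gt 1$'' and ``$\lt 1$'' in $*G$. The genuinely delicate case is when $n\bigl(\tfrac{a_n}{a_{n+1}}-1\bigr)|_{n=\infty}$ exceeds $1$ only by an infinitesimal (e.g.\ $a_n=\tfrac1{n\,\mathrm{ln}\,n}$, where this quantity is $1+\tfrac1{\mathrm{ln}\,n}|_{n=\infty}$): this must be read as the boundary/indeterminate case, not as ``$\gt 1$'', and it is then excluded because $n\,a_n-(n+1)\,a_{n+1}|_{n=\infty}$ is a positive infinitesimal, not $\gt 0$ once transferred to $\overline{\mathbb{R}}$. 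I would therefore perform the entire inequality chain above in $*G$ and apply the transfer $*G\mapsto\overline{\mathbb{R}}$ only as the final step, so that this distinction is preserved, consistent with the convention $(*G,\lt)\mapsto(\overline{\mathbb{R}},\leq)$ used elsewhere in the paper; with that understood, the $p=1$ boundary leaves no gap, and Theorem~\ref{P024} supplies exactly the conclusion claimed.
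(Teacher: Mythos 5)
Your proposal is correct and follows essentially the same route as the paper: rearrange $n(\frac{a_n}{a_{n+1}}-1)\;z\;1$ into the one-line form $n\,a_n-(n+1)\,a_{n+1}\;z\;0$ and invoke the difference form of Raabe's test (the paper cites Theorem \ref{P222}, which is the same statement as your Theorem \ref{P024}). The extra material on the $p$-series comparison and the infinitesimal-threshold caveat is sound but not needed for the reduction.
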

\begin{proof}
 Rearrange expression to one line. Let $z \in \{ \lt, \gt \}$.
 $n (\frac{ a_{n}}{a_{n+1}}-1)|_{n=\infty} \; z \; 1$, 
 $n a_{n} - n a_{n+1} \; z \; a_{n+1}$,
 $n a_{n} - (n+1) a_{n+1} \; z \; 0$,
 prove by Theorem \ref{P222}.
\end{proof}
\bigskip
\begin{theo}
 Raabe's test 2. \\
\[
n (\frac{ a_{n+1}}{a_{n}}-1)|_{n=\infty}
 = \left\{ 
  \begin{array}{rl}
    \lt -1 & \; \text{then } \sum a_{n}|_{n=\infty}=0 \text{ is convergent,} \\
    \gt -1 & \; \text{then } \sum a_{n}|_{n=\infty}=\infty \text{ is divergent.}
  \end{array} \right. 
\]
\end{theo}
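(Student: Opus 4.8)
The plan is to mirror the proof of Raabe's test~1 (Theorem~\ref{P221}): clear the denominator so the ratio expression collapses to a single line, and then invoke Theorem~\ref{P222} (the one‑line form that the proof of Theorem~\ref{P221} reduces to). Write the hypothesis as $n\big(\tfrac{a_{n+1}}{a_n}-1\big)\big|_{n=\infty}\; z\;(-1)$ with $z\in\{\lt,\gt\}$, the case $z=\;\lt$ being the convergence branch ($\lt -1$) and $z=\;\gt$ the divergence branch ($\gt -1$). Since $a_n\gt 0$, multiplying through by $a_n$ preserves $z$, giving $n(a_{n+1}-a_n)\; z\;-a_n$, hence $n a_{n+1}\; z\;(n-1)a_n$, i.e. $n a_{n+1}-(n-1)a_n\; z\;0$. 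Multiplying by $-1$ reverses the relation, so in the convergence branch $(n-1)a_n-n a_{n+1}\gt 0$ and in the divergence branch $(n-1)a_n-n a_{n+1}\lt 0$.

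Next I would identify $(n-1)a_n-n a_{n+1}$ with Theorem~\ref{P222}'s form $n a_n-(n+1)a_{n+1}$ up to the harmless index shift $n\mapsto n-1$: setting $b_n=a_{n+1}$, the quantity becomes $n b_n-(n+1)b_{n+1}$, and $\sum b_n=\sum a_n$ at infinity (a one‑place shift affects neither the tail at infinity nor convergence, by Theorem~\ref{P028} and the contiguous‑rearrangement machinery of Section~\ref{S16}). Theorem~\ref{P222} (equivalently Theorem~\ref{P221}) then yields $\sum a_n|_{n=\infty}=0$ when $(n-1)a_n-n a_{n+1}\gt 0$ and $\sum a_n|_{n=\infty}=\infty$ when it is $\lt 0$. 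Unwinding the branches, $n\big(\tfrac{a_{n+1}}{a_n}-1\big)|_{n=\infty}\lt -1$ gives convergence and $\gt -1$ gives divergence, which is the claim. (This is precisely the ``both tests can be rearranged to show the other'' remark from the introduction to Section~\ref{S17}.)

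A second, perhaps more transparent route uses the algebraic identity $n\big(\tfrac{a_{n+1}}{a_n}-1\big)=-\tfrac{a_{n+1}}{a_n}\cdot n\big(\tfrac{a_n}{a_{n+1}}-1\big)$. Raabe's test is applied exactly in the indeterminate case of the ratio test, where $\tfrac{a_{n+1}}{a_n}\big|_{n=\infty}=1$; writing $\tfrac{a_{n+1}}{a_n}=1+\delta_n$ with $\delta_n\in\Phi$ and noting that $n\big(\tfrac{a_n}{a_{n+1}}-1\big)$ is finite at infinity (it is being compared with $\pm1$), the correction term $n\big(\tfrac{a_n}{a_{n+1}}-1\big)\delta_n$ is an infinitesimal and drops out, so $n\big(\tfrac{a_{n+1}}{a_n}-1\big)|_{n=\infty}=-\,n\big(\tfrac{a_n}{a_{n+1}}-1\big)|_{n=\infty}$. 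Then $n\big(\tfrac{a_{n+1}}{a_n}-1\big)\lt -1\iff n\big(\tfrac{a_n}{a_{n+1}}-1\big)\gt 1$, and Theorem~\ref{P221} closes the argument.

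I expect the only real obstacle to be bookkeeping: keeping the reversal of the relation straight under multiplication by $-1$ and checking that the $\lt -1$ / $\gt -1$ branches land on convergence / divergence the right way around. In the second route there is the extra, genuinely non‑automatic, step of justifying that the infinitesimal correction $n\big(\tfrac{a_n}{a_{n+1}}-1\big)\delta_n$ is negligible — Example~\ref{MEX010} warns that such a move can fail — but here it is safe precisely because the cofactor $n\big(\tfrac{a_n}{a_{n+1}}-1\big)$ stays finite, unlike the divergent cofactor there. No new estimate is needed; all the analytic content is already carried by Theorems~\ref{P221} and~\ref{P222}.
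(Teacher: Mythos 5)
Your first route is essentially the paper's own proof: clear the denominator to get $n a_{n+1}-(n-1)a_{n}\;z\;0$, shift the index so this becomes $n a_{n}-(n+1)a_{n+1}$ with the relation reversed, and invoke Theorem \ref{P222}; the branches land the right way around. The second route via the identity $n(\tfrac{a_{n+1}}{a_{n}}-1)=-\tfrac{a_{n+1}}{a_{n}}\cdot n(\tfrac{a_{n}}{a_{n+1}}-1)$ is a correct extra, but the main argument matches the paper.
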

\begin{proof}
 Rearrange expression to one line. Let $z \in \{ \lt, \gt \}$.
 $n(\frac{a_{n+1}}{a_{n}}-1) \; z \; -1$,
 $n(a_{n+1}-a_{n}) \; z \; - a_{n}$,
 $n a_{n+1} - (n-1) a_{n} \; z \; 0$,
 relable index $a_{n+1}$ to $a_{n}$,
 $n a_{n} - (n-1) a_{n-1} \; z \; 0$,
 $(n+1) a_{n+1} - n a_{n} \; z \; 0$,
 $n a_{n} - (n+1) a_{n+1} \; (-z) \; 0$,
 and prove
 by Theorem \ref{P222}.
\end{proof}
\bigskip
\begin{theo}\label{P222}
 Raabe's test 3. 
 In $*G$ and $(\overline{\mathbb{R}}, \lt)$.
\[
n a_{n} - (n+1) a_{n+1}|_{n=\infty}
 = \left\{ 
  \begin{array}{rl}
    \gt 0 & \; \text{then } \sum a_{n}|_{n=\infty}=0 \text{ is convergent,} \\
    \lt 0 & \; \text{then } \sum a_{n}|_{n=\infty}=\infty \text{ is divergent.}
  \end{array} \right. 
\]
If $n a_{n} - (n+1) a_{n+1} \gt 0|_{n=\infty}$ then $\sum a_{n}|_{n=\infty}=0$ is convergent. 
If $n a_{n} - (n+1) a_{n+1} \lt 0|_{n=\infty}$ then $\sum a_{n}|_{n=\infty} = \infty$ is divergent.
\end{theo}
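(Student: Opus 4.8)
The plan is to work with the auxiliary sequence $b_n = n a_n$ and the telescoping identity for $c_k := k a_k - (k+1)a_{k+1} = b_k - b_{k+1}$. Since $(a_n)|_{n=\infty}$ is positive we have $b_n > 0$; and I read the hypothesis, after dividing by the positive $a_{n+1}$ inside $*G$ (the manoeuvre that links Theorem \ref{P221} to this one), as $n(\tfrac{a_n}{a_{n+1}}-1)-1 \; z \; 0|_{n=\infty}$ with $z\in\{\gt,\lt\}$, so that when the relation genuinely transfers to $\overline{\mathbb{R}}$ there is a real $\delta>0$ with $c_k \; z \; \delta a_{k+1}$ (up to sign) at infinity.

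I would do the divergent case first, since it is the easy one. If $c_n \lt 0|_{n=\infty}$ then $b_{n+1}-b_n = \tfrac{d b_n}{dn} \gt 0|_{n=\infty}$ (Definition \ref{DEF203}), so $(b_n)|_{n=\infty}$ is monotonically increasing; being positive it is bounded below by a positive real $\beta$ from some infinite index on, hence $a_n \geq \tfrac{\beta}{n}|_{n=\infty}$ and, by the comparison test (Theorem \ref{P010}) against the harmonic series (Theorem \ref{P054}), $\sum a_n|_{n=\infty}=\infty$ diverges.

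For the convergent case: if $c_n \gt 0|_{n=\infty}$ then $b_n = n a_n$ is positive and monotonically decreasing at infinity, hence a bounded monotone sequence, so $b_{n_0}-b_{n_1}\simeq 0$ for all infinite $n_0\lt n_1$. Telescoping over an infinite window gives $\sum_{k=n_0}^{n_1} c_k = b_{n_0}-b_{n_1+1}\simeq 0$, and together with $c_k \geq \delta a_{k+1}$ this yields $0 \le \delta\sum_{k=n_0}^{n_1} a_{k+1}\leq b_{n_0}-b_{n_1+1}\simeq 0$; pushing $n_0$ deeper makes the tail arbitrarily small, so $\sum a_n|_{n=\infty}\in\Phi$, i.e. $\sum a_n|_{n=\infty}=0$ converges by Criterion E3.4. (Equivalently one can compare $\tfrac{a_n}{a_{n+1}}\geq(1+\tfrac1n)^p|_{n=\infty}$ for a real $p$ with $1\lt p$ strictly below the realised value of $n(\tfrac{a_n}{a_{n+1}}-1)$, deduce $a_n\leq C n^{-p}|_{n=\infty}$, and invoke the $p$-series test; and in any case the statement is just the $k=0$ instance of the generalised ratio test, Theorem \ref{P037}/\ref{P225}, so it may also be inherited from there.)

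The hard part will be the boundary regime. For a positive decreasing sequence $c_n = n a_n-(n+1)a_{n+1}$ is typically a positive infinitesimal (e.g. $c_n=\tfrac{1}{n(n+1)}$ for the convergent $a_n=\tfrac1{n^2}$, but also $c_n\to 0^+$ for the divergent $a_n=\tfrac1{n\ln n}$), so the bare realised sign of $c_n$ cannot by itself decide convergence; what makes the theorem true is precisely that the hypothesis is a relation surviving transfer to $\overline{\mathbb{R}}$, i.e. a genuine strict inequality of $n(\tfrac{a_n}{a_{n+1}}-1)$ against $1$. Making that reading rigorous — producing the real $\delta>0$ used above, and consigning the genuinely indeterminate cases to the generalised $p$-series (Theorem \ref{P036}) and the boundary test (Theorem \ref{P041}) — is where the care is needed; everything else is telescoping and comparison.
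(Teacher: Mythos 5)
Your proof is correct in substance but takes a genuinely different route from the paper. The paper's proof is a two-line algebraic reduction: set $m=0$ in the generalized ratio test (Theorem \ref{P225}), so $\frac{a_{n}}{a_{n+1}} \; z \; 1+\frac{1}{n}$ becomes $n a_{n} - (n+1)a_{n+1} \; z \; 0$ after multiplying through by $n a_{n+1}$, and the two cases of $z$ are read off; the logical weight is thus carried entirely by Theorem \ref{P225}, which the paper in turn proves by transforming into the boundary test. You instead give a self-contained classical argument: telescoping $c_k = b_k - b_{k+1}$ with $b_n = n a_n$, monotonicity of $(b_n)$, comparison with the harmonic series for divergence, and an E3.4 tail estimate (or a $p$-series comparison) for convergence. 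What your route buys is independence from the boundary test --- whose proof the paper itself only asserts by equivalence with a test it ``believes'' is proved --- and, more importantly, it forces into the open the one genuine subtlety that the paper's reduction hides: as you correctly observe with $a_n = \frac{1}{n\,\mathrm{ln}\,n}$ (where $n a_n - (n+1)a_{n+1} = \frac{1}{\mathrm{ln}\,n}-\frac{1}{\mathrm{ln}(n+1)}$ is a positive infinitesimal yet the series diverges), the bare $*G$ sign of the difference cannot decide convergence, and the theorem is only true under the $(\overline{\mathbb{R}},\lt)$ realization convention, i.e.\ when the relation survives transfer with a real margin. That diagnosis is absent from the paper's proof and is the most valuable part of yours.

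One soft spot: in the divergence case you assert that the positive increasing sequence $b_n = n a_n$ is ``bounded below by a positive real $\beta$ from some infinite index on.'' If the hypothesis is only imposed at infinity, $b_{n_0}$ for infinite $n_0$ could still be a positive infinitesimal, and then $\sum \beta/n$ is an infinitesimal times a divergent sum --- not automatically infinite. The gap is repairable inside the paper's own framework (for a fixed infinitesimal $b_{n_0}$ the partial sum $b_{n_0}(\mathrm{ln}\,n_1 - \mathrm{ln}\,n_0)$ can still be made arbitrarily large by growing $n_1$, which is exactly condition E3.5), or by noting that monotonicity from a finite index gives a genuine real bound as in the classical proof; but as written the step needs one of these patches.
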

\begin{proof}
 $m=0$ in Theorem \ref{P225},
 $\frac{ a_{n}}{a_{n+1}} \; z \; 1 + \frac{1}{n}$,
 $\frac{n a_{n}}{a_{n+1}} \; z \; n+1$,
 $n a_{n} - (n+1) a_{n+1} \; z \; 0$.
 Case $z = \; \gt$ converges.
 $z = \; \lt$ diverges.
\end{proof}
\bigskip 
\begin{theo}\label{P223}
 See \cite[3.2.16]{kaczor},
 reformed with at-a-point notation.
\[
n \, \mathrm{ln} \frac{a_{n}}{a_{n+1}}|_{n=\infty}
 = \left\{ 
  \begin{array}{rl}
    \gt 1 & \; \text{then } \sum a_{n}|_{n=\infty}=0 \text{ is convergent,} \\
    \lt 1 & \; \text{then } \sum a_{n}|_{n=\infty}=\infty \text{ is divergent.}
  \end{array} \right. 
\]
\end{theo}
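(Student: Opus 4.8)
The plan is to reduce the statement directly to Raabe's test in the form of Theorem \ref{P222} (equivalently Theorem \ref{P024}), mirroring the argument already used for Theorem \ref{P050} in Example \ref{MEX039}; indeed this theorem is essentially a restatement of Theorem \ref{P050} in the at-a-point notation. Write $z \in \{\lt,\gt\}$ for the relation in force, so that the hypothesis reads $n\,\mathrm{ln}\frac{a_{n}}{a_{n+1}}|_{n=\infty}\;z\;1$.

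First I would divide through by the positive infinity $n$, which preserves $z$, to obtain $\mathrm{ln}\frac{a_{n}}{a_{n+1}}\;z\;\frac{1}{n}|_{n=\infty}$. Applying the monotone increasing exponential to both sides preserves the strict relation, so $\frac{a_{n}}{a_{n+1}}\;z\;e^{1/n}|_{n=\infty}$. Next I would substitute the representation $e=(\frac{n+1}{n})^{n}|_{n=\infty}$, giving $e^{1/n}=\big((\frac{n+1}{n})^{n}\big)^{1/n}=\frac{n+1}{n}|_{n=\infty}$, hence $\frac{a_{n}}{a_{n+1}}\;z\;1+\frac{1}{n}|_{n=\infty}$. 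Clearing the positive denominator $a_{n+1}$ (again relation-preserving) yields $n a_{n}-(n+1)a_{n+1}\;z\;0|_{n=\infty}$, which is exactly the hypothesis of Theorem \ref{P222}. Reading off its two cases: $z=\;\gt$ gives $\sum a_{n}|_{n=\infty}=0$ convergent, and $z=\;\lt$ gives $\sum a_{n}|_{n=\infty}=\infty$ divergent, which is the claim.

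The routine steps — dividing by $n$, exponentiating, clearing $a_{n+1}$ — are all relation-preserving because every quantity involved is positive, so the only place that needs care, and the main obstacle, is the substitution $e=(\frac{n+1}{n})^{n}|_{n=\infty}$ together with the implicit transfer $*G\mapsto\overline{\mathbb{R}}$. One must check that replacing $e$ by this infinite-$n$ representation inside $e^{1/n}$ is legitimate as an equality at infinity (not merely asymptotic in $\mathbb{R}$), and that the comparison $\frac{a_{n}}{a_{n+1}}\;z\;1+\frac{1}{n}$ is stable under the realization used in Theorem \ref{P222}. Since Theorem \ref{P222} already handles that realization (as the $m=0$ case of the generalized test), I would invoke it as a black box and confine the verification to the single substitution step, noting that the indeterminate boundary case $n\,\mathrm{ln}\frac{a_{n}}{a_{n+1}}|_{n=\infty}=1$ corresponds precisely to the discarded equality case there.
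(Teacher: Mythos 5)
Your proposal matches the paper's own proof essentially step for step: divide by $n$, exponentiate, substitute $e=(\frac{n+1}{n})^{n}|_{n=\infty}$ to turn $e^{1/n}$ into $1+\frac{1}{n}$, clear $a_{n+1}$ to reach $n a_{n}-(n+1)a_{n+1}\;z\;0|_{n=\infty}$, and invoke Theorem \ref{P222}. The extra care you flag around the substitution and the transfer is a reasonable refinement but not a departure from the paper's argument.
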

\begin{proof} Rearrange into
 Raabe's theorem. Let $z \in \{ \lt, \gt \}$.

 $n \, \mathrm{ln} \frac{a_{n}}{a_{n+1}} \; z \; 1|_{n=\infty}$, 
 $\mathrm{ln} \frac{a_{n}}{a_{n+1}} \; z \; \frac{1}{n}|_{n=\infty}$,   
 $\frac{a_{n}}{a_{n+1}} \gt e^{\frac{1}{n}} |_{n=\infty}$,   
 $a_{n} \; z \; a_{n+1} e^{\frac{1}{n}} |_{n=\infty}$.
 Substitute 
 $e = (\frac{n+1}{n})^{n}|_{n=\infty}$
 into the inequality,  
 $a_{n} \; z \; a_{n+1} ((\frac{n+1}{n})^{n})^{\frac{1}{n}}|_{n=\infty}$,  
 $a_{n} \; z \; a_{n+1} \frac{n+1}{n}|_{n=\infty}$, 
 $n a_{n} - (n+1) a_{n+1} \; z \; 0 |_{n=\infty}$. 
 This is Raabe's test, Theorem \ref{P222}.
\end{proof}
\subsection{A Generalized test} \label{S1703}
The ratio test can be generalized to
 produce other tests
 with the sum of the boundary functions.
 Each test involves higher order terms.

In the preceding discussion we proved Raabe's test (Theorem \ref{P222})
 by transforming the theorem into the ratio test.

Knopp \cite[p.129]{knopp}  
 referred to a generalization of the
 ratio test Theorem \ref{P224}, saying
 ``only the test for $k=0$ 
 and at most $k=1$ have any practical importance."
 Presumably this is because the ratio and Raabe tests are
 most often used.
\bigskip 
\begin{theo}\label{P224}
\cite[p.129]{knopp} with $m$ terms.
\[
[\frac{a_{n+1}}{a_{n}} - 1 + (\frac{1}{n} + \frac{1}{n\,\mathrm{ln}\,n} + \ldots + \frac{1}{n \, \mathrm{ln}\,n \, \ldots \mathrm{ln}_{m}\,n })] n \cdot \mathrm{ln}\,n \ldots \mathrm{ln}_{m}\,n = \left\{ 
  \begin{array}{rl}
    \lt 0 & \; \text{then } \sum a_{v} \text{ is convergent,} \\
    \geq 0 & \; \text{ then } \sum a_{v} \text{ is divergent.}
  \end{array} \right. 
\]
\end{theo}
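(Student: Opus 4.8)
The plan is to peel off the positive infinite factor and recognise the hypothesis as a \emph{generalized ratio test against the boundary generalized $p$-series}, then settle it by comparison with the generalized $p$-series, whose convergence is already classified in Theorem~\ref{P036}. Since $L_m = n\,\mathrm{ln}\,n\cdots\mathrm{ln}_m\,n \succ 0|_{n=\infty}$ (Definition~\ref{DEF200}), multiplying through by $L_m$ preserves the relation, so the hypothesis is equivalent to
\[
\frac{a_{n+1}}{a_n} \;\; z \;\; 1-\sum_{k=0}^{m}\frac{1}{L_k}\;\Big|_{n=\infty}, \qquad z=\;< \ \text{for convergence},\quad z=\;\geq \ \text{for divergence}.
\]

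The key computation is to identify the right--hand side with a ratio of consecutive terms of a generalized $p$-series. For a real $p$ put $b_n^{(p)}=\big(\prod_{k=0}^{m-1}\mathrm{ln}_k\,n\cdot\mathrm{ln}_m^{\,p}\,n\big)^{-1}$, the generalized $p$-series of Definition~\ref{DEF001} with $w=m$. Then $\mathrm{ln}\,b_n^{(p)}=-\sum_{j=1}^{m}\mathrm{ln}_j\,n-p\,\mathrm{ln}_{m+1}\,n$; by the sequence-derivative machinery (Theorems~\ref{P201} and~\ref{P203}, as used for iterated logarithms around Lemma~\ref{P202}) one has $\mathrm{ln}\,b_{n+1}^{(p)}-\mathrm{ln}\,b_n^{(p)}=\frac{d}{dn}\mathrm{ln}\,b_n^{(p)}$, and evaluating term by term with Lemma~\ref{P202} ($\frac{d}{dn}\mathrm{ln}_w=\frac{1}{L_{w-1}}$) this equals $-\sum_{k=0}^{m-1}\frac{1}{L_k}-\frac{p}{L_m}$. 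Exponentiating and expanding $e^{-x}=1-x$ for the infinitesimal $x$ (the discarded $O(x^2)$ pieces are $\prec\frac{1}{L_m}|_{n=\infty}$, hence additive identities at the scale of $L_m$),
\[
\frac{b_{n+1}^{(p)}}{b_n^{(p)}}\;\Big|_{n=\infty}=1-\sum_{k=0}^{m-1}\frac{1}{L_k}-\frac{p}{L_m}\;\Big|_{n=\infty},
\]
so in particular $p=1$ gives $b_n^{(1)}=1/L_m$, the divergent boundary (Theorem~\ref{P036}), whose ratio is exactly $1-\sum_{k=0}^{m}\frac{1}{L_k}$, i.e.\ the right--hand side above.

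With this, the divergence case ($z=\;\geq$) is immediate: the hypothesis reads $\frac{a_{n+1}}{a_n}\geq\frac{b^{(1)}_{n+1}}{b^{(1)}_n}|_{n=\infty}$, so telescoping from a fixed infinite index gives $a_n\geq c\,b^{(1)}_n$ with $c$ a fixed positive real after realization, and then $\sum a_n\geq c\sum 1/L_m|_{n=\infty}=\infty$ by Theorems~\ref{P010} and~\ref{P059} together with the divergence of $\sum 1/L_m$. The convergence case ($z=\;<$) is the delicate one and is the main obstacle: a strict ratio inequality against the \emph{divergent} boundary does not by itself force convergence, so the strictness must be converted into a \emph{fixed} exponent $p>1$. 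Writing $D_n$ for the bracket-times-$L_m$ quantity of the statement, one has $\frac{a_{n+1}}{a_n}=1-\sum_{k=0}^{m-1}\frac{1}{L_k}-\frac{1-D_n}{L_m}$; the hypothesis $D_n<0|_{n=\infty}$ (read after realization in $\overline{\mathbb{R}}$) means $1-D_n$ exceeds $1$, so one may fix a real $p$ with $1<p\leq 1-D_n|_{n=\infty}$, whence $\frac{a_{n+1}}{a_n}\leq\frac{b^{(p)}_{n+1}}{b^{(p)}_n}|_{n=\infty}$, hence $a_n\leq c'\,b^{(p)}_n$ and $\sum a_n\leq c'\sum b^{(p)}_n|_{n=\infty}=0$ by Theorems~\ref{P010},~\ref{P059} and the convergence of the generalized $p$-series for $p>1$ (Theorem~\ref{P036}).

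The step that needs the most care is exactly ``$D_n<0\Rightarrow$ some fixed $p>1$ works'': if $D_n$ were only eventually negative but $D_n\to0^-$ one would land on the boundary, so the hypothesis must be understood in the robust $*G\mapsto\overline{\mathbb{R}}$ sense. As consistency checks the result should reduce, at $m=0$, to Raabe's test (Theorem~\ref{P222}), and in general be algebraically interconvertible with the $\frac{a_n}{a_{n+1}}$-form of the generalized test (via $1/r=2-r$ as $r\to1$, after realization), which is the route by which the equivalence asserted in the proof of Theorem~\ref{P220} is meant; either the comparison argument above or that interconversion may be presented as the proof.
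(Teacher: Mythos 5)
Your proposal is correct, but it takes a genuinely different route from the paper. The paper's proof of Theorem \ref{P224} is one line: rearrange via Proposition \ref{P226} into the $\frac{a_n}{a_{n+1}}$-form (Theorem \ref{P225}), which is then proved by \emph{assuming the boundary test} and transforming the ratio inequality into it by the sequence derivative, separation of variables and integration; since the paper's proof of the boundary test (Theorem \ref{P233}) in turn invokes equivalence with the generalized ratio test, the paper's chain is close to circular and ultimately rests on the generalized $p$-series. You instead give the classical ratio-comparison argument directly: you compute $\frac{b^{(p)}_{n+1}}{b^{(p)}_n} = 1-\sum_{k=0}^{m-1}\frac{1}{L_k}-\frac{p}{L_m}|_{n=\infty}$ for the generalized $p$-series via Lemma \ref{P202} and the exponential expansion (correctly checking the discarded terms are $\prec \frac{1}{L_m}$), identify the theorem's hypothesis as a ratio comparison against the $p=1$ boundary term, telescope to get $a_n \geq c\, b^{(1)}_n$ for divergence, and for convergence extract a fixed $p>1$ from the realized strict inequality $D_n<0$ so that $a_n \leq c'\,b^{(p)}_n$; everything then reduces to Theorem \ref{P036}. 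What your approach buys is a self-contained proof grounded in the independently proved generalized $p$-series test rather than the boundary test, and an explicit treatment of the one genuinely delicate point --- that strictness against the \emph{divergent} boundary must be bounded away from zero (Knopp's $\leq -\rho$), which the paper's $(\overline{\mathbb{R}},\lt)$ realization convention supplies but its proof never surfaces. What the paper's route buys is brevity and the conceptual identification of the test with the boundary test; your closing remark correctly notes the two forms interconvert to first order. The only blemish is attributing the equivalence to the proof of Theorem \ref{P220} rather than to Proposition \ref{P226}, which is where the paper actually establishes it.
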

\begin{proof}
 By Proposition \ref{P226}, rearrange to
 Theorem \ref{P225} which is subsequently proved.
\end{proof}

 Constructing a ratio of $\frac{a_{n}}{a_{n+1}}$ instead of
 $\frac{a_{n+1}}{a_{n}}$
 leads to a different,
 but equivalent formation, Theorem \ref{P225}.
 See Proposition \ref{P226}.
\bigskip
\begin{theo}\label{P225}
\[
\frac{a_{n}}{a_{n+1}} - (1 + \frac{1}{n} + \frac{1}{n\,\mathrm{ln}\,n} + \ldots + \frac{1}{n \, \mathrm{ln}\,n \, \ldots \mathrm{ln}_{m}\,n })|_{n=\infty} = \left\{ 
  \begin{array}{rl}
    \gt 0 & \; \text{then } \sum a_{n}|_{n=\infty}=0 \text{ converges,} \\
    \leq 0 & \; \text{ then } \sum a_{n}|_{n=\infty}=\infty \text{ diverges.}
  \end{array} \right. 
\]
\end{theo}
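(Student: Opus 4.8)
The plan is to recognize the bracketed sum as the ratio of consecutive values of a boundary function and then invoke the boundary test (Theorem \ref{P041}) together with the generalized $p$-series test (Theorem \ref{P036}). Write $L_{w} = \prod_{k=0}^{w} \mathrm{ln}_{k}$ as in Definition \ref{DEF200} (so $L_{0}=n$), so that the bracketed term is $1 + \sum_{k=0}^{m} \frac{1}{L_{k}}$, and put $\beta_{n} = \frac{1}{L_{m}(n)}$. By the generalized $p$-series test this is the $p=1$ case, so $\sum \beta_{n}|_{n=\infty}=\infty$ diverges --- it lies on the boundary.

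First I would compute $\frac{\beta_{n}}{\beta_{n+1}} = \frac{L_{m}(n+1)}{L_{m}(n)} = \prod_{k=0}^{m} \frac{\mathrm{ln}_{k}(n+1)}{\mathrm{ln}_{k}(n)}$. Using the sequence derivative (Definition \ref{DEF203}) and Lemma \ref{P202}, for $1 \leq k \leq m$ we have $\mathrm{ln}_{k}(n+1)-\mathrm{ln}_{k}(n) = \frac{d}{dn}\mathrm{ln}_{k} = \frac{1}{L_{k-1}(n)}|_{n=\infty}$, hence $\frac{\mathrm{ln}_{k}(n+1)}{\mathrm{ln}_{k}(n)} = 1 + \frac{1}{L_{k-1}(n)\mathrm{ln}_{k}(n)} = 1 + \frac{1}{L_{k}(n)}|_{n=\infty}$ (the case $k=0$ being the plain $\frac{n+1}{n}=1+\frac{1}{n}$). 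Expanding the product $\prod_{k=0}^{m}(1+\frac{1}{L_{k}})$, every cross term has the form $\frac{1}{L_{j}L_{k}}$, which is $\prec \frac{1}{L_{m}}|_{n=\infty}$, so by non-reversible arithmetic it drops out, leaving $\frac{\beta_{n}}{\beta_{n+1}} = 1 + \sum_{k=0}^{m}\frac{1}{L_{k}}|_{n=\infty}$. Thus the hypothesis of the theorem is exactly $\frac{a_{n}}{a_{n+1}} - \frac{\beta_{n}}{\beta_{n+1}}|_{n=\infty} \; z \; 0$, with $z=\;>$ for the convergence branch and $z=\;\leq$ for the divergence branch.

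Next I would clear denominators: since all four terms are positive, $\frac{a_{n}}{a_{n+1}}\; z\; \frac{\beta_{n}}{\beta_{n+1}}|_{n=\infty}$ is equivalent to $\frac{a_{n}}{\beta_{n}}\; z\; \frac{a_{n+1}}{\beta_{n+1}}|_{n=\infty}$; that is, $c_{n}:=a_{n}/\beta_{n}=a_{n}L_{m}(n)$ is strictly decreasing when $z=\;>$ and non-decreasing when $z=\;\leq$. For divergence ($z=\;\leq$): from some infinite $n_{0}$ on, $c_{n}\geq c_{n_{0}}$ with $c_{n_{0}}$ a positive constant, so $a_{n}\geq c_{n_{0}}\beta_{n}|_{n=\infty}$, and summing, $\sum a_{n}|_{n=\infty}\geq c_{n_{0}}\sum\beta_{n}|_{n=\infty}=\infty$, divergent by the comparison test (Theorem \ref{P010}). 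For convergence ($z=\;>$) the boundary function $\beta_{n}$ is not by itself enough, so I would instead compare against $\beta'_{n}=\frac{1}{\prod_{k=0}^{m-1}\mathrm{ln}_{k}(n)\cdot \mathrm{ln}_{m}(n)^{q}}$ for a real $q>1$; by Theorem \ref{P036} this converges, and the same expansion, with $(1+\frac{1}{L_{m}})^{q}=1+\frac{q}{L_{m}}|_{n=\infty}$ after dropping higher-order terms, gives $\frac{\beta'_{n}}{\beta'_{n+1}} = 1 + \sum_{k=0}^{m-1}\frac{1}{L_{k}} + \frac{q}{L_{m}}|_{n=\infty}$, so $\frac{\beta'_{n}}{\beta'_{n+1}} - (1+\sum_{k=0}^{m}\frac{1}{L_{k}}) = \frac{q-1}{L_{m}}|_{n=\infty}>0$. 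Choosing $q$ so that $\frac{a_{n}}{a_{n+1}}\geq\frac{\beta'_{n}}{\beta'_{n+1}}|_{n=\infty}$ makes $a_{n}/\beta'_{n}$ decreasing, hence $a_{n}$ is bounded by a constant multiple of $\beta'_{n}$ at infinity, and so $\sum a_{n}|_{n=\infty}=0$ converges by comparison with the convergent $\sum\beta'_{n}$.

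The main obstacle is the last step of the convergence branch: justifying, inside the $*G$/non-reversible framework, that a strictly positive value of $\frac{a_{n}}{a_{n+1}}-(1+\sum_{k=0}^{m}\frac{1}{L_{k}})$ at infinity really does dominate $\frac{q-1}{L_{m}}$ for some constant $q>1$ --- equivalently, that the hypothesis rules out the genuinely borderline case where that difference is of order $\frac{1}{L_{m+1}}\prec\frac{1}{L_{m}}$, which would only be decided by the $(m+1)$-st instance of the test. I expect to treat this the way Theorems \ref{P044} and \ref{P057} treat Criterion E3: the comparison $\frac{a_{n}}{a_{n+1}}\;z\;\frac{\beta'_{n}}{\beta'_{n+1}}$ is realized in $\overline{\mathbb{R}}$, and the equivalence with the boundary test --- the stated aim of this section --- absorbs the borderline case into the divergence branch. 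Finally I would check the bookkeeping that $m=0$ recovers Raabe's test (Theorem \ref{P222}): then $L_{0}=n$, $\beta_{n}=\frac{1}{n}$, and the condition collapses to $na_{n}-(n+1)a_{n+1}\;z\;0$.
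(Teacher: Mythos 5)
Your route is genuinely different from the paper's. The paper proves Theorem \ref{P225} by \emph{assuming the boundary test}, rewriting $a_{n}-a_{n+1}$ as the sequence derivative $-\frac{d a_{n+1}}{dn}$, passing to the continuous domain, and separating variables to integrate $-\int\frac{1}{a}\,da \; z \; \int\sum\frac{1}{L_{i}}\,dn = \sum\mathrm{ln}_{i+1}$, which lands directly on the comparison $a \; (-z) \; \frac{1}{L_{m}}$ of Theorem \ref{P233}; the whole content is that reversible algebraic transformation, recorded afterwards as Corollary \ref{P243}. You instead run a Kummer--Gauss style argument: you identify the bracketed sum as $\frac{\beta_{n}}{\beta_{n+1}}$ for the boundary term $\beta_{n}=\frac{1}{L_{m}}$, observe that the hypothesis says exactly that $c_{n}=a_{n}L_{m}(n)$ is monotone, and then compare $a_{n}$ against $\beta_{n}$ (divergence) or against the convergent generalized p-series $\beta_{n}'$ with exponent $q\gt 1$ (convergence), using only Theorem \ref{P230} and the comparison test (Theorem \ref{P010}). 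What this buys is independence from the boundary test: in the paper, Theorem \ref{P233} is proved by appeal to Corollary \ref{P243}, i.e.\ to Theorem \ref{P225} itself, so the chain is circular unless one end is established from outside --- your argument supplies exactly that outside proof. One small cost: in the divergence branch you need $c_{n_{0}}$ to realize to a nonzero constant (not an infinitesimal) before $\sum a_{n}\geq c_{n_{0}}\sum\beta_{n}$ forces divergence; that deserves a sentence via Theorem \ref{P059}.

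The substantive gap is the one you name yourself, and it is real: positivity of $\frac{a_{n}}{a_{n+1}}-(1+\sum_{k=0}^{m}\frac{1}{L_{k}})$ in $*G$ does not yield a $q\gt 1$ with the difference $\geq\frac{q-1}{L_{m}}$. Taking $a_{n}=\frac{1}{L_{m+1}}$ gives, by your own expansion, a strictly positive difference of order $\frac{1}{L_{m+1}}\prec\frac{1}{L_{m}}$ while $\sum a_{n}|_{n=\infty}=\infty$ diverges. So the theorem is only true under the paper's $(\overline{\mathbb{R}},\lt)$ convention, where a difference realizing to $0$ falls into the $\leq 0$ branch; your proof must say explicitly that the hypothesis $\gt 0$ is read \emph{after} realization, exactly as in Theorems \ref{P211} and \ref{P219}. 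The paper's own proof never confronts this because it assumes the boundary test and pushes symbols; your version makes the borderline case visible, which is an improvement, but the realization convention has to be invoked as a stated step rather than ``absorbed.''
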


\begin{defy} An undefined sum has a value of $0$. E.g. $\sum_{k=2}^{1} x=0$
\end{defy} 

Then when $m=-1$,
 $\sum_{k=0}^{m} \frac{1}{\mathrm{ln}_{k}} = 0$
 Restating Theorem \ref{P225} with sum notation,
 we can define the sum to produce the ratio and higher order tests.
\[
\frac{a_{n}}{a_{n+1}} - (1 + \sum_{k=0}^{m}\frac{1}{\mathrm{ln}_{k}})|_{n=\infty} = \left\{ 
  \begin{array}{rl}
    \gt 0 & \; \text{then } \sum a_{n}|_{n=\infty}=0 \text{ converges,} \\
    \leq 0 & \; \text{ then } \sum a_{n}|_{n=\infty}=\infty \text{ diverges.}
  \end{array} \right. 
\]
Successive values of $m$ from $-1$ produce the tests.
 For example, with $m=-1$ and the remove of equality for
 the divergence case, gives the ratio test.
 
\begin{table}[H]
  \centering
  \begin{tabular}{|c|c|c|} \hline
 $m$ & \text{Comparison of terms} & \text{Test} \\ \hline
 $-1$ & $\frac{a_{n}}{a_{n+1}} \; z \; 1$ & \text{Ratio test} \\
 $0$ & $\frac{a_{n}}{a_{n+1}} \; z \; 1 + \frac{1}{n}$ & \text{Raabe's test} \\
 $1$ & $\frac{a_{n}}{a_{n+1}} \; z \; 1 + \frac{1}{n} + \frac{\rho_{n}}{n \, \mathrm{ln}\,n}$ & \text{Bertrand's test} \cite{bertrand} \\
  \hline
  \end{tabular}
  \caption{Tests} \label{FIG06}
\end{table} 
The table entry for Bertrand's test
 excluded the p-series as this is another
 test. $\rho \gt 1$ and $\rho \lt 1$ for the largest
 values
 of the sums become
 $1 + \frac{1}{n} + \frac{1}{n\,\mathrm{ln}\,n} \gt 1$ and
 $1 + \frac{1}{n} + \frac{1}{n\,\mathrm{ln}\,n} \lt 1$
 respectively. These are the only cases that
 need to be considered, as $\rho$ is just
 a real number. The assumption being $\rho \prec n \,\mathrm{ln}\,n|_{n=\infty}$, 
 hence it could be factored to a real number greater than $1$.

The generalized ratio test is proved by transforming
 the test to the boundary test, which we assume is true.
 By doing this,
 the boundary test is shown to be very general,
 and useful in proving other tests.

\begin{proof} \textbf{Theorem \ref{P225}}
 Assume the boundary test is true.
 Using algebra we transform the generalized ratio
 test into the boundary test.
 
\begin{align*}
\frac{a_{n}}{a_{n+1}} \; z \; 1+\sum_{i=0}^{m} \frac{1}{\prod_{k=0}^{i} \mathrm{ln}_{k}}|_{n=\infty} \tag{Generalized ratio}  \\
a_{n} \; z \; a_{n+1}( 1+\sum_{i=0}^{m} \frac{1}{\prod_{k=0}^{i} \mathrm{ln}_{k}}|_{n=\infty})  \\
a_{n}-a_{n+1} \; z \; a_{n+1}(\sum_{i=0}^{m} \frac{1}{\prod_{k=0}^{i} \mathrm{ln}_{k}}|_{n=\infty}) \tag{Interpet the difference as a derivative (Section \ref{S15})} \\
-\frac{ d a_{n+1}}{dn} \; z \; a_{n+1}(\sum_{i=0}^{m} \frac{1}{\prod_{k=0}^{i} \mathrm{ln}_{k}}|_{n=\infty})  \\
-\frac{ d a}{dn} \; z \; a(\sum_{i=0}^{m} \frac{1}{\prod_{k=0}^{i} \mathrm{ln}_{k}}|_{n=\infty}) \tag{Convert to the continuous domain} \\
-\int\frac{1}{a} d a \; z \; \int \sum_{i=0}^{m} \frac{1}{\prod_{k=0}^{i} \mathrm{ln}_{k}} dn |_{n=\infty} \tag{Separation of variables integral} \\
- \mathrm{ln}\,a \; z \; \sum_{i=0}^{m} \int \frac{1}{\prod_{k=0}^{i} \mathrm{ln}_{k}} dn |_{n=\infty}  \\
- \mathrm{ln}\,a \; z \; \sum_{i=0}^{m} \mathrm{ln}_{i+1} |_{n=\infty}  \\
 \mathrm{ln}\,a \; (-z) \; -\mathrm{ln}(\prod_{i=0}^{m} \mathrm{ln}_{i}) |_{n=\infty} \tag{Raising to a base of $e$ does not change the relation} \\
a \; (-z) \; \frac{1}{ \prod_{i=0}^{m} \mathrm{ln}_{i}} |_{n=\infty} \tag{The boundary test (See Section \ref{S18}) } \\
a_{n} \; (-z) \; \frac{1}{ \prod_{i=0}^{m} \mathrm{ln}_{i}} |_{n=\infty} \tag{Convert to a series}  
\end{align*}
 The $-z$ is correct, as the generalized ratio test defined $z$ in
 the opposite direction.
\end{proof}
\bigskip
\begin{cor}\label{P243}
The boundary test and the generalized ratio test are equivalent.
\end{cor}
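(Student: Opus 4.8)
\textbf{Proof plan for Corollary \ref{P243}.}

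The plan is to observe that the proof of Theorem \ref{P225} has already done essentially all the work: it exhibits an explicit chain of manipulations that starts from the generalized ratio test inequality $\frac{a_{n}}{a_{n+1}} \; z \; 1 + \sum_{i=0}^{m} \frac{1}{\prod_{k=0}^{i}\mathrm{ln}_{k}}|_{n=\infty}$ and ends at the boundary test comparison $a_{n} \; (-z) \; \frac{1}{\prod_{i=0}^{m}\mathrm{ln}_{i}}|_{n=\infty}$. That argument, read top to bottom, shows ``boundary test $\Rightarrow$ generalized ratio test''. To upgrade this to an equivalence I would simply check that every link in that chain is a biconditional in $*G$, so that the same chain read bottom to top gives ``generalized ratio test $\Rightarrow$ boundary test''.

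Concretely, I would go through the steps of the Theorem \ref{P225} derivation and note their reversibility: (i) multiplying both sides by the positive quantity $a_{n+1}$ and rearranging terms preserves the relation $z$ and is undone by dividing back; (ii) rewriting $a_{n}-a_{n+1}$ as $-\frac{da_{n+1}}{dn}$ via the sequence derivative (Definition \ref{DEF203}, Theorem \ref{P203}) is an identity at infinity for the monotonic sequences the tests apply to; (iii) the passage between the discrete comparison and the continuous one $-\frac{da}{dn} \; z \; a\big(\sum_{i=0}^{m}\frac{1}{\prod_{k=0}^{i}\mathrm{ln}_{k}}\big)$ is justified in both directions by threading a continuous monotonic curve through the sequence (Definition \ref{DEF202}); (iv) separation of variables and integration, $-\int\frac{1}{a}\,da \; z \; \int\sum_{i=0}^{m}\frac{1}{\prod_{k=0}^{i}\mathrm{ln}_{k}}\,dn$, is reversible by differentiating; (v) evaluating $\int\frac{1}{\prod_{k=0}^{i}\mathrm{ln}_{k}}\,dn = \mathrm{ln}_{i+1}$ (Lemma \ref{P202}) and collapsing $\sum_{i=0}^{m}\mathrm{ln}_{i+1}=\mathrm{ln}(\prod_{i=0}^{m}\mathrm{ln}_{i})$ is an algebraic identity; and (vi) applying $x \mapsto e^{x}$, which is strictly monotone, transports the relation consistently (with the documented left-to-right convention $\prec \Rightarrow \le$ for the divergence/equality case) and is inverted by $x \mapsto \mathrm{ln}\,x$. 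Stringing these together in reverse yields the boundary test inequality $a_{n} \; z' \; \frac{1}{\prod_{i=0}^{m}\mathrm{ln}_{i}}|_{n=\infty}$ from the generalized ratio test, with $z'$ the appropriately inverted relation, which is exactly Theorem \ref{P232}/Test \ref{S0720}. Hence each test is provable from the other.

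The main obstacle I expect is bookkeeping at the boundary itself, i.e. the $\le$ versus $<$ distinction in the divergence branch. When one raises a $\prec$ relation to the base $e$ and later realizes $*G \mapsto \overline{\mathbb{R}}$, the strict infinitesimal/infinitary relation projects to $\le$, and one must make sure the divergence case of the boundary test (relation $\ge$) and the divergence case of the generalized ratio test (relation $\le 0$) line up under the sign flip $z \mapsto -z$ without creating or destroying an endpoint. The cleanest way to handle this is to carry the relation symbolically as $z \in \{\lt,\le,\gt,\ge\}$ throughout and only collapse to the stated dichotomy at the very end, exactly as the Theorem \ref{P225} proof does; once that is in place, the corollary follows immediately, and I would simply record it as: by the reversibility of the transformation in the proof of Theorem \ref{P225}, the generalized ratio test (Theorem \ref{P225}, equivalently Theorem \ref{P224}) and the boundary test (Theorem \ref{P232}) imply one another.

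\begin{proof}
Every manipulation in the proof of Theorem \ref{P225} connecting $\frac{a_{n}}{a_{n+1}} \; z \; 1+\sum_{i=0}^{m}\frac{1}{\prod_{k=0}^{i}\mathrm{ln}_{k}}|_{n=\infty}$ to $a_{n}\;(-z)\;\frac{1}{\prod_{i=0}^{m}\mathrm{ln}_{i}}|_{n=\infty}$ is a reversible step in $*G$: multiplying/dividing by the positive term $a_{n+1}$, rewriting the difference $a_{n}-a_{n+1}$ as the sequence derivative $-\frac{da_{n+1}}{dn}$ (Definition \ref{DEF203}, Theorem \ref{P203}), threading a continuous monotonic curve through the sequence to pass to $a(n)$ (Definition \ref{DEF202}), separating variables and integrating (inverted by differentiating), evaluating $\int\frac{1}{\prod_{k=0}^{i}\mathrm{ln}_{k}}\,dn=\mathrm{ln}_{i+1}$ via Lemma \ref{P202}, and applying the strictly monotone maps $x\mapsto e^{x}$ and $x\mapsto\mathrm{ln}\,x$. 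Reading this chain forwards derives the generalized ratio test (Theorem \ref{P225}) from the boundary test (Theorem \ref{P232}); reading it backwards derives the boundary test from the generalized ratio test, carrying the relation symbolically as $z\in\{\lt,\le,\gt,\ge\}$ and collapsing to the stated convergence/divergence dichotomy only at the end. Hence each test implies the other, and since Theorem \ref{P224} is equivalent to Theorem \ref{P225} by Proposition \ref{P226}, the boundary test and the generalized ratio test are equivalent.
\end{proof}
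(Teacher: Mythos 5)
Your proposal takes essentially the same route as the paper: the paper's own proof of Corollary \ref{P243} simply observes that the algebraic transformation in the proof of Theorem \ref{P225} is reversible and can be run from the boundary test back to the generalized ratio test. Your version spells out the reversibility of each individual step (and flags the $\lt$ versus $\leq$ bookkeeping at the boundary) in more detail than the paper does, but the underlying argument is the same.
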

\begin{proof}
 Since the algebra transformation from the ratio test to the
 boundary test is reversible, by starting from the boundary
 test and, in reverse order
 to the previous proof of Theorem \ref{P225},
 proceed to the generalized ratio
 test,
 hence both tests are equivalent.
\end{proof}
\bigskip
\begin{prop}\label{P226}
Theorem \ref{P224} and Theorem \ref{P225}
 are equivalent.
\end{prop}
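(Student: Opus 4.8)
The plan is to show that Theorem \ref{P225} is literally the reciprocal of Theorem \ref{P224} once the positive-infinity weight in the latter has been divided out, so that no independent convergence argument is needed: whatever verdict one test returns, the algebra forces the other to return the same one. Throughout write $\sigma_{n} = \sum_{i=0}^{m}\frac{1}{\prod_{k=0}^{i}\mathrm{ln}_{k}}$ for the common ``tail of boundary terms''.

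First I would record the one structural fact the whole argument rests on: $\sigma_{n}|_{n=\infty}\in\Phi$. Each summand $\frac{1}{\prod_{k=0}^{i}\mathrm{ln}_{k}\,n}$ is a positive infinitesimal, the sum has a fixed (finite) number $m+1$ of them, and it is dominated by its first term $\frac{1}{n}$, so $0\prec\sigma_{n}\prec 1|_{n=\infty}$. Consequently $1-\sigma_{n}$ and $1+\sigma_{n}$ are strictly positive infinireals, and $\sigma_{n}^{2},\sigma_{n}^{3},\ldots$ are all additive identities relative to $\sigma_{n}$, which legitimises the non-reversible step used below.

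Next comes the algebra, carried out at $n=\infty$ with $z$ a relation variable. In Theorem \ref{P224} the factor $n\cdot\mathrm{ln}\,n\cdots\mathrm{ln}_{m}\,n$ is a positive infinity, so for $z\in\{<,\ge\}$ we have $[\frac{a_{n+1}}{a_{n}}-1+\sigma_{n}]\,(n\cdot\mathrm{ln}\,n\cdots\mathrm{ln}_{m}\,n)\;z\;0 \iff \frac{a_{n+1}}{a_{n}}-1+\sigma_{n}\;z\;0 \iff \frac{a_{n+1}}{a_{n}}\;z\;1-\sigma_{n}$, with $z=\;<$ the convergence case and $z=\;\ge$ the divergence case. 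Both sides are positive infinireals, so reciprocation is an order-reversing bijection on them (transferred as $(*G,z)\mapsto(\overline{\mathbb{R}},z)$, exactly as in Section \ref{S17}): $\frac{a_{n}}{a_{n+1}}\;\overline{z}\;\frac{1}{1-\sigma_{n}}$, where $\overline{z}=\;>$ when $z=\;<$ and $\overline{z}=\;\le$ when $z=\;\ge$. Finally $\frac{1}{1-\sigma_{n}}=1+\sigma_{n}+\sigma_{n}^{2}+\ldots=1+\sigma_{n}|_{n=\infty}$ by non-reversible arithmetic, giving $\frac{a_{n}}{a_{n+1}}-(1+\sigma_{n})\;\overline{z}\;0$ with $\overline{z}=\;>$ the convergence case and $\overline{z}=\;\le$ the divergence case — precisely Theorem \ref{P225}. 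Since every step is an equivalence, running it backwards from the sum form $\frac{a_{n}}{a_{n+1}}-(1+\sigma_{n})$ returns the weighted $\frac{a_{n+1}}{a_{n}}$ form of Theorem \ref{P224}, so the two are equivalent; for the classical-to-pointwise identification one invokes the transfer $\sum_{\nu}a_{\nu}\mapsto\sum a_{n}|_{n=\infty}$ already standing behind all of Section \ref{S17}.

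The step I expect to need the most care is not the computation but the bookkeeping of the boundary (equality) case under reciprocation: one must check that the closed half-condition $z=\;\ge$ in Theorem \ref{P224} maps to the closed half-condition $\overline{z}=\;\le$ in Theorem \ref{P225} and not to the strict $<$, which is exactly why the first paragraph insists that both sides are \emph{strictly positive} infinireals so that $x\mapsto 1/x$ is a genuine order-reversing bijection there, and why $\sigma_{n}\in\Phi$ is isolated up front so that replacing $\frac{1}{1-\sigma_{n}}$ by $1+\sigma_{n}$ is a valid non-reversible simplification rather than a loss of information.
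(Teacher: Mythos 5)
Your proof is correct within the paper's framework, but it takes a genuinely different route from the paper's own argument. The paper, after dividing out the infinite weight $n\,\mathrm{ln}\,n\cdots\mathrm{ln}_{m}\,n$ exactly as you do, converts the difference $a_{n+1}-a_{n}$ into a sequence derivative $\frac{da_{n}}{dn}$, passes to the continuous domain, and then ``reverses'' the derivative with the index shifted ($a_{n}$ replaced by $a_{n+1}$ in the denominator, using $a_{n}=a_{n+1}|_{n=\infty}$) to reassemble the $\frac{a_{n}}{a_{n+1}}$ form of Theorem \ref{P225}. You instead stay entirely algebraic: you reciprocate the inequality $\frac{a_{n+1}}{a_{n}}\;z\;1-\sigma_{n}$ and expand $\frac{1}{1-\sigma_{n}}=1+\sigma_{n}+\sigma_{n}^{2}+\ldots=1+\sigma_{n}$ by non-reversible arithmetic. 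The two manipulations discard errors of exactly the same magnitude (the paper's index shift costs a term of order $\sigma_{n}\cdot\frac{a_{n+1}-a_{n}}{a_{n}}\sim\sigma_{n}^{2}$, your geometric truncation costs $\frac{\sigma_{n}^{2}}{1-\sigma_{n}}\sim\sigma_{n}^{2}$), so neither is more rigorous than the other on the one genuinely delicate point, namely that a discrepancy of order $\sigma_{n}^{2}\sim\frac{1}{n^{2}}$ could in principle flip the sign of $\frac{a_{n}}{a_{n+1}}-(1+\sigma_{n})$ when that quantity is itself that small. Your version has two advantages: it makes the justification explicit up front ($\sigma_{n}^{2}$ is an additive identity relative to every term of $\sigma_{n}$, in particular relative to the finest boundary term $\frac{1}{\prod_{k=0}^{m}\mathrm{ln}_{k}}$, which the paper never states), and it tracks the reversal of the relation under reciprocation cleanly, where the paper's chain writes the same $z$ on both sides of a side-swap and leaves the reader to repair the direction. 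What the paper's route buys in exchange is consistency with its surrounding machinery: the same derivative-and-separation-of-variables passage is reused immediately afterwards in the proof of Theorem \ref{P225} itself, so Proposition \ref{P226} there is a warm-up for that computation rather than a self-contained equivalence.
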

\begin{proof}
\begin{align*}
(\frac{a_{n+1}}{a_{n}}-1 + \frac{1}{n} + \frac{1}{n\,\mathrm{ln}\,n} + \ldots + \frac{1}{n\,\mathrm{ln}\,n\ldots \mathrm{ln}_{m}\,n}) n\,\mathrm{ln}\,n\ldots \mathrm{ln}_{m}\,n \; z \; 0 \\
(\frac{a_{n+1}}{a_{n}}-1) \prod_{j=0}^{m}\mathrm{ln}_{j}  + ( \mathrm{ln}_{1}\cdot\mathrm{ln}_{2}\ldots\mathrm{ln}_{m} + \mathrm{ln}_{2}\cdot\mathrm{ln}_{3}\cdot \ldots \cdot \mathrm{ln}_{m} + \ldots + 1) \; z \; 0 \\
\frac{ a_{n+1}-a_{n}}{a_{n}} + \sum_{j=0}^{m} \frac{1}{\prod_{k=0}^{j} \mathrm{ln}_{k}} \; z \; 0  \\
\sum_{j=0}^{m} \frac{1}{\prod_{k=0}^{j} \mathrm{ln}_{k}} \; z \; -\frac{1}{a_{n}}\frac{d a_{n}}{dn} \\
-\frac{ d a}{dn} \; z \; a(\sum_{i=0}^{m} \frac{1}{\prod_{k=0}^{i} \mathrm{ln}_{k}}|_{n=\infty}) \tag{reversing to form the other ratio test} \\
 -\frac{ d a_{n+1}}{dn} \; z \; a_{n+1}(\sum_{i=0}^{m} \frac{1}{\prod_{k=0}^{i} \mathrm{ln}_{k}}|_{n=\infty}) \\
 a_{n}-a_{n+1} \; z \; a_{n+1}(\sum_{i=0}^{m} \frac{1}{\prod_{k=0}^{i} \mathrm{ln}_{k}}|_{n=\infty}) \\
 a_{n} \; z \; a_{n+1}( 1+\sum_{i=0}^{m} \frac{1}{\prod_{k=0}^{i} \mathrm{ln}_{k}}|_{n=\infty}) \\
 \frac{a_{n}}{a_{n+1}} \; z \; 1+\sum_{i=0}^{m} \frac{1}{\prod_{k=0}^{i} \mathrm{ln}_{k}}|_{n=\infty}
\end{align*}
 Reversing the above implies the other test. Hence
 both tests are equivalent.
\end{proof}
\section{The Boundary test for positive series} \label{S18}
 With convergence sums, a universal comparison test for positive series
 is developed, which compares a positive monotonic series
 with an infinity of generalized p-series.  The boundary 
 between convergence and divergence is an infinity of
 generalized p-series.
 This is a rediscovery and reformation of a 175 year old convergence/divergence test.
\subsection{Introduction}\label{S1801}
  We see the boundary test as the most general and powerful of
 all convergence tests.
 A tool for other tests and theory, large or small.

The boundary test we believe to be the most general
 positive series test. Both du Bois-Reymond's theory,
 and described by Hardy, 
 the comparison of functions were forgotten 
 largely because the theory had
 no perceived applications.
 The rediscovery of this test should place
 it as something which has been missing from convergence and divergence
 theory. 
\bigskip
\begin{defy}
$L_{w} = \prod_{k=0}^{w} \mathrm{ln}_{j}$, $\;L_{w}=1$ when $w \lt 0$
\end{defy}
\begin{quote}
Du Bois-Reymond seems to have been led to 
 consider this ordering of functions
 by way of an attempt to construct an ideal series or integral which would serve as
 a boundary between convergent and divergent series or integrals, based on
 BERTRAND'S series. These are sometimes called ABEL'S series
 $...$ series of the form $\sum \frac{1}{L_{w-1} \mathrm{ln}_{w}^{p}}$ \cite[p.103]{fisher}.
\end{quote}

 Generally the convergence sums (Section \ref{S01}) and comparison is in $*G$.
 When we say a sum at infinity is $0$, the transfer $\Phi \mapsto 0$
 has been applied.
 When we say a sum at infinity is infinity, the transfer $\Phi^{-1} \mapsto \infty$
 was applied if the sum was defined.
\subsection{Generalized p-series}\label{S1802}
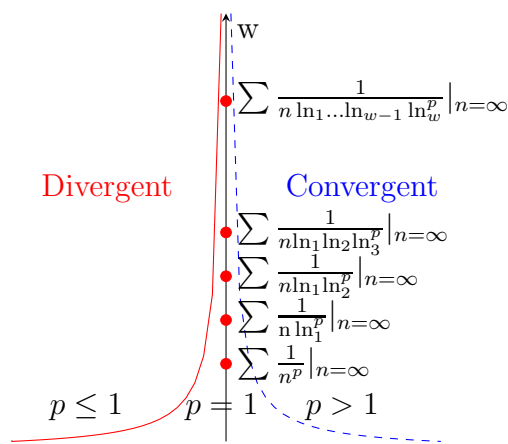
\begin{figure}[H]
\centering
\begin{tikzpicture}
\begin{axis}
  [
    ticks=none,
    axis y line=center,
    ylabel={w},
    axis x line=none,
    xlabel={p}
  ]
  \addplot[domain=0.1:4.4,blue,dashed]{ 1/x};
  \addplot[domain=-4.4:-0.1,red]{ -1/x};
  \addplot[red, mark = *, nodes near coords=$\sum \frac{1}{n \, \mathrm{ln}_{1} \ldots \mathrm{ln}_{w-1} \, \mathrm{ln}_{w}^{p}}|_{n=\infty}$, every node near coord/.style={color=black, anchor=180}] coordinates {(0,8)};
  \addplot[red, mark = *, nodes near coords=$\sum \frac{1}{n \mathrm{ln}_{1} \mathrm{ln}_{2}\mathrm{ln}_{3}^{p}}|_{n=\infty}$, every node near coord/.style={color=black,anchor=180}] coordinates {(0,5)};
  \addplot[red, mark = *, nodes near coords=$\sum \frac{1}{n \mathrm{ln}_{1} \mathrm{ln}_{2}^{p}}|_{n=\infty}$, every node near coord/.style={color=black, anchor=180}] coordinates {(0,4)};
  \addplot[red, mark = *, nodes near coords=$\sum \frac{1}{\mathrm{n \, ln}_{1}^{p}}|_{n=\infty}$, every node near coord/.style={color=black, anchor=180}] coordinates {(0,3)};
  \addplot[red, mark = *, nodes near coords=$\sum \frac{1}{n^{p}}|_{n=\infty}$, every node near coord/.style={color=black, anchor=180}] coordinates {(0,2)};
  \addplot[blue, nodes near coords=Convergent, every node near coord/.style={anchor=180}] coordinates {(1,6)};
  \addplot[red, nodes near coords=Divergent, every node near coord/.style={anchor=180}] coordinates {(-4,6)};
  \node [label={[shift={(2.8cm,0.0cm)}]$p=1$}] {};
  \node [label={[shift={(1.0cm,0.0cm)}]$p \leq 1$}] {};
  \node [label={[shift={(4.4cm,0.0cm)}]$p > 1$}] {};
  \draw (1.5,1) [dashed] to (1.5,2);
\end{axis}
\end{tikzpicture}
\caption{Generalized p-series between convergence/divergence} \label{fig:M1}
\end{figure}

We discussed the possibility of two straight lines
 approaching each other, with two possibilities at
 infinity. The lines never meet, or the lines meet
 at infinity. We find a more complex
 case where two classes of sums about $p=1$, having
 their functions close, never meet,
 one sums to infinity, the other zero.

The following discussion is on
 the generalization of the p-series
 through an observation and investigation. While the results are known,
 they are expressed 
 with infinitary calculus at
 infinity. 

It so happens that the p-series
 has for all values $p \gt 1$ 
 the series converges, and all values
 $p \leq 1$ the series diverges.
\bigskip
\begin{theo}\label{P227}
 If $p \leq 1 \Rightarrow \sum \frac{1}{n^{p}}|_{n=\infty}=\infty$
  diverges. 
 If $p \gt 1 \Rightarrow \sum \frac{1}{n^{p}}|_{n=\infty}=0$ converges. 
\end{theo}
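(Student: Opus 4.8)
The plan is to reduce this to the integral test and the power rule, exactly as in the proof of Theorem \ref{P054}, but organised around the three regimes of $p$.

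First I would dispose of the case $p \le 0$ separately: here $\frac{1}{n^{p}} = n^{-p}|_{n=\infty} \ge 1$, so $a_{n}|_{n=\infty} \neq 0$, and the $n$th term divergence test (Theorem \ref{P013}) gives $\sum \frac{1}{n^{p}}|_{n=\infty} = \infty$ at once. For the remaining range $p > 0$ the sequence $(\frac{1}{n^{p}})|_{n=\infty}$ is positive and strictly monotonically decreasing (since $n^{p} \prec (n+1)^{p}|_{n=\infty}$ for $p>0$), so it threads a continuous monotonic function meeting the E3 criteria, and Theorem \ref{P065} lets me replace the sum at infinity by the integral at infinity, $\sum \frac{1}{n^{p}}|_{n=\infty} = \int \frac{1}{x^{p}}\,dx|_{x=\infty}$.

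Next, for $p \neq 1$ I would integrate by the power rule: $\int x^{-p}\,dx = \frac{1}{1-p}\, x^{1-p} = \frac{1}{1-p}\,\frac{1}{x^{p-1}}|_{x=\infty}$. When $p > 1$ the exponent $p-1$ is positive, so $\frac{1}{x^{p-1}}|_{x=\infty} \in \Phi$; applying the transfer $\Phi \mapsto 0$ and condition E3.4 gives convergence with value $0$. When $0 < p < 1$ the exponent $p-1$ is negative, so $\frac{1}{x^{p-1}} = x^{1-p}|_{x=\infty} \in \Phi^{-1}$, and by $\Phi^{-1} \mapsto \infty$ together with E3.5 the sum diverges. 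Finally the boundary case $p = 1$ is handled by $\int \frac{1}{x}\,dx = \mathrm{ln}\,x|_{x=\infty}$, which lies in $\Phi^{-1}$ by the scale $\mathrm{ln}\,x \prec x^{q}|_{x=\infty}$ for any $q>0$, hence diverges; this is Example \ref{MEX003} in disguise.

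The one genuinely delicate point is making sure each evaluation actually lands in $\mathbb{R}_{\infty}$ so that E3 applies: this is why the $p \le 0$ and $p = 1$ cases must be peeled off before invoking the power rule, and why the monotonicity of $(\frac{1}{n^{p}})|_{n=\infty}$ must be checked before threading the curve. Everything else is the routine realization of an infinitesimal to $0$ or an infinity to $\infty$, so no machinery beyond Theorems \ref{P065} and \ref{P044} and the logarithmic scale is needed.
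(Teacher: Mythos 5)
Your proposal is correct and follows essentially the same route as the paper's own proof (given for the identical statement as Theorem \ref{P054}): convert the sum to the integral at infinity, apply the power rule for $p \neq 1$ and realize the result as an infinitesimal or infinity, and handle $p=1$ separately via $\mathrm{ln}\,x|_{x=\infty}=\infty$. Your extra step of peeling off $p \le 0$ with the $n$th term test is a small refinement the paper omits, but it does not change the argument.
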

\bigskip 
\begin{defy}\label{DEF012}
Let $k$-nested natural logarithms
 be represented by $\mathrm{ln}_{k}(x)$:  
$\mathrm{ln}_{0}\,x = x$,  
$\mathrm{ln}_{k}\,x = \mathrm{ln}( \mathrm{ln}_{k-1}\,x)$.
 For convenience, let $\mathrm{ln}_{k}$ without an argument mean $\mathrm{ln}_{k}(n)$.
\end{defy}
\bigskip
\begin{theo}\label{P228}
Let $f(w,n) = \int \frac{1}{\prod_{k=0}^{w-1} \mathrm{ln}_{k}(n)} \cdot \frac{1}{(\mathrm{ln}_{w}(n))^{p} } dn$.
 When $w \geq 1$ then $f(w,n) = f(w-1,\mathrm{ln}\,n)$ and 
 $(f(w,n) = f(0,n) = \int \frac{1}{t^{p}} dt)|_{n=\infty}$
 where $t =\mathrm{ln}_{w}\,n|_{n=\infty}$.
\end{theo}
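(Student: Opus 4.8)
The plan is to establish the single reduction step $f(w,n) = f(w-1,\mathrm{ln}\,n)$ for $w \geq 1$ by a change of variable, and then iterate it $w$ times down to $f(0,\cdot)$, whose integrand — by the empty-product convention $\prod_{k=0}^{-1}\mathrm{ln}_{k}(n) = 1$ — is simply $t^{-p}$. So the first and essential move is the substitution $t = \mathrm{ln}\,n$, giving $dn = n\,dt = e^{t}\,dt$. Under this substitution the nested logarithms shift index: $\mathrm{ln}_{0}(n) = n = e^{t}$ and, for $k \geq 1$, $\mathrm{ln}_{k}(n) = \mathrm{ln}_{k-1}(t)$. Hence
\[ \prod_{k=0}^{w-1}\mathrm{ln}_{k}(n) = n\prod_{k=1}^{w-1}\mathrm{ln}_{k}(n) = e^{t}\prod_{j=0}^{w-2}\mathrm{ln}_{j}(t), \qquad \mathrm{ln}_{w}(n) = \mathrm{ln}_{w-1}(t). \]
Substituting into the defining integral, the factor $e^{t}$ coming from $dn$ cancels the $e^{t}$ in the denominator, leaving
\[ f(w,n) = \int \frac{1}{\prod_{j=0}^{w-2}\mathrm{ln}_{j}(t)}\cdot\frac{1}{(\mathrm{ln}_{w-1}(t))^{p}}\,dt = f(w-1,t) = f(w-1,\mathrm{ln}\,n), \]
which is exactly the claimed recursion; for $w = 1$ the inner product is empty and this reads $\int \frac{dn}{n(\mathrm{ln}\,n)^{p}} = \int \frac{dt}{t^{p}}$.

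Next I would iterate the recursion (equivalently, induct on $w$): $f(w,n) = f(w-1,\mathrm{ln}\,n) = f(w-2,\mathrm{ln}_{2}\,n) = \cdots = f(0,\mathrm{ln}_{w}\,n)$, and by the base convention $f(0,s) = \int \frac{ds}{(\mathrm{ln}_{0}\,s)^{p}} = \int \frac{ds}{s^{p}}$. Writing $t = \mathrm{ln}_{w}\,n$ then gives $f(w,n) = \int \frac{dt}{t^{p}} = f(0,t)$, the asserted identity. The qualifier $|_{n=\infty}$ is consistent because $t = \mathrm{ln}_{w}\,n$ is monotone increasing and diverges as $n \to \infty$, so in $*G$ the reduced integral is again a convergence integral at infinity in the variable $t$; the constants of integration introduced at each substitution step are additive identities under Criterion E3 (Section \ref{S0108}) and do not affect the equality at infinity.

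The only point requiring care — and hence the main obstacle — is the index bookkeeping for the nested logarithms together with the empty-product conventions at the endpoints $w = 1$ (the lower limit $0 > w-2$ of the inner product) and $w = 0$ (the base case). Beyond that the argument is a routine chain-rule computation: there is no analytic subtlety, since we manipulate only indefinite integrals, and $t = \mathrm{ln}\,n$ is a monotone bijection of $(1,\infty)$ onto $(0,\infty)$ that lifts to the infinireals without incident.
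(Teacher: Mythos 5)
Your proposal is correct and follows essentially the same route as the paper: the substitution $t=\mathrm{ln}\,n$ (the paper writes it as $n=e^{v}$) cancels the leading factor $\mathrm{ln}_{0}(n)=n$ against $dn=e^{t}\,dt$ and shifts every nested logarithm down one index, giving $f(w,n)=f(w-1,\mathrm{ln}\,n)$, which is then iterated $w$ times to the base case $\int t^{-p}\,dt$. Your additional remarks on the empty-product conventions and the behaviour of $t=\mathrm{ln}_{w}\,n$ at infinity are consistent with, and slightly more explicit than, the paper's treatment.
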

\begin{proof}
 Let $n = e^{v}$, $\frac{dn}{dv} = e^{v}$. 
$f(w,n) = \int \frac{1}{n} \frac{1}{\prod_{k=1}^{w-1} \mathrm{ln}_{k}(n)} \cdot \frac{1}{(\mathrm{ln}_{w}(n))^{p} } \frac{dn}{dv} dv$ 
$= \int \frac{1}{n} \frac{1}{\prod_{k=1}^{w-1} \mathrm{ln}_{k}(e^{v})} \cdot \frac{1}{(\mathrm{ln}_{w}(e^{v}))^{p} } e^{v} dv$ 
$= \int \frac{1}{n} \frac{1}{\prod_{k=1}^{w-1} \mathrm{ln}_{k-1}(v)} \cdot \frac{1}{(\mathrm{ln}_{w-1}(v))^{p} } n dv$ 
$= \int \frac{1}{\prod_{k=1}^{w-1} \mathrm{ln}_{k-1}(v)} \cdot \frac{1}{(\mathrm{ln}_{w-1}(v))^{p} } dv$ 
$= \int \frac{1}{\prod_{k=0}^{w-2} \mathrm{ln}_{k}(v)} \cdot \frac{1}{(\mathrm{ln}_{w-1}(v))^{p} } dv$ 
$=f(w-1,v)$

 Apply $f(w,n) = f(w-1,\mathrm{ln}\,n)$, $w$ times to remove
 $\frac{1}{\prod_{k=0}^{w-1} \mathrm{ln}_{k}}$ inside the integral.
 $(f(w,n) = f(0,n) = \int \frac{1}{t^{p}} dt)|_{n=\infty}$,
 $t =\mathrm{ln}_{w}\,n|_{n=\infty}$.
\end{proof}
\bigskip
\begin{theo}\label{P229}
\[ \int \frac{1}{L_{w}} dn = \mathrm{ln}_{w+1}|_{n=\infty} \]
\end{theo}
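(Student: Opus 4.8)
The plan is to prove the identity by induction on $w$, with the substitution $u = \mathrm{ln}\,n$ driving the inductive step, exactly paralleling the log-reduction already used in Theorem \ref{P228}. For the base case $w = 0$ we have $L_{0} = \mathrm{ln}_{0} = n$, so $\int \frac{1}{L_{0}}\,dn = \int \frac{1}{n}\,dn = \mathrm{ln}\,n = \mathrm{ln}_{1}$, which is the claim.

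For the inductive step, assume $\int \frac{1}{L_{w-1}}\,dn = \mathrm{ln}_{w}|_{n=\infty}$. First I would write $L_{w} = n \cdot \prod_{k=1}^{w} \mathrm{ln}_{k}(n)$ and substitute $u = \mathrm{ln}\,n$, so that $\frac{du}{dn} = \frac{1}{n}$ and $\mathrm{ln}_{k}(n) = \mathrm{ln}_{k-1}(u)$ for every $k \geq 1$. Then $\int \frac{1}{L_{w}(n)}\,dn = \int \frac{1}{n\prod_{k=1}^{w}\mathrm{ln}_{k-1}(u)}\,\frac{dn}{du}\,du = \int \frac{1}{\prod_{j=0}^{w-1}\mathrm{ln}_{j}(u)}\,du = \int \frac{1}{L_{w-1}(u)}\,du$, which by the induction hypothesis equals $\mathrm{ln}_{w}(u) = \mathrm{ln}_{w}(\mathrm{ln}\,n) = \mathrm{ln}_{w+1}(n)$, closing the induction. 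Throughout, the constant of integration is discarded as an additive identity at infinity, consistent with the convergence-sums convention of integrating at a point.

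Alternatively, and more cheaply, one can simply invoke Lemma \ref{P202}: since $\frac{d}{dn}\mathrm{ln}_{w+1} = \frac{1}{L_{w}}|_{n=\infty}$, the fundamental theorem of calculus in $*G$ gives $\int \frac{1}{L_{w}}\,dn = \mathrm{ln}_{w+1}|_{n=\infty}$ at once. Either route is routine, so there is no real obstacle; the only step needing a line of care is the chain-rule computation underlying Lemma \ref{P202} — the subsidiary induction showing $\frac{d}{dn}\mathrm{ln}_{k} = \frac{1}{\mathrm{ln}_{0}\mathrm{ln}_{1}\cdots\mathrm{ln}_{k-1}} = \frac{1}{L_{k-1}}$ — together with the bookkeeping observation that the substitution $u = \mathrm{ln}\,n$ genuinely shifts every nested logarithm $\mathrm{ln}_{k}$ down by one index, which is what makes the inductive collapse go through.
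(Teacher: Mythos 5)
Your proof is correct and rests on the same log-substitution reduction the paper itself uses: the paper's proof simply specializes Theorem \ref{P228} to $p=0$ (with $w$ shifted to $w+1$), and that reduction $f(w,n)=f(w-1,\mathrm{ln}\,n)$ is exactly the index-lowering step your induction re-derives via $u=\mathrm{ln}\,n$. Both routes are sound; the paper's is just a one-line citation of the already-proved reduction rather than an explicit induction.
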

\begin{proof}
Substitute $p=0$ and $w=w+1$ into Theorem \ref{P228}
 then $t = \mathrm{ln}_{w+1}\,n|_{n=\infty}$ and
 $f(0,n)|_{n=\infty} = \int \frac{1}{t^{p}} dt|_{n=\infty}$
 $=t|_{n=\infty}$
 $= \mathrm{ln}_{w+1}|_{n=\infty}$ 
\end{proof}

Since the divergence
 of the sum is the
 same as the divergence of the integral,
 by Theorem \ref{P229} the following generalization
 of the harmonic series always 
 diverges.
\[ ( \sum \frac{1}{n}, \sum \frac{1}{n\, \mathrm{ln}\,n}, \sum \frac{1}{n \, \mathrm{ln}\,n \, \mathrm{ln}_{2}}, \sum \frac{1}{n \, \mathrm{ln} \, \mathrm{ln}_{2} \, \mathrm{ln}_{3}}, \ldots )|_{n=\infty} \text{ sums diverge} \]

 We could have reasoned the above by considering
 scales of infinities \cite[Part 2]{cebp21}.
 $L_{0} \prec L_{1} \prec L_{2} \prec L_{3} \prec \ldots|_{n=\infty}$,  
 $\frac{1}{L_{0}} \succ \frac{1}{L_{1}} \succ \frac{1}{L_{2}} \succ \frac{1}{L_{3}} \succ \ldots|_{n=\infty}$,   
 assuming 
 $f \succ g$ then $\sum f \succ \sum g$,
 $\sum \frac{1}{L_{0}} \succ \sum \frac{1}{L_{1}} \succ \sum \frac{1}{L_{2}} \succ \sum \frac{1}{L_{3}} \succ \ldots|_{n=\infty}$ 
 which shows the sums diverging more slowly.
\bigskip
\begin{defy}
Let 
 $\sum \frac{1}{\prod_{k=0}^{w-1} \mathrm{ln}_{k} \cdot \mathrm{ln}_{w}^{p} }$
 or $\int \frac{1}{\prod_{k=0}^{w-1} \mathrm{ln}_{k} \cdot \mathrm{ln}_{w}^{p} }\,dn|_{n=\infty}$
 be called the generalized p-series. 
\end{defy}
\bigskip
\begin{theo}\label{P230}
\[
\sum \frac{1}{\prod_{k=0}^{w-1} \mathrm{ln}_{k} \cdot \mathrm{ln}_{w}^{p} }|_{n=\infty}
 = \left\{
  \begin{array}{rl}
    0  & \; \text{converges when } p \gt 1 \\
    \infty  & \; \text{diverges when } p \leq 1 
  \end{array} \right.
\]
\end{theo}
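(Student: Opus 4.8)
The plan is to reduce the whole family of generalized $p$-series to the ordinary $p$-series (Theorem~\ref{P227}) by combining the integral test with the change-of-variable identity already proved in Theorem~\ref{P228}.

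First I would observe that the summand $a(n)=\frac{1}{\prod_{k=0}^{w-1}\mathrm{ln}_{k}\cdot\mathrm{ln}_{w}^{p}}$ is positive and, once $n$ is large enough that all the nested logarithms are defined and positive, strictly monotonic decreasing; hence it has no singularities before infinity, the E3 criteria of Section~\ref{S0108} are met, and by Theorem~\ref{P016} convergence or divergence is decided at the point $n=\infty$. By the integral test in the form of Theorem~\ref{P065}, $\sum a_{n}|_{n=\infty}=\int a(n)\,dn|_{n=\infty}=f(w,n)|_{n=\infty}$ in the notation of Theorem~\ref{P228}. Then I would invoke Theorem~\ref{P228} directly: applying the substitution $n=e^{v}$ a total of $w$ times strips off the factor $\frac{1}{\prod_{k=0}^{w-1}\mathrm{ln}_{k}}$ and gives $f(w,n)|_{n=\infty}=f(0,n)|_{n=\infty}=\int\frac{1}{t^{p}}\,dt|_{t=\infty}$ with $t=\mathrm{ln}_{w}\,n|_{n=\infty}$. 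The point to stress is that $t$ is still a positive infinity --- merely a smaller one on the scale $L_{0}\prec L_{1}\prec L_{2}\prec\ldots$ --- so the reduced integral is again a convergence integral to which the $p$-series result applies verbatim: by Theorem~\ref{P227} (equivalently the integral half of Theorem~\ref{P054}), $\int\frac{1}{t^{p}}\,dt|_{t=\infty}=0$ when $p>1$ and $=\infty$ when $p\leq1$, the boundary case $p=1$ falling on the divergent side since $\int\frac{1}{t}\,dt=\mathrm{ln}\,t|_{t=\infty}=\infty$ (cf.\ Theorem~\ref{P229}). Transferring back, $\Phi\mapsto0$ yields convergence for $p>1$ and $\Phi^{-1}\mapsto\infty$ yields divergence for $p\leq1$; the same chain of equalities, read without the first step, establishes the integral version.

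The main obstacle I anticipate is bookkeeping rather than conceptual. One has to check that the $w$-fold substitution is legitimate \emph{at infinity}: that each application of $n=e^{v}$ preserves monotonicity of the integrand and the ``between two infinities'' interval the E3 criteria demand, and that the image of the upper endpoint, $t=\mathrm{ln}_{w}\,n$, genuinely lies in $+\Phi^{-1}$ rather than degenerating. Since Theorem~\ref{P228} has already been established, verifying these amounts to confirming its hypotheses hold in the present setting, which they do; so in effect the argument is no more than ``integral test, then Theorem~\ref{P228}, then Theorem~\ref{P227}.'' An alternative, if one prefers not to lean on Theorem~\ref{P228}, would be induction on $w$ with base case the ordinary $p$-series and inductive step a single substitution $n=e^{v}$, but that reproves Theorem~\ref{P228} inline and gains nothing.
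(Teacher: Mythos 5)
Your proposal is correct and follows essentially the same route as the paper's own proof: convert the sum to an integral by the integral test, apply Theorem \ref{P228} $w$ times to strip off the factor $\frac{1}{\prod_{k=0}^{w-1}\mathrm{ln}_{k}}$, and conclude from the ordinary $p$-series result with $t=\mathrm{ln}_{w}\,n$. The extra attention you give to verifying monotonicity and that $t$ remains a positive infinity is a reasonable tightening but does not change the argument.
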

\begin{proof}
By the integral theorem the convergence and divergence of
 the series is the same as the respective integral.
 $\sum \frac{1}{\prod_{k=0}^{w-1} \mathrm{ln}_{k} \cdot \mathrm{ln}_{w}^{p} }$
 $=\int \frac{1}{\prod_{k=0}^{w-1} \mathrm{ln}_{k}(n) } \cdot \frac{1}{(\mathrm{ln}_{w}(n))^{p} } \,dn|_{n=\infty}$
 $= f(w,n)$

Apply Theorem \ref{P228},
$f(w,n) = f(w-1,\mathrm{ln}\,n)$, $w$ times to remove $\frac{1}{\prod_{k=0}^{w-1} \mathrm{ln}_{k}(n)}$ inside the integral.

 $(f(w,n) = f(0,n) = \int \frac{1}{t^{p}} dt)|_{n=\infty}$, $t =\mathrm{ln}_{w}\,n$, 
 which is known to converge when $p \gt 1$ and diverge when $p \leq 1$.
 Hence the generalized p-series proved.
\end{proof}
\subsection{The existence of the boundary and tests}\label{S1803}
Arguments for and against the boundary
 have existed from its inception,
 when it was realized that
 no one series could partition
 all positive series into
 either converging or diverging series.

\begin{quote}
KNOPP says, for example, that it is clear
 "that it is quite useless to attempt to introduce anything of the nature of a boundary
 between convergent and divergent series, as was suggested by P. DU BOIS-REYMOND
 ... in whatever manner we may choose to render it precise, it will never correspond
 to the actual circumstances" [KNOPP 1954, 313 or 1951, 304; his emphasis].
 \cite[p.135]{fisher}
\end{quote}

 While teaching (see \cite{hart}) comments.
\begin{quote}
Recently one of our students
 remarked that the harmonic series 
 $\sum \frac{1}{n}$ acts as a boundary
 between convergence and divergence.
\end{quote}

This was not that different from
 the historical account where new tests emerged,
 and was contradicted by Abel,  
 $\sum \frac{1}{n\,\mathrm{ln}\,n}$ diverges.

The series was generalized
 to the generalized p-series (Section \ref{S1802}),
 and considered as a boundary between convergence
 and divergence.

 However, ironically it was du Bois-Reymond who disproved
 the boundary \cite[p.103]{fisher}.

\begin{quote}
 One can ask whether or not the convergence or divergence
 of all series with positive terms can be settled by comparison with a real multiple
 of one of these series. The answer is no, and this was first shown
 by DU BOIS-REYMOND [DU BOIS-REYMOND 1873, 88-91].
\end{quote}

 See Theorem \ref{P241} and Theorem \ref{P239}. Hardy summarizes
 (see \cite[pp.67--68]{ordersofinfinity}).

\begin{quote}
 Given any divergent
 series we can always find one more slowly divergent.
 $\ldots$ given any convergent series,
 we can find one more slowly convergent.
\end{quote}  

A. Pringsheim was of the same opinion.  
\begin{quote}
The analogy with the irrational numbers is a logical blunder; one can insert between the elements of the two
classes defined by $x^{2} \lt 2$ and $x^{2} \gt 2$
 a new thing corresponding to the relation
 $x^{2} = 2$,
 but between the convergence and divergence of positive
 series, there is no such "third". \cite[p.151]{fisher} 
\end{quote}

 However, the concept of an ideal function did not go away.
\begin{quote}
BOREL says "we know that there is no function of $n$,
 in the ordinary sense of the word, which has this property; that is, we call
 $O(n)$ an ideal function; it is not a true function" [BOREL 1946, 148]. But there it is,
nevertheless. PRINGSHEIM'S attempts to argue and ridicule it out of existence failed
to sway BOREL. \cite[p.147]{fisher}
\end{quote}

\bigskip
\begin{conjecture}
 While one series cannot be the boundary,
 what is to say that an infinite collection
 of series cannot form the boundary.
\end{conjecture}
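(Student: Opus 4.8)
The plan is to first turn the informal claim into a precise statement and then establish it in stages. Precisely, let $\mathcal{B} = \{\, \sum \frac{1}{L_{w-1}\,\mathrm{ln}_{w}^{p}} \mid w \in \{0,1,2,\ldots\},\ p \in \mathbb{R} \,\}$ be the collection of generalized p-series, with its ``core'' the divergent chain $\{\sum \frac{1}{L_{w}}\}_{w \ge 0}$ obtained by setting $p=1$. I would say $\mathcal{B}$ \emph{forms the boundary} if: (i) $\mathcal{B}$ is totally ordered by the logarithmic-magnitude relation $\prec\!\prec$; (ii) convergence/divergence is completely decided inside $\mathcal{B}$ --- a member diverges exactly when $p \le 1$ (Theorem \ref{P230}); (iii) $\mathcal{B}$ is gapless across the convergence/divergence cut: no positive monotonic series can sit strictly $\prec\!\prec$-between the convergent part and the divergent part of $\mathcal{B}$ at a fixed level $w$ without being caught at level $w+1$; and (iv) $\mathcal{B}$ separates: every positive monotonic convergence sum $\sum a_{n}|_{n=\infty}$ is $\prec\!\prec$-comparable to some member of $\mathcal{B}$, so that the boundary test (Theorem \ref{P041}, i.e.\ Theorem \ref{P232}) classifies it. The conjecture asserts that such a $\mathcal{B}$ exists, in contrast to the non-existence of any single separating series.

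First I would reduce to the continuous, strictly monotonic case: by the second rearrangement theorem (Theorem \ref{P212}) replace $a_{n}$ by $\mathrm{unique}(a_{n})$, thread a continuous analytic curve through it (Section \ref{S15}), and pass freely between sum and integral by the two-directional integral test (Theorem \ref{P065}). Item (ii) then follows from the change-of-variable reduction $f(w,n) = f(0,\mathrm{ln}_{w}\,n)|_{n=\infty}$ of Theorems \ref{P228} and \ref{P229} fed into Theorem \ref{P230}. Item (i) is a direct computation: taking logarithms, $\mathrm{ln}\frac{1}{L_{w-1}\mathrm{ln}_{w}^{p}} = -\big(\sum_{k=1}^{w-1}\mathrm{ln}_{k} + p\,\mathrm{ln}_{w}\big)|_{n=\infty}$, and these are linearly ordered by the iterated-logarithm scale $\mathrm{ln}_{0} \succ \mathrm{ln}_{1} \succ \mathrm{ln}_{2} \succ \cdots|_{n=\infty}$ together with the scale $c \prec \mathrm{ln}\,x \prec x^{p}|_{x=\infty}$ quoted in Section \ref{S0102}. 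Item (iii) is where the du Bois-Reymond constructions enter (the results the paper cites as Theorems \ref{P239} and \ref{P241}): given a convergent generalized p-series one manufactures a more slowly convergent one by lowering $p$ toward $1$ or by raising $w$, and dually for divergence; the essential observation is that these refinements never leave $\mathcal{B}$, so the cut inside $\mathcal{B}$ admits no straddling ``third''.

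The hard part will be item (iv), completeness of $\mathcal{B}$ as a separating family, and here the very diagonalisation that killed the single-series boundary reappears: one can build a divergent series slower than every $\sum \frac{1}{L_{w}}$, which no \emph{finite} level $w$ of the boundary test resolves. I see two routes. The conservative route restricts the theorem to series ``of generalized p-series type'' --- those for which the boundary test terminates at some finite $w$ --- where (iv) holds by definition; one then argues, using Theorems \ref{P241} and \ref{P239}, that this class is already strictly cofinal and coinitial around the cut in a way no single series is, which is precisely the rebuttal of Pringsheim's ``no third'' objection and is consistent with how the boundary test is applied elsewhere in the paper (with $w$ a fixed integer, cf.\ Theorem \ref{P041}). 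The ambitious route extends $\mathcal{B}$ transfinitely: define $\mathrm{ln}_{\alpha}$ at limit ordinals $\alpha$ by threading a curve in $*G$ that log-dominates every $\mathrm{ln}_{w}$ with $w < \alpha$, verify that Theorems \ref{P228}--\ref{P230} and the magnitude comparisons survive transfinite iteration, and iterate through the countable ordinals; the genuine technical obstacle is showing this transfinite scale is exhaustive (an analogue of the Hardy--du Bois-Reymond order-type question), which I would state as the residual open problem rather than claim outright. I would present the conservative route as the proved theorem and the transfinite route as the conjecture's deeper form.
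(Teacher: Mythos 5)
The first thing to say is that the paper does not prove this statement: it is labelled a conjecture and remains one. The only support the paper offers is the remark that Theorems \ref{P239} and \ref{P241} (one can always diverge or converge more slowly) concern a \emph{single} comparison series and ``do not say anything about an infinity of such series,'' followed later by an appeal to Hardy's empirical observation that no function outside the logarithmico-exponential scale has ever arisen in analysis. That is a plausibility argument, not a proof, and the paper's subsequent claim that the boundary test is ``theoretically complete'' rests on it. So there is no proof in the paper for your proposal to be measured against; your four-condition formalization of what ``forms the boundary'' should mean is already more precise than anything the authors write down.

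That said, your proposal does not close the conjecture either, and your own diagnosis of why is correct and worth making explicit. Items (i)--(iii) are routine given Theorems \ref{P228}--\ref{P230}. Item (iv) is the entire content of the conjecture, and it genuinely \emph{fails} for the countable family $\{\sum \frac{1}{L_{w}}\}_{w\ge 0}$: du Bois-Reymond's own diagonal construction (the same one that kills a single boundary series, and the one underlying Theorem \ref{P241}) produces a positive monotonic divergent series with $a_{n} \prec \frac{1}{L_{w}}|_{n=\infty}$ for \emph{every} finite $w$, so the boundary test at each fixed $w$ returns $z = \;\lt$ and misclassifies it as convergent. Your conservative route therefore proves only a restricted statement that is true by definition on the class of series already comparable to the scale at some finite level, and your transfinite route correctly isolates the exhaustiveness of the extended scale as the real open problem (it is essentially the Hausdorff-gap/order-type question, and the answer for the full ordering of monotone functions is negative: no countable, and indeed no $\omega_1$-indexed, scale is cofinal). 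The honest conclusion is that the conjecture as stated is either false (if ``the boundary'' must separate all positive monotonic series) or must be read as the weaker, restricted claim you call the conservative route; you should say which one you are proving rather than presenting the restricted version as a resolution of the conjecture.
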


While one series alone cannot 
 separate all convergent and divergent series,
 Theorem \ref{P239} and Theorem \ref{P241} do not say anything
 about an infinity of such series.
 Indeed the generalized p-series
 keeps decreasing its terms
 in size. In other words an appeal
 that there is no smallest 
 series fails on a collection
 of infinitely smaller term series, is invalid. 

 Surprisingly, 
 after deriving a general boundary
 test which is the subject of this paper,
 it was found in fact to be a rediscovery.

 The test was referenced
 in the appendix (not an important place for a major test) in Hardy's Orders of Infinity,
 with no example and limited description.
 Hardy does cite other
 references, referring to the `logarithmic criteria' by De Morgan,
 attributing the criteria in 1839 \cite[p.67]{ordersofinfinity}.
 Further, the test does not appear as a general test in
 the current known convergent tests.
   
 However, we believe this
 is the long sort after universal test
 for a positive series either converging or diverging.

The test given by Hardy was incorrect for
 the divergent series case, possibly a typing error.
\bigskip 
\begin{theo}\label{P231}
\textbf{The logarithmic test} \cite[Appendix II p.66]{ordersofinfinity} with correction. \\
 \\
The series $\sum a_{n} \;\; (a_{n} \geq 0)$
\[ \text{is convergent if } a_{n} \preceq \frac{1}{ \prod_{k=0}^{w-1} \mathrm{ln}_{k} \cdot (\mathrm{ln}_{w})^{1+\alpha}} \\
 \text{where }\alpha \gt 0, \]
\[ \text{and divergent if } \\
a_{n} \succeq \frac{1}{ \prod_{k=0}^{w}  \mathrm{ln}_{k}} \]
The integral $\int^{\infty} f(x) \,dx \;\; (f \geq 0)$ \\ 
\[ \text{is convergent if } f(x) \preceq \frac{1}{ \prod_{k=0}^{w-1} \mathrm{ln}_{k}\,x \cdot (\mathrm{ln}_{w}\,x)^{1+\alpha}} \\
 \text{where }\alpha \gt 0,\]
\[ \text{and divergent if } \\
f(x) \succeq \frac{1}{ \prod_{k=0}^{w}  \mathrm{ln}_{k}\,x} \]
\end{theo}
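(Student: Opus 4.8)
The plan is to prove all four assertions by reducing them to the generalized p-series (Theorem \ref{P230}) and then invoking the comparison test (Theorem \ref{P010}). Because convergence sums treat series and integrals on the same footing --- the integral test in both directions is Theorem \ref{P065}, and Theorem \ref{P230} is stated for both $\sum$ and $\int$ --- I would handle the series statement and the integral statement by one and the same argument, passing freely between $\sum a_n|_{n=\infty}$ and $\int a(x)\,dx|_{n=\infty}$. The only extra ingredient needed is that a bounded positive multiplier does not change a convergence sum, which is Theorem \ref{P059} (and its signed companion Theorem \ref{P061}); this lets the relations $\preceq$ and $\succeq$ be read, after absorbing constants, as plain inequalities between the terms at infinity, so that the comparison test applies termwise.

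For the convergent half: the comparison function $\frac{1}{\prod_{k=0}^{w-1}\mathrm{ln}_k\cdot(\mathrm{ln}_w)^{1+\alpha}}$ is exactly the generalized p-series with exponent $p=1+\alpha$, and since $\alpha>0$ we have $p>1$, so by Theorem \ref{P230} its sum (and its integral) at infinity is $0$. The comparand is a positive monotonic decreasing sequence at infinity, so the sandwich $0\le \sum a_n \le c\sum\frac{1}{\prod_{k=0}^{w-1}\mathrm{ln}_k\cdot(\mathrm{ln}_w)^{1+\alpha}}|_{n=\infty}=0$ forces $\sum a_n|_{n=\infty}=0$, i.e. convergence. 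The identical chain with $\int$ in place of $\sum$, using Theorem \ref{P228} to carry the integral down to $\int t^{-p}\,dt$ with $t=\mathrm{ln}_w n$, yields the integral statement.

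For the divergent half: the comparison function $\frac{1}{\prod_{k=0}^{w}\mathrm{ln}_k}=\frac{1}{L_w}$ is the generalized p-series with $p=1$, which by Theorem \ref{P230} diverges; equivalently $\int\frac{1}{L_w}\,dn=\mathrm{ln}_{w+1}|_{n=\infty}=\infty$ by Theorem \ref{P229}. Since $a_n\succeq\frac{1}{L_w}|_{n=\infty}$, after absorbing the constant we get $0\le c\sum\frac{1}{L_w}\le \sum a_n|_{n=\infty}$ with the left side equal to $\infty$, so $\sum a_n|_{n=\infty}=\infty$ by Theorem \ref{P010}, and the integral version is the same argument with $\int$. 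This is where the ``correction'' to Hardy enters: the divergence comparand must be the $p=1$ series $\frac{1}{L_w}=\frac{1}{\prod_{k=0}^w\mathrm{ln}_k}$, not a variant with a different exponent on the final logarithm.

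The main obstacle I anticipate is not a single calculation but pinning down the precise meaning of $\preceq$ and $\succeq$ at infinity so the constant-absorption step is legitimate in every case --- in particular that $a_n\preceq b_n|_{n=\infty}$ really does give $\sum a_n\le c\sum b_n|_{n=\infty}$ for a real $c$ when $b_n$ is a generalized p-series, rather than something weaker. A secondary point worth stating explicitly, to forestall the Pringsheim / du Bois-Reymond objection recalled above, is that this is a comparison test and not a decision procedure: for a fixed $w$ there is a genuine gap (a term sequence can sit below every level-$w$ generalized p-series with $p>1$ yet above the level-$w$ one with $p=1$), and the theorem does not claim to resolve such a sequence --- one must pass to level $w+1$, exactly as in Hardy's remark that given any convergent series we can find one more slowly convergent.
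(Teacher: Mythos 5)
The paper does not actually prove this statement: Theorem \ref{P231} is presented as a citation of Hardy (with a corrected divergence comparand), and the surrounding argument runs in the opposite direction --- Proposition \ref{P017} takes the logarithmic test as a hypothesis and derives the reformed boundary test (Theorem \ref{P232}) from it, while the boundary test itself is supported independently via its equivalence with the generalized ratio test (Corollary \ref{P243}). Your proposal supplies a direct, self-contained proof instead: reduce both comparands to the generalized p-series of Theorem \ref{P230} (with $p=1+\alpha \gt 1$ for convergence and $p=1$ for divergence), absorb the constants implicit in $\preceq$ and $\succeq$ by Theorem \ref{P059}, and finish with the comparison test (Theorem \ref{P010}) and the sum--integral interchange (Theorem \ref{P065}) to cover the integral half. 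This is consistent with the paper's machinery and free of circularity, since Theorem \ref{P230} rests only on the change of variable of Theorem \ref{P228}; it is essentially the content the paper compresses into the second proof of Theorem \ref{P233}. Your reading of $\preceq$ and $\succeq$ as inequality up to a positive real multiple is exactly the one the paper itself uses inside the proof of Proposition \ref{P017} (where it introduces $M_{1}, M_{2} \in \mathbb{R}^{+}$ and then discards them as transients), so the constant-absorption step you flag as the main risk is already sanctioned by the paper's conventions. The one standing hypothesis you should state explicitly is monotonicity of $(a_{n})|_{n=\infty}$, which the convergence-sum criteria require throughout but which the theorem statement omits; your closing remark about the gap at each fixed $w$ is accurate and worth keeping, as it matches the paper's own discussion of why no single series can form the boundary.
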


 We developed a detailed convergence criteria E3
 (Section \ref{S01}) where
 comparing a sum we are able to remove the sum/integral and
 compare the corresponding monotonic functions. In the reformed test, a 
 direct comparison with the boundary is made.
\begin{align*}
\sum a_{n} \; z \; \sum \frac{1}{\prod_{k=0}^{w} \mathrm{ln}_{k}}|_{n=\infty} \\
 a_{n} \; z \; \frac{1}{\prod_{k=0}^{w} \mathrm{ln}_{k}}|_{n=\infty} 
\end{align*} 
By expressing the boundary and comparison differently,
 in the original test the two functions 
 (1): $\frac{1}{ \prod_{k=0}^{w-1} \mathrm{ln}_{k}\,x \cdot (\mathrm{ln}_{w}\,x)^{1+\alpha}}$
 convergent and asymptotic to the boundary, and
 (2): $\frac{1}{ \prod_{k=0}^{w}  \mathrm{ln}_{k}}$
 the p-series on the boundary in Theorem \ref{P231} are rephrased
 to compare against the p-series function (2) only, see Theorem \ref{P232}.
 Also, the relation
 being solved for is a simpler relation. Only one relation needs
 to be solved for, not two.

 We also needed an algebra for comparing functions to make the
 boundary test usable. 

Some further differences in the theorems, $\succeq$ in the
 logarithmic test is replaced by $\geq$ in the boundary test.
 This can be explained where the original test is
 considered from a finite perspective, and $\succeq$
 removes the transient sum terms before reaching
 infinity.  By considering the series
 at infinity and requiring a monotonic sequence of
 terms this was avoided.
\bigskip
\begin{remk}
 By considering different rearrangements and intervals,
 we can deform $a_{n}$ into a monotonic sequence (Section \ref{S16}) 
 comparable with the generalized
 p-series. If this is not possible then by definition the series
 or integral diverges.
 Similarly with functions, $a(n)$ can be deformed.
\end{remk}
\bigskip 
\begin{theo}\label{P232}
\textbf{The boundary test} 
\[ 
\sum a_{n} \; z \; \sum \frac{1}{\prod_{k=0}^{w} \mathrm{ln}_{k}\,n}|_{n=\infty},\;\; z = \left\{
  \begin{array}{rl}
    \lt & \; \text{then } \sum a_{n}|_{n=\infty}=0 \text{ is convergent,} \\ 
    \geq & \; \text{then } \sum a_{n}|_{n=\infty}=\infty \text{ is divergent.} 
  \end{array} \right.
\]
\[ 
\int a(n)\,dn \; z \; \int \frac{1}{\prod_{k=0}^{w} \mathrm{ln}_{k}\,n}\,dn|_{n=\infty},\;\; z = \left\{
  \begin{array}{rl}
    \lt & \; \text{then } \int a(n)\,dn|_{n=\infty}=0 \text{ is convergent,} \\ 
    \geq & \; \text{then } \int a(n)\,dn|_{n=\infty}=\infty \text{ is divergent.} 
  \end{array} \right.
\]

Since $w$ is a fixed integer, there is an infinity of tests.
 By removing the sum and solving the comparison
 of functions a unique $w$ is found. 
\end{theo}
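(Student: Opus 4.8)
The plan is to derive the boundary test from the logarithmic test (Theorem \ref{P231}), which is already in hand, by feeding it through the convergence--sum comparison machinery of Section \ref{S05}. First I would dispose of the hypothesis that $a_n$ need not be monotonic: by the rearrangement theorems of Section \ref{S16} (Theorems \ref{P210} and \ref{P212}) we may, without loss of generality, replace $a_n$ by a strictly monotonic contiguous rearrangement with the same value at infinity, and if no such rearrangement exists the series is divergent by the E3 convention (Section \ref{S0108}), consistent with the stated conclusion. With $a_n$ monotonic, Theorem \ref{P038} lets me strip the summation sign from $\sum a_n \; z \; \sum \frac{1}{\prod_{k=0}^{w}\mathrm{ln}_k\,n}$ and read off $z$ from the pointwise comparison $a_n \; z \; \frac{1}{\prod_{k=0}^{w}\mathrm{ln}_k\,n}|_{n=\infty}$ in $*G$; solving this comparison also pins down the scale $w$ at which the answer is decisive (neither $\prec$ nor $\succ$), as indicated in the closing remark. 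The whole problem is thereby reduced to a comparison at a single point against a fixed member of the generalized--p--series family.

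For the divergent branch, suppose the solved relation is $z = {\geq}$ (including the boundary case $z = {=}$). The boundary series $\sum \frac{1}{\prod_{k=0}^{w}\mathrm{ln}_k\,n} = \sum \frac{1}{L_{w-1}\,\mathrm{ln}_w}$ is the $p=1$ instance of the generalized p--series, hence $\sum \frac{1}{L_{w-1}\mathrm{ln}_w}|_{n=\infty}=\infty$ by Theorem \ref{P230}. From $a_n \geq \frac{1}{L_{w-1}\mathrm{ln}_w}|_{n=\infty}$ the comparison test (Theorem \ref{P010}) gives $\sum a_n|_{n=\infty}=\infty$ directly; this is the divergence half of Theorem \ref{P231}, with $\succeq$ sharpened to $\geq$, the sharpening being legitimate precisely because we have already passed to a monotonic sequence at infinity (the remark before the statement).

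The convergent branch is the delicate one and where I expect the real work to lie. Here $z = {<}$, i.e.\ $a_n < \frac{1}{L_{w-1}\mathrm{ln}_w}|_{n=\infty}$ at the decisive scale $w$, and I must turn this into a genuine convergence statement; the subtlety is exactly du Bois--Reymond's, namely that no single series is the boundary, so $a_n$ being below one fixed $\frac{1}{L_w}$ is not by itself enough. The key step is to show that, $w$ having been chosen as the decisive scale, the relation $a_n < \frac{1}{L_{w-1}\mathrm{ln}_w}$ forces $a_n \preceq \frac{1}{L_{w-1}\,(\mathrm{ln}_w)^{1+\alpha}}$ for some real $\alpha > 0$: once the innermost logarithm $\mathrm{ln}_w$ no longer cancels in the comparison, a strict $*G$--inequality against its first power yields a genuine surplus power of it, which is what the function--comparison algebra of \cite[Part 5]{cebp21} has to make precise. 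Granting this, $\sum \frac{1}{L_{w-1}(\mathrm{ln}_w)^{1+\alpha}}|_{n=\infty}=0$ converges by Theorem \ref{P230}, and Theorem \ref{P010} again gives $\sum a_n|_{n=\infty}=0$, the convergence half of Theorem \ref{P231}.

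Finally, the integral version is proved verbatim with Theorem \ref{P039} in place of Theorem \ref{P038} and the integral forms of Theorems \ref{P230} and \ref{P231}, the nested--logarithm substitution of Theorem \ref{P228} supplying the antiderivatives. The main obstacle, to restate it, is the convergent branch: proving that solving the function comparison down to the decisive scale $w$ and obtaining $z = {<}$ really does place $a_n$ underneath a convergent generalized p--series rather than merely underneath the divergent one $\frac{1}{L_w}$ --- equivalently, that the infinite family $\{\,\tfrac{1}{L_{w-1}\mathrm{ln}_w^{p}}\,\}_{w,p}$, unlike any single one of its members, separates the convergent from the divergent positive monotonic series.
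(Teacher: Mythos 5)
Your route is genuinely different from the paper's. The paper proves Theorem \ref{P232} by stripping the sum to reduce to the pointwise comparison of Theorem \ref{P233}, and then justifies Theorem \ref{P233} not via the logarithmic test but via Corollary \ref{P243}, the algebraic equivalence with the generalized ratio test (Theorem \ref{P225}), whose own proof the authors concede is taken on faith from Knopp; the logarithmic test enters the paper only through Proposition \ref{P017}, and only in the direction ``hypotheses of the logarithmic test imply hypotheses of the boundary test''. Your divergent branch is sound and agrees with that machinery: $a_{n} \geq \frac{1}{L_{w}}|_{n=\infty}$ together with Theorem \ref{P230} at $p=1$ and the comparison test (Theorem \ref{P010}) does give divergence.

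The gap you flag in the convergent branch is real, and nothing in the paper closes it. The bridge you need --- that $z = \; \lt$ at the decisive scale $w$ forces $a_{n} \preceq \frac{1}{L_{w-1}(\mathrm{ln}_{w})^{1+\alpha}}$ for some $\alpha \gt 0$ --- is the converse of the implication proved in Proposition \ref{P017}, and it fails even if $w$ is allowed to vary: iterating the paper's own Theorem \ref{P240} (every convergent positive series is convergence-dominated by a more slowly convergent one) diagonally over the countable family $\{\frac{1}{L_{w-1}(\mathrm{ln}_{w})^{1+1/m}}\}_{w,m}$ yields a monotonic convergent $a_{n}$ with $a_{n} L_{w-1}(\mathrm{ln}_{w})^{1+\alpha}|_{n=\infty}=\infty$ for every $w$ and every $\alpha \gt 0$, so the logarithmic test is silent on it and your search for a convergent majorant in the family never terminates. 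This is precisely du Bois-Reymond's obstruction, so the boundary test cannot be deduced from Theorem \ref{P231} alone; a complete argument must either show directly that the full two-parameter family separates convergent from divergent positive monotonic series (the paper does not do this either --- it defers to the generalized ratio test, itself unproved here), or restrict the statement to series comparable with the logarithmico-exponential scale, as Hardy's remark quoted in Section \ref{S1803} implicitly does.
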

\begin{proof}
$\sum a_{n} \; z \; \sum \frac{1}{\prod_{k=0}^{w} \mathrm{ln}_{k}}|_{n=\infty}$,
$\int a(n)\,dn \; z \; \int \frac{1}{\prod_{k=0}^{w} \mathrm{ln}_{k}}dn|_{n=\infty}$,
 differentiate,
 $a(n) \; z \; \frac{1}{\prod_{k=0}^{w} \mathrm{ln}_{k}}|_{n=\infty}$,
 by Theorem \ref{P233}
 solve for $z = \{ \lt, \geq \}$. 
\end{proof}
\bigskip
\begin{theo}\label{P233}
\textbf{The boundary test comparison} 
\[ 
 a_{n} \; z \; \frac{1}{\prod_{k=0}^{w} \mathrm{ln}_{k}\,n}|_{n=\infty},\;\; z = \left\{
  \begin{array}{rl}
    \lt & \; \text{then } \sum a_{n}|_{n=\infty}=0 \text{ is convergent,} \\ 
    \geq & \; \text{then } \sum a_{n}|_{n=\infty}=\infty \text{ is divergent.} 
  \end{array} \right.
\]
\[ 
 a(n) \; z \; \frac{1}{\prod_{k=0}^{w} \mathrm{ln}_{k}\,n}|_{n=\infty},\;\; z = \left\{
  \begin{array}{rl}
    \lt & \; \text{then } \int a(n)\,dn|_{n=\infty}=0 \text{ is convergent,} \\ 
    \geq & \; \text{then } \int a(n)\,dn|_{n=\infty}=\infty \text{ is divergent.} 
  \end{array} \right.
\]
 Solving the
 comparison of functions a unique $w$ is found.
\end{theo}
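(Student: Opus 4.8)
The plan is to derive the boundary test comparison from the generalized p-series test (Theorem \ref{P230}) together with the comparison test (Theorem \ref{P010}) and the relation-preserving comparison of monotonic functions (Theorems \ref{P038} and \ref{P039}). First I would observe that, by the two-directional integral test (Theorem \ref{P065}), it suffices to prove one of the two displayed forms: I would carry out the argument for the series form and then transfer it verbatim to the integral form, since the deductions have identical structure. Throughout, write $L_{w}=\prod_{k=0}^{w}\mathrm{ln}_{k}\,n$, so the boundary function is $1/L_{w}$. By Theorem \ref{P229} we have $\int \frac{1}{L_{w}}\,dn = \mathrm{ln}_{w+1}|_{n=\infty}=\infty$; equivalently, $1/L_{w}$ is the $p=1$ member of the generalized p-series of Theorem \ref{P230}, so $\sum \frac{1}{L_{w}}|_{n=\infty}=\infty$ diverges, while $\frac{1}{L_{w-1}\cdot(\mathrm{ln}_{w}\,n)^{p}}$ gives a convergent sum for every real $p\gt 1$.

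The divergent branch is then immediate. Suppose the comparison is solved with $z=\;\geq$, i.e.\ $a_{n}\geq \frac{1}{L_{w}}|_{n=\infty}$, with $a_{n}$ already deformed to a monotonic sequence as in the Remark preceding Theorem \ref{P232}. Summing the inequality using Theorem \ref{P038} gives $\sum a_{n}\geq \sum \frac{1}{L_{w}}|_{n=\infty}=\infty$, hence $\sum a_{n}|_{n=\infty}=\infty$ diverges by Theorem \ref{P010}. The continuous case is the same with Theorem \ref{P039} in place of Theorem \ref{P038}.

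The substance of the proof is the convergent branch, $z=\;\lt$. Here I would argue that once the comparison has been solved and the \emph{unique} $w$ of the statement identified, the strict relation $a_{n}\lt \frac{1}{L_{w}}|_{n=\infty}$ can be sharpened to $a_{n}\leq \frac{1}{L_{w-1}\cdot(\mathrm{ln}_{w}\,n)^{p}}|_{n=\infty}$ for some real $p\gt 1$. The role of the selected $w$ is exactly that $a_{n}$ is of the same logarithmic scale as $1/L_{w}$: it is not $\prec\!\prec 1/L_{w}$ (which would have been caught at a smaller $w$), nor of order $1/L_{w}$ (which would have returned $z=\;=$ and placed the problem in the divergent branch), so the residual ratio $\frac{1}{a_{n}L_{w}}$ dominates some positive power $(\mathrm{ln}_{w}\,n)^{p-1}$ in the sense of the comparison algebra of \cite[Part 5]{cebp21}. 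Granting this, Theorem \ref{P230} gives $\sum \frac{1}{L_{w-1}\cdot(\mathrm{ln}_{w}\,n)^{p}}|_{n=\infty}=0$, and Theorem \ref{P010} yields $\sum a_{n}|_{n=\infty}=0$; the integral case is again identical.

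The hard part will be making the sharpening step rigorous, i.e.\ saying operationally what ``solving the comparison and finding the unique $w$'' delivers and why it excludes the apparent pathologies — for instance $a_{n}=1/L_{w+1}$, which satisfies $a_{n}\prec 1/L_{w}$ yet still diverges, because it is detected at level $w+1$, not $w$. This is precisely the point where du Bois-Reymond's result that no single series bounds all positive series (Theorems \ref{P239} and \ref{P241}) must be reconciled with the claim that the whole ladder $\{1/L_{w}\}_{w\geq 0}$, together with its real-power refinements, does separate convergence from divergence. I would handle it by a case analysis on the largest $w$ for which $a_{n}\preceq 1/L_{w-1}|_{n=\infty}$ is not a $\prec\!\prec$ relation, showing that this $w$ is the one the comparison procedure returns, and that at that level the dichotomy $a_{n}\lt 1/L_{w}$ versus $a_{n}\geq 1/L_{w}$ is governed by a genuine power $(\mathrm{ln}_{w}\,n)^{p-1}$ with $p\gtrless 1$, which reduces the theorem to Theorem \ref{P230}.
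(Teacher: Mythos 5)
Your divergent branch is fine: once $z=\;\geq$ has been solved, summing the inequality against the $p=1$ generalized p-series via Theorems \ref{P038} and \ref{P010} gives divergence, and the integral case follows with Theorem \ref{P039}. The problem is the convergent branch, and you have located it yourself: the sharpening from $a_{n}\lt \frac{1}{L_{w}}|_{n=\infty}$ to $a_{n}\leq \frac{1}{L_{w-1}\cdot(\mathrm{ln}_{w}n)^{p}}|_{n=\infty}$ for some fixed real $p\gt 1$ is asserted, not proved, and it is false in general. Your proposed case analysis presupposes that every positive monotonic $a_{n}$ is, at some finite level $w$, comparable to a genuine power $(\mathrm{ln}_{w}n)^{p}$ with $p\neq 1$; but the diagonal construction behind Theorem \ref{P239} (du Bois-Reymond's own argument, the one Pringsheim and Knopp invoke against the existence of a boundary) produces convergent monotone series whose terms eventually dominate $\frac{1}{L_{w-1}\cdot(\mathrm{ln}_{w}n)^{p}}$ for \emph{every} fixed pair $(w,p)$ with $p\gt 1$. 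For such a series your reduction to Theorem \ref{P230} never terminates: no finite $w$ delivers the required $p\gt 1$, so the ``unique $w$'' clause is carrying the entire theorem and is exactly what needs justification. The counterexample $a_{n}=1/L_{w+1}$ that you mention is the easy case (it is caught one rung down the ladder); the serious case is a sequence that slips between every rung and its power refinements simultaneously.

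For what it is worth, the paper does not close this gap either. Its first proof of Theorem \ref{P233} appeals to Corollary \ref{P243} (equivalence of the boundary test with the generalized ratio test), but Corollary \ref{P243} and the proof of Theorem \ref{P225} are themselves obtained by \emph{assuming the boundary test}, so that route is circular. Its second proof is a two-sentence sketch of precisely the reduction you attempt: ``by solving for the relation, the appropriate p-series may be found and tested against.'' Your proposal is therefore a more explicit and more honest rendering of the paper's second argument, with the missing step named rather than elided; but as a proof it has the same hole, and the hole cannot be patched without either restricting $a_{n}$ to a class (e.g.\ Hardy's logarithmico-exponential functions) on which comparability at some finite $w$ is a theorem, or adopting the separability of convergence and divergence by the ladder as an axiom or criterion, which is what Knopp in effect does.
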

\begin{proof}
 In Collary \ref{P243} 
 we show the equivalence between the boundary test
 and the generalized ratio test.
 If we consider the generalized ratio test is proved,
 since we establish the equivalence
 the boundary test would consequently be proved.

 We believe the generalized ratio test is proved,
 though Knopp \cite[p.129]{knopp} did not cite the proof.
 Instead he referred to the generalized p-series as criteria.
 Since they are proved, this is not contradictory, as a criteria are
 the assumptions, that which is held to be true. 
\end{proof}
\begin{proof}
 The boundary is equivalent to the generalized p-series 
 test Theorem \ref{P227}.
 By solving for the relation, the appropriate p-series
 may be found and tested against.

 In general, non-reversible arguments of magnitude are
 used to reduce lower order terms. This is required
 to meet the convergence criteria E3 (Section \ref{S01}).
\end{proof}
\bigskip
\begin{remk}
 If a circular argument occurs, then additional information,
 perhaps an identity may be required to be found to 
 solve for the relation. Similarly further conditions may
 be input if the problem is ill posed or incomplete.
\end{remk}
\bigskip
\begin{remk} If a positive sum is less than the boundary the sum converges; else the sum diverges. 
\end{remk}

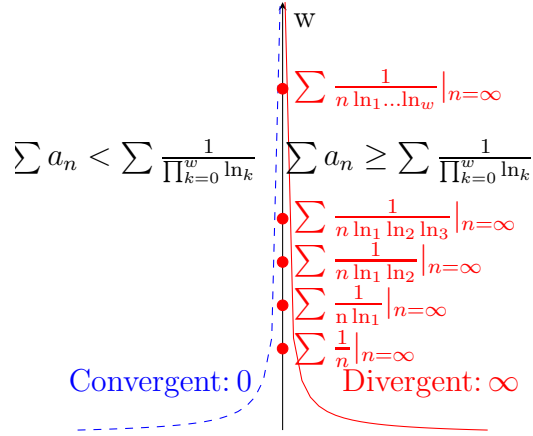
\begin{figure}[h]
\centering
\begin{tikzpicture}
\begin{axis}
  [
    ticks=none,
    axis y line=center,
    ylabel={w},
    axis x line=none,
    xlabel={p}
  ]
  \addplot[domain=0.1:8.4,red]{ 1/x};
  \addplot[domain=-8.4:-0.1,blue,dashed]{ -1/x};
  \addplot[red, mark = *, nodes near coords=$\sum \frac{1}{n \, \mathrm{ln}_{1} \ldots \mathrm{ln}_{w}}|_{n=\infty}$, every node near coord/.style={anchor=180}] coordinates {(0,8)};
  \addplot[red, mark = *, nodes near coords=$\sum \frac{1}{n \, \mathrm{ln}_{1} \, \mathrm{ln}_{2} \, \mathrm{ln}_{3}}|_{n=\infty}$, every node near coord/.style={anchor=180}] coordinates {(0,5)};
  \addplot[red, mark = *, nodes near coords=$\sum \frac{1}{n \, \mathrm{ln}_{1} \, \mathrm{ln}_{2}}|_{n=\infty}$, every node near coord/.style={anchor=180}] coordinates {(0,4)};
  \addplot[red, mark = *, nodes near coords=$\sum \frac{1}{\mathrm{n \, ln}_{1}}|_{n=\infty}$, every node near coord/.style={anchor=180}] coordinates {(0,3)};
  \addplot[red, mark = *, nodes near coords=$\sum \frac{1}{n}|_{n=\infty}$, every node near coord/.style={anchor=180}] coordinates {(0,2)};
  \addplot[red, nodes near coords=$\text{Divergent:}\, \infty$, every node near coord/.style={anchor=180}] coordinates {(2.0,1.2)};
  \addplot[blue, nodes near coords=$\text{Convergent:}\,0$, every node near coord/.style={anchor=180}] coordinates {(-9.2,1.2)};
  \node [label={[shift={(1.0cm,3.00cm)}]$\sum a_{n} < \sum \frac{1}{\prod_{k=0}^{w} \mathrm{ln}_{k}}$}] {};
  \node [label={[shift={(5.1cm,3.00cm)}]$\sum a_{n} \geq \sum \frac{1}{\prod_{k=0}^{w} \mathrm{ln}_{k}}|_{n=\infty}$}] {};
\end{axis}
\end{tikzpicture}
\caption{Series $\sum a_{n}|_{n=\infty}$ compared with the boundary} \label{fig:M2}
\end{figure}

 The boundary test has a convergence criterion specifically for
 evaluating sums as either $0$ or $\infty$. The logarithmic test
 compares the inner components. 
\bigskip
\begin{prop}\label{P017}
 The Logarithmic test Theorem \ref{P231}
 implies
 the Boundary test Theorem \ref{P232} 
\end{prop}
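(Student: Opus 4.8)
The plan is to reduce the Boundary test (Theorem \ref{P232}) to its comparison-of-terms form (Theorem \ref{P233}) and then derive that form from the Logarithmic test (Theorem \ref{P231}). The reduction is already carried out inside the proof of Theorem \ref{P232}: threading a monotonic function through the sequence, applying Theorem \ref{P038} to strip the summation sign, and differentiating, one is left with the comparison $a_n \; z \; 1/\prod_{k=0}^{w} \mathrm{ln}_k\,n|_{n=\infty}$, and the analogous statement for $a(n)$ and the integral via Theorem \ref{P039}. So it suffices to show that, solving this comparison for $z$, the answer $z=\;<$ forces $\sum a_n|_{n=\infty}=0$ and $z=\;\geq$ forces $\sum a_n|_{n=\infty}=\infty$, using only Theorem \ref{P231}.

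First I would fix the dictionary between the $*G$ relations at a point and du Bois--Reymond's $\preceq,\succeq$ for positive monotonic terms: $a_n \geq b_n|_{n=\infty}$ gives $b_n/a_n \leq 1$ bounded, i.e. $a_n \succeq b_n$; and $a_n < b_n|_{n=\infty}$ gives $a_n \preceq b_n$, consistent with the transfer $(*G,<)\mapsto(\overline{\mathbb{R}},\leq)$ discussed in Section \ref{S07}. With this in hand the divergence case is immediate: if $z=\;\geq$, i.e. $a_n \geq 1/\prod_{k=0}^{w}\mathrm{ln}_k|_{n=\infty}$, then $a_n \succeq 1/\prod_{k=0}^{w}\mathrm{ln}_k$, which is exactly the divergence hypothesis of Theorem \ref{P231}, so $\sum a_n$ diverges; the integral version is identical after transfer through Theorem \ref{P039}.

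The convergence case is where the work lies. Suppose $z=\;<$, so $a_n < 1/\prod_{k=0}^{w}\mathrm{ln}_k|_{n=\infty}$ for the $w$ produced by ``solving the comparison of functions''. I would argue that for this uniquely selected $w$ the strict inequality is not merely infinitely close: writing the boundary term as $1/(L_{w-1}\,\mathrm{ln}_w)$, a genuinely resolved strict comparison at level $w$ means the ratio $a_n \, L_{w-1}\,\mathrm{ln}_w|_{n=\infty}$ is an infinitesimal, so there is $\alpha>0$ with $a_n \preceq 1/(L_{w-1}\,\mathrm{ln}_w^{1+\alpha})$; and if no such $\alpha$ is available at level $w$, the solving procedure instead returns level $w+1$, where the boundary is $1/L_{w+1} = 1/(L_w\,\mathrm{ln}_{w+1})$ and the exponent on $\mathrm{ln}_{w+1}$ exceeds $1$ by a definite amount. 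In either case the convergence hypothesis of Theorem \ref{P231} is met, so $\sum a_n|_{n=\infty}=0$; equivalently one may quote Theorem \ref{P230} directly on the dominating generalized $p$-series with $p=1+\alpha>1$, and Theorem \ref{P010} to pass the comparison to $\sum a_n$.

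The main obstacle is exactly this last step: making rigorous the claim that, for the $w$ selected by the boundary test, a positive monotonic term lying strictly below the $p=1$ boundary $1/L_w$ at infinity is dominated by some convergent member $1/(L_{w-1}\,\mathrm{ln}_w^{1+\alpha})$, $\alpha>0$, of the generalized $p$-series family. This is a density statement about the logarithmico-exponential scale, and in full generality it is false — precisely the du Bois--Reymond non-existence result discussed in Section \ref{S1803} — so the argument must lean on the restriction implicit in the boundary test that $a_n$ first be reduced, by the non-reversible magnitude arguments of Section \ref{S01} and the deformation to a monotonic sequence of Section \ref{S16}, to a term genuinely on that scale. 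I would make this restriction explicit as a standing hypothesis, after which the implication goes through and, since the two tests share the same integral form, transferring the whole argument through Theorem \ref{P039} completes the proof that Theorem \ref{P231} implies Theorem \ref{P232}.
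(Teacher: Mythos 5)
Your proposal follows the same basic condition-matching strategy as the paper's proof, but runs the implications in the opposite --- and logically required --- direction, and that is precisely what makes the difference visible. The paper's proof starts from the \emph{hypotheses} of the Logarithmic test ($a_{n} \succeq \frac{1}{L_{w}}$, respectively $a_{n} \preceq \frac{1}{L_{w-1}\mathrm{ln}_{w}^{1+\alpha}}$), strips the multiplicative constants $M_{1}, M_{2}$ by the non-reversible argument $\mathrm{ln}\,M_{i} \prec \mathrm{ln}\,a_{n}$, arrives at $a_{n} \geq \frac{1}{L_{w}}|_{n=\infty}$ and $a_{n} \lt \frac{1}{L_{w}}|_{n=\infty}$ respectively, and then closes with the bare assertion that ``both cases are disjoint and cover the line.'' To show that Theorem \ref{P231} implies Theorem \ref{P232} one must instead, as you do, start from the boundary-test hypothesis (the solved relation $z$) and recover a logarithmic-test hypothesis. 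Your divergence case ($z = \; \geq$ yields $a_{n} \succeq \frac{1}{L_{w}}$) goes through and matches the paper's divergence case read backwards.

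Your convergence case isolates exactly the step that the paper's closing assertion conceals: that $a_{n} \lt \frac{1}{L_{w}}|_{n=\infty}$, for the selected $w$, forces $a_{n} \preceq \frac{1}{L_{w-1}\mathrm{ln}_{w}^{1+\alpha}}$ for some $\alpha \gt 0$. You are right that this is a density claim about the logarithmico-exponential scale which is false for arbitrary positive monotonic terms --- this is du Bois-Reymond's own non-existence result quoted in Section \ref{S1803}, and the paper's proof of Proposition \ref{P017} does not address it; it only proves the easy converse ($\preceq$ with exponent $1+\alpha$ implies $\lt$ the boundary). So the gap you name is real and is present in the paper's argument as well, hidden in the exhaustiveness claim. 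Your proposed remedy --- a standing hypothesis restricting $a_{n}$ to terms resolvable on the logarithmico-exponential scale --- is in effect the escape hatch the paper itself invokes elsewhere through Hardy's remark, and making it explicit is the honest way to close the proof. In short: same two-case decomposition, but your version is the correct direction of implication and surfaces a genuine deficiency that the paper's own proof passes over.
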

\begin{proof}
 By removing the transients, the conditions are simplified.

Divergent case. 
 $a_{n} \succeq \frac{1}{L_{w}}$,
 $M_{1} \in \mathbb{R}^{+}$,
 $\exists M_{1}$: $M_{1} a_{n} \geq \frac{1}{L_{w}}$,
 $M_{1} a_{n} L_{w} \geq 1|_{n=\infty}$,
 $\mathrm{ln}\,M_{1} + \mathrm{ln}\, a_{n} + \mathrm{ln}\,L_{w} \geq 0|_{n=\infty}$,
 $\mathrm{ln}\, a_{n} + \mathrm{ln}\,L_{w} \geq 0|_{n=\infty}$ as
 $\mathrm{ln}\,M_{1} \prec \mathrm{ln}\,a_{n}|_{n=\infty}$,
 $a_{n} L_{w} \geq 1|_{n=\infty}$,
 $a_{n} \geq \frac{1}{L_{w}}|_{n=\infty}$.

 Convergent case. Let $\alpha \gt 0$,
 $a_{n} \preceq \frac{1}{L_{w-1} \mathrm{ln}_{w}^{1+\alpha}}$,
 $M_{2} \in \mathbb{R}^{+}$,
 $\exists M_{2}$: 
 $a_{n} \leq M_{2} \frac{1}{L_{w-1} \mathrm{ln}_{w}^{1+\alpha}}$,
 $a_{n} M_{2}^{-1} L_{w-1} \mathrm{ln}_{w}^{1+\alpha} \leq 1$,
 $\mathrm{ln}\,a_{n} - \mathrm{ln}\,M_{2} + \mathrm{ln}\,L_{w-1} + \mathrm{ln}\,\mathrm{ln}_{w}^{1+\alpha} \leq 0$,
 $\mathrm{ln}\,a_{n} + \mathrm{ln}\,L_{w-1} + \mathrm{ln}\,\mathrm{ln}_{w}^{1+\alpha} \leq 0$ as $\mathrm{ln}\,a_{n} \succ \mathrm{ln}\,M_{2}$, 
 $a_{n} \leq \frac{1}{L_{w-1} \mathrm{ln}_{w}^{1+\alpha}}$,
 since $\frac{1}{L_{w-1} \mathrm{ln}_{w}^{1+\alpha}} \lt \frac{1}{L_{w-1} \mathrm{ln}_{w}^{1}}$ then
 $a_{n} \lt \frac{1}{L_{w}}$.

 Both cases are disjoint and cover the line. The conditions 
 correspond exactly with the boundary test.
\end{proof}

The way the logarithmic test was structured
  shows that
 there was no explicit separation between
 finite and infinite numbers.
 With the removal of the
 transients 
 there is no need to solve for $M_{1}$ and $M_{2}$.
 
\begin{quote}
 It is a reasonable question to ask how 
 the most important and general
 positive series convergence and divergence test
 was found and lost.
\end{quote}

 Hardy was of the belief that du Bois-Reymond's
 theory was highly original, but served no purpose.
 Du Bois-Reymond was looking for a theory of
 the continuum. It may well be a case of once again,
 a theoretical piece of mathematics that appears utterly
 useless, even after having been considered,
 becomes essential to our understanding.

 We have arrived at this view by several steps,
 a separation between finite and infinite numbers
 \cite[Part 6]{cebp21},
 a number system with infinities \cite[Part 1]{cebp21},
 an algebra for comparing functions \cite[Part 3]{cebp21}
 and convergence sums (Section \ref{S01}).

On the importance of the logarithmico-exponential
 scales, to argue for the generality of the test, Hardy comments:
\begin{quote}
 No function has yet presented itself in analysis the
 laws of whose increase, in so far as they can be stated at all, cannot be
 stated, so to say, in logarithmico-exponential terms. \cite[p.48]{ordersofinfinity}
\end{quote}

 Since the boundary test is a comparison against the whole
 logarithmico-exponential scale, then if what Hardy said is
 true; that all such testable functions can be expressed
 in logarithmico-exponential scale,
 then as the boundary test compares against this scale, all
 functions as said are able to be compared with the boundary
 by the boundary test.
 The boundary test, with the monotonic
 constraint is theoretically complete. 

As discussed in (Section \ref{S16}), the theory 
 of convergence sums is extended
 to include testing for non-monotonic sequences.
 By considering arrangements,
 convert non-monotonic series to monotonic series for
 convergence testing, thereby increasing the classes of series 
 which can be tested.
 It follows that this makes the boundary test more general.

 As a consequence of Theorem \ref{P232}:
 If a sum is less than the boundary at infinity then the sum converges.
 Symbolically,
 if $\sum a_{n} \lt \sum \frac{1}{\prod_{k=0}^{w} \mathrm{ln}_{k}}|_{n=\infty}$
 then $\sum a_{n}|_{n=\infty}=0$ converges.
 Therefore when solving
 $\sum a_{n} \; z \; \frac{1}{\prod_{k=0}^{w} \mathrm{ln}_{k}}$,
 if we solve for $z$ and find $z = \; \lt$, it immediately follows that $\sum a_{n}|_{n=\infty}=0$
 converges.
\subsection{The boundary test Examples}\label{S1804}
\begin{mex}\label{MEX223}
$\sum \frac{1}{n^{2}}|_{n=\infty}$ is the p-series with $p=2$
 and is known to converge.
 Testing this series against the boundary. 
\begin{align*}
\sum \frac{1}{n^{2}} \;\; z \;\; \sum \frac{1}{\prod_{k=0}^{w}\mathrm{ln}_{k}} |_{n=\infty} \\ 
\frac{1}{n^{2}} \;\; z \;\; \frac{1}{\prod_{k=0}^{w}\mathrm{ln}_{k}} |_{n=\infty} \\ 
\prod_{k=0}^{w}\mathrm{ln}_{k} \;\; z \;\; n^{2}|_{n=\infty} \\ 
\sum_{k=0}^{w}\mathrm{ln}_{k+1} \;\; (\mathrm{ln}\,z) \;\; 2 \, \mathrm{ln}\,n|_{n=\infty} \\ 
 0 \;\; (\mathrm{ln}\,z) \;\; 2 \, \mathrm{ln}\,n|_{n=\infty} \tag{as $\mathrm{ln}\,n \succ \mathrm{ln}_{k+1}$} \\ 
 0 \lt 2 \, \mathrm{ln}\,n|_{n=\infty} \tag{$\mathrm{ln}\,z = \; \lt$,
 $z = e^{\lt} = \; \lt$} \\ 
\sum \frac{1}{n^{2}} \lt \sum \frac{1}{\prod_{k=0}^{w}\mathrm{ln}_{k}} |_{n=\infty} \tag{$\sum a_{n}|_{n=\infty}=0$ converges} 
\end{align*}
\end{mex}
\bigskip
The boundary test can handle products such as $n!$ by converting them to sums via the $\mathrm{log}$ operation,
 and integrating the function in the continuous domain.
\bigskip
\begin{mex}\label{MEX224}
Determine convergence or divergence of $\sum \frac{1}{n!}|_{n=\infty}$.

$\sum \frac{1}{n!} \; z \; \sum \frac{1}{\prod_{k=0}^{w} \mathrm{ln}_{k}}|_{n=\infty}$,
 $\prod_{k=0}^{w} \mathrm{ln}_{k} \; z \; n!|_{n=\infty}$,
 $\sum_{k=1}^{w+1} \mathrm{ln}_{k} \; (\mathrm{ln}\,z) \; \sum_{k=1}^{n} \mathrm{ln}\,k|_{n=\infty}$,
 $\sum_{k=1}^{w+1} \mathrm{ln}_{k} \; (\mathrm{ln}\,z) \; \int^{n} \mathrm{ln}\,n \,dn|_{n=\infty}$,
 $\sum_{k=1}^{w+1} \mathrm{ln}_{k} \; (\mathrm{ln}\,z) \; n \,\mathrm{ln}\,n |_{n=\infty}$,
 $\mathrm{ln}\,n \; (\mathrm{ln}\,z) \; n \,\mathrm{ln}\,n |_{n=\infty}$,
 $\mathrm{ln}\,z = \; \lt$, $z = e^{\lt} = \; \lt$,
 $\sum \frac{1}{n!}|_{n=\infty} = 0$ converges.
\end{mex}
\bigskip
\begin{mex}\label{MEX222}
Determine the convergence/divergence of
 $\sum \frac{n!}{(2n)!} |_{n=\infty}$ 

This problem would normally be done 
 more simply with the ratio or comparison test.

\begin{align*}
\sum \frac{n!}{(2n)!} \; z \; \sum \frac{1}{\prod_{k=0}^{w} \mathrm{ln}_{k}} |_{n=\infty} \tag{Compare against the boundary} \\ 
\frac{n!}{(2n)!} \; z \; \frac{1}{\prod_{k=0}^{w} \mathrm{ln}_{k}} |_{n=\infty} \\ 
n! \prod_{k=0}^{w} \mathrm{ln}_{k} \; z \; (2n)!|_{n=\infty} \\ 
\mathrm{ln}(n! \prod_{k=0}^{w} \mathrm{ln}_{k}) \; (\mathrm{ln}\, z) \; \mathrm{ln}\,(2n)!|_{n=\infty} \\ 
\sum_{k=1}^{n} \mathrm{ln}\,k + \sum_{k=1}^{w+1} \mathrm{ln}_{k} \; (\mathrm{ln}\,z) \; \sum_{k=1}^{2n} \mathrm{ln}\,k |_{n=\infty} \tag{Apply log law, convert products to sums} \\
\int_{1}^{n} \mathrm{ln}\,x dx + \sum_{k=1}^{w+1} \mathrm{ln}_{k} \; (\mathrm{ln}\,z) \; \int_{1}^{2n} \mathrm{ln}\,x dx |_{n=\infty} \tag{discrete to a continuous domain} \\
\int^{n} \mathrm{ln}\,x dx + \sum_{k=1}^{w+1} \mathrm{ln}_{k} \; (\mathrm{ln}\,z) \; \int^{2n} \mathrm{ln}\,x dx |_{n=\infty} \\
n \,\mathrm{ln}\,n + \sum_{k=1}^{w+1} \mathrm{ln}_{k} \; (\mathrm{ln}\,z) \; (2n) \mathrm{ln}(2n)|_{n=\infty} \\
n \,\mathrm{ln}\,n + \mathrm{ln}\,n \; (\mathrm{ln}\,z) \; (2n) \mathrm{ln}(2n)|_{n=\infty} \tag{largest boundary} \\
n \,\mathrm{ln}\,n \; (\mathrm{ln}\,z) \; (2n) \mathrm{ln}(2n)|_{n=\infty} 
 \tag{$\mathrm{ln}\,n \prec n\,\mathrm{ln}\,n$, $2n \prec 2n \,\mathrm{ln}(2n)|_{n=\infty}$} \\
\mathrm{ln}\,z = \; \prec, \;\; z = e^{\prec} = \; \lt \text{ converges}
\end{align*}
\end{mex}

\bigskip
\begin{mex}
Determine convergence or divergence of
 $\frac{1}{\mathrm{ln}_{2}\,3} + \frac{1}{\mathrm{ln}_{2}\,4} + \frac{1}{\mathrm{ln}_{2}\,5} + \ldots$ 

 $\frac{1}{\mathrm{ln}_{2}\,n} \; z \; \frac{1}{\prod_{k=0}^{w} \mathrm{ln}_{k}}|_{n=\infty}$,
 $\prod_{k=0}^{w} \mathrm{ln}_{k} \; z \; \mathrm{ln}_{2}|_{n=\infty}$,
 $\sum_{k=1}^{w+1} \mathrm{ln}_{k} \; (\mathrm{ln}\,z) \; \mathrm{ln}_{3}|_{n=\infty}$,
 $\mathrm{ln}_{1} \geq \mathrm{ln}_{3}|_{n=\infty}$,
 $\mathrm{ln}\,z = \; \geq$, $z = e^{\geq} = \; \geq$
 and
 the series diverges.
\end{mex}
\bigskip
\begin{mex}
Ramanujan gave the following $\frac{1}{\pi} = \frac{2\sqrt{2}}{9801} \sum_{k=0}^{\infty} \frac{(4k)!(1103+26390k)}{(k!)^{4} 396^{4k}}$.
 Although the following does not calculate the sum,
 by comparing against the boundary we show the sum converges.

\begin{align*}
  \sum \frac{(4n)!(1103+26390n)}{(n!)^{4} 396^{4n}} \; z \; \sum \frac{1}{\prod_{k=0}^{w} \mathrm{ln}_{k}\,n}|_{n=\infty} & \tag{remove sum, cross multiply} \\
  (4n)!(26390n) \prod_{k=0}^{w} \mathrm{ln}_{k}\,n \; z \; (n!)^{4} 396^{4n}|_{n=\infty} & \tag{undo multiplication} \\
  \mathrm{ln}(4n)! + \mathrm{ln}\,n + \sum_{k=1}^{w+1}\mathrm{ln}_{k}\,n \; (\mathrm{ln}\,z) \; 4\,\mathrm{ln}(n!) + 4n \,\mathrm{ln}\,396|_{n=\infty} & \tag{$w=0$ largest left side} \\
  \sum_{k=1}^{4n} \mathrm{ln}\,k + 2\,\mathrm{ln}\,n \; (\mathrm{ln}\,z) \; 4 \sum_{k=1}^{n}\mathrm{ln}\,k + 4n \,\mathrm{ln}\,396|_{n=\infty} & \tag{$\int \mathrm{ln}\,n\,dn = n\,\mathrm{ln}\,n|_{n=\infty}$} \\ 
  4n\,\mathrm{ln}\,4n + 2\,\mathrm{ln}\,n \; (\mathrm{ln}\,z) \; 4 n\,\mathrm{ln}\,n + 4n \,\mathrm{ln}\,396|_{n=\infty} \\ 
  4n\,\mathrm{ln}\,n + \mathrm{ln}\,4\cdot 4 n+ 2\,\mathrm{ln}\,n \; (\mathrm{ln}\,z) \; 4 n\,\mathrm{ln}\,n + 4n \,\mathrm{ln}\,396|_{n=\infty} \\
  \mathrm{ln}\,4\cdot 4 n+ \; (\mathrm{ln}\,z) \; 4 \cdot \mathrm{ln}\,396 \cdot n|_{n=\infty} \\ 
 (\mathrm{ln}\,z) = \; \lt, \; z = \; \lt & \tag{sum converges}
\end{align*}
\end{mex}
\subsection{Convergence tests}\label{S1805}
The boundary test can be used to prove other convergence 
 tests. In particular the generalized ratio test (Section \ref{S17}):
 which uses the boundary test to
 prove the ratio test,
 Raabe's test, Bertrand's test,
 as a consequence
 of proving the more general test, the generalized ratio test. 

These other tests require a rearrangement of the sequence
 at infinity, and hence is a discussion for another paper,
 where the sequence will be rearranged before input into the general
 boundary test.
\bigskip
\begin{theo}\label{P234}
  nth root convergence test:
If $|a_{n}|^{\frac{1}{n}}|_{n=\infty} \lt 1$ then
 $\sum a_{n}|_{n=\infty}=0$ converges.
\end{theo}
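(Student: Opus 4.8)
The plan is to reduce the root test to the geometric series via the comparison and absolute convergence tests already established in this section. First I would unpack the hypothesis. The statement $|a_n|^{\frac{1}{n}}|_{n=\infty} \lt 1$ is to be read in $\overline{\mathbb{R}}$ after the realization $*G \mapsto \overline{\mathbb{R}}$, so it asserts that $|a_n|^{\frac{1}{n}}|_{n=\infty}$ is either an infinitesimal or a real number strictly below $1$; in either case there is a real constant $\rho$ with $0 \le |a_n|^{\frac{1}{n}} \le \rho \lt 1$ holding at infinity (if the realized value is the real $c \lt 1$, take $\rho = \frac{1+c}{2}$; if it is an infinitesimal, take any $\rho \in (0,1)$). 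This is the step where the $\overline{\mathbb{R}}$ reading matters, since a value merely \emph{infinitely close} to $1$ from below — realizing to $1$ — is exactly the indeterminate case and must be excluded.

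Next I would raise the inequality to the $n$-th power: from $0 \le |a_n|^{\frac{1}{n}} \le \rho|_{n=\infty}$ we obtain $0 \le |a_n| \le \rho^{n}|_{n=\infty}$, the exponentiation preserving the inequality for nonnegative quantities. By Theorem \ref{P027} applied with $x = \rho$, since $|\rho| \lt 1$ the geometric series satisfies $\sum \rho^{n}|_{n=\infty} = 0$ and converges. Now the Comparison test, Theorem \ref{P010}, with $b_n = \rho^{n}$ and the role of the nonnegative summand played by $|a_n|$, gives $\sum |a_n||_{n=\infty} = 0$ converges. Finally the Absolute convergence test, Theorem \ref{P015}, promotes this to $\sum a_n|_{n=\infty} = 0$ converges, which is the claim.

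An alternative route, staying inside the machinery of Sections \ref{S17}--\ref{S18}, is a direct appeal to the boundary test (Theorem \ref{P232}): solving $|a_n| \; z \; \frac{1}{\prod_{k=0}^{w} \mathrm{ln}_k\,n}|_{n=\infty}$ and taking logarithms turns the left side into $\mathrm{ln}|a_n| = n\,\mathrm{ln}|a_n|^{\frac{1}{n}} \preceq n\,\mathrm{ln}\,\rho|_{n=\infty}$, a negative infinity of linear order, while the right side is $-\sum_{k=0}^{w}\mathrm{ln}_{k+1}\,n|_{n=\infty}$, a negative infinity of merely logarithmic order; hence $\mathrm{ln}|a_n| \prec -\sum_{k=0}^{w}\mathrm{ln}_{k+1}\,n$, so $z = \; \lt$ and the series converges. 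This version still needs Theorem \ref{P015} at the end to drop the modulus.

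The main obstacle I expect is precisely the first step: pinning down what $|a_n|^{\frac{1}{n}}|_{n=\infty} \lt 1$ means in $*G$ versus $\overline{\mathbb{R}}$ and justifying the extraction of the real bounding constant $\rho$ (equivalently, excluding the borderline case where the root realizes to $1$). Everything downstream — exponentiating, the comparison, the absolute convergence step — is routine given the results already proved. A secondary, minor point is to note that $a_n$ need not be monotonic or sign-definite here, which is why Theorem \ref{P015} is the correct finishing tool rather than a bare comparison; this is consistent with the nth root test being flagged with an asterisk as a test applicable to general series.
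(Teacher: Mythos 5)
Your primary argument is correct but follows a genuinely different route from the paper. You reduce the root test to the geometric series: extract a real $\rho$ with $|a_{n}|^{\frac{1}{n}} \leq \rho \lt 1$ at infinity, raise to the $n$-th power to get $0 \leq |a_{n}| \leq \rho^{n}$, invoke Theorem \ref{P027} for $\sum \rho^{n}|_{n=\infty}=0$, the Comparison test (Theorem \ref{P010}) for $\sum |a_{n}||_{n=\infty}=0$, and the Absolute convergence test (Theorem \ref{P015}) to drop the modulus. The paper instead derives the test directly from the boundary test: it writes the convergence condition $|a_{n}| \lt \frac{1}{\prod_{k=0}^{w} \mathrm{ln}_{k}}|_{n=\infty}$, takes $n$-th roots of both sides, and uses $(\prod_{k=0}^{w}\mathrm{ln}_{k})^{\frac{1}{n}}|_{n=\infty}=1$ to show the right-hand side collapses to $1$ — so the threshold $1$ in the root test is literally the $n$-th root of the boundary. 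Your route is more elementary and self-contained, and it makes explicit the extraction of the real constant $\rho$ (the step that excludes the indeterminate case where the root realizes to $1$), which the paper leaves implicit. The paper's route buys the thematic payoff of Section \ref{S18}: the root test is exhibited as a corollary of the single universal boundary comparison rather than as a separate geometric-series argument. Your alternative sketch via logarithms and magnitude orders is essentially the paper's argument transposed additively ($n\,\mathrm{ln}\,\rho \prec -\sum_{k}\mathrm{ln}_{k+1}$ instead of $\rho^{n} \prec \frac{1}{\prod_{k}\mathrm{ln}_{k}}$), so it lands closest to the intended proof; either version is acceptable, and both, like yours, require Theorem \ref{P015} at the end since $a_{n}$ need not be sign-definite.
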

\begin{proof}
Comparing against the boundary
 with the known convergent
 condition, 
$\sum |a_{n}| \lt \sum \frac{1}{\prod_{k=0}^{w} \mathrm{ln_{k}}}|_{n=\infty}$
 then $\sum |a_{n}||_{n=\infty}=0$ converges. 
Removing the sum. 
$|a_{n}| \lt \frac{1}{\prod_{k=0}^{w} \mathrm{ln_{k}}}|_{n=\infty}$,  
$|a_{n}|^{\frac{1}{n}} \lt \frac{1}{ (\prod_{k=0}^{w} \mathrm{ln_{k}})^{\frac{1}{n}}}|_{n=\infty}$, 
$|a_{n}|^{\frac{1}{n}} \lt \frac{1}{ \prod_{k=0}^{w} (\mathrm{ln_{k}}^{\frac{1}{n}})}|_{n=\infty}$.  
 Since $n^{\frac{1}{n}}=(\mathrm{ln}_{0})^{\frac{1}{n}}=1$ then $(\mathrm{ln_{k}})^{\frac{1}{n}}|_{n=\infty}=1$,  
$|a_{n}|^{\frac{1}{n}} \lt 1|_{n=\infty}$ 
\end{proof}

\bigskip
\begin{theo}\label{P236}
 The power series test.
 If $|x| \lt 1$ then $\sum x^{n}|_{n=\infty}=0$ converges.
\end{theo}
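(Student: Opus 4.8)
The plan is to prove Theorem \ref{P236} by applying the boundary test (Theorem \ref{P232}) directly to the power series, exhibiting $\sum x^{n}$ as a special case of the universal test. Since the statement only restricts $|x| \lt 1$, I would first dispose of the degenerate case $x = 0$, where $\sum x^{n}|_{n=\infty} = \sum 0|_{n=\infty} = 0$ trivially. For $x \neq 0$ I would pass to the positive, strictly monotonically decreasing series $\sum |x|^{n}|_{n=\infty}$ (a legitimate convergence-sum input when $0 \lt |x| \lt 1$); once that is shown to converge, the absolute convergence test (Theorem \ref{P015}) returns $\sum x^{n}|_{n=\infty} = 0$, which covers negative $x$ as well.

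The core computation is to solve for the relation $z$ in
\[ \sum |x|^{n} \; z \; \sum \frac{1}{\prod_{k=0}^{w} \mathrm{ln}_{k}\,n}|_{n=\infty}. \]
Following the boundary test, remove the sums, cross-multiply, and take logarithms:
\begin{align*}
 |x|^{n} \; z \; \frac{1}{\prod_{k=0}^{w} \mathrm{ln}_{k}}|_{n=\infty} \\
 |x|^{n} \prod_{k=0}^{w} \mathrm{ln}_{k} \; z \; 1|_{n=\infty} \\
 n\,\mathrm{ln}\,|x| + \sum_{k=0}^{w} \mathrm{ln}_{k+1} \; (\mathrm{ln}\,z) \; 0|_{n=\infty}
\end{align*}
Because $0 \lt |x| \lt 1$, the constant $\mathrm{ln}\,|x|$ is strictly negative, so $n\,\mathrm{ln}\,|x| = -\infty$; and since $n \succ \mathrm{ln}_{k+1}\,n|_{n=\infty}$ for every $k$, the term $n\,\mathrm{ln}\,|x|$ dominates the finite sum of nested logarithms. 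Hence $n\,\mathrm{ln}\,|x| + \sum_{k=0}^{w}\mathrm{ln}_{k+1} = n\,\mathrm{ln}\,|x| \lt 0|_{n=\infty}$, so $\mathrm{ln}\,z = \;\lt$ and therefore $z = e^{\lt} = \;\lt$. Substituting back, $\sum |x|^{n} \lt \sum \frac{1}{\prod_{k=0}^{w}\mathrm{ln}_{k}}|_{n=\infty}$, and Theorem \ref{P232} gives $\sum |x|^{n}|_{n=\infty} = 0$. The radius of convergence $r = 1$ then follows as in Definition \ref{DEF009}, since $|x| \gt 1$ fails the $n$th term test (Proposition \ref{P033}).

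The main obstacle is the magnitude bookkeeping in the middle line: one must be certain that a negative real multiple of $n$ genuinely overwhelms $\sum_{k=0}^{w}\mathrm{ln}_{k+1}(n)$ for the fixed integer $w$, so that the \emph{sign} of the dominant term — not merely its magnitude — is what the relation $z$ inherits after realizing at infinity. This is exactly the logarithmico-exponential scale $(c \prec \mathrm{ln}\,n \prec n^{p} \prec \ldots)|_{n=\infty}$ used throughout the paper, applied with the positive constant $c = |\mathrm{ln}\,|x||$. Alternatively, the whole theorem is an immediate restatement of Theorem \ref{P027}; the boundary-test derivation above is offered mainly to display the power series test as one more instance of the universal test of this section.
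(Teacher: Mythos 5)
Your proposal is correct and follows essentially the same route as the paper's own proof: reduce to positive $x$ via the absolute convergence test, compare $\sum x^{n}$ against the boundary $\sum \frac{1}{\prod_{k=0}^{w}\mathrm{ln}_{k}}$, take logarithms to get $n\,\mathrm{ln}\,x + \sum \mathrm{ln}_{k+1} \; (\mathrm{ln}\,z) \; 0$, and observe that the negative term $n\,\mathrm{ln}\,x$ dominates the nested logarithms precisely when $0 \lt x \lt 1$, giving $z = \;\lt$ and hence convergence. Your extra remarks (the explicit $x=0$ case, the citation of the logarithmico-exponential scale for the dominance step, and the observation that the result restates Theorem \ref{P027}) are harmless elaborations of the same argument.
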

\begin{proof} 
By the absolute value convergence test, consider positive $x$ only.
 Compare against the boundary.
 $\sum x^{n} \; z \; \sum \frac{1}{\prod_{k=0}^{w} \mathrm{ln}_{k}}|_{n=\infty}$, 
 $x^{n} \; z \; \frac{1}{\prod_{k=0}^{w} \mathrm{ln}_{k}}|_{n=\infty}$, 
 $x^{n} \prod_{k=0}^{w} \mathrm{ln}_{k}|_{n=\infty} \; z \; 1$, 
 $n\, \mathrm{ln}\,x + \sum_{k=1}^{w+1} \mathrm{ln}_{k}|_{n=\infty} \; (\mathrm{ln}\,z) \; 0$, 
 $(\mathrm{ln}\,z) = \; \lt$ only when $\mathrm{ln}\,x$ is negative, $0 \lt x \lt 1$. Reversing
 the process,
$n\, \mathrm{ln}\,x + \sum_{k=1}^{w+1} \mathrm{ln}_{k}|_{n=\infty} \lt 0$, 
 $x^{n} \prod_{k=0}^{w} \mathrm{ln}_{k}|_{n=\infty} \lt 1$, 
 $x^{n} \lt \frac{1}{\prod_{k=0}^{w} \mathrm{ln}_{k}}|_{n=\infty}$, 
 $\sum x^{n} \lt \sum \frac{1}{\prod_{k=0}^{w} \mathrm{ln}_{k}}|_{n=\infty}$,
 as less than the boundary the sum converges. 
\end{proof}
\bigskip
\begin{theo}\label{P018}
 If $a \succ\!\succ b$ then  
 $\sum ab|_{n=\infty} = \sum a|_{n=\infty}$ 
\end{theo}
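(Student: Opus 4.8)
The plan is to reduce the statement to the non-reversible product theorem of \cite[Part 5]{cebp21} applied termwise, and then push the summation operator through. This is the series mirror of Theorem \ref{P032}, so the argument runs along the same lines, only now phrased for use in the boundary test of this section. First I would unpack the hypothesis: by the definition of logarithmic magnitude recalled earlier in the excerpt, $a \succ\!\succ b|_{n=\infty}$ means precisely $\mathrm{ln}\,a \succ \mathrm{ln}\,b|_{n=\infty}$. Assuming, as throughout Section \ref{S18}, that $a$ and $b$ are positive (the general-sign case reduces to this by passing to $|a|,|b|$ and invoking the absolute convergence test Theorem \ref{P015}), the logarithms are well defined at infinity.

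Next I would carry out the simplification at the level of the summand. Since $\mathrm{ln}\,a \succ \mathrm{ln}\,b|_{n=\infty}$, non-reversible addition at infinity gives $\mathrm{ln}(ab) = \mathrm{ln}\,a + \mathrm{ln}\,b = \mathrm{ln}\,a|_{n=\infty}$, and exponentiating, $ab = e^{\mathrm{ln}(ab)} = e^{\mathrm{ln}\,a+\mathrm{ln}\,b} = e^{\mathrm{ln}\,a} = a|_{n=\infty}$. Applying the summation operator to both sides then yields $\sum ab = \sum a|_{n=\infty}$, which is the claim. Concretely this is the chain $\big(\sum ab = \sum e^{\mathrm{ln}(ab)} = \sum e^{\mathrm{ln}\,a+\mathrm{ln}\,b} = \sum e^{\mathrm{ln}\,a} = \sum a\big)|_{n=\infty}$, exactly as in the proof of Theorem \ref{P032}, and one may equally appeal to Theorem \ref{P038} with the relation $z$ taken to be equality.

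The step I expect to be the main obstacle is the passage from the termwise identity $ab = a|_{n=\infty}$ to the identity of convergence sums, because of the realization subtleties flagged in Example \ref{MEX011}: discarding a lower-order factor inside an expression and then summing is not automatically legitimate. The resolution I would give is that the hypothesis $a \succ\!\succ b$ — strict logarithmic domination — is precisely the condition under which the non-reversible product theorem of \cite[Part 5]{cebp21} applies, so that $b$ functions as a genuine multiplicative/additive identity for the summand and may be removed \emph{before} the sum is formed rather than after; equivalently, one threads a monotonic function through the sequence and solves the comparison $ab \; z \; a|_{n=\infty}$, obtaining $z = \;==$ from $\mathrm{ln}\,z = \;==$. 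Once this is in place the remaining bookkeeping (positivity, the reduction of the signed case to absolute values, and the fact that both sides are convergence sums and hence land in $\{0,\infty\}$ after realization) is routine.
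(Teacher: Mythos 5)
Your argument is correct within the paper's framework, but it is not the route the paper takes for this theorem. What you have written is, almost verbatim, the paper's proof of Theorem \ref{P032} from the power-series section: simplify the summand via $ab = e^{\mathrm{ln}\,a+\mathrm{ln}\,b} = e^{\mathrm{ln}\,a} = a|_{n=\infty}$ using non-reversible addition of logarithms, then push $\sum$ through. The paper's proof of Theorem \ref{P018}, by contrast, never forms the termwise identity $ab=a|_{n=\infty}$ at all. It compares $\sum ab$ against the boundary, $\sum ab \; z \; \sum \frac{1}{\prod_{k=0}^{w}\mathrm{ln}_{k}}|_{n=\infty}$, cross-multiplies and takes logarithms to reach $\mathrm{ln}\,a + \mathrm{ln}\,b + \sum_{k=1}^{w+1}\mathrm{ln}_{k} \; (\mathrm{ln}\,z)\; 0$, drops $\mathrm{ln}\,b$ by the hypothesis, and then reverses every step to land on $\sum a \; z \; \sum \frac{1}{\prod_{k=0}^{w}\mathrm{ln}_{k}}|_{n=\infty}$ with the \emph{same} relation $z$; since both sums sit on the same side of the boundary, Theorem \ref{P232} forces both to converge or both to diverge. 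This choice is deliberate: the theorem lives in the subsection whose stated purpose is to show the boundary test proving other results, and routing both series through a common third comparison also neatly sidesteps the realization subtlety you flag from Example \ref{MEX011} --- no lower-order factor is ever discarded inside a sum that is then evaluated; only the relation to a reference series is transported. Your approach buys directness and reuses Theorem \ref{P032}; the paper's buys a demonstration of the boundary test's generality and a cleaner answer to exactly the obstacle you identified.
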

\begin{proof}
 By definition $a \succ\!\succ b$ then $\mathrm{ln}\,a + \mathrm{ln}\,b = \mathrm{ln}\,a$.
 Consider
 $\sum ab \; z \; \sum \frac{1}{\prod_{k=0}^{w}}|_{n=\infty}$,
 $ab \prod_{k=0}^{w}\mathrm{ln}_{k} \; z \; 1|_{n=\infty}$, 
 $\mathrm{ln}\,a + \mathrm{ln}\,b + \sum_{k=1}^{w+1}\mathrm{ln}\,k \; (\mathrm{ln}\,z) \; 0|_{n=\infty}$,
 $\mathrm{ln}\,a + \sum_{k=1}^{w+1}\mathrm{ln}\,k \; (\mathrm{ln}\,z) \; 0|_{n=\infty}$, reverse the process, 
 $\mathrm{ln}(a \prod_{k=0}^{w} \mathrm{ln}_{k}) \; (\mathrm{ln}\,z) \; \mathrm{ln}\,1|_{n=\infty}$,
 $a \prod_{k=0}^{w} \mathrm{ln}_{k} \; z \; 1|_{n=\infty}$,
 $\sum a \; z \; \sum \frac{1}{\prod_{k=0}^{w}\mathrm{ln}_{k}}|_{n=\infty}$.
\end{proof} 
\bigskip
\begin{mex}
 $\sum \frac{n^{\frac{1}{n}}}{n^{2}}|_{n=\infty}$
 We can immediately observe $n^{\frac{1}{n}} \prec\!\prec n^{2}|_{n=\infty}$
 if we recognise $n^{\frac{1}{n}}|_{n=\infty}$
 as a constant. Then apply the theorem
 $\sum \frac{n^{\frac{1}{n}}}{n^{2}}|_{n=\infty}$
 $=\sum \frac{1}{n^{2}}|_{n=\infty}=0$ converges.
 Alternatively,  
 consider $n^{\frac{1}{n}} \; z \; n^{2}|_{n=\infty}$,
 $\frac{1}{n} \mathrm{ln}\,n \; (\mathrm{ln}\,z) \; 2 \mathrm{ln}\,n|_{n=\infty}$,
 $\frac{1}{n} \; (\mathrm{ln}\,z) \; 2|_{n=\infty}$,
 $\frac{1}{n} \prec 2|_{n=\infty}$, then
 $n^{\frac{1}{n}} \prec\!\prec n^{2}|_{n=\infty}$.
\end{mex}

 The following are references to the boundary test being
 applied to solve convergence tests in other papers.

\begin{itemize}
  \item Alternating convergence test (see \ref{S0710}, Theorem \ref{P206})
  \item Generalized ratio test (Section \ref{S17}) 
\end{itemize}
\subsubsection{L'Hopital's convergence test}\label{S180501}
 The following is L'Hopital's rule in a weaker form.
 For example, the ratio of two polynomials
 equally reduces the numerator and denominator.
\bigskip
\begin{prop}\label{P235}
If $\frac{f}{g} = \frac{f'}{g'}|_{n=\infty}$, where $f$ and $g$
 are in indeterminate form $\infty/\infty$ or $0/0$ then
 $\sum \frac{f}{g} = \sum \frac{f'}{g'}|_{n=\infty}$
\end{prop}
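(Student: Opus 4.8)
The plan is to read this ``weak'' L'Hopital statement correctly: it is not a claim about \emph{when} one may substitute the derivative ratio, but a claim that \emph{once} the substitution is already known to hold at infinity it is inherited by the sums. So the whole content is that equality of the summed terms at infinity forces equality of the convergence sums at infinity. The hypothesis $\frac{f}{g} = \frac{f'}{g'}|_{n=\infty}$ is taken in the sense of the paper's left-to-right equals, i.e. $\frac{f}{g}$ and $\frac{f'}{g'}$ coincide as functions at infinity in $*G$ (possibly after realizing infinitesimals); and since we work inside the convergence sums framework, both $\frac{f}{g}$ and $\frac{f'}{g'}$ are nonnegative and monotonic at infinity by Criterion E3' (otherwise the sum is by definition divergent and there is nothing to prove).

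First I would split the assumed equality into the pair $\frac{f}{g} \leq \frac{f'}{g'}|_{n=\infty}$ and $\frac{f}{g} \geq \frac{f'}{g'}|_{n=\infty}$. Then I would apply Theorem \ref{P038} (the ``remove/restore the sigma'' equivalence for monotonic nonnegative sequences) in each direction: from $\frac{f}{g} \leq \frac{f'}{g'}|_{n=\infty}$ one gets $\sum \frac{f}{g} \leq \sum \frac{f'}{g'}|_{n=\infty}$, and from the reverse inequality $\sum \frac{f}{g} \geq \sum \frac{f'}{g'}|_{n=\infty}$; combining the two yields $\sum \frac{f}{g} = \sum \frac{f'}{g'}|_{n=\infty}$, which is the claim. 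Equivalently one can route through the integral test: $\sum \frac{f}{g}|_{n=\infty} = \int \frac{f}{g}\,dn|_{n=\infty} = \int \frac{f'}{g'}\,dn|_{n=\infty} = \sum \frac{f'}{g'}|_{n=\infty}$ by Theorem \ref{P065} and Theorem \ref{P039}, the middle equality holding simply because the integrands agree at infinity.

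I would then add a short remark on how the hypothesis actually arises: for a ratio of polynomials, or more generally any $\infty/\infty$ or $0/0$ ratio where differentiating top and bottom strips only lower order terms (the ``equally reduces'' case mentioned in the preamble), the equality $\frac{f}{g} = \frac{f'}{g'}|_{n=\infty}$ is obtained by direct computation with non-reversible arithmetic; the unconditional, stronger version is Conjecture \ref{P244}, which is deliberately not invoked here.

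The main obstacle is the subtlety emphasized throughout Section \ref{S05}: the left-to-right equals with realization can discard infinitesimal information, and Example \ref{MEX011} shows that dropping lower order terms before summing can change the convergence verdict. Hence the conclusion is safe only when the assumed equality is ``sum-respecting'' — no dropped lower order term straddles the convergence/divergence boundary — which is exactly what the paper's standing convention (and the boundary-test conjecture in Section \ref{S05}) guarantees. I would therefore state and prove the proposition under that convention, so that the term-wise substitution inside $\sum$ is legitimate and the short argument above closes it.
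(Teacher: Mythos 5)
Your argument is correct, and its engine --- remove the sigma, use the hypothesized term-level equality at infinity, restore the sigma --- is the same one the paper relies on, but you route it differently. The paper never compares the two sums to each other directly: it compares each of $\sum \frac{f}{g}$ and $\sum \frac{f'}{g'}$ against the boundary series $\sum \frac{1}{\prod_{k=0}^{w}\mathrm{ln}_{k}}$, solves for the relation $z$, observes that the hypothesis $\frac{f}{g}=\frac{f'}{g'}|_{n=\infty}$ forces both comparisons to yield the same $z$, and concludes via the boundary test (Theorem \ref{P232}) that the two sums share the same verdict. You instead split the equality into $\leq$ and $\geq$ and invoke Theorem \ref{P038} (or, equivalently, pass through Theorems \ref{P065} and \ref{P039}) to transfer the term-level relation to the sums directly. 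Your route is more elementary and self-contained --- it needs only the comparison machinery of Section \ref{S05}, not the boundary test --- whereas the paper's route fits the proposition into the theme of Section \ref{S18} and delivers, as a by-product, the explicit classification of the common value as $0$ or $\infty$ via the sign of $z$ against the boundary. Your closing caveat about Example \ref{MEX011} and the danger of realizing infinitesimals before summing is a point the paper's own proof silently assumes away; flagging it strengthens rather than weakens the argument. One small care point: Theorem \ref{P038} as stated asks for the summed terms to be infinireals and monotonic, so strictly you should note (as you implicitly do via E3') that if either ratio fails these conditions the sum is declared divergent by fiat and the equality of verdicts still holds.
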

\begin{proof}
 $\sum \frac{f}{g} \; z \; \sum \frac{1}{\prod_{k=0}^{w} \mathrm{ln}_{k}}|_{n=\infty}$, 
 $\frac{f}{g} \; z \; \frac{1}{\prod_{k=0}^{w} \mathrm{ln}_{k}}|_{n=\infty}$, 
 $\frac{f'}{g'} \; z \; \frac{1}{\prod_{k=0}^{w} \mathrm{ln}_{k}}|_{n=\infty}$, 
 $\sum \frac{f'}{g'} \; z \; \sum \frac{1}{\prod_{k=0}^{w} \mathrm{ln}_{k}}|_{n=\infty}$.
 Since both sums have the
 same relation then
$\sum \frac{f}{g} = \sum \frac{f'}{g'}|_{n=\infty}$ 
\end{proof}

 A counter example
 for the general L'Hopital was given \cite{hopital2},
 with convergence sums
 $\sum \frac{\mathrm{ln}_{2}\,n}{n\,\mathrm{ln}\,n}|_{n=\infty}$ diverges, $f = \mathrm{ln}_{2}$, $g_{n} = n\,\mathrm{ln}\,n$ but $\sum \frac{f'}{g'} = \sum \frac{1}{n \,(\mathrm{ln}\,n)^{2}}|_{n=\infty}=0$ converges.
 We can generalize the counter example.
\bigskip
\begin{prop}
$w \geq 1$,
$f = \mathrm{ln}_{w+1}$,
 $g = \prod_{k=0}^{w}\mathrm{ln_{k}}$;
 $\sum \frac{f}{g}|_{n=\infty}=\infty$ diverges,
 but $\sum \frac{f'}{g'}|_{n=\infty}=0$ converges. 
\end{prop}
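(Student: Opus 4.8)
The plan is to compute the two derivatives in closed form using Lemma \ref{P202}, reduce each by non-reversible arithmetic to its dominant term, and then recognise $\sum f/g$ and $\sum f'/g'$ as series comparable with members of the generalized p-series family, finishing with the comparison test (Theorem \ref{P010}), or equivalently the boundary test (Theorem \ref{P232}). Throughout I write $g = L_{w} = \prod_{k=0}^{w}\mathrm{ln}_{k}$, so that $f/g = \mathrm{ln}_{w+1}/L_{w}$.

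For the divergence of $\sum f/g$: since $\mathrm{ln}_{w+1}|_{n=\infty} = \infty \geq 1$, one has $\frac{f}{g} = \frac{\mathrm{ln}_{w+1}}{L_{w}} \geq \frac{1}{L_{w}}|_{n=\infty}$, and $\sum \frac{1}{L_{w}}|_{n=\infty} = \infty$ is the $p=1$ generalized p-series (Theorem \ref{P230}); the comparison test then gives $\sum \frac{f}{g}|_{n=\infty} = \infty$. For the convergence of $\sum f'/g'$ I first compute the derivatives: Lemma \ref{P202} gives $f' = \frac{d}{dn}\mathrm{ln}_{w+1} = \frac{1}{L_{w}}$, while logarithmic differentiation of $g$, using $\mathrm{ln}\,g = \sum_{k=0}^{w}\mathrm{ln}_{k+1}$ and again Lemma \ref{P202}, gives $\frac{g'}{g} = \sum_{k=0}^{w}\frac{1}{L_{k}}$, hence $g' = \sum_{k=0}^{w}\frac{L_{w}}{L_{k}}$. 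Keeping just the $k=0$ summand, which is $\frac{L_{w}}{L_{0}} = \frac{L_{w}}{n} = \prod_{k=1}^{w}\mathrm{ln}_{k}$ and is the dominant one (its ratios to the later summands are $\mathrm{ln}_{1}, \mathrm{ln}_{1}\mathrm{ln}_{2}, \ldots$, all infinities), we obtain $g' \geq \frac{L_{w}}{n}$, equivalently $g'|_{n=\infty} = \frac{L_{w}}{n}$.

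Combining, $\frac{f'}{g'} \leq \frac{1/L_{w}}{L_{w}/n} = \frac{n}{L_{w}^{2}} = \frac{1}{n\prod_{k=1}^{w}\mathrm{ln}_{k}^{2}}|_{n=\infty}$, which for $w=1$ is exactly the known counter-example term $\frac{1}{n(\mathrm{ln}\,n)^{2}}$. Since every $\mathrm{ln}_{k}|_{n=\infty}\geq 1$, this last quantity is $\leq \frac{1}{n(\mathrm{ln}\,n)^{2}}|_{n=\infty}$, whose series converges by the generalized p-series (the $w=1$, $p=2>1$ case of Theorem \ref{P230}); the comparison test then yields $\sum \frac{f'}{g'}|_{n=\infty} = 0$. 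Alternatively one can run the boundary test directly on $\frac{f'}{g'}$, solving $\frac{1}{n\prod_{k=1}^{w}\mathrm{ln}_{k}^{2}} \; z \; \frac{1}{\prod_{k=0}^{w}\mathrm{ln}_{k}}|_{n=\infty}$, which after cross-multiplying reduces to $1 \prec \prod_{k=1}^{w}\mathrm{ln}_{k}|_{n=\infty}$, so $z = \; \lt$ and the series converges. The only point needing any care is the passage from $g'$ to $\frac{L_{w}}{n}$; but for the proof I only need the one-sided bound $g' \geq \frac{L_{w}}{L_{0}}$, which is immediate since all summands are positive, so there is no genuine obstacle — the remaining work is routine bookkeeping with nested logarithms and the scale facts $c \prec \mathrm{ln}_{j}|_{n=\infty}$.
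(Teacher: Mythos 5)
Your proposal is correct and follows essentially the same route as the paper: both halves reduce to the same comparisons, namely $\sum f/g$ diverges by discarding the numerator $\mathrm{ln}_{w+1}$ against the divergent $\sum \frac{1}{L_{w}}$, and $\frac{f'}{g'}$ collapses to $\frac{1}{n(\prod_{k=1}^{w}\mathrm{ln}_{k})^{2}} \leq \frac{1}{n(\mathrm{ln}\,n)^{2}}$, whose sum converges. The only difference is that you justify the dominant-term identification $g' = L_{w}/n = \prod_{k=1}^{w}\mathrm{ln}_{k}$ explicitly via logarithmic differentiation and a one-sided bound, where the paper simply asserts it.
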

\begin{proof}
 $\sum \frac{f}{g} = \sum \frac {1}{g}|_{n=\infty}$ 
 $= \infty$ diverges,
 as $g \succ\!\succ f$.

 $f' = \frac{1}{\prod_{k=0}^{w}\mathrm{ln}_{k}}$,
 $g' = \prod_{k=1}^{w}\mathrm{ln}_{k}$;
$\sum \frac{f'}{g'}|_{n=\infty}$
 $= \sum \frac{1}{\prod_{k=0}^{w}\mathrm{ln}_{k}} \frac{1}{\prod_{k=1}^{w}\mathrm{ln}_{k}}|_{n=\infty}$
 $\leq \sum \frac{1}{n\,(\mathrm{ln}\,n)^{2}}=0$ converges.
\end{proof}

 Taking the derivative in the counter example
 case has the numerator $f$ interacting with
 the denominator $g$ when there is no interaction.
 Since the boundary is the `suspect', lets
 exclude this case.
\bigskip
\begin{conjecture}\label{P245}
 When $\frac{f}{g} \neq \frac{\mathrm{ln}_{w+1}}{ \prod_{k=0}^{w}\mathrm{ln_{k}}}|_{n=\infty}$ then 
 $\sum \frac{f}{g} = \sum \frac{f'}{g'}|_{n=\infty}$ 
\end{conjecture}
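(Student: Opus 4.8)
The plan is to organise the whole argument around the boundary test (Theorems~\ref{P232}, \ref{P233}, \ref{P236}), which already reduces any positive convergence sum to a single function comparison against the generalized $p$-series family $\frac{1}{L_w}$. First I would normalise: by the absolute convergence test (Theorem~\ref{P015}) assume $f,g>0$ at infinity; the $0/0$ form is handled verbatim since L'Hopital applies to it directly, so I focus on $f,g\to\infty$. If $\frac{f}{g}$ or $\frac{f'}{g'}$ fails to be monotone at infinity, deform it to a strictly monotone convergence sum via Theorem~\ref{P212}. By the boundary test, the verdict for $\sum\frac{f}{g}|_{n=\infty}$ is obtained by solving $\frac{f}{g}\;z\;\frac{1}{L_w}|_{n=\infty}$ for the unique admissible $w$ and $z\in\{\lt,\geq\}$, and likewise for $\sum\frac{f'}{g'}|_{n=\infty}$. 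The conjecture is therefore equivalent to the statement that $\frac{f}{g}$ and $\frac{f'}{g'}$ produce the same verdict; and since Proposition~\ref{P235} already covers the sub-case $\frac{f}{g}=\frac{f'}{g'}|_{n=\infty}$, what must be added is precisely the L'Hopital step where the two functions differ at infinity.

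Next I would run the comparison in denominator-free form. Cross-multiplying, $\frac{f}{g}\;z\;\frac{1}{L_w}$ becomes $fL_w\;z\;g$, i.e. $\frac{fL_w}{g}\;z\;1|_{n=\infty}$, an $\infty/\infty$ ratio; applying L'Hopital's rule (\cite[Part~5]{cebp21}) gives $\frac{(fL_w)'}{g'}\;z\;1$, that is $f'L_w+fL_w'\;z\;g'|_{n=\infty}$. Using Lemma~\ref{P202} ($\frac{d}{dn}\mathrm{ln}_k=\frac{1}{L_{k-1}}$) one computes $L_w'=L_w\sum_{k=0}^{w}\frac{1}{L_k}|_{n=\infty}=\frac{L_w}{n}|_{n=\infty}=\prod_{k=1}^{w}\mathrm{ln}_k|_{n=\infty}$ after discarding the lower-order boundary terms. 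So the relation in hand is $f'L_w+fL_w'\;z\;g'$, whereas the differentiated verdict we want is encoded by $f'L_w\;z\;g'$, equivalently $\frac{f'}{g'}\;z\;\frac{1}{L_w}$. Both $f'L_w$ and $fL_w'$ being positive (the former once $f$ is deformed to strictly monotone), a non-reversible-arithmetic trichotomy applies: if $f'L_w\succeq fL_w'$, or the two are of the same order, then $f'L_w+fL_w'$ is of the same order as $f'L_w$, the harmless constant is absorbed in the comparison with $g'$, and the two verdicts coincide.

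The only surviving possibility is $f'L_w\prec fL_w'$ together with $f'L_w\prec g'\prec fL_w'$, where the perturbation $fL_w'$ dominates and flips the relation; this is exactly the situation realised by the known counterexample $f=\mathrm{ln}_{w+1}$, $g=L_w$, for which $f'L_w=1$, $g'\approx\mathrm{ln}_1$ and $fL_w'\approx L_w'\,\mathrm{ln}_{w+1}$, so that $f'L_w\prec g'\prec fL_w'$. The substantive step is to show this obstruction occurs \emph{only} in the excluded class: from the chain $f'L_w\prec g'\prec fL_w'$, divide through by $L_w$, read $\frac{f'}{f}$ and $\frac{g'}{g}$ as logarithmic derivatives, and integrate the resulting differential inequalities by the separation-of-variables calculus at infinity used in the proof of Theorem~\ref{P225}; this should force $\mathrm{ln}\,f+\mathrm{ln}_{w+1}=\mathrm{ln}\,g|_{n=\infty}$, i.e. $\frac{f}{g}=\frac{\mathrm{ln}_{w+1}}{L_w}|_{n=\infty}$, contradicting the hypothesis. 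Hence under the hypothesis $fL_w'$ is an additive identity in the relevant comparison, $f'L_w\;z\;g'$ holds with the same $z$, and by the boundary test $\sum\frac{f}{g}=\sum\frac{f'}{g'}|_{n=\infty}$.

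The hard part will be making the trichotomy and the integration step watertight. One must guarantee that the admissible $w$ is actually the same integer for $\frac{f}{g}$ and $\frac{f'}{g'}$ whenever no flip occurs — a priori differentiation can shift $w$ while leaving the verdict unchanged — and, most delicately, prove that the integrated consequence of $f'L_w\prec g'\prec fL_w'$ is exactly $\frac{f}{g}$ asymptotic to $\frac{\mathrm{ln}_{w+1}}{L_w}$, rather than some strictly larger or smaller exceptional family, so that the stated hypothesis is neither too weak nor too strong. The cleanest route to that last point is probably the contrapositive: assume the two verdicts disagree and deduce, via the du Bois-Reymond comparison algebra of \cite[Part~3]{cebp21} applied to $\mathrm{ln}\,f$, $\mathrm{ln}\,g$ and the logarithmic scale $\{\mathrm{ln}_k\}$, that $\frac{f}{g}$ is pinned to the boundary family.
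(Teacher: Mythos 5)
You should first be aware that the paper contains no proof of this statement: it is explicitly labelled a conjecture, motivated only by the counterexample family $f=\mathrm{ln}_{w+1}$, $g=\prod_{k=0}^{w}\mathrm{ln}_{k}$ discussed just before it, so there is no ``paper's own proof'' for your argument to be measured against --- anything complete here would be new mathematics. Your opening reduction to the boundary test (Theorem \ref{P232}) is the natural move, but the proposal has two genuine gaps. The lesser one: the step ``$\frac{fL_{w}}{g}\;z\;1$ is an $\infty/\infty$ ratio; applying L'Hopital's rule gives $\frac{(fL_{w})'}{g'}\;z\;1$'' runs L'Hopital in the converse direction, from a ratio to the ratio of its derivatives. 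The rule invoked from \cite[Part 5]{cebp21}, like Proposition \ref{P235}, transfers information the other way, and the converse implication is exactly what the conjecture is about, so this step is circular as written. The greater one: the entire content of the conjecture is concentrated in your final claim that the chain $f'L_{w}\prec g'\prec fL_{w}'$ ``should force'' $\frac{f}{g}=\frac{\mathrm{ln}_{w+1}}{L_{w}}|_{n=\infty}$. You assert and defer this, but it is not a loose end --- it \emph{is} the theorem, and no mechanism is offered for why integrating the inequalities should land on exactly that one curve.

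Worse, the claim you defer is false, so the strategy cannot close. Take $f=(\mathrm{ln}_{w+1})^{\frac{1}{2}}$ and $g=L_{w}=\prod_{k=0}^{w}\mathrm{ln}_{k}$ with $w\geq 1$; both tend to infinity, so the ratio is indeterminate. Since $(\mathrm{ln}_{w+1})^{\frac{1}{2}}\geq 1$ at infinity, $\frac{f}{g}\geq\frac{1}{L_{w}}$ and $\sum\frac{f}{g}|_{n=\infty}=\infty$ diverges by the boundary test (equivalently, the substitution $t=\mathrm{ln}_{w+1}$ turns the integral into $\int t^{\frac{1}{2}}\,dt=\infty$). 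On the other hand $f'=\frac{1}{2}(\mathrm{ln}_{w+1})^{-\frac{1}{2}}\frac{1}{L_{w}}$ and $g'=L_{w}\sum_{k=0}^{w}\frac{1}{L_{k}}=\frac{L_{w}}{n}|_{n=\infty}$, so $\frac{f'}{g'}=\frac{(\mathrm{ln}_{w+1})^{-\frac{1}{2}}}{2\,n\,(\mathrm{ln}_{1})^{2}\cdots(\mathrm{ln}_{w})^{2}}\leq\frac{1}{n(\mathrm{ln}\,n)^{2}}|_{n=\infty}$, whose sum converges. The verdicts disagree, yet $\frac{f}{g}=\frac{(\mathrm{ln}_{w+1})^{\frac{1}{2}}}{L_{w}}$ is not equal at infinity to $\frac{\mathrm{ln}_{w'+1}}{L_{w'}}$ for any $w'$: the relevant ratios are $(\mathrm{ln}_{w+1})^{-\frac{1}{2}}\to 0$ and $\frac{(\mathrm{ln}_{w+1})^{\frac{3}{2}}}{\mathrm{ln}_{w+2}}\to\infty$. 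Indeed every $f=(\mathrm{ln}_{w+1})^{\alpha}$ with $0<\alpha<1$ against $g=L_{w}$ behaves the same way, so the exceptional set contains a continuum of ratios outside the stated family. Before any proof is attempted, the hypothesis of the conjecture itself has to be repaired to exclude this whole band of slowly varying numerators sitting on the divergent side of the boundary; only then does it make sense to try to show, via the comparison algebra of $\mathrm{ln}\,f$ and $\mathrm{ln}\,g$ against the scale $\{\mathrm{ln}_{k}\}$, that nothing else can flip the verdict.
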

\subsection{Representing convergent/divergent series}\label{S1806}
 As the boundary test
 is complete (Theorem \ref{P232}), 
 since the boundary separates between convergence
 and divergence, for a given series,
 a lower bound for divergent 
 series and an upper bound for convergent
 series exists.

 Although this may seem a trivial rearrangement of the boundary test,
 we show how we may isolate and describe classes of
 convergence and divergence from the original test.

 For example, 
 the idea of a class of series that is convergent to the boundary,
 but is also convergent may seem counter intuitive.
\bigskip
\begin{mex} 
 $\frac{1}{n^{2}}$ is bounded above, $\frac{1}{n^{2}} \lt \frac{1}{n^{p}}$ when $1 \lt p \lt 2$, which is known to converge. 
 Applying a comparison, 
 $\frac{1}{n^{2}} \lt \frac{1}{n^{p}}|_{n=\infty}$,
 $\sum \frac{1}{n^{2}} \lt \sum \frac{1}{n^{p}}|_{n=\infty}$,
 $\sum \frac{1}{n^{2}} \leq 0$,
 $\sum \frac{1}{n^{2}} =0$.
\end{mex}
\bigskip
\begin{mex}
$\frac{1}{n \, (\mathrm{ln}\,n)^{\frac{1}{2}}}$
 is bounded below, $\frac{1}{n\, \mathrm{ln}\,n} \lt \frac{1}{n \, (\mathrm{ln}\,n)^{\frac{1}{2}}} \lt \frac{1}{n}$ then $\frac{1}{n\, \mathrm{ln}\,n}$ is the first discrete lower bound
 of the boundary, $w=1$.
 Applying a comparison,
 $\frac{1}{n\, \mathrm{ln}\,n} \lt \frac{1}{n \, (\mathrm{ln}\,n)^{\frac{1}{2}}}|_{n=\infty}$,
 $\sum \frac{1}{n\, \mathrm{ln}\,n} \lt \sum \frac{1}{n \, (\mathrm{ln}\,n)^{\frac{1}{2}}}|_{n=\infty}$,
 $\infty \leq \sum \frac{1}{n \, (\mathrm{ln}\,n)^{\frac{1}{2}}}|_{n=\infty}$,
 $\sum \frac{1}{n \, (\mathrm{ln}\,n)^{\frac{1}{2}}}|_{n=\infty} = \infty$
 diverges.
\end{mex}
\bigskip
\begin{remk}
 All positive monotonic divergent series are bounded by the
 boundary below.
\end{remk}
\bigskip
\begin{lem}\label{P237}
 For monotonic and diverging series
 $\sum a_{n}|_{n=\infty} = \infty$,
 for some $w$,
 \[ \sum \frac{1}{\prod_{k=0}^{w} \mathrm{ln}_{k}}|_{n=\infty} \leq \sum a_{n}|_{n=\infty} \]
\end{lem}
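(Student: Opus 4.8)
Looking at this, I need to prove Lemma \ref{P237}: every positive monotonic divergent series is bounded below by some generalized p-series on the boundary. Let me think about the structure.

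The plan is to read Lemma \ref{P237} as an essentially immediate corollary of the boundary test (Theorem \ref{P232}) together with its function-comparison form (Theorem \ref{P233}). The hypothesis supplies a monotonic sequence $(a_{n})|_{n=\infty}$ whose sum diverges, so the first step is simply to run the boundary test on $\sum a_{n}$: comparing $a_{n}$ against the boundary family $\frac{1}{\prod_{k=0}^{w}\mathrm{ln}_{k}}$ and solving the comparison of functions produces, by Theorem \ref{P233}, a unique integer $w$ and a relation $z$ with $\sum a_{n}\;z\;\sum \frac{1}{\prod_{k=0}^{w}\mathrm{ln}_{k}}|_{n=\infty}$.

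The second step is a two-case dichotomy built into the boundary test: the case $z=\;\lt$ forces $\sum a_{n}|_{n=\infty}=0$ to converge, which contradicts the hypothesis $\sum a_{n}|_{n=\infty}=\infty$; hence $z=\;\geq$, and this is exactly the claimed inequality $\sum \frac{1}{\prod_{k=0}^{w}\mathrm{ln}_{k}}|_{n=\infty}\leq \sum a_{n}|_{n=\infty}$. If a more self-contained wording is wanted, I would instead observe that Theorem \ref{P233} gives $a_{n}\geq \frac{1}{\prod_{k=0}^{w}\mathrm{ln}_{k}}|_{n=\infty}$ at the level of the inner functions, then pass to the sums via Theorem \ref{P038} (for positive monotonic sequences $f_{n}\;z\;g_{n}\Leftrightarrow \sum f_{n}\;z\;\sum g_{n}|_{n=\infty}$), and realise the resulting $\leq$ under $(*G,\leq)\mapsto(\overline{\mathbb{R}},\leq)$. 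As a free addendum, since $\frac{1}{L_{0}}\succ\frac{1}{L_{1}}\succ\cdots|_{n=\infty}$, every index $w'\geq w$ works as well, so the existential ``for some $w$'' is comfortably satisfied, and if one wishes to record that the boundary series themselves diverge that is just Theorem \ref{P230} with $p=1$.

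The main obstacle is conceptual rather than computational: it is the claim that the boundary test actually terminates with a valid $w$ for \emph{every} monotonic divergent series, i.e.\ the completeness discussed in Section \ref{S1803} (all functions arising in analysis lie on the logarithmico-exponential scale, so $a_{n}$ is comparable with the boundary family, per Hardy's remark). I would cite that discussion rather than reprove it. I would also flag the degenerate possibility, already noted after Theorem \ref{P233}, that the comparison of functions can become circular; in that event the proof should say an auxiliary identity or an extra hypothesis is required to pin down the relation, after which the dichotomy above goes through unchanged. Beyond that, no step needs anything more than invoking Theorems \ref{P232}, \ref{P233}, and \ref{P038}.
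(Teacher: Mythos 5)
Your proposal is correct and follows essentially the same route as the paper, whose entire proof reads ``Boundary test Theorem \ref{P232}: swap sides and inequality direction for the divergent case.'' You simply spell out the dichotomy ($z=\;\lt$ would force convergence, so $z=\;\geq$) and flag the completeness caveat, both of which are consistent with, and slightly more explicit than, the paper's one-line argument.
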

\begin{proof}
 Boundary test Theorem \ref{P232}: swap sides and inequality direction for the divergent case.
\end{proof}

For convergent series $\sum a_{n}|_{n=\infty}=0$ 
 either the series is asymptotic
 to `below the boundary' or below
 the boundary$(a_{n} \lt \frac{1}{n})|_{n=\infty}$. 
\bigskip
\begin{remk}
  All positive monotonic convergent series
 are bounded above by a series less than the boundary. 
\end{remk}
\bigskip
\begin{lem}\label{P238}
When 
 $\sum a_{n}|_{n=\infty} = 0$ converges then for some $w$
 and $p \gt 1$,
\[
\sum a_{n}|_{n=\infty} \leq \sum \frac{1}{\prod_{k=0}^{w} \mathrm{ln}_{k} \cdot \mathrm{ln}_{w+1}^{p} }|_{n=\infty} \]
\end{lem}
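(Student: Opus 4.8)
The plan is to reduce the statement to the boundary test (Theorem~\ref{P232}, in its comparison form Theorem~\ref{P233}) together with the generalized p-series theorem (Theorem~\ref{P230}). First I would apply the boundary test to the convergent sum $\sum a_{n}|_{n=\infty}=0$: solving the comparison $a_{n} \; z \; \frac{1}{\prod_{k=0}^{w}\mathrm{ln}_{k}}|_{n=\infty}$, the test returns a level $w$ with $z = \; \lt$, so that $a_{n} \lt \frac{1}{\prod_{k=0}^{w}\mathrm{ln}_{k}}|_{n=\infty}$. Equivalently, $a_{n} \prod_{k=0}^{w}\mathrm{ln}_{k} \lt 1|_{n=\infty}$, so the reciprocal $t_{n} = (a_{n} \prod_{k=0}^{w}\mathrm{ln}_{k})^{-1}$ is an infinity in $*G$.

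The key step is to convert this strict comparison into a comparison against a genuinely convergent generalized p-series. By Hardy's logarithmico-exponential principle, invoked in Section~\ref{S1803}, $t_{n}$ is comparable to the logarithmico-exponential scale; hence $t_{n} \succeq \mathrm{ln}_{w+1}^{p}$ for some real $p$, i.e. $a_{n} \leq \frac{1}{\prod_{k=0}^{w}\mathrm{ln}_{k} \cdot \mathrm{ln}_{w+1}^{p}}|_{n=\infty}$. It then remains to see that $p$ can be chosen with $p \gt 1$. If this failed, then $a_{n} \geq \frac{1}{\prod_{k=0}^{w}\mathrm{ln}_{k} \cdot \mathrm{ln}_{w+1}^{q}}|_{n=\infty}$ for some $q \leq 1$, and the right-hand series is a divergent generalized p-series by Theorem~\ref{P230}; the comparison test (Theorem~\ref{P010}) would then force $\sum a_{n}|_{n=\infty}=\infty$, contradicting convergence. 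Finally, lifting the term inequality to the sums by Theorem~\ref{P038} gives $\sum a_{n} \leq \sum \frac{1}{\prod_{k=0}^{w}\mathrm{ln}_{k} \cdot \mathrm{ln}_{w+1}^{p}}|_{n=\infty}$, which is exactly the claim (both sides realize to $0$, by Theorem~\ref{P014} and Theorem~\ref{P230}). The degenerate case in which $a_{n}$ is already dominated by an ordinary p-series $\frac{1}{n^{p}}$ with $p\gt 1$ is absorbed by the convention $L_{-1}=1$, i.e. $w=-1$.

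The main obstacle is the comparability step: justifying that $1/a_{n}$ (equivalently $t_{n}$) genuinely lies on the generalized p-series scale, rather than being an exotic infinity incomparable with it. The paper's stance is that this is guaranteed by the completeness of the boundary test (the discussion following Theorem~\ref{P232}) and by Hardy's principle, and that under the standing monotonicity hypothesis any $a_{n}$ that cannot be so compared is excluded by the deformation remark attached to Criterion~E3. A secondary, purely bookkeeping, subtlety is verifying that the slack $t_{n}$ can always be ``spent'' as a power of the single next logarithm $\mathrm{ln}_{w+1}$ at the level $w$ selected by the boundary test; once comparability is in hand this is routine magnitude arithmetic with the $\mathrm{ln}_{k}$ and needs no new idea.
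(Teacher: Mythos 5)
Your proposal follows the same route as the paper --- reduce everything to the boundary test --- but the paper's entire proof of this lemma is the single line ``Boundary test Theorem~\ref{P232}: convergent case,'' so your write-up is really an attempt to supply the argument the paper omits. You have correctly diagnosed that the bare citation does not suffice: the convergent case of the boundary test only yields $a_{n} \lt \frac{1}{\prod_{k=0}^{w}\mathrm{ln}_{k}}|_{n=\infty}$ (and for a convergent series this holds for \emph{every} $w$), which is strictly weaker than domination by a convergent generalized p-series; some further input is needed to convert the slack into a factor $\mathrm{ln}_{w+1}^{p}$ with a fixed $p \gt 1$.

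The bridging argument you supply, however, has a genuine gap, and it is not the one you dismiss as routine bookkeeping. Your contradiction step assumes a dichotomy at a fixed level $w$: either $t_{n}\succeq \mathrm{ln}_{w+1}^{p}$ for some $p\gt 1$, or $a_{n}\geq \frac{1}{\prod_{k=0}^{w}\mathrm{ln}_{k}\cdot\mathrm{ln}_{w+1}^{q}}$ for some $q\leq 1$. This dichotomy fails: take $a_{n}=\frac{1}{n\,(\mathrm{ln}\,n)^{1+1/\mathrm{ln}_{3}n}}$, which converges, yet at level $w=0$ the effective exponent $1+1/\mathrm{ln}_{3}\,n$ tends to $1$ from above, so no fixed $p\gt 1$ gives an upper bound and no fixed $q\leq 1$ gives the reverse bound either; one must move up to $w=1$ before the claimed form appears. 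So the selection of $w$ is the crux of the lemma, not an afterthought. Moreover, the comparability hypothesis you lean on --- that $1/(a_{n}L_{w})$ lies on the logarithmico-exponential scale --- cannot be dispensed with: by the du Bois-Reymond/Hardy results quoted in Section~\ref{S1803} (``given any convergent series, we can find one more slowly convergent''), no countable family such as $\{\frac{1}{L_{w}\,\mathrm{ln}_{w+1}^{p}}\}$ is cofinal among positive convergent series, and a diagonal construction produces a convergent monotonic series exceeding every member of the family. The lemma is therefore only tenable for terms comparable to that scale; your proof makes this restriction visible where the paper's citation hides it, but neither closes it.
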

\begin{proof}
 Boundary test Theorem \ref{P232}: convergent case.
\end{proof}

 These bounds are available as another tool.  
 Applying the lower and upper bound at the boundary to
 derive the following theorems.
\bigskip
\begin{remk}
 There always exists a series that converges more slowly.
\end{remk}
\bigskip
\begin{theo}\label{P239}
If $\sum_{n=1}^{\infty} a_{n}$ is a convergent
 series with positive terms 
 then there exists a monotonic sequence $( b_{n} )_{n=1}^{\infty}$
 such that $\lim\limits_{n \to \infty} b_{n} = \infty$
 and series $\sum_{n=1}^{\infty} a_{n} b_{n}$ converges.
\end{theo}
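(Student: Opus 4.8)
The plan is to read off $b_{n}$ directly from the slack in Lemma \ref{P238}, which already wedges every convergent positive series below a concrete generalized $p$-series. First I would apply Lemma \ref{P238}: since $\sum a_{n}|_{n=\infty}=0$ converges there are a fixed integer $w$ and a real $p>1$ with
\[ \sum a_{n}|_{n=\infty} \;\leq\; \sum \frac{1}{\prod_{k=0}^{w}\mathrm{ln}_{k}\cdot \mathrm{ln}_{w+1}^{p}}\Big|_{n=\infty}. \]
By Theorem \ref{P038} (removing the summation sign from a solved relation) this is equivalent to the pointwise estimate $a_{n}\leq \dfrac{1}{\prod_{k=0}^{w}\mathrm{ln}_{k}\cdot \mathrm{ln}_{w+1}^{p}}|_{n=\infty}$.

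Next I would manufacture $b_{n}$ from the exponent gap. Choose any $q$ with $1<q<p$ and set $b_{n}=(\mathrm{ln}_{w+1}(n))^{\,p-q}$ for all $n$ large enough that $\mathrm{ln}_{w+1}(n)$ is defined and at least $1$, extending $b_{n}$ by the constant value $1$ on the finitely many earlier indices. Then $(b_{n})$ is monotonically increasing, and $b_{n}|_{n=\infty}=(\mathrm{ln}_{w+1})^{p-q}|_{n=\infty}=\infty$ because $p-q>0$. Multiplying the pointwise bound by the positive factor $b_{n}$ gives
\[ 0\leq a_{n}b_{n}\;\leq\; \frac{1}{\prod_{k=0}^{w}\mathrm{ln}_{k}\cdot \mathrm{ln}_{w+1}^{q}}\Big|_{n=\infty}, \]
and the right-hand side is a generalized $p$-series (with index $w+1$ and exponent $q>1$), hence $\sum \frac{1}{\prod_{k=0}^{w}\mathrm{ln}_{k}\cdot \mathrm{ln}_{w+1}^{q}}|_{n=\infty}=0$ converges by Theorem \ref{P230}. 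The comparison test Theorem \ref{P010} then yields $\sum a_{n}b_{n}|_{n=\infty}=0$; since $(a_{n}b_{n})$ is positive and has no singularity on the finite range, Theorem \ref{P091} transfers this back to convergence of the genuine series $\sum_{n=1}^{\infty}a_{n}b_{n}$, which is what is asserted.

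The main obstacle I expect is purely bookkeeping at the finite end: $\mathrm{ln}_{w+1}(n)$ is only positive past some threshold $n_{0}$, so I must check that patching $b_{n}\equiv 1$ on $\{1,\dots,n_{0}\}$ keeps $(b_{n})$ monotone, still sends $b_{n}\to\infty$, and changes convergence of $\sum a_{n}b_{n}$ not at all — each point is routine because altering finitely many terms is irrelevant to monotonic convergence sums. A second, slightly subtler point is that the pointwise inequality coming out of Lemma \ref{P238} is legitimately multiplied by the monotone positive factor $b_{n}$ before re-summing; this is exactly the situation handled by forming $a_{n}b_{n}$ and then comparing via Theorem \ref{P010}, so no new machinery is required. (The classical route — take $r_{n}=\sum_{k=n}^{\infty}a_{k}$ and $b_{n}=1/\sqrt{r_{n}}$, using $a_{n}/\sqrt{r_{n}}\leq 2(\sqrt{r_{n}}-\sqrt{r_{n+1}})$ — is available as a cross-check but is less in the spirit of the boundary-test development.)
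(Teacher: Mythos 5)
Your proposal is correct, and it shares the paper's skeleton: the paper also proves this statement by passing to the convergence sum (Theorem \ref{P240}), invoking Lemma \ref{P238} to wedge $a_{n}$ under a generalized $p$-series, multiplying by a slowly divergent factor, and transferring back via Theorem \ref{P090}. The genuine difference is where the slack for $b_{n}$ is found and which test closes the argument. The paper takes $b_{n}=\mathrm{ln}_{w+1}$, i.e.\ it inserts one entire additional logarithmic factor, and then verifies convergence of $\sum a_{n}b_{n}$ by running the product through the boundary test: after two logarithms the contribution $\mathrm{ln}\,b_{n}=\mathrm{ln}_{w+2}$ is absorbed as a lower-order term, so the comparison against $\sum 1/\prod_{k=0}^{w}\mathrm{ln}_{k}$ still resolves to $<$. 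You instead exploit the exponent gap $p>1$ itself, taking $b_{n}=(\mathrm{ln}_{w+1})^{p-q}$ with $1<q<p$, which drops $a_{n}b_{n}$ directly underneath another generalized $p$-series (depth $w+1$, exponent $q>1$), so Theorem \ref{P230} plus the comparison test (Theorem \ref{P010}) finish with no boundary-test manipulation at all. Your route is more economical in machinery and makes the convergence of the majorant completely explicit; the paper's choice has the mild advantages that $b_{n}$ depends only on the depth of the bounding series and not on the exponent $p$, that it grows by a full logarithm rather than a power of one, and that it exhibits the result as one more application of the boundary comparison the section is built around. Your handling of the finite-index patching and of the pointwise multiplication before re-summing is consistent with the paper's conventions, so I see no gap.
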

\begin{proof}
 Reform as the convergence sum, solve by Theorem \ref{P240},
 then 
 apply the transfer principle,
 transferring a convergence sum to a sum 
 (See Theorem \ref{P090}).
\end{proof}
\bigskip
\begin{theo}\label{P240}
 If $\sum a_{n}|_{n=\infty}=0$ is a convergent series
 with positive terms then there exists a monotonic sequence
 $( b_{n} )|_{n=\infty}$
 such that $b_{n}|_{n=\infty}=\infty$ 
 and
 $\sum a_{n} b_{n}|_{n=\infty}=0$ converges.
\end{theo}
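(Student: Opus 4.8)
The plan is to exploit the completeness of the boundary test, and in particular Lemma~\ref{P238}, which already asserts that a convergent positive series is dominated by a generalized p-series whose last exponent is \emph{strictly} greater than $1$. That strict inequality leaves a small amount of slack, and a sufficiently slowly growing $b_n$ can consume it without destroying convergence.

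First I would apply Lemma~\ref{P238}: since $\sum a_n|_{n=\infty}=0$ converges and $(a_n)|_{n=\infty}$ is monotonic, there is a fixed integer $w$ and a real $p>1$ with $\sum a_n|_{n=\infty} \leq \sum \frac{1}{\prod_{k=0}^{w}\mathrm{ln}_{k} \cdot \mathrm{ln}_{w+1}^{p}}|_{n=\infty}$. Removing the summation sign — legitimate by Theorem~\ref{P038}, as both inner terms are monotonic at infinity — gives $a_n \leq \frac{1}{\prod_{k=0}^{w}\mathrm{ln}_{k} \cdot \mathrm{ln}_{w+1}^{p}}|_{n=\infty}$.

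Next I would pick $\varepsilon$ with $0 < \varepsilon < p-1$ and set $b_n = (\mathrm{ln}_{w+1}\,n)^{\varepsilon}$. This is a positive, monotonically increasing sequence at infinity with $b_n|_{n=\infty}=\infty$, since $\mathrm{ln}_{w+1}\,n|_{n=\infty}=\infty$. Multiplying the inequality above by $b_n \geq 0$ yields $a_n b_n \leq \frac{(\mathrm{ln}_{w+1})^{\varepsilon}}{\prod_{k=0}^{w}\mathrm{ln}_{k} \cdot \mathrm{ln}_{w+1}^{p}} = \frac{1}{\prod_{k=0}^{w}\mathrm{ln}_{k} \cdot \mathrm{ln}_{w+1}^{p-\varepsilon}}|_{n=\infty}$. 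The right-hand side is the generalized p-series obtained by substituting $w+1$ for the index and $p-\varepsilon$ for the exponent in Theorem~\ref{P230}; since $p-\varepsilon > 1$ it satisfies $\sum \frac{1}{\prod_{k=0}^{w}\mathrm{ln}_{k} \cdot \mathrm{ln}_{w+1}^{p-\varepsilon}}|_{n=\infty}=0$. Summing the inequality at infinity and invoking the comparison test (Theorem~\ref{P010}) gives $0 \leq \sum a_n b_n|_{n=\infty} \leq 0$, hence $\sum a_n b_n|_{n=\infty}=0$ converges, as required.

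The main obstacle is essentially bookkeeping: one must be careful that $(a_n)|_{n=\infty}$ being monotonic is exactly what licenses Theorem~\ref{P038} to push the sum-level inequality from Lemma~\ref{P238} down to the terms, and one must confirm that $(\mathrm{ln}_{w+1}\,n)^{\varepsilon}$ is genuinely monotonic and divergent at infinity (automatic once $n$ is large enough that every nested logarithm exceeds $1$, which is guaranteed at infinity). Everything else is a routine application of the generalized p-series classification. One could instead mimic the classical tail construction $b_n = 1/\sqrt{\,\sum_{k \geq n} a_k\,}$, but the boundary-test route is shorter given the machinery already developed.
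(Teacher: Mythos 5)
Your proof is correct and is built on the same pivot as the paper's: Lemma \ref{P238} supplies a convergent generalized p-series $\frac{1}{\prod_{k=0}^{w}\mathrm{ln}_{k}\cdot \mathrm{ln}_{w+1}^{p}}$ with $p>1$ dominating $a_{n}$, and the whole game is to choose $b_{n}$ slowly enough that this domination survives multiplication. Where you diverge from the paper is in the choice of $b_{n}$ and in how convergence of the product is certified. The paper takes $b_{n}=\mathrm{ln}_{w+1}$, i.e.\ one additional level of nested logarithm, and then re-runs the boundary comparison: in the logarithmic domain the extra term $\mathrm{ln}\,b_{n}=\mathrm{ln}_{w+2}$ is absorbed by non-reversible arithmetic ($\mathrm{ln}_{1}\succ \mathrm{ln}_{w+2}$), so the comparison of $\sum \frac{1}{L_{c-1}(\mathrm{ln}_{c})^{p}}b_{n}$ against the divergent boundary $\sum \frac{1}{L_{w}}$ still resolves to $\lt$. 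You instead spend part of the exponent slack: with $b_{n}=(\mathrm{ln}_{w+1})^{\varepsilon}$ and $0\lt\varepsilon\lt p-1$, the product is dominated termwise by the generalized p-series with exponent $p-\varepsilon\gt 1$, so convergence follows directly from Theorem \ref{P230} together with the comparison test (Theorem \ref{P010}), with no second pass through the boundary test. Your closing step is therefore more elementary and self-contained, and your explicit appeal to Theorem \ref{P038} to pass from the sum-level inequality of Lemma \ref{P238} to the termwise one is a point the paper glosses over; the paper's version, by contrast, keeps the argument entirely inside the boundary-test formalism, which is the theme of that section. Both choices of $b_{n}$ are monotonic and divergent at infinity, so both satisfy the statement.
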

\begin{proof}
 Since $\sum a_{n}|_{n=\infty}=0$
 then
 by Lemma \ref{P238},
 $\exists c, p \gt 1$: 
 $a_{n} \leq \frac{1}{L_{c-1} (\mathrm{ln}_{c})^{p} }$.  
 For positive $b_{n}$, 
 $a_{n} b_{n} \leq \frac{1}{L_{c-1} (\mathrm{ln}_{c})^{p} } b_{n}$,
 $(\sum a_{n} b_{n} \leq \sum \frac{1}{L_{c-1} (\mathrm{ln}_{c})^{p} } b_{n})|_{n=\infty}$.

 Compare  $\sum \frac{1}{L_{c-1} (\mathrm{ln}_{c})^{p} } b_{n}|_{n=\infty}$
 against the boundary.
 $(\sum \frac{1}{L_{c-1} (\mathrm{ln}_{c})^{p} } b_{n} \; z \; \sum \frac{1}{L_{w}})|_{n=\infty}$, 
 $(\frac{1}{L_{c-1} (\mathrm{ln}_{c})^{p} } b_{n} \; z \; \frac{1}{L_{w}})|_{n=\infty}$,
 $(L_{w} b_{n} \;\; z \;\; L_{c-1} (\mathrm{ln}_{c})^{p})|_{n=\infty}$,  
 $(\mathrm{ln}(L_{w} b_{n}) \; (\mathrm{ln}\, z) \; \mathrm{ln}(L_{c-1} (\mathrm{ln}_{c})^{p}))|_{n=\infty}$,  
 $(\sum_{k=1}^{w+1} \mathrm{ln}_{k} + \mathrm{ln}\,b_{n} \;\; (\mathrm{ln}\,z) \;\; \sum_{k=1}^{c} \mathrm{ln}_{k} + p \, \mathrm{ln}_{c+1})|_{n=\infty}$.

Choose $b_{n} = \mathrm{ln}_{w+1}$, then $\sum_{k=1}^{w+1} \mathrm{ln}_{k} + \mathrm{ln}\,b_{n}  = \sum_{k=1}^{w+1} \mathrm{ln}_{k}|_{n=\infty}$ 
 as $\mathrm{ln}_{w+1} \succ \mathrm{ln}_{w+2}|_{n=\infty}$.

 Reversing the process, 
 $(\sum_{k=1}^{w+1} \mathrm{ln}_{k} \; (\mathrm{ln}\,z) \; \sum_{k=1}^{c} \mathrm{ln}_{k} + p \, \mathrm{ln}_{c+1})|_{n=\infty}$, 
 $(\mathrm{ln}(L_{w}) \; (\mathrm{ln}\, z) \; \mathrm{ln}(L_{c-1} (\mathrm{ln}_{c})^{p})) |_{n=\infty}$, 
 $(L_{w} \; z \; L_{c-1} (\mathrm{ln}_{c})^{p}) |_{n=\infty}$,  
 $(\frac{1}{L_{c-1} (\mathrm{ln}_{c})^{p} } \; z \; \frac{1}{L_{w}})|_{n=\infty}$,  
 $(\sum \frac{1}{L_{c-1} (\mathrm{ln}_{c})^{p} } \; z \; \sum \frac{1}{L_{w}})|_{n=\infty}$.  

Since the left hand side sum 
 $\sum \frac{1}{L_{c-1} (\mathrm{ln}_{c})^{p} }$
is always convergent, and the right hand sum
 $\sum \frac{1}{L_{w}}$
 is always
 divergent, $z = \; \lt$.
 $\sum \frac{1}{L_{c-1} (\mathrm{ln}_{c})^{p} } b_{n}|_{n=\infty}=0$ then
 $\sum a_{n}b_{n}|_{n=\infty}=0$ converges. 
\end{proof}
\bigskip
\begin{remk}
 There always exists a series that diverges more slowly.
\end{remk}
\bigskip
\begin{theo}\label{P241}
 If $\sum_{n=1}^{\infty} a_{n}$ is a divergent
 series with positive terms then there exists a monotonic sequence
 $(b_{n})_{n=1}^{\infty}$ such that $\lim\limits_{n \to \infty} b_{n}=0$ and
 the series $\sum_{n=1}^{\infty} a_{n} b_{n}$ diverges. 
\end{theo}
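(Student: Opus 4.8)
The plan is to mirror the proof of Theorem \ref{P240} on the divergent side: first prove the statement "at infinity" as a convergence-sum result, and then recover the theorem as stated by the transfer principle (Theorem \ref{P090}, specifically Theorem \ref{P093} for the divergent case). So the intermediate target is: \emph{if $\sum a_n|_{n=\infty}=\infty$ is a positive divergent convergence sum, then there is a monotonic sequence $(b_n)|_{n=\infty}$ with $b_n|_{n=\infty}=0$ and $\sum a_n b_n|_{n=\infty}=\infty$.}

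First I would reform the hypothesis as a convergence sum, using a contiguous rearrangement (Section \ref{S16}) if necessary so that $(a_n)|_{n=\infty}$ is a positive \emph{monotonic} sequence with $\sum a_n|_{n=\infty}=\infty$. By Lemma \ref{P237} there is a fixed integer $w$ with $\sum \frac{1}{L_w}|_{n=\infty} \leq \sum a_n|_{n=\infty}$; removing the sum by Criterion E3 (as in the boundary test, Theorem \ref{P232}) gives the componentwise comparison $\frac{1}{L_w} \leq a_n|_{n=\infty}$, i.e. $\frac{1}{\prod_{k=0}^{w}\mathrm{ln}_k} \leq a_n|_{n=\infty}$.

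Next I would set $b_n = \frac{1}{\mathrm{ln}_{w+1}\,n}$. Since $\mathrm{ln}_{w+1}|_{n=\infty}=\infty$, the sequence $(b_n)|_{n=\infty}$ is positive, monotonically decreasing, and $b_n|_{n=\infty}=0$. Multiplying the comparison by the positive $b_n$ gives $a_n b_n \geq \frac{1}{L_w}\cdot\frac{1}{\mathrm{ln}_{w+1}} = \frac{1}{L_{w+1}}|_{n=\infty}$, hence $\sum a_n b_n \geq \sum \frac{1}{L_{w+1}}|_{n=\infty}$. By Theorem \ref{P230} (the generalized p-series with $p=1$, equivalently Theorem \ref{P229}) $\sum \frac{1}{L_{w+1}}|_{n=\infty}=\infty$ diverges, so $\sum a_n b_n|_{n=\infty}=\infty$ diverges; if $(a_n b_n)|_{n=\infty}$ happens not to be monotonic, the same conclusion follows directly from E3.1, or after a contiguous rearrangement. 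This establishes the at-infinity statement. Finally, Theorem \ref{P093} transfers $\sum a_n b_n|_{n=\infty}=\infty$ together with the sequence $(b_n)$ back to the ordinary series $\sum_{n=1}^{\infty} a_n b_n$, which diverges, with $\lim_{n\to\infty} b_n=0$.

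I expect the main obstacle to be the monotonicity bookkeeping rather than any estimate: Lemma \ref{P237} is stated for monotonic divergent series, so one must justify via the rearrangement theorems of Section \ref{S16} that a general positive divergent series can be replaced by a monotonic one without affecting the construction, and check that the chosen $b_n$ keeps $a_n b_n$ within the scope of Criterion E3 (or else invoke E3.1). Everything else is a routine mirror of Theorem \ref{P240}, with the boundary $\sum \frac{1}{L_w}|_{n=\infty}$ playing the role that the convergent envelope $\sum \frac{1}{L_{c-1}\mathrm{ln}_c^{p}}|_{n=\infty}$ played there.
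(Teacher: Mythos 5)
Your proposal matches the paper's own argument: the paper proves Theorem \ref{P241} by reducing to the at-infinity version (Theorem \ref{P242}), where it likewise takes the boundary lower bound $\frac{1}{L_{w}} \leq a_{n}|_{n=\infty}$, chooses $b_{n} = \frac{1}{\mathrm{ln}_{w+1}}$, multiplies to get $\frac{1}{L_{w+1}} \leq a_{n}b_{n}|_{n=\infty}$, and concludes divergence before transferring back via Theorem \ref{P090}. Your extra attention to the monotonicity bookkeeping is a reasonable elaboration but does not change the route.
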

\begin{proof}
 Reform as the convergence sum, solve by Theorem \ref{P242},
 then 
 apply the transfer principle,
 transferring a convergence sum to a sum
 (See Theorem \ref{P090}).
\end{proof}
\bigskip
\begin{theo}\label{P242}
 If $\sum a_{n}|_{n=\infty}=\infty$ is a divergent
 series with positive terms then there exists a monotonic sequence
 $(b_{n})_{n=\infty}$ such that $b_{n}|_{n=\infty}=0$ 
 and 
 $\sum a_{n} b_{n}|_{n=\infty} = \infty$ diverges.
\end{theo}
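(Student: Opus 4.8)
The plan is to dualize the proof of Theorem~\ref{P240}, using the divergent half of the boundary test to peel one extra logarithm off the boundary while remaining divergent. Since $\sum a_{n}|_{n=\infty}=\infty$ diverges (and $a_{n}$ is monotonic, or has been deformed to a monotonic sequence as in Section~\ref{S16}), Lemma~\ref{P237} --- equivalently the divergent case of the boundary test, Theorem~\ref{P232} --- gives a fixed integer $w$ with $\sum \frac{1}{L_{w}}|_{n=\infty} \leq \sum a_{n}|_{n=\infty}$. Removing the sums via Theorem~\ref{P038} this becomes the pointwise comparison $a_{n} \geq \frac{1}{L_{w}}|_{n=\infty}$. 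The sequence I would propose is $b_{n} = \frac{1}{\mathrm{ln}_{w+1}\,n}$: it is monotonically decreasing at infinity, and $b_{n}|_{n=\infty}=0$ since $\mathrm{ln}_{w+1}\,n|_{n=\infty}=\infty$.

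First I would multiply the bound $a_{n} \geq \frac{1}{L_{w}}|_{n=\infty}$ by the positive factor $b_{n}$, obtaining $a_{n} b_{n} \geq \frac{1}{L_{w}}\cdot\frac{1}{\mathrm{ln}_{w+1}}|_{n=\infty}$; since $L_{w}\cdot \mathrm{ln}_{w+1} = \prod_{k=0}^{w+1}\mathrm{ln}_{k} = L_{w+1}$, the right side is exactly $\frac{1}{L_{w+1}}$. Summing at infinity (monotonicity lets the sum operator pass through the inequality, Section~\ref{S05}) and applying the comparison test Theorem~\ref{P010} together with the generalized $p$-series Theorem~\ref{P230} at $p=1$, which asserts $\sum \frac{1}{L_{w+1}}|_{n=\infty}=\infty$, yields $\sum a_{n} b_{n} \geq \sum \frac{1}{L_{w+1}}|_{n=\infty}=\infty$. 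Hence $\sum a_{n} b_{n}|_{n=\infty}=\infty$ diverges, which is the claim.

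The routine verifications --- that $b_{n}$ is monotonic at infinity (it is the reciprocal of the monotonically increasing $\mathrm{ln}_{w+1}$) and that multiplying a comparison by a positive sequence and then summing preserves the inequality direction --- are immediate from the machinery of Section~\ref{S05}. I do not expect a serious obstacle beyond the first step: the whole force of the argument rests on Lemma~\ref{P237}, i.e.\ on the divergent side of the boundary test, which is exactly what guarantees that a slower-diverging companion $\frac{1}{L_{w+1}}$ still lies below $a_{n}$; the paper has already granted this (Theorem~\ref{P232}, via its equivalence with the generalized ratio test, Corollary~\ref{P243}). Unlike Theorem~\ref{P240}, no upper bound on $a_{n}$ is needed here, and since $b_{n}\le 1$ eventually we in fact get $\sum a_{n}b_{n}|_{n=\infty}=\infty$ diverging \emph{more slowly} than $\sum a_{n}|_{n=\infty}$, matching the remark preceding the theorem.
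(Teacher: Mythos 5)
Your proposal is correct and follows essentially the same route as the paper's own proof: both invoke the divergent side of the boundary test (Lemma \ref{P237}) to obtain $a_{n} \geq \frac{1}{L_{w}}|_{n=\infty}$, choose $b_{n} = \frac{1}{\mathrm{ln}_{w+1}}$, use $L_{w}\,\mathrm{ln}_{w+1} = L_{w+1}$ to get $a_{n}b_{n} \geq \frac{1}{L_{w+1}}|_{n=\infty}$, and conclude by divergence of the generalized harmonic series $\sum \frac{1}{L_{w+1}}|_{n=\infty}$. No substantive differences.
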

\begin{proof}
Since $\sum a_{n}$ is diverging there exists a boundary sequence
 that acts as a lower bound.
 $\exists w: \frac{1}{L_{w}} \leq a_{n}|_{n=\infty}$,
 $\frac{1}{L_{w}} b_{n} \leq a_{n} b_{n} |_{n=\infty}$,
 let $b_{n} = \frac{1}{\mathrm{ln}_{w+1}}$,  
 $\frac{1}{L_{w} \mathrm{ln}_{w+1}} \leq a_{n} b_{n} |_{n=\infty}$,
 $\frac{1}{L_{w+1} }  \leq a_{n} b_{n} |_{n=\infty}$,
 $\sum \frac{1}{L_{w+1} }  \leq  \sum a_{n} b_{n} |_{n=\infty}$,
 $\infty  \leq  \sum a_{n} b_{n} |_{n=\infty}$,
 then $\sum a_{n}b_{n}|_{n=\infty}=\infty$ diverges.
\end{proof}

{\em RMIT University, GPO Box 2467V, Melbourne, Victoria 3001, Australia}\\
{\em chelton.evans@rmit.edu.au}
\end{document}